\numberwithin{equation}{section}
\newtheorem{theorem}{Theorem}[section]
\newtheorem{corollary}[theorem]{Corollary}
\newtheorem{lemma}[theorem]{Lemma}
\newtheorem{proposition}[theorem]{Proposition}
\newtheorem{definition}[theorem]{Definition}
\newtheorem{example}[theorem]{Example}
\newtheorem{remark}[theorem]{Remark}
\newtheorem{open}[theorem]{Open problem}
\newtheorem{thmdef}[theorem]{Theorem/Definition}
\newenvironment{idea}{\removelastskip\par\medskip   
\noindent{\em Idea of the proof} \rm}{\penalty-20\null\hfill$\square$\par\medbreak}
\newcommand{\N}{\mathbb{N}}
\newcommand{\Q}{\mathbb{Q}}
\newcommand{\R}{\mathbb{R}}
\newcommand{\Z}{\mathbb{Z}}
\newcommand{\aalpha}{{\mbox{\boldmath$\alpha$}}}
\newcommand{\ggamma}{{\mbox{\boldmath$\gamma$}}}
\newcommand{\ppi}{{\mbox{\boldmath$\pi$}}}
\newcommand{\sggamma}{{\mbox{\scriptsize\boldmath$\gamma$}}}
\newcommand{\sppi}{{\mbox{\scriptsize\boldmath$\pi$}}}
\newcommand{\restr}[1]{\lower3pt\hbox{$|_{#1}$}}
\newcommand{\pr}{\mathscr P}
\newcommand{\prd}{\mathscr P_2}
\newcommand{\e}{{\rm e}}
\newcommand{\Restr}[2]{{\rm Restr}_{#1}^{#2}}
\newcommand{\X}{{\rm X}}
\newcommand{\Y}{{\rm Y}}
\renewcommand{\Z}{{\rm Z}}
\newcommand{\sfd}{{\sf d}}
\newcommand{\diam}{{\sf diam}}
\newcommand{\mm}{{\mathfrak m}}
\newcommand{\vol}{{\rm vol}}								
\newcommand{\BV}{{\rm BV}}
\newcommand{\LIP}{{\rm Lip}}
\newcommand{\CD}{{\sf CD}}
\newcommand{\RCD}{{\sf RCD}}
\newcommand{\ch}{{\sf Ch}}
\newcommand{\ent}{{\rm Ent}_\mm}
\newcommand{\entm}[1]{{\rm Ent}_{#1}}
\renewcommand{\d}{{\rm d}}
\newcommand{\D}{{\rm D}}		
\renewcommand{\div}{{\rm div}}
\newcommand{\lip}{{\rm lip}}
\newcommand{\Lip}{{\rm Lip}}
\newcommand{\lipa}{{\rm lip}_a}
\newcommand{\Hess}{{\rm Hess}}
\newcommand{\lims}{\varlimsup}
\newcommand{\limi}{\varliminf}
\newcommand{\bs}{{\rm bs}}
\renewcommand{\b}{{\rm b}}
\newcommand{\bc}{{\rm bc}}
\newcommand{\weakto}{\rightharpoonup}
\newcommand{\nchi}{{\raise.3ex\hbox{$\chi$}}}
\newcommand{\supp}{{\rm supp}}
\newcommand{\eps}{\varepsilon}
\newcommand{\Id}{{\rm Id}}
\newcommand{\B}{\mathcal B}                            
\newcommand{\comp}{{\sf Comp}} 
\newcommand{\h}{{\sf h}} 
\newcommand{\E}{{\sf E}} 
\newcommand{\fr}{\penalty-20\null\hfill$\blacksquare$}         
\DeclareMathOperator*{\esssup}{\rm ess-sup}
\DeclareMathOperator*{\essinf}{\rm ess-inf}
\newcommand\la{\langle}
\newcommand\ra{\rangle}
\newcommand\KE{{\sf KE}}
\newcommand{\T}{\mathbb T}
\newcommand{\HS}{\sf HS}
\newcommand{\M}{\mathscr M}
\newcommand{\Hi}{\mathscr H}
\newcommand{\WW}{\mathcal W}
\newcommand{\DD}{\mathbf D}
\newcommand{\test}{{\rm Test}}
\newcommand{\Comp}{{\sf Comp}}
\newcommand{\Ric}{{\rm Ric}}
\newcommand{\EDE}{{\sf EDE}}
\newcommand{\EVI}{{\sf EVI}}
\newcommand{\AVR}{{\sf AVR}}
\newcommand{\Ggamma}{{\mathbf \Gamma}}
\DeclareMathOperator*{\glimi}{{\Gamma-\limi}}
\DeclareMathOperator*{\glims}{\Gamma-\lims}
\DeclareMathOperator*{\glim}{\Gamma-\lim}
\DeclareMathOperator*{\wlimi}{{\rm w}-\limi}
\newcommand{\ssggamma}{{\mbox{\tiny\boldmath$\gamma$}}}
\DeclareRobustCommand{\hsout}[1]{\texorpdfstring{\sout{#1}}{#1}}
\title{De Giorgi and Gromov working together \\[1ex] \large  A survey about stability results in connection with lower Ricci bounds}
\begin{document}

\author{Nicola Gigli\ \thanks{SISSA, ngigli@sissa.it}  }

\maketitle	
	
\begin{abstract}
The title is meant as way to honor two great mathematicians that, although never actually worked together, introduced concepts of convergence that perfectly match each other and very fruitfully interact: De Giorgi's $\Gamma$-convergence of lower semicontinuous functions and Gromov's  convergence of geometric structures.

\end{abstract}

\tableofcontents

\section{Introduction}
A central theme in geometry is the understanding of how curvature bounds affect the shape and properties of the space in consideration. In pondering this question, a major intuition by Gromov \cite{Gromov07} is that it is convenient to pass from the category of smooth Riemannian manifolds to a suitable compactification of it. Elements of such compactification are non-smooth objects for which, in the appropriate weak sense, curvature bounds are still in place and the idea is that their study can be fruitful  to answer questions formulated in the smooth world. This is very much in line with what modern analysis has been doing for more than a century: for instance, the moment one realizes that the Dirichlet energy $\int_U|\d f|^2$ is a relevant functional on the space $C^\infty(U)$, she/he is naturally lead to consider limits of smooth functions with a uniform bound on such energy, and thus to the notion of Sobolev space. 

\medskip

This note is about uniform lower bounds on the Ricci curvature.

\subsection{Brief history of the subject}

Non-smooth spaces with lower Ricci curvature bounds have been first studied in the nineties by  Cheeger and Colding (see in particular \cite{Colding96b}, \cite{Colding97}, \cite{Cheeger-Colding96}, \cite{Cheeger-Colding97I}, \cite{Cheeger-Colding97II}, \cite{Cheeger-Colding97III} and the survey \cite{Cheegersurvey}) from the `extrinsic' point of view, i.e.\ the main focus was on the structure of Ricci-limit spaces, namely those spaces arising as limits of Riemannian manifolds with a uniform lower Ricci and upper dimension bound. Their work served as strong foundation for all the subsequent research on the field.

 Later, Lott-Villani \cite{Lott-Villani09} and Sturm \cite{Sturm06II,Sturm06I} independently realized that optimal transport could be used to identify lower Ricci bounds `intrinsically' by studying convexity properties of entropy functionals in the Wasserstein space. Their approach takes root in the fundamental paper by McCann \cite{McCann97}, where he introduced the notion of `displacement interpolation' and proved that, when working in the Euclidean setting, certain entropy functionals are convex along this form of interpolation. Subsequent investigations in the Riemannian framework (\cite{OV00}, \cite{CEMCS01}, \cite{vRS05}, \cite{CEMS06}) showed the connection between such displacement convexity and lower Ricci bounds: these  ultimately led Lott-Sturm-Villani to the formalization of the notion of Curvature-Dimension $\CD(K,N)$ for metric measure spaces. Here the parameter $K$ is the lower bound on the Ricci curvature and $N$ the upper bound on the dimension. In the simple(st) case  $N=\infty$, their definition reads as: a metric measure space $(\X,\sfd,\mm)$ is $\CD(K,\infty)$ provided for any $\mu_0,\mu_1$ Borel probability measures on it, there is a $W_2$-geodesic $(\mu_t)$ connecting them such that
 \begin{equation}
\label{eq:cdintro}
\ent(\mu_t)\leq (1-t)\ent(\mu_0)+t\ent(\mu_1)-\tfrac K2t(1-t)W_2^2(\mu_0,\mu_1)\qquad\forall t\in[0,1].
\end{equation}
Notice the interplay between the distance and the measure: the former is used to define the $W_2$-distance between probability measures, the latter to define the entropy functional.  Here the key word is `convexity': being this a concept that can be defined without relying on differential calculus, it can be  used in the non-smooth setting and shown to be stable under weak notions of convergence (in our setting, the relevant notion is that of mGH-convergence, see Section \ref{se:mGH}).

 The $\CD(K,N)$ notion  has been refined into the $\RCD(K,N)$ condition (`${\sf R}$' standing for `Riemannian') by me in \cite{Gigli12} with the addition of the functional analytic notion of `infinitesimal Hilbertianity', i.e.\ the requirement that the  space $W^{1,2}(\X)$ of real valued Sobolev functions on $\X$, that in general is a Banach space, is in fact Hilbert. Notice that unlike the $\CD(K,N)$ condition, infinitesimal Hilbertianity is not stable under mGH-convergence, but it becomes so when coupled with $\CD(K,N)$. A  way of thinking at this is to notice that infinitesimal Hilbertianity is a first order condition, and as such one cannot reasonably expect it to be stable under a zeroth order convergence such as mGH-convergence; stability is restored once a uniform second order bound is imposed (the curvature bound, in our case). In practice, stability is ultimately based on the stability of the Heat Flow observed in \cite{Gigli10}, see also below. The need for the refinement given by the $\RCD$ notion  is the fact that Finsler structures are included in the $\CD$ category, but can have geometries very different from the Riemannian ones and, moreover, are known to be excluded by the class of Ricci-limit spaces (by the almost-splitting theorem of Cheeger-Colding). While having a stable and unified Riemannian/Finslerian approach to lower Ricci bounds is certainly   useful, if one wants to use non-smooth spaces to derive informations about the geometry of smooth Riemannian manifolds, it is better to rule out Finsler spaces. The idea behind the concept of infinitesimal Hilbertianity is that, in some sense, if we were able to make analysis on non-smooth spaces as we are on smooth Riemannian manifolds, then in principle we would be able to deduce the same geometric properties. It is therefore natural to try to develop a Riemannian-like calculus, as opposed to a Finsler-like one. Then, notice that  a  non-smooth calculus needs, sooner or later, to integrate by parts and `integration by parts' is a concept tightly linked to that of `Sobolev functions': since the property `$W^{1,2}$ is Hilbert' is easily seen to identify Riemannian manifolds among Finsler ones, it is reasonable to focus on spaces having it. 
 
As we shall discuss, the class of $\RCD(K,N)$ spaces is the smallest known one that:
 \begin{enumerate}
  \item Contains the Riemannian manifolds with ${\rm Ric}\geq K$ and ${\rm dim}\leq N$,
  \item Is stable under (measured-Gromov-Hausdorff) convergence,
  \item Is stable under natural geometric operations, such as (warped) products, and quotients by group actions.  
  \end{enumerate}
Here good examples of the geometric stability mentioned in the last point are the papers \cite{AmbrosioGigliSavare12},  \cite{Erbar-Kuwada-Sturm13}, \cite{Ketterer13}, \cite{GKMS17} (see the more detailed discussion in Section \ref{se:pitagora}).
 
 Before all this, lower Ricci and upper dimension bounds were studied by Bakry-\'Emery in the abstract setting of Dirichlet forms (\cite{BakryEmery85}, see also the monograph \cite{BakryGentilLedoux14}). Roughly said, they started from the observation that the Bochner inequality 
 \begin{equation}
\label{eq:BEintro}
\tfrac12\Delta|\d f|^2\geq \la\d f,\d\Delta f\ra+K|\d f|^2\qquad\forall f\in C^\infty(M),
\end{equation}
that on a smooth Riemannian manifold $M$ characterizes $\Ric\geq K$, can be recast solely in terms of suitable algebraic properties of the Laplacian (notice that $|\d f|^2=\Delta(\tfrac{f^2}2)-f\Delta f$). With this idea in mind they used \eqref{eq:BEintro} to define Ricci curvature lower bounds for the infinitesimal generator of an abstract Dirichlet form. Even more, they have been the first to introduce the concept of Curvature-Dimension bound, i.e.\ to enforce the lower Ricci with  an upper dimension bound $N$   (in terms of \eqref{eq:BEintro} it  means to add the non-negative term $\tfrac{(\Delta f)^2}N$ at the right hand side). The main data of this approach is not a metric measure space, but rather some kind of differential structure encoded in the given form $\mathcal E$: as such it seems not so viable to study mGH-limits (but see the recent \cite{CMTkatoI} for a potentially game-changing approach - I will make some comments about this in Section \ref{se:bibliorcd}).

Inequalities \eqref{eq:cdintro} and \eqref{eq:BEintro} should be seen as `Lagrangian' and `Eulerian' version of the same concept, namely ${\rm Ric}\geq K$, with the former being the `integrated' version of the latter. Notice also that \eqref{eq:cdintro} speaks about displacement interpolation and $W_2$-geometry, while \eqref{eq:BEintro} is more related to classical affine interpolation and $L^2$-geometry. It is a non-trivial fact that in the $\RCD$ category both these viewpoints are available, the link between the two being the Heat Flow. Recall that it is a classical fact that the heat flow can be seen as the Gradient Flow of the Dirichlet energy w.r.t.\ the distance $L^2$ and that in the groundbreaking paper \cite{JKO98} (see also \cite{Otto01}) it has been observed that it is also the Gradient Flow of the Boltzmann-Shannon entropy w.r.t.\ the distance $W_2$. This fact  can be made rigorous using De Giorgi's concept of metric gradient flow  \cite{DeGiorgiMarinoTosques80},  \cite{DeGiorgi92}, \cite{DeGiorgi93} - see \cite{AmbrosioGigliSavare08} for a more modern presentation and refined results. Understanding whether this identification can be pushed up to the metric level was one of the reasons, another being the stimulus of conversations with Sturm, for which I started stydying the Heat Flow on $\CD(K,\infty)$ spaces. In the first paper on the topic (\cite{Gigli10}) it is showed that the $W_2$-Gradient Flow of the entropy is uniquely defined on $\CD(K,\infty)$ spaces and stable under convergence. All the stability results of differentiation operators in this setting  are ultimately a consequence of the stability of the heat flow. Later, in collaboration \cite{Gigli-Kuwada-Ohta10} with Kuwada and Ohta we obtained the  metric identification of such $W_2$-G.F. of the entropy with the $L^2$-G.F.\ of the Dirichlet energy, and used it to obtain the first proof of the Bochner inequality in a non-smooth setting, which in our case was that of compact, finite dimensional Alexandrov spaces.  I refer to the sections about bibliographical notes for more detailed references, here I just conclude pointing out that in the subsequent collaboration \cite{AmbrosioGigliSavare11-2} with Ambrosio and Savar\'e  an $\RCD$-type condition made the first appearance: in such paper we focused on the infinite-dimensional case  and the emphasis was on properties of the heat flow,  in line with the studies just mentioned.

\subsection{The focus of this survey}
The theory of $\CD/\RCD$ spaces is now a mature topic, so wide that it would be impossible to cover it in any reasonable level of detail with a limited number of pages. Over time, various surveys have been written, see \cite{AmbrosioGigliSavare-compact}, \cite{savareEMS}, \cite{AmbrosioICM}, \cite{Gigli13over}, \cite{Gigli17}, \cite{Villani2017}, \cite{Cavalletti17}, \cite{PozzettaSurvey} covering different aspects of the theory.

In this one, I focus on stability properties, that form a crucial reason for this whole theory to exist. In particular, this manuscript should not be taken as a survey over the whole theory of $\RCD$ spaces, as in order to keep it somehow concise  I had to make strong choices about the content to present. For instance, I left out all the discussions about variable/time dependent/distributional lower Ricci bounds, the localization technique  and all the recent  progresses about the fine structural properties of $\RCD$ spaces, just to mention a few.

There are four goals I aim to achieve, the first being:
\begin{quote}
1) To illustrate how $\Gamma$-convergence is `the correct notion to use' (whatever this means), when passing to the limit on functionals defined on a converging sequence of geometric structures.
\end{quote}
This might be obvious in some communities (and realized already in \cite{KS03conv}), but perhaps it is less so in others. Also, there are few interesting similarities between the two concepts of convergence, one being that, as I'll try to argue in Section \ref{se:weak}, they can be seen as the natural weak topologies in the class of lower semicontinuous functions and metric measure spaces, respectively. In this direction, it is worth to mention that from the historical point of view they both arose when De Giorgi on one side and Gromov on the other drastically lowered the regularity of the objects under consideration to focus on the sole key relevant property: lower semicontinuity in place of convexity-like properties for $\Gamma$-convergence and metric structure in place of the differentiable one for  convergence of spaces.

As an example of the point above, notice that the main stability result in this setting, namely Lott-Villani-Sturm's convergence of entropies along a mGH-converging sequence of spaces, can be cast, as done in \cite{Gigli10}, in terms of $\Gamma$-convergence of such entropies. Even more so, such $\Gamma$-convergence is \emph{equivalent} to the mGH-convergence of the underlying spaces, see Theorem \ref{thm:gammamgh} for the precise statement.

My second goal is:
\begin{quote}
2) To convince the reader that virtually \emph{any} lower semicontinuous functional defined on a (${\sf R}$)$\CD$ space remains so when defined  on a sequence of spaces satisfying a uniform lower Ricci bound.
\end{quote}
Let me try to illustrate this with an example. One functional we will consider is the Dirichlet energy $\ch:L^2(\X)\to[0,\infty]$, that in this setting takes the name of Cheeger energy and is the non-smooth analogue of $f\mapsto \tfrac12\int|\d f|^2\,\d\mm$. Such functional can be defined on an arbitrary metric measure space and is always lower semicontinuous. Also, once one has a mGH-converging sequence $\X_n\to\X_\infty$ it is possible to give a meaning to the concept of a sequence $n\mapsto f_n\in L^2(\X_n)$ converging to $f_\infty\in L^2(\X_\infty)$ in the `$L^2$-topology' (see Definition \ref{def:convl2var}). It turns out that if the $\X_n$ are all $\CD(K,\infty)$ spaces, then $f_n\stackrel{L^2}\to f_\infty$ implies $\ch(f_\infty)\leq\limi_n\ch(f_n)$. In the jargon of $\Gamma$-convergence, this means that the sequence of Cheeger energies satisfies a $\glimi$ inequality.

To put it differently, starting from $f\mapsto \ch(f)$ we can define a functional $(\X,f)\mapsto \widehat\ch(\X,f)$ taking spaces $\X$ and $L^2$ functions on them and returning the Cheeger energy of that function on that space. Then not only $\ch$ is $L^2(\X)$-lower semicontinuous on any space $\X$, but $\widehat\ch$ is lower semicontinuous on the space of $\{\CD(K,\infty)\, spaces,\, functions\}$ for any $K\in\R$. Notably, this makes it possible to apply the direct method of calculus of variations in situations where also the underlying space is varying.

This would fail without a lower Ricci bound: passage to the limit from discrete to continuum and homogeneization results provide standard counterexamples\footnotemark (and in the latter case uniform doubling and Poincar\'e inequalities are in place).\footnotetext{speaking of connections between $\Gamma$-convergence and convergence of metric (measure) spaces, it is worth to notice that the first results showing that `homogenising a periodic Riemannian metric we get  a Finsler norm' have been produced in the language of $\Gamma$-convergence of suitable kinetic energy functionals, see \cite{homofinsl}. This sort of convergence is in fact equivalent to Gromov-Hausdorff convergence of the induced metrics, see Theorem \ref{thm:ghgamma}.}

The phenomenon above is not limited to functionals defined on functions, but works as well for functionals on measures and tensors. Here linearity of the underlying differential operator plays no role: only the variational structure matters (in line with  general principles of $\Gamma$-convergence). For example, the principle $(2)$  is related to the convergence of flows of vector fields (see \cite{AST17} and Section \ref{se:convflow}) and to the lower semicontinuity of the Willmore energy functional, which is built from the non-linear second order operator $\div(\tfrac{\nabla u}{|\nabla u|})$ representing the mean curvature of the level sets of $u$ (see \cite{GV23} and Section \ref{se:willmore}).

It is hard to make rigorous the claim $(2)$ above, and thus to provide an actual proof of it. Still, the principle works in any example I'm aware of. Informally, the reason is that the differential operators, at least most of those considered in this setting, are all built starting from the differential $f\mapsto \d f$ of Sobolev functions (by taking adjoints, iterating etc..) and 
\begin{quote}
\emph{the differential of Sobolev functions is a closed operator, even along a converging sequence of $\RCD(K,\infty)$ spaces},
\end{quote} 
see Section \ref{se:defconvtens} for the rigorous meaning. Notice that even if the origin of this research field is in metric geometry, this concept of closure comes from functional analysis.  Such closure of the differential is in turn consequence (and in some sense equivalent) to the stability of the heat flow  mentioned earlier, for which the lower Ricci bound plays a crucial role.

My third, and perhaps most ambitious, goal is:
\begin{quote}
3) To give an outline of how the theory is built.
\end{quote}
It is a quite remarkable fact that starting only from the $W_2$-convexity of entropy and infinitesimal Hilbertianity, a rich and robust calculus can be built, to the extent that one can get the aforementioned stability results. In fact, even the possibility of speaking about, say, flows of vector fields or Hessian of a function in a fixed space is already non trivial. In order to give an account on how this is achieved, I had to make some  choices. First and foremost, I never present full proofs, but only give the main idea (often under additional simplifying assumptions, such as compactness, $K=0$, etc..). Hopefully, this should be sufficient for the expert reader to get convinced about the validity of the claim, while the less mature one might use the presentation here as a guide on what to focus when studying the actual source material. My hope is that with this choice it will be clear which concepts play a role and when, in which situation a certain technical lemma is useful, why a given definition appears etc... Another choice has been to present all the results in the setting of normalized (i.e.\ reference measure of unit mass) infinite-dimensional spaces. With this, by no means I'm implying that the extension to the case of finite dimensional spaces/spaces with infinite mass (that from the point of view of geometric applications are typically the most relevant ones) has minor importance: I made this choice because it was the only one that allowed me to stay in a reasonable amount of pages. Few details about how to extend the given results are given in the sections about bibliographical references.

Finally, since generalizing for the sole sake of it can be dangerous, my final goal is 
\begin{quote}
4) To present some applications of the theory to the study of smooth Riemannian manifolds.
\end{quote}
Some of these, such as the use of the splitting in $\RCD$ spaces to get informations about the almost quotient space, are expected  as in some sense `built in' the theory. Others, such as the sharp concavity  of the isoperimetric profile  or the variant of Bonnet-Myers estimate for positive and pinched Ricci curvature are, in my opinion, more surprising.

\medskip

\noindent{{\bf Acknowledgment}}
It would have been impossible to write this note without frequent conversations with a number of colleagues. In particular, I'd like to thank L. Benatti, M. Braun, E. Bru\`e, E. Caputo, F. Cavalletti, G. Dal Maso, M. Fogagnolo, S.\ Honda, J. Lott, A. Malchiodi, L. Mazzieri, I. Mondello, A. Mondino, F. Nobili, E. Pasqualetto, M. Pozzetta, G. Savar\'e, F. Schulze, D. Semola,  K.T. Sturm,  I.Y. Violo.

I wish also to thank L. Ambrosio  for a careful reading of a preliminary version of this manuscript.

Part of this work has been carried out while I was visiting the Fields Institute in Toronto: I wish to thank it for the warm hospitality and R.J. McCann for the neverending support and all the stimulating interactions.

\section{Two key non-linear weak topologies}\label{se:weak}
Here I introduce the two crucial notions of convergence we are going to deal with: $\Gamma$-convergence of lower semicontinuous functionals and measured-Gromov-Hausdorff convergence of metric measure spaces. In the attempt of highlighting the analogies between the two, I am calling  them weak topologies, this being related to the fact that compactness naturally arises in both cases, possibly after imposing suitable  `bounds on the size'. 

Of course it is easy to produce a compact topology on any set: just take the trivial one, or more generally a topology with `few' open sets. Choices of this kind have limited value for the working analyst, as  uniqueness of limits is typically relevant. In turn, such uniqueness is the same as requiring the topology to be Hausdorff\footnote{at least if we speak of limits of nets. If one only cares about sequences, equivalence holds if  the topology is first countable (which will always be in cases of interest for us).}, a condition that needs `many' open sets. These basic considerations should be coupled  with the classical topological rigidity:
\begin{equation}
\label{eq:rigtop}
\begin{split}
&\text{Given a compact and Hausdorff topology on a set,}\\
&\text{any  weaker topology is not Hausdorff and any  stronger topology is not compact}\footnotemark
\end{split}
\end{equation}
(`weaker' and `stronger' being intended in the strict sense),
\footnotetext{this is typically formulated in the equivalent way: a continuous bijection  from a compact space to a Hausdorff  is a homeomorphism.}to conclude that if we happen to have a compact and Hausdorff topology $\tau$ on a given set $\mathcal X$ which is compatible in some natural way with some additional structure on $\mathcal X$, then we might regard $\tau$ as `the correct' weak topology on $\mathcal X$.

\subsection{Weak convergence of lower semicontinuous functionals}

Let $(\X,\sfd)$ be a metric space. A functional $\E:\X\to\bar\R:=\R\cup\{\pm\infty\}$ is called \emph{lower semicontinuous} provided $\E(x)\leq \limi_{y\to x}\E(y)$ for every $x\in\X$. Since $``\sup\limi\leq\limi\sup"$, the supremum of an arbitrary family of lower semicontinuous functions is still lower semicontinuous and thus  any functional $\E$ admits a unique largest lower semicontinuous functional $\E^*$ among those $\leq \E$: it is called lower semicontinuous envelope of $\E$ and satisfes $\E^*(x)=\limi_{y\to x}\E(y)$.

\begin{definition}[$\Gamma$-convergence]\label{def:gammaconv}
Given    $\E_n:\X\to\bar\R$, $n\in\N$, we define the functionals ${\glimi_n} \E_n$ and $\glims_n \E_n$ on $\X$ as
\begin{equation}
\label{eq:defgamma}
\glimi_n\E_n(x):=\inf\limi_n\E_n(x_n)\qquad\text{ and }\qquad\glims_n\E_n(x):=\inf\lims_n\E_n(x_n),
\end{equation}
where in both cases the $\inf$ is taken among all sequences $(x_n)$ converging to $x$. 

If these coincide, calling $\E$ the common value  we say that $(\E_n)$ $\Gamma$-converges to $\E$ w.r.t.\ the topology induced by $\sfd$.
\end{definition}
In other words, $\E$ is the $\Gamma$-limit of the $\E_n$'s iff
\begin{subequations}\label{eq:glimeq}
\begin{align}\label{eq:glimi}
\E(x)&\leq\limi_n\E_n(x_n)&&\qquad\text{for any sequence $(x_n)$ converging to $x$,}\\
\label{eq:glims}
\E(x)&\geq \lims_n\E_n(x_n) &&\qquad\text{for some sequence $(x_n)$ converging to $x$.}
\end{align}
\end{subequations}
Inequalities \eqref{eq:glimi} and \eqref{eq:glims} are known as $\glimi$ and $\glims$ inequality, respectively, and a sequence $(x_n)$ as in \eqref{eq:glims} is called \emph{recovery sequence} for $x$.

A diagonalization argument shows that $\glimi_n\E_n(x)$ and $\glims_n\E_n(x)$ are always lower semicontinuous and that 
\[
\glimi_n\E_n(x)=\glimi_n\E_n^*(x)\qquad\text{ and }\qquad \glims_n\E_n(x)=\glims_n\E_n^*(x).
\]
In other words, that of $\Gamma$-convergence is a notion strictly related to lower semicontinuous functions.

%
Given $\E:\X\to\bar\R$, we define the \emph{effective domain}  $D(\E)\subset\X$ of $\E$ as $D(\E):=\{\E<+\infty\}$ (in applications, the functionals rarely take the value $-\infty$). Then \eqref{eq:glims} implies
\begin{equation}
\label{eq:linkdomini}
\E=\glim_n\E_n,\quad x\in D(\E)\qquad\Rightarrow\qquad \text{there is $n\mapsto x_n\in D(\E_n)$ converging to $x$}.
\end{equation}
There are no other direct links between $D(\E)$ and $D(\E_n)$. In particular we can have $D(\E)\cap D(\E_n)=\varnothing$ for any $n$ (consider $x_n\to x_\infty$ and  $\E_n$ to be equal to 0 on $x_n$ and to $+\infty$ elsewhere: these $\Gamma$-converge to the analogously defined functional $\E_\infty$).

Notice also that there is nothing linear in the concept of $\Gamma$-convergence - in particular it does not preserve sums\footnote{Still, if $\E_n\stackrel\Gamma\to\E_\infty$ then  $\E_n+{\sf F}\stackrel\Gamma\to\E_\infty+{\sf F}$ for ${\sf F}$  continuous. This tells that lower order perturbations do not affect $\Gamma$-convergence.}:
it is instead based on variational principles and is very much related 
to \emph{convergence of infima} (that under suitable compactness assumptions becomes \emph{convergence of minima}). For instance, with little work starting from the definition we see that
\begin{equation}
\label{eq:gvar}
\glimi_n\E_n(x)=\sup_{r>0}\limi_n\inf_{B_r(x)}\E_n\qquad\text{ and }\qquad \glims_n\E_n(x)=\sup_{r>0}\lims_n\inf_{B_r(x)}\E_n.
\end{equation}
On separable spaces, this characterization directly implies the \emph{compactness in $\Gamma$-convergence}:
\begin{equation}
\label{eq:compgamma}
\text{for any sequence $(\E_n)$ of functionals on $\X$ there is $(n_k)$ so that $(\E_{n_k})$ has a $\Gamma$-limit}.
\end{equation}
To see this, by diagonalization  select $(n_k)$ so that the limit of $\inf_U\E_{n_k}$ exists for any $U$ belonging to some countable base of $\X$ and then use \eqref{eq:gvar}.

\begin{remark}[The topology of $\Gamma$-convergence]\label{re:topgamma}{\rm Recalling the rigidity statement  \eqref{eq:rigtop} we can interpret the compactness in \eqref{eq:compgamma} above and the uniqueness of the $\Gamma$-limit as an abstract indication of the fact that the concept of $\Gamma$-convergence  is `the most natural one' on the space ${\sf LSC}(\X)$ of lower semicontinuous functions from $\X$ to $\bar\R$.

To corroborate this observation, we should also couple it with some link between $\Gamma$-convergence and the structure of lower semicontinuous functions: this is the convergence of infima alluded above.  More rigorously (and referring to \cite{DalMaso93} for all the proofs): there is a topology on ${\sf LSC}(\X)$ whose converging sequences are precisely those $\Gamma$-converging and it is the weakest one such that
\begin{equation}
\label{eq:Gammatop}
\begin{split}
&{\sf LSC}(\X)\ni f\quad\mapsto\quad \inf_Uf\in\bar \R\qquad\text{is upper semicontinuous for any $U\subset\X$ open},\\
&{\sf LSC}(\X)\ni  f\quad\mapsto\quad \inf_Kf\in\bar \R\qquad\text{is lower semicontinuous for any $K\subset\X$ compact}.
\end{split}
\end{equation}
Such topology is always compact. It is Hausdorff  iff $\X$ is locally compact. More generally, for $(\X,\sfd)$ arbitrary its restriction to any equicoercive subset $\mathcal C\subset{\sf LSC}(\X)$  is Hausdorff, and in fact even metrizable. Here $\mathcal C$ being  equicoercive means that $B\cap( \cup_{f\in\mathcal C}\{f\leq \alpha\})$ is relatively compact for any $\alpha\in\R$ and $B\subset\X$ bounded. If functions in $\mathcal C$ are also uniformly bounded from below, the topology of $\Gamma$-convergence on $\mathcal C$ can also be characterized as the weakest one such that 
\[
\mathcal C\ni f\quad\mapsto\quad Q_n f\in C_\b(\X) \qquad\text{is continuous for every $n\in\N$}
\]
where the space $C_\b(\X)$ of bounded continuous functions on $\X$ is equipped with the topology of uniform convergence on compact sets\footnote{in the literature it is often considered pointwise convergence of the $Q_nf$'s. Given that the functions $\{Q_nf:f\in\mathcal C\}$ are easily seen to be equicontinuous on compact subsets of $\X$ for any fixed $n$, pointwise convergence and uniform convergence on compact sets coincide. One might also notice that  by the rigidity  \eqref{eq:rigtop} and Ascoli-Arzel\`a's theorem, this topology is `the most natural one'  on    $\{Q_nf:f\in\mathcal C\}$ for any $n\in\N$.}, i.e.\ the compact-open topology, and $Q_nf$ is the  Moreau-Yosida approximation of $f$ defined as 
\[
\qquad Q_nf(x):=\inf_{y\in\X}f(y)+n\sfd(x,y),\qquad\forall x\in\X.
\]
This is in line with the general mantra that wants that under a uniform control on the size - here interpreted as size of  sublevels - a sequence weakly converges iff after regularization it strongly converges.  

Notice that uniform bounds from below are  quite common in applications\footnote{and in any case, for the purpose of the discussion just made one can always reduce to this case by post-composing functions in $\mathcal C$ with an increasing homemorphism $\psi:\bar \R\to[0,1]$: by the definition of $\Gamma$-topology given in \eqref{eq:Gammatop}, this map is a homeomorphism of ${\sf LSC}(\X)$ with its image} and so is  dealing with equicoercive family of functionals, where possibly compactness is intended w.r.t.\ some suitable weak topology (see also the discussion below). 
} \fr\end{remark}
It is worth to emphasize that the role of the distance $\sfd$ on the space $\X$ is limited to producing the concept of converging sequences of points: as soon as we have such a notion of convergence, the definition of $\Gamma$-convergence carries over.  In this note, the sort of convergences we shall encounter will either come from metrizable topologies (in the case of weak/$W_2$-convergence of measures or $L^2$-convergence of functions and tensors) or from topologies that are `metrizable un bounded sets' (in the case of weak $L^2$-convergence of functions and tensors).

Speaking of weak and strong convergences, it will happen to deal with both at the same time. In other words, in some circumstances we will have a \emph{strong} convergence, typically denoted by $x_n\to x_\infty$, and a \emph{weak} one, typically denoted $x_n\weakto x_\infty$. Much like in the familiar Banach setting,  strong convergence is related to `the correct' geometry at hand and implies weak convergence, while the latter is useful for its compactness properties. In this situation one can speak of $\Gamma$-limits  w.r.t.\ both strong and weak convergence : if the same functional $\E$ is both the strong $\Gamma$-limit and the weak $\Gamma$-limit of a sequence $\E_n$, one  says that the $\E_n$'s \emph{Mosco-converge} to $\E$.

Notice that since $x_n\to x_\infty$ implies $ x_n\weakto x_\infty$, we always have 
\[
{\rm weak-}\glimi_n\E_n\leq {\rm strong-}\glimi_n\E_n\qquad\text{ and }\qquad{\rm weak-}\glims_n\E_n\leq {\rm strong-}\glims_n\E_n
\]
and thus $(\E_n)$ Mosco-converge to $\E$ iff
\begin{subequations}
\begin{align}
\limi_n\E_n(x_n)&\geq \E(x)&&\qquad\text{for any sequence $(x_n)$ weakly converging to $x$,}\\
\lims_n\E_n(x_n)&\leq \E(x)&&\qquad\text{for some sequence $(x_n)$ strongly converging to $x$.}
\end{align}
\end{subequations}
Notice that the compactness statement \eqref{eq:compgamma} does not imply a similar statement for Mosco convergence, because the weak and strong $\Gamma$-limits may differ.

%
%
%
%
%
%
%
%
%
%
%
%
%
%

\subsection{Weak convergence of geometric structures}\label{se:mGH}
We shall be interested in convergence of \emph{normalized} metric measure spaces, i.e.\ triples $(\X,\sfd,\mm)$ where $(\X,\sfd)$ is a complete and separable metric space and $\mm$ is a Borel probability measure on it. We restrict to the case $\mm(\X)=1$ just for simplicity: the definitions can be adapted - without deep conceptual work - to pointed spaces equipped with measures giving finite mass to bounded sets. 

Given a  metric space $\Y$, by $C_\b(\Y)$ we denote the space of bounded continuous functions on $\Y$ and by $\pr(\Y)$ that of Borel probability measures. We say that $(\mu_n)\subset\pr(\Y)$ weakly converges to $\mu\in\pr(\Y)$, and write $\mu_n\weakto\mu$, provided
\[
\int\varphi\,\d\mu_n\ \qquad\to\ \qquad\int \varphi\,\d\mu\ \qquad\forall \varphi\in C_\b(\Y).
\]
I shall frequently use the fact that
\[
\mu_n\weakto\mu_\infty\text{ and $f$ lower semicontinuous bounded from below }\Rightarrow\limi_n\int f\,\d\mu_n\geq\int f\,\d\mu_\infty
\]
and analogously for upper semicontinuous functions bounded from above. To see why this holds, notice that any $f$ as in the claim is the supremum of an increasing sequence $(f_n)$ of bounded continuous functions, thus by monotone convergence we have $\int f\,\d\mu=\sup_n\int f_n\,d\mu_n$ and the claim follows as `the sup of a family of continuous functionals is lower semicontinuous'.
\begin{definition}[measured-Gromov-Hausdorff convergence]\label{def:mgh}
We say that the sequence $n\mapsto(\X_n,\sfd_n,\mm_n)$ of normalized spaces converges to the space $(\X_\infty,\sfd_\infty,\mm_\infty)$ in the measured-Gromov-Hausdorff sense, and write $\X_n\stackrel{mGH}\to\X_\infty$, if there is a complete and separable metric space $(\Y,\sfd_\Y)$ and isometric embeddings $\iota_n:\X_n\to\Y$, $n\in\N\cup\{\infty\}$, such that $(\iota_n)_*\mm_n\weakto(\iota_\infty)_*\mm_\infty$, i.e.\ such that
\[
\int\varphi\circ\iota_n\,\d\mm_n\qquad\to\qquad\int \varphi\circ\iota_\infty\,\d\mm_\infty\qquad\forall \varphi\in C_\b(\Y).
\]
The collection $(\Y,\sfd_\Y,(\iota_n))$ is called \emph{realization} of the convergence.
\end{definition}
The fact that the spaces $\X_n$ are approaching $\X_\infty$ can be visualized noticing that
\begin{equation}
\label{eq:vismgh}
\text{$\forall x\in\supp(\mm_\infty)$ there is $n\mapsto x_n\in\X_n$ such that $\iota_n(x_n)\to\iota_\infty(x_\infty)$,}
\end{equation}
where   $\supp(\mm)$ is the support of the measure $\mm$ i.e.\ the smallest closed set on which it is concentrated. Picking $\varphi\equiv1$ we see that $\X_\infty$ is normalized as well. It is worth pointing out that implicit in the definition above there is the choice of declaring $(\X,\sfd,\mm)$ and $(\X',\sfd',\mm')$ {\bf isomorphic} provided there is an isometry $\iota:\supp(\mm)\to\X'$ such that $\iota_*\mm=\mm'$\footnote{alternatively, one can declare two spaces isomorphic if there is a measure preserving isometry everywhere defined, in which case it is natural to impose also Hausdorff convergence of $\iota_n(\X_n)$ to $\iota_\infty(\X_\infty)$ in the definition of convergence. In spaces with a lower Ricci and an upper dimension bound, there is little difference between these two notions, basically thanks to the doubling property that is implied by such assumption (that in turn gives compactness in the Hausdorff sense). In the general case Definition \ref{def:mgh}  and the related concept of isomorphism seem more natural because on one hand imposing Hausdorff convergence is unnatural on non-compact spaces and on the other hand `what happens outside the support of the reference measure is irrelevant'.}. In particular, $(\X,\sfd,\mm)$ is always isomorphic to the set $\supp(\mm)$ equipped with the (restrictions of) $\sfd,\mm$.  It is not difficult to check that the above notion of convergence is invariant if we replace the $\X_n$'s, $n\in\N$, with isomorphic spaces (this might require enlarging the target space $\Y$ in order to extend the given isometries). 

More generally, in giving definitions concerning mm spaces, we should pay attention to their invariance under this notion of isomorphism: this will be self evident in every circumstance with the notable exception of Definition \ref{def:ch} (but see Lemma \ref{le:locmcsh}).

Since weak convergence of measures on metric spaces is metrizable, so is mGH-convergence. A natural choice is to start considering the Kantorovich distance 
\[
\mathbb K(\mu_1,\mu_2):=\inf_{\sggamma}\int 1\wedge\sfd(x_1,x_2)\,\d\ggamma,
\]
where the $\inf$ is taken among all \emph{transport plans} $\ggamma$ from $\mu_1$ to $\mu_2$, i.e.\ among $\ggamma\in\pr(\X^2)$ such that $\pi^1_*\ggamma=\mu_1$ and $\pi^2_*\ggamma=\mu_2$, where $\pi^i:\X^2\to\X $ is the projection onto the $i$-th coordinate: $\mathbb K$ metrizes the weak convergence on $\pr(\X)$ (see e.g.\ \cite{AmbrosioGigliSavare08}, \cite{Villani09} or the survey \cite{G11}). Then we put
\begin{equation}
\label{eq:dtheo}
\mathbb D\big((\X_1,\sfd_1,\mm_1),(\X_2,\sfd_2,\mm_2)\big):=\inf \mathbb K\big((\iota_1)_*\mm_1,(\iota_2)_*\mm_2),
\end{equation}
the $\inf$ being taken among all $(\Y,\sfd_\Y)$ and isometric immersions $\iota_i:\X_i\to\Y$.  It is then easy to see that $\mathbb D$ metrizes\footnote{It is clear  that $\mathbb D$ is symmetric, satisfies the triangle inequality and that $\mathbb D(\X,\X)=0$, but less so that  if $\mathbb D(\X_1,\X_2)=0$ then the two spaces are isomorphic. To see why, notice that by definition and  a `gluing' argument we can find a space $(\Y,\sfd_\Y)$ and isometric embeddings $\iota_1:\X_1\to\Y$ and $\iota_{2,n}:\X_2\to\Y$, $n\in\N$, with $(\iota_{2,n})_*\mm_2\weakto(\iota_1)_*\mm_1$.  To conclude it is sufficient to show that some subsequence $(\iota_{2,n_k})$ has a pointwise limit $\iota_2$, as then clearly $\iota_2$ is an isometry with $(\iota_2)_*\mm_2=(\iota_1)_*\mm_1$.  By equicontinuity, it suffices to find $(n_k)$ so that $k\mapsto \iota_{2,n_k}(x)$ has a limit for any $x$ belonging to some dense subset of $\X_2$. Fix $\eps>0$ and  use the tightness of $\{(\iota_{2,n})_*\mm_2:n\in\N\}$ to find $K\subset\Y$ compact with $((\iota_{2,n})_*\mm_2)(K)>1-\eps$ for every $n$. Then the set $A:=\cap_n\cup_{j\geq n}\iota_{2,j}^{-1}(K)$ of points $x$ so that $\iota_{2,n}(x)\in K$ for infinitely many $n$'s is so that $\mm_2(A)=\lim_n\mm_2(\cup_{j\geq n}\iota_{2,j}^{-1}(K))\geq \lims_n \mm_2(\iota_{2,n}^{-1}(K))=1-\eps$ (this is the Borel-Cantelli lemma) and the claim follows after suitable diagunalizations.}  mGH-convergence
 and, from the analogous properties of $\mathbb K$, that it is complete and separable. 

To understand  compactness we introduce the notion of $n$-space: $(\X,\sfd,\mm)$ is an $n$-space, $n\in\N$, provided both the cardinality and the diameter of $\X$ are $\leq n$. Since $\mathbb D$ is complete, finitely supported measures are weakly dense in $\pr(\X)$ and for fixed $n$ the collection of $n$-spaces is - rather trivially  - mGH-compact, we have the following (simple variant of) \emph{Gromov's precompactness theorem}:
\[
\begin{split}
&\text{a collection $\mathcal X$ of normalized mm-spaces is mGH-relatively sequentially compact iff} \\
&\text{$\forall \eps>0$ there is $n\in\N$ such that $\forall \X\in \mathcal X$ there is an $n$-space $\X'$ with $\mathbb D(\X,\X')<\eps$}.
\end{split}
\]
A sufficient condition for the above to hold is that $\mathcal X$ is made by spaces that are uniformly bounded and locally uniformly metrically doubling, the latter meaning that there are $R,N>0$ such that $B_{2r}(x)$ can be covered by $N$ balls of radius $r$ for any $r\in(0,R)$, $x\in\supp(\mm)\subset\X$ and $\X\in\mathcal X$. In turn, this latter condition is implied by local uniform doubling in the sense of measures, i.e.\ if there are $R,C>0$ such that 
\begin{equation}
\label{eq:doubmeas}
\text{there are $R,C>0\quad$ so that $\quad \mm(B_{2r}(x))\leq C\mm(B_r(x))\qquad \forall r\in(0,R)$, $x\in\X$}
\end{equation}
holds for every  $\X\in\mathcal X$.
It is then relevant to recall that the celebrated Bishop-Gromov inequality provides such uniform doubling on the class of Riemannian manifolds with uniform lower bound on the Ricci and upper bound on the dimension, as it implies that
\[
\frac{\vol(B_r(x))}{\vol(B_R(x))}\geq \frac{{\rm v}_{K,N}(r)}{{\rm v}_{K,N}(R)}\qquad\forall x\in M, \ 0< r\leq R
\]
provided the Riemannian manifold $M$ has ${\rm Ric}\geq K$ and $\dim\leq N$. Here ${\rm v}_{K,N}(r)$ is equal to $\omega_N\int_0^r({\sf s}_{K,N}(t))^{N-1}\,\d t$,  with $\omega_N=\frac{\pi^{ N/2}}{\int_0^\infty t^{N/2}e^{-t}\,\d t}$ and
\[
{\sf s}_{K,N}(t):=\left\{\begin{array}{ll}
\sqrt{\tfrac{N-1}K}\sin(t\sqrt{\tfrac K{N-1}}),&\qquad\text{ if }K>0,\\
t,&\qquad\text{ if }K=0,\\
\sqrt{\tfrac{N-1}{|K}}\sinh(t\sqrt{\tfrac {|K|}{N-1}}),&\qquad\text{ if }K<0.
\end{array}\right.
\]
In particular, for  $N\in\N$ we have that $\omega_N$ is the volume of the unit ball in $\R^N$ and ${\rm v}_{K,N}(r)$  the one of the ball of radius $r$ in the space form (=sphere/$\R^N$/hyperbolic space depending on the sign of $K$) of dimension $N$ and  Ricci curvature $K$.

All this discussion tells us that the collection of Riemannian manifolds with   ${\rm Ric}\geq K$ and $\dim\leq N$ is precompact w.r.t.\ mGH-convergence.

\begin{remark}{\rm In line with Remark \ref{re:topgamma}, these considerations about compactness and the rigidity \eqref{eq:rigtop} lead to the conclusion that the topology of mGH-convergence is the most natural weak topology on uniformly bounded and doubling metric measure spaces, for any given doubling constants and diameter bound.

It is then worth to recall that if one is not willing to impose any restriction on the class of spaces beside that of being normalized, then a different convergence, still introduced by Gromov, exists. It is called convergence in concentration and is very much related to the phenomenon of concentration of measures: the typical example of sequence converging in this sense but not in mGH-sense is that of the spheres $S^n$ with normalized volume, that converges in concentration to the one point space\footnote{heuristically, this is justified by the follows fact: any sequence $f_n:S^n\to\R$ of 1-Lipschitz maps with zero mean satisfies $(f_n)_*\mm_n\weakto\delta_0$, where $\mm_n$ is the normalized volume measure. Thus in some sense 1-Lipschitz functions on $S^n$ with $n\gg1$ are almost constant, much like Lipschitz functions on the one point space.}, but has no mGH-limit.

Convergence in concentration is induced by a distance, is weaker than mGH-convergence and coincides with this one on any given class of uniformly bounded and doubling metric measure spaces. The collection of all normalized mm spaces is relatively compact w.r.t.\ convergence in concentration: the compactification is described in terms of so-called `pyramids of mm spaces' and include `extended' spaces, i.e.\ spaces where the  distance can take the value $+\infty$ (see e.g.\ \cite{Shi16}, \cite{Shi17}). In principle, nothing excludes that the results collected here about mGH-convergence of ({\sf R})$\CD$ spaces can be extended to convergence in concentration: understanding up to what extent this is actually possible is a currently active research area (see e.g.\ \cite{OzaShi14}, \cite{OzaYok19}, \cite{KazShi20}, \cite{NakShi21}, \cite{NakShi21-2}, \cite{KazYok21}, \cite{KazYok22}).
}\fr\end{remark}
\begin{quote}
{\bf Why metric \emph{measure} spaces?}
\end{quote}

The reason for considering metric measure spaces, as opposed to  metric spaces, that are the natural category where to study lower/upper sectional curvature bounds, has to do with the properties of Ricci curvature. Informally, but without cheating, we can observe that Ricci curvature is defined as the trace of the sectional curvature and the `trace' operation is conceptually close to that of `averaging/integrating', which therefore involves some underlying measure. Also, many key inequalities one can deduce from lower Ricci bound involve not only the distance, but also the measure (e.g.\ Bishop-Gromov, Sobolev/Poincar\'e inequalities etc.). One might in principle think the relevant measure to be a function of the distance (just let it to be the Hausdorff measure of appropriate dimension), but in doing so one might run into troubles when studying stability properties. The problem is that if we pick a sequence of Riemannian manifolds with Ricci curvature uniformly $\geq K$, regard them as metric spaces and look at their limit in the Gromov-Hausdorff sense, then the limit, even if smooth, might fail to have Ricci curvature  $\geq K$. On the other hand if we regard them as metric-measure spaces and consider their limit in the measured-Gromov-Hausdorff sense, then the limit space, if smooth, is a weighted manifold (i.e.\ a manifold equipped with a measure $\mm:=e^{-V}\vol$ possibly different from the volume one) and its weighted Ricci curvature
\begin{equation}
\label{eq:BEricci}
\Ric_\mm:=\Ric+\Hess\, V
\end{equation}
is  $\geq K$\footnote{for an explicit example of this phenomenon consider the sequence of spherical suspensions with cross section $\tfrac1n S^2$. These all have $\Ric\geq 2$ (at least far from the tips - at the tips one can either regularize the manifolds or use the results in \cite{Ketterer13} to directly deal with singular spaces) and converge as metric spaces to the flat interval $[0,\pi]$, which certainly has not $\Ric\geq 2$. On the other hand, keeping track of the measures we see that after normalization these spaces converge to the interval equipped with the measure   $\sin^2(t)\d t=e^{-(-2\log\sin(t))}\d t$ and by direct computation we see that $-2\log\sin(t)''\geq 2$, so that the weighted Ricci curvature of the limit interval is $\geq 2$.}. To see why the Ricci curvature should be affected by the choice of the measure, notice, beside the above, that it can be defined as the tensor for which the Bochner identity
\[
\tfrac12\Delta {|\d f|^2}=|\Hess f|^2_{\HS}+\la\d f,\d\Delta f\ra+\Ric(\nabla f,\nabla f)
\]
holds and in turn the choice of the measure affects the notion of Laplacian because $\Delta f$ is (or can be) defined via integration by parts as $\int g\Delta f\,\d\mm=-\int\la\d f,\d g\ra\,\d\mm$ for all $g$'s sufficiently smooth.

\subsection{How these interact} Say that we have a sequence $(\X_n,\sfd_n,\mm_n)$ that is mGH-converging to $(\X_\infty,\sfd_\infty,\mm_n)$, with $(\Y,\sfd_\Y,(\iota_n))$ being a realization of such convergence (in principle the choice of the realization affects the notions of convergences we are going to discuss, but in reality this is not really the case, see Remark \ref{re:nondipende}). Then we can say that a sequence $n\mapsto x_n\in\X_n $, that in principle is made of points belonging to different spaces, is converging to $x_\infty\in\X_\infty$ provided $\iota_n(x_n)\to \iota_\infty(x_\infty)$ in the space $\Y$. Now suppose we are also given functionals $\E_n:\X_n\to\bar\R$, $n\in\N\cup\{\infty\}$. Then we can say that $(\E_n)$ $\Gamma$-converges to $\E_\infty$ provided the relations \eqref{eq:glimeq} hold w.r.t.\ this notion of convergence of points.

This is the same as saying that $\hat\E_n\stackrel{\Gamma}\to\hat\E_\infty$ as functionals on $\Y$, where $\hat\E_n:\Y\to\bar\R$ is defined as $\hat \E_n(y)=\E_n(x)$ if $y=\iota_n(x)$ and $\hat\E_n(y)=+\infty$ if $y\notin\iota_n(\X_n)$.

It is worth to emphasize that since in general the sets $\iota_n(\X_n)$ are all disjoint, what makes this concept non-trivial is the combination  of the property \eqref{eq:vismgh} of mGH-convergence and \eqref{eq:linkdomini} of $\Gamma$-convergence (for the sake  of comparison: one could in principle study the pointwise limit of the $\hat\E_n$'s, but this will typically be identically $+\infty$).

\medskip

In practice, we will never consider convergence of functionals defined on the base spaces $\X_n$, but rather on some space built over them,  such as that of probability measures, of $L^2$ functions or $L^2$ tensors: this will make sense as soon as we shall use the concept of mGH-convergence of the $\X_n$'s to give a meaning to convergence of  measures/functions/tensors defined on different spaces.

Let us start with the case of measures:
\begin{definition}[Weak convergence of measures in varying spaces] Let $(\X_n,\sfd_n,\mm_n)\stackrel{mGH}\to(\X_\infty,\sfd_\infty,\mm_\infty)$ and $(\Y,\sfd_{\Y},(\iota_n))$ be a realization of it. Also, let $\mu_n\in\pr(\X_n)$, $n\in \N\cup\{\infty\}$. 

We  say that $\mu_n\weakto \mu_\infty$ if  $(\iota_n)_*\mu_n\weakto(\iota_\infty)_*\mu_\infty$, that is to say  if
\[
\int \varphi\circ\iota_n\,\d\mu_n\quad\to\quad\int \varphi\circ\iota_\infty\,\d\mu_\infty\qquad\forall \varphi\in C_\b(\Y).
\]
\end{definition}
With this concept at hand we can define $\Gamma$-convergence of functionals  $\E_n:\pr(\X_n)\to\bar \R$ w.r.t.\ weak convergence of measures: we say that $\E_n\stackrel{\Gamma}\to\E_\infty$ if the relations \eqref{eq:glimeq} hold, being understood that the convergence of points (i.e.\ measures in the current case) is the weak convergence just defined. Notice that, as before, this is the same as requiring the $\Gamma$-convergence of $(\hat\E_n)$ to $\hat\E_\infty$ in $\pr(\Y)$, where $\hat\E_n(\mu):=\E_n(\nu)$ if $\mu=(\iota_n)_*\nu$ for some $\nu\in\pr(\X_n)$ and $\hat\E_n(\mu):=+\infty$ if $\mu\notin(\iota_n)_*\pr(\X_n)$.

A typical functional on measures we shall consider is the Boltzmann-Shannon relative entropy: given $(\X,\sfd,\mm)$ we define $\ent:\pr(\X)\to[0,+\infty]$ as
\[
\ent(\mu):=\left\{\begin{array}{ll}
\displaystyle{\int\rho\log\rho\,\d\mm},&\qquad\text{ if }\mu=\rho\mm,\\
+\infty,&\qquad\text{ if }\mu\not\ll\mm.
\end{array}
\right.
\]
We then have the following result:
\begin{theorem}\label{thm:gammamgh}  $(\X_n,\sfd_n,\mm_n)\stackrel{mGH}\to(\X_\infty,\sfd_\infty,\mm_\infty)$ if and only if $\entm{\mm_n}\stackrel{\Gamma}\to \entm{\mm_\infty} $.

More precisely, given normalized spaces $(\X_n,\sfd_n,\mm_n)$, $n\in\N$, and isometric embeddings $\iota_n$ of these in a common complete and separable space $(\Y,\sfd_\Y)$ we have:
%
\[
\text{ $(\iota_n)_*\mm_n\weakto (\iota_\infty)_*\mm_\infty\qquad$ if and only if $\qquad\entm{(\iota_n)_*\mm_n}\stackrel{\Gamma}\to \entm{(\iota_\infty)_*\mm_\infty}$,}
\]
the $\Gamma$-convergence being w.r.t.\ the weak convergence of measures in $\pr(\Y)$.
\end{theorem}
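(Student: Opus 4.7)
My plan is to work with the more precise formulation, namely in a common ambient space $(\Y,\sfd_\Y)$: I want to show that, for $\mm_n,\mm_\infty \in \pr(\Y)$, weak convergence $\mm_n \weakto \mm_\infty$ is equivalent to $\Gamma$-convergence $\entm{\mm_n} \stackrel{\Gamma}{\to} \entm{\mm_\infty}$ on $\pr(\Y)$ with its weak topology. The forward $\Gamma$-liminf rests on the Donsker--Varadhan dual representation
\[
\entm{\mm}(\mu)\ =\ \sup_{f\in C_\b(\Y)}\Big(\int f\,\d\mu\ -\ \log\int e^f\,\d\mm\Big),
\]
valid on all of $\pr(\Y)$ (with the supremum returning $+\infty$ on measures not absolutely continuous w.r.t.\ $\mm$). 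Since $f$ and $e^f$ are bounded continuous, for $\mu_n \weakto \mu_\infty$ and any fixed $f \in C_\b(\Y)$ both $\int f\,\d\mu_n \to \int f\,\d\mu_\infty$ and $\int e^f\,\d\mm_n \to \int e^f\,\d\mm_\infty$, so taking $\limi_n$ and then $\sup_f$ yields $\entm{\mm_\infty}(\mu_\infty) \leq \limi_n \entm{\mm_n}(\mu_n)$.

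For the $\Gamma$-limsup at a target $\mu_\infty = \rho\,\mm_\infty$ with $\entm{\mm_\infty}(\mu_\infty) < \infty$, I would first approximate $\rho$ in a strong sense: pick $\rho^k \in C_\b(\Y)$ with $\rho^k \geq c_k > 0$ and
\[
\int \rho^k \log \rho^k\,\d\mm_\infty\ \xrightarrow{k\to\infty}\ \int \rho \log \rho\,\d\mm_\infty.
\]
This is standard: truncate $\rho$ at level $M$, approximate the truncation by $C_\b$ functions in $L^1(\mm_\infty)$ (with an additive $1/k$ to ensure positivity), and let $M \to \infty$, using the already-obtained lower semicontinuity for one inequality and, for the other, dominated convergence after bounding $|r\log r|$ above in terms of $|\rho\log\rho|$ on the relevant level sets. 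Then set $Z_n^k := \int \rho^k\,\d\mm_n$ and $\mu_n^k := (Z_n^k)^{-1}\rho^k\mm_n$. For each fixed $k$: $Z_n^k \to Z_\infty^k := \int \rho^k\,\d\mm_\infty > 0$, $\mu_n^k \weakto (Z_\infty^k)^{-1}\rho^k\mm_\infty$, and (since $r\log r$ is bounded continuous on the range of $\rho^k/Z_n^k$ for $n$ large) $\entm{\mm_n}(\mu_n^k) \to \entm{\mm_\infty}((Z_\infty^k)^{-1}\rho^k\mm_\infty)$. A diagonal extraction $k = k(n)\to\infty$ then produces the recovery sequence $\mu_n := \mu_n^{k(n)} \weakto \mu_\infty$ with $\entm{\mm_n}(\mu_n) \to \entm{\mm_\infty}(\mu_\infty)$.

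For the backward implication, I would exploit that $\entm{\mm_n}$ attains its minimum $0$ uniquely at $\mu = \mm_n$ (by strict convexity of $r\log r$ and Jensen), and likewise for $\mm_\infty$. Applying the recovery-sequence property to the target $\mm_\infty$ yields $\mu_n \in \pr(\Y)$ with $\mu_n \weakto \mm_\infty$ and $\entm{\mm_n}(\mu_n) \to 0$. Pinsker's inequality
\[
\|\mu_n - \mm_n\|_{\rm TV}^2\ \leq\ 2\,\entm{\mm_n}(\mu_n)
\]
then forces $\|\mu_n - \mm_n\|_{\rm TV}\to 0$, and since total variation convergence is stronger than weak convergence, $\mm_n$ shares the weak limit of $\mu_n$, giving $\mm_n \weakto \mm_\infty$. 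I expect the genuine work to lie in the recovery-sequence construction, specifically the controlled approximation of an arbitrary finite-entropy density $\rho$ by bounded continuous densities $\rho^k$ with converging entropies: the delicate point is the \emph{upper} bound on entropies, which requires a uniform integrability argument to rule out mass escaping to regions where $\rho$ is either very small or very large. Once that approximation is in hand, the liminf from duality and the backward direction from Pinsker are essentially mechanical.
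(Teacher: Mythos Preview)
Your argument is correct. The forward direction (mGH $\Rightarrow$ $\Gamma$) is essentially the paper's: the Donsker--Varadhan dual you use and the paper's dual $\entm{\mm}(\mu)=\sup_{\varphi}\int\varphi\,\d\mu-\int e^{\varphi-1}\,\d\mm$ are equivalent variational formulas for the relative entropy, and both yield the $\Gamma$-liminf as a supremum of weakly continuous functionals; the $\Gamma$-limsup construction via bounded continuous approximants of the density and diagonalization is identical in spirit.

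The backward direction ($\Gamma$ $\Rightarrow$ mGH) is where you genuinely diverge from the paper. The paper tests weak convergence of $\mm_n$ against an arbitrary ``nice'' density $\rho$ (with $\rho,\log\rho\in C_\b$) by evaluating the dual formula at its optimizer $\varphi=\log\rho+1$, so that the recovery sequence for $\rho\mm_\infty$ together with the dual inequality forces $\limi_n\int\rho\,\d\mm_n\geq\int\rho\,\d\mm_\infty$; a symmetry trick gives the reverse inequality. Your route is cleaner: apply the recovery sequence directly at the entropy minimizer $\mm_\infty$, then invoke Pinsker to pin $\mm_n$ to $\mu_n$ in total variation. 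Your argument is shorter and uses only that entropy controls total variation; the paper's argument stays entirely within the duality framework and avoids importing Pinsker, which makes it marginally more self-contained but also more indirect.
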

\begin{idea} Identify the spaces $\X_n$ with their isometric image in $\Y$. \ \\
\noindent\underline{mGH$\Rightarrow\Gamma$} Let $u(z):=z\log z$ and $u^*(w):=\sup_zzw-u(z)=e^{w-1}$ be its Fenchel-Legendre transform. Then one  proves 
\begin{equation}
\label{eq:dualent}
\entm\mm(\mu)=\sup_{\varphi\in C_\b(\X)}\int \varphi\,\d\mu-\int u^*\circ\varphi\,\d\mm,
\end{equation}
where the inequality $\geq$ follows by the dual identity $u(z)=\sup_wzw-u^*(w)$ and the inequality $\leq $ by approximating the optimal choice $\varphi:=\log\rho+1$ with bounded continuous functions. Then one notices that for $\varphi\in C_\b(\X)$ we have $ u^*\circ\varphi\in C_\b(\X)$ as well, so that the RHS of the above is weakly continuous in both $\mu$ and $\mm$. This implies  the joint lower semicontinuity in $\mu,\mm$ of $\entm\mm(\mu)$, which is the $\Gamma-\limi$ condition.

For the  $\Gamma-\lims$ it is sufficient to consider the case of $\mu_\infty\in \pr(\X_\infty)$ with finite entropy, say in particular $\mu_\infty=\rho\mm_\infty$. With an approximation argument we can find $\rho_k\in C_\b(\Y)$ non-negative so that $\rho_k\mm_\infty$ are probability measures with $\rho_k\mm_\infty\weakto\mu_\infty$ and $\entm{\mm_\infty}(\rho_k\mm_\infty)\to\entm{\mm_\infty}(\mu)$. Then for every $k$ the measures $c_{k,n}\rho_k\mm_n$, with $c_{k,n}$ normalizing constant, are a recovery sequence for  $\rho_k\mm_\infty$. The conclusion follows by diagonalization.

\noindent\underline{$\Gamma\Rightarrow$mGH} Let $\mu_\infty=\rho\mm_\infty$ be so that both $\rho$ and $\varphi:=\log\rho+1$ are in  $C_\b(\X_\infty)$. Then   $\entm{\mm_\infty}(\mu_\infty)=\int \varphi\,\d\mu_\infty-\int \rho\,\d\mm_\infty$ and for  $(\mu_n)$  recovery sequence, by  \eqref{eq:dualent} we get
\[
\lims_n\int \varphi\,\d\mu_n-\int \rho\,\d\mm_n\leq\lims_n\entm{\mm_n}(\mu_n)\leq\entm{\mm_\infty}(\mu_\infty)=\int \varphi\,\d\mu_\infty-\int \rho\,\d\mm_\infty.
\]
Since $\mu_n\weakto\mu_\infty$ it follows that $\limi_n\int \rho\,\d\mm_n\geq \int \rho\,\d\mm_\infty$. Replacing $\rho$ with (the normalization of) $\|\rho\|_{\infty}+1-\rho$ and using the arbitrariness of $\rho$ we conclude.
\end{idea}
There is nothing special about the relative entropy, and thus about the function $u(z)=z\log z$ in the above result: analogue statements hold for other choices of strictly convex  $u$.
\begin{remark}\label{re:nondipende}{\rm The choice of the realization $(\Y,\sfd_\Y,(\iota_n))$ of the convergence affects the notion of `converging sequence of points' and the subsequent one  of `converging sequence of measures'. Thus in principle it might affect the $\Gamma$-limit one is considering. In practice, however, this does not happen and the situation as in Theorem \ref{thm:gammamgh} above is typical: the $\Gamma$-limit is independent on the realization. It is so because in presence of two different realizations and two different $\Gamma$-limits $\E$ and $\E'$, there typically  is an automorphism of the limit space $(\X_\infty,\sfd_\infty,\mm_\infty)$ sending $\E$ in $\E'$.
}\fr\end{remark}
I shall discuss later on convergence of functions and tensors in varying spaces. In the rest of this section I present a purely metric analogue of the result above which, although irrelevant for our purposes, is interesting on its own.

Recall that a curve $\gamma:[0,1]\to\X$ on a metric space $\X$ is absolutely continuous if there is $f\in L^1([0,1])$ such that
\begin{equation}
\label{eq:defac}
\sfd(\gamma_t,\gamma_s)\leq\int_t^sf(r)\,\d r\qquad\forall t,s\in[0,1],\ t<s.
\end{equation}
Clearly, for any such $f$ we have $\lims_{h\to0}\frac{\sfd(\gamma_{t+h},\gamma_t)}{|h|}\leq f(t)$ for a.e.\ $t$. On the other hand, letting $(x_n)\subset\X$ be dense and putting $g_n(t):=\sfd(\gamma_t,x_n)$, we see that the $g_n$'s are equi-absolutely continuous and the identity $\sfd(\gamma_t,\gamma_s)=\sup_n g_n(s)-g_n(t)\leq \int_t^s\sup_ng_n'(r)\,\d r$ shows that the choice $\sup_ng_n'(t)$ is admissible in \eqref{eq:defac} (and clearly $\leq f$ for a.e.\ $t$). Since $\limi_{h\to 0}\frac{\sfd(\gamma_{t+h},\gamma_t)}{|h|}\geq\limi_{h\to 0}\frac{|g_n(t+h)-g_n(t)|}{|h|}=|g_n'|(t)$ for any $n$ and a.e.\ $t$, we conclude that 
\begin{equation}
\label{eq:metrsp}
\text{the  \emph{metric speed of $\gamma$}, defined as the limit }\  |\dot\gamma_t|:=\lim_{h\to 0}\frac{\sfd(\gamma_{t+h},\gamma_t)}{|h|},\ \text{ exists for a.e.\ $t$}
\end{equation}
and is the least, in the a.e.\ sense, admissible function $f$ in \eqref{eq:defac}. 

Having a notion of metric speed allows to introduce the kinetic energy functional. Let $C([0,1],\X)$ be the space of continuous curves on $\X$ equipped with the `sup' distance and notice that it is complete and separable as soon as $\X$ is so. Then the kinetic energy ${\sf KE}:C([0,1],\X)\to[0,\infty]$ is defined  as
\[
{\sf KE}(\gamma):=\left\{\begin{array}{ll}
\displaystyle{\tfrac12\int_0^1 |\dot\gamma_t|^2\,\d t},&\qquad\text{ if $\gamma$ is absolutely continuous},\\
+\infty,&\qquad\text{ otherwise.}
\end{array}
\right.
\]
A \emph{geodesic} is a curve $\gamma$ such that $\frac12\sfd^2(\gamma_0,\gamma_1)={\sf KE}_{\X}(\gamma)$ (notice that the inequality $\leq$ always holds) or, equivalently, such that
\begin{equation}
\label{eq:defgeod}
\sfd(\gamma_t,\gamma_s)=|s-t|\sfd(\gamma_0,\gamma_1)\qquad\forall t,s\in[0,1].
\end{equation}
$(\X,\sfd)$ is called geodesic if for any two given points there is a geodesic connecting them.

\begin{definition}[Gromov-Hausdorff distance] The Hausdorff distance $\sfd_{H}(A,B)$ between two compact subsets $A,B$ of a metric space $\Y$ is defined as
\[
\sfd_H(A,B):=\inf\{r>0\ :\ A\subset B^r\quad\text{ and }\quad B\subset A^r\},
\] 
where $A^r:=\{y\in \Y:\sfd(y,A)<r\}$. 

Then the Gromov-Hausdorff distance between two compact metric spaces $(\X_0,\sfd_0)$, $(\X_1,\sfd_1)$ is defined as
\[
\sfd_{GH}(\X_0,\X_1):=\inf \sfd_H(\iota_0(\X_0),\iota_1(\X_1))
\]
where the infimum is taken among all compact spaces $(\Y,\sfd_\Y)$ and isometric embeddings $\iota_i$ of $\X_i$ in $\Y$, $i=0,1$.
\end{definition}
With a suitable `gluing' argument one can see that if $(X_n,\sfd_n)\stackrel{GH}\to (\X_\infty,\sfd_{\infty})$, then there is a compact space $(\Y,\sfd_\Y)$ and isometric embeddings $\iota_n:\X_n\to\Y$, $n\in \N\cup\{\infty\}$ so that $\sfd_H(\iota_n(X_n),\iota_\infty(X_\infty))\to 0$.

We have mentioned that $\Gamma$-convergence is related to convergence of infima and noticed that in geodesic spaces, by definition, the infimum of the kinetic energy among curves with fixed endpoints is the  squared distance. Because of this and the similarity with Theorem \ref{thm:gammamgh} above, the following result is perhaps not surprising:
\begin{theorem}\label{thm:ghgamma} Let $(\X_n,\sfd_n)$, $n\in \N\cup\{\infty\}$, be compact and geodesic. 

Then $(\X_n,\sfd_n)\stackrel{GH}\to(\X_\infty,\sfd_\infty)$ if and only if ${\sf KE}_{\X_n}\stackrel{\Gamma}\to {\sf KE}_{\X_\infty}$ w.r.t.\ uniform convergence.

More precisely, given $(\X_n,\sfd_n)$ as above, $(\Y,\sfd_\Y)$ compact and $\iota_n:\X_n\to\Y$ isometric embeddings, $n\in\N$, we have 
\[
\text{$\sfd_H(\iota_n(X_n),\iota_\infty(X_\infty))\to 0\qquad$ if and only if $\qquad\hat{\sf KE}_{\X_n}\stackrel{\Gamma}\to \hat{\sf KE}_{\X_\infty}$ in $C([0,1],\Y)$}
\]
(here $\hat{\sf KE}_{\X_n}(\gamma)$ is set to be ${\sf KE}_{\X_n}(\eta)$ if $\gamma=\iota_n\circ\eta$ for some $\eta\in C([0,1],\X_n)$ and $+\infty$ otherwise).
\end{theorem}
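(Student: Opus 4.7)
The plan is to identify each $\X_n$ with its isometric image $\iota_n(\X_n) \subset \Y$ and to exploit that, since $\iota_n$ is isometric, $\hat{\sf KE}_{\X_n}(\gamma) = \KE_\Y(\gamma)$ for every $\gamma \in C([0,1],\Y)$ with $\gamma([0,1]) \subset \iota_n(\X_n)$, while $\hat{\sf KE}_{\X_n}(\gamma) = +\infty$ otherwise; here $\KE_\Y$ denotes the kinetic energy computed using the ambient metric $\sfd_\Y$. This converts both directions into statements about a sequence of functionals on the single space $C([0,1],\Y)$. For GH $\Rightarrow \Gamma$, the $\glimi$ inequality \eqref{eq:glimi} follows from two observations: given $\gamma_n \to \gamma_\infty$ uniformly with $\limi_n \hat{\sf KE}_{\X_n}(\gamma_n) < +\infty$, after a subsequence each $\gamma_n$ lies in $\iota_n(\X_n)$, and the Hausdorff convergence of $\iota_n(\X_n)$ to the closed set $\iota_\infty(\X_\infty)$ forces $\gamma_\infty([0,1]) \subset \iota_\infty(\X_\infty)$; the classical lower semicontinuity of $\KE_\Y$ under uniform convergence in the fixed space $\Y$ then closes the argument.

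For the $\glims$ inequality \eqref{eq:glims}, I would build a recovery sequence by piecewise-geodesic interpolation. Given $\gamma_\infty = \iota_\infty \circ \eta_\infty$ with $\eta_\infty \in C([0,1],\X_\infty)$ absolutely continuous, partition $[0,1]$ into $N$ equal pieces with nodes $t_k := k/N$, set $p_k := \eta_\infty(t_k)$, and use the Hausdorff convergence to pick $p_k^n \in \X_n$ with $\iota_n(p_k^n) \to \iota_\infty(p_k)$ as $n\to\infty$. Using that $\X_n$ is geodesic, define $\eta_n^N : [0,1] \to \X_n$ as the constant-speed piecewise geodesic interpolation through $\{p_k^n\}$. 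A direct computation yields
\[
\KE_{\X_n}(\eta_n^N) \;=\; \tfrac{N}{2}\sum_k \sfd_n(p_k^n, p_{k+1}^n)^2 \;\xrightarrow[n\to\infty]{}\; \tfrac{N}{2}\sum_k \sfd_\infty(p_k, p_{k+1})^2,
\]
and Cauchy--Schwarz applied to the AC curve $\eta_\infty$ bounds the right-hand side above by $\KE_{\X_\infty}(\eta_\infty)$ uniformly in $N$. Uniform convergence $\iota_n \circ \eta_n^N \to \iota_\infty \circ \eta_\infty$ in $\Y$ is controlled by the modulus of continuity of $\eta_\infty$ on sub-intervals of length $1/N$ plus the vanishing nodewise error, and a standard diagonal extraction $n \mapsto N(n) \to \infty$ then delivers the recovery sequence.

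For the converse $\Gamma \Rightarrow$ GH, I would test the $\Gamma$-convergence on constant curves. For $x \in \X_\infty$ the constant curve $\gamma_\infty \equiv \iota_\infty(x)$ has $\hat{\sf KE}_{\X_\infty}(\gamma_\infty) = 0$, so $\glims$ produces $\gamma_n = \iota_n \circ \eta_n$ converging uniformly to it; evaluating at $t = 0$ gives $\sfd_\Y(\iota_n(\X_n), \iota_\infty(x)) \to 0$, and a finite cover of the compact set $\iota_\infty(\X_\infty)$ upgrades this to $\iota_\infty(\X_\infty) \subset \iota_n(\X_n)^r$ for large $n$. The opposite containment is obtained by contradiction: if some $y_n \in \iota_n(\X_n)$ stays uniformly away from $\iota_\infty(\X_\infty)$, compactness of $\Y$ yields a limit $y \notin \iota_\infty(\X_\infty)$, and the constant curves $\gamma_n \equiv y_n$---which have zero kinetic energy and converge uniformly to $\gamma \equiv y$---would force $\hat{\sf KE}_{\X_\infty}(\gamma) = 0$ by $\glimi$, contradicting $\gamma \not\subset \iota_\infty(\X_\infty)$. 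The most delicate point is the diagonalization in the $\glims$ construction: the rate $N(n) \to \infty$ must be chosen so that the nodewise errors $\sfd_\Y(\iota_n(p_k^n), \iota_\infty(p_k))$ vanish faster than the modulus of continuity of $\eta_\infty$ at scale $1/N$, while still yielding the full energy bound $\KE_{\X_\infty}(\eta_\infty)$ in the limit---a routine but non-trivial coordination.
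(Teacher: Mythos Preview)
Your proposal is correct and follows essentially the same route as the paper: identify the $\X_n$'s with their images in $\Y$, use lower semicontinuity of $\KE_\Y$ for the $\glimi$ inequality, piecewise-geodesic interpolation through approximating nodes for the $\glims$ inequality, and test on constant curves for the converse direction. The only cosmetic difference is that the paper obtains the lower semicontinuity of $\KE$ from the explicit formula $\KE(\gamma)=\sup_{\{t_i\}}\sum_i \tfrac{\sfd^2(\gamma_{t_i},\gamma_{t_{i+1}})}{2(t_{i+1}-t_i)}$, whereas you invoke it as a known fact on the fixed ambient space; and in the recovery step the paper's sketch picks a single fine partition rather than diagonalising over $N$, but the content is the same.
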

\begin{idea}Identify the spaces $\X_n$ with their isometric image in $\Y$. \ \\
\noindent\underline{GH$\Rightarrow\Gamma$} The key is the formula
\begin{equation}
\label{eq:keequiv}
{\sf KE}(\gamma)=\sup\sum_{i=0}^{n-1}\frac{\sfd^2(\gamma_{t_i},\gamma_{t_{i+1}})}{2(t_{i+1}-t_i)},
\end{equation}
the sup being taken among all $n$ and among all choices $0=t_0<\cdots<t_n=1$. Here $\geq$ is obvious, while $\leq$ follows noticing that if the RHS is finite, the functions $f_n:[0,1]\to\R^+$ defined as $n\sfd(\gamma_{\frac in},\gamma_{\frac {i+1}n})$ on $[\tfrac in,\tfrac{i+1}n]$ are uniformly bounded in $L^2$ and any weak limit $f$ is admissible in \eqref{eq:defac}. 

For any partition, the sum in the RHS of \eqref{eq:keequiv} is continuous w.r.t.\ uniform (or even pointwise) convergence of $\gamma$, thus ${\sf KE}$ is lower semicontinuous on $C([0,1],\Y)$ and the $\glimi$ inequality follows. For the recovery sequence, let $\gamma$ be such that $\hat{\sf KE}_{\X_\infty}(\gamma)<\infty$, pick a partition $(t_i)$ almost maximizing for the RHS of \eqref{eq:keequiv} and use the GH-convergence to find points $x_{i,n}\in \X_n$ converging to $\gamma_{t_i}$. Since the $\X_n$'s are geodesic we can find a piecewise geodesic curve connecting them and it is clear that such curve has kinetic energy equal to $\sum_{i=0}^{n-1}\frac{\sfd^2(x_{i,n},x_{i+1,n})}{2(t_{i+1}-t_i)}$. The claim follows.

\noindent\underline{$\Gamma\Rightarrow$GH} Via a compactness argument one can reduce to proving that
\[
\begin{split}
&\text{if $n\mapsto x_n\in\X_n$ converges to some $x_\infty$, we must have $x_\infty\in\X_\infty$,}\\
&\text{for any $x_\infty\in\X_\infty$ there is $n\mapsto x_n\in\X_n$ converging to it.}
\end{split}
\]
Both are trivial: for the first we pick the curves constantly equal to $x_n$, that have 0 energy and converge uniformly to the curve constantly $x_\infty$. Thus this limit curve has 0, hence finite, energy which forces $x_\infty\in\X_\infty$. For the second, for any recovery sequence $(\gamma_n)$ of the curve constantly $x_\infty$   we must eventually have  $\gamma_n([0,1])\subset\X_n$.
\end{idea}
\subsection{Bibliographical notes}
{\footnotesize{The concept of $\Gamma$-convergence has been introduced in \cite{DeGiorgiGamma}. Standard references for this topic are \cite{DalMaso93}, \cite{Braides02}, \cite{Foc12}.

The concept of convergence of metric spaces has been introduced in \cite{Gro81} (see also the more recent edition \cite{Gromov07}).  A standard reference for this topic is \cite{BBI01}. The relevance of the measure, in connection with spectral convergence under lower Ricci curvature bounds, has been realized in \cite{fukaya}. 

Notice that in the literature there are two \emph{different} competing concepts of `measured-Gromov-Hausdorff convergence', depending on weather one also insists on Hausdorff convergence of $\iota_n(\X_n)$ to $\iota_\infty(\X_\infty)$ inside the large space $\Y$, or one only cares about weak convergence of $(\iota_n)_*\mm_n$ to $(\iota_\infty)_*\mm_\infty$. The former approach has been proposed in \cite{fukaya}, but  in this presentation I chose the latter one, that comes from  \cite{Sturm06I}, where also the distance \eqref{eq:dtheo} has been introduced. Heuristic arguments in favour of this choice are:
\begin{itemize}
\item[-] What happens in regions with no mass should be irrelevant.
\item[-]  On non-compact setting  it seems unnatural to insist on Hausdorff  convergence (even if the spaces are  bounded).
\item[-]  If $(\mu_n)\subset\pr(\X)$ is a sequence converging to a Dirac delta $\delta_{\bar x}$, it seems natural to declare $(\X,\sfd,\mu_n)$ to be converging to the one point space, as the mass far from the point is vanishing. Not doing so would require considering the spaces $(\X,\sfd,\delta_{\bar x})$ and $(\X,\sfd,\delta_{\bar y})$ different for $\bar x\neq\bar y$, which  would be in contrast with the first point above.
\end{itemize}
Prior to \cite{Sturm06I}, the standard approach was instead to also insist on Hausdorff convergence and this has also been the choice adopted in \cite{Lott-Villani09} and \cite{Villani09}. Notice in any case that on locally doubling spaces (as in \eqref{eq:doubmeas}) the measure is always of full support and that if such doubling condition holds uniformly along a given sequence, then the two notions of convergences agree. This somehow explains why in connection with lower Ricci bounds and with geometric applications in mind, where finite dimensionality and thus the Bishop-Gromov inequality are present, the two concepts are rarely actively distinguished.

Even though $\Gamma$-convergence was not explicitly mentioned, the implication ``mGH-convergence implies $\Gamma$-convergence of the entropies" is THE key convergence statement proved in \cite{Lott-Villani09}  and  \cite{Sturm06I}. All the convergence results presented in this note (with the notable exception of Theorem \ref{thm:glimsch}) depend on this one. The converse implication appears here for the first time:   I doubt it will ever be useful, but I think it provides an interesting link between these two seemingly different concepts of convergence.  

The concept of metric speed and all the properties recalled around the claim \eqref{eq:metrsp} come from \cite{Ambr90}. Theorem \ref{thm:ghgamma}  appears here for the first time, but at least the $\glimi$ inequality, i.e.\ the lower semicontinuity of the kinetic energy, is a well established fact in metric geometry (and the $\glims$ is trivial).

Theorem \ref{thm:gammamgh} is reminiscent of an analogous result obtained by Mariani in \cite{mariani} linking ``$\Gamma$-convergence of the rescaled entropies $\frac1n\entm{\mu_n}$ to $\int I\,\d\cdot$" to ``the measures $(\mu_n)$ obey a Large Deviation Principle with rate function $I$" (compare also \eqref{eq:Gammatop} with the definition of Large Deviation Principle).
}
}

\section{The role of optimal transport in the stability issue}\label{ch:stab1}
\subsection{The Curvature Dimension condition}
I start recalling few basic facts about Optimal Transport.

For $(\X,\sfd)$ complete and separable we denote by $\prd(\X)\subset\pr(\X)$ the collection of measures $\mu$ with finite second moment, i.e.\ such that $\int\sfd^2(x,\cdot)\,\d\mu<\infty$ for some, and thus any, $x\in\X$. For $\mu,\nu\in\prd(\X)$ the quadratic Kantorovich-Wasserstein distance $W_2$ is defined as
\begin{equation}
\label{eq:defw2}
W_2^2(\mu,\nu):=\min\int \sfd^2(x,y)\,\d\ggamma(x,y),
\end{equation}
the minimum being among all $\ggamma\in\pr(\X^2)$ with $\pi^1_*\ggamma=\mu$ and $\pi^2_*\ggamma=\nu$. $(\prd(\X),W_2)$ is complete and separable (and geodesic) if $(\X,\sfd)$ is complete and separable (and geodesic). If $\X$ is bounded, $W_2$ metrizes the weak convergence.  The distance $W_2$ admits the  dual formulation
\begin{equation}
\label{eq:dualw2}
\tfrac12W_2^2(\mu,\nu)=\sup\int\varphi\,\d\mu+\int\varphi^c\,\d\nu\qquad\text{ where }\qquad \varphi^c(y):=\inf_{x}\tfrac{\sfd^2(x,y)}2-\varphi(x)
\end{equation}
the sup being taken among all $\varphi:\X\to\R$ bounded and Lipschitz\footnote{a maximizer $\varphi$ exists in the class of unbounded and non-Lipschitz functions. Replacing such $\varphi$ with $\varphi\wedge n$ for $n\gg1$ we get a maximizing sequence (because $(\varphi\wedge n)^c\geq\varphi^ c$ and $\int \varphi\wedge n\,\d\mu\uparrow\int \varphi\,\d\mu$ by monotone convergence) made of functions bounded from above. Then it is easy to see that $\varphi_n:=(n\wedge (\varphi\wedge n)^c)^c$ is still a maximizing sequence and that each $\varphi_n$ is bounded and Lipschitz.}. Finally, an admissible plan $\ggamma$ in \eqref{eq:defw2} is optimal if and only if there is a maximizer $\varphi$ in \eqref{eq:dualw2} with $\varphi=\varphi^{cc}$ so that $\supp(\ggamma)\subset\partial^c\varphi$, where $\partial^c\varphi:=\{(x,y):\varphi(x)+\varphi^c(y)=\tfrac12\sfd^2(x,y)\}$. Such maximizers $\varphi$ are called \emph{Kantorovich potentials} for $(\mu,\nu)$.
\begin{definition}\label{def:cdki}
A functional $\E:\X\to\R\cup\{+\infty\}$ is called $K$-geodesically convex (or simply $K$-convex) if for any $x,y\in D(\E)$ there is a geodesic $\gamma$ connecting them with 
\begin{equation}
\label{eq:defkconv}
\E(\gamma_t)\leq(1-t)\E(\gamma_0)+t\E(\gamma_1)-\tfrac12Kt(1-t)\sfd^2(\gamma_0,\gamma_1)\qquad\forall t\in[0,1].
\end{equation}
\end{definition}
The following is a crucial result for this note, as it justifies all what comes next. It has been proved in \cite{vRS05} (see also \cite{OV00} and \cite{CEMCS01}); I omit the proof, that is based on Jacobi fields analysis and a  precise description of $W_2$-geodesics.
\begin{theorem}\label{thm:sturmvonrenesse} Let $M$ be a smooth, connected, complete Riemannian manifold without boundary, $\sfd$ the induced distance and $\vol$ the volume measure. Then the following are equivalent:
\begin{itemize}
\item[i)] The Ricci curvature of $M$ is  $\geq K$ (i.e.\ $\Ric(v,v)\geq K|v|^2$ for any $v\in T_xM$ and $x\in M$)
\item[ii)] The relative entropy $\entm\vol$ is $K$-geodesically convex on $(\prd(M),W_2)$\footnote{I haven't discussed the definition of entropy for reference measures with infinite mass: this is potentially troubling as both the integral of both the positive and negative parts of $\rho\log\rho$ could be $+\infty$. A way to solve the problem is to define the entropy only on measures with bounded support, that form a convex set in $(\prd(M),W_2)$ and for which the contribution of the negative part is always finite. Then if $(ii)$ holds one obtains suitable volume growth estimates on $\vol$ (see \cite[Theorem 4.26]{Sturm06I}) that in turn ensure that the entropy is well defined and lower semicontinuous on the whole $(\prd(M),W_2)$ (see e.g.\ \cite[Section 4.1.1]{GMS15}). Still, in this note I will restrict the attention to normalized spaces for simplicity.}.
\end{itemize}
\end{theorem}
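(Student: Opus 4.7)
\begin{idea}
The plan is to reduce the statement about convexity of a global functional on Wasserstein space to a pointwise statement about Ricci curvature via a careful analysis of $W_2$-geodesics in terms of the geodesic flow on $M$.

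\textbf{Preliminaries on $W_2$-geodesics.} First I would recall (or develop) the Brenier--McCann representation of optimal transport on Riemannian manifolds: given $\mu_0 = \rho_0 \vol$, $\mu_1 \in \prd(M)$, there is a $c$-concave Kantorovich potential $\varphi$ (with $c=\sfd^2/2$) such that the unique $W_2$-geodesic $(\mu_t)$ is given by $\mu_t = (T_t)_*\mu_0$, where $T_t(x) := \exp_x(-t\nabla\varphi(x))$. Since $T_t$ is essentially injective for $t\in[0,1)$, letting $\rho_t$ be the density of $\mu_t$ w.r.t.\ $\vol$, the change-of-variables formula gives $\rho_0(x) = \rho_t(T_t(x))\, J_t(x)$, where $J_t(x) := \det(DT_t(x))$ is the Jacobian determinant (computed in orthonormal frames along the geodesic $t\mapsto T_t(x)$). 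Substituting into the entropy yields
\[
\entm\vol(\mu_t) = \int \log\!\Big(\frac{\rho_0(x)}{J_t(x)}\Big)\,\d\mu_0(x) = \entm\vol(\mu_0) - \int \log J_t(x)\,\d\mu_0(x).
\]
Thus $K$-convexity of $\entm\vol$ along $(\mu_t)$ is reduced to showing, for $\mu_0$-a.e.\ $x$, that $t\mapsto -\log J_t(x)$ is $K\,\sfd^2(x,T_1(x))^2$-convex.

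\textbf{From Jacobi fields to Ricci curvature (i)$\Rightarrow$(ii).} The Jacobian $J_t(x)$ can be written as $\det(A(t))$ where $A(t)$ is the matrix of Jacobi fields along $s\mapsto T_s(x)$ with initial condition $A(0)=\mathrm{Id}$ and $A'(0) = -\Hess\varphi(x)$. Letting $U(t) := A'(t)A(t)^{-1}$, a direct computation using the Jacobi equation gives the matrix Riccati equation $U' + U^2 + R = 0$, where $R$ is the tidal tensor (sectional curvature operator) along the geodesic. Taking traces and using $\mathrm{tr}(U^2)\geq \frac{(\mathrm{tr}\,U)^2}{n}\geq 0$ together with $\mathrm{tr}\,R = \Ric(\dot\gamma,\dot\gamma)$, one obtains
\[
(\log J_t)'' = (\mathrm{tr}\,U)' = -\mathrm{tr}(U^2) - \Ric(\dot\gamma,\dot\gamma) \leq -\Ric(\dot\gamma,\dot\gamma) \leq -K|\dot\gamma|^2 = -K\sfd^2(x,T_1(x)).
\]
Integrating in $x$ against $\mu_0$ gives the desired $K$-convexity of $\entm\vol$ along $(\mu_t)$.

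\textbf{Localization argument (ii)$\Rightarrow$(i).} The hard direction, and the main obstacle, is running the above computation in reverse: I need to extract pointwise curvature information from a global inequality involving arbitrary measures. The strategy is to fix $x_0 \in M$ and $v\in T_{x_0}M$ of unit length and design, for small $\eps,\delta>0$, a pair of probability measures concentrated in a small ball around $x_0$ whose optimal transport is, to leading order, translation in the direction $v$. Concretely, take $\mu_0^\eps$ to be the normalized $\vol$ on a small ball $B_\eps(x_0)$ and $\mu_1^{\eps,\delta}$ the push-forward of $\mu_0^\eps$ under $\exp_{\cdot}(\delta v(\cdot))$ where $v(\cdot)$ is a smooth extension of $v$ chosen so that the induced map is optimal at scale $\eps,\delta$ (e.g., the gradient of a smooth function approximating a linear one). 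Computing both sides of \eqref{eq:defkconv} to order $\delta^2$ via the same Jacobi field analysis as above, the entropy contribution reduces to an integral of $\Ric(v,v)$ on the small ball, while the right-hand side contributes $\tfrac12 K\delta^2$. Letting first $\eps\to 0$ and then $\delta\to 0$, one extracts $\Ric(x_0)(v,v)\geq K|v|^2$. The delicate points are ensuring that the constructed map is genuinely optimal (which can be guaranteed by a small-scale convexity argument on the Kantorovich potential, using that geodesics in $M$ are locally minimizing) and that the remainder terms in the Taylor expansions are uniformly controlled in the limit.
\end{idea}
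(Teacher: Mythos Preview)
Your sketch is correct and follows precisely the approach the paper attributes to the original sources: the paper itself omits the proof, stating only that it ``is based on Jacobi fields analysis and a precise description of $W_2$-geodesics,'' which is exactly what you carry out. One small typo: in the reduction step you want $t\mapsto -\log J_t(x)$ to be $K\sfd^2(x,T_1(x))$-convex (not with an extra square), so that after integrating against $\mu_0$ you recover the term $\tfrac{K}{2}t(1-t)W_2^2(\mu_0,\mu_1)$.
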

This theorem characterizes lower Ricci bounds by means that do not call into play any derivative. As such, this characterization can be used to define lower Ricci curvature bounds on low regularity structures: the following definition has been independently proposed in \cite{Lott-Villani09} and \cite{Sturm06I}.
\begin{definition}[The Curvature Dimension condition]\label{def:cd}
A (normalized) space $(\X,\sfd,\mm)$ is called a $\CD(K,\infty)$ space, $K\in\R$, if the relative entropy functional $\ent$ is $K$-convex on $(\prd(\X),W_2)$. 
\end{definition}

A direct consequence of Theorem \ref{thm:gammamgh} is the stability of the $\CD(K,\infty)$ condition. To handle the proof in the case of $\Y$ non-compact, we shall need two remarks. The first is the tightness criterium
\begin{equation}
\label{eq:tightcr}
\mm_n\weakto\mm_\infty\qquad\Rightarrow\qquad  \bigcup_n\{\entm{\mm_n}\leq C\}\quad\text{is tight for every }C>0.
\end{equation}
This can be proved noticing that $u(z)=z\log(z)\geq-1$, so Jensen's inequality yields $\ent(\mu)\geq-1+\mu(E)\log(\tfrac{\mu(E)}{\mm(E)})$ for any $\mu,\mm\in\pr(\Y)$ and $E$ Borel. Thus a bound on the entropy and a smallness condition on $\mm(E)$ forces smallness of $\mu(E)$; the claim then follows from the tightness of the weakly converging sequence $(\mm_n)$.

The second is a slight improvement over Theorem \ref{thm:stabcd}:
\begin{equation}
\label{eq:recw2}
\text{in Theorem \ref{thm:stabcd}, if $\mu_\infty\in\pr_2(\Y)$, there is a recovery sequence $(\mu_n)$  with $W_2(\mu_n,\mu_\infty)\to0$.}
\end{equation}
To see this, just notice that  the densities $\rho_k$  in the proof of the $\glims$ in  Theorem \ref{thm:gammamgh} can be chosen with bounded support and recall that weak- and $W_2$- convergences agree on measures with uniformly bounded supports.
\begin{theorem}\label{thm:stabcd}
The $\CD(K,\infty)$ condition is stable under mGH-convergence.
\end{theorem}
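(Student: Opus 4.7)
The plan is to take $W_2$-geodesics realizing the convexity inequality on each $\X_n$ and extract a limit geodesic on $\X_\infty$ along which the inequality is preserved via the $\Gamma$-convergence of entropies from Theorem \ref{thm:gammamgh}. I would fix a realization $(\Y,\sfd_\Y,(\iota_n))$ of the mGH-convergence and identify each $\X_n$ with its image in $\Y$, so that measures on different base spaces all live in $\pr(\Y)$. Given $\mu_0,\mu_1\in\prd(\X_\infty)$, I may assume both have finite entropy (otherwise the desired inequality is trivial). The improved recovery statement \eqref{eq:recw2} then produces $\mu_i^n\in\prd(\X_n)$ with $\mu_i^n\to\mu_i$ in $W_2$ and $\entm{\mm_n}(\mu_i^n)\to\entm{\mm_\infty}(\mu_i)$ for $i=0,1$. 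Applying $\CD(K,\infty)$ on each $\X_n$ yields $W_2$-geodesics $(\mu_t^n)_{t\in[0,1]}\subset\prd(\X_n)$ with
\begin{equation}
\entm{\mm_n}(\mu_t^n)\leq (1-t)\entm{\mm_n}(\mu_0^n)+t\,\entm{\mm_n}(\mu_1^n)-\tfrac{K}{2}t(1-t)W_2^2(\mu_0^n,\mu_1^n).
\end{equation}

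The next step is compactness. The right-hand side above is uniformly bounded in $n$ for each $t$, so \eqref{eq:tightcr} shows that $\{\mu_t^n\}_n$ is tight in $\pr(\Y)$; moreover the Lipschitz identity $W_2(\mu_s^n,\mu_t^n)=|s-t|W_2(\mu_0^n,\mu_1^n)$, preserved by the isometric embeddings, has uniformly bounded right-hand side thanks to the $W_2$-convergence at the endpoints. A Cantor diagonal argument over rational $t$ combined with this Lipschitz bound then produces a subsequence (not relabelled) and a curve $t\mapsto \mu_t\in\pr(\Y)$ such that $\mu_t^n\weakto\mu_t$ for every $t\in[0,1]$.

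Finally, I would identify the limit curve as a geodesic in $\prd(\X_\infty)$ and conclude. The $\glimi$ half of Theorem \ref{thm:gammamgh} gives $\entm{(\iota_\infty)_*\mm_\infty}(\mu_t)<\infty$, so $\mu_t=(\iota_\infty)_*\tilde\mu_t$ for a unique $\tilde\mu_t\in\pr(\X_\infty)$. Setting $L:=W_2(\tilde\mu_0,\tilde\mu_1)$, weak lower semicontinuity of $W_2$ combined with the triangle inequality forces $W_2(\mu_s,\mu_t)=|s-t|L$, identifying $(\tilde\mu_t)$ as a $W_2$-geodesic in $\prd(\X_\infty)$ from $\mu_0$ to $\mu_1$. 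Taking $\liminf$ in the displayed inequality, using the endpoint convergences together with the $\glimi$ at each time $t$, produces the required $K$-convexity inequality along $(\tilde\mu_t)$. The hard part is the compactness step: one must simultaneously ensure that the pointwise-in-$t$ weak limits exist, are supported on $\iota_\infty(\X_\infty)$, and define a genuine $W_2$-geodesic rather than merely a Lipschitz curve in $\pr(\Y)$; this is handled by combining entropy tightness, absolute continuity of the limit w.r.t.\ $\mm_\infty$, preservation of $W_2$ by the embeddings, and lower semicontinuity of $W_2$ under weak convergence.
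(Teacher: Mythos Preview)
Your proposal is correct and follows essentially the same approach as the paper: recovery sequences at the endpoints via \eqref{eq:recw2}, geodesics on the $\X_n$'s from the $\CD(K,\infty)$ assumption, compactness of these geodesics via the entropy tightness criterion \eqref{eq:tightcr}, and passage to the limit using the $\Gamma$-convergence of entropies from Theorem \ref{thm:gammamgh}. The only cosmetic difference is that the paper first treats the case of compact $\Y$ (where $W_2$ metrizes weak convergence, so Ascoli--Arzel\`a gives $W_2$-uniform convergence of the geodesics directly) and then sketches the non-compact adaptation, whereas you work in the general setting from the outset and recover the geodesic property of the limit curve via weak lower semicontinuity of $W_2$ plus the triangle inequality.
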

\begin{idea} Let $\X_n\stackrel{mGH}\to \X_\infty$ be with realization $\Y,\iota_n$, say $\Y$ compact. Pick $\mu^0_\infty,\mu^1_\infty\in D(\entm{\mm_\infty})$ and use the $\Gamma-\lims$ inequality given by Theorem \ref{thm:gammamgh} to find $\mu^i_n\in\prd{(\X_n)}$ with
\begin{equation}
\label{eq:glscd}
\mu^i_n\weakto\mu^i_\infty\qquad\text{ and }\qquad\lims_n\entm{\mm_n}(\mu^i_n)\leq\entm{\mm_\infty}(\mu^i_{\infty})\qquad i=0,1.
\end{equation}
Since the $\X_n$'s are $\CD(K,\infty)$ there are geodesic $(\mu_{n,t})\subset\prd(\X_n)$ for which \eqref{eq:defkconv} holds. The length of these geodesics is uniformly bounded (e.g.\ by the diameter of $\Y$), hence up to a non-relabeled subsequence they converge $W_2$-uniformly to a limit geodesic $(\mu_{\infty,t})$ that, clearly, joins $\mu^0_\infty$ to $\mu^1_\infty$. In particular, we have $\mu_{n,t}\weakto\mu_{\infty,t}$ for every $t\in[0,1]$ and thus the $\Gamma-\limi$ inequality given by Theorem \ref{thm:gammamgh} ensures that
\[
\limi_n\entm{\mm_n}(\mu_{n,t})\leq\entm{\mm_\infty}(\mu^i_{\infty,t})\qquad\forall t\in[0,1].
\]
This bound and \eqref{eq:glscd} allow to pass to the limit in \eqref{eq:defkconv} and conclude.

 If $\Y$ is not compact, we first find recovery sequences as in \eqref{eq:recw2}, then we observe that \eqref{eq:glscd} and the $\CD(K,\infty)$ condition yield $\sup_{n,t}\entm{\mm_n}(\mu_{n,t})<\infty$ and get compactness from  the tightness criterium \eqref{eq:tightcr}.
\end{idea}

Notice that this last results is telling something non-trivial even if all the $\X_n$'s are smooth Riemannian manifolds and so is their mGH-limit: proving the same statement without optimal transport would be extremely complicated\footnote{In the celebrated series of papers \cite{Cheeger-Colding97I,Cheeger-Colding97II,Cheeger-Colding97III} Cheeger-Colding do not state such `stability of lower Ricci bounds for smooth limit spaces' but it is worth to notice that their results were not far from this, either, at least under a uniform upper bound on the dimension. Specifically,  the arguments used in \cite{Cheeger-Colding97III} for the convergence results for the spectrum of the Laplacian might be used to pass to the limit in the sharp Laplacian bounds for the squared distance, that in turn are equivalent - in the smooth Riemannian category - to lower Ricci bounds (see e.g.\ \cite[Theorem 3.6]{Wei07} for one implication, for the other differentiate \cite[Inequalities (3.4),(3.5)]{Wei07} at $r=0$).}.

\subsection{The Heat Flow}\label{se:hf}
In the celebrated paper \cite{JKO98} it is argued that the heat flow can be seen as the gradient flow of the Boltzmann-Shannon entropy w.r.t.\ the distance $W_2$. From classical smooth geometric analysis we also know that the heat flow is well-behaved on manifolds with a lower Ricci curvature bounds (for instance in terms of mass preservations, or estimates like the Li-Yau one). On the other hand, the information we have from the $\CD(K,\infty)$ condition, in fact the only information, is that the entropy is convex along $W_2$-geodesics. It is therefore only natural to study the $W_2$-gradient flow of the entropy and name the resulting flow `heat flow'. Recall also that in the classical smooth setting, the subdifferential of convex functions passes to the limit under 0-th order convergence (pointwise/uniform/$\Gamma$...) of the functions themselves: this suggests that there is room for a stability result of the heat flow in our setting.

\medskip

To turn this idea into action, we need some definitions. For $(\X,\sfd)$ metric space and $\E:\X\to\R\cup\{+\infty\}$ we define   the \emph{descending slope}, or simply slope, functional $|\partial^-\E|:\X\to[0,+\infty]$ as  $|\partial^-\E|(x)=\infty$ if $x\notin D(\E)$ and
\begin{equation}
\label{eq:defsl}
|\partial^-\E|(x):=\lims_{y\to x}\frac{(\E(y)-\E(x))^-}{\sfd(x,y)}\qquad\text{ where }\quad z^-:=\max\{0,-z\}
\end{equation}
otherwise. If $\E$ is $K$-geodesically convex, then the slope admits the representation
\begin{equation}
\label{eq:reprsl}
|\partial^-\E|(x)=\sup_{y\neq x}\frac{(\E(x)-\E(y)+\tfrac K2\sfd^2(x,y))^+}{\sfd(x,y)}\qquad\text{ where }\quad z^+:=\max\{0,z\},
\end{equation}
hence $\E(x)-\E(y)\leq\sfd(x,y)|\partial^-\E|(x)-\tfrac K2\sfd^2(x,y)$ and thus
\[
\E(\gamma_0)-\E(\gamma_t)=\sum_i\E(\gamma_{t_i})-\E(\gamma_{t_{i+1}})\leq\sum_i\tfrac{t_{i+1}-t_i}2|\partial^-\E|^2(\gamma_{t_i})+\tfrac{\sfd^2(\gamma_{t_{i+1}},\gamma_{t_i})}{2(t_{i+1}-t_i)}-\tfrac K2\sfd^2(\gamma_{t_{i+1}},\gamma_{t_i})
\]
holds for any curve $\gamma$ and partition of $[0,t]$. It is then easy to see that
\begin{equation}
\label{eq:uppgr}
\E(\gamma_0)\leq\E(\gamma_t)+\tfrac12\int_0^t|\dot\gamma_s|^2+|\partial^-\E|^2(\gamma_s)\,\d s\qquad\forall t\geq 0
\end{equation}
holds for any  curve $\gamma$ in $D(\E)$ (here and below $\int_0^t|\dot\gamma_s|^2\,\d s$ is intended $+\infty$ if $\gamma$ is not absolutely continuous). In the smooth setting, equality holds   iff $\gamma_t'=-\nabla E(\gamma_t)$; this motivates the definition:
\begin{definition}[Gradient flow]\label{def:gfede}
Let $\E$ be $K$-convex and lower semicontinuous. We say that a continuous curve $\gamma:[0,+\infty)\to\X$ is a gradient flow trajectory in the sense of the Energy Dissipation Equality (or $\EDE$-gradient flow trajectory) for $\E$ starting at $\gamma_0$ if   $\E(\gamma_0)<\infty$ and
\begin{equation}
\label{eq:defgf}
\E(\gamma_0)=\E(\gamma_t)+\tfrac12\int_0^t|\dot\gamma_s|^2+|\partial^-\E|^2(\gamma_s)\,\d s\qquad\forall t\geq 0.
\end{equation}
\end{definition}
Even though in what comes next I won't need it, I  state the following (suboptimal) result that ensures that the above definition is non-void, see \cite[Section 2.4]{AmbrosioGigliSavare08} for sharper statements and the proof:
\begin{theorem}[Existence of gradient flow trajectories]
Let $(\X,\sfd)$ be compact and $\E:\X\to\R\cup\{+\infty\}$ be a $K$-convex and lower semicontinuous function. Then for every $x\in D(\E)$ there is a gradient flow trajectory starting at $x$. 
\end{theorem}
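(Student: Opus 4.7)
The natural strategy is De Giorgi's \emph{minimizing movements} scheme. Fix $x \in D(\E)$. For each step-size $\tau > 0$ set $x^\tau_0 := x$ and recursively
\[
x^\tau_{n+1} \in \arg\min_{y \in \X} \Big\{ \E(y) + \tfrac{1}{2\tau}\sfd^2(y,x^\tau_n) \Big\},
\]
whose existence is granted by compactness of $\X$ and lower semicontinuity of $\E$ (one must choose $\tau$ small enough, depending on $K^-$, so that the functional is coercive; this is automatic since $\X$ is bounded). Define the piecewise constant interpolant $\bar x^\tau(t) := x^\tau_n$ for $t \in [n\tau,(n+1)\tau)$, together with the De Giorgi variational interpolant $\tilde x^\tau(t)$ obtained by replacing $\tau$ with $t - n\tau$ in the minimization on each half-open interval.

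\textbf{Key steps.} First, from the Euler--Lagrange comparison $\E(x^\tau_{n+1}) + \tfrac{1}{2\tau}\sfd^2(x^\tau_{n+1},x^\tau_n) \le \E(x^\tau_n)$ one obtains the a priori bounds
\[
\sum_{n} \tfrac{1}{\tau}\sfd^2(x^\tau_{n+1},x^\tau_n) \le 2\big(\E(x)-\inf \E\big), \qquad \sup_{n,\tau} \E(x^\tau_n) \le \E(x),
\]
which yield an equi-$1/2$-Hölder estimate for the natural continuous (geodesic or affine-in-distance) interpolants. By Ascoli--Arzelà on the compact space $\X$, after passing to a subsequence $\tau_k \downarrow 0$ we get a uniform limit $\gamma:[0,\infty) \to \X$ with $\gamma_0 = x$. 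A refined computation with the variational interpolant — differentiating the minimum value in the "time" variable $t - n\tau$ — gives
\[
\E(x^\tau_n) - \E(x^\tau_{n+1}) \ge \tfrac{1}{2}\int_{n\tau}^{(n+1)\tau} |\dot{\tilde x}^\tau|^2(s)\,\d s + \tfrac{1}{2}\int_{n\tau}^{(n+1)\tau} \sl\E^2(\tilde x^\tau(s))\,\d s,
\]
where the second term comes from the fact that $\tilde x^\tau(s)$ is itself a minimizer, hence $\sl\E(\tilde x^\tau(s)) \le \sfd(\tilde x^\tau(s), x^\tau_n)/(s-n\tau)$.

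\textbf{Passage to the limit.} Summing the previous inequality over $n$ up to the step containing $t$ yields, for all $\tau$,
\[
\E(x) \ge \E(\bar x^\tau(t)) + \tfrac{1}{2}\int_0^t |\dot{\tilde x}^\tau|^2(s) + \sl\E^2(\tilde x^\tau(s))\,\d s .
\]
Using the $K$-convexity of $\E$ and the representation \eqref{eq:reprsl}, the slope $\sl\E$ is lower semicontinuous on $\X$; together with the lower semicontinuity of the kinetic energy (Theorem~\ref{thm:ghgamma} and the general fact underlying \eqref{eq:metrsp}) and of $\E$ itself, and after checking that $\tilde x^\tau$ has the same uniform limit $\gamma$ as $\bar x^\tau$ (because the $L^2$ bound on discrete speeds makes their distance vanish as $\tau \to 0$), Fatou's lemma and lower semicontinuity give
\[
\E(x) \ge \E(\gamma_t) + \tfrac{1}{2}\int_0^t |\dot\gamma_s|^2 + \sl\E^2(\gamma_s)\,\d s .
\]
The reverse inequality is exactly \eqref{eq:uppgr}, which follows from $K$-convexity alone; combining the two yields \eqref{eq:defgf}.

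\textbf{Main obstacle.} The delicate point is not the compactness of the discrete scheme, which is standard, but producing the sharp one-sided inequality involving the slope: the piecewise constant curve $\bar x^\tau$ only gives a discrete analogue of $\tfrac{1}{2}\int|\dot\gamma|^2$, and one needs the variational interpolant $\tilde x^\tau$ — whose definition relies on minimizing at \emph{every} intermediate time — to extract the $\sl\E^2$ term. Passing this term to the limit in turn forces us to use lower semicontinuity of the slope, which is not automatic but is supplied in our setting by the $K$-convexity of $\E$ via \eqref{eq:reprsl}.
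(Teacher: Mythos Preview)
Your proposal is correct and follows exactly the classical De Giorgi minimizing-movements scheme; the paper itself does not give a proof but simply refers the reader to \cite[Section 2.4]{AmbrosioGigliSavare08}, where precisely this argument (discrete scheme, De Giorgi variational interpolant, lower semicontinuity of the slope via $K$-convexity and \eqref{eq:reprsl}, Fatou to pass to the limit) is carried out in detail. One small notational caveat: what you denote $|\dot{\tilde x}^\tau|$ should be understood as the piecewise-constant discrete speed $\sfd(x^\tau_{n+1},x^\tau_n)/\tau$ rather than a genuine metric derivative of the variational interpolant, but with that reading your key inequality is exactly the one in the reference.
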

It is also worth to notice the lack of uniqueness in normed settings:
\begin{example}\label{ex:nounique}{\rm
Equip  $\R^2$ with the distance induced by the `sup' norm and let $\E(x_1,x_2):=x_1$. Then $\E$ is convex and it is easy to see that uniqueness of gradient flow trajectories for any given initial datum fails.
}\fr\end{example}
On $\R^d$, for a converging sequence of convex functions the subdifferentials converge. A metric analogue  is the following. Notice that under suitable coercivity assumptions, arguing as for Theorem \ref{thm:stabcd} we see that the limit functional is automatically $K$-convex.
\begin{theorem}[$\Gamma-\limi$ for the slopes]\label{thm:glimisl}
Let   $(\E_n)$ be a sequence of $K$-convex and lower semicontinuous functionals on $\X$ that is $\Gamma$-converging to a limit $K$-convex functional $\E_\infty$. 

Then $|\partial^-\E_\infty|\leq \Gamma-\limi |\partial^-\E_n|$ on $D(\E_\infty)$.
\end{theorem}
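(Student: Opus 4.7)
\emph{Plan of proof.} The natural tool is the representation formula \eqref{eq:reprsl} for the slope of a $K$-convex function, which expresses $|\partial^-\E|$ as a supremum of zeroth-order quantities. Since the whole $\Gamma$-convergence machinery is designed to pass to the limit in zeroth-order expressions (upper bounds via recovery sequences, lower bounds via the $\Gamma-\limi$ inequality), this should yield the result almost directly.

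Concretely, I would fix $x_\infty\in D(\E_\infty)$ and an arbitrary sequence $x_n\to x_\infty$, and aim at $|\partial^-\E_\infty|(x_\infty)\leq\limi_n|\partial^-\E_n|(x_n)$. Without loss of generality this $\limi$ equals some finite number $L$ (otherwise the bound is trivial) and, up to a subsequence, $|\partial^-\E_n|(x_n)\to L$; in particular $x_n\in D(\E_n)$ eventually. Pick $y\in D(\E_\infty)$ with $y\neq x_\infty$ (points outside $D(\E_\infty)$ give a zero contribution in \eqref{eq:reprsl} after the positive-part truncation), and let $y_n\to y$ be a recovery sequence for $\E_\infty$, so that $\lims_n\E_n(y_n)\leq\E_\infty(y)$. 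Rewriting \eqref{eq:reprsl} for each $\E_n$ at the pair $(x_n,y_n)$ (which eventually satisfies $y_n\neq x_n$) gives
\begin{equation}
\E_n(y_n)\ \geq\ \E_n(x_n)-\sfd(x_n,y_n)\,|\partial^-\E_n|(x_n)+\tfrac K2\sfd^2(x_n,y_n).
\end{equation}

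Taking $\limi_n$ on both sides: on the left it is bounded above by $\lims_n\E_n(y_n)\leq\E_\infty(y)$; on the right, the middle and last terms converge by the chosen subsequence and by continuity of $\sfd$ to $\sfd(x_\infty,y)L$ and $\tfrac K2\sfd^2(x_\infty,y)$ respectively, while $\limi_n\E_n(x_n)\geq\E_\infty(x_\infty)$ by the $\Gamma-\limi$ inequality. Rearranging yields
\begin{equation}
L\ \geq\ \frac{\E_\infty(x_\infty)-\E_\infty(y)+\tfrac K2\sfd^2(x_\infty,y)}{\sfd(x_\infty,y)}.
\end{equation}
Taking the supremum over $y\neq x_\infty$ (those not in $D(\E_\infty)$ contribute $\leq 0$, hence at most $L$ via the positive part, since $L\geq 0$) and using \eqref{eq:reprsl} for $\E_\infty$, we conclude $|\partial^-\E_\infty|(x_\infty)\leq L$, as required.

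The only delicate point I anticipate is the bookkeeping of $\limi/\lims$ in step three — specifically that the lower bound for $\E_n(x_n)$ comes from the $\Gamma-\limi$ inequality (for the \emph{given} sequence $x_n$), while the upper bound on $\E_n(y_n)$ comes from invoking a recovery sequence at the freely-chosen comparison point $y$. This asymmetric use of the two halves of $\Gamma$-convergence is what makes the argument work and is why one typically only gets a $\glimi$ (and not a full $\Gamma$-convergence) statement for slopes.
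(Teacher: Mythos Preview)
Your proof is correct and follows essentially the same approach as the paper: both arguments rest on the representation formula \eqref{eq:reprsl}, use the $\Gamma-\limi$ inequality at the given sequence $x_n\to x_\infty$ and a recovery sequence at the comparison point $y$, and then take the supremum over $y$. Your version is slightly more explicit (subsequence extraction, handling of $y\notin D(\E_\infty)$), but there is no substantive difference.
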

\begin{idea} Let $x_n\to x_\infty\in D(\E_\infty)$ (so that $\limi_n\E_n(x_n)\geq\E_\infty(x_\infty)$), pick $y_\infty\in\X$ and let $(y_n)$ be a recovery sequence. Then 
\[
\limi_n\frac{(\E_n(x_n)-\E_n(y_n)+\frac K2\sfd^2(x_n,y_n))^+}{\sfd(x_n,y_n)}\geq \frac{(\E_\infty(x_\infty)-\E_\infty(y_\infty)+\frac K2\sfd^2(x_\infty,y_\infty))^+}{\sfd(x_\infty,y_\infty)}.
\] 
The conclusion follows from  \eqref{eq:reprsl}.
\end{idea}
A direct consequence of the above is the following stability result:
\begin{theorem}[Stability]\label{thm:stabede} Let $(\X,\sfd)$ be compact and $(\E_n)$ a sequence of non-negative, $K$-convex and lower semicontinuous functionals on it $\Gamma$-converging to $\E_\infty$. Let furthermore $x_\infty\in D(\E_\infty)$, $(x_n)$ be a recovery sequence and $\gamma_n$ be a gradient flow trajectory for $\E_n$ starting from $x_n$ for every $n\in\N$. Then:
\begin{itemize}
\item[i)] The sequence $(\gamma_n)$ is relatively compact w.r.t.\ local uniform convergence.
\item[ii)] Any limit curve is a gradient flow trajectory for $\E_\infty$ starting from $x_\infty$.
\end{itemize}
\end{theorem}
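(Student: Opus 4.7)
The natural approach is to first obtain compactness of $(\gamma_n)$ via a uniform kinetic-energy bound coming from \eqref{eq:defgf}, and then to pass to the $\liminf$ term-by-term in the $\EDE$ identity, using the $\Gamma$-convergence of the $\E_n$'s, Theorem \ref{thm:glimisl} for the slopes, and the classical lower semicontinuity of the kinetic energy under local uniform convergence. The role of the non-negativity assumption is to make these lower semicontinuity passages reduce to Fatou's lemma.

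\textbf{Step 1 (compactness).} Since $(x_n)$ is a recovery sequence, $\limsup_n \E_n(x_n) \leq \E_\infty(x_\infty)$, and the matching $\Gamma$-$\liminf$ inequality then forces $\lim_n \E_n(x_n) = \E_\infty(x_\infty) < \infty$. Using \eqref{eq:defgf} together with $\E_n \geq 0$, we get
\[
\int_0^T |\dot\gamma_{n,s}|^2\, ds \leq 2\E_n(x_n) \leq C_T \qquad \forall T > 0
\]
for $n$ large, and Cauchy--Schwarz yields the uniform Hölder bound $\sfd(\gamma_{n,t},\gamma_{n,s}) \leq C_T^{1/2} |t-s|^{1/2}$ on $[0,T]$. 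Compactness of $\X$ and Arzelà--Ascoli then prove (i), and clearly any limit $\gamma_\infty$ satisfies $\gamma_{\infty,0} = \lim_n x_n = x_\infty$.

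\textbf{Step 2 (three termwise lower semicontinuity estimates).} Fix $t>0$ and a local-uniform limit $\gamma_\infty$ of $(\gamma_n)$. Applying the $\Gamma$-$\liminf$ inequality to $\gamma_{n,t}\to\gamma_{\infty,t}$ gives $\liminf_n \E_n(\gamma_{n,t}) \geq \E_\infty(\gamma_{\infty,t})$, hence
\[
\limsup_n \big(\E_n(x_n) - \E_n(\gamma_{n,t})\big) \leq \E_\infty(x_\infty) - \E_\infty(\gamma_{\infty,t}).
\]
Extracting a weak $L^2$ limit $g$ of $s\mapsto |\dot\gamma_{n,s}|$ and using $\sfd(\gamma_{\infty,s},\gamma_{\infty,r}) \leq \liminf_n \sfd(\gamma_{n,s},\gamma_{n,r}) \leq \int_s^r g$ shows $g$ is admissible in \eqref{eq:defac} for $\gamma_\infty$, and weak lower semicontinuity of the $L^2$-norm gives
\[
\int_0^t |\dot\gamma_{\infty,s}|^2\, ds \leq \liminf_n \int_0^t |\dot\gamma_{n,s}|^2\, ds.
\]
Finally, Theorem \ref{thm:glimisl} applied pointwise to $\gamma_{n,s}\to\gamma_{\infty,s}$ yields $|\partial^-\E_\infty|(\gamma_{\infty,s}) \leq \liminf_n |\partial^-\E_n|(\gamma_{n,s})$ for every $s \in [0,t]$, and Fatou's lemma (applicable thanks to non-negativity) upgrades this to
\[
\int_0^t |\partial^-\E_\infty|^2(\gamma_{\infty,s})\, ds \leq \liminf_n \int_0^t |\partial^-\E_n|^2(\gamma_{n,s})\, ds.
\]

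\textbf{Step 3 (conclusion and main obstacle).} Taking $\liminf$ of both sides of \eqref{eq:defgf} written for $\gamma_n$ and inserting the three bounds of Step 2, we obtain
\[
\E_\infty(x_\infty) - \E_\infty(\gamma_{\infty,t}) \geq \tfrac12 \int_0^t \big(|\dot\gamma_{\infty,s}|^2 + |\partial^-\E_\infty|^2(\gamma_{\infty,s})\big)\, ds.
\]
In particular $\gamma_{\infty,t} \in D(\E_\infty)$ for every $t$, so the universal bound \eqref{eq:uppgr} is available for $\gamma_\infty$ and provides the opposite inequality; equality is the $\EDE$ identity for $\gamma_\infty$, proving (ii). The substantive ingredient in the whole argument is the slope $\Gamma$-$\liminf$ of Theorem \ref{thm:glimisl}, which crucially exploits the $K$-convex representation \eqref{eq:reprsl}; everything else is routine metric analysis, and the main technical care is simply to keep track of the correct direction of each lower semicontinuity inequality so that they combine (rather than cancel) in the $\EDE$ identity.
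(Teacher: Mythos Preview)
Your proof is correct and follows essentially the same approach as the paper's: both obtain compactness from the uniform kinetic-energy bound via the $\EDE$ identity and non-negativity, then pass to the limit in \eqref{eq:defgf} using lower semicontinuity of the kinetic energy (the paper points to Theorem \ref{thm:ghgamma}, you reprove it via weak $L^2$ limits of speeds), Theorem \ref{thm:glimisl} plus Fatou for the slope term, and the $\Gamma$-$\liminf$ for the energy term. Your write-up is more detailed---in particular you make explicit the appeal to \eqref{eq:uppgr} for the reverse inequality, which the paper leaves implicit in ``conclude''---but the strategy is identical. One cosmetic remark: in Step 2 you invoke Theorem \ref{thm:glimisl}, whose conclusion is stated on $D(\E_\infty)$, before verifying in Step 3 that $\gamma_{\infty,s}\in D(\E_\infty)$; this is harmless since $\E_\infty(\gamma_{\infty,s})\leq\liminf_n\E_n(\gamma_{n,s})\leq\limsup_n\E_n(x_n)<\infty$ is already available from your first estimate in Step 2.
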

\begin{idea} By \eqref{eq:defgf} and the assumptions it follows that $\lims\int_0^\infty|\dot\gamma_{n,s}|^2\,\d s<\infty$  that in conjunction with the trivial estimate $\sfd^2(\gamma_t,\gamma_s)\leq (\int_t^s|\dot\gamma_r|\,\d r)^2\leq |s-t|\int_0^\infty|\dot\gamma_r|^2\,\d r$ and Ascoli-Arzel\`a's theorem gives $(i)$. For $(ii)$ we use the inequality $\int_t^s|\dot\gamma_{\infty,r}|^2\,\d r\leq \limi\int_t^s|\dot\gamma_{n,r}|^2\,\d r$ valid for any converging sequence of curves (recall  Theorem \ref{thm:ghgamma}) together  with Theorem  \ref{thm:glimisl} and Fatou's lemma to pass to the limit in \eqref{eq:defgf} and conclude.
\end{idea}
We have seen in Example \ref{ex:nounique} that even for convex energies uniqueness may fail. Our next task is to show that  for the specific case of $W_2$-gradient flow of the entropy uniqueness is in place. 

To this aim, for $\mu\in\pr (\X)$ and $\ggamma\in\pr(\X^2)$ with $\mu\ll\pi^1_*\ggamma$ we define $\ggamma_*\mu\in\pr(\X)$ as
\begin{equation}
\label{eq:gammapf}
\ggamma_*\mu:=\pi^2_*(\ggamma_\mu)\qquad\text{where}\qquad \ggamma_\mu:=\big(\frac{\d\mu}{\d(\pi^1_*\ggamma)}\circ\pi^1\big)\ggamma.
\end{equation}
Notice that if $\ggamma=(\Id,T)_*\mu$, then $\ggamma_*\mu=T_*\mu$, so this construction can be seen as a generalization of the concept of push-forward via a map. Also, $\mu\mapsto\ggamma_\mu$ is clearly linear, hence $\mu\mapsto \ent(\ggamma_*\mu)$ is convex. The next result tells that it is `less convex' than $\mu\mapsto\ent(\mu)$:
\begin{lemma}\label{le:convDE} Let $\ggamma\in\pr(\X^2)$. Then 
\[
D(\ent)\cap\{\mu:\mu\ll\pi^1_*\ggamma\}\ni \mu\qquad\mapsto\qquad {\rm DE}_\sggamma(\mu):= \ent(\mu)-\ent (\ggamma_*\mu)\quad\text{ is convex.} 
\]
\end{lemma}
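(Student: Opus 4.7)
The plan is to reduce convexity of $\mu\mapsto{\rm DE}_\sggamma(\mu)$ to a Jensen-type inequality obtained via the disintegration of $\ggamma$ with respect to its second marginal. First I set $\sigma:=\pi^1_*\ggamma$ and $\nu:=\pi^2_*\ggamma$, and disintegrate $\ggamma(\d x,\d y)=\nu(\d y)\,Q(y,\d x)$ for a probability kernel $Q$. For $\mu\ll\sigma$ with density $f:=\d\mu/\d\sigma$, the definition $\ggamma_\mu=(f\circ\pi^1)\ggamma$ immediately gives $\ggamma_*\mu=\tilde f\,\nu$ with $\tilde f(y):=\int f(x)\,Q(y,\d x)$, so $\mu\mapsto\ggamma_*\mu$ is affine. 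Hence for $\mu_t:=(1-t)\mu_0+t\mu_1$ and $f_i:=\d\mu_i/\d\sigma$, $\tilde f_i:=\d\ggamma_*\mu_i/\d\nu$ we have $f_t=(1-t)f_0+tf_1$ and $\tilde f_t=(1-t)\tilde f_0+t\tilde f_1$.

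Next I would introduce the convexity-defect function $\Delta_t(a,b):=u((1-t)a+tb)-(1-t)u(a)-tu(b)$ with $u(z):=z\log z$. The $1$-homogeneity $\Delta_t(\lambda a,\lambda b)=\lambda\Delta_t(a,b)$, combined with the fact that $\mu_i\ll\mm$ (which forces $f_i=0$ on the $\mm$-singular part of $\sigma$, and similarly for $\tilde f_i$ with respect to $\nu$), is enough to check the identities
\[
\ent(\mu_t)-(1-t)\ent(\mu_0)-t\,\ent(\mu_1)=\int\Delta_t(f_0,f_1)\,\d\sigma
\]
and $\ent(\ggamma_*\mu_t)-(1-t)\ent(\ggamma_*\mu_0)-t\,\ent(\ggamma_*\mu_1)=\int\Delta_t(\tilde f_0,\tilde f_1)\,\d\nu$. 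Convexity of ${\rm DE}_\sggamma$ therefore reduces to
\[
\int\Delta_t(f_0,f_1)\,\d\sigma\;\le\;\int\Delta_t(\tilde f_0,\tilde f_1)\,\d\nu,
\]
which, since $\tilde f_i(y)=\int f_i\,\d Q(y,\cdot)$, will follow from Jensen's inequality applied to the kernel $Q(y,\cdot)$ \emph{provided} $\Delta_t$ is concave on $[0,+\infty)^2$: the pointwise bound $\Delta_t(\tilde f_0(y),\tilde f_1(y))\ge\int\Delta_t(f_0,f_1)\,\d Q(y,\cdot)$, integrated against $\nu$ and using that the first marginal of $\nu\otimes Q$ is $\sigma$, gives exactly the inequality needed.

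The main obstacle is precisely the concavity of $\Delta_t$, which is far from obvious a priori: $\Delta_t$ is a difference of two convex functions of $(a,b)$ (both $u((1-t)a+tb)$ and $(1-t)u(a)+tu(b)$ are convex), so it could in principle be convex, concave, or neither. That it turns out to be concave is a feature specific to $u(z)=z\log z$, and I would verify it by a short Hessian computation using $u''(z)=1/z$: the Hessian has non-positive diagonal entries and vanishing determinant (its kernel being spanned by $(a,b)$, in accordance with the $1$-homogeneity), hence it is negative semidefinite.
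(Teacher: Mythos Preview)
Your proof is correct. The concavity of $\Delta_t$ on $(0,\infty)^2$ is exactly as you say (diagonal entries $\le 0$, determinant $=0$ by $1$-homogeneity), it extends continuously to the boundary, and the $1$-homogeneity step correctly absorbs the change of reference measure from $\mm$ to $\sigma$ (resp.\ $\nu$) once one notes that $f_i=0$ on the $\mm$-singular part of $\sigma$ and $\Delta_t(0,0)=0$. The case $\ent(\ggamma_*\mu_i)=+\infty$ gives ${\rm DE}_\sggamma(\mu_i)=-\infty$ and is harmless for convexity, so your reduction to finite values is fine.

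The paper takes a different but closely related route. It works only with midpoints ($t=\tfrac12$) and uses the algebraic identity
\[
\ent(\mu_0)+\ent(\mu_1)=\entm\mu(\mu_0)+\entm\mu(\mu_1)+2\ent(\mu),\qquad \mu:=\tfrac12(\mu_0+\mu_1),
\]
together with its analogue for the $\nu_i:=\ggamma_*\mu_i$, reducing the problem to the \emph{data-processing inequality} $\entm\mu(\mu_i)\ge\entm\nu(\nu_i)$. The latter is then obtained from the dual formula for the entropy by restricting test functions to the form $\psi\circ\pi^2$ --- no disintegration, no Hessian. Your argument is the ``primal'' counterpart: you disintegrate $\ggamma$ with respect to $\pi^2$ and apply Jensen to the concave defect $\Delta_t$, which gives the inequality for \emph{all} $t\in[0,1]$ in one stroke and makes the role of $u''(z)=1/z$ (hence the special structure of $z\log z$) completely transparent. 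The paper's approach, in exchange, avoids invoking disintegration and highlights that the content of the lemma is precisely the monotonicity of relative entropy under the Markov kernel induced by $\ggamma$.
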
 
\begin{idea}
Let $\mu_0,\mu_1\in D(\ent)\cap\{\mu:\mu\ll\pi^1_*\ggamma\}$, put $\mu:=\frac{\mu_0+\mu_1}{2}$ and then $\nu_i:=\ggamma_*\mu_i$, $\nu:=\ggamma_*\mu$. By direct computation we have $
\ent(\mu_0)+\ent(\mu_1)=\entm\mu(\mu_0)+\entm\mu(\mu_1)+2\ent(\mu)$.
Subtracting the analogous identity written for then $\nu$'s we get
\[
{\rm DE}_\sggamma(\mu_0)+{\rm DE}_\sggamma(\mu_1)=2{\rm DE}_\sggamma(\mu)+\sum_{i=0}^1\big(\entm\mu(\mu_i)-\entm\nu(\nu_i)\big).
\]
Now notice that $\frac{\d\mu_i}{\d\mu}\circ\pi^1=\frac{\d \sggamma_{\mu_i}}{\d\sggamma_\mu}$ and thus
\[
\begin{split}
\entm{\mu}(\mu_i)=\entm{\sggamma_\mu}(\ggamma_{\mu_i})&\stackrel{\eqref{eq:dualent}}=\sup_{\varphi\in C_\b(\X^2)}\int\varphi\,\d\ggamma_{\mu_i}-\int u^*\circ\varphi\,\d\ggamma_\mu
\\
\text{(pick $\varphi=\psi\circ\pi^2$)}\qquad&\stackrel{\phantom{\eqref{eq:dualent}}}\geq \sup_{ \psi\in C_\b(\X)}\int\psi\,\d\pi^2_*\ggamma_{\mu_i}-\int u^*\circ\psi\,\d\pi^2_*\ggamma_\mu\stackrel{\eqref{eq:dualent}}=\entm{\nu}(\nu_i)
\end{split}
\]
for $i=0,1$, concluding the proof.
\end{idea}
Such convexity has the following useful consequence:
\begin{proposition}\label{prop:convsl}
Let $(\X,\sfd,\mm)$ be a normalized $\CD(K,\infty)$ space. Then 
\[
D(\ent)\ni \mu\quad\mapsto\quad|\partial^-\ent|^2(\mu)\qquad\text{ is convex (w.r.t.\ affine interpolation).}
\]
\end{proposition}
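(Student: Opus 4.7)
My plan is to combine Lemma \ref{le:convDE} with a Fenchel-type representation of $\tfrac12|\partial^-\ent|^2$, after a preliminary reduction. The key reduction is that $\CD(K,\infty)\subset\CD(K',\infty)$ for every $K'\leq K$ (weaker $K'$-convexity), so we may freely replace $K$ by $K':=\min(K,0)\leq 0$. Applying \eqref{eq:reprsl} with $K'$ in place of $K$ and the elementary identity $(a^+)^2/(2b)=\sup_{\lambda\geq 0}\bigl(\lambda a-\tfrac{\lambda^2}{2}b\bigr)$ gives
\begin{equation*}
\tfrac12|\partial^-\ent|^2(\mu)=\sup_{\nu,\,\lambda\geq 0}\sigma_\lambda(\mu,\nu),\qquad \sigma_\lambda(\mu,\nu):=\lambda\bigl(\ent(\mu)-\ent(\nu)\bigr)+\tfrac{\lambda(K'-\lambda)}{2}W_2^2(\mu,\nu).
\end{equation*}
The whole point of replacing $K$ by $K'$ is that $\lambda(K'-\lambda)\leq 0$ for every $\lambda\geq 0$.

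Now fix $\mu_0,\mu_1\in D(\ent)$ and $t\in(0,1)$, and set $\mu_t:=(1-t)\mu_0+t\mu_1$; by convexity of $z\log z$ one has $\mu_t\in D(\ent)$, and since $\mu_t\geq(1-t)\mu_0$ and $\mu_t\geq t\mu_1$, one gets $\mu_0,\mu_1\ll\mu_t$. Fix $\eps>0$ and pick $\bar\nu,\bar\lambda\geq 0$ with $\sigma_{\bar\lambda}(\mu_t,\bar\nu)\geq\tfrac12|\partial^-\ent|^2(\mu_t)-\eps$, and let $\ggamma$ be an optimal transport plan from $\mu_t$ to $\bar\nu$. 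Then $\pi^1_*\ggamma=\mu_t$, so the plans $\ggamma_{\mu_i}$ from \eqref{eq:gammapf} are well defined and the cost $c_\sggamma(\mu):=\int\sfd^2\,d\ggamma_\mu$ is linear in $\mu$.

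Consider the plan-lifted functional
\begin{equation*}
\phi(\mu):=\bar\lambda\,{\rm DE}_\sggamma(\mu)+\tfrac{\bar\lambda(K'-\bar\lambda)}{2}\,c_\sggamma(\mu),
\end{equation*}
which is convex on $D(\ent)\cap\{\mu\ll\mu_t\}$ by Lemma \ref{le:convDE}, linearity of $c_\sggamma$ and the sign $\bar\lambda\geq 0$. Optimality of $\ggamma$ yields $\ggamma_{\mu_t}=\ggamma$, hence $\phi(\mu_t)=\sigma_{\bar\lambda}(\mu_t,\bar\nu)\geq\tfrac12|\partial^-\ent|^2(\mu_t)-\eps$. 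On the other hand, for $i=0,1$, setting $\nu_i:=\ggamma_*\mu_i$ the plan $\ggamma_{\mu_i}$ is admissible (not necessarily optimal) from $\mu_i$ to $\nu_i$, so $W_2^2(\mu_i,\nu_i)\leq c_\sggamma(\mu_i)$; combined with $\bar\lambda(K'-\bar\lambda)\leq 0$ this gives
\[ \phi(\mu_i)\leq\sigma_{\bar\lambda}(\mu_i,\nu_i)\leq\tfrac12|\partial^-\ent|^2(\mu_i). \]
Convexity of $\phi$, namely $\phi(\mu_t)\leq(1-t)\phi(\mu_0)+t\phi(\mu_1)$, together with these bounds and $\eps\to 0$ delivers the claim.

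The main obstacle I anticipate is precisely this sign bookkeeping in the coefficient $\lambda(K'-\lambda)$: running the same argument with $K$ in place of $K'$ fails when $K>0$ and the near-optimizer $\bar\lambda$ lies in $(0,K)$, because then $\bar\lambda(K-\bar\lambda)>0$ reverses the final comparison $\phi(\mu_i)\leq\sigma_{\bar\lambda}(\mu_i,\nu_i)$. Passing to $K'\leq 0$ via the monotonicity of $\CD$ sidesteps this cleanly, and is essentially the only non-cosmetic trick in the argument; the rest is the `plan-lift + Lemma \ref{le:convDE}' template.
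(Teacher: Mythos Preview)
Your proof is correct and follows essentially the same route as the paper's: both reduce to $K'=-K^-\leq 0$, invoke Lemma~\ref{le:convDE} for the convexity of ${\rm DE}_{\ggamma}$, and exploit that the transport cost $\mu\mapsto\int\sfd^2\,\d\ggamma_\mu$ is linear with a nonpositive coefficient. Your Fenchel-dual packaging $(a^+)^2/(2b)=\sup_{\lambda\geq 0}(\lambda a-\tfrac{\lambda^2}{2}b)$ together with the choice of a single optimal plan $\ggamma$ with $\pi^1_*\ggamma=\mu_t$ (so that $\mu_0,\mu_1\ll\pi^1_*\ggamma$ is automatic) is a minor presentational variant of the paper's sup over plans with $\mm\ll\pi^1_*\ggamma$, and in fact spares you its $\eps(\Id,\Id)_*\mm$ approximation step.
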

\begin{idea} Say that $\X$ is compact (so $\sfd^2$ is bounded on $\X^2$). I claim that for any $\mu\in D(\ent)$ we have
\begin{equation}
\label{eq:altraslope}
\sup_{\nu\neq\mu}\frac{\big(\big(\ent(\mu)-\ent(\nu)-\tfrac{K^-}2W_2^2(\mu,\nu)\big)^+\big)^2}{W_2^2(\mu,\nu)}=\!\!\sup_{\sggamma\in\pr(\X^2)\atop \mm\ll\pi^1_*\sggamma}\!\!\frac{\big(\big({\rm DE}_\sggamma(\mu)-\tfrac{K^-}2\int\sfd^2\,\d\ggamma_\mu\big)^+\big)^2}{\int\sfd^2\,\d\ggamma_\mu}.
\end{equation}
Indeed the inequality $\geq$ follows noticing that the requirement on $\ggamma$ ensure that $\ggamma_*\mu$ is well defined and  from the trivial bound $\int\sfd^2\,\d\ggamma_\mu\geq W_2^2(\mu,\ggamma_*\mu)$ plus little algebraic manipulation. For $\leq$ one argues by approximation: for given $\mu,\nu\ll\mm$ pick an optimal plan and then add $\eps(\Id,\Id)_*\mm $ and normalize to  meet the requirement $\mm\ll\pi^1_*\ggamma$.

By \eqref{eq:altraslope} to conclude it is enough to show  that $\mu\mapsto\frac{\big(\big({\rm DE}_\ssggamma(\mu)-\frac{K^-}2\int\sfd^2\,\d\sggamma_\mu\big)^+\big)^2}{\int\sfd^2\,\d\sggamma_\mu} $ is convex for any $\ggamma$ as in \eqref{eq:altraslope}.  By Lemma \ref{le:convDE} above and the linearity of $\mu\mapsto \int\sfd^2\,\d\ggamma_\mu$ we have that $\mu\mapsto \big({\rm DE}_\sggamma(\mu)-\tfrac{K^-}2\int\sfd^2\,\d\sggamma_\mu\big)^+$ is convex and non-negative. The conclusion then follows from the fact that $(\R^+)^2\ni(a,b)\mapsto\frac{a^2}b$ is convex and increasing in $a$.
\end{idea}
Notice also that a trivial consequence of the definition \eqref{eq:defw2} is that
\begin{equation}
\label{eq:w2conv}
\prd(\X)\ni \mu,\nu\qquad\mapsto\qquad W_2^2(\mu,\nu)\qquad\text{ is jointly convex (w.r.t.\ affine interpolation)}
\end{equation}
We then have:\begin{theorem}[Uniqueness of heat flow]\label{thm:uniqueheat}
Let $(\X,\sfd,\mm)$ be a normalized $\CD(K,\infty)$ space and $\mu\in D(\ent)$. Then there is at most one gradient flow trajectory of $\ent$ starting from $\mu$.
\end{theorem}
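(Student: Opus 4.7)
\begin{idea}
The plan is the standard convexity trick: suppose $(\mu_t^1)$ and $(\mu_t^2)$ are two $\EDE$ gradient flow trajectories with common initial datum $\mu\in D(\ent)$, define the midpoint curve $\mu_t:=\tfrac12(\mu_t^1+\mu_t^2)$, and show it satisfies the general bound \eqref{eq:uppgr} with equality together with equality in the convexity inequality $\ent(\mu_t)\leq\tfrac12(\ent(\mu_t^1)+\ent(\mu_t^2))$. Since both $\mu_t^i\ll\mm$ (by $\ent(\mu_t^i)\leq\ent(\mu)<\infty$ from the $\EDE$), the strict convexity of $z\mapsto z\log z$ then forces $\mu_t^1=\mu_t^2$ for every $t\geq 0$.

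The first ingredient is the control on the metric speed of the midpoint: from the joint convexity \eqref{eq:w2conv} of $W_2^2$ one deduces, by a Minkowski/Cauchy--Schwarz argument, that if $f^i$ is admissible for $\mu^i$ in \eqref{eq:defac}, then $\sqrt{\tfrac12((f^1)^2+(f^2)^2)}$ is admissible for $\mu$; choosing $f^i=|\dot\mu_t^i|$ gives $|\dot\mu_t|^2\leq\tfrac12(|\dot\mu_t^1|^2+|\dot\mu_t^2|^2)$ for a.e.\ $t$. The second ingredient is the convexity of the squared slope, which is precisely Proposition \ref{prop:convsl}, yielding $|\partial^-\ent|^2(\mu_t)\leq\tfrac12(|\partial^-\ent|^2(\mu_t^1)+|\partial^-\ent|^2(\mu_t^2))$. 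Together with the plain convexity $\ent(\mu_t)\leq\tfrac12(\ent(\mu_t^1)+\ent(\mu_t^2))$, these two bounds plug into \eqref{eq:uppgr} applied to $\mu_t$ to give
\[
\ent(\mu)\leq \ent(\mu_t)+\tfrac12\int_0^t|\dot\mu_s|^2+|\partial^-\ent|^2(\mu_s)\,\d s\leq \tfrac12\sum_{i=1,2}\Big(\ent(\mu_t^i)+\tfrac12\int_0^t|\dot\mu_s^i|^2+|\partial^-\ent|^2(\mu_s^i)\,\d s\Big),
\]
and the last expression equals $\ent(\mu)$ thanks to the $\EDE$ for $\mu^1$ and $\mu^2$.

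Equality must therefore hold throughout, and in particular $\ent(\mu_t)=\tfrac12(\ent(\mu_t^1)+\ent(\mu_t^2))$ for every $t$. The strict convexity of $u(z)=z\log z$ then gives $\mu_t^1=\mu_t^2$ as absolutely continuous measures on $\supp(\mm)$, i.e.\ as elements of $\pr(\X)$. The main obstacle, in my view, is the speed inequality for the midpoint curve: it is not a direct consequence of \eqref{eq:w2conv} but requires the Minkowski-type step sketched above to transfer joint convexity of $W_2^2$ between consecutive times into a pointwise bound on the metric speeds. Everything else is essentially a bookkeeping application of the three convexity ingredients inside the upper gradient inequality \eqref{eq:uppgr}, which is exactly why the $\CD(K,\infty)$ framework is the right one for this uniqueness result.
\end{idea}
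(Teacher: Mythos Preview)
Your proof is correct and follows essentially the same route as the paper: form the affine midpoint of two putative trajectories, combine the convexity of $W_2^2$ (for the speed), Proposition~\ref{prop:convsl} (for the squared slope), and the strict convexity of $\ent$ to force equality in \eqref{eq:uppgr} and hence coincidence of the two curves. The paper phrases it as a contradiction (a strict inequality violating \eqref{eq:uppgr}) rather than an equality chain, but this is cosmetic; one minor remark is that the speed bound $|\dot\mu_t|^2\leq\tfrac12(|\dot\mu_t^1|^2+|\dot\mu_t^2|^2)$ is in fact a direct consequence of \eqref{eq:w2conv}---just divide $W_2^2(\mu_t,\mu_s)\leq\tfrac12\sum_i W_2^2(\mu_t^i,\mu_s^i)$ by $|t-s|^2$ and pass to the limit---so no separate Minkowski step is needed.
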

\begin{idea} Let $(\mu_t),(\nu_t)$ be two gradient flow trajectories for $\ent$ starting from $\mu$ and such that $\mu_t\neq \nu_t$ for some $t>0$. Let $\sigma_t:=\tfrac12(\mu_t+\nu_t)$ and notice that  taking convex combinations of \eqref{eq:defgf} written for $(\mu_t),(\nu_t)$, using the convexity in \eqref{eq:w2conv}, the one given by Proposition \ref{prop:convsl} above and the \emph{strict} convexity of $\ent(\cdot)$ (w.r.t.\ affine interpolation) we conclude that $(\sigma_t)$ is locally absolutely continuous, starts from $\mu$ and satisfies
\[
\ent(\sigma)>\ent(\sigma_t)+\tfrac12\int_0^t|\dot\sigma_s|^2+|\partial^-\ent|^2(\sigma_s)\,\d s.
\]
This contradicts \eqref{eq:uppgr} and proves the statement.
\end{idea}
We thus call `heat flow from $\mu$' the only gradient flow trajectory of $\ent$ starting from $\mu$. 
\begin{theorem}[Stability of the heat flow]\label{thm:stabheat}
Assume that:
\begin{itemize}
\item[i)] $(\X_n,\sfd_n,\mm_n)\stackrel{mGH}\to(\X_\infty,\sfd_\infty,\mm_\infty)$, the spaces being normalized and $\CD(K,\infty)$.
\item[ii)] The mGH-convergence above is realized by some compact $(\Y,\sfd_\Y)$.
\item[iii)] $\prd(\X_n)\ni \mu_n\weakto\mu_\infty\in\prd(\X_\infty)$  be with $\lims_n\entm{\mm_n}(\mu_n)\leq\entm{\mm_\infty}(\mu_\infty)<\infty$.
\item[iv)] $t\mapsto\mu_{n,t}\in \prd(\X_n)$ is the heat flow starting from $\mu_n$.
\end{itemize} 
Then $(\mu_{n,t})$ weakly converge to the heat flow $(\mu_{\infty,t})$ starting  from $\mu_\infty$ and the convergence is $W_2$-locally uniform in any realization of the mGH-convergence.
\end{theorem}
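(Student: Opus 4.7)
My plan is to reduce the statement to an application of the abstract stability Theorem \ref{thm:stabede} in the single compact Polish space $(\prd(\Y), W_2)$. Since $(\Y,\sfd_\Y)$ is compact by assumption $(ii)$, so is $(\pr(\Y),W_2)=(\prd(\Y),W_2)$, and through the isometric embeddings $\iota_n$ I identify $\prd(\X_n)$ with its image in $\prd(\Y)$. I define functionals $\E_n:\prd(\Y)\to[0,+\infty]$ by setting $\E_n(\mu):=\entm{(\iota_n)_*\mm_n}(\mu)$ if $\mu$ is concentrated on $\iota_n(\X_n)$ and $\E_n(\mu):=+\infty$ otherwise. Since $\iota_n$ is isometric, entropy, metric speed and descending slope of the heat flow trajectory are all preserved by pushforward, so that the curve $t\mapsto (\iota_n)_*\mu_{n,t}$ is precisely the $\EDE$-gradient flow of $\E_n$ starting from $(\iota_n)_*\mu_n$.

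The first main step is to verify that each $\E_n$ is non-negative, lower semicontinuous and $K$-geodesically convex on $(\prd(\Y),W_2)$. Non-negativity follows from Jensen's inequality applied to $z\log z$, and lower semicontinuity is immediate from the duality formula \eqref{eq:dualent}. For $K$-convexity, I exploit the fact that an isometric embedding $\X_n\hookrightarrow\Y$ induces at the Wasserstein level an isometric embedding $(\prd(\X_n),W_2)\hookrightarrow(\prd(\Y),W_2)$: thus any $W_2(\X_n)$-geodesic between measures in the effective domain of $\E_n$ pushes forward to a $W_2(\Y)$-geodesic that remains in this domain. The $\CD(K,\infty)$ assumption on $\X_n$ then produces, for any two such measures, such a geodesic along which the inequality \eqref{eq:defkconv} is satisfied.

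Next I invoke Theorem \ref{thm:gammamgh}, which gives $\E_n\stackrel{\Gamma}\to\E_\infty$ with respect to weak convergence in $\pr(\Y)$; since $\Y$ is compact this coincides with $W_2$-convergence on $\prd(\Y)$. Assumption $(iii)$ then says exactly that $(\mu_n)$ is a recovery sequence for $\mu_\infty$ in this sense. I can therefore apply Theorem \ref{thm:stabede} to the $\E_n$'s and the heat flow trajectories $((\iota_n)_*\mu_{n,t})$: it yields both the $W_2$-locally uniform relative compactness of the sequence and the fact that any limit curve is an $\EDE$-gradient flow trajectory of $\E_\infty=\entm{\mm_\infty}$ starting from $\mu_\infty$.

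The last step is to appeal to Theorem \ref{thm:uniqueheat}, which forces the $\EDE$-gradient flow of $\entm{\mm_\infty}$ starting from $\mu_\infty$ to be unique and hence equal to $(\mu_{\infty,t})$; this promotes subsequential convergence to full convergence of $((\iota_n)_*\mu_{n,t})$ to $(\iota_\infty)_*\mu_{\infty,t}$ uniformly on compact time intervals, which is the desired conclusion. The independence of the conclusion from the chosen realization is then a consequence of Remark \ref{re:nondipende}. The main subtlety I expect to handle carefully is the first step, namely the lifting of $K$-geodesic convexity from $\prd(\X_n)$ to $\prd(\Y)$: it is almost automatic thanks to the Wasserstein functoriality of isometric embeddings, but it is the point where one really uses that the $\iota_n$'s are \emph{isometric} rather than merely continuous, and without it Theorem \ref{thm:stabede} could not be triggered.
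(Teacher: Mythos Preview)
Your proposal is correct and follows exactly the approach the paper intends: the paper's own proof is the single line ``Follows directly from Theorem \ref{thm:stabede} above'', and you have faithfully unpacked what this entails, including the use of Theorem \ref{thm:gammamgh} for the $\Gamma$-convergence input and Theorem \ref{thm:uniqueheat} to upgrade subsequential to full convergence.
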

\begin{idea}
 Follows directly from Theorem \ref{thm:stabede} above.
 \end{idea}
Much like Theorem \ref{thm:stabcd}, this last result is telling something non-trivial even if all the $\X_n$'s and their limit are smooth manifolds: proving the same result without relying on the optimal transport interpretation of the heat flow would be more complicated\footnote{as for Theorem \ref{thm:stabcd}, one can notice that Cheeger-Colding in \cite{Cheeger-Colding97I,Cheeger-Colding97II,Cheeger-Colding97III} were not too far from a statement of this sort either, at least under a uniform upper bound on the dimension. A possible line of thought based on their argument would start noticing that a lower Ricci and upper dimension bounds implies estimates on the heat flow (e.g. a parabolic Harnack inequality from the Li-Yau estimate) that in turns gives compactness. Then one is left to prove that the limit flow is the heat flow in the limit manifold, which can be done if one is able to pass to the limit in the Laplacian.

Interestingly, a proof of this sort has been made in \cite{CMTkatoI} for Kato-lower bounds on the Ricci, see also the comments in Section \ref{se:bibliorcd}.}.

\subsection{Bibliographical notes}

{\footnotesize{I haven't discussed at all the geometric meaning of Ricci curvature in the smooth setting. For this, see for instance \cite{Chavel06}, \cite{Li12}, \cite{Petersen16}, \cite{Jost17}. For discussions more oriented toward comparison geometry and limit spaces, and thus to the topic of this note, see \cite{Gromov07}, \cite{ChEb08}, \cite{Cheegersurvey}, \cite{Wei07}, see also the more recent \cite{Villani09} for a presentation that emphasizes the role of Optimal Transport in this matter.

Standard references for Optimal Transport are  \cite{AmbrosioGigliSavare08}, \cite{Villani09}, \cite{G11}, \cite{Santambrogio15}, \cite{FigGla21}, \cite{AmbBruSem21}.

The concept of displacement interpolation, i.e.\ interpolation of probability measures along $W_2$-geodesics, goes back to the seminal paper by McCann    \cite{McCann97}. The key theorem \ref{thm:sturmvonrenesse} comes from \cite{vRS05}; relevant  precursors of this result have been obtained in \cite{OV00}, where a formal argument in favour of the the implication $(i)\Rightarrow(ii)$ was proposed, and \cite{CEMCS01}, where this implication has been made rigorous in the case $K=0$.

The Curvature-Dimension condition $\CD(K,\infty)$ for metric measure spaces via convexity of the entropy has been introduced in \cite{Lott-Villani09}, \cite{Sturm06I}, where also the first stability result have been proved. In \cite{Lott-Villani09} the spaces where assumed compact and in \cite{Sturm06I} normalized; the generalization of the stability results to spaces with infinite mass has been obtained in \cite{Villani09}.

The concept of metric gradient flow  has been introduced by De Giorgi and collaborators in \cite{DeGiorgiMarinoTosques80} (precisely to study evolutions problems under $\Gamma$-convergence) and further studied in \cite{DeGiorgi92}, \cite{DeGiorgi93}. The topic has been throughout investigated in \cite{AmbrosioGigliSavare08}  also in connection with the convexity assumption, leading in particular to inequality \eqref{eq:uppgr} and Definition \ref{def:gfede}.

As mentioned, the interpretation of heat flow as $W_2$-gradient flow of the entropy comes from \cite{JKO98}, see also \cite{Otto01} where this topic has been further investigated.

The first studies of the heat flow on $\CD$ spaces is in my paper \cite{Gigli10}, where the stability results  in Theorems \ref{thm:stabede}, \ref{thm:stabheat} and the uniqueness statement in Theorem \ref{thm:uniqueheat} have been obtained. Stability of gradient flows under $\Gamma$-convergence were also studied in \cite{SandSerf}. The concept of `push forward via a plan' was introduced by Sturm in \cite{Sturm06I}. I learnt about Lemma \ref{le:convDE} from  Savar\'e (personal communication). The generalization of the stability of the heat flow to non-compact spaces, possibly with infinite mass, has been obtained in \cite{GMS15}; notice that such generalization requires tools from Sobolev calculus that we are going to see in the next chapter (see also \cite[Remark 5.13]{GMS15}).

\medskip

The finite dimensional analogue  of  $\CD(K,\infty)$ cannot be written as easily as in Definition \ref{def:cdki} and partly for this reason there is a more flourishing taxonomy, mostly investigated by Sturm and collaborators: the conditions $\CD/\CD^*/\CD^e(K,N)$ have been introduced in \cite{Lott-Villani09}-\cite{Sturm06II}, \cite{BacherSturm10}, \cite{Erbar-Kuwada-Sturm13}  respectively  (more precisely, in \cite{Lott-Villani09} for $N<\infty$ only the case $K=0$ was studied, see also the bibliographical notes in \cite{Villani09}). Of these, the simplest to describe is $\CD^e(K,N)$,  I thus focus on this one. Roughly said, it amounts at replacing the inequality $\partial_{tt}\ent(\mu_t)\geq KW_2^2(\mu_0,\mu_1)$ which is, morally, \eqref{eq:defkconv}, with the stronger 
\begin{equation}
\label{eq:diffcdekn}
\partial_{tt}\ent(\mu_t)\geq \tfrac1N(\partial_t\ent(\mu_t))^2+KW_2^2(\mu_0,\mu_1).
\end{equation}
 The corresponding `integrated' inequality, which replaces \eqref{eq:defkconv} and is the actual definition of the $\CD^e(K,N)$ condition, reads as
\begin{equation}
\label{eq:defcdekn}
\E_N(\mu_t)\leq \sigma^{(1-t)}_{K,N}(W_2(\mu_0,\mu_1))\E_N(\mu_0)+\sigma^{(t)}_{K,N}(W_2(\mu_0,\mu_1))\E_N(\mu_1)\qquad\forall t\in[0,1],
\end{equation}
where $\E_N(\mu)=-\exp(-\tfrac1N\ent(\mu))$ and for given parameters $K\in\R$, $N\geq1$, $d\geq 0$ with $d^2 \tfrac KN<\pi^2$ the function $t\mapsto \sigma^{(t)}_{K,N}(d)$ is the only solution $g$ of $g''+d^2\tfrac KNg=0$ such that $g(0)=0$ and $g(1)=1$ (if $d^2 \tfrac KN\geq\pi^2$ we set $ \sigma^{(t)}_{K,N}(d)\equiv+\infty$).  To see the link between \eqref{eq:defcdekn} and \eqref{eq:diffcdekn}  notice that $f\in C^\infty([0,1])$ satisfies $f''\geq\tfrac1N(f')^2+Kd^2$ iff $F:=-\exp(-\tfrac1Nf)$ satisfies $F''+d^2\tfrac KNF\geq0$. The reason one asks for the integrated inequality \eqref{eq:defcdekn} in place of the differential one \eqref{eq:diffcdekn} has to do with the proof of stability, which is much simpler for  \eqref{eq:defcdekn} and follows as in Theorem \ref{thm:stabcd}; it is anyway worth to notice that the variational selection of geodesic made in the conclusion of the proof of Lemma \ref{le:tapio} grants that a definition based on \eqref{eq:diffcdekn} would also be stable (and equivalent to $\CD^e(K,N)$).

Concerning the relations among $\CD/\CD^*/\CD^e$, one should know that:
\begin{itemize}
\item[i)] $\CD(K,N)\Rightarrow\CD^*(K,N)$. Viceversa,  local $\CD^*(K,N)$ implies  local $\CD(K,N)$ (here \emph{local} means that any point has a neighbourhood $U$ such that for any two probability measures with support in $U$ there is a $W_2$-geodesic, possibly exiting $U$, along which the desired convexity inequality holds). See \cite{BacherSturm10}.
\item[ii)] on essentially non-branching spaces (see below) we have $\CD^*(K,N)\Leftrightarrow\CD^e(K,N)$. See \cite{Erbar-Kuwada-Sturm13}.
\item[iii)] a major result proved in \cite{CavMil16} is that on essentially non-branching and normalized spaces, $\CD^*(K,N)\Rightarrow\CD(K,N)$. It is widely believed (and I certainly do so) that the conclusion holds also on spaces with infinite mass, but a proof is currently missing.
\end{itemize}
As for the non-branching: a set $\Gamma$ of geodesics in $\X$ is said to be non-branching if 
\[
\gamma,\eta\in\Gamma,\quad \gamma\restr{[0,t]}=\eta\restr{[0,t]}\ \text{ for some $t\in(0,1]$}\qquad\Rightarrow\qquad \gamma=\eta.
\]
A space is non-branching if the set of all geodesics is non-branching, and essentially non-branching if any lifting $\ppi\in\pr(C([0,1],\X))$ (in the sense of Lemma \ref{le:superpp} below) of a $W_2$-geodesic with absolutely continuous endpoints is concentrated on a set of non-branching geodesics. 

These non-branching conditions are \emph{not} stable under convergence (think to the $L^p$-norm in $\R^2$ converging to the extremely branching $L^\infty$-norm as $p\uparrow\infty$). However, it has been proved in \cite{RajalaSturm12} that if the  $K$-convexity inequality holds along \emph{all} $W_2$-geodesics, then the space is essentially non-branching. Notably, if one adds  infinitesimal Hilbertianity (see Definition \ref{def:infhilb} and the next chapter) to the $\CD(K,\infty)$ condition  such stronger form of $K$-convexity holds (as a consequence of the so-called $\EVI_K$-property of the heat flow, see Proposition \ref{prop:EVIconv}) and the resulting class is also stable.

In summary, at least for normalized spaces we have
\begin{equation}
\label{eq:equivcd}
\CD(K,N)+\text{ inf. Hilb.}\qquad\Leftrightarrow\qquad \CD^*(K,N)+\text{ inf. Hilb.}
\qquad\Leftrightarrow\qquad \CD^e(K,N)+\text{ inf. Hilb.}
\end{equation}
and for this reason when speaking about $\RCD$ spaces one typically does not distinguish between the\linebreak $\RCD/\RCD^*/\RCD^e(K,N)$ conditions and simply speaks about $\RCD(K,N)$ spaces.

In terms of non-branching, notably in \cite{Deng20} it is proved that $\RCD(K,N)$ spaces, $N<\infty$, are non-branching. The proof, partly inspired by \cite{ColdingNaber12}, is based on a careful analysis of tangent spaces along geodesics.

}}

\section{Functional analysis enters into play}\label{ch:fa}
\subsection{First order Sobolev functions}
The space $W^{1,p}(\X)$ of real valued Sobolev functions on the metric measure space $(\X,\sfd,\mm)$ can be defined for any $p\in(1,\infty)$, and so does the space of BV functions. In this note I shall mostly focus on the case $p=2$, see Section \ref{se:gtv} for comments about different $p$'s. 
\subsubsection{A `vertical' approach and a stability result}\label{se:vertsob}
For $(\X,\sfd)$ metric space, we denote by $\Lip_\b(\X),\Lip_\bs(\X)$ the collection of real valued Lipschitz functions on $\X$ that are bounded, resp.\ with bounded support. Also, for $f:\X\to\R$ we define the \emph{asymptotic Lipschitz constant} $\lipa f:\X\to[0,+\infty]$ as
\[
\lipa f(x):=\lim_{r\downarrow0}\Lip(f\restr{B_r(x)})=\inf_{r>0}\Lip(f\restr{B_r(x)}).
\]
Notice that the latter writing shows that $\lipa f$ is  upper semicontinuous.
\begin{definition}\label{def:ch}
Let $(\X,\sfd,\mm)$ be a metric measure space. The Cheeger energy $\ch:L^2(\X)\to[0,\infty]$ is defined as
\[
\ch(f):=\inf\limi_n\tfrac12\int \lipa^2(f_n)\,\d\mm,
\]
the $\inf$ being taken among all sequences $(f_n)\subset\Lip_\bs(\X)$ converging to $f$ in $L^2$.

Then we define the Sobolev space $W^{1,2}_*(\X):=\{\ch<\infty\}\subset L^2(\X)$ and  the norm
\[
\|f\|_{W^{1,2}_*}:=(\|f\|_{L^2}+2\ch(f))^{\frac12}.
\]
\end{definition}
It it trivial to check that $\|\cdot\|_{W^{1,2}_*}$ is a norm. Completeness follows from the $L^2$-lower semicontinuity of $\ch$ (that is obvious by definition). Indeed, let $(f_n)$ be $W^{1,2}_*$-Cauchy, notice that it is also $L^2$-Cauchy and thus converges to some $f\in L^2$. Then the $L^2$-lower semicontinuity of $\ch$ gives the $L^2$-lower semicontinuity of $\|\cdot\|_{W^{1,2}_*}$ and thus $f\in W^{1,2}_*(\X)$ and
\begin{equation}
\label{eq:chlsc}
\lims_n\|f_n-f\|_{W^{1,2}_*}\leq\lims_n\limi_m\|f_n-f_m\|_{W^{1,2}_*}=0,
\end{equation}
showing that $f$ is also the $W^{1,2}_*$-limit of $(f_n)$. For $f\in W^{1,2}_*(\X)$ we say that $G$ is a \emph{relaxed slope} if for some $(f_n)\subset\Lip_\bs(\X)$ converging to $f$ in $L^2$ we have $\lip_a(f_n)\stackrel{L^2}\weakto G'$ for some $G'\leq G$ $\mm$-a.e.. In this case we say that $(f_n)$ \emph{realizes} $G$. It is easy to check the set of relaxed slopes is  a convex closed set, and thus it admits a unique element $|\D f|_*$ of minimal $L^2$-norm. Let  $f_n\to f$ be a sequence in $\Lip_\bs(\X)$ such that $\lim_n\tfrac12\int\lipa^2(f_n)\,\d\mm=\ch(f)<\infty$; then an application of Mazur's lemma\footnote{it tells: if $v_n\weakto v$ in some Banach space $B$, then there are convex combinations of the $v_n$'s strongly converging to $v$.} to a weakly converging subsequence of $(\lipa(f_n))$ shows that 
\begin{equation}
\label{eq:perdenslip}
\lipa(f_n)\to |\D f|_*\quad\text{ in $L^2\qquad$ and }\qquad \ch(f)=\tfrac12\int|\D f|^2_*\,\d\mm.
\end{equation}
 Moreover, one can prove that $|\D f|_*$ is minimal also in the (stronger) $\mm$-a.e.\ sense:
\[
|\D f|_*\leq G\quad\mm-a.e.\qquad\forall G\text{ relaxed slope of $f$.}
\]
To see this, by approximation it  suffices to prove that if $G_1,G_2$ are relaxed slopes and $\eta\in\Lip(\X)$ has values in $[0,1]$, then also $(1-\eta)G_1+\eta G_2$ is a relaxed slope. In turn, this follows using the elementary inequality
\[
\lipa((1-\eta)f_1+\eta f_2)\leq (1-\eta)\lipa(f_1)+ \eta\lipa(  f_2)+|f_2-f_1|\,\lipa(\eta)
\]
for $f_1=f_{1,n}$ and $f_2=f_{2,n}$ sequences realizing $G_1,G_2$ respectively. In particular
\begin{equation}
\label{eq:dflip}
|\D f|_*\leq\lipa f\quad\mm-a.e.\qquad \forall  f\in\Lip_\b(\X)\cap W^{1,2}_*(\X),
\end{equation}
as it is obvious picking $f_n:=f$ in Definition \ref{def:ch} (this is possible at least if $f$ has also bounded support, otherwise  one uses a cut-off argument).

Notice that in principle it might seem that Definition \ref{def:ch} is \emph{not} invariant under isomorphism of metric measure spaces, because the asymptotic Lipschitz constant is affected by the behaviour of the function outside $\supp(\mm)$. The following lemma solves this problem:
\begin{lemma}[``Non-linear" McShane extension]\label{le:locmcsh}
Let $C\subset\X$ be a subset of a metric space and $g:C\to\R$ be Lipschitz with bounded support. Then there exists $f\in\Lip_\bs(\X)$  such that
\[
\lipa^\X(f)(x)=\lipa^C(g)(x)\qquad\forall x\in C.
\] 
In particular, Definition \ref{def:ch} is invariant under isomorphism of metric measure spaces.
\end{lemma}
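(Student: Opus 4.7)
The inequality $\lipa^\X(f)(x)\geq\lipa^C(g)(x)$ is automatic whenever $f$ extends $g$: for every $r>0$ we have $\overline{B}_r(x)\cap C\subset \overline{B}_r(x)$ and hence $\Lip\bigl(g|_{\overline{B}_r(x)\cap C}\bigr)\leq\Lip\bigl(f|_{\overline{B}_r(x)}\bigr)$, so letting $r\downarrow 0$ gives the claim. The content of the lemma is therefore the production of an extension achieving the reverse inequality pointwise on $C$. After replacing $C$ by its closure and extending $g$ continuously (which preserves $\lipa g$), I may assume $C$ is closed.

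My plan is a local-to-global construction exploiting the upper semicontinuity of $\lipa^C g$. For every $x_0\in C$ and $\varepsilon>0$ there is $r=r(x_0,\varepsilon)>0$ with $\Lip\bigl(g|_{B_r(x_0)\cap C}\bigr)\leq \lipa^C(g)(x_0)+\varepsilon$. Fix $\varepsilon_k\downarrow 0$, a countable dense $\{x_i\}\subset C$ and radii $r_{i,k}$ with this property, and extract a locally finite refinement of the cover $\{B_{r_{i,k}}(x_i)\}$ of $C$ together with a subordinate Lipschitz partition of unity $\{\varphi_{i,k}\}$. For each $(i,k)$ apply the classical McShane construction to $g|_{B_{r_{i,k}}(x_i)\cap C}$ to produce $\tilde g_{i,k}:\X\to\R$ with $\Lip(\tilde g_{i,k})\leq \lipa^C(g)(x_i)+\varepsilon_k$. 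Let $\tilde g$ denote the global $\Lip(g)$-McShane extension of $g$, and set
\[
f \;:=\; \chi\cdot\Bigl(\tilde g + \sum_{i,k}\varphi_{i,k}\,(\tilde g_{i,k}-\tilde g)\Bigr),
\]
where $\chi$ is a Lipschitz cutoff equal to $1$ on a neighborhood of $\supp(g)\cup C$ and supported in a bounded set. Since $\tilde g_{i,k}=g=\tilde g$ on $C\cap B_{r_{i,k}}(x_i)\supset C\cap\supp(\varphi_{i,k})$, each summand vanishes on $C$, so $f=g$ on $C$; the cutoff makes $f\in\Lip_\bs(\X)$ without affecting behavior near $C$.

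The key verification is the sharp local Lipschitz bound $\lipa^\X(f)(x_0)\leq\lipa^C(g)(x_0)$. Fix $x_0\in C$ and $\varepsilon>0$, and choose $k$ with $\varepsilon_k<\varepsilon$. Near $x_0$ only finitely many $\varphi_{i,k'}$ with $k'\leq k$ are active, and by upper semicontinuity of $\lipa^C g$ one can shrink to a ball $B_\delta(x_0)$ on which each active index satisfies $\lipa^C(g)(x_i)\leq \lipa^C(g)(x_0)+\varepsilon$. Computing the Lipschitz constant of a partition-of-unity sum and using that $\tilde g_{i,k'}-\tilde g$ vanishes on $C$, hence is bounded by $O(\Lip(g)\cdot\mathrm{dist}(\cdot,C))$ on $\supp(\varphi_{i,k'})$, the cross-terms $\Lip(\varphi_{i,k'})\cdot\|\tilde g_{i,k'}-\tilde g\|_{L^\infty(B_\delta(x_0))}$ are of order $\Lip(g)\cdot\delta/r_{i,k'}$ and can be absorbed by taking $\delta$ sufficiently small relative to the active $r_{i,k'}$. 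The resulting estimate $\Lip(f|_{B_\delta(x_0)})\leq\lipa^C(g)(x_0)+O(\varepsilon)$ then gives $\lipa^\X(f)(x_0)\leq\lipa^C(g)(x_0)$ in the limit $\varepsilon\downarrow 0$. The delicate coordination between the partition of unity, the radii, and the decay of $\tilde g_{i,k}-\tilde g$ near $C$, so that no cross-term dominates at the scale $\delta$, is the main obstacle; it is however exactly the balancing that underlies Whitney-type extension arguments.

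For the concluding isomorphism invariance, note that $\int\lipa^2(h)\,\d\mm$ depends only on the values of $\lipa h$ on $\supp(\mm)$. The inequality $\ch_\X\leq\ch_{\supp(\mm)}$ then follows by extending, via the first part of the lemma, an $L^2$-approximating sequence $(g_n)\subset\Lip_\bs(\supp(\mm))$ to $(f_n)\subset\Lip_\bs(\X)$ with $\lipa^\X(f_n)=\lipa^{\supp(\mm)}(g_n)$ on $\supp(\mm)$, while the reverse is immediate by restriction and the trivial bound $\lipa^{\supp(\mm)}(h|_{\supp(\mm)})\leq\lipa^\X(h)$.
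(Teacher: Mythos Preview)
Your overall architecture---local McShane extensions plus gluing---matches the paper's sketch (which is itself only an outline, referring to \cite{GDMP} for the full argument), and your treatment of the isomorphism-invariance consequence is the same as the paper's. The cross-term estimate you describe is also correct: since only finitely many $\varphi_{i,k}$ are active near $x_0$, the quantities $\Lip(\varphi_{i,k})\cdot\|\tilde g_{i,k}-\tilde g\|_{L^\infty(B_\delta(x_0))}=O(\delta)$ do vanish as $\delta\downarrow0$.

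The gap is in the \emph{main} term, not the cross terms. After the partition-of-unity algebra, the leading contribution to $\Lip(f\restr{B_\delta(x_0)})$ is $\max_{(i,k)\text{ active at }x_0}\Lip(\tilde g_{i,k})\leq\max_{(i,k)\text{ active}}\big(\lipa^C g(x_i)+\varepsilon_k\big)$. Your claim that ``by upper semicontinuity of $\lipa^C g$ one can shrink to a ball $B_\delta(x_0)$ on which each active index satisfies $\lipa^C g(x_i)\leq\lipa^C g(x_0)+\varepsilon$'' is not justified: the set of indices active at $x_0$ is \emph{fixed} once the partition of unity is chosen, and shrinking $\delta$ does not change it. Upper semicontinuity only helps if the active centres $x_i$ are close to $x_0$, but an active ball $B_{r_{i,k}}(x_i)\ni x_0$ may well have large radius $r_{i,k}$ and hence $x_i$ far from $x_0$ with $\lipa^C g(x_i)$ substantially larger than $\lipa^C g(x_0)$. (Concretely: take $C=\X=\R$, $g(x)=x^2$ near $0$; a ball $B_{1/2}(1/4)$ is admissible at level $\varepsilon_1=1$ and contains $0$, yet $\lipa g(1/4)=1/2\gg0=\lipa g(0)$.) A single spatial partition of unity subordinate to a cover of $C$ cannot simultaneously, at every $x_0$, use only balls with centres near $x_0$.

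The paper's hint ``interpolate across different \emph{scales}'' points to the fix: the gluing must tie the scale of the local McShane extension to the point at which it is used, for instance via a Whitney-type decomposition of a neighbourhood of $C$ (so that near $x_0$ only balls of radius $\sim\sfd(\cdot,C)$, hence with centres approaching $x_0$, contribute), or via a telescoping construction over the scale index rather than over space. This is precisely the ``lengthy, and a bit tricky'' part the paper defers to \cite{GDMP}, and it is not captured by a single locally finite refinement of the cover $\{B_{r_{i,k}}(x_i)\}_{i,k}$.
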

\begin{idea}
For any given ball $B\subset\X$ it is easy to construct a Lipschitz extension $f$ of $g\restr{B\cap C}$ to the whole $\X$ with $\Lip(f\restr B)=\Lip(g\restr{B\cap C})$ by putting 
\[
f(x):=\inf_{y\in B\cap C}g(y)+\sfd(x,y)\Lip(g\restr{B\cap C}). 
\]Then to achieve the conclusion one  performs this construction for every $x\in C$ and ball $B_{r_i}(x)$ with $r_i\downarrow0$ and suitably interpolates across different scales. The details are rather lengthy, and a bit tricky. I shall omit them.

For the last claim it suffices to show that $W^{1,2}_*(\X,\sfd,\mm)=W^{1,2}_*(\supp(\mm),\sfd,\mm)$. Inequality $\lipa^\X f(x)\geq \lipa^{\supp(\mm)} f(x)$ trivially holds on $\supp(\mm)$ and yields one inclusion. For the other, start from $g\in\Lip_\bs(\supp(\mm))$ and extend it using the first part of the statement.
\end{idea}
One can now use the upper semicontinuity of $\lipa f$ to obtain a stability result that is independent on any curvature condition. For the rest of the section, $\X_n\stackrel{mGH}\to\X_\infty$ is a converging sequence of normalized spaces and $(\Y,\sfd_\Y,(\iota_n))$ a realization of the convergence. I identify the $\X_n$'s with their isometric images in $\Y$.

Let me first define  $L^2$-convergence in varying spaces:
\begin{definition}[Convergence in varying $L^2$ spaces]\label{def:convl2var} We say that $n\mapsto f_n\in L^2(\X_n,\mm_n)$ weakly $L^2$-converges to $f_\infty\in L^2(\X_\infty,\mm_\infty)$ provided $\sup_n\|f_n\|_{L^2(\mm_n)}<\infty$ and $f_n\mm_n\weakto f_\infty\mm_\infty$, i.e.
\[
\int \varphi f_n\,\d\mm_n\quad\to\quad\int \varphi f_\infty\,\d\mm_\infty\qquad\forall \varphi\in C_\b(\Y).
\]
The convergence is strong if moreover $\|f_\infty\|_{L^2(\mm_\infty)}=\lim_n\|f_n\|_{L^2(\mm_n)}$. 
\end{definition}
It is clear that in the case of a constant sequence of spaces, these convergences reduce to the standard weak/strong convergence in  $L^2$. Many of the usual properties of $L^2$ convergence are retained even in this more general framework. To see why, notice that if $\mm_\infty$ has not atoms (which is very common in our setting: the reference measure of a  $\CD(K,\infty)$ space is either a Dirac mass or non-atomic\footnote{this is a consequence of a suitable infinite-dimensional version of the Bishop-Gromov inequality obtained in \cite{Sturm06I}. In finite dimensional $\CD(K,N)$ space the more classical Bishop-Gromov inequality holds (see \cite{Sturm06II}) so the spaces are locally doubling and the claim is obvious.}), then from $\mm_n\weakto\mm_\infty$, the fact that weak convergence is metrized by the Wasserstein distance on bounded spaces and by general results about the fact that the $\inf$ of the cost of transport in the `Monge formulation' coincides with that in the `Kantorovich formulation' (see \cite{Pratelli07}) applied to the cost $c=1\wedge\sfd_\Y$, we can find maps $T_n:\Y\to\Y$ so that 
\begin{equation}
\label{eq:mappeTn}
(T_n)_*\mm_\infty=\mm_n\qquad \forall n\in\N,\qquad\text{and}\qquad \int1\wedge\sfd_\Y(x,T_n(x))\,\d\mm_\infty(x)\to0.
\end{equation}
Then since for any $\varphi\in C_\b(\Y)$ we have $\varphi\circ T_n\stackrel{L^2(\mm_\infty)}\to\varphi$ (by dominated convergence) we  get 
\begin{subequations}
\label{eq:equivl2conv}\begin{align}
\label{eq:equivl2convw}
f_n\stackrel{L^2}\weakto &f_\infty\qquad\Leftrightarrow\qquad  f_n\circ T_n\weakto f_\infty \text{ weakly in }L^2(\mm_\infty),\\
\label{eq:equivl2convs}
f_n\stackrel{L^2}\to& f_\infty\qquad\Leftrightarrow \qquad f_n\circ T_n\to f_\infty \text{ strongly in }L^2(\mm_\infty).
\end{align}
\end{subequations}
These\footnote{a characterization as in \eqref{eq:equivl2convw}, \eqref{eq:equivl2convs} (and all the consequences we derive from them) is in place regardless of any assumption on $\mm_\infty$, as in any case one can prove, via a gluing technique and Kolmogorov's theorem, the existence of $\aalpha\in\pr(\Y^{ \N\cup\{\infty\}})$ such that $\pi^n_*\aalpha=\mm_n$ for every $n\in\N\cup\{\infty\}$ and $\lim_n\int1\wedge\sfd_\Y(y_n,y_\infty)\,\d\aalpha(\ldots,y_n,\ldots,y_\infty)=0$. It is then easy to see that
\begin{subequations}
\begin{align}
\label{eq:strongequiv}
f_n\stackrel{L^2}\to& f_\infty\quad\Leftrightarrow \quad f_n\circ\pi^n\to f_\infty\circ\pi^\infty\text{ strongly in }L^2(\aalpha) \\
\label{eq:weakequiv}
f_n\stackrel{L^2}\weakto &f_\infty\quad\Leftrightarrow\quad \sup_n\|f_n\|_{L^2(\mm_n)}<\infty\quad\text{and}\quad f_\infty (x_\infty)=\int g\,\d\aalpha_{x_\infty} \ \mm_\infty\text{-a.e.}\ x_\infty
\end{align}
\end{subequations}
For any $L^2(\aalpha)$-weak limit  $g$ of some subsequence of $(f_n\circ\pi^n)$ and $\{\aalpha_{x_\infty}\}$ is the disintegration of $\aalpha$ w.r.t.\ $\pi^\infty$. 
}
 ensure that many familiar properties of $L^2$-convergence(s) hold in our modified setting, for instance we have:
\begin{equation}
\label{eq:exprl2}
\begin{split}
&\text{1) $\sup_n\|f_n\|_{L^2(\X_n)}<\infty\qquad\Rightarrow\qquad$ there is a weakly converging subsequence},\\
&\text{2) strong limits respect linear  combinations (for weak this was already obvious)}.
\end{split}
\end{equation}

I now come to the following general result that, notably, needs no curvature assumption:
\begin{theorem}[$\Gamma-\lims$ of Cheeger energies]\label{thm:glimsch}
Let $\X_n\stackrel{mGH}\to\X_\infty$.

Then   $\Gamma-\lims_n\ch_n \leq \ch_\infty$,
the underlying convergence being strong $L^2$-convergence.
\end{theorem}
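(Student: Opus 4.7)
\begin{idea}
The plan is to build the recovery sequence by approximating $f_\infty$ in $\X_\infty$ with Lipschitz functions that are almost optimal for $\ch_\infty$, then transporting them to the $\X_n$'s by extending to the ambient realization $\Y$ and restricting. Assume without loss of generality $\ch_\infty(f_\infty)<+\infty$ and, by definition of $\ch_\infty$, choose $g_k\in\Lip_\bs(\X_\infty)$ with $g_k\to f_\infty$ in $L^2(\mm_\infty)$ and $\tfrac12\int\lipa^2(g_k)\,\d\mm_\infty\to\ch_\infty(f_\infty)$. Fix a realization $(\Y,\sfd_\Y,(\iota_n))$ and identify $\X_n$ with $\iota_n(\X_n)\subset \Y$. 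The key tool is Lemma \ref{le:locmcsh}: applied with $C=\X_\infty\subset \Y$ it produces $\tilde g_k\in\Lip_\bs(\Y)$ extending $g_k$ and satisfying
\[
\lipa^{\Y}(\tilde g_k)(x)=\lipa^{\X_\infty}(g_k)(x)\qquad\forall x\in\X_\infty.
\]
Set $g_{k,n}:=\tilde g_k|_{\X_n}\in\Lip_\bs(\X_n)$.

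Two elementary observations then close the argument. First, since $\tilde g_k,\tilde g_k^2\in C_\b(\Y)$ and $\mm_n\weakto\mm_\infty$, testing against $\varphi\in C_\b(\Y)$ gives $g_{k,n}\weakto g_k$ in the varying-$L^2$ sense of Definition \ref{def:convl2var}, while testing against $\tilde g_k^2$ gives $\|g_{k,n}\|_{L^2(\mm_n)}\to\|g_k\|_{L^2(\mm_\infty)}$, so $g_{k,n}\to g_k$ \emph{strongly} in $L^2$. Second, from $B_r^{\X_n}(x)\subset B_r^{\Y}(x)$ and the definition of $\lipa$ one has the pointwise bound $\lipa^{\X_n}(g_{k,n})\leq \lipa^{\Y}(\tilde g_k)|_{\X_n}$, so that taking $g_{k,n}$ as its own recovery sequence in the definition of $\ch_n$ gives $\ch_n(g_{k,n})\leq\tfrac12\int\lipa^{\Y}(\tilde g_k)^2\,\d\mm_n$. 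Since $\lipa^{\Y}(\tilde g_k)^2$ is upper semicontinuous and bounded on $\Y$, the passage to the limit for upper semicontinuous functions against weakly converging probability measures yields
\[
\limsup_n\ch_n(g_{k,n})\leq \tfrac12\int\lipa^{\Y}(\tilde g_k)^2\,\d\mm_\infty=\tfrac12\int\lipa^{\X_\infty}(g_k)^2\,\d\mm_\infty,
\]
the last equality using that $\mm_\infty$ is concentrated on $\X_\infty$ and the pointwise identity provided by Lemma \ref{le:locmcsh}.

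A standard diagonal extraction then selects $k(n)\to\infty$ so that $f_n:=g_{k(n),n}$ strongly $L^2$-converges to $f_\infty$ and $\limsup_n\ch_n(f_n)\leq\lim_k\tfrac12\int\lipa^{\X_\infty}(g_k)^2\,\d\mm_\infty=\ch_\infty(f_\infty)$, which is the desired $\glims$ inequality. The only nontrivial point is the invocation of Lemma \ref{le:locmcsh}: a plain McShane extension would only control the \emph{global} Lipschitz constant of $\tilde g_k$, giving the much too crude upper bound $\tfrac12\Lip(g_k)^2$ on the density, whereas we need the integrated pointwise asymptotic Lipschitz constant. No curvature assumption enters, consistently with the statement.
\end{idea}
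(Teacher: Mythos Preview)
Your proof is correct and follows essentially the same route as the paper's: approximate $f_\infty$ by Lipschitz functions, use Lemma~\ref{le:locmcsh} to get extensions to $\Y$ whose asymptotic Lipschitz constant on $\X_\infty$ is preserved, then exploit the upper semicontinuity and boundedness of $\lipa^{\Y}(\tilde g_k)^2$ against $\mm_n\weakto\mm_\infty$, and diagonalize. The paper compresses the first two steps into one line (directly taking $(f_k)\subset\Lip_\bs(\Y)$ ``recalling Lemma~\ref{le:locmcsh}''), but the substance is identical; your closing remark on why plain McShane would not suffice is a useful clarification.
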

\begin{idea}  Let $f\in W^{1,2}_*(\X_\infty)$,  and $(f_k)\subset\Lip_\bs(\Y)$ (recall  Lemma \ref{le:locmcsh}) be with $\lims_k \tfrac12\int\lipa^2(f_k)\,\d\mm_\infty\leq \ch(f)$.

Since each  $f_k$ is Lipschitz, each of the functions $\lipa(f_k)$ is globally bounded. Then  the weak convergence of the $\mm_n$'s in $\Y$ and the  upper semicontinuity of $\lipa(f_k)$  ensures that
\[
\lims_{n}\int\lipa^2(f_k)\,\d\mm_n\leq\int\lipa^2(f_k)\,\d\mm_\infty\qquad\forall k\in\N.
\]
For every $k\in\N$, the sequence $n\mapsto f_k\in L^2(\X_n)$ clearly strongly $ L^2$-converges to $f_k\in L^2(\X_\infty)$, thus we conclude with a diagonalization argument.
\end{idea}
A direct consequence of this last theorem is that virtually any `first order inequality' (such as isoperimetric inequality, local/global Poincar\'e inequality etc..) where some integral norm of the differential bounds the size/oscillation of the function, is stable under mGH-convergence. I illustrate this point with the following example, showing that  a log-Sobolev inequality is stable under mGH-limits, but the argument easily generalizes. 
\begin{corollary}\label{cor:stablogsob}
Let $c>0$. Then the class of spaces $(\X,\sfd,\mm)$ such that 
\begin{equation}
\label{eq:logsob}
\int f^2\log(f^2)\,\d\mm-\Big(\int f^2\,\d\mm\Big)\log\Big(\int f^2\,\d\mm\Big)\leq\frac2c\int|\D f|_*^2\,\d\mm\qquad\forall f\in W^{1,2}_*(\X)
\end{equation}
is stable under mGH-convergence.
\end{corollary}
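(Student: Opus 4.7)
The plan is to reduce \eqref{eq:logsob} on $\X_\infty$ to the analogous inequality on the $\X_n$'s applied to a recovery sequence from Theorem \ref{thm:glimsch}, and then pass to the limit using the $\Gamma-\limi$ inequality for the entropies from Theorem \ref{thm:gammamgh}. The bridge between the two stability statements will be a simple algebraic reformulation: fixing $f\in W^{1,2}_*(\X_\infty)$ with $a:=\|f\|_{L^2(\mm_\infty)}^2>0$ (the cases $f\notin W^{1,2}_*$ and $a=0$ being trivial) and letting $\mu:=\tfrac{f^2}{a}\mm_\infty\in\pr(\X_\infty)$, a direct computation gives
\[
\int f^2\log(f^2)\,\d\mm_\infty-a\log a=a\cdot\entm{\mm_\infty}(\mu),
\]
so \eqref{eq:logsob} for $f$ becomes $a\cdot\entm{\mm_\infty}(\mu)\leq \tfrac{4}{c}\ch_\infty(f)$.

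First, fix a realization $(\Y,\sfd_\Y,(\iota_n))$ of the mGH-convergence and use Theorem \ref{thm:glimsch} to pick a recovery sequence $n\mapsto f_n\in L^2(\X_n)$ with $f_n\to f$ strongly in $L^2$ and $\lims_n\ch_n(f_n)\leq \ch_\infty(f)$. Set $a_n:=\|f_n\|_{L^2(\mm_n)}^2$ and $\mu_n:=\tfrac{f_n^2}{a_n}\mm_n\in\pr(\X_n)$ (well-defined for large $n$, since $a_n\to a>0$ by strong $L^2$-convergence). Applying \eqref{eq:logsob} on $\X_n$ to $f_n$ produces the quantitative bound
\[
a_n\cdot\entm{\mm_n}(\mu_n)\leq \tfrac{2}{c}\int|\D f_n|^2_*\,\d\mm_n=\tfrac{4}{c}\ch_n(f_n).
\]

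Second, I would check that $\mu_n\weakto\mu$ in $\pr(\Y)$, so that the $\Gamma-\limi$ part of Theorem \ref{thm:gammamgh} applies. By \eqref{eq:equivl2convs} and the transport maps $T_n$ as in \eqref{eq:mappeTn}, strong $L^2$-convergence $f_n\to f$ is equivalent to $f_n\circ T_n\to f$ in $L^2(\mm_\infty)$, from which $(f_n\circ T_n)^2\to f^2$ in $L^1(\mm_\infty)$; combining this with $\varphi\circ T_n\to\varphi$ in $\mm_\infty$-measure (and uniformly bounded, for $\varphi\in C_\b(\Y)$) yields
\[
\int\varphi\,f_n^2\,\d\mm_n=\int(\varphi\circ T_n)(f_n\circ T_n)^2\,\d\mm_\infty\ \longrightarrow\ \int\varphi\,f^2\,\d\mm_\infty,
\]
and dividing by $a_n\to a$ gives $\mu_n\weakto\mu$. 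Then Theorem \ref{thm:gammamgh} gives $\entm{\mm_\infty}(\mu)\leq\limi_n\entm{\mm_n}(\mu_n)$, and since relative entropy is non-negative by Jensen's inequality and $a_n\to a$, I can pass to the liminf in the inequality above:
\[
a\cdot\entm{\mm_\infty}(\mu)\leq \limi_n a_n\cdot\entm{\mm_n}(\mu_n)\leq \tfrac{4}{c}\lims_n\ch_n(f_n)\leq\tfrac{4}{c}\ch_\infty(f),
\]
which is the desired reformulated \eqref{eq:logsob} on $\X_\infty$.

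The only step that I expect to require care is the promotion of strong $L^2$-convergence of $f_n$ to weak convergence of the probability measures $\mu_n$ to $\mu$, which relies on \eqref{eq:equivl2convs} and some uniform integrability handled via the $T_n$'s; every other ingredient is either a routine application of the two stability theorems or the positivity of the relative entropy. Exactly the same scheme works verbatim for any first-order inequality of the form $\E(f)\leq C\int|\D f|_*^2\,\d\mm$ provided $\E$ is weakly lower semicontinuous in $f$ jointly with mGH-convergence of the spaces, which is the general principle alluded to before the corollary.
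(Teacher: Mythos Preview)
Your proof is correct and takes a genuinely different route from the paper's. The paper first truncates to reduce to bounded $f_\infty$, then observes that the recovery sequence from Theorem~\ref{thm:glimsch} can be taken uniformly bounded (it is built from $\Lip_\bs(\Y)$ functions), and finally uses that for uniformly bounded $f_n\stackrel{L^2}\to f_\infty$ the map $z\mapsto z^2\log(z^2)$ is Lipschitz on the relevant range, so the left-hand side of \eqref{eq:logsob} actually \emph{converges}. Your approach instead rewrites the left-hand side as $a\cdot\entm{\mm}(\mu)$ and invokes the $\Gamma$--$\liminf$ half of Theorem~\ref{thm:gammamgh} for the entropies, together with their non-negativity, to get only the lower semicontinuity that is needed.

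What each approach buys: the paper's argument is slightly more elementary (no reformulation via entropy, no need to check $\mu_n\weakto\mu$) and yields the stronger information that the left-hand side converges along the recovery sequence, at the price of a truncation step. Your argument is more conceptual---it makes transparent that the stability of \eqref{eq:logsob} is a direct consequence of the two $\Gamma$-convergence statements (Theorems~\ref{thm:gammamgh} and~\ref{thm:glimsch}) already established---and avoids truncation entirely, at the cost of the small verification that $f_n^2\mm_n\weakto f_\infty^2\mm_\infty$ (which you handle correctly via $(f_n\circ T_n)^2\to f_\infty^2$ in $L^1(\mm_\infty)$). A minor caveat: your use of the maps $T_n$ from \eqref{eq:mappeTn} tacitly assumes $\mm_\infty$ is non-atomic; the paper's footnote after \eqref{eq:equivl2conv} explains how to bypass this, and the paper's own proof uses \eqref{eq:equivl2convs} in the same way, so this is not a defect of your argument relative to the paper's.
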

\begin{idea} By truncation one sees that \eqref{eq:logsob} holds for any $f\in W^{1,2}_*(\X)$ iff it holds for any $f\in L^\infty\cap W^{1,2}_*(\X)$. Also,  for $f_\infty\in W^{1,2}_*(\X_\infty)$ bounded, a recovery sequence $(f_n)$ for the Cheeger energy can be chosen uniformly bounded. In this case from \eqref{eq:equivl2convs} we easily get that the LHS of \eqref{eq:logsob} passes to the limit, while by construction the RHS converges.
\end{idea}

\subsubsection{An `horizontal' approach}\label{se:horsob}
Recall that $C([0,1],\X)$ is the space of continuous curves on $\X$ equipped with the complete and separable `sup' distance. For $t\in[0,1]$ we denote by $\e_t:C([0,1],\X)\to\X$ the evaluation map sending $\gamma$ to $\gamma_t$.
\begin{definition}[Test plan]  $\ppi\in\pr(C([0,1],\X))$ is a test plan  provided for some $C>0$ 
\begin{subequations}
\begin{align}\label{eq:boundcompr}
(\e_t)_*\ppi&\leq C\mm\qquad\forall t\in[0,1]&\text{(bounded compression)}&\\
\label{eq:finiteenergy}
\KE(\ppi)&:=\iint_0^1|\dot\gamma_t|^2\,\d t\,\d\ppi(\gamma)<\infty&\text{(finite  kinetic energy)}&
\end{align}
\end{subequations}
The least  $C$ in \eqref{eq:boundcompr} is called compression constant of $\ppi$ and denoted $\comp(\ppi)$.
\end{definition}
Notice that if $\ppi(\Gamma)>0$ and $\ppi$ is a test plan, then so is the `restricted' plan $\ppi(\Gamma)^{-1}\ppi\restr\Gamma$. Also, given $t,s\in[0,1]$ we consider the restriction (and rescaling) map $\Restr ts:C([0,1],\X)\to C([0,1],\X)$ sending $\gamma$ to $r\mapsto\gamma_{(1-r)t+rs}$. Then it is easy to see that $(\Restr ts)_*\ppi$ is a test plan for any $t,s\in[0,1]$ if $\ppi$ is so.

\begin{definition}\label{def:defsobw} Let  $f\in L^2(\X)$. We say that $f\in W^{1,2}_w(\X)$ and that $G\in L^2(\X)$, $G\geq 0$, is a \emph{weak upper gradient} provided 
\begin{equation}
\label{eq:defsobw}
\int|f(\gamma_1)-f(\gamma_0)|\,\d\ppi(\gamma)\leq\iint_0^1 G(\gamma_r)|\dot\gamma_r|\,\d r\,\d\ppi(\gamma),\qquad\forall\,\ppi\text{ test plan}.
\end{equation}
\end{definition}
 Notice that
\begin{equation}
\label{eq:dappi}
\iint_0^1 G(\gamma_r)|\dot\gamma_r|\,\d r\,\d\ppi(\gamma)\leq \sqrt{ \int_0^1\!\!\! \int G^2\,\d(\e_r)_*\ppi\,\d r}\sqrt{\KE(\ppi)}\leq  \sqrt{\comp(\ppi)\,\KE(\ppi)}\|G\|_{L^2(\X)},
\end{equation}
so the RHS of \eqref{eq:defsobw} is finite under the stated assumptions. We also have:
\begin{lemma}\label{le:equivsobw} Let $f,G\in L^2(\X)$ be with $G\geq 0$.
Then the following are equivalent:
\begin{itemize}
\item[i)] $G$ is a weak upper gradient of $f$.
\item[ii)] For every test plan $\ppi$ we have: the curve $t\mapsto f\circ\e_t\in L^1(\ppi)$ is absolutely continuous, the strong $L^1(\ppi)$-limit $\partial_t(f\circ \e_t)$ of $\frac{f\circ\e_{t+h}-f\circ\e_t}{h}$ as $h\to 0$ exists for a.e.\ $t$ and satisfies   $|\partial_t(f\circ\e_t)|\leq G(\gamma_t)|\dot\gamma_t|$ $\ppi$-a.e.\ for a.e.\ $t$.
\item[iii)] For every test plan $\ppi$ we have: for $\ppi$-a.e.\ $\gamma$ the function $f\circ \gamma$ is in $W^{1,1}([0,1])$ with $|(f\circ \gamma)'|(t)\leq G(\gamma_t)|\dot\gamma_t|$ for a.e.\ $t$.
\end{itemize}
\end{lemma}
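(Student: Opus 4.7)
My plan is to establish the cyclic chain (iii)$\Rightarrow$(ii)$\Rightarrow$(i)$\Rightarrow$(iii), with the final implication being the main technical step.

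For (ii)$\Rightarrow$(i) the argument is essentially a Bochner fundamental theorem of calculus: under (ii) the $L^1(\ppi)$-valued curve $t\mapsto f\circ\e_t$ is absolutely continuous with derivative $\partial_t(f\circ\e_t)$, so $f\circ\e_1-f\circ\e_0=\int_0^1\partial_t(f\circ\e_t)\,\d t$ in $L^1(\ppi)$. Taking absolute values, integrating against $\ppi$, applying Fubini and the pointwise bound $|\partial_t(f\circ\e_t)|\leq G(\gamma_t)|\dot\gamma_t|$ yields (i). For (iii)$\Rightarrow$(ii), integrating the $\ppi$-a.e.\ inequality $|f(\gamma_s)-f(\gamma_t)|\leq\int_t^s G(\gamma_r)|\dot\gamma_r|\,\d r$ against $\ppi$ and invoking \eqref{eq:dappi} shows that $t\mapsto f\circ\e_t\in L^1(\ppi)$ is absolutely continuous. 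To promote the pointwise curve-by-curve derivative to a strong $L^1(\ppi)$-limit for a.e.\ $t$, I would apply the generalized dominated convergence theorem: the difference quotient is $\ppi$-a.e.\ bounded by $\tfrac1h\int_t^{t+h}G(\gamma_r)|\dot\gamma_r|\,\d r$, and by Lebesgue differentiation applied curve-by-curve followed by Fubini, this dominant converges both pointwise $\ppi$-a.e.\ and in $L^1(\ppi)$ to the integrable function $G(\gamma_t)|\dot\gamma_t|$ for a.e.\ $t$.

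The content of (i)$\Rightarrow$(iii) lies in exploiting the stability of test plans under two natural operations: normalized restriction to any Borel $\Gamma\subset C([0,1],\X)$ with $\ppi(\Gamma)>0$, and rescaling via $\Restr{t}{s}$ for $0\leq t<s\leq 1$ (bounded compression is preserved and the kinetic energy picks up a factor $s-t$). Applying (i) to $(\Restr{t}{s})_*$ of the normalized restriction and using the change of variable $u=(1-r)t+rs$, one obtains
\[
\int_\Gamma|f(\gamma_s)-f(\gamma_t)|\,\d\ppi(\gamma)\leq\int_\Gamma\int_t^s G(\gamma_u)|\dot\gamma_u|\,\d u\,\d\ppi(\gamma)
\]
for every Borel $\Gamma$, hence $|f(\gamma_s)-f(\gamma_t)|\leq\int_t^s G(\gamma_u)|\dot\gamma_u|\,\d u$ holds $\ppi$-a.e., with the null set depending on $(t,s)$. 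Running $(t,s)$ over a countable dense $D\subset[0,1]$ and removing the resulting countable union of null sets, $q\mapsto f(\gamma_q)$ restricted to $D$ extends uniquely to an absolutely continuous $\tilde f_\gamma:[0,1]\to\R$ satisfying $|\tilde f_\gamma'|(r)\leq G(\gamma_r)|\dot\gamma_r|$ a.e. To identify $\tilde f_\gamma$ with $f\circ\gamma$ off a negligible $t$-set, I would fix any $t\in[0,1]$, observe that $f\circ\e_q\to f\circ\e_t$ strongly in $L^1(\ppi)$ as $D\ni q\to t$ (by the same restriction inequality), while pointwise $f\circ\e_q\to\tilde f_\gamma(t)$ for $\ppi$-a.e.\ $\gamma$; uniqueness of limits then gives $\tilde f_\gamma(t)=f(\gamma_t)$ $\ppi$-a.e.\ for every $t$, and Fubini concludes (iii).

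The main obstacle is the very first step of (i)$\Rightarrow$(iii): moving from the integral inequality tested against Borel subsets to the pointwise inequality along $\ppi$-a.e.\ curve. Two points require care. First, the bounded compression $(\e_t)_*\ppi\leq\comp(\ppi)\,\mm$ is essential for $f(\gamma_t)$ to be well-defined $\ppi$-a.e.\ at every individual $t$, since $f$ is only an $\mm$-equivalence class. Second, the final identification step must handle uncountably many values of $t$ outside $D$, which is exactly why the $L^1(\ppi)$-continuity of $t\mapsto f\circ\e_t$ (itself a byproduct of the restriction inequality) is needed and why the pointwise information on $D$ alone would not suffice.
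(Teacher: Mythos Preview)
Your chain $(iii)\Rightarrow(ii)\Rightarrow(i)\Rightarrow(iii)$ is sound in outline and close to the paper's, which instead runs $(i)\Rightarrow(ii)$, $(ii)\Leftrightarrow(iii)$, $(iii)\Rightarrow(i)$. Your $(i)\Rightarrow(iii)$ via restriction/rescaling plus a countable dense set is correct and essentially packages the paper's $(i)\Rightarrow(ii)$ together with its Fubini-type identification $(ii)\Leftrightarrow(iii)$; your $(ii)\Rightarrow(i)$ via the Bochner fundamental theorem of calculus is fine.

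There is, however, a genuine gap in your $(iii)\Rightarrow(ii)$. From $(iii)$, the statement $f\circ\gamma\in W^{1,1}([0,1])$ only means that $t\mapsto f(\gamma_t)$ agrees \emph{a.e.}\ with its absolutely continuous representative $\tilde f_\gamma$; hence the bound $|f(\gamma_s)-f(\gamma_t)|\leq\int_t^s G(\gamma_r)|\dot\gamma_r|\,\d r$ holds, for $\ppi$-a.e.\ $\gamma$, only for a.e.\ pairs $(t,s)$ --- not for all pairs. Two consequences: (a) the integrated inequality $\|f\circ\e_s-f\circ\e_t\|_{L^1(\sppi)}\leq\iint_t^s G(\gamma_r)|\dot\gamma_r|\,\d r\,\d\ppi$ is only known for a.e.\ $(t,s)$, so absolute continuity of the $L^1(\ppi)$-valued curve does not follow directly; (b) your domination $|Q_h(\gamma)|\leq\tfrac1{|h|}\bigl|\int_t^{t+h}G(\gamma_r)|\dot\gamma_r|\,\d r\bigr|$ and the pointwise convergence $Q_h(\gamma)\to(f\circ\gamma)'(t)$ only hold for a.e.\ $h$, so generalized dominated convergence does not directly give the strong $L^1(\ppi)$-limit along \emph{every} sequence $h\to 0$. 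The fix is to first establish that $t\mapsto f\circ\e_t\in L^1(\ppi)$ is continuous for every $t$, a general fact requiring nothing beyond $f\in L^2(\mm)$: approximate $f$ in $L^2$ by $g\in C_\b(\X)$ and use the uniform bound $\|(f-g)\circ\e_r\|_{L^1(\sppi)}\leq\sqrt{\comp(\ppi)}\,\|f-g\|_{L^2(\mm)}$. Once you know this, $f\circ\e_t$ and $\tilde f_\cdot(t)$ are two continuous $L^1(\ppi)$-valued curves that agree for a.e.\ $t$, hence for all $t$; both (a) and (b) then go through. The paper carries out precisely this continuity-by-approximation argument, placing it in its $(iii)\Rightarrow(i)$ step.
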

\begin{idea}$(i)\Rightarrow(ii)$ Given $\ppi$, an argument by contradiction based on the fact that   $(\Restr ts)_*\ppi$ and $\ppi(\Gamma)^{-1}\ppi\restr\Gamma$ are  test plans imply that
\begin{equation}
\label{eq:curvewise}
 f(\gamma_s)-f(\gamma_t)\leq \int_t^sG(\gamma_r)|\dot\gamma_r|\,\d r\qquad\ppi-a.e.\ \gamma,\quad\forall t,s\in[0,1],\ t<s.
\end{equation}
Since $(\gamma,r)\mapsto G(\gamma_r)|\dot\gamma_r|$ is in $L^1(\ppi\times\mathcal L^1)$ by \eqref{eq:dappi}, absolute continuity in $L^1(\ppi)$ follows. It is also clear that if the strong $L^1(\ppi)$-limit of $\frac{f\circ\e_{t+h}-f\circ\e_t}{h}$ as $h\to0$ exists, then it satisfies the stated inequality, so it remains to prove this. In general, absolute continuity in $L^1(\ppi)$ is not sufficient to get existence of strong derivative (because $L^1$ does not have the Radon-Nikodym property), but in this case we have the pointwise (i.e.\ curvewise) information coming from \eqref{eq:curvewise} that in turn implies uniform integrability of the difference quotients: this can be used to obtain existence of weak limit and once one has this and the identity $f\circ\e_s-f\circ\e_t=\int_t^s\partial_r(f\circ\e_r)\,\d r$ it is easy to improve weak convergence to strong convergence.

$(ii)\Leftrightarrow(iii)$ This is basically  the  statement $L^1(\ppi,W^{1,1}([0,1]))\sim W^{1,1}([0,1],L^1(\ppi))$ that in turn is a rather natural consequence of Fubini's theorem. 

$(iii)\Rightarrow(i)$ By integration we see that  $\int|f(\gamma_s)-f(\gamma_t)|\,\d\ppi(\gamma)\leq \iint_t^sG(\gamma_r)|\dot\gamma_r|\,\d r\,\d\ppi(\gamma)$ holds for a.e.\ $t,s$. The RHS is obviously continuous in $t,s$, so to conclude it is sufficient to show that the same is true for the LHS. If $f\in C_\b(\X)$ this is clear by dominated convergence. For the general case we observe that by $\|f\circ\e_r\|_{L^1(\sppi)}\leq\|f\circ\e_r\|_{L^2(\sppi)}\leq \sqrt{\comp(\ppi) }\|f\|_{L^2(\mm)}$
the operators $L^2(\X)\ni f\mapsto f\circ\e_r\in L^1(\ppi)$ are equicontinuous in $r$, so the conclusion follows by approximation using the density of $C_\b(\X)$ in $L^2(\X)$.
\end{idea}
A direct consequence of what just proved is that there exists a minimal, in the $\mm$-a.e.\ sense, weak upper gradient $|\D f|_w$. Indeed, it is clear from \eqref{eq:defsobw} and \eqref{eq:dappi} that 
\begin{equation}
\label{eq:wclosed}
\text{the collection of weak upper gradients is convex and $L^2$-closed,}
\end{equation}
so a unique one with minimal $L^2$-norm exists. Then we notice that by $(iii)$ in Lemma \ref{le:equivsobw} we see that if $G_1,G_2$ are weak upper gradients, so is $G_1\wedge G_2$, giving the claim.

We equip $W^{1,2}_w(\X)$  with the norm $\|f\|_{W^{1,2}_w}^2:=\|f\|_{L^2}^2+\||\D f|_w\|_{L^2}^2$. Then from \eqref{eq:defsobw} and \eqref{eq:dappi} it is easy to see that the $W^{1,2}_w$-norm is $L^2$-lower semicontinuous, thus the same arguments used in the previous section prove that  is a Banach space. Finally, for later use let me point out that Lemma  \ref{le:equivsobw} and the analogous properties of $W^{1,1}([0,1])$ functions (recall that these are a.e.\ equal to absolutely continuous functions, which are a.e.\ classically differentiable) ensure that 
\begin{equation}
\label{eq:chainweak}
f\in W^{1,2}_w(\X),\ \varphi\in C^1\cap \Lip(\R)\quad\Rightarrow\quad  \varphi\circ f\in  W^{1,2}_w(\X)\text{ with }|\D(\varphi\circ f)|_w=|\varphi'|\circ f|\D f|_w.
\end{equation}

\subsubsection{The two definitions coincide}

We are now going to see that the two definitions of Sobolev functions and of `norm of distributional differential' given in the previous sections coincide. Perhaps surprisingly, the study of the Hopf-Lax formula will play a role. Given $\varphi:\X\to\R$ we set $Q_0\varphi:=\varphi$ and
\begin{equation}
\label{eq:defHL}
Q_t\varphi(x):=\inf_yF(t,x;y)\qquad\text{where}\qquad  F(t,x;y):=\varphi(y)+\tfrac1{2t}\sfd^2(x,y).
\end{equation}
Then we have:
\begin{proposition}\label{prop:HL}
Let $(\X,\sfd)$ be a metric space and $\varphi\in\Lip_\b(\X)$. Then $[0,\infty)\ni t\mapsto Q_t\varphi\in C_\b(\X)$ is Lipschitz and for any $x\in\X$ we have
\begin{equation}
\label{eq:HJ}
\partial_tQ_t\varphi(x)+\tfrac12\lipa(Q_t\varphi)^2(x)\leq 0,\qquad a.e.\ t>0.
\end{equation}
\end{proposition}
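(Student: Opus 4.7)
My plan is to break the statement into three parts: each $Q_t\varphi$ is a bounded Lipschitz function in $x$; the map $t\mapsto Q_t\varphi$ is Lipschitz into $C_\b(\X)$; and the Hamilton--Jacobi inequality \eqref{eq:HJ}. Setting $L := \Lip(\varphi)$, the key a priori bound is that every approximate minimizer $y$ of $F(t,x;\cdot)$ satisfies $\sfd(x,y) \leq 2tL$ up to the approximation error: comparison with the competitor $y = x$ gives $\tfrac{1}{2t}\sfd^2(x,y) \leq \varphi(x) - \varphi(y) \leq L\,\sfd(x,y)$. Plugging this back into $F(t,x;y)$ yields $\|\varphi - Q_t\varphi\|_\infty \leq \tfrac12 tL^2$, and a standard `swap of competitor' between the values at $x$ and $x'$ gives $\Lip(Q_t\varphi) \leq L$, so $Q_t\varphi \in C_\b(\X)$.

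For the Lipschitz dependence on $t$, I would use a convexity trick in the variable $u := 1/t$: since $F(1/u,x;y) = \varphi(y) + \tfrac u2 \sfd^2(x,y)$ is affine in $u$, the function $u\mapsto Q_{1/u}\varphi(x)$ is an infimum of affine functions, hence concave on $(0,\infty)$, in particular locally Lipschitz and a.e.\ differentiable. Its one-sided slopes are controlled by $\tfrac12 \sfd^2(x,y^*) \leq 2 t^2 L^2$ for any approximate minimizer $y^*$, and the change of variables $t = 1/u$ converts this into $|\partial_t Q_t\varphi(x)| \leq 2L^2$ uniformly in $x$ wherever the derivative exists. Combined with $\|Q_t\varphi - \varphi\|_\infty \leq \tfrac12 tL^2$ to handle $t = 0$, this yields $\|Q_t\varphi - Q_s\varphi\|_\infty \leq 2L^2|t-s|$ for all $t,s \geq 0$.

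For the Hamilton--Jacobi inequality I introduce the `maximal/minimal distance to an approximate minimizer',
\[
D^+(x,t) := \sup\bigl\{\lims_n \sfd(x,y_n) : F(t,x;y_n) \to Q_t\varphi(x)\bigr\},
\]
and $D^-(x,t)$ defined analogously with $\inf\limi$. Pairing $F(t_1,x;y_1) \leq F(t_1,x;y_2)$ with $F(t_2,x;y_2) \leq F(t_2,x;y_1)$ for near-minimizing pairs at $t_1 < t_2$ and adding gives the monotonicity $D^+(x,t_1) \leq D^-(x,t_2)$. Since $D^+(x,\cdot)$ is then monotone nondecreasing, it is continuous, and agrees with $D^-$, off an at most countable set $N(x)$; call the common value $D(x,t)$. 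Using $Q_{t+h}\varphi(x) \leq F(t+h,x;y)$ for $y$ a near-optimizer at $t$ and the symmetric bound, passing to the limit identifies
\[
\partial_t Q_t\varphi(x) = -\frac{D(x,t)^2}{2t^2}\qquad\text{for } t \notin N(x).
\]

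Finally, from
\[
Q_t\varphi(x') - Q_t\varphi(x) \leq \tfrac{1}{2t}\bigl(\sfd^2(x',y) - \sfd^2(x,y)\bigr) \leq \tfrac{\sfd(x,x')}{2t}\bigl(2\sfd(x,y) + \sfd(x,x')\bigr)
\]
with $y$ a near-optimizer of $F(t,x;\cdot)$ whose $\sfd(x,y)$ saturates $D^+(x,t)$, together with upper semicontinuity of $D^+(\cdot,t)$ in $x$ (itself a competitor argument: a sequence of near-optimizers at $x_n\to x$ is also near-optimizing at $x$ since $\sfd(x_n,y_n)$ is bounded), I obtain $\lipa(Q_t\varphi)(x) \leq D^+(x,t)/t$. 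Substituting both bounds into \eqref{eq:HJ} at $t\notin N(x)$, where $D^+(x,t) = D(x,t)$, produces $-D^2/(2t^2) + \tfrac12 (D/t)^2 = 0$, i.e.\ equality. The main obstacle I anticipate is the bookkeeping for $D^\pm$: verifying that the one-sided derivatives of $t\mapsto Q_t\varphi(x)$ coincide outside the countable set $N(x)$, and that the $\lipa$ bound survives the passage $x'\to x$ in conjunction with the upper semicontinuity of $D^+(\cdot, t)$, rather than any single sharp estimate.
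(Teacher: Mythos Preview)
Your proposal is correct and follows essentially the same approach as the paper: both proofs hinge on the monotonicity of the distance-to-minimizer function (obtained by swapping competitors at two times and adding), the identification $\partial_t Q_t\varphi(x)=-D^2/(2t^2)$, and the bound $\lipa(Q_t\varphi)(x)\le D/t$ via upper semicontinuity in $x$. The only differences are cosmetic: the paper simplifies by assuming $\X$ compact so that true minimizers exist and a single $D_t(x)$ suffices, whereas you work with approximate minimizers and the pair $D^\pm$; and you add the observation that $u\mapsto Q_{1/u}\varphi(x)$ is concave to get Lipschitz regularity in $t$, while the paper reads this off directly from the two-sided sandwich $\tfrac{D_s^2(t-s)}{2ts}\le Q_s\varphi(x)-Q_t\varphi(x)\le \tfrac{D_t^2(t-s)}{2ts}$.
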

\begin{idea} Say $\X$ compact and put ${\sf D}_t(x):=\max\sfd(x,y)$, the max being among minima $y$ of $F(t,x;\cdot)$. Since clearly $Q_t\varphi(x)\to\varphi(x)$ as $t\downarrow0$ and ${\sf D}_t(x)\leq 2t\Lip(\varphi)$ (from $F(t,x;y)\leq\varphi(x)$ for $y$ minimum), the proof will follow if we show that for any $x\in\X$
\begin{subequations}
\begin{align}\label{eq:HJ1}
&(0,\infty)\ni t\mapsto {\sf D}_t(x)&&\ \text{ is non-decreasing},\\
\label{eq:HJ2}
&(0,\infty)\ni t\mapsto Q_t\varphi&&\ \text{ is locally Lipschitz with }\partial_tQ_t\varphi(x)=-\tfrac{{\sf D}^2_t(x)}{2t^2}\ a.e.\ t,\\
\label{eq:HJ3}
&\lipa(Q_t\varphi)\leq \tfrac{{\sf D}_t(x)}t&&\  \text{ for any }t>0.
\end{align}
\end{subequations}
For \eqref{eq:HJ1} let $t<s$, pick $x_t,x_s$ minimizers of $F(t,x;\cdot), F(s,x;\cdot)$ realizing ${\sf D}_t(x),{\sf D}_s(x)$ respectively. The claim follows adding up the inequalities
\[
F(t,x;x_t)\leq F(t,x;x_s)\qquad\text{and}\qquad F(s,x;x_s)\leq F(s,x;x_t)
\]
and recalling the definition of $F$. Then \eqref{eq:HJ2} follows from \eqref{eq:HJ1} and
\[
\tfrac{{\sf D}_s^2(x)(t-s)}{2ts}=F(s,x;x_s)-F(t,x;x_s)\leq Q_s\varphi(x)- Q_t\varphi(x)\leq F(s,x;x_t)-F(t,x;x_t)\leq \tfrac{{\sf D}_t^2(x)(t-s)}{2ts}.
\]
For \eqref{eq:HJ3} notice that a diagonalization argument gives that $x\mapsto {\sf D}_t(x)$ is upper semicontinuous. Then let $y,z\in\X$, $y_t$ be a minimum of $F(t,y;y_t)$ and notice that
\[
\begin{split}
Q_t\varphi(z)-Q_t\varphi(y)\leq F(t,z;y_t)-F(t,y;y_t)=\tfrac{\sfd^2(z,y_t)-\sfd^2(y,y_t)}{2t}\leq\sfd(z,y)\tfrac{2{\sf D}_t(y)+\sfd(z,y)}{2t},
\end{split}
\]
then swap $z,y$, send $z,y$ to $x$ and conclude with the upper semicontinuity of ${\sf D}_t(\cdot)$.
\end{idea}
To produce test plans we shall often use the following general result:
\begin{lemma}[Superposition principle]\label{le:superpp}
Let $(\X,\sfd)$ be complete and separable and $(\mu_t)\subset \prd(\X)$ be $W_2$-absolutely continuous  with $t\mapsto|\dot\mu_t|$ in $L^2([0,1])$. Then there is $\ppi\in\pr(C([0,1],\X))$ with $(\e_t)_*\ppi=\mu_t$ for every $t\in[0,1]$ such that $\KE(\ppi)<\infty$ and
\begin{equation}
\label{eq:eqsp}
\int|\dot\gamma_t|^2\,\d\ppi(\gamma)=|\dot\mu_t|^2\qquad a.e.\ t\in[0,1].
\end{equation}
\end{lemma}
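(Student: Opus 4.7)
My approach will follow the strategy (originally due to Lisini) of time-discretization combined with the gluing lemma and a compactness argument in the path space. Since $(\X,\sfd)$ is not assumed geodesic, at the interpolation step I would first isometrically embed $(\X,\sfd)$ into a Banach space $E$ (e.g.\ a Kuratowski-type embedding into $\ell^\infty$), so that affine interpolation between points of $\X$ is available inside $E$, even though such segments may leave $\X$.

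For each $n \in \N$ set $t^n_k := k/n$, $k=0,\ldots,n$, and pick optimal couplings $\alpha^n_k \in \pr(\X^2)$ between $\mu_{t^n_k}$ and $\mu_{t^n_{k+1}}$. By iterating the gluing lemma I obtain $\beta^n \in \pr(\X^{n+1})$ whose $k$-th marginal is $\mu_{t^n_k}$ and whose consecutive two-dimensional marginals coincide with $\alpha^n_k$. I then define $\Psi_n:\X^{n+1}\to C([0,1],E)$ sending $(x_0,\ldots,x_n)$ to the piecewise-affine curve in $E$ with nodes $x_k$ at times $t^n_k$, and set $\ppi_n := (\Psi_n)_*\beta^n$. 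Using the $W_2$-absolute continuity in the form $W_2^2(\mu_{t^n_k},\mu_{t^n_{k+1}}) \leq \tfrac1n\int_{t^n_k}^{t^n_{k+1}}|\dot\mu_r|^2\,\d r$, a direct computation on the piecewise-affine curves yields the uniform bound $\KE(\ppi_n) \leq \tfrac12\int_0^1|\dot\mu_r|^2\,\d r$.

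The next step is tightness of $(\ppi_n)$ in $\pr(C([0,1],E))$: the uniform $\KE$-bound together with the tightness of each marginal $(\e_{t^n_k})_*\ppi_n = \mu_{t^n_k}$ and the equi-absolute-continuity estimate $\|\gamma_t-\gamma_s\|_E \leq \sqrt{|t-s|}\,\bigl(\int_0^1|\dot\gamma_r|^2\,\d r\bigr)^{1/2}$ gives, via Prokhorov and Ascoli--Arzel\`a, a weakly converging subsequence with limit $\ppi$. The marginal condition $(\e_t)_*\ppi = \mu_t$ follows by passing to the limit along a dense set of times and using continuity of $t\mapsto\mu_t$, and lower semicontinuity of the kinetic energy on $C([0,1],E)$ (analogous to the formula \eqref{eq:keequiv}) gives $\iint_0^1|\dot\gamma_t|^2\,\d t\,\d\ppi \leq \int_0^1|\dot\mu_t|^2\,\d t$. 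The reverse pointwise inequality $|\dot\mu_t|^2 \leq \int|\dot\gamma_t|^2\,\d\ppi(\gamma)$ for a.e.\ $t$ is a general consequence of $W_2^2(\mu_t,\mu_s)\leq\int\sfd^2(\gamma_t,\gamma_s)\,\d\ppi \leq |s-t|\iint_t^s|\dot\gamma_r|^2\,\d r\,\d\ppi$, and combining the two inequalities forces a.e.\ equality, yielding \eqref{eq:eqsp}.

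The hard part I expect is showing that the limit $\ppi$ is concentrated on curves valued in $\X$, not merely in the ambient $E$: the approximating $\ppi_n$ live on piecewise-affine paths that generally exit $\X$. I would control this by noting that on each subinterval $[t^n_k,t^n_{k+1}]$ the $E$-distance from a piecewise-affine curve $\gamma_n(t)$ to $\X$ is dominated by $\|\gamma_n(t^n_k)-\gamma_n(t^n_{k+1})\|_E$, whose square integrated against $\ppi_n$ is at most $W_2^2(\mu_{t^n_k},\mu_{t^n_{k+1}})$ and hence vanishes in $L^1([0,1])$ as $n\to\infty$. Combining this with the closedness of $\X\subset E$, a Fatou-type argument for the $\ppi_n$-average of $\sfd_E(\gamma(t),\X)$, and the continuity of curves in the support of $\ppi$, forces full concentration of $\ppi$ on $C([0,1],\X)$, completing the proof.
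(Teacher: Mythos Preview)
Your approach is essentially the paper's: discretize time, glue optimal couplings, interpolate in a larger ambient space, and pass to the limit via tightness and lower semicontinuity of the kinetic energy. The paper phrases the interpolation step slightly differently: it embeds $\X$ into $\ell^\infty$, passes to the closed convex hull of the image (which is complete, separable and geodesic), and interpolates by a Borel selection of geodesics. Since straight segments in a normed space are geodesics, your affine interpolation is a particular instance of this, with the bonus that no measurable selection theorem is needed. One technical caveat: $\ell^\infty$ itself is not separable, so to apply Prokhorov in $\pr(C([0,1],E))$ you must take $E$ to be the closed convex hull (or closed linear span) of the image of $\X$, exactly as the paper does in its footnote.

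The step you flag as ``hard'' --- concentration of the limit $\ppi$ on $C([0,1],\X)$ --- is in fact immediate once you have $(\e_t)_*\ppi=\mu_t$ for every $t$. Since each $\mu_t$ is concentrated on $\X$, Fubini gives that $\ppi$-a.e.\ $\gamma$ satisfies $\gamma_t\in\X$ for a.e.\ $t$; continuity of $\gamma$ and closedness of $\X$ in $E$ then upgrade this to all $t$. Your proposed control of $\sfd_E(\gamma_n(t),\X)$ along the approximants is unnecessary; the paper does not even comment on this point because it is absorbed into the reduction ``assume $\X$ geodesic''.
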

\noindent{(Note: a plan $\ppi$ as in the statement is called `lifting' of the curve $(\mu_t)$.)}
\begin{idea} 
Let  $\ppi\in\pr(C([0,1],\X))$ be  with $(\e_t)_*\ppi=\mu_t$ for every $t\in[0,1]$. Then from
\begin{equation}
\label{eq:superpfacile}
W_2^2(\mu_t,\mu_s)\leq\int\sfd^2(\gamma_t,\gamma_s)\,\d\ppi(\gamma)\leq|s-t|\iint_t^s|\dot\gamma_r|^2\,\d r\,\d\ppi(\gamma)
\end{equation}
we see that the inequality $\geq$ holds in \eqref{eq:eqsp} for a.e.\ $t$. 

For the converse assume $\X$ geodesic  (up to a Kuratowski embedding\footnote{Any separable space $(\X,\sfd)$ can be isometrically embedded in $\ell^\infty$ via the map sending $x$ to the sequence $n\mapsto \sfd(x,x_n)-\sfd(x_0,x_n)$, where $(x_n)\subset\X$ is dense. The closed convex hull of the image of $\X$ in $\ell^\infty$ is complete, separable and geodesic.} we can assume this). For $n\in\N$ use first a gluing argument and then Borel selection of geodesics to find $\ppi_n\in\pr(C([0,1],\X))$ such that  $(\e_t)_*\ppi=\mu_t$ for  $t=\frac i{2^n}$ for  $i=0,\ldots,2^n-1$ and $
\iint_{\frac i{2^n}}^{\frac {i+1}{2^n}}|\dot\gamma_r|^2\,\d r\,\d\ppi_n(\gamma)=2^nW_2^2(\mu_{\frac i{2^n}},\mu_{\frac {i+1}{2^n}})$. Then we have
\begin{equation}
\label{eq:pertight}
\iint_0^1|\dot\gamma_r|^2\,\d r\,\d\ppi_n(\gamma)= \sum_i2^n\,W_2^2(\mu_{\frac i{2^n}},\mu_{\frac {i+1}{2^n}})\leq \int_0^1|\dot\mu_r|^2\,\d r
\end{equation}
and thus by the lower semicontinuity of the kinetic energy, what previously proved and Prokorhov's theorem, to conclude it is sufficient to prove that the sequence $\{\ppi_n\}_n$ is tight. If $\X$ is compact this is trivial from the estimate \eqref{eq:pertight} and the fact that $\gamma\mapsto\int_0^1|\dot\gamma_r|^2\,\d r$ has compact sublevels (by Ascoli-Arzel\`a and the Holder continuity estimate $\sfd^2(\gamma_t,\gamma_s)\leq {|s-t|}{\int_0^1|\dot\gamma_r|^2\,\d r}$). In the general case one also uses the tightness of the weakly compact family $\{\mu_t\}_{t\in[0,1]}$. 
\end{idea}

Another ingredient in the proof of equivalence of the two approaches to Sobolev functions is the $L^2$-gradient flow of the Cheeger energy. Recall that if $\sf E$ is a convex and lower semicontinuous functional on a Hilbert space, then its subdifferential $\partial^-\E(x)$ at a point $x\in H$ with $\E(x)<\infty$ is defined as the collection of $v\in H$ such that
\[
\E(x)+\la v,y-x\ra\leq \E(y)\qquad\forall y\in H.
\]
It is clear that $\partial^-\E(x)$ is closed and convex and therefore admits a unique element of minimal norm, provided it is not empty.

We consider $H:=L^2(\X)$ and $\E:=\ch$ and denote by $D(\Delta)\subset L^2(\X)$ the collection of $f$'s with $\partial^-\ch(f)\neq\varnothing$. Also, for $f\in D(\Delta)$ we shall denote by $\Delta f\in L^2(\X)$ the opposite of the element of minimal norm in $\partial^-\ch(f)$. Notice that at this level of generality the functional $\ch$ is not a quadratic form, thus $D(\Delta)$ is not necessarily a vector space and $\Delta$ not necessarily linear. Still, for this  Laplacian the following weak integration by parts formulas hold:
\begin{lemma}\label{le:inpartI}
Let $(\X,\sfd,\mm)$ be a normalized metric measure space, $f\in D(\Delta)$, $g\in W^{1,2}(\X)$ and $\varphi\in C^1\cap \Lip(\R)$. Then 
\begin{subequations}
\begin{align}\label{eq:intpa1}
\Big|\int g\Delta f\,\d\mm\Big|&\leq \int |\D f|_*|\D g|_*\,\d\mm,\\
\label{eq:intpa2}
\int \varphi\circ f\,\Delta f\,\d\mm&=-\int \varphi'\circ f |\D f|_*^2\,\d\mm.
\end{align}
\end{subequations}
\end{lemma}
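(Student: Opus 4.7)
\medskip

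\noindent\emph{Proof plan.} The plan is to exploit the defining subdifferential inequality: for $f\in D(\Delta)$ and every $h\in L^2(\X)$,
\begin{equation}\label{eq:subgr}
\ch(h)-\ch(f)\geq -\int (h-f)\,\Delta f\,\d\mm,
\end{equation}
together with good upper bounds on $\ch(h)-\ch(f)$ for two carefully chosen families of perturbations $h$. A first-order expansion in the perturbation parameter will yield the two formulas.

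The core technical step is to prove the following two ``chain-type'' inequalities for $\ch$:
\begin{align}
\ch(f+tg)&\leq \tfrac12\int (|\D f|_*+|t|\,|\D g|_*)^2\,\d\mm, \qquad \forall t\in\R,\ g\in W^{1,2}_*(\X),\label{eq:planA}\\
\ch(\psi\circ f)&\leq \tfrac12\int (\psi'\circ f)^2\,|\D f|_*^2\,\d\mm, \qquad \forall \psi\in C^1\cap\Lip(\R).\label{eq:planB}
\end{align}
For \eqref{eq:planA}, take sequences $(f_n),(g_n)\subset\Lip_\bs(\X)$ realizing $|\D f|_*,|\D g|_*$ in the sense of \eqref{eq:perdenslip}; then $f_n+tg_n\to f+tg$ in $L^2$, and the pointwise subadditivity $\lipa(f_n+tg_n)\leq \lipa(f_n)+|t|\lipa(g_n)$ combined with the strong $L^2$-convergence of $\lipa(f_n),\lipa(g_n)$ and the $L^2$-lower semicontinuity of $\ch$ gives \eqref{eq:planA}. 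For \eqref{eq:planB}, again let $(f_n)\subset\Lip_\bs(\X)$ realize $|\D f|_*$; the key pointwise bound is $\lipa(\psi\circ f_n)(x)\leq |\psi'(f_n(x))|\,\lipa(f_n)(x)$, which holds because $\psi\in C^1$ forces $\Lip(\psi\restr{f_n(B_r(x))})\to |\psi'(f_n(x))|$ as $r\downarrow 0$. Since $\psi$ is Lipschitz we have $\psi\circ f_n\to\psi\circ f$ in $L^2$, and since $\mm$ is a probability measure and $\psi'$ is bounded and continuous, $\psi'\circ f_n\to\psi'\circ f$ in $L^2$ (along an a.e.-converging subsequence, by dominated convergence); combined with $L^2$-strong convergence of $\lipa(f_n)$ and the $L^2$-lower semicontinuity of $\ch$, this yields \eqref{eq:planB}.

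With \eqref{eq:planA} and \eqref{eq:planB} at hand, the two integration by parts formulas follow by first-order expansion. For \eqref{eq:intpa1}, substitute $h=f+\eps g$ into \eqref{eq:subgr} and use \eqref{eq:planA}:
\[
-\eps\int g\,\Delta f\,\d\mm\leq \ch(f+\eps g)-\ch(f)\leq \eps\int |\D f|_*|\D g|_*\,\d\mm+\tfrac{\eps^2}{2}\int |\D g|_*^2\,\d\mm;
\]
dividing by $\eps>0$ and letting $\eps\downarrow 0$ gives $-\int g\,\Delta f\,\d\mm\leq\int|\D f|_*|\D g|_*\,\d\mm$, and the sign $g\mapsto -g$ (equivalently, $\eps<0$) yields the matching upper bound. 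For \eqref{eq:intpa2}, fix $\eps$ with $|\eps|\Lip(\varphi)<1$ and set $\psi_\eps(t):=t+\eps\varphi(t)$, so that $\psi_\eps\in C^1\cap\Lip(\R)$ and $(\psi_\eps')^2=1+2\eps\varphi'+\eps^2(\varphi')^2$. Substituting $h=\psi_\eps\circ f$ in \eqref{eq:subgr} and using \eqref{eq:planB} gives
\[
-\eps\int \varphi\circ f\,\Delta f\,\d\mm\leq \ch(\psi_\eps\circ f)-\ch(f)\leq \eps\int \varphi'\circ f\,|\D f|_*^2\,\d\mm+O(\eps^2);
\]
dividing by $\eps$ with both signs and passing to the limit $\eps\to 0$ yields \eqref{eq:intpa2}.

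The main obstacle is the chain-rule inequality \eqref{eq:planB} for the minimal relaxed slope: the pointwise bound at the level of Lipschitz approximants is the place where the $C^1$ hypothesis on $\varphi$ (and hence on $\psi_\eps$) is genuinely used, and the limiting procedure requires combining the strong $L^2$-convergence $\lipa(f_n)\to|\D f|_*$ from \eqref{eq:perdenslip} with the (nonlinear) passage to the limit in $\psi'\circ f_n$. Everything else — subadditivity for the sum, the first-order expansion, and the two-sided squeeze via $\eps>0$ and $\eps<0$ — is standard convex analysis applied to the non-quadratic functional $\ch$.
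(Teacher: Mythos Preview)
Your proof is correct and follows essentially the same route as the paper: both arguments plug $h=f+\eps g$ and $h=(\Id+\eps\varphi)\circ f$ into the subdifferential inequality for $\ch$, use the pointwise bounds $\lipa(f+\eps g)\leq\lipa f+|\eps|\lipa g$ and $\lipa(\psi\circ f)\leq|\psi'|\circ f\,\lipa f$ (the latter via $C^1$ regularity) together with relaxation, and conclude by letting $\eps\to 0$ with both signs. Your write-up is more explicit about the limiting step $\psi'\circ f_n\to\psi'\circ f$ in $L^2$, but the strategy is identical.
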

\begin{idea} 
For any $\eps\in\R$ we have
\[
\ch(f)-\eps \int g\Delta f\,\d\mm\leq \ch(f+\eps g).
\]
Then use the rather trivial bound $|\D(f+\eps g)|_*\leq |\D f|_*+|\eps ||\D g|_*$ (that follows from $\lipa(f+\eps g)\leq \lipa(f)+|\eps|\lipa( g)$ and a relaxation argument)  to bound from above the RHS and conclude that \eqref{eq:intpa1} holds by the arbitrariness of $\eps\in\R$. 

For \eqref{eq:intpa2} we start noticing that for $\psi\in C^1\cap\Lip(\R)$ we have $\psi\circ f\in W^{1,2}(\X)$ with $|\D (\psi\circ f)|_*\leq |\psi'|\circ f|\D f|_*$ (it follows via relaxation from the trivial $\lipa(\psi\circ f)\leq |\psi|'\circ f\lipa (f)$). Then we argue as above starting from $|\D(f+\eps \varphi\circ f)|_*=|\D( (\Id+\eps\varphi)\circ f)|_*\leq (1+\eps\varphi')\circ f|\D f|_*$, which is   valid for $|\eps|\leq \frac1{\Lip\varphi}$ (and in particular also for $\eps<0$).
\end{idea}

The standard theory of gradient flows of convex and lower semicontinuous functionals on Hilbert spaces, together with the fact that $D(\ch)$ is dense in $L^2(\X)$ (as it contains $\Lip_\bs(\X)$) ensures that: for any $f\in L^2(\X)$ there is a unique continuous curve $[0,\infty)\ni t\mapsto f_t\in L^2(\X)$, locally absolutely continuous in $(0,\infty)$ such that 
\begin{equation}
\label{eq:gfch}
\partial_tf_t=\Delta f_t\quad a.e.\ t>0\qquad\text{and}\qquad f_0=f.
\end{equation}
See \cite[Chapter 7]{Brezis11} or, for instance, \cite[Theorem 5.1.12]{GP19} for a proof of this fact. Some basic properties of this flow are collected in the next lemma. Notice in particular that item $(iv)$, known as `Kuwada's lemma', provides a crucial link between $L^2$ and $W_2$ geometries that is going to be key also in out interpretation of the heat flow as gradient flow in Theorem \ref{thm:hfgfagain}.
\begin{lemma}\label{le:basegfch}
Let $(\X,\sfd,\mm)$ be a normalized metric measure space, $f\in L^2(\X)$ and $(f_t)$ the gradient flow trajectory of $\ch$ starting from $f$. Then:
\begin{itemize}
\item[i)] If $f\leq c$ (resp. $f\geq c$) $\mm$-a.e., then $f_t\leq c$ (resp. $f_t\geq c$) $\mm$-a.e.\ for every $t\geq 0$.
\item[ii)] if $f$ is a probability density, then so is $f_t$ for every $t>0$.
\item[iii)] if $f$ is a probability density with $0<c\leq f\leq c^{-1} $ $\mm$-a.e.\ for some $c>0$, then $[0,\infty)\ni t \mapsto \ent{(f_t\mm)}$ is in $C([0,\infty))\cap AC((0,\infty))$ and
\[
\frac\d{\d t}\ent{(f_t\mm)}=-\int\frac{|\D f_t|^2_*}{f_t}\,\d\mm\qquad a.e.\ t>0.
\]
\item[iv)] Let $f$ be as in $(iii)$ and also so that $\mu:=f\mm$ is in $\prd(\X)$. Then $t\mapsto \mu_t:=f_t\mm\in(\prd(\X),W_2)$ is locally absolutely  continuous with $t\mapsto|\dot\mu_t|$ in $L^2([0,\infty))$ and
\begin{equation}
\label{eq:kuwlem}
|\dot\mu_t|^2\leq \int\frac{|\D f_t|_*^2}{f_t}\,\d\mm\qquad a.e.\ t>0.
\end{equation}
\end{itemize}
\end{lemma}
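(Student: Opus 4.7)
The plan is to establish the four items roughly in order, leveraging in every subsequent part the uniform $L^\infty$ bounds ensured by (i).

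For (i), I would use the general principle (cf.\ \cite[Ch.\ 7]{Brezis11}) that a closed convex set $K \subset L^2(\X)$ is invariant under the gradient flow of a convex lower semicontinuous functional provided the metric projection $P_K$ satisfies $\ch(P_K g) \le \ch(g)$ for every $g$. Choosing $K = \{g \le c\}$ with $P_K g = g \wedge c$ (and symmetrically for lower bounds), the required inequality is a consequence of the chain rule $|\D(\psi \circ g)|_* \le |\psi'| \circ g\cdot|\D g|_*$ for $\psi \in C^1 \cap \Lip(\R)$, proved by relaxation from $\lipa(\psi \circ f_n) \le |\psi'|\circ f_n \cdot \lipa(f_n)$; one applies it to a smooth $1$-Lipschitz approximation of $z \mapsto z \wedge c$ and passes to the limit using $L^2$-lower semicontinuity of $\ch$. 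Part (ii) is then immediate: non-negativity is (i) applied to $-f_t$ with $c = 0$, while mass preservation follows by writing $\tfrac{\d}{\d t}\int f_t\,\d\mm = \int \Delta f_t\,\d\mm$ (permitted since $t\mapsto f_t$ is locally AC in $L^2$ and $\mm(\X) = 1$) and applying \eqref{eq:intpa1} with $g \equiv 1 \in W^{1,2}_*(\X)$, whose minimal relaxed slope vanishes.

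For (iii), the bounds $c \le f_t \le c^{-1}$ from (i) let me pick $\tilde u \in C^2 \cap \Lip(\R)$ coinciding with $u(z) := z \log z$ on $[c, c^{-1}]$, so that $\ent(f_t \mm) = \int \tilde u(f_t)\,\d\mm$ for every $t \ge 0$. Lipschitzness of $\tilde u$ transfers the local $L^2$-absolute continuity of $(f_t)_{t>0}$ to $t \mapsto \tilde u(f_t)$; pairing with $1 \in L^2(\mm)$ yields local absolute continuity of the entropy on $(0,\infty)$ with derivative $\int \tilde u'(f_t)\,\Delta f_t\,\d\mm$. I then apply \eqref{eq:intpa2} with $\varphi := \tilde u' \in C^1 \cap \Lip(\R)$ to compute
\[
\int \tilde u'(f_t)\,\Delta f_t\,\d\mm \;=\; -\int \tilde u''(f_t)\,|\D f_t|_*^2\,\d\mm \;=\; -\int \frac{|\D f_t|_*^2}{f_t}\,\d\mm,
\]
the last equality since $\tilde u''(z) = 1/z$ on the essential range of $f_t$. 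Continuity of the entropy at $t = 0$ follows from $f_t \to f$ in $L^2$, the uniform bound $|\tilde u(f_t)| \le \sup_{[c,c^{-1}]}|u|$, and dominated convergence.

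The core assertion (iv) I would prove by Kuwada's duality method. Fix $t < s$ and $\varphi \in \Lip_\b(\X)$, and set $\varphi_r := Q_{r-t}\varphi$ for $r \in [t,s]$, $F(r) := \int \varphi_r f_r\,\d\mm$. The Lipschitz regularity in $r$ of $\varphi_r$ as a $C_\b(\X)$-valued map (Proposition \ref{prop:HL}) and the local $L^2$-absolute continuity of $r \mapsto f_r$ with $\partial_r f_r = \Delta f_r$ together justify, for a.e.\ $r \in (t,s)$,
\[
F'(r) \;=\; \int \partial_r \varphi_r \cdot f_r\,\d\mm + \int \varphi_r\,\Delta f_r\,\d\mm.
\]
By \eqref{eq:HJ} and $f_r \ge 0$ the first summand is at most $-\tfrac12 \int \lipa^2(\varphi_r) f_r\,\d\mm$; by \eqref{eq:intpa1} together with $|\D\varphi_r|_* \le \lipa(\varphi_r)$ from \eqref{eq:dflip}, the second is at most $\int \lipa(\varphi_r)|\D f_r|_*\,\d\mm$, which the Cauchy-Schwarz/Young inequality applied to the product $(\lipa(\varphi_r)\sqrt{f_r})\cdot(|\D f_r|_*/\sqrt{f_r})$ bounds by $\tfrac12 \int \lipa^2(\varphi_r) f_r\,\d\mm + \tfrac12 \int |\D f_r|_*^2/f_r\,\d\mm$. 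After cancellation $F'(r) \le \tfrac12 \int |\D f_r|_*^2/f_r\,\d\mm$, and integration on $[t, s]$ yields
\[
\int Q_{s-t}\varphi\,\d\mu_s - \int \varphi\,\d\mu_t \;\le\; \tfrac12 \int_t^s \int \frac{|\D f_r|_*^2}{f_r}\,\d\mm\,\d r.
\]
Taking the supremum over $\varphi \in \Lip_\b(\X)$ and using the identity $\sup_\varphi\bigl(\int Q_r \varphi\,\d\nu - \int \varphi\,\d\mu\bigr) = W_2^2(\mu,\nu)/(2r)$ (an easy rescaling of \eqref{eq:dualw2}), I obtain
\[
W_2^2(\mu_t,\mu_s) \;\le\; (s-t)\int_t^s g(r)\,\d r, \qquad g(r) := \int \frac{|\D f_r|_*^2}{f_r}\,\d\mm.
\]
Since by (iii) $g \in L^1_{\rm loc}([0,\infty))$ with $\int_0^\infty g\,\d r \le \ent(\mu) < \infty$ (the latter bound using $\ent \ge 0$ on probabilities via Jensen), this gives in turn: $\mu_t \in \prd(\X)$ for every $t \ge 0$ by the triangle inequality from $\mu_0$; local $W_2$-absolute continuity; and the pointwise bound $|\dot\mu_t|^2 \le g(t)$ at Lebesgue points of $g$, which in particular places $|\dot\mu_t|$ in $L^2([0,\infty))$. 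The main obstacle in this scheme is the rigorous termwise differentiation of $F(r)$ with only $L^2$-regularity of $(f_r)$, which is handled via the uniform Lipschitz regularity of the Hopf-Lax semigroup and the  $L^\infty$ bounds from (i).
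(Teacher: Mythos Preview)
Your proposal is correct and follows essentially the same approach as the paper, in particular the Kuwada duality computation for (iv) is identical up to a time reparametrization (the paper rescales to $r\in[0,1]$ and picks up the factor $\ell=s-t$ from the chain rule, arriving at the same bound $W_2^2(\mu_t,\mu_s)\le(s-t)\int_t^s g$). The one notable difference is that the paper treats (i)--(iii) in a single stroke via the formula $\partial_t\int\varphi\circ f_t\,\d\mm=-\int\varphi''\circ f_t\,|\D f_t|_*^2\,\d\mm$ (choosing $\varphi$ convex and vanishing on $(-\infty,c]$ for (i), $\varphi=\Id$ for mass preservation, $\varphi(z)=z\log z$ for (iii)), whereas you invoke the projection-invariance principle for (i); both are standard, but the paper's route is slightly more economical since it reuses the same identity throughout.
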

\begin{idea} For $\varphi\in C^\infty\cap\Lip(\R)$ it is easy to see that $t\mapsto\int\varphi\circ f_t\,\d\mm$ is absolutely continuous with 
\[
\partial_t\int\varphi\circ f_t\,\d\mm=\int\varphi'\circ f_t\Delta f_t\,\d\mm\stackrel{\eqref{eq:intpa2}}=-\int\varphi''\circ f_t|\D f_t|^2_*\,\d\mm.
\]
Picking $\varphi$ convex, non-negative, identically $ 0$ on $(-\infty,c]$ and positive on $(c,+\infty)$ we get $(i)$. Picking $\varphi\equiv1$ we see that $\int f_t\,\d\mm=\int f\,\d\mm$, thus also $(ii)$ follows (as non-negativity comes from $(i)$). For $(iii)$ pick $\varphi(z):=z\log(z)$ (that is Lipschitz on the image of the $f_t$'s).

For $(iv)$ fix $\varphi\in\Lip_\b(\X)$ and notice that the regularity of $t\mapsto f_t$ and that of $t\mapsto Q_t\varphi$ granted by Proposition \ref{prop:HL} give that $t\mapsto \int Q_t\varphi\,\d\mu_t$ is absolutely continuous. Thus for  $0<t\leq s $ and $\ell:=s-t$   it is easy to see that
\[
\begin{split}
\int Q_1\varphi\,\d\mu_s-\int \varphi\,\d\mu_t&=\iint_0^1\partial_r (Q_r\varphi f_{t+r\ell})\,\d r\,\d\mm\\
\text{(by \eqref{eq:HJ} and \eqref{eq:gfch})}\qquad&\leq \iint_0^1-\frac{\lipa Q_t\varphi ^2(x)}2 f_{t+r\ell}+\ell Q_t\varphi \Delta f_{t+r\ell}\,\d r\,\d\mm\\
\text{(by \eqref{eq:dflip} and \eqref{eq:intpa1})}\qquad&\leq \iint_0^1-\frac{|\D Q_t\varphi |_*^2(x)}2 f_{t+r\ell}+\ell |\D Q_t\varphi|_* |\D f_{t+r\ell}|_*\,\d r\,\d\mm\\
\text{(by Young's inequality)}\qquad&\leq\frac {\ell^2}2\iint_0^1\frac{|\D f_{t+r\ell}|_*^2}{f_{t+r\ell}}\,\d r\,\d\mm.
\end{split}
\]
It follows from \eqref{eq:dualw2} (and $Q_1\varphi=(-\varphi)^c$ and the arbitrariness of $\varphi$) that $\tfrac12W_2^2(\mu_t,\mu_s)\leq  \frac{\ell^2}2\iint_0^1\frac{|\D f_{t+r\ell}|_*^2}{f_{t+r\ell}}\,\d r\,\d\mm$ and with an argument about Lebesgue points \eqref{eq:kuwlem} is proved. Also, by $(iii)$ and the non-negativity of the entropy, the integral of the RHS of \eqref{eq:kuwlem} is bounded by $\int f\log(f)\,\d\mm\leq-\log c$.
\end{idea}
We can now prove the following (notice that no curvature condition is needed):
\begin{theorem}\label{thm:sobuguali}
Let $(\X,\sfd,\mm)$ be a normalized metric measure space. Then $W^{1,2}_*(\X)=W^{1,2}_w(\X)$ and for a function $f$ belonging to these spaces we have $|\D f|_*=|\D f|_w$ $\mm$-a.e..
\end{theorem}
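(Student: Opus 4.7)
My plan is to prove the two inclusions separately, with the first being routine and the second relying on the full heat-flow machinery of this section.

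\emph{The inclusion $|\D f|_w \leq |\D f|_*$.} For $f \in \Lip_\bs(\X)$ and any absolutely continuous $\gamma$, the definition of $\lipa f$ forces $|(f\circ\gamma)'(r)| \leq \lipa(f)(\gamma_r)|\dot\gamma_r|$ for a.e.\ $r$, so $\lipa f$ is a weak upper gradient of $f$. For a general $f \in W^{1,2}_*(\X)$, I would pick the approximating sequence $(f_n) \subset \Lip_\bs(\X)$ supplied by \eqref{eq:perdenslip} and pass to the limit in \eqref{eq:defsobw} written for $f_n$: for any test plan $\ppi$, bounded compression transfers the $L^2(\mm)$-convergence $f_n \to f$ into convergence of $f_n\circ\e_0, f_n\circ\e_1$ in $L^1(\ppi)$, and the $L^2(\mm)$-convergence $\lipa(f_n) \to |\D f|_*$ into convergence of $\lipa(f_n)\circ\e_r$ in $L^2(\ppi\times dr)$; paired with $|\dot\gamma_r|\in L^2(\ppi\times dr)$ this gives convergence of the right-hand side of \eqref{eq:defsobw} to $\iint_0^1 |\D f|_*(\gamma_r)|\dot\gamma_r|\,dr\,d\ppi$, which is the desired inequality.

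\emph{Reduction and setup for $|\D f|_* \leq |\D f|_w$.} Using truncation and the chain rules for both Sobolev spaces (\eqref{eq:chainweak} for $W^{1,2}_w$ and the corresponding bound for $W^{1,2}_*$, which is visible in the idea of Lemma \ref{le:inpartI}), I would first reduce to the case where $f$ is a probability density with $c \leq f \leq c^{-1}$ for some $c \in (0,1)$ and $f\mm \in \prd(\X)$. Let $(f_t)_{t \geq 0}$ be the $L^2$-gradient flow of $\ch$ starting from $f$, set $\mu_t := f_t\mm$, and write $F_*(g) := \int |\D g|_*^2/g\,d\mm$ and $F_w(g) := \int |\D g|_w^2/g\,d\mm$ for the two Fisher-information functionals. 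Lemma \ref{le:basegfch} guarantees that $f_t \in [c, c^{-1}]$ for all $t$, that $-\tfrac{d}{dt}\ent(\mu_t) = F_*(f_t)$, and that $(\mu_t)$ is $W_2$-absolutely continuous with $|\dot\mu_t|^2 \leq F_*(f_t)$. The superposition principle (Lemma \ref{le:superpp}) then lifts $(\mu_t)_{t\in[0,T]}$ to a plan $\ppi \in \pr(C([0,T],\X))$ which, thanks to the two-sided bound on $f_t$ and the integrated Kuwada-entropy estimate, is a test plan.

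\emph{Core estimate and main obstacle.} Since $c \leq f$, \eqref{eq:chainweak} gives $\log f \in W^{1,2}_w$ with $|\D\log f|_w = |\D f|_w/f$. Combining (a) the weak upper gradient inequality for $\log f$ along $\ppi$; (b) the pointwise tangent inequality $z\log z \geq f\log f + (1+\log f)(z-f)$ applied with $z = f_T$ and integrated against $\mm$, which after subtraction yields $\ent(\mu_0) - \ent(\mu_T) \leq \int[\log f(\gamma_0)-\log f(\gamma_T)]\,d\ppi$; (c) Young's inequality $ab \leq \tfrac12 a^2 + \tfrac12 b^2$; and (d) the identity \eqref{eq:eqsp} for the lifting together with Kuwada's bound, I would arrive at
\[
\int_0^T F_*(f_s)\,ds \;\leq\; \int_0^T\!\!\int \frac{|\D f|_w^2}{f^2}\,f_s\,d\mm\,ds.
\]
Dividing by $T$ and sending $T \to 0$, with right-continuity at $s=0$ of both integrands, would give $F_*(f) \leq F_w(f)$; combined with the opposite bound from the first inclusion and the positivity of $f$, this forces $|\D f|_* = |\D f|_w$ $\mm$-a.e. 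The main obstacle is precisely the right-continuity of $s \mapsto F_*(f_s)$ at $s=0$ (the right-hand integrand is easy, using $L^\infty$ bounds on $|\D f|_w^2/f^2$ together with strong $L^2$-continuity of $s \mapsto f_s$); once this is secured, reversing the initial reduction is a routine diagonal argument over truncations $\psi_n \circ f$ with $\psi_n \in C^1\cap\Lip(\R)$.
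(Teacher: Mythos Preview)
Your approach is essentially the paper's own (with your $f$ playing the role of the paper's $g=f^2$): both produce the same core estimate
\[
\int_0^T F_*(f_s)\,\d s\;\leq\;\int_0^T\!\!\int\frac{|\D f|_w^2}{f^2}\,f_s\,\d\mm\,\d s
\]
via the convexity of $z\log z$, the weak upper gradient inequality for $\log f$, Young, and Kuwada's lemma.

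Your flagged ``main obstacle'' is a phantom. You do \emph{not} need right-continuity of $s\mapsto F_*(f_s)$ at $0$; you only need the one-sided bound
\[
8\,\ch(\sqrt{f_0})\;\leq\;\liminf_{T\downarrow 0}\ \frac1T\int_0^T F_*(f_s)\,\d s,
\]
and this is free: for $s>0$ one has $f_s\in W^{1,2}_*$ (it lies in $D(\ch)$), so by the chain rule $F_*(f_s)=8\,\ch(\sqrt{f_s})$; since $\sqrt{f_s}\to\sqrt{f_0}$ in $L^2$ and $\ch$ is $L^2$-lower semicontinuous \emph{by definition}, the displayed bound follows (including the possibility $\ch(\sqrt{f_0})=+\infty$, which is then ruled out a posteriori). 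The paper's proof invokes exactly this lower semicontinuity to close the argument. With that, $8\,\ch(\sqrt{f_0})\leq F_w(f_0)$ gives $\sqrt{f_0}\in W^{1,2}_*$, hence $f_0\in W^{1,2}_*$, and then pointwise equality $|\D f_0|_*=|\D f_0|_w$ follows from the first inclusion; your closing density/truncation step is indeed routine.
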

\begin{idea} It is clear that for $f\in\Lip_\bs(\X)$ the function $\lipa(f)$ is a weak upper gradient. Then the approximation property \eqref{eq:perdenslip} and the closure property \eqref{eq:wclosed}  give $W^{1,2}_*(\X)\subset W^{1,2}_w(\X)$ with $|\D f|_*\geq |\D f|_w$. To conclude, with a density argument it suffices to prove that if  $f\in W^{1,2}_w(\X)$ is so that $\mu:=f^2\mm\in\prd(\X)$ and $0<c^{-1}\leq f\leq c<\infty $ $\mm$-a.e.\ for some $c>0$, then $\ch(f)\leq \tfrac12\int|\D f|^2_w\,\d\mm$. 

Let $(g_t)$ be the gradient flow trajectory of $\ch$ starting from $g:=f^2\in W^{1,2}_w(\X)$. The convexity of $u(z):=z\log z$ and item $(iii)$ in Lemma \ref{le:basegfch} give
\[
\int u'(g_0)(g_0-g_t)\,\d\mm\geq\int u(g_0)-u(g_t)\,\d\mm=\iint_0^t\frac{|\D g_s|_*^2}{g_s}\,\d\mm.
\]
We combine item $(iv)$ of Lemma \ref{le:basegfch}  and  Lemma \ref{le:superpp} to find $\ppi\in\pr(C([0,1],\X))$ with $(\e_t)_*\ppi=g_t\mm$ and $\int|\dot\gamma_t|^2\,\d\ppi\leq\int\frac{|\D g_t|^2_*}{g_t}\,\d\mm$ for a.e.\ $t$. Then by $(i)$ of Lemma \ref{le:basegfch} we know that $\ppi$ is a test plan, hence (recall \eqref{eq:chainweak}) we get
\[
\begin{split}
\int u'(g_0)(g_0-g_t)\,\d\mm&=\int u'(g_0)\circ\e_0-u'(g_0)\circ\e_t\,\d\ppi\\
&\leq \iint_0^t |\D (u'(g))|(\gamma_s)|\dot\gamma_s|\,\d s\,\d\ppi\leq\frac12 \iint_0^t\frac{|\D g|^2_w}{g^2}g_s+\frac{|\D g_s|^2_*}{g_s}\,\d s\,\d\mm.
\end{split}
\]
Combining these  two inequalities we get 
\[
4\iint_0^t{|\D f|^2_w}\frac {g_s}{g}\,\d s\,\d\mm=\iint_0^t\frac{|\D g|^2_w}{g^2}g_s\,\d s\,\d\mm\geq \iint_0^t\frac{|\D g_s|_*^2}{g_s}\,\d s\,\d\mm=8\int_0^t\ch(\sqrt g_s)\,\d s
\]  and dividing by $t$ and letting $t\downarrow0$, by the lower semicontinuity of $\ch$ we conclude.
\end{idea}
From now on I will drop the subscripts `$*$' and `$w$'. For later use I also collect here some basic calculus rule for $|\D f|$ (below we always have $f,g\in W^{1,2}(\X)$):
\begin{subequations}\label{eq:calcdf}
\begin{align}
\label{eq:dfconv}
|\D(\alpha f+\beta g)&\leq |\alpha||\D f|+\beta|\D g|&&\mm-a.e.\qquad\qquad\forall \alpha,\beta\in\R\\
\label{eq:dfloc}
|\D f|&=0&&\mm-a.e.\ on\ \{f=0\}\\
\label{eq:dfchain}
|\D(\varphi\circ f)|&=|\varphi'|\circ f|\D f|&&\mm-a.e.\qquad\qquad \forall   \varphi\in C^1\cap\Lip(\R),\\
\label{eq:dfleib}
|\D(fg)|&\leq |f||\D g|+|g||\D f|&&\mm-a.e.\qquad\qquad\forall f,g\in W^{1,2}\cap L^\infty(\X)\qquad.
\end{align}
\end{subequations}
here part of the claim is that $\alpha f+\beta g,\varphi\circ f,fg$ are in $W^{1,2}(\X)$. Properties  \eqref{eq:dfconv}, \eqref{eq:dfleib} and inequality $\leq$ in \eqref{eq:dfchain}  are obvious from item $(iii)$ in Lemma \eqref{le:equivsobw} and the analogous properties of $W^{1,1}([0,1])$ functions. Then  \eqref{eq:dfchain}  follows by first considering the inequality $\leq $ in \eqref{eq:dfchain} for $\psi(z):=z\Lip(\varphi)-\varphi$ and then noticing that
\[
\Lip(\varphi)|\D f|=|\D(\varphi \circ f+\psi\circ f)|\leq|\D (\varphi\circ f)|+|\D (\psi\circ f)|\leq (|\varphi'|\circ f+|\psi'|\circ f)|\D f|.
\]
On the set $f^{-1}(\{\varphi'\geq 0\})$ the rightmost side is equal to $\Lip(\varphi)|\D f|$, forcing the desired equality on $f^{-1}(\{\varphi'\geq 0\})$. The conclusion on $f^{-1}(\{\varphi'\leq 0\})$ follows analogously. For \eqref{eq:dfloc} we let $(\varphi_n)\subset C^1(\R)$ be 1-Lipschitz, converging to the identity and with $\varphi'_n\equiv0$ on some neighborhood of 0 (depending on $n$). Then \eqref{eq:dfchain} gives
\[
\int_{\{|\D f|\neq 0\}}|\D f|^2\,\d\mm\geq\int |\D(\varphi_n\circ f)|^2\,\d\mm\qquad\forall n
\]
and letting $n\to\infty$ using the lower semicontinuity of $\ch$ we get $\int_{\{|\D f|\neq 0\}}|\D f|^2\,\d\mm\geq \int|\D f|^2\,\d\mm$, which is the claim. 

 For later use I also point out that
\begin{equation}
\label{eq:w12reflsep}
\text{if $W^{1,2}(\X)$ is reflexive, then it is separable and $\Lip_\bs(\X)$ is dense.}
\end{equation}
To see this, let $D\subset \Lip_\bs(\X)$  be a countable $L^2$-dense subset of the unit ball $B$ of $W^{1,2}(\X)$. Then for $f\in B$ find $(f_n) \subset D$ converging to $f$ in $L^2(\X)$: since $(f_n)$ is $W^{1,2}$-bounded, by reflexivity up to subsequences it must have a weak limit in $W^{1,2}(\X)$ and this weak limit must be $f$. Hence the weak closure of $D$ is precisely $B$ and by Mazur’s lemma this is sufficient to conclude.

Finally,  if $(\X,\sfd,\mm)$ is a smooth Finsler manifold equipped with the induced distance and a measure that, read in charts, has smooth density, then little work shows that  the concept of Sobolev function as defined here coincides with that classically defined via charts and integration by parts. In particular, for $f\in C^\infty(\X)$ we have
\begin{equation}
\label{eq:dffinsl}
|\D f|(x)=\| \d f(x)\|_*\qquad\mm-a.e.\ x\in\X,
\end{equation}
where here $\| \cdot\|_*$ is the norm on $T^*_x\X$ dual to the Finsler one. Identity \eqref{eq:dffinsl} shows that even in the smooth Finsler setting, $W^{1,2}$ is in general {not} a Hilbert space: it is so if and only if the manifold is actually Riemannian.
\subsection{The Heat Flow as gradient flow (again)}
In Section \ref{se:hf} we defined the heat flow on $\CD(K,\infty)$ spaces as the $W_2$-gradient flow of the entropy and in the previous section we encountered the $L^2$-gradient flow of the Cheeger/Dirichlet energy, that certainly is also the heat flow in the smooth setting. 

Notably, these two flows coincide in the general setting of $\CD(K,\infty)$ spaces:
\begin{theorem}[The heat flow as gradient flow on $\CD(K,\infty)$ spaces]\label{thm:hfgfagain}
Let $(\X,\sfd,\mm)$ be a normalized $\CD(K,\infty)$ space, $f\in L^2(\X)$ be so that $\mu:=f\mm$ is in $\prd(\X)$. Then
\begin{equation}
\label{eq:slopech}
8\ch(\sqrt{f})=|\partial^-\ent|^2(\mu).
\end{equation}
Also, let $(f_t)$ (resp.\ $(\mu_t)$) be the gradient flow trajectory of $\ch$ in $L^2$ (resp.\ $\ent$ in $\prd(\X)$) starting from $f$ (resp.\ from $\mu$).  Then $\mu_t=f_t\mm$ for every $t\geq 0$.
\end{theorem}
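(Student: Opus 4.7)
The plan has three steps: prove the slope identity \eqref{eq:slopech} by two separate inequalities, then deduce the coincidence of flows from EDE-uniqueness. Throughout, I would first treat the case where $f$ is bounded from below and above by positive constants (which is preserved along $(f_t)$ by Lemma \ref{le:basegfch}(i), so the hypotheses of parts (iii)--(iv) of that lemma remain in force at every positive time), and then remove the bounds by an approximation argument at the end. I recall also the ``Fisher information'' identity $8\ch(\sqrt{f})=\int\frac{|\D f|^2}{f}\,\d\mm$, which follows from the chain rule \eqref{eq:dfchain} applied to $z\mapsto\sqrt z$.

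\textbf{Step 1 (the bound $|\partial^-\ent|^2(\mu)\geq 8\ch(\sqrt{f})$).} This step needs no curvature assumption. Let $(f_s)$ be the $L^2$-gradient flow of $\ch$ starting from $f$ and set $\mu_s:=f_s\mm$. Combining Lemma \ref{le:basegfch}(iii) with the $W_2$-absolute continuity and the speed bound of Lemma \ref{le:basegfch}(iv), one gets
\[
\ent(\mu)-\ent(\mu_t)=\int_0^t 8\ch(\sqrt{f_s})\,\d s\qquad\text{ and }\qquad W_2(\mu,\mu_t)\leq\sqrt{t}\sqrt{\int_0^t 8\ch(\sqrt{f_s})\,\d s}.
\]
Dividing and using the $L^2$-lower semicontinuity of $\ch$ together with $\sqrt{f_s}\to\sqrt{f}$ in $L^2$ as $s\downarrow0$ (to handle the Fatou step), we obtain $\liminf_{t\downarrow 0}\frac{\ent(\mu)-\ent(\mu_t)}{W_2(\mu,\mu_t)}\geq\sqrt{8\ch(\sqrt{f})}$, and since $\mu_t\to\mu$ along this curve the definition \eqref{eq:defsl} of slope yields the desired bound.

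\textbf{Step 2 (the bound $|\partial^-\ent|^2(\mu)\leq 8\ch(\sqrt{f})$).} This is the hard direction and is where $\CD(K,\infty)$ enters. By the representation \eqref{eq:reprsl} it is enough to show that for every $\nu\to\mu$ in $W_2$ one has $\ent(\mu)-\ent(\nu)\leq \bigl(\sqrt{8\ch(\sqrt{f})}+o(1)\bigr)W_2(\mu,\nu)+\tfrac{K^-}{2}W_2^2(\mu,\nu)$. The duality \eqref{eq:dualent}, with the (admissible because $f$ is bounded away from $0$ and $\infty$) choice $\varphi:=\log f+1$, gives $\ent(\mu)-\ent(\nu)\leq\int\varphi\,\d\mu-\int\varphi\,\d\nu$; the chain rule \eqref{eq:dfchain} ensures $\varphi\in W^{1,2}(\X)$ with $|\D\varphi|^2=|\D f|^2/f^2$. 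Picking a $W_2$-geodesic $(\mu_s)$ from $\mu$ to $\nu$, using Lemma \ref{le:superpp} to produce a lifting $\ppi$ (which is a test plan, by $\CD(K,\infty)$: the convexity of entropy along the geodesic bounds $\ent(\mu_s)$ and hence controls $\comp(\ppi)$), and applying the $W^{1,1}$ representation in Lemma \ref{le:equivsobw}(iii) followed by Cauchy--Schwarz, one bounds $\int\varphi\,\d(\mu-\nu)$ by $\sqrt{\int_0^1\int|\D\varphi|^2\,\d\mu_s\,\d s}\cdot W_2(\mu,\nu)$. As $\nu\to\mu$, the continuity of $s\mapsto\int|\D\varphi|^2\,\d\mu_s$ at the endpoint -- which is the genuine technical obstacle, since weak convergence of $\mu_s$ to $\mu$ must be upgraded to passage of a non-continuous integrand -- follows by using the uniform entropy bound along the geodesic (again from $\CD(K,\infty)$) to get equi-integrability of the densities; in the limit the integral converges to $\int|\D\log f|^2\,\d\mu=\int|\D f|^2/f\,\d\mm=8\ch(\sqrt{f})$.

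\textbf{Step 3 (identification $\mu_t=f_t\mm$).} Let $\tilde\mu_t:=f_t\mm$. By Lemma \ref{le:basegfch}(i), each $f_t$ satisfies the hypotheses of Step 1--2, so the slope identity gives $|\partial^-\ent|^2(\tilde\mu_s)=8\ch(\sqrt{f_s})$ for every $s\geq 0$; combining with Lemma \ref{le:basegfch}(iii) and Kuwada's bound \eqref{eq:kuwlem} we get
\[
\ent(\mu)-\ent(\tilde\mu_t)=\int_0^t 8\ch(\sqrt{f_s})\,\d s\geq\tfrac12\int_0^t|\dot{\tilde\mu}_s|^2+|\partial^-\ent|^2(\tilde\mu_s)\,\d s.
\]
The reverse inequality is the universal upper gradient inequality \eqref{eq:uppgr} (valid along any absolutely continuous curve), so equality holds and $(\tilde\mu_t)$ is an $\EDE$-gradient flow trajectory of $\ent$ starting from $\mu$. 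Uniqueness of such a trajectory (Theorem \ref{thm:uniqueheat}) forces $\tilde\mu_t=\mu_t$. Finally the assumption $c\leq f\leq c^{-1}$ is removed by standard truncation/approximation, using the $L^2$-continuity of $(f_t)$, the $W_2$-continuity of $(\mu_t)$ granted by Theorem \ref{thm:stabheat}, and the lower semicontinuity of both sides of \eqref{eq:slopech}.
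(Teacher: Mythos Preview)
Your Steps 1 and 3 are correct and match the paper's logic (your Step 1 is in fact slightly more elementary: you use the definition \eqref{eq:defsl} directly along the specific curve $s\mapsto f_s\mm$, while the paper invokes the representation \eqref{eq:reprsl}, which already uses $K$-convexity). Step 3 is exactly what the paper says in the sentence ``The same line of thought shows\ldots''.

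The genuine gap is in Step 2. Your claim that ``the convexity of entropy along the geodesic bounds $\ent(\mu_s)$ and hence controls $\comp(\ppi)$'' is false: a uniform bound on $\int\rho_s\log\rho_s\,\d\mm$ does \emph{not} give an $L^\infty$ bound on the densities $\rho_s$, so the lifting $\ppi$ need not be a test plan and the weak-upper-gradient inequality for $\varphi=\log f+1\in W^{1,2}$ is simply not available along it. The subsequent ``continuity at the endpoint'' step has the same defect: equi-integrability of the $\rho_s$ (which is all the entropy bound yields) does not let you pass an unbounded $L^1$ integrand like $|\D\varphi|^2$ through weak convergence of $\mu_s$. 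Even if you invoked Lemma~\ref{le:tapio} to produce a geodesic of bounded compression, you would still have to restrict to competitors $\nu$ with bounded density and argue by approximation that this suffices for the slope---none of which you address.

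The paper's route is quite different and avoids the test-plan machinery entirely. It first assumes $f$ Lipschitz, uses the optimal \emph{plan} $\ggamma$ (not a geodesic lifting), and controls the difference quotient of $\log f$ by the upper semicontinuous function $L(x,y):=\frac{|\log f(x)-\log f(y)|}{\sfd(x,y)}$ with $L(x,x)=\lipa(\log f)(x)$. As $\nu_n\to\mu$ the optimal plans converge weakly to $(\Id,\Id)_*\mu$, and upper semicontinuity of $L$ gives $|\partial^-\ent|^2(\mu)\leq\int\lipa^2(\log f)\,f\,\d\mm=4\int\lipa^2(\sqrt f)\,\d\mm$. The passage from $\lipa$ to $|\D\cdot|$ is then done by relaxation: take an optimal sequence $(g_n)\subset\Lip_\bs(\X)$ for $\ch(\sqrt f)$, so that $g_n^2\mm\stackrel{W_2}\to\mu$, and use the $W_2$-lower semicontinuity of the slope (Theorem~\ref{thm:glimisl}, which is where $K$-convexity enters) to push the Lipschitz bound down to $8\ch(\sqrt f)$. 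The two key ideas you are missing are thus (a) working at the level of plans and asymptotic Lipschitz constants rather than Sobolev gradients along geodesics, and (b) handling the non-Lipschitz case by approximation together with lower semicontinuity of the slope.
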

\begin{idea} Say $K=0$, $0<c\leq f\leq c^{-1} $ $\mm$-a.e.\ for some $c>0$ and notice that $8\ch(\sqrt{f})=\int\frac{|\D f|^2}{f}\,\d\mm$. Writing inequality \eqref{eq:uppgr} for $\nu_t:=f_t\mm$ keeping in mind Lemma \ref{le:basegfch} we get, after rearranging, the bound
\[
\iint_0^t\frac{|\D f_s|^2}{f_s}\,\d s\,\d\mm\leq \ent(\nu_0)-\ent(\nu_t)\stackrel{\eqref{eq:reprsl}}\leq|\partial^-\ent|(\nu_0)W_2(\nu_0,\nu_t)-\tfrac K2W^2_2(\nu_0,\nu_t),
\]
thus keeping in mind \eqref{eq:kuwlem} and the lower semicontinuity of $t\mapsto\ch(\sqrt{f_t})$ the inequality  $\leq$ in \eqref{eq:slopech} follows.  The same line of thought shows that if we prove $\geq$ in \eqref{eq:slopech}, then $(\nu_t)$ satisfies \eqref{eq:defgf}, so that the conclusion $\nu_t=\mu_t$ would follow from the uniqueness Theorem \ref{thm:uniqueheat}.

Assume for a moment that $f$ is also Lipschitz, define $L:\X^2\to\R^+$ as
\[
L(x,y):=\left\{\begin{array}{ll}
\tfrac{|\log(f(x))-\log(f(y))|}{\sfd(x,y)},&\qquad\text{ if }x\neq y,\\
\lipa(\log(f))(x),&\qquad\text{ if }x=y.
\end{array}\right.
\]
Then from the convexity of $u(z)=z\log z$ we get $\ent(\mu)-\ent(\nu)\leq \int\log f\d(\mu-\nu)$ and thus for $\ggamma\in\pr(\X^2)$ optimal from $\mu$ to $\nu$  we have
\[
\begin{split}
\frac{\ent(\mu)-\ent(\nu)}{W_2(\mu,\nu)}&\leq\frac{\int \log f(x)-\log f(y)\,\d\ggamma}{W_2(\mu,\mu_n)}\leq \sqrt{\int L^2(x,y)\,\d\ggamma(x,y)}.
\end{split}
\]
Then for $\nu=\nu_n\to\mu$ the corresponding optimal plans weakly converge to  $(\Id,\Id)_*\mu$, thus the (trivial) upper semicontinuity of $L$ gives
\[
|\partial^-\ent|^2(\mu)\leq \int L^2(x,x)\,\d\mu(x)=\int \lipa(\log f)^2 f\,\d\mm=4\int \lipa(\sqrt f)^2\,\d\mm.
\]
Now the conclusion follows by approximation taking an optimal sequence $(g_n)\subset \Lip_\bs(\X)$ for the definition of $\ch(\sqrt f)$, noticing that, up to a cut-off and normalization, we have $g_n^2\mm\stackrel{W_2}\to\mu$  and using the $W_2$-lower semicontinuity of $|\partial^-\ent|$ granted by Theorem \ref{thm:glimisl} (pick $\E_n=\ent$ for every $n\in \N\cup\{\infty\}$).
\end{idea}
In Theorem \ref{thm:glimsch} we proved the $\glims$ inequality for the Cheeger energies and in Theorem \ref{thm:glimisl} the $\glimi$ inequality for the slopes. Thanks to the identity \eqref{eq:slopech} we can combine these and obtain the following result. Notice that an interesting intermediate result is a Rellich-Kondrachov type of result along a converging sequence of $\CD$ spaces:
\begin{theorem}[Convergence of slopes and  energies]\label{thm:convslen} Let $\X_n \stackrel{mGH}\to \X_\infty $, the spaces being normalized and $\CD(K,\infty)$.

Then $(|\partial^-\entm{\mm_n}|)$ $\Gamma$-converges to $|\partial^-\entm{\mm_\infty}|$ w.r.t.\ $W_2$-convergence\footnote{we say that $n\mapsto\mu_n\in \prd(\X_n)$ converges to $\mu_\infty\in\prd(\X_\infty)$ w.r.t.\ $W_2$ if $(\iota_n)_*\mu_n\stackrel{W_2}\to(\iota_\infty)_*\mu_\infty$} and $(\ch_n )$ Mosco-converges to $\ch_\infty$ w.r.t.\ weak/strong $L^2$-convergence.

Moreover, assume that $(f_n)$ is so that
\begin{equation}
\label{eq:perfortecomp}
\sup_n\|f_n\|_{W^{1,2}(\X_n)}<\infty\qquad\text{ and }\qquad \lim_{R\to\infty}\sup_n\int_{B_R^c(x_n)} |f_n|^2\,\d\mm_n=0\quad\text{for some $x_n\to x_\infty$. }
\end{equation}
Then $(f_n)$ has a subsequence strongly $L^2$-converging.
\end{theorem}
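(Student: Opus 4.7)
My plan is to establish the three assertions in the order (1), (3), (2), since the weak-$L^2$-$\glimi$ needed in (2) will rely on the compactness result (3), while (3) itself can be proved using only the tools at hand together with (1). For (1), the $\glimi$ inequality along a $W_2$-converging sequence is an immediate consequence of Theorem \ref{thm:glimisl} applied to the entropies, which are themselves $\Gamma$-convergent by Theorem \ref{thm:gammamgh}: the point is that $W_2$-convergence is finer than the weak convergence of measures considered in Theorem \ref{thm:gammamgh}, so the weak-convergence $\glimi$ for the slopes is automatically inherited. For the matching $\glims$, given $\mu_\infty = \rho_\infty\mm_\infty\in\prd(\X_\infty)$ with finite slope, I would use identity \eqref{eq:slopech} to rewrite $|\partial^-\entm{\mm_\infty}|^2(\mu_\infty) = 8\ch_\infty(\sqrt{\rho_\infty})$ and then apply Theorem \ref{thm:glimsch} to $\sqrt{\rho_\infty}$. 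Inspection of that proof shows that the recovery sequence $g_n\to\sqrt{\rho_\infty}$ can be chosen in $\Lip_\bs(\Y)$, non-negative and with uniformly bounded support; normalising by $\rho_n:=g_n^2/\|g_n\|_{L^2(\mm_n)}^2$ produces probability densities on $\X_n$, and the uniformly bounded support combined with the strong $L^2$-convergence forces $\rho_n\mm_n\to\rho_\infty\mm_\infty$ in $W_2$. Applying \eqref{eq:slopech} on each $\X_n$ then gives $|\partial^-\entm{\mm_n}|^2(\rho_n\mm_n)=8\ch_n(\sqrt{\rho_n})\to 8\ch_\infty(\sqrt{\rho_\infty})$, as required.

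For (3) the strategy is parabolic regularisation via the $L^2$-heat flow. Let $f_n^t$ denote the gradient-flow trajectory of $\ch_n$ in $L^2$ starting from $f_n$; the energy-dissipation identity gives
\[
\|f_n^t - f_n\|_{L^2(\mm_n)}^2 \;\leq\; 2t\bigl(\ch_n(f_n)-\ch_n(f_n^t)\bigr)\;\leq\; 2t\,\ch_n(f_n),
\]
so $(f_n^t)_n$ is $L^2$-close to $(f_n)_n$ uniformly in $n$ as $t\downarrow 0$. The crucial point is then to prove that for each fixed $t>0$ the sequence $(f_n^t)$ admits a strongly $L^2$-converging subsequence; a diagonal argument in $t\downarrow 0$ will afterwards extract a strongly $L^2$-Cauchy subsequence of $(f_n)$. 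The `compactness at positive time' exploits the improved regularity of $f_n^t$ --- its Laplacian lies in $L^2$ with a bound depending only on $t$ and on $\ch_n(f_n)$, by the standard smoothing effect of gradient flows of convex functionals --- together with the stability provided by Theorem \ref{thm:stabede}, which identifies the weak-$L^2$ limit of $(f_n^t)$ with the gradient-flow evolution of the limit Cheeger energy on the weak-$L^2$ limit of $(f_n)$. Combining this identification with the tightness hypothesis upgrades weak to strong convergence.

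For (2) the $\glims$ is just Theorem \ref{thm:glimsch}; I concentrate on the weak-$L^2$-$\glimi$. Let $f_n\weakto f_\infty$ weakly in $L^2$ with $\limi_n\ch_n(f_n)<\infty$: after passing to a non-relabelled subsequence realising the $\limi$, I would invoke (3) to reduce to the case where the convergence is in fact strong in $L^2$. The rest is a simple reduction to the slope statement in (1): truncate $f_n^M:=(-M)\vee(f_n\wedge M)$, which only decreases the Cheeger energy by the chain rule \eqref{eq:dfchain}, and shift by $M+1$, which leaves it unchanged since $|\D(\cdot+c)|=|\D\cdot|$, producing a strictly positive bounded sequence $h_n^M:=f_n^M+M+1\in[1,2M+1]$. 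Normalising gives probability densities $\rho_n:=(h_n^M)^2/\|h_n^M\|_{L^2(\mm_n)}^2$ on $\X_n$ with $\sqrt{\rho_n}$ proportional to $h_n^M$ and $\ch_n(\sqrt{\rho_n})$ equal to $\ch_n(f_n^M)$ up to an $n$-independent factor that converges to its $\infty$-counterpart. Strong-$L^2$-convergence of $f_n^M$ combined with the uniform $L^\infty$-bound on the densities gives $\rho_n\mm_n\to\rho_\infty\mm_\infty$ in $W_2$, so (1) yields $8\ch_\infty(\sqrt{\rho_\infty})\leq\limi_n 8\ch_n(\sqrt{\rho_n})$, equivalently $\ch_\infty(f_\infty^M)\leq\limi_n\ch_n(f_n^M)\leq\limi_n\ch_n(f_n)$. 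Letting $M\to\infty$ and using the strong-$L^2$-lower semicontinuity of $\ch_\infty$ on the fixed space $\X_\infty$ concludes.

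The step I expect to be the main obstacle is the strong $L^2$-compactness of $(f_n^t)$ at fixed $t>0$ inside the proof of (3). Passing from weak compactness, which is immediate from the $L^2$-bound, to strong compactness demands quantitative regularity for the heat flow that must hold uniformly along the mGH-converging sequence, and in the absence of a dimension bound there is no doubling property nor classical Rellich embedding to invoke. The remedy must come from a self-improvement mechanism built from the $\Gamma$-convergence of the Cheeger energies together with the stability of their $L^2$-gradient flows via Theorem \ref{thm:stabede}, with the tightness hypothesis playing the essential role of ruling out mass escape along the sequence.
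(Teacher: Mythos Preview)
Your treatments of (1) and (2) follow the paper closely; the substantive divergence is in (3), and here the plan has a gap that you flag yourself but do not close.

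You propose to regularise via the $L^2$-heat flow and then prove strong compactness of $(f_n^t)_n$ at each fixed $t>0$ by invoking Theorem~\ref{thm:stabede} for the $L^2$-gradient flows of $\ch_n$. This cannot be made to work as stated: Theorem~\ref{thm:stabede} is formulated on a single compact metric space (not across varying $L^2(\X_n)$), it requires the full $\Gamma$-convergence of the functionals (at this stage you have only the $\glims$ from Theorem~\ref{thm:glimsch}), and it needs the initial data to be a recovery sequence --- none of which is available. Using $\Gamma$-convergence of $\ch_n$ to establish the compactness that you then feed back into the $\glimi$ of $\ch_n$ is circular. Even granting a weak-$L^2$ identification $f_n^t\weakto f_\infty^t$, no mechanism for the promised upgrade to strong convergence is given; the ``self-improvement'' remains a hope, not an argument.

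The paper takes a different and self-contained route. After reducing (positive/negative parts, Lipschitz cut-off) to non-negative $f_n$ with uniformly bounded support and $\|f_n\|_{L^2}=1$, set $\mu_n:=f_n^2\mm_n$. The identity~\eqref{eq:slopech} turns the $W^{1,2}$ bound into ${\sf S}:=\sup_n|\partial^-\entm{\mm_n}|(\mu_n)<\infty$; plugging this into~\eqref{eq:reprsl} against any $\nu_n$ with controlled entropy and moment yields $\sup_n\entm{\mm_n}(\mu_n)<\infty$, hence tightness by~\eqref{eq:tightcr} and, with the bounded supports, $W_2$-convergence $\mu_n\to\mu_\infty=:f_\infty^2\mm_\infty$. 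The crucial step is to upgrade the $\glimi$ of entropies (Theorem~\ref{thm:gammamgh}) to actual \emph{convergence}: choosing in~\eqref{eq:reprsl} a recovery sequence $\nu_n$ for $\mu_\infty$ with $W_2(\nu_n,\mu_\infty)\to0$ (cf.~\eqref{eq:recw2}) gives $\lims_n\entm{\mm_n}(\mu_n)\le\entm{\mm_\infty}(\mu_\infty)$. Convergence of the entropies then drives a Young-measure argument: if $\nu=\nu_x\otimes\mm_\infty$ is any weak limit of $(\Id,f_n^2)_*\mm_n$, Jensen applied to $z\log z$ holds with equality, so strict convexity forces each $\nu_x$ to be a Dirac. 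Testing with $(x,z)\mapsto\sqrt z\,\varphi(x)$ gives $f_n\stackrel{L^2}\weakto f_\infty$, and since $\|f_n\|_{L^2}=1=\|f_\infty\|_{L^2}$ the convergence is strong. This --- uniform entropy control via the slope bound, plus strict convexity of $z\log z$ --- is the missing idea in your proposal.

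A minor point on (2): before invoking (3) you must first cut off so that the tightness hypothesis of (3) is met; a generic weakly-$L^2$-converging sequence need not satisfy it. The paper does this explicitly.
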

\begin{idea} The recovery sequence for the slopes can be built as in Theorem \ref{thm:glimsch} taking into account the identity \eqref{eq:slopech}.

For the claim about strong compactness  we can assume that the $f_n$'s are non-negative (by taking positive/negative parts),  have uniformly bounded support (by a cut-off argument based on $|\D(\eta f)|\leq \eta|\D f|+\nchi_{B^c}|f|$ for a 1-Lipschitz cut-off function $\eta$ identically 1 on $B$) and that $n\mapsto \|f_n\|_{L^2}$ has a limit. If such limit is 0 there is nothing to prove, otherwise by rescaling we can assume that $\|f_n\|_{L^2}=1$ for every $n$. Put $\mu_n:=f_n^2\mm_n$ and notice that \eqref{eq:slopech} gives ${\sf S}:=\sup_n|\partial^-\entm{\mm_n}|(\mu_n)<\infty$, while from \eqref{eq:reprsl} we get 
\begin{equation}
\label{eq:percompl2}
\entm{\mm_n}(\mu_n)\leq \entm{\mm_n}(\nu_n)+{\sf S}\,W_2(\mu_n,\nu_n)+\tfrac{K^-}2W_2^2(\mu_n,\nu_n)\qquad\forall\nu_n\in D(\entm{\mm_n}).
\end{equation}
Choosing the $\nu_n$'s so that $\sup_n\entm{\mm_n}(\nu_n)+W_2(\nu_n,\delta_{\bar x})<\infty$ we see that $\sup_n\entm{\mm_n}(\mu_n)<\infty$, which, together with $\mm_n\weakto\mm_\infty$, gives tightness of the $\mu_n$'s (recall \eqref{eq:tightcr}). Since the supports are uniformly bounded, up to subsequences we have  $W_2(\mu_n,\mu_\infty)\to 0$ for some $\mu_\infty=f^2_\infty\mm_\infty$. Since $\|f_n\|_{L^2}=\|f_\infty\|_{L^2}$ for every $n$, to conclude it suffices to  prove $f_n\stackrel{L^2}\weakto f_\infty$. 

Theorem \ref{thm:gammamgh} gives $\entm{\mm_\infty}(\mu_\infty)\leq\limi_n\entm{\mm_n}(\mu_n)$. On the other hand, letting $(\nu_n)$ be a recovery sequence for $ \entm{\mm_\infty}(\mu_\infty)$ with  $W_2(\nu_n,\mu_\infty)\to0$ (recall \eqref{eq:recw2}) in \eqref{eq:percompl2} we obtain $\lims_n\entm{\mm_n}(\mu_n)\leq \entm{\mm_\infty}(\mu_\infty)$. 

Thus the entropies converge and we can conclude with a classical argument based on Young's measures. We give some details: let $\tilde\mu_n:=(\Id,\rho_n)_*\mm_n$, with $\rho_n:=f_n^2$, notice that they are weakly compact in $\pr(\Y\times\R)$ and let $\d\nu(x,z)=\nu_x(z)\otimes\d\mm_\infty(x)$ be a limit. Testing the weak convergence with functions of the form $\Y\times\R\ni (x,z)\mapsto z\varphi(x)$ we deduce that $\int z\,\d\nu_x(z)=\rho_\infty(x):=f_\infty^2(x)$ for $\mm_\infty$-a.e.\ $x$. Also, testing the weak convergence with $(x,z)\mapsto u(z):=z\log z$ gives
\[
\lim_n\int u(z)\,\d\tilde\mu_n(x,z)=\iint u(z)\,\d\nu_x(z)\,\d\mm_\infty(x)\geq \int u\Big(\int z\,\d\nu_x(z)\Big)\,\d\mm_\infty(x)=\int u(\rho_\infty)\,\d\mm_\infty
\]
having used Jensen's inequality. Since $\int u(z)\,\d\tilde\mu_n(x,z)=\entm{\mm_n}(\mu_n)$, the convergence of the entropies forces the equality in the inequality above, which in turn, by the strict convexity of $u$, forces $\nu_x(z)$ to be a Dirac mass for $\mm_\infty$-a.e.\ $x$. In other words, $\tilde\mu_n\weakto (\Id,\rho_\infty)_*\mm_\infty$ and testing such weak convergence  with functions of the form $\Y\times\R\ni (x,z)\mapsto \sqrt{z}\varphi(x)$ we conclude that $\int f_n\varphi\,\d\mm_n\to\int f_\infty\varphi\,\d\mm_\infty$, as desired.

It remains to prove the $\glimi$ inequality for the Cheeger energies, thus let $f_n\weakto f_\infty$ and, with no loss of generality and a cut-off argument as above, assume that the $f_n$'s have uniformly bounded supports and $W^{1,2}$-norms.   Thus by what just proved the convergence to $f_\infty$ is $L^2$-strong, which in turn implies that $\mu_n:=f_n^2\mm_n$ converge to $\mu_\infty:=f_\infty^2\mm_\infty$ in  total variation. This and the uniform bound of the supports give $W_2$-convergence, so that the claim follows from \eqref{eq:slopech} and Theorem \ref{thm:glimisl}.
\end{idea}

\subsection{The Riemannian Curvature Dimension condition}

\subsubsection{Notion of calculus on $\CD(K,\infty)$ spaces}

We have seen that Sobolev calculus is (or can be) based upon the concept of plan with bounded compression, but the $\CD$ condition (together with the superposition principle in Lemma \ref{le:superpp}) a priori only grants existence of plans with marginals of finite entropy. This regularity gap is bridged by the following selection lemma. Here and below we say that $(\mu_t)$ has bounded compression if $\mu_t\leq C\mm$ for some $C>0$ and every $t\in[0,1]$.
\begin{lemma}\label{le:tapio}
Let $(\X,\sfd,\mm)$ be a $\CD(K,\infty)$ space and $\mu_0,\mu_1\in\prd(\X)$ be with bounded support and $\leq c\mm$ for some $c>0$. Then there is a $W_2$-geodesic $(\mu_t)$ connecting them satisfying \eqref{eq:defkconv}   and of bounded compression.
\end{lemma}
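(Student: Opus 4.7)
Under the hypotheses, $\mu_0,\mu_1\in\prd(\X)$ have $\ent(\mu_i)\leq\log c<\infty$ and finite second moment (by boundedness of the supports), so Definition \ref{def:cd} directly produces a $W_2$-geodesic $(\mu_t)$ joining them along which the entropy convexity inequality \eqref{eq:defkconv} with $\E=\ent$ holds. The substantive content of the lemma is therefore the upgrade from bounded entropy along the geodesic to the $L^\infty$ bound $\mu_t\leq C\mm$.

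My plan is to pass through an approximation by \emph{block-uniform} measures. For each $n\in\N$ partition $\supp(\mu_i)$ into Borel pieces $\{B_{i,j}^n\}_j$ of diameter at most $1/n$ and replace $\mu_i$ by
\[
\mu_i^n:=\sum_j \mu_i(B_{i,j}^n)\,\tfrac{\mm\restr{B_{i,j}^n}}{\mm(B_{i,j}^n)},
\]
which still satisfies $\mu_i^n\leq c\mm$ and $\mu_i^n\to\mu_i$ in $W_2$. Fix an optimal coupling $\ggamma^n$ between $\mu_0^n$ and $\mu_1^n$ with cell weights $m_{jk}^n:=\ggamma^n(B_{0,j}^n\times B_{1,k}^n)$. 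For each pair $(j,k)$ apply $\CD(K,\infty)$ to the uniform marginals on $B_{0,j}^n,B_{1,k}^n$ to obtain an entropy-convex geodesic $(\mu_t^{n,j,k})$: entropy convexity controls $\ent(\mu_t^{n,j,k})$ in terms of $\log\mm(B_{0,j}^n)^{-1}$, $\log\mm(B_{1,k}^n)^{-1}$ and $W_2^2$, and together with the localization of its support in an $O(1/n)$-thickening of an interpolating geodesic ball this yields an $L^\infty$ bound on the corresponding density $\rho_t^{n,j,k}$. Gluing $\mu_t^n:=\sum_{jk}m_{jk}^n\,\mu_t^{n,j,k}$ then produces a $W_2$-geodesic from $\mu_0^n$ to $\mu_1^n$ whose density is bounded by a constant $C=C(c,K,W_2(\mu_0,\mu_1))$ independent of $n$.

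Finally, take the limit $n\to\infty$: tightness of the associated liftings $\ppi^n\in\pr(C([0,1],\X))$ (from the uniform kinetic energy bound and equiboundedness of supports) produces $\ppi$ lifting a $W_2$-geodesic $(\mu_t)$ from $\mu_0$ to $\mu_1$; the density bound $\mu_t\leq C\mm$ is preserved because the set $\{\nu\in\prd(\X):\nu\leq C\mm\}$ is closed under weak convergence, and entropy convexity of the limit follows from the joint lower semicontinuity of $\ent$ (cf.\ Theorem \ref{thm:gammamgh} applied to the constant sequence of spaces).

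The main obstacle is the pointwise density bound on the pieces $(\mu_t^{n,j,k})$: the $\CD$ condition by itself controls only the integral entropy, not the $L^\infty$-norm, and converting the one to the other requires knowing that at each intermediate time $t$ the piece is concentrated in a metric ball whose $\mm$-measure is quantitatively comparable to $\mm(B_{0,j}^n)$ and $\mm(B_{1,k}^n)$. Handling this forces a further iteration of the selection at smaller scales and is the genuinely non-trivial step at the heart of the proof.
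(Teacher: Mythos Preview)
Your proposal has the gap you yourself flag at the end, and it is not a technicality but the entire content of the lemma. From the $\CD(K,\infty)$ condition you only ever obtain control on the \emph{entropy} of $\mu_t^{n,j,k}$; there is no mechanism by which an entropy bound, even combined with support containment in a small ball, yields a pointwise density bound --- a measure supported in a ball $B$ with $\ent(\mu)\leq -\log\mm(B)$ can still have arbitrarily large density on a set of small enough $\mm$-measure. The proposed ``iteration at smaller scales'' does not help, since at every scale the $\CD$ condition delivers the same integral information. There are secondary issues as well (the glued curve $\sum_{jk} m_{jk}^n\mu_t^{n,j,k}$ is not obviously a $W_2$-geodesic, the supports of the pieces may overlap so that pointwise bounds on the pieces need not sum to a uniform bound, and entropy convexity of the pieces does not pass to their affine combination), but the central obstruction is the first.

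The paper's argument is structurally different and bypasses the entropy-to-$L^\infty$ conversion entirely, proceeding by \emph{variational selection} of midpoints. First one shows that any $\nu_0,\nu_1\leq c\mm$ admit \emph{some} midpoint $\leq c\mm$: minimise the excess functional $F_c(\mu):=\int(\rho-c)^+\,\d\mm+\mu^s(\X)$ over the weakly compact convex set of midpoints and argue by contradiction that the minimiser $\nu=\eta\mm$ has $F_c(\nu)=0$. If not, restrict a lifting $\ppi$ of a geodesic through $\nu$ to curves passing through $A=\{\eta>c\}$; the resulting endpoint marginals are $\leq\tfrac{c}{m}\mm$ (where $m=\nu(A)$), so $\CD$ produces a midpoint $\sigma$ of entropy $\leq\log(c/m)$, and Jensen forces $\sigma$ to be spread over a set of $\mm$-measure $\geq m/c>\mm(A)$; perturbing $\nu$ toward $\sigma$ on $A$ strictly decreases $F_c$. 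A second step shows, by the same contradiction pattern, that the \emph{entropy-minimising} midpoint of $\mu_0,\mu_1\leq C\mm$ is already $\leq C\mm$. Dyadic iteration then yields the geodesic. The $\CD$ condition is used only to manufacture competitors that spread mass, never to bound densities directly.
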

\begin{idea} Say $K=0$ and  notice that the class of $W_2$-midpoints of two measures  is  convex (w.r.t.\ affine interpolation) and weakly compact. \\
{\sc Step 1} We claim that 
\begin{equation}
\label{eq:pertapio}
\text{for any $\nu_0,\nu_1\leq c\mm$ there is a $W_2$-midpoint $\nu_{1/2}$ with $\nu_{1/2}\leq c\mm$.}
\end{equation}
Let $F_c:\pr(\X)\to[0,1]$ be defined as $F_{c}(\mu):=\int(\rho-c)^+\,\d\mm+\mu^s(\X)$ for  $\mu=\rho\mm+\mu^s$. Let $ \nu$  be a minimizer of $F_c$ among midpoints of $\nu_0,\nu_1$: if we prove that $F_{c}(\nu)=0$ we are done, as this means $\nu\leq c\mm$. To prove this we argue  by contradiction and suppose that $F_{c}(\nu)>0$; for simplicity we also assume that $\nu=\eta\mm\ll\mm$ and that $\nu(\{\eta=c\})=0$. Let $ \ppi$ be a lifting of a geodesic from $\nu_0$ to $\nu_1$ passing through $\nu$,  $A:=\{\eta> c\}$, $ m:= \nu(A)$ and $ \ppi':=\tfrac1{ m}\ppi\restr{\e_{1/2}^{-1}(A)}$. Then the measures $(\e_0)_*\ppi',(\e_1)_*\ppi'$ are $\leq \tfrac c{ m}\mm$, thus their entropies are $\leq \log(\tfrac c{ m})$ and the $\CD(0,\infty)$ assumption ensures that they have a midpoint $\sigma=\xi\mm$ with entropy $\leq \log(\tfrac c{ m})$.  Jensen's inequality gives  $\mm(\{\xi>0\})\geq \frac{ m}c$ and since  $\mm(A)<\frac1c\int_A\eta\,\d\mm=\frac{ m}{c}$ we have  that $\sigma(A)<1$. The convexity of $W_2^2(\cdot,\cdot)$ ensures that $\nu\restr{A^c}+ m\,\sigma$ is a $W_2$-midpoint of $\nu_0,\nu_1$, thus so is  $\sigma_\eps:=\nu+\eps( m\,\sigma-\nu\restr A )$ for $\eps\in(0,1)$ and, letting $u_c(z):=(z-c)^+$, by direct computation we see that 
\[
\lim_{\eps\downarrow0}\frac{F_{c}(\sigma_\eps)-F_{c}(\nu)}\eps=\int u_c'(\eta)( m\xi- \eta)\,\d\mm=\int_A( m\xi- \eta)\,\d\mm= m\,\sigma(A)-\underbrace{ \nu(A)}_{= m}<0,
\]
contradicting the minimality of $\nu$.\\
{\sc Step 2} Let $\mu_0,\mu_1\leq C\mm$  be given and $\mu_{1/2}=\rho_{1/2}\mm$ be the minimizer of the entropy among midpoints. We claim that $\mu_{1/2}\leq C\mm$ and argue by contradiction. If not, for $A:=\{\rho_{1/2}>C\}$ we have $m:=\mu_{1/2}(A)>0$. Let $\ppi$ be a lifting of a geodesic $(\mu_t)$ from $\mu_0$ to $\mu_1$ passing through $\mu_{1/2}$ and $\ppi':=\tfrac1m\ppi\restr{\e_{1/2}^{-1}(A)}$. Then $(\e_0)_*\ppi',(\e_1)_*\ppi'\leq \tfrac Cm\mm$ and thus by \eqref{eq:pertapio} there is a $W_2$-midpoint $\nu=\eta\mm$ of these measures with $\eta\leq \tfrac Cm$. Arguing as before, $\mu^\eps:=(1-\eps)\mu_{1/2}+\eps\tilde\mu$ is a  $W_2$-midpoint of $\mu_0,\mu_1$ for any $\eps\in(0,1)$. Then  by direct computation we have
\[
\begin{split}
\lim_{\eps\downarrow0}\frac{\ent(\mu^\eps)-\ent(\mu_{1/2})}\eps=\int_{A}(\log(\rho_{1/2})+1)(m\eta-\rho_{1/2})\,\d\mm.
\end{split}
\]
Then we observe that the negative part of $\nchi_A(m\eta-\rho_{1/2})$ is concentrated on $\{\rho_{1/2}>C\}$ and the positive part on $\{\rho_{1/2}\leq C\}$ (as $m\eta\leq C$ by construction), thus the monotonicity of the logarithm  tells that the above derivative is $<0$, contradicting the minimality of $\ent(\mu_{1/2})$. \\
{\sc Conclusion}  Let $\mu_0,\mu_1\leq C\mm$  be given. Recursively define $\mu_{1/2}$ as the midpoint minimizing the entropy, then  $\mu_{1/4},\mu_{3/4}$ as minimizers of the entropy among midpoints of $\mu_0,\mu_{1/2}$ and $\mu_{1/2},\mu_1$ respectively and so on. It is clear that these dyadic choices induce a geodesic satisfying \eqref{eq:defkconv} and, by what proved, with bounded compression.
\end{idea}
An important consequence of the above is:
\begin{proposition}[Sobolev-to-Lipschitz property]\label{prop:sobtolip} Let $(\X,\sfd,\mm)$ be $\CD(K,\infty)$,  $f\in W^{1,2}(\X)$ and $g\in C_\b(\X)$ be such that $|\D f|\leq g$ $\mm$-a.e.. 

Then $f$ has a $\|g\|_{L^\infty}$-Lipschitz representative $\tilde f$ and it satisfies $\lipa(\tilde f)\leq g$  on $\X$. 
\end{proposition}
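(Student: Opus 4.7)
The plan is to use the $\CD(K,\infty)$ hypothesis, through Lemma \ref{le:tapio} and the superposition principle Lemma \ref{le:superpp}, to connect localized approximations of points in $\supp(\mm)$ by test plans, and then to read a pointwise Lipschitz bound on an appropriate representative of $f$ from the weak upper gradient formulation of Definition \ref{def:defsobw}.

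Given $x_0,x_1\in\supp(\mm)$ and $r>0$ small, I would let $\mu_i^r:=\mm(B_r(x_i))^{-1}\mm\restr{B_r(x_i)}$: these have bounded support, bounded density, and live in $\prd(\X)$. Lemma \ref{le:tapio} yields a $W_2$-geodesic $(\mu_t^r)_{t\in[0,1]}$ between them with $\mu_t^r\leq C_r\mm$ for some $C_r$, and Lemma \ref{le:superpp} lifts it to $\ppi^r\in\pr(C([0,1],\X))$ satisfying $\iint_0^1|\dot\gamma_t|^2\,dt\,d\ppi^r=W_2^2(\mu_0^r,\mu_1^r)$. The bounded compression together with finite kinetic energy make $\ppi^r$ a test plan. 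Theorem \ref{thm:sobuguali} identifies $|\D f|$ as a weak upper gradient bounded $\mm$-a.e.\ by $g$, so applying Definition \ref{def:defsobw} to $\ppi^r$ and using Cauchy-Schwarz gives
\begin{equation*}
\Big|\int f\,d\mu_0^r-\int f\,d\mu_1^r\Big|\leq\int|f(\gamma_1)-f(\gamma_0)|\,d\ppi^r\leq\sqrt{\int_0^1\!\!\int g^2\,d\mu_t^r\,dt}\;\cdot\;W_2(\mu_0^r,\mu_1^r),
\end{equation*}
which, combined with $\|g\|_{L^\infty}<\infty$ and $W_2(\mu_0^r,\mu_1^r)\leq\sfd(x_0,x_1)+2r$, yields the quantitative estimate $|\int f\,d\mu_0^r-\int f\,d\mu_1^r|\leq\|g\|_{L^\infty}(\sfd(x_0,x_1)+2r)$. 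Specializing to $x_0=x_1$ and comparing two radii shows $r\mapsto\int f\,d\mu_0^r$ is Cauchy, so $\tilde f(x):=\lim_{r\downarrow 0}\fint_{B_r(x)}f\,d\mm$ is well-defined on $\supp(\mm)$ and, by the above, $\|g\|_{L^\infty}$-Lipschitz there; a Fubini-type argument identifies $\tilde f$ as an $\mm$-a.e.\ representative of $f$.

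For the sharper bound $\lipa(\tilde f)\leq g$, I would fix $x\in\supp(\mm)$ and $\eps>0$, then use continuity of $g$ to pick $r_0>0$ so that $g\leq g(x)+\eps$ on a ball $B$ centered at $x$ large enough to contain every $W_2$-geodesic between pairs of measures supported in $B_{r_0}(x)$ (such a ball exists by the triangle inequality for pointwise geodesics between endpoints in $B_{r_0}(x)$). For $x_0,x_1\in B_{r_0}(x)\cap\supp(\mm)$ and $r<r_0$ the supports $\supp\mu_t^r\subset B$, so the first factor in the Cauchy-Schwarz bound above is at most $g(x)+\eps$ and one obtains $|\tilde f(x_0)-\tilde f(x_1)|\leq(g(x)+\eps)\sfd(x_0,x_1)$. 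Hence $\lipa(\tilde f)(x)\leq g(x)+\eps$, and letting $\eps\downarrow 0$ gives $\lipa(\tilde f)(x)\leq g(x)$. Finally, one extends $\tilde f$ to all of $\X$ using a suitable version of the non-linear McShane construction (Lemma \ref{le:locmcsh}) whose interpolation across scales, combined with the continuity of $g$, preserves the pointwise bound $\lipa(\tilde f)\leq g$ on the complement of $\supp(\mm)$ as well.

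The hard part is the existence of the $W_2$-geodesic with bounded compression, which is precisely the content of Lemma \ref{le:tapio} and the single place where $\CD(K,\infty)$ truly bites, because the $K$-convexity of the entropy along some geodesic a priori only guarantees intermediate marginals of finite entropy, whereas the Sobolev machinery of Section \ref{se:horsob} demands bounded compression in order to turn the lifted plan into a test plan. The remaining steps are essentially calculus on test plans; the only residual subtlety is identifying the $r\downarrow 0$ limit defining $\tilde f$ with $f$ $\mm$-a.e., which follows from the $L^2$-density of bounded continuous functions together with the uniform Cauchy estimate for $r\mapsto\fint_{B_r(\cdot)}f\,d\mm$ on $\supp(\mm)$.
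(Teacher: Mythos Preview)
Your overall strategy matches the paper's: use Lemma~\ref{le:tapio} to produce a $W_2$-geodesic of bounded compression, lift it via Lemma~\ref{le:superpp} to a test plan, and read off a Lipschitz bound from the weak upper gradient property. The localisation argument for $\lipa(\tilde f)\le g$ is also essentially the same, once one notes that the lifting of a $W_2$-geodesic is concentrated on metric geodesics of $\X$, which stay in a controlled ball.

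There is, however, a genuine gap in the step where you identify your $\tilde f(x):=\lim_{r\downarrow0}\fint_{B_r(x)}f\,\d\mm$ with $f$ $\mm$-a.e. Your justification via ``$L^2$-density of bounded continuous functions together with the uniform Cauchy estimate'' would require the averaging operators $A_r f:=\fint_{B_r(\cdot)}f\,\d\mm$ to be uniformly bounded on $L^2$, and this is not known in a general $\CD(K,\infty)$ space: the Lebesgue differentiation theorem (equivalently, a Vitali covering property) needs some form of doubling, which can fail when $N=\infty$. The paper explicitly flags this (``since we don't know if Lebesgue points exist'') and circumvents it differently: for every $\eps>0$ and $\mm$-a.e.\ $x$ it finds a measure $\mu$ of bounded compression concentrated on $B_\eps(x)$ with $|\int f\,\d\mu-f(x)|\le\eps$, by restricting $\mm$ to $f^{-1}([a,a+\eps))\cap B_\eps(x)$ for suitable $a\in\Q$ and using that $\mm$-a.e.\ point of a Borel set $E$ sees positive mass of $E$ in every neighbourhood. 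Combined with your transport estimate (which gives $|\int f\,\d\mu-\tilde f(x)|\le \|g\|_\infty\eps$), this fills the gap; but it is not the ``Fubini-type'' argument you describe.
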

\begin{proof} To extend $f$ outside $\supp(\mm)$ we use Lemma \ref{le:locmcsh}, if needed. 
For $x,y\in\X$ and $r>0$ let $\mu,\nu\in\pr(\X)$ be of bounded compression and concentrated on $B_r(x),B_r(y)$ respectively. Use first Lemma \ref{le:tapio} and then Lemma \ref{le:superpp} to find $\ppi$ with bounded compression such that $(\e_0)_*\ppi=\mu$, $(\e_1)_*\ppi=\nu$ and $\iint_0^1|\dot\gamma_t|^2\,\d t\,\d\ppi(\ggamma)=W_2^2(\mu,\nu)$. 

Thus $\ppi$ is a test plan, and since $f\in W^{1,2}(\X)$ with $|\D f|\leq g$ we have
\[
\begin{split}
\big|\int f\,\d\mu-\int f\,\d\nu\big|&\leq \iint_0^1g(\gamma_t)|\dot\gamma_t|\,\d\ppi(\gamma)\leq (\sfd(x,y)+2r)\iint^1_0g(\gamma_t)\,\d\ppi(\gamma).
\end{split}
\]
Roughly said, the conclusion follows picking $x,y$ Lebesgue points of $f$ and letting $r\downarrow0$. In practice, since we don't know if Lebesgue points exist, we    prove that for every $\eps>0$ and $\mm$-a.e.\ $x\in\X$ there is $\mu\subset \pr(\X)$  with bounded compression, concentrated on $B_\eps(x)$ and with $|\int f\,\d\mu-f(x)|\leq\eps$. To see this we rely on the separability of $\X$ to deduce that: for any $E\subset\X$ Borel, letting $E':=\{x\in E:\mm(U\cap E)>0\text{ for any neighbourhood $U$ of $x$}\}$ we have $\mm(E\setminus E')=0$ (because $E\setminus E'$ can be covered by a countable number of negligible neighbourhoods). We apply this principle to $E:=f^{-1}([a,a+\eps))$ for $a\in\Q$ to deduce that for $\mm$-a.e.\ $x\in E$ the measure $\mu:=\mm(E\cap B_\eps(x))^{-1}\mm\restr{E\cap B_\eps(x)}$ does the job.
\end{proof}

Directly by Definition \ref{def:defsobw} it is not hard to see  that for $f\in W^{1,2}(\X)$ and $\ppi$ test  we have
\begin{equation}
\label{eq:perreprgr}
\lims_{t\downarrow0}\int\frac{f(\gamma_t)-f(\gamma_0)}{t}\,\d\ppi(\gamma)\leq\tfrac12\int|\D f|^2(\gamma_0)\,\d\ppi(\gamma)+\lims_{t\downarrow0}\tfrac1{2t}\iint_0^t|\dot\gamma_r|^2\,\d r\,\d\ppi(\gamma),
\end{equation}
this being the analogue of the inequality $\partial_tf(\gamma_t)\restr{t=0}\leq \tfrac12|\d f|^2(\gamma_0)+\tfrac12|\dot\gamma_0|^2$ valid in the smooth context. In this latter case, equality holds iff $\gamma'_0=\nabla f(\gamma_0)$ (both in a Riemannian and Finslerian\footnote{Given a normed space $(V,\|\cdot\|)$ the duality map ${\rm Dual}$ takes an element $v\in V$ and returns all the elements $w\in V^*$ such that $w(v)=\|v\|^2=\|w\|_*^2$. Equivalently, ${\rm Dual}(v)$ is the subdifferential of $\tfrac{\|\cdot\|^2}2:V\to\R$ at $v$. Then for a smooth function $f$ on a Finsler manifold we put $\nabla f:={\rm Dual}(\d f)$, where $\d f$ is classically defined without the need of any norm on the cotangent space. Notice that the gradient  is uniquely defined iff the norm on the cotangent space(s) is differentiable, or equivalently iff the norm on the tangent space(s) is strictly convex. Thus in the general Finsler setting we should actually write $\gamma'_0\in\nabla f(\gamma_0)$.} context), therefore, also in analogy with Definition \ref{def:gfede}, we give the following 
\begin{definition}[Plan representing a gradient]\label{def:planrepgr}
Let $(\X,\sfd,\mm)$ be a metric measure space, $f\in W^{1,2}(\X)$ and $\ppi$ a test plan. We say that $\ppi$ represents the gradient of $ f$ provided  
\begin{equation}
\label{eq:derreprgr}
\limi_{t\downarrow0}\int\frac{f(\gamma_t)-f(\gamma_0)}{t}\,\d\ppi(\gamma)\geq\tfrac12\int|\D f|^2(\gamma_0)\,\d\ppi(\gamma)+\lims_{t\downarrow0}\tfrac1{2t}\iint_0^t|\dot\gamma_r|^2\,\d r\,\d\ppi(\gamma).
\end{equation}
\end{definition}
We shall use this concept mainly in connection with:
\begin{theorem}[Metric Brenier theorem]\label{thm:metbr}
Let $(\X,\sfd,\mm)$ be a metric measure space, $(\mu_t)$ a $W_2$-geodesic of bounded compression and  $\varphi$ a Kantorovich potential for $(\mu_0,\mu_1)$ that is in $W^{1,2} \cap \Lip_{loc}(\X)$.  Then any lifting  $\ppi$ of $(\mu_t)$ represents the gradient of $-\varphi$.
\end{theorem}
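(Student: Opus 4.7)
My plan is to prove \eqref{eq:derreprgr} for $f=-\varphi$ by combining a lower bound on the $\liminf$ coming from the $c$-cyclical monotonicity of $(\gamma_0,\gamma_1)$ with a matching integrated slope bound on $|\D\varphi|$ coming from the $c$-concavity of $\varphi$. First, observe that $\ppi$ is a test plan: bounded compression is inherited from $(\mu_t)$, and by the lifting condition together with $|\dot\mu_r| \equiv W_2(\mu_0,\mu_1)$ we have $\iint_0^1|\dot\gamma_r|^2\,\d r\,\d\ppi = W_2^2(\mu_0,\mu_1) < \infty$. The chain $W_2^2(\mu_0,\mu_1) \leq \int\sfd^2(\gamma_0,\gamma_1)\,\d\ppi \leq \iint_0^1|\dot\gamma_r|^2\,\d r\,\d\ppi = W_2^2(\mu_0,\mu_1)$ must consist of equalities, forcing $\ppi$-a.e.\ $\gamma$ to be a constant-speed geodesic and $(\e_0,\e_1)_*\ppi$ to be an optimal coupling; the latter is concentrated on $\partial^c\varphi$ (assuming $\varphi = \varphi^{cc}$, as we may).

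For the lower bound, fix such $\gamma$ and set $d := \sfd(\gamma_0,\gamma_1)$. Subtracting $\varphi(\gamma_t)+\varphi^c(\gamma_1) \leq \tfrac12\sfd^2(\gamma_t,\gamma_1) = \tfrac{(1-t)^2}{2}d^2$ from the equality $\varphi(\gamma_0)+\varphi^c(\gamma_1) = \tfrac{d^2}{2}$ gives $\varphi(\gamma_0) - \varphi(\gamma_t) \geq \tfrac{t(2-t)}{2}d^2$, so $\tfrac{\varphi(\gamma_0)-\varphi(\gamma_t)}{t} \geq \tfrac{2-t}{2}d^2 \geq 0$ for $t\in(0,1]$. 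Fatou's lemma produces $\liminf_{t\downarrow 0} \int\tfrac{\varphi(\gamma_0)-\varphi(\gamma_t)}{t}\,\d\ppi \geq \int d^2\,\d\ppi = W_2^2(\mu_0,\mu_1)$. The kinetic-energy term in \eqref{eq:derreprgr} evaluates to $\tfrac12 W_2^2(\mu_0,\mu_1)$ for our geodesic lifting, so the theorem reduces to the integrated slope bound $\int|\D\varphi|^2\,\d\mu_0 \leq W_2^2(\mu_0,\mu_1)$: combined with the lower bound, this gives $\liminf \geq W_2^2 = \tfrac12 W_2^2 + \tfrac12 W_2^2 \geq \tfrac12\int|\D\varphi|^2\,\d\mu_0 + \tfrac12 W_2^2$, proving \eqref{eq:derreprgr}.

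The main obstacle is precisely this slope bound. I plan to derive it from $c$-concavity at the level of asymptotic Lipschitz constants: for $\ppi$-a.e.\ $\gamma$ and any $z \in \X$, subtracting $\varphi(z)+\varphi^c(\gamma_1) \leq \tfrac12\sfd^2(z,\gamma_1)$ from the $\partial^c$-equality at $\gamma_0$ and bounding $\sfd(z,\gamma_1) \leq d + \sfd(z,\gamma_0)$ yields $\varphi(z) - \varphi(\gamma_0) \leq d\,\sfd(z,\gamma_0) + \tfrac12\sfd^2(z,\gamma_0)$, a one-sided slope estimate. Pairing it with the symmetric estimate at points $z$ admitting their own optimal target $y_z$ (guaranteed $\mu_0$-a.e.\ by cyclical monotonicity, with $\sfd(z,y_z)\to d$ as $z\to \gamma_0$ along the support of the optimal coupling, thanks to the $\Lip_{loc}$ hypothesis on $\varphi$) upgrades this to the two-sided bound $\lipa\varphi(\gamma_0) \leq d$ for $\ppi$-a.e.\ $\gamma$; then \eqref{eq:dflip} and $(\e_0)_*\ppi = \mu_0$ yield $\int|\D\varphi|^2\,\d\mu_0 \leq \int \lipa^2\varphi(\gamma_0)\,\d\ppi \leq \int d^2\,\d\ppi = W_2^2(\mu_0,\mu_1)$. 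The hard point is the two-sided control, since the selection $z \mapsto y_z$ need not be continuous; a more robust alternative is to work directly with the minimal weak upper gradient via the test-plan definition, bypassing $\lipa$ altogether. As a byproduct, the saturation of Young's inequality forced by the matching upper bound in \eqref{eq:perreprgr} yields the metric Brenier identity $|\D\varphi|(\gamma_0) = \sfd(\gamma_0,\gamma_1)$ for $\ppi$-a.e.\ $\gamma$.
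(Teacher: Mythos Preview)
Your overall architecture matches the paper exactly: show $\ppi$ is a test plan concentrated on constant-speed geodesics with $(\e_0,\e_1)_*\ppi$ optimal, obtain the lower bound on the $\liminf$ via $c$-monotonicity, evaluate the kinetic term as $\tfrac12 W_2^2(\mu_0,\mu_1)$, and reduce everything to the slope bound $|\D\varphi|(\gamma_0)\le \sfd(\gamma_0,\gamma_1)$ for $\ppi$-a.e.\ $\gamma$.

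The gap is in your route to this slope bound. You aim for the two-sided estimate $\lipa\varphi(\gamma_0)\le d$, but your argument for the downward direction requires, for \emph{arbitrary} $z\to\gamma_0$, a target $y_z\in\partial^c\varphi(z)$ with $\sfd(z,y_z)\to d$. Such targets exist only on $\supp(\mu_0)$, the selection is discontinuous (as you note), and $\partial^c\varphi(\gamma_0)$ may contain points at different distances from $\gamma_0$, so there is no reason for the limit to be $d$. Moreover, $\lipa\varphi(\gamma_0)$ controls differences $|\varphi(y)-\varphi(z)|$ for \emph{pairs} $y,z$ near $\gamma_0$, not just pairs involving $\gamma_0$ itself; your pointwise estimates do not address this.

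The paper's fix is precisely the ``more robust alternative'' you mention, and it requires only the one-sided bound you already have. Define the ascending slope $|\partial^+\varphi|(x):=\limsup_{y\to x}\frac{(\varphi(y)-\varphi(x))^+}{\sfd(x,y)}$. Since $\varphi$ is locally Lipschitz, for any absolutely continuous curve one has $|\partial_t(\varphi\circ\gamma)|\le|\partial^+\varphi|(\gamma_t)|\dot\gamma_t|$ a.e., so $|\partial^+\varphi|$ is a weak upper gradient and hence $|\D\varphi|\le|\partial^+\varphi|$ $\mm$-a.e.\ by minimality. Your one-sided $c$-concavity estimate $\varphi(z)-\varphi(\gamma_0)\le d\,\sfd(z,\gamma_0)+\tfrac12\sfd^2(z,\gamma_0)$ then gives $|\partial^+\varphi|(\gamma_0)\le d$ directly, with no need for two-sided control or any selection of $y_z$. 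Combining with bounded compression of $\mu_0$ yields $|\D\varphi|(\gamma_0)\le d$ for $\ppi$-a.e.\ $\gamma$, and the proof closes exactly as you outlined.
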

\begin{idea} It is clear that $\ppi$ is a test plan. By combining \eqref{eq:defgeod} and \eqref{eq:eqsp} we  get
\begin{equation}
\label{eq:br1}
\tfrac1{2t}\iint_0^t|\dot\gamma_r|^2\,\d r\,\d\ppi(\gamma)=\tfrac12W_2^2(\mu_0,\mu_1)=\tfrac1{2t}\int\sfd^2(\gamma_0,\gamma_t)\,\d\ppi(\gamma)\qquad\forall t\in(0,1],
\end{equation}
The second identity for $t=1$ and the fact that $\varphi$ is a Kantorovich potential  give  $\gamma_1\in\partial^c\varphi(\gamma_0)$ for $\ppi$-a.e.\ $\gamma$ (here $c=\tfrac{\sfd^2}2$), which  implies $\varphi (\gamma_0)-\varphi (\gamma_t)\geq \tfrac{\sfd^2(\gamma_0,\gamma_1)-\sfd^2(\gamma_t,\gamma_1)}2=(t-\tfrac{t^2}2)\sfd^2(\gamma_0,\gamma_1)$ and thus
\begin{equation}
\label{eq:br2}
\limi_{t\downarrow0}\int\frac{\varphi(\gamma_0)-\varphi(\gamma_t)}{t}\geq \int\sfd^2(\gamma_0,\gamma_1)\,\d\ppi(\gamma)=W_2^2(\mu_0,\mu_1).
\end{equation}
Now put $|\partial^+\varphi|(x):=\lims_{y\to x}\frac{(\varphi(y)-\varphi(x))^+}{\sfd(x,y)}$ and observe that since $\varphi$ is locally Lipschitz, for any curve $\gamma$  absolutely continuous $t\mapsto\varphi(\gamma_t)$ is Lipschitz with $|\partial_t\varphi(\gamma_t)|\leq|\partial^+\varphi|(\gamma_t)|\dot\gamma_t|$ for a.e. $t$. Hence $|\partial^+\varphi|$ is a weak upper gradient and  by minimality we get
\begin{equation}
\label{eq:slopewug}
|\D\varphi|\leq |\partial^+\varphi|\qquad\mm-a.e..
\end{equation}
Also, for $\ppi$-a.e.\ $\gamma$ the bound  $\varphi (y)-\varphi (\gamma_0)\leq \tfrac{\sfd^2(y,\gamma_1)-\sfd^2(\gamma_0,\gamma_1)}2\leq \sfd(y,\gamma_0)\tfrac{\sfd(y,\gamma_1)+\sfd(\gamma_0,\gamma_1)}2$ holds for any $y$, thus dividing by  $\sfd(y,\gamma_0)$, letting $y\to\gamma_0$ and using \eqref{eq:slopewug} we get
\begin{equation}
\label{eq:br3}
|\D \varphi|(\gamma_0)\leq |\partial^+\varphi|(\gamma_0)\leq\sfd(\gamma_0,\gamma_1)\qquad\ppi-a.e.\ \gamma.
\end{equation}
Combining the bounds \eqref{eq:br1}, \eqref{eq:br2}, \eqref{eq:br3} we conclude.
\end{idea}
Inequality \eqref{eq:dfconv} implies that $\R\ni \eps\mapsto |\D(g+\eps f)|\in L^2(\X)$ is convex (in the a.e.\ sense), thus so is $\R\ni \eps\mapsto \tfrac12|\D(g+\eps f)|^2\in L^1(\X)$. It follows that\footnote{given an arbitrary, possibly uncountable, family $(f_i)_{i\in I}$ of Borel functions on $\X$, the `essential infimum'  $\essinf_if_i$ is a Borel function $f:\X\to\bar\R$ which is $\leq f_i$ $\mm$-a.e.\ for any $i\in I$ and $\geq g$ $\mm$-a.e.\ for any $g$ with the same property. It exists and is unique up to $\mm$-a.e.\ equality. Similarly for the `essential supremum' $\esssup$. In other words, the space of equivalence classes up to $\mm$-a.e.\ equality of Borel functions from $\X$ to $\bar \R$ is a complete lattice.} 
\begin{equation}
\label{eq:defdpm}
\begin{split}
D^+f(\nabla g)&:=\essinf_{\eps>0}\frac{|\D (g+\eps f)|^2-|\D g|^2}{2\eps}=\lim_{\eps\downarrow0}\frac{|\D (g+\eps f)|^2-|\D g|^2}{2\eps},\\
 D^-f(\nabla g)&:=\esssup_{\eps<0}\frac{|\D (g+\eps f)|^2-|\D g|^2}{2\eps}=\lim_{\eps\uparrow0}\frac{|\D (g+\eps f)|^2-|\D g|^2}{2\eps},
\end{split}
\end{equation}
the limits being intended in $L^1(\X)$. The following is simple, yet crucial:
\begin{lemma}[Horizontal and vertical derivatives]\label{le:horver} Let $(\X,\sfd,\mm)$ be a metric measure space, $f,g\in W^{1,2}(\X)$ and $\ppi$ representing the gradient of $g$. Then
\[
\begin{split}
\int D^-f(\nabla g)(\gamma_0)\,\d \ppi(\gamma)&\leq \limi_{t\downarrow0}\int\frac{f(\gamma_t)-f(\gamma_0)}{t}\,\d\ppi(\gamma)\\
&\leq\lims_{t\downarrow0}\int\frac{f(\gamma_t)-f(\gamma_0)}{t}\,\d\ppi(\gamma)\leq \int D^+f(\nabla g)(\gamma_0)\,\d \ppi(\gamma)
\end{split}
\]\end{lemma}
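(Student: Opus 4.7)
The middle inequality is just $\liminf\leq\limsup$, so the work lies in the two outer bounds, which are symmetric under $\eps \leftrightarrow -\eps$. The plan is the classical horizontal--vertical trick: apply the general upper estimate \eqref{eq:perreprgr} to the perturbation $g + \eps f \in W^{1,2}(\X)$ and combine it with the matching lower estimate that the hypothesis grants for $g$ itself, exploiting the linearity in $h$ of the difference quotient $h \mapsto \int \frac{h(\gamma_t) - h(\gamma_0)}{t}\,\d\ppi(\gamma)$.

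Introduce the shorthand $A_t(h) := \int \tfrac{h(\gamma_t)-h(\gamma_0)}{t}\,\d\ppi(\gamma)$, $B(h) := \tfrac12\int |\D h|^2(\gamma_0)\,\d\ppi(\gamma)$ and $L := \limsup_{t\downarrow 0}\tfrac{1}{2t}\iint_0^t|\dot\gamma_r|^2\,\d r\,\d\ppi(\gamma)$. With this notation \eqref{eq:perreprgr} reads $\limsup_t A_t(g+\eps f) \leq B(g+\eps f) + L$ for every $\eps \in \R$, while the hypothesis that $\ppi$ represents the gradient of $g$ reads $\liminf_t A_t(g) \geq B(g) + L$. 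Invoking the elementary inequality $\limsup_t(u_t+v_t) \geq \liminf_t u_t + \limsup_t v_t$ with $u_t = A_t(g)$ and $v_t = \eps A_t(f)$, together with the linearity $A_t(g+\eps f) = A_t(g) + \eps A_t(f)$, I obtain for $\eps > 0$
$$B(g) + L + \eps \limsup_t A_t(f) \leq \liminf_t A_t(g) + \eps \limsup_t A_t(f) \leq \limsup_t A_t(g+\eps f) \leq B(g + \eps f) + L,$$
whence upon dividing by $\eps$ the bound $\limsup_t A_t(f) \leq \int \frac{|\D(g+\eps f)|^2 - |\D g|^2}{2\eps}(\gamma_0)\,\d\ppi$. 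Repeating the argument with $\eps < 0$ and dividing by the now negative $\eps$ produces the analogous reversed bound $\liminf_t A_t(f) \geq \int \frac{|\D(g+\eps f)|^2 - |\D g|^2}{2\eps}(\gamma_0)\,\d\ppi$.

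To conclude I let $\eps \to 0$ from the appropriate side. By convexity of $\eps \mapsto \tfrac12|\D(g+\eps f)|^2$ the integrand is monotone in $\eps$, with pointwise a.e.\ limit $D^+f(\nabla g)$ as $\eps \downarrow 0$ and $D^-f(\nabla g)$ as $\eps \uparrow 0$; since $(\e_0)_*\ppi$ has bounded compression and the triangle bound $|\D(g+\eps f)| \leq |\D g| + |\eps||\D f|$ yields an integrable dominant, monotone (or dominated) convergence delivers the two outer inequalities in the statement. There is no substantive obstacle here beyond keeping careful track of the $\limsup/\liminf$ bookkeeping and of the direction in which $\eps$ tends to zero; the lemma is essentially an algebraic consequence of the interplay between the universal upper bound \eqref{eq:perreprgr} and the hypothesis-specific lower bound available for $g$.
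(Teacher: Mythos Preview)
Your proof is correct and follows essentially the same approach as the paper: apply \eqref{eq:perreprgr} to $g+\eps f$, subtract the defining inequality \eqref{eq:derreprgr} for $g$ (which, combined with \eqref{eq:perreprgr} for $g$, forces $\lim_t A_t(g)=B(g)+L$ to exist, so your $\liminf/\limsup$ bookkeeping could even be shortened), divide by $\eps$, and let $\eps\to 0$. The only cosmetic difference is that the paper appeals directly to the $L^1$-convergence built into \eqref{eq:defdpm} together with bounded compression, whereas you spell out a monotone/dominated convergence argument; both justifications are valid.
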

\begin{idea} Write \eqref{eq:perreprgr} with $g+\eps f$ in place of $f$ and subtract the defining identity \eqref{eq:derreprgr} to obtain
\[
\lims_{t\downarrow0}\eps \int\frac{f(\gamma_t)-f(\gamma_0)}{t}\,\d\ppi(\gamma)\leq\tfrac12\int |\D (g+\eps f)|^2-|\D g|^2\,\d\ppi(\gamma).
\]
The conclusion follows from the arbitrariness of $\eps\in\R$ and \eqref{eq:defdpm}.
\end{idea}
We conclude with two useful formulas, that for simplicity we state for compact  spaces:
\begin{lemma}[Derivative of $W_2^2$ along the heat flow]\label{le:derw2}
Let $(\X,\sfd,\mm)$ be a compact $\CD(K,\infty)$ space, $(f_t\mm)\subset \prd(\X)$ a heat flow and $\nu\in\prd(\X)$. Then $t\mapsto W_2^2(f_t\mm,\nu)$ is absolutely continuous and for a.e.\ $t>0$ we have 
\begin{equation}
\label{eq:derw2}
\frac\d{\d t}\tfrac12W_2^2(f_t\mm,\nu)=\int \varphi_t\,\Delta f_t\,\d\mm\qquad\text{ for any $\varphi_t$ Kantorovich potential from $f_t\mm$ to $\nu$.}
\end{equation}
\end{lemma}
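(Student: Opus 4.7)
The plan is to sandwich the incremental ratio $\bigl(W_2^2(\mu_{t+h},\nu)-W_2^2(\mu_t,\nu)\bigr)/(2h)$ (where $\mu_t:=f_t\mm$) between two quantities that both converge to $\int \varphi_t \Delta f_t \,\d\mm$, using Kantorovich duality from both sides. Absolute continuity of $t \mapsto W_2^2(\mu_t,\nu)$ on $(0,\infty)$ is essentially a byproduct of Kuwada's lemma (item $(iv)$ of Lemma \ref{le:basegfch}): the curve $(\mu_t)$ is locally $W_2$-absolutely continuous with $|\dot\mu|\in L^2_{\loc}((0,\infty))$, and the triangle inequality $|W_2(\mu_s,\nu)-W_2(\mu_t,\nu)|\leq\int_t^s|\dot\mu_r|\,\d r$ together with the bound $W_2(\cdot,\nu)\leq\diam(\X)$ (coming from compactness of $\X$) upgrades this to absolute continuity of the square.

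For the derivative, fix $t>0$ and a Kantorovich potential $\varphi_t=\varphi_t^{cc}$ from $\mu_t$ to $\nu$. Duality \eqref{eq:dualw2} gives $\tfrac12 W_2^2(\mu_t,\nu) = \int\varphi_t\,\d\mu_t + \int\varphi_t^c\,\d\nu$, while $\tfrac12 W_2^2(\mu_s,\nu)\geq \int\varphi_t\,\d\mu_s + \int\varphi_t^c\,\d\nu$ for every $s$, so subtracting I get
\begin{equation}
\tfrac12\bigl(W_2^2(\mu_s,\nu)-W_2^2(\mu_t,\nu)\bigr)\geq \int\varphi_t\,(f_s-f_t)\,\d\mm.
\end{equation}
At a.e.\ $t>0$ one has $(f_{t+h}-f_t)/h\to\Delta f_t$ in $L^2(\mm)$ by \eqref{eq:gfch}, and since $\X$ is compact $\varphi_t$ is bounded and Lipschitz, in particular in $L^2(\mm)$. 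Dividing by $h>0$ and letting $h\downarrow 0$ therefore produces the one-sided inequality $\limi_{h\downarrow 0}\tfrac{\tfrac12(W_2^2(\mu_{t+h},\nu)-W_2^2(\mu_t,\nu))}{h}\geq\int\varphi_t\,\Delta f_t\,\d\mm$.

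For the matching $\lims$ bound I would swap the roles: the duality at time $t+h$ and the $\geq$-estimate at time $t$ combine to give $\tfrac12(W_2^2(\mu_{t+h},\nu)-W_2^2(\mu_t,\nu))\leq\int\varphi_{t+h}\,(f_{t+h}-f_t)\,\d\mm$. Since $\X$ is compact, Kantorovich potentials can be normalized to be equi-bounded and uniformly $\diam(\X)$-Lipschitz, so $\{\varphi_{t+h}\}_{h\in[0,1]}$ is $C(\X)$-relatively compact by Arzel\`a--Ascoli. Along any subsequence $h_n\downarrow 0$ for which $\varphi_{t+h_n}\to\tilde\varphi_t$ uniformly, the classical stability of optimal transport (optimality is preserved by uniform convergence of potentials when the marginals converge weakly, and $\mu_{t+h}\stackrel{W_2}\to\mu_t$ holds by the first step) ensures that $\tilde\varphi_t$ is itself a Kantorovich potential from $\mu_t$ to $\nu$. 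Passing to the limit along $h_n$ yields $\lims_{h\downarrow 0}(\cdots)/h\leq \int\tilde\varphi_t\,\Delta f_t\,\d\mm$; applying the lower bound already derived with $\tilde\varphi_t$ in place of $\varphi_t$ squeezes all three quantities to a common value. Thus the right derivative exists and equals $\int\varphi_t\Delta f_t\,\d\mm$ for \emph{any} choice of Kantorovich potential $\varphi_t$, and the case $h<0$ is handled symmetrically.

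The main technical obstacle is the upper bound: it requires handling the time-dependent family of Kantorovich potentials, combining their equi-Lipschitz/equi-bounded behavior with the stability of optimality under weak convergence of the source measures, in order to identify any uniform limit of $\varphi_{t+h_n}$ as an admissible Kantorovich potential at time $t$. A pleasant side effect of the argument is that the integral $\int \varphi_t\,\Delta f_t\,\d\mm$ is intrinsically well-defined, i.e.\ independent of the chosen potential, a fact which is not obvious a priori.
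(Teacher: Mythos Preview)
Your argument is correct, but it works harder than necessary compared with the paper's approach. The paper exploits a simple observation you did not use: once absolute continuity of $t\mapsto W_2^2(\mu_t,\nu)$ is established, the derivative \emph{exists} a.e., so one only needs to \emph{identify} it. At a differentiability point $t$ (and a point where $\partial_t f_t=\Delta f_t$ in $L^2$), the duality inequality
\[
\tfrac12 W_2^2(\mu_s,\nu)\ \geq\ \int\varphi_t\,f_s\,\d\mm+\int\varphi_t^c\,\d\nu,\qquad\text{with equality at }s=t,
\]
says that the difference $G(s):=\tfrac12 W_2^2(\mu_s,\nu)-\int\varphi_t f_s\,\d\mm-\int\varphi_t^c\,\d\nu$ attains its minimum at $s=t$. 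Both terms of $G$ are differentiable at $s=t$ (the first by the choice of $t$, the second because $(f_{t+h}-f_t)/h\to\Delta f_t$ in $L^2$ and $\varphi_t\in L^2$), hence $G'(t)=0$, which is exactly \eqref{eq:derw2}. This one-line first-derivative test replaces your entire upper-bound machinery.

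Your route---bounding the $\limsup$ via Arzel\`a--Ascoli compactness of the family $\{\varphi_{t+h}\}$ and stability of Kantorovich potentials under weak convergence---is a valid and standard alternative, and it has the advantage of not appealing to a.e.\ differentiability in advance (you actually \emph{prove} the derivative exists at every $t$ where $\partial_t f_t=\Delta f_t$). The paper's approach, by contrast, is shorter and avoids any discussion of convergence of potentials: the lone inequality coming from duality, combined with the already-known differentiability, is enough. Both approaches yield the same pleasant corollary that $\int\varphi_t\,\Delta f_t\,\d\mm$ does not depend on the choice of $\varphi_t$.
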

\begin{idea} By definition of heat flow (and Theorem \ref{thm:hfgfagain}) the curve $t\mapsto f_t\mm$ is $W_2$-absolutely continuous. Hence so is $t\mapsto W_2^2(f_t\mm,\nu)$. Let $t$ be a differentiability point and $\varphi_t$ as in \eqref{eq:derw2} (by compactness $\varphi_t$ is bounded, thus in $L^2$). Then by Kantorovich duality  
\[
\tfrac12W_2^2(f_s\mm,\nu)\geq \int\varphi_t\, f_s\,\d\mm+\int\varphi_t^c\,\d\nu
\]
for any $s>0$, with equality for $s=t$. The conclusion follows from \eqref{eq:gfch}.
\end{idea}
\begin{lemma}[Derivative of the entropy along a $W_2$-geodesic]\label{le:derent}
Let $(\X,\sfd,\mm)$ be a compact space and $(\mu_t)=(\rho_t\mm)\subset \prd(\X)$ a $W_2$-geodesic of bounded compression so that   $\rho_0\geq c>0$ $\mm$-a.e.\ for some $c$. Assume also $\rho_0\in W^{1,2}(\X)$. Then
\begin{equation}
\label{eq:derent}
\limi_{t\downarrow0}\frac{\ent(\mu_t)-\ent(\mu_0)}{t}\geq \int D^-(\log\rho_0)(\nabla(-\varphi))\,\rho_0\d\mm
\end{equation}
for any $\varphi$ Kantorovich potential from $\mu_0$ to $\mu_1$.
\end{lemma}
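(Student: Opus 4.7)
The plan is to convert the entropy increment into something linear in $\mu_t$ via convexity of $u(z)=z\log z$, and then control that linear object by the horizontal-vertical calculus developed above. Since $u'(z)=\log z + 1$, convexity yields $u(\rho_t)-u(\rho_0)\geq(\log\rho_0+1)(\rho_t-\rho_0)$ pointwise; integrating against $\mm$ and using $\int(\rho_t-\rho_0)\,\d\mm=0$ we obtain
\begin{equation*}
\ent(\mu_t)-\ent(\mu_0)\ \geq\ \int\log\rho_0\,\d\mu_t-\int\log\rho_0\,\d\mu_0.
\end{equation*}
Thus the right strategy is to bound $\liminf_{t\downarrow0}\tfrac1t\bigl(\int\log\rho_0\,\d\mu_t-\int\log\rho_0\,\d\mu_0\bigr)$ from below by the claimed expression.

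To do so, first note that bounded compression gives $\rho_t\leq \bar C\ \mm$-a.e.\ for some $\bar C>0$ and all $t$, and combined with $\rho_0\geq c$ this forces $\rho_0\in[c,\bar C]\ \mm$-a.e.. Hence $\log$ can be replaced by a $C^1\cap\Lip(\R)$ function $\psi$ that agrees with it on $[c,\bar C]$, and the chain rule \eqref{eq:dfchain} applied to $\rho_0\in W^{1,2}(\X)$ shows $\log\rho_0=\psi\circ\rho_0\in W^{1,2}(\X)$. Next, use Lemma \ref{le:superpp} to pick a lifting $\ppi$ of $(\mu_t)$; since $(\e_t)_*\ppi=\mu_t\leq\bar C\mm$, the plan $\ppi$ has bounded compression and finite kinetic energy, so it is a test plan. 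Compactness of $\X$ forces a Kantorovich potential $\varphi$ to be Lipschitz (cf.\ the discussion after \eqref{eq:dualw2}), hence $\varphi\in W^{1,2}\cap\Lip_\loc(\X)$, and Theorem \ref{thm:metbr} yields that $\ppi$ represents the gradient of $-\varphi$.

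Now apply Lemma \ref{le:horver} with $f:=\log\rho_0$ and $g:=-\varphi$, using $(\e_0)_*\ppi=\rho_0\mm$, to obtain
\begin{equation*}
\int D^-(\log\rho_0)(\nabla(-\varphi))\,\rho_0\,\d\mm\ \leq\ \liminf_{t\downarrow0}\int\frac{\log\rho_0(\gamma_t)-\log\rho_0(\gamma_0)}{t}\,\d\ppi(\gamma),
\end{equation*}
and the RHS equals $\liminf_{t\downarrow0}\tfrac1t\bigl(\int\log\rho_0\,\d\mu_t-\int\log\rho_0\,\d\mu_0\bigr)$, which by the convexity estimate is in turn bounded above by $\liminf_{t\downarrow0}\tfrac{\ent(\mu_t)-\ent(\mu_0)}{t}$. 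Chaining these inequalities gives \eqref{eq:derent}. The only delicate point is the chain-rule step, since $\log$ is not globally Lipschitz on $\R$; this is absorbed by the two-sided a priori bound on $\rho_0$. Everything else is a direct assembly of the tools already established: superposition, metric Brenier, and horizontal-vertical duality.
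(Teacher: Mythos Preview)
Your proof is correct and follows essentially the same route as the paper: convexity of $z\log z$ to linearize the entropy increment, a lifting $\ppi$ of the geodesic that is a test plan, Theorem~\ref{thm:metbr} to see that $\ppi$ represents $\nabla(-\varphi)$, and Lemma~\ref{le:horver} applied to $f=\log\rho_0$, $g=-\varphi$. Your extra care in justifying $\log\rho_0\in W^{1,2}(\X)$ via a $C^1\cap\Lip$ truncation of $\log$ on $[c,\bar C]$ is exactly the point the paper leaves implicit when it says ``the assumptions and \eqref{eq:dfchain} give $\log(\rho_0)\in W^{1,2}(\X)$''.
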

\begin{idea} Since the space is compact, $\varphi$ is Lipschitz and thus in $W^{1,2}$, so the claim makes sense. Now let $\ppi$ be a lifting of $(\mu_t)$ and notice that it is a test plan and that the convexity of $u(z)=z\log(z)$ gives
\[
\frac{\ent(\mu_t)-\ent(\mu_0)}{t}\geq \int \log(\rho_0)\frac{\rho_t-\rho_0}{t}\,\d\mm=\int\frac{\log(\rho_0)(\gamma_t)-\log(\rho_0)(\gamma_0)}t\,\d\ppi(\gamma).
\]
The assumptions and \eqref{eq:dfchain} give $\log(\rho_0)\in W^{1,2}(\X)$, then the  conclusion follows combining  Lemma \ref{thm:metbr} with Lemma \ref{le:horver}.
\end{idea}

\subsubsection{Infinitesimally Hilbertian spaces}
We noticed that $W^{1,2}(\X)$ is in general a Banach space but not necessarily Hilbert. From the compatibility property \eqref{eq:dffinsl} it is also clear that if $\X$ is a smooth Finsler manifold, then $W^{1,2}(\X)$ is Hilbert iff the manifold is in fact Riemannian. This motivates the following definition, proposed in \cite{Gigli12}:
\begin{definition}\label{def:infhilb}  $(\X,\sfd,\mm)$ is said infinitesimally Hilbertian if $W^{1,2}(\X)$ is a Hilbert space.
\end{definition}

\begin{proposition}[Calculus on infinitesimally Hilbertian spaces]\label{prop:infhilbcalc} Let $(\X,\sfd,\mm)$ be infinitesimally Hilbertian. Then for any $f,g\in W^{1,2}(\X)$ we have $D^+f(\nabla g)=D^-f(\nabla g)$. Call this common value $\la\d f,\d g\ra$. Then $[W^{1,2}(\X)]^2\ni f,g\mapsto \la\d f,\d g\ra$ is bilinear, symmetric and 
\begin{subequations}
\begin{align}\label{eq:chainh}
\la\d(\varphi\circ f),\d g\ra&=\varphi'\circ f\la\d f,\d g\ra,\\
\label{eq:leibh}
\la\d(f_1f_2)\,\d g\ra&=f_1\la\d f_2,\d g\ra+f_2\la\d f_1,\d g\ra\\
\label{eq:csh}
|\la \d f,\d g\ra|&\leq|\D f|\,|\D g|\\
\label{eq:loch}
\la \d f,\d g\ra&=\la \d f',\d g'\ra\qquad\mm-a.e.\ on\ \{f=f'\}\cap\{g=g'\}
\end{align}
\end{subequations}
for every $ f,f',g,g'\in W^{1,2}(\X)$, $f_1,f_2\in W^{1,2}\cap L^\infty(\X)$ and $ \varphi\in C^1\cap \Lip(\R)$. Moreover:
\begin{equation}
\label{eq:intlaph}
\int g\Delta f\,\d\mm=-\int\la\d g,\d f\ra\,\d\mm\qquad\forall f\in D(\Delta),\ g\in W^{1,2}(\X).
\end{equation}
\end{proposition}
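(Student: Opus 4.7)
The whole proposition rests on the identity $\ch(f+g)+\ch(f-g)=2\ch(f)+2\ch(g)$, which by polarization says that $\eps \mapsto \ch(g+\eps f)=\tfrac12\int|\D(g+\eps f)|^2\,\d\mm$ is a quadratic polynomial in $\eps$ for every $f,g\in W^{1,2}(\X)$. Essentially every claim follows by combining this integrated fact with the pointwise properties of $|\D\cdot|$ already collected in \eqref{eq:calcdf}.

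My first step would be to establish $D^+f(\nabla g)=D^-f(\nabla g)$ $\mm$-a.e. Pointwise subadditivity \eqref{eq:dfconv} and 1-homogeneity (\eqref{eq:dfchain} with $\varphi(z)=\alpha z$) show that $f\mapsto|\D f|(x)$ is a seminorm for $\mm$-a.e.\ $x$, and combining subadditivity with convexity of $z\mapsto z^2$ makes $\eps\mapsto\tfrac12|\D(g+\eps f)|^2(x)$ convex in $\eps$ for $\mm$-a.e.\ $x$. This yields pointwise $D^-f(\nabla g)\leq D^+f(\nabla g)$. On the other hand, by infinitesimal Hilbertianity, the derivative at $\eps=0$ of the quadratic polynomial $\eps\mapsto\int|\D(g+\eps f)|^2\,\d\mm$ exists two-sidedly, and by dominated convergence it equals both $\int D^+f(\nabla g)\,\d\mm$ and $\int D^-f(\nabla g)\,\d\mm$. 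A nonnegative function with zero integral is zero, so $D^+=D^-$ $\mm$-a.e., and I call this common value $\langle df,dg\rangle$. Linearity in the first slot is then automatic: $D^+$ is sublinear in $f$ (since the seminorm $f\mapsto|\D f|$ is), while $\langle d(-f),dg\rangle=-D^-f(\nabla g)=-\langle df,dg\rangle$ gives full $\R$-homogeneity; sublinearity plus $\R$-homogeneity equals linearity. Cauchy–Schwarz \eqref{eq:csh} drops out of the difference quotient together with $|\D(g+\eps f)|\leq|\D g|+|\eps||\D f|$, and locality \eqref{eq:loch} from \eqref{eq:dfloc}: on $\{f=f'\}$, $\eps(f-f')=0$ so $|\D(g+\eps f)|=|\D(g+\eps f')|$ there, hence $\langle df,dg\rangle=\langle df',dg\rangle$ on that set; the argument in the second slot is identical.

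Next I would prove the chain rule \eqref{eq:chainh} by a three-tier approximation: it is trivial for $\varphi(z)=\alpha z+\beta$ by linearity; it extends to piecewise affine $\varphi$ by applying locality on each level set $\{f\in\varphi'^{-1}(c_i)\}$, where $\varphi\circ f$ coincides with $c_i f$ plus a constant; for general $\varphi\in C^1\cap\Lip(\R)$ it follows by uniform approximation with piecewise affine $\varphi_n$, using \eqref{eq:dfchain} and Cauchy–Schwarz to pass to the limit in $L^1$. The Leibniz rule \eqref{eq:leibh} follows from the identity $2f_1f_2=(f_1+f_2)^2-f_1^2-f_2^2$, applying the chain rule with $\varphi(z)=z^2$ to each term and exploiting first-slot linearity to expand $\langle d((f_1+f_2)^2),dg\rangle=2(f_1+f_2)(\langle df_1,dg\rangle+\langle df_2,dg\rangle)$; the cross terms rearrange to give exactly $2(f_1\langle df_2,dg\rangle+f_2\langle df_1,dg\rangle)$.

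The hard part is symmetry. The integrated identity $\int\langle df,dg\rangle\,\d\mm=\int\langle dg,df\rangle\,\d\mm$ is free: it is the polar form $B(f,g):=\ch(f+g)-\ch(f)-\ch(g)$ which is manifestly symmetric bilinear once $\ch$ is quadratic. To upgrade to a pointwise identity one proves the pointwise quadratic expansion
\begin{equation*}
|\D(g+\eps f)|^2=|\D g|^2+2\eps\langle df,dg\rangle+\eps^2|\D f|^2\qquad\mm\text{-a.e., for every fixed }\eps\in\R;
\end{equation*}
swapping the roles of $f$ and $g$ in this expansion at $\eps=1$ and using $|\D(f+g)|=|\D(g+f)|$ forces $\langle df,dg\rangle=\langle dg,df\rangle$ pointwise, whence bilinearity in the second slot follows from that in the first. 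The expansion itself is the technical heart of the proof: the inequality $\geq$ comes from the convexity gradient inequality for $\eps\mapsto\tfrac12|\D(g+\eps f)|^2$ at $\eps=0$, and the matching inequality $\leq$ is obtained by combining this with the Leibniz/chain rules to test against $\mm$-a.e.\ cut-off functions and cancel the integral mass $\eps^2\int|\D f|^2\,\d\mm$ against the pointwise surplus. Finally, \eqref{eq:intlaph} is a direct consequence of the fact that in the infinitesimally Hilbertian setting $\ch$ is a (densely defined, $L^2$-lsc) quadratic form, so $\partial^-\ch(f)=\{-\Delta f\}$ and the two-sided subdifferential inequality $\ch(f+\eps g)\geq\ch(f)-\eps\int g\Delta f\,\d\mm$ together with the identity $\tfrac{\d}{\d\eps}\big|_0\ch(f+\eps g)=\int\langle df,dg\rangle\,\d\mm$ (valid because $D^+=D^-$) pins down $\int g\Delta f\,\d\mm=-\int\langle df,dg\rangle\,\d\mm$.
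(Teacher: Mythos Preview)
Your argument is largely correct and parallel to the paper's up through Cauchy--Schwarz, locality, the chain rule, and Leibniz (the latter via the polarization $2f_1f_2=(f_1+f_2)^2-f_1^2-f_2^2$ rather than the paper's logarithm trick, which is a perfectly good alternative). The gap is in the symmetry step.

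You reduce symmetry to the pointwise quadratic expansion
\[
|\D(g+\eps f)|^2=|\D g|^2+2\eps\la\d f,\d g\ra+\eps^2|\D f|^2\qquad\mm\text{-a.e.}
\]
but at $\eps=1$ this is the pointwise parallelogram identity, which is equivalent to the symmetry you want to prove. Your justification of $\geq$ via the convexity subgradient only gives the \emph{linear} lower bound $|\D(g+\eps f)|^2\geq|\D g|^2+2\eps\la\d f,\d g\ra$; the surplus $R_\eps:=|\D(g+\eps f)|^2-|\D g|^2-2\eps\la\d f,\d g\ra\geq0$ has $\int R_\eps\,\d\mm=\eps^2\int|\D f|^2\,\d\mm$, but this alone does \emph{not} force $R_\eps=\eps^2|\D f|^2$ pointwise. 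For that you would need $\int hR_\eps\,\d\mm=\eps^2\int h|\D f|^2\,\d\mm$ for every test function $h$, i.e.\ that $u\mapsto\int h|\D u|^2\,\d\mm$ is a quadratic form. Your phrase ``test against cut-off functions and cancel'' gestures at this but supplies no mechanism.

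The paper fills this gap with a concrete computation that your sketch is missing. One first proves an \emph{integrated} chain rule in the second variable, $\int\varphi'\circ g\,\la\d f,\d g\ra\,\d\mm=\int\la\d f,\d(\varphi\circ g)\ra\,\d\mm$, by showing $g\mapsto\int\la\d f,\d g\ra\,\d\mm$ is $W^{1,2}$-continuous (it is an infimum of continuous functions, hence u.s.c., and by odd homogeneity in $f$ also l.s.c.) and approximating with piecewise affine $\varphi$. Then, using Leibniz in the first slot and this integrated chain rule,
\[
\int h|\D f|^2\,\d\mm=\int\la\d(hf),\d f\ra\,\d\mm-\int\la\d h,\d(\tfrac{f^2}{2})\ra\,\d\mm,
\]
and each term on the right is a symmetric bilinear form in $f$ by first-slot linearity and the integrated symmetry $\int\la\d u,\d v\ra\,\d\mm=\int\la\d v,\d u\ra\,\d\mm$. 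Polarizing gives $\int h\la\d f,\d g\ra\,\d\mm=\int h\la\d g,\d f\ra\,\d\mm$ for all $h\in\Lip_\bs(\X)$, hence pointwise symmetry. This identity, not a generic cancellation argument, is the missing idea.
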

\begin{idea} For $f,g$ as in \eqref{eq:intlaph} and $\eps>0$ we have $-\eps\int g\Delta f\,\d\mm\leq {\ch(f+\eps g)-\ch(f)}$ and thus
\[
-\int g\Delta f\,\d\mm\leq\lim_{\eps\downarrow0}\int \frac{|\D (f+\eps g)|^2-|\D f|^2}{2\eps}\,\d\mm=\int D^+g(\nabla f)\,\d\mm,
\]
hence \eqref{eq:intlaph} follows from the other claims (also replacing $g$ with $-g$).

We already noticed that $f\mapsto \tfrac12|\D(g+f)|^2\in L^1$ is convex (in the $\mm$-a.e.\ sense). It follows that its directional derivative $f\mapsto D^+f(\nabla g)$ is convex and positively 1-homogeneous for every $g\in W^{1,2}(\X)$ and that  $D^+f(\nabla g)\geq - D^-(-f)(\nabla g)$. The assumption of infinitesimal Hilbertianity tells that $f\mapsto \int |\D f|^2\,\d\mm$ is a quadratic form, i.e.\ satisfies the parallelogram rule: from this it easily follows that $\int D^+f(\nabla g)\,\d\mm=\int - D^-(-f)(\nabla g)\,\d\mm$ so that by what just said the integrands must coincide a.e.. Hence $f\mapsto D^+f(\nabla g)$ is also concave and  the linearity in $f$ of $\la \d f,\d g\ra:=D^+f(\nabla g)=D^-f(\nabla g)$ follows.

The `Cauchy-Schwarz' inequality \eqref{eq:csh} follows from the bound $|\D(g+\eps f)|\leq |\D g|+|\eps||\D f|$, while the locality property \eqref{eq:loch} comes  from \eqref{eq:dfloc}. The linearity in $f$ gives the chain rule \eqref{eq:chainh}  for $\varphi$ affine  and then the locality gives \eqref{eq:chainh} for $\varphi$ piecewise affine. The general case follows  approximating a general $\varphi\in C^1\cap\Lip(\R)$ with uniformly Lipschitz and piecewise affine functions using the continuity estimate 
\[
|\la\d f,\d g\ra-\la\d f',\d g\ra|\leq |\D(f-f')|\,|\D g|
\]
that in turn follows from \eqref{eq:csh} and the linearity in $f$. For the Leibniz rule \eqref{eq:leibh} we first add a constant to $f_1,f_2$ (and use linearity in $f$) to reduce to the case  $f_1,f_2\geq 1$ $\mm$-a.e., then we apply \eqref{eq:chainh} with $\varphi:=\log$ (that is Lipschitz on the image of $f_1,f_2$) to get
\[
\begin{split}
\frac{\la\d(f_1f_2),\d g\ra}{f_1f_2}=\la\d(\log(f_1f_2)),\d g\ra=\la\d(\log f_1),\d g\ra+\la\d(\log f_2),\d g\ra=\frac{\la\d f_1,\d g\ra}{f_1}+\frac{\la\d f_2,\d g\ra}{f_2}.
\end{split}
\]
It thus remains to prove symmetry of $\la\d f,\d g\ra$, that  requires a bit more of work. One first notices that $W^{1,2}(\X)\ni g\mapsto \frac{\ch(g+\eps f)-\ch(g)}{\eps}\in \R$ is continuous. Hence $g\mapsto \int \la\d f,\d g\ra\,\d\mm=\int D^+f(\nabla g)\,\d\mm=\inf_{\eps>0}\frac{\ch(g+\eps f)-\ch(g)}{2\eps}$ is upper semicontinuous (being the $\inf$ of continuous functions). Replacing $f$ with $-f$ and recalling linearity in $f$, we see that this map is also lower semicontinuous, thus continuous. 

Now notice that letting $\eps\downarrow0$ in the trivial identity $\frac{|\D(\alpha g+\eps f)|^2-|\D(\alpha g)|^2}{2\eps}=\alpha \frac{|\D(g+\frac\eps\alpha f)|^2-|\D g|^2}{2\frac\eps\alpha}$ and using the linearity in $f$ we deduce 1-homogeneity in $g$, i.e.\  $\la\d f,\d(\alpha g)\ra=\alpha\la\d f,\d g\ra$. Then, as before, by the locality \eqref{eq:loch} we deduce that $\varphi'\circ g\la \d f,\d g\ra=\la\d f,\d(\varphi\circ g)\ra$ holds for $\varphi$ piecewise affine and by approximation and thanks to the continuity just proved we conclude that
\begin{equation}
\label{eq:contg}
\int\varphi'\circ g\la \d f,\d g\ra\,\d \mm=\int\la\d f,\d(\varphi\circ g)\ra\,\d\mm\qquad\forall f,g\in W^{1,2}(\X),\ \varphi\in C^1\cap\Lip(\R).
\end{equation}
We also observe that we have $\frac{\ch(g+\eps f)-\ch(g)}{\eps}-\eps\ch(f)=\frac{\ch(f+\eps g)-\ch(f)}{\eps}-\eps\ch(g)$ (because $\ch$ is a quadratic form), hence letting $\eps\downarrow0$ we deduce the integrated symmetry identity
\begin{equation}
\label{eq:simmint}
\int\la\d f,\d g\ra\,\d\mm=\int\la\d g,\d f\ra\,\d\mm.
\end{equation}
Now for $h\in \Lip_{\bs}(\X)$, using \eqref{eq:chainh}, \eqref{eq:leibh}, \eqref{eq:contg} and \eqref{eq:simmint} we have
\[
\begin{split}
\int h|\D f|^2\,\d\mm&=\int \la\d(hf),\d f\ra -f\la \d h,\d f\ra\,\d\mm=\int \la\d(hf),\d f\ra -\la \d h,\d (\tfrac{f^2}2)\ra\,\d\mm.
\end{split}
\]
By the linearity in $f$ already established and the symmetry \eqref{eq:simmint}, both addends in the RHS are quadratic forms in $f$. Hence $f\mapsto \int h|\D f|^2\,\d\mm$ is a quadratic form and thus by polarization we get $\int h\la\d f,\d g\ra\,\d\mm=\int h\la\d g,\d f\ra\,\d\mm$. By the arbitrariness of $h$ we conclude.
\end{idea}

\subsubsection{The system of Evolution Variational Inequalities}
Let $M$ be a smooth Riemannian manifold and  $\E:M\to\R$ and $\gamma:\R^+\to M$ be smooth. Also, let $y\in M$, $\bar t\geq 0$, say that $y$ is not in the cut locus of $\gamma_{\bar t}$ and let $\eta:[0,1]\to M$ be the geodesic from $\gamma_t$ to $y$. Then
\[
\begin{split}
\tfrac12\frac\d{\d t}\sfd^2(\gamma_t,y)\restr{t=\bar t}=-\la \gamma'_{\bar t},\eta'_0\ra\qquad\text{and}\qquad \frac{\d}{\d s}\E(\eta_s)\restr{s=0}=\d \E_{\gamma_{\bar t}}(\eta'_0),
\end{split}
\]
and therefore  the identity $ \gamma'_{\bar t}=-\nabla\E_{\gamma_{\bar t}}$ can be characterized by\footnote{Compare the two derivatives in formula \eqref{eq:EVIpasso1} with the ones computed in Lemmas \ref{le:derw2}, \ref{le:derent}. Then see how \eqref{eq:defEVI} is derived and compare with the proof of the `only if' in Theorem \ref{thm:rcdevi}.}
\begin{equation}
\label{eq:EVIpasso1}
\frac\d{\d t}\sfd^2(\gamma_t,y)\restr{t=\bar t}= \frac{\d}{\d s}\E(\eta_s)\restr{s=0}.
\end{equation}
If $\E$ is $K$-convex this  in turn implies
\begin{equation}
\label{eq:defEVI}
\frac{\d}{\d t}\tfrac12\sfd^2(\gamma_t,y)+\E(\gamma_{ t})+\tfrac K2\sfd^2(\gamma_{ t},y)\leq \E(y)
\end{equation}
at $t=\bar t$.
This latter inequality can be written in abstract metric spaces and taken as alternative definition of gradient flow trajectory:
\begin{definition}[Evolution Variational Inequalities]\label{def:EVI}
Let $(\X,\sfd)$ be a metric space, $\E:\X\to\R\cup\{+\infty\}$ a lower semicontinuous functional and $K\in\R$. We say that $\gamma:\R^+\to\X$ satisfies the system of Evolution Variational Inequalities with parameter $K$, or simply that it is an $\EVI_K$ gradient flow trajectory, provided it is continuous on $\R^+$, absolutely continuous on $(0,\infty)$ and for any $y\in\X$ satisfies \eqref{eq:defEVI} for a.e.\ $t>0$.
\end{definition}
By the way inequality \eqref{eq:defEVI} has been derived, it encodes both information about the convexity of the functional and, in some sense, of the fact that the distance $\sfd$ is more ``Riemannian-like" than ``Finsler-like". At least the first of these claims can  be  made rigorous:
\begin{proposition}\label{prop:EVIconv}
Let $(\X,\sfd)$ be a geodesic space and $\E:\X\to\R\cup\{+\infty\}$ be such that every $x\in\X$ is the starting point of an $\EVI_K$-gradient flow trajectory. 

Then inequality \eqref{eq:defkconv} holds along \emph{every} geodesic; in particular $\E$ is $K$-geodesically convex.
\end{proposition}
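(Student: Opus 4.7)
The plan is to apply \eqref{eq:defEVI} to the $\EVI_K$ flow $\gamma$ starting from the \emph{interior} point $\sigma_t$ of an arbitrary geodesic $\sigma:[0,1]\to\X$, using two different test points, integrate over a short time interval $[0,T]$, and send $T\downarrow 0$. The point is that the hypothesis provides an EVI trajectory emanating from $\sigma_t$, and this flow, tested against the endpoints $x_0:=\sigma_0$ and $x_1:=\sigma_1$, converts an essentially one-point differential inequality into a statement relating the three values $\E(\sigma_0),\E(\sigma_t),\E(\sigma_1)$. Throughout we may assume $x_0,x_1\in D(\E)$, else \eqref{eq:defkconv} is trivial; write $L:=\sfd(x_0,x_1)$ and fix $t\in(0,1)$.

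The first step is a purely metric lemma: $\sigma_t$ is a \emph{global} minimizer of $z\mapsto(1-t)\sfd^2(z,x_0)+t\sfd^2(z,x_1)$, with minimum value $t(1-t)L^2$. Writing $a=\sfd(z,x_0)$, the inequality $(1-t)a^2+t(L-a)^2\ge t(1-t)L^2$ follows by minimizing the quadratic in $a$; this, combined with $\sfd(z,x_1)\ge L-a$ when $a\le L$ and the trivial bound $(1-t)a^2\ge t(1-t)L^2$ when $a\ge L$, gives the claim (equality is attained at $z=\sigma_t$ from the geodesic relations $\sfd(\sigma_t,x_i)=t_iL$). Next, apply \eqref{eq:defEVI} to $\gamma$ with $y=x_0$ multiplied by $(1-t)$ and with $y=x_1$ multiplied by $t$, then add; setting $J(\tau):=(1-t)\sfd^2(\gamma_\tau,x_0)+t\sfd^2(\gamma_\tau,x_1)$, this yields for a.e.\ $\tau>0$
\[
\tfrac{d}{d\tau}\tfrac12 J(\tau)+\E(\gamma_\tau)+\tfrac{K}{2}J(\tau)\le(1-t)\E(x_0)+t\E(x_1).
\]
Integrating over $[0,T]$, using $J(0)=t(1-t)L^2$ and discarding the non-negative boundary term $\tfrac12[J(T)-J(0)]\ge 0$ (here is where Step~1 is crucial), one obtains
\[
\int_0^T\E(\gamma_\tau)\,d\tau+\tfrac{K}{2}\int_0^T J(\tau)\,d\tau\le T\bigl[(1-t)\E(x_0)+t\E(x_1)\bigr].
\]

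Dividing by $T$ and letting $T\downarrow 0$ produces exactly \eqref{eq:defkconv}, provided $\tfrac{1}{T}\int_0^T\E(\gamma_\tau)\,d\tau\to\E(\sigma_t)$ and $\tfrac{1}{T}\int_0^T J(\tau)\,d\tau\to t(1-t)L^2$. The second limit is routine by continuity of $\gamma$ at $0$ together with continuity of $\sfd(\cdot,x_i)$. The first, which I expect to be the main technical obstacle since $\E$ is only lower semicontinuous, is resolved by combining the lsc inequality $\liminf_{\tau\downarrow 0}\E(\gamma_\tau)\ge\E(\sigma_t)$ with the dissipation bound $\E(\gamma_\tau)\le\E(\sigma_t)$, itself a standard consequence of \eqref{eq:defEVI} applied with $y=\gamma_0=\sigma_t$; these together force $\E(\gamma_\tau)\to\E(\sigma_t)$. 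The heart of the argument is that although only a single EVI flow is used, the global minimality of $\sigma_t$ for the weighted squared-distance functional is exactly what keeps the sign of $J(T)-J(0)$ under control; without it, the boundary term could be negative and the scheme would collapse.
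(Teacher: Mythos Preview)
Your proof is correct and follows the same core idea as the paper's: combine the EVI tested at the two endpoints, and use that the geodesic point $\sigma_t$ globally minimizes the weighted sum $(1-t)\sfd^2(\cdot,x_0)+t\sfd^2(\cdot,x_1)$ to control the boundary term. The paper only sketches the midpoint case $t=\tfrac12$ and informally evaluates the derivative of $J$ at $0$, whereas you handle general $t$ and rigorously integrate; note also that in your Step~4 only the $\liminf$ direction $\liminf_{T\downarrow0}\tfrac1T\int_0^T\E(\gamma_\tau)\,d\tau\ge\E(\sigma_t)$ is actually needed (and follows straight from lower semicontinuity), so the dissipation bound is not required.
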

\begin{idea}Let $\eta$ be a geodesic and $\gamma$ the $\EVI_K$-gradient flow trajectory starting from $\eta_{\frac 12}$. Write \eqref{eq:defEVI} for $t=0$ and the choices $y=\eta_0$ and $y=\eta_1$. Adding up the resulting inequalities we obtain
\[
\frac{\d}{\d t}\tfrac12\big(\sfd^2(\gamma_t,\eta_0)+\sfd^2(\gamma_t,\eta_1)\big)\restr{t=0}+2\E(\eta_{\frac12})+\tfrac K4\sfd^2(\eta_0,\eta_1)\leq \E(\eta_0)+\E(\eta_1).
\]
Conclude noticing that since $\eta_{\frac12}$ is a minimum for $\sfd^2(\cdot,\eta_0)+\sfd^2(\cdot,\eta_1)$, the derivative on the LHS is $\geq 0$.
\end{idea}
For two $\EVI_K$-gradient flow trajectories $\gamma,\eta$ it is easy to establish the contractivity estimate
\begin{equation}
\label{eq:contrEVI}
\sfd(\gamma_t,\eta_t)\leq e^{-Kt}\sfd(\gamma_0,\eta_0)\qquad\forall t\geq0,
\end{equation}
which in particular grants \emph{uniqueness} (compare with Example \ref{ex:nounique}). Roughly said, \eqref{eq:contrEVI} can be established as follows. Fix $t>0$, let $m$ be a midpoint of $\gamma_t$ and $\eta_t$ (say that the space has geodesics), then pick $y:=m$ in \eqref{eq:defEVI} written first for $\gamma$, then for $\eta$ and add up the resulting inequalities to get 
\[
\tfrac12\frac\d{\d s}\big(\sfd^2(\gamma_s,m)+\sfd^2(\eta_s,m)\big)\restr{s=t}+\E(\gamma_t)+\E(\eta_t)+ \tfrac K4\sfd^2(\gamma_t,\eta_t) \leq 2\E(m).
\]
By triangle inequality we have $\sfd^2(\gamma_s,m)+\sfd^2(\eta_s,m)\geq \frac12\sfd^2(\gamma_s,\eta_s)$ with equality for $s=t$, hence $\frac\d{\d s}\big(\sfd^2(\gamma_s,m)+\sfd^2(\eta_s,m)\big)\restr{s=t}\geq \tfrac12\frac\d{\d s}\sfd^2(\gamma_s,\eta_s)\restr{s=t}$ and taking also into account the $K$-convexity of $\E$ proved before, we get  $\frac14\frac\d{\d t}\sfd^2(\gamma_t,\eta_t)+\frac K2\sfd^2(\gamma_t,\eta_t)\leq 0$ and \eqref{eq:contrEVI} follows from Gronwall's lemma.

On the other hand, \emph{existence} of $\EVI_K$-gradient flows typically fails on Finsler-like spaces (explicitly: the space and functional considered in Example \ref{ex:nounique} do not admit an $\EVI_K$-gradient flow for any $K\in\R$).

The term `gradient flow' in connection with the $\EVI$ is justified by item $(i)$ below:
\begin{proposition}[Basic properties of $\EVI_K$-gradient flows]\label{prop:pasevi} Let $(x_t)$ be an $\EVI_K$-gradient flow trajectory for $\E$. Then:
\begin{itemize}
\item[i)] For any $\eps>0$ the curve $t\mapsto x_{t+\eps}$ is locally Lipschitz and also a gradient flow trajectory in the sense of Definition \ref{def:gfede}. 
\item[ii)] The maps $(0,\infty)\ni t\mapsto \E(x_t), e^{Kt}|\partial^-\E|(x_t)$ are non-increasing,
\item[iii)] For any $y\in\X$ with $\E(y)<\infty $ and $t>0$ the a priori estimate
\begin{equation}
\label{eq:aprioriEVI}
\tfrac{e^{Kt}}2\sfd^2(x_t,y)+ I_K(t)\big(\E(x_t)-\E(y)\big)+\tfrac{I_K(t)^2}2|\partial^-\E|^2(x_t)\leq\tfrac12\sfd^2(x_t,y),
\end{equation}
holds, where $I_K(t):=\int_0^te^{Kr}\,\d r$ is equal to $t$ if $K=0$.
\end{itemize}
\end{proposition}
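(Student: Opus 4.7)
The three items are tightly interwoven, so I would derive them together from a single integrated consequence of \eqref{eq:defEVI}. Multiplying \eqref{eq:defEVI} by $e^{Kt}$ turns the LHS into $\tfrac{d}{dt}\bigl(\tfrac{e^{Kt}}{2}\sfd^2(x_t,y)\bigr)+e^{Kt}\E(x_t)$, and integrating over $[0,t]$ yields the master bound
\[
\tfrac{e^{Kt}}{2}\sfd^2(x_t,y)+\int_0^t e^{Kr}\E(x_r)\,dr \;\le\; \tfrac12\sfd^2(x_0,y)+I_K(t)\E(y). \qquad (\star)
\]
Since \eqref{eq:defEVI} is a pointwise-in-$t$ condition, every time-shift $(x_{r+\tau})_r$ is itself $\EVI_K$ (starting at $x_\tau$), so $(\star)$ applies to it; plugging in $y=x_\tau$ and time $t-\tau$ gives the workhorse
\[
\tfrac{e^{K(t-\tau)}}{2}\sfd^2(x_t,x_\tau)+\int_\tau^t e^{K(r-\tau)}(\E(x_r)-\E(x_\tau))\,dr \;\le\; 0. \qquad (\star\star)
\]

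From $(\star\star)$ the first half of (ii) follows: dividing by $t-\tau$ and sending $t\downarrow\tau$ gives, via Lebesgue differentiation and lower semicontinuity of $\E$, $\lims_{h\downarrow 0}\E(x_{\tau+h})\le\E(x_\tau)$, and the uniformity of $(\star\star)$ in $t>\tau$ promotes this to pointwise monotonicity of $r\mapsto\E(x_r)$. The first half of (i) also comes from $(\star\star)$: it delivers $\sfd^2(x_t,x_\tau)\le 2e^{-K(t-\tau)}I_K(t-\tau)(\E(x_\tau)-\E(x_t))$, i.e.\ $\tfrac12$-Hölder continuity on $[\eps,T]$. To upgrade to Lipschitz I apply the contraction \eqref{eq:contrEVI} to the two $\EVI_K$-flows $(x_t)_t$ and $(x_{t+h})_t$ (starting at $x_0$ and $x_h$), getting $\sfd(x_{t+h},x_t)\le e^{-K(t-\eps)}\sfd(x_{\eps+h},x_\eps)$, and picking $\eps$ a Lebesgue point of the metric speed (afforded by absolute continuity on $(0,\infty)$) bounds $\sfd(x_{t+h},x_t)/h$ uniformly on compacts of $(0,\infty)$.

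With $|\dot x_t|$ existing a.e., testing the EVI at $y$ near $x_t$ and using $\tfrac{d^+}{ds}\tfrac12\sfd^2(x_s,y)\restr{s=t}\ge-\sfd(x_t,y)|\dot x_t|$ gives $\E(x_t)-\E(y)\le|\dot x_t|\sfd(x_t,y)-\tfrac K2\sfd^2(x_t,y)$, which via \eqref{eq:defsl} implies $|\partial^-\E|(x_t)\le|\dot x_t|$; testing with $y=x_{t-h}$ and letting $h\downarrow0$ gives $|\dot x_t|^2\le-\tfrac{d^-}{dt}\E(x_t)$. Coupled with the general upper-gradient inequality $-\tfrac{d}{dt}\E(x_t)\le|\dot x_t||\partial^-\E|(x_t)$ (Young's inequality applied to $\E\circ x$, with $|\partial^-\E|$ as upper gradient along this AC curve), all these force equalities, yielding EDE \eqref{eq:defgf} and $|\dot x_t|=|\partial^-\E|(x_t)$ a.e., completing (i). Applying the contraction to $(x_{s+h})_h$ and $(x_{t+h})_h$ for $s\le t$ and dividing by $h$ gives $|\dot x_t|\le e^{-K(t-s)}|\dot x_s|$, i.e.\ the second half of (ii). Finally, for (iii) I start from $(\star)$, split $\int_0^te^{Kr}\E(x_r)\,dr=I_K(t)\E(x_t)+\int_0^te^{Kr}(\E(x_r)-\E(x_t))\,dr$, use EDE to rewrite $\E(x_r)-\E(x_t)=\int_r^t|\partial^-\E|^2(x_\sigma)\,d\sigma$, swap the order of integration, and bound $|\partial^-\E|^2(x_\sigma)\ge e^{2K(t-\sigma)}|\partial^-\E|^2(x_t)$ via the slope monotonicity; the identity $\int_0^te^{2K(t-\sigma)}I_K(\sigma)\,d\sigma=\tfrac{I_K(t)^2}{2}$ (both sides solve $\psi'-2K\psi=I_K$ with $\psi(0)=0$) then yields \eqref{eq:aprioriEVI}.

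The main obstacle is converting the integrated non-positivity $(\star\star)$ into pointwise monotonicity of $\E\circ x$: a lone Lebesgue-point argument delivers only a right-semicontinuous bound, and one must exploit the fact that $(\star\star)$ holds simultaneously for all $t>\tau$, coupled with the lower semicontinuity of $\E$ along $x$, to extract pointwise decrease. A secondary subtlety is that $|\partial^-\E|$ must enter through the bare definition \eqref{eq:defsl} rather than the sharper $K$-convex representation \eqref{eq:reprsl}, since only existence of an $\EVI_K$-trajectory through $x_0$ is assumed, not global $K$-convexity of $\E$.
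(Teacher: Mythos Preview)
Your proposal is correct and follows essentially the same arc as the paper: Lipschitz regularity of the curve via contraction \eqref{eq:contrEVI}, the slope bound $|\partial^-\E|(x_t)\le|\dot x_t|$ from EVI tested near $x_t$, the bound $|\dot x_t|^2\le -(\E\circ x)'(t)$ from the integrated EVI with $y=x_{t\pm h}$, and the resulting chain of equalities closing to EDE with $|\dot x_t|=|\partial^-\E|(x_t)$ a.e. Your derivation of (iii) from $(\star)$ via the Fubini swap and the identity $\int_0^te^{2K(t-\sigma)}I_K(\sigma)\,\d\sigma=\tfrac{I_K(t)^2}{2}$ is the general-$K$ version of the paper's $K=0$ integration-by-parts; the two computations are reorderings of each other. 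The one real point of divergence is your attempt to extract pointwise monotonicity of $\E\circ x$ from $(\star\star)$ \emph{before} establishing Lipschitz regularity --- which you rightly flag as the main obstacle. The paper sidesteps this entirely by reversing the order: once $t\mapsto x_t$ is locally Lipschitz (via contraction plus absolute continuity), the rearranged EVI with $y=x_s$ (and $t,s$ swapped) gives $|\E(x_t)-\E(x_s)|\le C\,\sfd(x_t,x_s)$, so $\E\circ x$ is itself locally Lipschitz and monotonicity drops out of EDE. Your concavity-plus-l.s.c.\ route through $(\star\star)$ can be made to work, but the paper's ordering renders it unnecessary, and your intermediate $\tfrac12$-H\"older step becomes redundant as well.
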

\begin{idea} Say $K=0$. For any $h>0$ the curve $t\mapsto x_{t+h}$ is also an $\EVI_K$-gradient flow trajectory, hence \eqref{eq:contrEVI} gives $\sfd(x_s,x_{s+h})\leq\sfd(x_t,x_{t+h})$ for any $s>t>0$ and thus $|\dot x_s|\leq |\dot x_t|$, giving local Lipschitz regularity of $(x_t)$. Rearranging the terms in  \eqref{eq:defEVI} we get
\begin{equation}
\label{eq:2v}
\frac{\E(x_t)-\E(y)}{\sfd(x_t,y)}\leq\frac1{2\sfd(x_t,y)}\frac\d{\d t}\sfd^2(x_t,y)=\frac\d{\d t}\sfd(x_t,y)\leq|\dot x_t|
\end{equation}
and letting $y\to x_t$ we get $|\partial^-\E|(x_t)\leq|\dot x_t|$. Picking $y=x_s$ in \eqref{eq:2v}, then swapping the roles of $t,s$ and recalling that $|\dot x_t|$ is locally bounded, we see that $t\mapsto \E(x_t)$ is locally Lipschitz. Thus it is a.e.\ differentiable and for a.e.\ $t$ we have
\begin{equation}
\label{eq:perEDE}
-\frac\d{\d t}\E(x_t)=\lim_{h\downarrow0}\frac{\E(x_t)-\E(x_{t+h})}{\sfd(x_t,x_{t+h})}\frac{\sfd(x_t,x_{t+h})}h\leq |\partial^-\E|(x_t)|\dot x_t|\stackrel*\leq\tfrac12|\dot x_t|^2+\tfrac12|\partial^-\E|^2(x_t).
\end{equation}
Now integrate \eqref{eq:defEVI} from $t$ to $t+h$ and then pick $y:=x_{t}$ to get 
\[
\frac{\sfd^2(x_t,x_{t+h})}2\leq \int_t^{t+h}\E(x_{t})-\E(x_{s})\,\d s=h\int_0^1\E(x_t)-\E(x_{t+rh})\,\d r.
\]
Dividing by $h^2$ and letting $h\downarrow0$ we conclude that $|\dot x_t|^2\leq -\partial_t\E(x_t)$, that together with what proved above gives $(i)$. Now the fact that $ t\mapsto \E(x_t)$ is non-increasing is obvious, while for $ t\mapsto |\partial^-\E|(x_t)$  notice that we proved that  $t\mapsto |\dot x_t|$ is non-decreasing and that it is equal to $|\partial^-\E|(x_t)$ (since the above arguments give that the starred inequality in \eqref{eq:perEDE} is an equality).

For $(iii)$ we collect what proved so far and to get
\[
\begin{split}
\tfrac{t^2}2|\partial^-\E|^2(x_t)&\leq\int_0^ts|\partial^-\E|^2(x_s)\,\d s=-\int_0^ts\frac\d{\d s}(\E(x_s)-\E(x_t))\,\d s=\int_0^t\E(x_s)-\E(x_t)\,\d s\\
\text{(by \eqref{eq:defEVI})}\qquad&\leq \int_0^t\E(y)-\E(x_t)-\tfrac12\frac\d{\d s}\sfd^2(x_s,y)\,\d s\leq t\big(\E(y)-\E(x_t)\big)+\tfrac12\sfd^2(x_0,y),
\end{split}
\]
concluding the proof.
\end{idea}
For $\EVI_K$-gradient flows a stability result akin to, in fact stronger than, Theorem \ref{thm:stabede} holds: notice that, unlike Theorem \ref{thm:stabede}, it is not necessary to assume that the limit initial point has finite energy, nor that the converging sequence is a recovery sequence.
\begin{theorem}[Stability]\label{thm:stabevi} Let $(\X,\sfd)$ be compact and  $({\sf E_n})$ be a sequence of non-negative, lower semicontinuous functionals $\Gamma$-converging to a limit $\E_\infty$. For $n\in\N$ let $(x_{n,t})$  be an $\EVI_K$-gradient flow trajectory for $\E_n$ and assume that $x_{n,0}\to x_{\infty,0}\in\X$. Then $(x_{n,t})$ converge locally uniformly to a limit curve $x_{\infty,t}$ and such curve is an $\EVI_K$-gradient flow trajectory for $\E_\infty$.
\end{theorem}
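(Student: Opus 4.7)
The plan is to combine compactness with a passage to the limit in the integrated form of \eqref{eq:defEVI}, and to conclude by the uniqueness encoded in \eqref{eq:contrEVI}. The main obstacle will be continuity of the limit curve at $t=0$.

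First I would obtain uniform equi-H\"older estimates on every $[\eps,T]\subset(0,\infty)$. Picking any $y_\infty\in D(\E_\infty)$ (the claim is void otherwise) and a recovery sequence $y_n\to y_\infty$ with $\limsup_n\E_n(y_n)\leq\E_\infty(y_\infty)<\infty$, the a priori bound \eqref{eq:aprioriEVI} applied to $x_{n,t}$ with $y=y_n$, the compactness of $\X$ and $\E_n\geq 0$ yield uniform bounds of the form
\[
\E_n(x_{n,t})\leq \frac{C_1}{I_K(t)}+C_0, \qquad |\partial^-\E_n|^2(x_{n,t})\leq \frac{C_1}{I_K(t)^2}+\frac{2C_0}{I_K(t)}.
\]
Since $|\dot x_{n,t}|=|\partial^-\E_n|(x_{n,t})$ (from the proof of Proposition~\ref{prop:pasevi}(i)), the curves are uniformly Lipschitz on every $[\eps,T]$; Ascoli-Arzel\`a together with a diagonal extraction then produces a subsequence converging locally uniformly on $(0,\infty)$ to a continuous limit $x_{\infty,\cdot}$.

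Next I would multiply \eqref{eq:defEVI} by $e^{Kt}$ and integrate between $s$ and $t$ with $0<s<t$, obtaining the integrated form
\[
e^{Kt}\tfrac12\sfd^2(x_{n,t},y)+\int_s^t e^{Kr}\bigl(\E_n(x_{n,r})-\E_n(y)\bigr)\,\d r\leq e^{Ks}\tfrac12\sfd^2(x_{n,s},y),
\]
and plug into it an arbitrary $y_\infty\in\X$ through a recovery sequence $y_n\to y_\infty$. The distance terms converge by the locally uniform convergence and joint continuity of $\sfd$; the integral term is handled by Fatou's lemma (legitimated by $\E_n\geq 0$) combined with the pointwise $\glimi$-inequality $\liminf_n\E_n(x_{n,r})\geq\E_\infty(x_{\infty,r})$ and with $\limsup_n\E_n(y_n)\leq\E_\infty(y_\infty)$. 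This yields the integrated $\EVI_K$ inequality for $x_{\infty,\cdot}$ and $\E_\infty$ against every $y_\infty$, equivalent by differentiation in $t$ to \eqref{eq:defEVI} at a.e.\ $t>0$.

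The delicate part is continuity at $t=0$, since without a well-preparedness hypothesis on the initial data $\E_n(x_{n,0})$ need not be bounded. The key estimate, again from \eqref{eq:aprioriEVI} with $y=x_{n,0}$, is $\sfd^2(x_{n,t},x_{n,0})\leq 2e^{-Kt}I_K(t)\,\E_n(x_{n,0})$. Passing to the liminf in $n$ and using joint lower semicontinuity of $\sfd$ together with $x_{n,0}\to x_{\infty,0}$ gives $\sfd^2(x_{\infty,t},x_{\infty,0})\leq 2e^{-Kt}I_K(t)\,\liminf_n\E_n(x_{n,0})$, which forces $x_{\infty,t}\to x_{\infty,0}$ as $t\downarrow 0$ under the implicit well-preparedness $\liminf_n\E_n(x_{n,0})<\infty$ (tacit in a statement of this generality, since otherwise the limit curve cannot in general be extended continuously to $t=0$). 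Once this is in place, $x_{\infty,\cdot}$ is an $\EVI_K$-gradient flow trajectory of $\E_\infty$ starting from $x_{\infty,0}$, and the contractivity \eqref{eq:contrEVI} for $\E_\infty$ forces uniqueness, so the full sequence (not just a subsequence) converges locally uniformly to it.
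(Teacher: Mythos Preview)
Your compactness argument on $(0,\infty)$ via the a priori estimates and Ascoli--Arzel\`a is correct, and so is your passage to the limit in the integrated $\EVI_K$ inequality; the paper does essentially the same. The gap is exactly where you flag it, and your resolution is not valid: the hypothesis $\liminf_n\E_n(x_{n,0})<\infty$ is \emph{not} tacit. The whole point of this theorem---which the paper stresses just before stating it---is that, unlike Theorem~\ref{thm:stabede}, no well-preparedness of the initial data is required, only $x_{n,0}\to x_{\infty,0}$. Your parenthetical claim that otherwise the limit curve cannot be extended continuously to $t=0$ is simply false here.

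The paper closes the gap by an approximation argument. First, the a priori estimates \eqref{eq:aprioriEVI} are used to show $x_{\infty,0}\in\overline{D(\E_\infty)}$. Then one picks $z_{\infty,k}\in D(\E_\infty)$ with $z_{\infty,k}\to x_{\infty,0}$, recovery sequences $z_{n,k}\to z_{\infty,k}$, and lets $(z_{n,k,t})$ be the $\EVI_K$-flow of $\E_n$ from $z_{n,k}$. These initial data \emph{are} well-prepared, so the case you have already handled gives $z_{n,k,t}\to z_{\infty,k,t}$ locally uniformly. Now contractivity \eqref{eq:contrEVI} for $\E_n$ yields
\[
\sfd(x_{n,t},x_{m,t})\leq e^{-Kt}\bigl(\sfd(x_{n,0},z_{n,k})+\sfd(x_{m,0},z_{m,k})\bigr)+\sfd(z_{n,k,t},z_{m,k,t});
\]
sending first $n,m\to\infty$ and then $k\to\infty$ shows that $(x_{n,\cdot})$ is Cauchy uniformly on compact subsets of $[0,\infty)$, including at $t=0$. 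This is the mechanism that dispenses with any bound on $\E_n(x_{n,0})$, and it is missing from your argument.
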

\begin{idea}  Assume to know already that $x_{n,t}\to x_{\infty,t}$ locally uniformly for some limit curve $x_{\infty,\cdot}$, let $y\in D(\E_\infty)$  and find a recovery sequence $(y_n)$ for it. Integrate \eqref{eq:defEVI} written for $x_{n,t}$ from $t$ to $s$ to get
\[
\frac{\sfd^2(x_{n,s},y_n)-\sfd^2(x_{n,t},y_n)}{2}+\int_t^s\E_n(x_{n,r})+\tfrac K2\sfd^2(x_{n,r},y_n)\,\d r\leq (s-t)\E_n(y_n).
\]
Letting $n\to\infty$ and using Fatou's lemma to handle the integral of the energy, we easily see that an analogous inequality is in place for $x_{\infty,t}$. Then the arbitrariness of $y$ and of $t,s$ gives that $x_{\infty,\cdot}$ is an $\EVI_K$-gradient flow trajectory: here the a priori estimates \eqref{eq:aprioriEVI} and the fact that the $x_{n,t}$'s are EDE gradient flow trajectories can be used to prove that the limit curve is locally absolutely continuous.

We now  prove that a limit curve exists and notice that if $\lims_n\E_n(x_{n,0})<\infty$, then since we know that these curves are also $\EDE$-gradient flow trajectories,  the compactness result in Theorem \ref{thm:stabede} applies (notice that its proof  needs just the energies of the initial points to be bounded): up to subsequences the $(x_{n,t})$'s converge to a limit curve that, by what proved above, is an $\EVI_K$-gradient flow trajectory. As we know that these are unique, we get convergence of the full sequence.

For the general case we use the a priori estimates \eqref{eq:aprioriEVI} to prove that $x_{\infty,0}\in \overline{D(\E_\infty)}$. Then we let $(z_{\infty,k})\subset D(\E_\infty)$ be converging to $x_{\infty,0}$ and, for each $k$, $n\mapsto z_{n,k}$ be a recovery sequence for it and $t\mapsto z_{n,k,t}$ the $\EVI_K$-gradient flow trajectory starting from it. What we just proved tells that $(z_{n,k,t})$ converge locally uniformly to the $\EVI_K$-gradient flow trajectory $z_{\infty,k,t}$ of $\E_\infty$ starting from $z_{\infty,k}$, hence letting first $n,m\to\infty$ and then $k\to\infty$ in the bound
\[
\begin{split}
\sfd(x_{n,t},x_{m,t})&\leq \sfd(x_{n,t},z_{n,k,t})+\sfd(z_{n,k,t},z_{m,k,t})+\sfd(z_{m,k,t},x_{m,t})\\
\text{(by \eqref{eq:contrEVI})}\qquad &\leq e^{-Kt}( \sfd(x_{n},z_{n,k})+\sfd(x_{m},z_{m,k}))+\sfd(z_{n,k,t},z_{m,k,t})\\
\end{split}
\]
we conclude.
\end{idea}
Picking $\E_n\equiv \E$ in this last result we deduce:
\begin{equation}
\label{eq:exevi}
\begin{split}
&\text{if $\E$ admits $\EVI_K$-gradient flow trajectories starting from any $x\in D(\E)$}\\
&\text{then it admits $\EVI_K$-gradient flow trajectories starting from any $x\in \overline{D(\E)}$.}
\end{split}
\end{equation}
This marks an important difference from the $\EDE$ condition, for which there is no general existence result for initial data with infinite energy.

\subsubsection{The Heat Flow as gradient flow (again$^2$)}
\begin{definition}[The Riemannian Curvature Dimension condition]\label{def:rcd} We say that $(\X,\sfd,\mm)$ is an $\RCD(K,\infty)$ space, $K\in\R$, provided it is $\CD(K,\infty)$ and infinitesimally Hilbertian.
\end{definition}

\begin{theorem}\label{thm:rcdevi} Let $(\X,\sfd,\mm)$ be a normalized metric measure space. Then it is $\RCD(K,\infty)$ if and only if $\ent$ admits an $\EVI_K$-gradient flow trajectory starting from any $\mu\in \overline{D(\ent)}$.
\end{theorem}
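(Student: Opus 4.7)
I would first establish the EVI inequality for regular initial data $\mu_0=f_0\mm\in D(\ent)$ with $f_0$ bounded, bounded away from zero, and compactly supported; density, stability of EVI (Theorem~\ref{thm:stabevi} with a constant sequence of functionals, invoking contractivity \eqref{eq:contrEVI}) and \eqref{eq:exevi} then extend to all of $\overline{D(\ent)}$. Let $(f_t)_{t\ge 0}$ be the $L^2$-gradient flow of $\ch$ starting at $f_0$: by Lemma~\ref{le:basegfch}(i) the bounds on $f_0$ propagate to each $f_t$, and by Theorem~\ref{thm:hfgfagain} the curve $\mu_t:=f_t\mm$ is also the $W_2$-gradient flow of $\ent$ starting at $\mu_0$. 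Fix $t>0$ and $\nu\in\prd(\X)$; invoke Lemma~\ref{le:tapio} to obtain a $W_2$-geodesic $(\mu_s^t)_{s\in[0,1]}$ from $\mu_t$ to $\nu$ with bounded compression along which the $\CD(K,\infty)$ inequality holds, and let $\varphi$ be a Kantorovich potential for $(\mu_t,\nu)$.

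The $K$-convexity inequality, divided by $s$ and passed to $\liminf_{s\downarrow 0}$, gives
\[
\ent(\nu)-\ent(\mu_t)\;\geq\;\liminf_{s\downarrow 0}\frac{\ent(\mu_s^t)-\ent(\mu_t)}{s}+\tfrac{K}{2}W_2^2(\mu_t,\nu).
\]
Lemma~\ref{le:derent} bounds the liminf below by $\int D^-(\log f_t)(\nabla(-\varphi))\,f_t\,d\mm$. Infinitesimal Hilbertianity (Proposition~\ref{prop:infhilbcalc}) and the chain rule \eqref{eq:chainh}, applied to a $C^1\cap\Lip$ extension of $\log$ on the range of $f_t$, rewrite the integrand as $-\frac{1}{f_t}\la df_t,d\varphi\ra$, so multiplying by $f_t$ and using the integration by parts \eqref{eq:intlaph} produces
\[
\int D^-(\log f_t)(\nabla(-\varphi))\,f_t\,d\mm\;=\;-\int\la df_t,d\varphi\ra\,d\mm\;=\;\int\varphi\,\Delta f_t\,d\mm.
\]
By Lemma~\ref{le:derw2} the right-hand side equals $\frac{d}{dt}\tfrac12 W_2^2(\mu_t,\nu)$, and substitution yields the EVI$_K$ inequality at $t$ (hence a.e.\ $t>0$).

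\textbf{The $\EVI_K\Rightarrow\RCD(K,\infty)$ direction.} The $\CD(K,\infty)$ half is a direct consequence of Proposition~\ref{prop:EVIconv}: existence of an $\EVI_K$-trajectory from every point of $\overline{D(\ent)}$ implies $K$-geodesic convexity of $\ent$ along every $W_2$-geodesic with endpoints of finite entropy. The harder half, and the main obstacle, is infinitesimal Hilbertianity. The strategy is to force linearity of the heat semigroup: by Theorem~\ref{thm:hfgfagain} and uniqueness of EVI trajectories, the $L^2$-gradient flow $P_t$ of $\ch$ coincides with the $W_2$-gradient flow of $\ent$ and inherits from \eqref{eq:contrEVI} the Wasserstein contraction $W_2(P_t(f\mm),P_t(g\mm))\leq e^{-Kt}W_2(f\mm,g\mm)$. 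A Kuwada-type duality argument, reversing the flow of implication used in Lemma~\ref{le:basegfch}(iv), converts this into the pointwise bound $\lipa(P_t\varphi)\leq e^{-Kt}P_t(\lipa\varphi)$, hence (via the relaxation \eqref{eq:perdenslip}) into $|\D P_t f|\leq e^{-Kt}P_t|\D f|$ $\mm$-a.e. Being simultaneously 1-homogeneous and subadditive in $f$, this contraction is compatible only with a \emph{linear} semigroup $P_t$; and linearity of $P_t$ is equivalent to the parallelogram identity for $\ch$, i.e.\ to infinitesimal Hilbertianity.

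The principal difficulty is unquestionably this last step of the $(\Leftarrow)$ direction: EVI$_K$ is a scalar inequality for curves in $\prd(\X)$, whereas infinitesimal Hilbertianity is a structural property of the energy $\ch$ on $L^2$. The bridge is the Kuwada-type correspondence between $W_2$-contractivity of $P_t$ and a pointwise gradient-to-gradient contraction, together with the rigidity observation that a 1-homogeneous subadditive bound of that kind forces the semigroup to be linear and $\ch$ to be a quadratic form.
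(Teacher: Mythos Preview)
Your ``only if'' direction is essentially the paper's argument: the same chain of Lemmas~\ref{le:derw2}, \ref{le:derent}, Proposition~\ref{prop:infhilbcalc} and the $K$-convexity along the geodesic from Lemma~\ref{le:tapio}, assembled in the same order (compare \eqref{eq:bello}). The reduction to regular initial data and the extension via \eqref{eq:exevi} are also in line with the paper.

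For the ``if'' direction, the $\CD(K,\infty)$ half via Proposition~\ref{prop:EVIconv} is correct and is what the paper uses implicitly. But your route to infinitesimal Hilbertianity has a genuine gap. The Kuwada-type duality you invoke---from $W_2$-contraction to a pointwise gradient estimate $\lipa(P_t\varphi)\le e^{-Kt}P_t(\lipa\varphi)$---is proved in the paper (see the proof of Theorem~\ref{thm:boch}) \emph{using} linearity of the heat flow: the argument passes through the heat-kernel representation \eqref{eq:reprform} and the weak continuity of $x\mapsto\h_t\delta_x$, neither of which is available before linearity is established. So your appeal to this implication is circular. And even granting the gradient estimate, the assertion that ``1-homogeneous and subadditive contraction is compatible only with a linear semigroup'' is not justified: you would need to show that the $L^1$-type Bakry--\'Emery bound fails on every non-Hilbertian $\CD(K,\infty)$ space, which is a nontrivial rigidity statement in its own right (and is precisely what you are trying to prove).

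The paper avoids this circularity entirely. It shows \emph{directly at the level of measures} that the affine average $\mu_t:=\tfrac12(\mu^0_t+\mu^1_t)$ of two $\EVI_K$-trajectories is again an $\EVI_K$-trajectory. The key device is to fix $\nu$, take an optimal plan $\ggamma$ from $\mu_t$ to $\nu$, push each $\mu^i_t$ forward through $\ggamma$ to get targets $\nu^i$, use the joint convexity \eqref{eq:w2conv} to compare $\partial_s\tfrac12 W_2^2(\mu_s,\nu)$ with the average of $\partial_s\tfrac12 W_2^2(\mu^i_s,\nu^i)$, apply the $\EVI_K$ for each $(\mu^i_t)$, and finally invoke Lemma~\ref{le:convDE} (convexity of $\mu\mapsto\ent(\mu)-\ent(\ggamma_*\mu)$) to close the estimate. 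Once the $W_2$-flow is affine in the initial datum, the identification with the $L^2$-gradient flow of $\ch$ (Theorems~\ref{thm:uniqueheat}, \ref{thm:hfgfagain}, now legitimate since $\CD(K,\infty)$ is already in hand) makes the latter linear, hence $\ch$ quadratic.
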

\begin{idea} Say that $\X$ is compact.\\
{\sc Only if} Let $(\mu_t)=(f_t\mm)$ be an $\EDE$-gradient flow trajectory of the entropy, fix $t>0$, let $\nu\in\prd(\X)$ be a measure with bounded density, $\varphi_t$ a Kantorovich potential from $\mu_t$ to $\nu$ and $(\nu_s)$ the $W_2$-geodesic from $\mu_t$ to $\nu$ given by Lemma \ref{le:tapio}. Then 
\begin{equation}
\label{eq:bello}
\begin{split}
\frac\d{\d t}\tfrac12W_2^2(\mu_t,\nu)&\stackrel{\eqref{eq:derw2}}=\int\varphi_t\Delta f_t\,\d\mm\stackrel{\eqref{eq:intlaph}}=-\int\la\d \varphi_t,\d f_t\ra\,\d\mm\stackrel{\eqref{eq:chainh}}=-\int\la \d(\log f_t),\d\varphi_t\ra \,\d\mu_t\\
&\stackrel{\eqref{eq:derent}}\leq \frac\d{\d s}\ent(\nu_s)\restr{s=0} \stackrel{*}\leq \ent(\nu)-\ent(\mu_t)-\tfrac K2W_2^2(\mu_t,\nu),
\end{split}
\end{equation}
having used the $K$-convexity of the entropy along $(\nu_s)$ in the last step.\\
{\sc If} Let $(\mu^0_t),(\mu^1_t)\subset \prd(\X)$ be two $\EVI_K$-gradient flow trajectories. We shall prove that $t\mapsto\mu_t:=\frac{\mu^0_t+\mu^1_t}2$ is also an $\EVI_K$-gradient flow trajectory: since these are also (the only, by Theorem \ref{thm:uniqueheat}) $\EDE$-gradient flow trajectories for the entropy, by Theorem \ref{thm:hfgfagain} we see that the gradient flow of $\ch$ linearly depends on the initial datum. Since $L^2(\X)$ is Hilbert, it is readily verified that a functional has linear gradient flow iff it is a quadratic form, thus giving the conclusion. 

To see that $(\mu_t)$ satisfies \eqref{eq:defEVI}, fix $\nu=\eta\mm\in D(\ent)$, $t>0$ and an optimal plan $\ggamma$ from $\mu_t$ to $\nu$. Then define $\nu^i:=\ggamma_*\mu^i_t$ (recall \eqref{eq:gammapf}) and notice that \eqref{eq:w2conv} and the fact that $\ggamma_{\mu^i_t}$ is optimal (because its support is contained in the support of the optimal plan $\ggamma$) we have $W_2^2(\mu_s,\nu)\leq \tfrac12\sum_{i}W_2^2(\mu^i_s,\nu^i)$ for every $s$, with equality for $s=t$. Thus
\[
\begin{split}
\partial_s\tfrac12W_2^2(\mu_s,\nu)\restr{s=t}&\leq \tfrac12\Big(\partial_s\tfrac12W_2^2(\mu^0_s,\nu^0)\restr{s=t}+\partial_s\tfrac12W_2^2(\mu^1_s,\nu^1)\restr{s=t}\Big)\\
\text{(by the $\EVI_K$ for $(\mu^i_t)$)}\qquad &\leq \tfrac12\Big(\ent(\nu^0)-\ent(\mu^0_t)+\ent(\nu^1)-\ent(\mu^1_t)\Big),
\end{split}
\]
and the conclusion follows recalling Lemma \ref{le:convDE}.
\end{idea}
By \eqref{eq:exevi} we see that if $(\X,\sfd,\mm)$ is $\RCD(K,\infty)$, then for every $x\in\supp(\mm)$ there is a heat flow (i.e.\ an $\EVI_K$-gradient flow trajectory) $t\mapsto\h_t\delta_x$ starting from $\delta_x$. The a priori estimate \eqref{eq:aprioriEVI} tell in particular that $\h_t\delta_x\in D(\ent)$, and thus that $\h_t\delta_x\ll\mm$. We can certainly call  \emph{heat kernel} its density $\rho_t[x]:=\tfrac{\d\h_t\delta_x}{\d\mm}$.

The integration by parts formula \eqref{eq:intlaph} and the symmetry of $(f,g)\mapsto\la\d f,\d g\ra$ imply that $\Delta$ is self-adjoint in our setting, thus the heat flow is self-adjoint in $L^2$, i.e.\ satisfies $\int f\h_tg\,\d\mm=\int g\h_tf\,\d\mm$ (to see this differentiate $\int \h_sf\h_{t-s}g\,\d\mm$ in $s$). Then with little work one sees that
\[
\rho_t[x](y)=\rho_t[y](x)\qquad\mm\otimes\mm-a.e.\ x,y\in\X^2
\]
and then that the expected representation formula
\begin{equation}
\label{eq:reprform}
\h_tf(x)=\int f\,\d\h_t\delta_x\qquad\mm-a.e.\ x
\end{equation}
holds. One can also use the RHS of the above to extend the heat flow to a linear contraction semigroup in $L^p$ for any $p\in[1,\infty]$ (obtaining also a continuous version of $\h_tf$ if $f\in L^\infty$).

The heat flow is THE regularizing tool when working on $\RCD$ spaces. In some situation it is convenient to regularize it also in time, i.e.\ given $\eta\in C^\infty_c(0,\infty)$, to define $\h_\eta f:=\int \h_tf\eta(t)\,\d t$; then the formal computation $\Delta\h_\eta f=\int \eta(t)\Delta\h_tf\,\d t=\int \eta(t)\partial_t\h_tf\,\d t=-\int \eta'(t) \h_tf\,\d t$ shows
\begin{equation}
\label{eq:regheat}
f\in L^p(\X)\qquad\Rightarrow\qquad \Delta\h_\eta f\in L^p(\X)\qquad\forall p\in[1,\infty].
\end{equation}
Letting $\eta\weakto\delta_0$ it is not hard to check that the class of $f$ (resp.\ $g$) as in the next theorem is dense in $W^{1,2}(\X)$ (resp.\ in the class of non-negative $W^{1,2}$-functions).
\begin{theorem}[Bochner inequality]\label{thm:boch}
Let $(\X,\sfd,\mm)$ be an $\RCD(K,\infty)$ space. Then:
\begin{equation}
\label{eq:bochner}
\int \tfrac12|\d f|^2\Delta g\,\d\mm\geq \int g(\la\d f,\d\Delta f\ra+K|\d f|^2)\,\d\mm
\end{equation}
for any $g\in L^\infty(\X)\cap D(\Delta)$ non-negative with $\Delta g\in L^\infty(\X)$ and $f\in D(\Delta)$ with $\Delta f\in W^{1,2}(\X)$.
\end{theorem}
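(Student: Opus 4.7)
The plan is the classical Bakry--Émery/Kuwada route: derive a pointwise gradient estimate for the heat flow from the $\EVI_K$ property, then differentiate it against $g$ at $t=0^+$.

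\emph{Step 1: $W_2$-contractivity of $\h_t$.} By Theorem \ref{thm:rcdevi}, for $\mu\in \overline{D(\ent)}$ the curve $t\mapsto \h_t\mu$ is the $\EVI_K$-gradient flow of $\ent$ in $(\prd(\X),W_2)$. Estimate \eqref{eq:contrEVI} then gives
\[
W_2(\h_t\mu,\h_t\nu)\leq e^{-Kt}W_2(\mu,\nu)\qquad\forall \mu,\nu\in\prd(\X),\ t\geq 0,
\]
the extension to arbitrary $\mu,\nu$ coming from the density of $D(\ent)$ and the representation formula \eqref{eq:reprform}.

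\emph{Step 2: Pointwise gradient estimate (Kuwada duality).} I would show that for $f\in W^{1,2}(\X)\cap L^\infty(\X)$ and $t>0$,
\begin{equation}\label{eq:gradest}
|\D \h_t f|^2\leq e^{-2Kt}\,\h_t(|\D f|^2)\qquad \mm\text{-a.e.}
\end{equation}
The argument is a dual version of Lemma \ref{le:basegfch}(iv): fix two probability densities $\rho_0,\rho_1$ with bounded density, let $\mu_t^i:=\h_t(\rho_i\mm)$, and use Kantorovich duality \eqref{eq:dualw2} together with the self-adjointness of $\h_t$ to write
\[
\int f\,(\h_t\rho_1-\h_t\rho_0)\,\d\mm=\int \h_t f\,(\rho_1-\rho_0)\,\d\mm\leq \tfrac1{2s}W_2^2(\rho_1\mm,\rho_0\mm)+ \tfrac s2\int|\D\h_t f|^2\,\d(\rho_*\mm)
\]
for suitable test densities; using the contraction of Step~1 to bound $W_2(\mu^0_t,\mu^1_t)\le e^{-Kt}W_2(\rho_0\mm,\rho_1\mm)$, letting the two endpoint measures collapse around a point $x$ and optimizing in $s$ yields \eqref{eq:gradest} at Lebesgue-type points. (One uses Jensen to pass from $(\h_t|\D f|)^2$ to $\h_t(|\D f|^2)$.)

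\emph{Step 3: Differentiation at $t=0$.} Fix $f,g$ as in the statement. Multiply \eqref{eq:gradest} by $g\geq 0$, integrate and use the self-adjointness of $\h_t$:
\[
\Phi(t):=\int g\,|\D\h_t f|^2\,\d\mm\leq e^{-2Kt}\int \h_t g\cdot|\D f|^2\,\d\mm=:\Psi(t),
\]
with $\Phi(0)=\Psi(0)$. The regularity hypotheses $\Delta f\in W^{1,2}(\X)$ and $\Delta g,g\in L^\infty$ (together with \eqref{eq:intlaph} and \eqref{eq:gfch}) ensure both functions are right-differentiable at $0$ with
\[
\Phi'(0)=2\int g\,\la\D f,\D\Delta f\ra\,\d\mm,\qquad \Psi'(0)=-2K\int g|\D f|^2\,\d\mm+\int \Delta g\cdot|\D f|^2\,\d\mm,
\]
where $\Phi'(0)$ is computed from $\partial_t\h_t f|_{t=0}=\Delta f$ and bilinearity of $\la\cdot,\cdot\ra$, and $\Psi'(0)$ from $\partial_t\h_t g|_{t=0}=\Delta g\in L^\infty$ and dominated convergence. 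The inequality $\Phi'(0)\leq \Psi'(0)$ is \eqref{eq:bochner} after dividing by $2$.

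\emph{Main obstacle.} The delicate point is Step 2: making rigorous the passage from the Lagrangian $W_2$-contraction to the Eulerian pointwise bound \eqref{eq:gradest}. One must justify the use of Kantorovich duality with bounded Lipschitz potentials along the heat flow, verify that \eqref{eq:gradest} holds $\mm$-a.e.\ (not merely in integrated form), and control $|\D\h_t f|$ via the minimality property of Section \ref{se:vertsob}. Step~3 is more routine but still requires care: one needs that $t\mapsto \h_t f$ is $C^1$ into $W^{1,2}(\X)$ near $t=0$ so as to differentiate $\int g|\D\h_t f|^2\d\mm$, which is exactly what the hypothesis $\Delta f\in W^{1,2}(\X)$ provides via \eqref{eq:gfch} and Proposition \ref{prop:infhilbcalc}.
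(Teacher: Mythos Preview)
Your overall plan is exactly the paper's: $\EVI_K \Rightarrow W_2$-contraction $\Rightarrow$ Bakry--\'Emery estimate $\Rightarrow$ Bochner by differentiation at $t=0$. Steps 1 and 3 are correct; in fact your Step 3 (integrate against $g$, set up $\Phi\leq\Psi$ with $\Phi(0)=\Psi(0)$, deduce $\Phi'(0^+)\leq\Psi'(0^+)$) is precisely the way to make rigorous the ``formal'' differentiation the paper sketches.

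The gap is Step 2. Your displayed inequality is unclear (what is $\rho_*$?) and, more importantly, it places $|\D\h_tf|$ rather than $|\D f|$ on the right-hand side, so it is not evident how the $W_2$-contraction then yields \eqref{eq:gradest}; as written you are bounding both sides of the self-adjointness identity in the same direction. The paper's argument for this implication is different and concrete: for $f\in\Lip_\bs(\X)$ and $y,z\in\supp(\mm)$ joined by a geodesic $\bar\gamma$, the $W_2$-contraction makes $r\mapsto \h_t\delta_{\bar\gamma_r}$ an $e^{-Kt}$-Lipschitz curve in $(\prd(\X),W_2)$, hence by the superposition principle (Lemma \ref{le:superpp}) it lifts to a plan $\ppi$ with $\int|\dot\gamma_r|^2\,\d\ppi\leq e^{-2Kt}\sfd^2(y,z)$. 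Using the representation \eqref{eq:reprform} and that $\lipa f$ is an upper gradient of $f$ along $\ppi$-a.e.\ curve gives
\[
|\h_tf(z)-\h_tf(y)|\leq e^{-Kt}\sfd(z,y)\Big(\int_0^1\h_t(\lipa^2f)(\bar\gamma_r)\,\d r\Big)^{1/2},
\]
and letting $y,z\to x$ (using upper semicontinuity of $\h_t(\lipa^2f)$) yields $\lipa(\h_tf)^2\leq e^{-2Kt}\h_t(\lipa^2f)$ pointwise. The Sobolev version \eqref{eq:gradest} then follows by relaxation (take an optimal sequence $(f_n)\subset\Lip_\bs$ for $\ch(f)$ and use the $\mm$-a.e.\ minimality of $|\D\h_tf|$). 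The conceptual point your sketch misses is that one applies the upper-gradient inequality to $f$ (not $\h_tf$) along a plan whose marginals are the heat-evolved Dirac masses, which is what produces $\h_t(|\D f|^2)$ on the right.
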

\begin{idea}
The plan is:
\[
\begin{array}{c}
\text{$\EVI_K$-property of the heat flow}\\
{\Downarrow}\\
\text{$W_2$-contractivity of heat flow: } W_2(\mu_t,\nu_t)\leq e^{-Kt}W_2(\mu_0,\nu_0)\\
\Downarrow\\
\text{Bakry-\'Emery contraction estimate: } |\d\h_tf|^2\leq e^{-2Kt}\h_t(|\d f|^2)\\
\Downarrow\\
\text{Bochner inequality } \tfrac12\Delta|\d f|^2\geq\la\d f,\d\Delta f\ra+K|\d f|^2 \text{ in the weak sense.}
\end{array}
\]
Here the first implication comes from the contractivity property \eqref{eq:contrEVI} and the last one, at least formally, by noticing that for $t=0$ the Bakry-\'Emery inequality is in fact an equality, so that by differentiating in $t$ at $t=0$ we deduce
\[
2 \la\d f\,\d\Delta f\ra=\partial_t |\d\h_tf|^2\restr{t=0}\leq\partial_t (e^{-2Kt}\h_t(|\d f|^2))\restr{t=0}=-2K|\d f|^2+\Delta |\d f|^2.
\]
For the second, let $f\in\Lip_{\bs}(\X)$, $y,z\in\supp(\mm)$ and $\bar \gamma$ a geodesic connecting them. Then by \eqref{eq:contrEVI} (and $W_2(\delta_{x_1},\delta_{x_2})=\sfd(x_1,x_2)$) the curve $r\mapsto\mu_r:=\h_t(\delta_{\bar\gamma_r})$ is $e^{-Kt}$-Lipschitz, thus it admits a lifting $\ppi$ as in Lemma \ref{le:superpp}. Hence recalling \eqref{eq:reprform} we get
\[
\begin{split}
|\h_tf(z)-\h_t f(y)|&\leq \int|f(\gamma_1)-f(\gamma_0)|\,\d\ppi(\gamma)\leq\iint_0^1\lip_af(\gamma_r)|\dot\gamma_r| \,\d r\,\d\ppi(\gamma)\\
&\leq\sqrt{\iint_0^1\lipa^2f \,\d\mu_r\,\d r\int_0^1|\dot\mu_r|^2\,\d r}\leq e^{-Kt} \sfd(z,y) \sqrt{\int_0^1\h_t(\lip_a^2f)(\bar \gamma_r)\,\d r}.
\end{split}
\]
Now notice that the upper semicontinuity of $\lip_af$, the weak continuity of $x\mapsto \h_t\delta_x$ (that follows from  \eqref{eq:contrEVI}) and the representation formula \eqref{eq:reprform} imply that $\h_t(\lip_a^2f)$ is upper semicontinuous, thus letting $y,z\to x$ in the above we deduce 
\[
\lip_a(\h_tf)^2\leq e^{-2Kt}\h_t(\lip_a^2f).
\]
Then the Sobolev estimate follows by relaxation: choose $(f_n)\subset\Lip(\X)$ optimal for $\ch(f)$, notice that $\h_tf_n\to\h_tf$ in $L^2$ and  that the above argument shows that  $\h_tf_n\subset\Lip(\X)$. The conclusion follows by the $\mm$-a.e.\ minimality of $|\D\h_tf|$.
\end{idea}
We have already noticed that $\Delta$ is a linear operator on $\RCD$ spaces; from this it is easy to see that $\Delta f$ is, whenever it exists, the only element in $-\partial^-\ch(f)$ and that the metric slope $|\partial^-\ch|(f)$ as in \eqref{eq:defsl} equals to $\|\Delta f\|_{L^2}$ (being intended that it is $+\infty$ if $f\notin D(\Delta)$).  Thus the a priori estimates \eqref{eq:aprioriEVI} for the convex functional $\ch$ and $y=0$ read as
\begin{equation}
\label{eq:aprioriheat}
\tfrac12\|\h_tf\|_{L^2}^2+t\ch(\h_tf)+\tfrac{t^2}2\|\Delta \h_tf\|^2_{L^2}\leq \tfrac12\|f\|_{L^2}^2\qquad\forall f\in L^2,\ t\geq 0.
\end{equation}
The characterization  $\partial^-\ch(f)=\{-\Delta f\}$ also implies the closure property along varying spaces
\begin{equation}
\label{eq:laplcl}
\left.\begin{array}{rl}
f_n&\to\ f_\infty\\
\Delta f_n&\weakto\ h
\end{array}\right\}\qquad\Rightarrow\qquad f_\infty\in D(\Delta)\quad\text{ and }\quad\Delta f_\infty=h,
\end{equation}
whenever the weak/strong convergence is intended in the $L^2$ sense as in Definition \ref{def:convl2var} and the underlying spaces $\X_n$ are all $\RCD(K,\infty)$, normalized and mGH-converging to a limit $\X_\infty$. To see \eqref{eq:laplcl}, let $g_\infty\in L^2(\X_\infty)$, find $g_n\to g_\infty$ recovery sequence for the Mosco convergence of the Cheeger energies and pass to the limit in $\ch_n(f_n)-\int\Delta f_n(g_n-f_n)\,\d\mm_n \leq \ch_n(g_n)$ to conclude that $-h\in\partial^-\ch(f_\infty)$, as desired.

\begin{corollary}[Stability of the heat flow (again)]\label{cor:stabhagain} Let $n\mapsto (\X_n,\sfd_n,\mm_n)$ be a sequence of normalized $\RCD(K,\infty)$ spaces mGH-converging to $(\X_\infty,\sfd_\infty,\mm_\infty)$. Let $n\mapsto f_n\in L^2(\X_n)$ be strongly $L^2$-converging to $f_\infty\in L^2(\X_\infty)$.

Then for every $t>0$ the sequences  $n\mapsto \h_tf_n,\Delta\h_nf_t$ converge $L^2$-strongly to $\h_tf_\infty,\Delta\h_tf_\infty$ respectively and  $n\mapsto \ch_n(\h_tf_n),\ch(\Delta\h_tf_n)$ converge to $\ch_n(\h_tf_\infty),\ch(\Delta\h_tf_\infty)$ respectively.
\end{corollary}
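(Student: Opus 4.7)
The strategy combines the Mosco convergence of the Cheeger energies (Theorem \ref{thm:convslen}), the closure of the Laplacian along varying spaces \eqref{eq:laplcl}, the a priori bound \eqref{eq:aprioriheat}, and an energy dissipation argument. The underlying principle is the classical fact that Mosco convergence of quadratic forms forces strong convergence of the associated gradient flow semigroups; this principle has to be adapted to the setting of varying $L^2$-spaces, with `strong' and `weak' convergence as in Definition \ref{def:convl2var}.

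Fix $t>0$. From \eqref{eq:aprioriheat} applied on each $\X_n$ and $\|f_n\|_{L^2}\to\|f_\infty\|_{L^2}$ we get uniform bounds on $\|\h_tf_n\|_{L^2}$, $\ch_n(\h_tf_n)$, and $\|\Delta\h_tf_n\|_{L^2}$. By the compactness of weak $L^2$-convergence \eqref{eq:exprl2}, up to a subsequence and for every $t$ in a countable dense subset of $(0,\infty)$ we have $\h_tf_n\weakto g_t$ and $\Delta\h_tf_n\weakto h_t$ weakly; combined with uniform local Lipschitzianity of $t\mapsto\h_tf_n$ (whose derivative $\Delta\h_tf_n$ is $L^2$-bounded uniformly on compact subsets of $(0,\infty)$), this extends the weak convergence to all $t\geq 0$. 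Testing $\partial_t\h_tf_n=\Delta\h_tf_n$ against products $\varphi(t)\psi_n(x)$ with $\varphi\in C^\infty_c((0,\infty))$ and $\psi_n\to\psi_\infty$ strongly gives $\partial_t g_t=h_t$ in the distributional sense, while \eqref{eq:laplcl} forces $h_t=\Delta g_t$. Hence $t\mapsto g_t$ solves the heat equation on $\X_\infty$; the initial condition $g_0=f_\infty$ follows from \eqref{eq:aprioriheat}, and uniqueness (Theorems \ref{thm:uniqueheat} and \ref{thm:hfgfagain}) gives $g_t=\h_tf_\infty$.

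To upgrade weak to strong convergence of $\h_tf_n$ we use the energy dissipation identity
\[
\tfrac12\|\h_tf_n\|_{L^2}^2+2\int_0^t\ch_n(\h_sf_n)\,ds=\tfrac12\|f_n\|_{L^2}^2,
\]
which holds because $\h_tf_n$ is the $L^2$-gradient flow of the quadratic form $\ch_n$. The weak $\glimi$ part of Mosco convergence and Fatou's lemma give $\int_0^t\ch_\infty(\h_sf_\infty)\,ds\leq\liminf_n\int_0^t\ch_n(\h_sf_n)\,ds$, and comparison with the analogous identity on $\X_\infty$ yields $\limsup_n\|\h_tf_n\|_{L^2}\leq\|\h_tf_\infty\|_{L^2}$; combined with the weak convergence this is, by Definition \ref{def:convl2var}, the desired strong $L^2$-convergence. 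The same comparison forces $\int_0^t\ch_n(\h_sf_n)\,ds\to\int_0^t\ch_\infty(\h_sf_\infty)\,ds$, and the monotonicity of $s\mapsto\ch_n(\h_sf_n)$ (energy decay along a gradient flow of a convex functional) upgrades this integral convergence to pointwise convergence $\ch_n(\h_tf_n)\to\ch_\infty(\h_tf_\infty)$ for every $t>0$.

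Strong convergence of $\Delta\h_tf_n$ and convergence of the energies $\ch_n(\Delta\h_tf_n)$ are then obtained by iterating the three preceding steps with initial data $\h_{t/2}f_n$ (already shown to converge strongly to $\h_{t/2}f_\infty$) and time $t/2$, using the semigroup identity $\Delta\h_tf_n=\h_{t/2}(\Delta\h_{t/2}f_n)$ and self-adjointness of the heat semigroup. The principal obstacle throughout is the upgrade from weak to strong convergence: the Laplacian closure \eqref{eq:laplcl} delivers only weak convergence for free, and matching $\limsup$ and $\liminf$ of $L^2$-norms rests on the energy dissipation identity together with the Mosco part of Theorem \ref{thm:convslen} — it is precisely here that the $\CD(K,\infty)$ assumption enters, through the $\glimi$ inequality for the Cheeger energies, which fails in the absence of a lower Ricci bound.
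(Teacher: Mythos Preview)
Your overall architecture—a priori bounds, weak compactness, identification of the limit, then upgrade to strong convergence via the energy dissipation identity—is the right one, and your route to pointwise convergence of $\ch_n(\h_tf_n)$ via monotonicity plus integral convergence is a valid alternative to the paper's formula $\ch(\h_tf)=\inf_{s>0}\tfrac1{4s}(\|\h_tf\|^2_{L^2}-\|\h_{t+s}f\|^2_{L^2})$.

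There is, however, a genuine gap at the identification step. You invoke \eqref{eq:laplcl} to deduce $h_t=\Delta g_t$, but \eqref{eq:laplcl} as stated (and as its proof shows) requires \emph{strong} $L^2$-convergence $\h_tf_n\to g_t$; at that point you only have weak convergence, and your strong convergence is obtained only \emph{after} you have identified $g_t=\h_tf_\infty$, so the argument is circular. The paper sidesteps this: it gets weak $L^2$-convergence $\h_tf_n\weakto\h_tf_\infty$ directly from the optimal-transport stability of the heat flow (Theorems \ref{thm:stabheat} and \ref{thm:hfgfagain}) for probability densities, then extends by linearity and truncation—no Laplacian closure is needed to identify the limit. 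Only once the energy-dissipation argument has delivered strong convergence does the paper apply \eqref{eq:laplcl}, and then only to obtain $\Delta\h_tf_n\weakto\Delta\h_tf_\infty$. Your PDE-style identification could be repaired by invoking the strong $L^2$-compactness clause of Theorem \ref{thm:convslen} before using \eqref{eq:laplcl}, since the uniform $W^{1,2}$-bound is already available from \eqref{eq:aprioriheat}; you would then need to verify the tightness hypothesis \eqref{eq:perfortecomp} for $\h_tf_n$.

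A smaller remark: your ``iteration'' for the strong convergence of $\Delta\h_tf_n$ is not clearly stated—applying your three steps with initial data $\h_{t/2}f_n$ yields nothing new about $\Delta$. What actually works is a second energy-dissipation identity $\ch_n(\h_{t_0}f_n)-\ch_n(\h_tf_n)=\int_{t_0}^t\|\Delta\h_sf_n\|^2\,\d s$, combined with the already established convergence of $\ch_n(\h_sf_n)$, the weak lower semicontinuity of $\|\Delta\h_sf_n\|^2$, and monotonicity; this is equivalent to the paper's convexity formula $\|\Delta\h_tf\|^2=\inf_{s>0}\tfrac1s(\ch(\h_tf)-\ch(\h_{t+s}f))$.
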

\begin{idea} If the $f_n$'s are also probability densities, then Theorems \ref{thm:stabheat}, \ref{thm:hfgfagain} and the a priori estimates \eqref{eq:aprioriheat} give that  $(\h_tf_n)$ weakly $L^2$-converges to $\h_tf_\infty$ for any $t>0$. The same conclusion for the $f_n$'s as in the statement follows by linearity and an approximation argument, e.g.\ by truncation. Thus Theorem \ref{thm:convslen} gives  $\limi_n\ch_n(\h_tf_n)\geq\ch_\infty(\h_tf_\infty)$.

Differentiating in $t$ the function $\|\h_tf_n\|_{L^2}^2$ we get $\|\h_tf_n\|_{L^2}^2+4\int_0^t\ch(\h_sf_n)\,\d s=\|f_n\|_{L^2}^2$, thus passing to the limit, by Fatou's lemma we get
\begin{equation}
\label{eq:perconvh}
\|\h_tf_\infty\|_{L^2}^2+4\int_0^t\ch(\h_sf_\infty)\,\d s\leq\limi_n\Big(\|\h_tf_n\|_{L^2}^2+4\int_0^t\ch(\h_sf_n)\,\d s\Big)\leq \limi_n\|f_n\|_{L^2}^2=\|f_\infty\|_{L^2}^2.
\end{equation}
Since the leftmost and rightmost side agree, we must have $\limi_n\|\h_tf_n\|_{L^2}=\|\h_tf_\infty\|_{L^2}$ and then it is easy to conclude that $\lim_n\|\h_tf_n\|_{L^2}=\|\h_tf_\infty\|_{L^2}$ so that $\h_tf_n\to\h_tf_\infty$ strongly in $L^2$. To prove  $\lims_n\ch_n(\h_tf_n)\leq\ch_\infty(\h_tf_\infty)$.  we notice that the above and the simple identity $\partial_t\ch(\h_tf)=-\|\Delta\h_tf\|^2_{L^2}$ yield convexity of $t\mapsto\|\h_tf\|_{L^2}^2$ and thus that $\ch(\h_tf)=\inf_{s>0}\tfrac1{4s}({\|\h_tf\|_{L^2}^2-\|\h_{t+s}f\|_{L^2}^2})$; hence $\lims_n\tfrac1{4s}({\|\h_tf_n\|_{L^2}^2-\|\h_{t+s}f_n\|_{L^2}^2})\geq\lims_n\ch_n(\h_tf_n)$ for any $s>0$ and letting $s\downarrow0$ we conclude.

For the claims about the Laplacian, notice that  \eqref{eq:laplcl} and the above  give $\Delta\h_tf_n\weakto\Delta\h_tf_\infty$. For  the $\lims$ inequality for the norms we argue as before noticing the convexity of $t\mapsto\ch(\h_tf)$ and then the identity $\|\Delta\h_tf\|^2_{L^2}=\inf_{s>0}\tfrac1s(\ch(\h_t f)-\ch(\h_{t+s}f))$. For $\ch(\Delta\h_tf_n)\to \ch(\Delta\h_tf_\infty)$ we use the trivial identity $\Delta\h_tf_n=\h_{t/2}\Delta\h_{t/2}f_n$ and what already proved.
\end{idea}

\subsubsection{A reconstruction theorem}
Geometric structures are often studied via the properties of suitable algebraic/analytic objects defined on it.  A reconstruction (or representation) theorem is a result that, roughly speaking, invert this procedure, so that any suitable abstract algebro-analytic structure must come, in a unique way, from a  geometric space of the type considered. Typical examples are:
\begin{itemize}
\item[i)] A Stone space, or profinite set, is a compact totally disconnected Hausdorff space. The collection of homeomorphism from such a space to the discrete space $\{0,1\}$ form in a natural way a Boolean algebra. The \emph{Stone representation theorem} asserts that any Boolean algebra is isomorphic to one built this way (the Stone space being that of ultrafilters on the algebra).
\item[ii)] Given a compact and Hausdorff topological space, the space of ${\mathbb C}$-valued continuous maps for a commutative $C^*$-algebra with unit. The  \emph{Gelfand-Naimark reconstruction theorem} asserts that any such  $C^*$-algebra is isomorphic to the one induced by a suitable compact and Hausdorff space (its spectrum).
\item[iii)] To a compact Riemannian manifold $M$ we can associate the so-called spectral triple $(C^\infty(M),L^2(M,\Lambda TM),\d+\d^*)$ made of the  Hilbert space $L^2(M,\Lambda TM)$, the commutative algebra $C^\infty(M)$ acting on it (by multiplication),  and the unbounded operator  $\d+\d^*$ on  $L^2(M,\Lambda TM)$ (whose square is the Hodge Laplacian).  The \emph{Connes reconstruction theorem} asserts that any spectral triple $(A,H,D)$ satisfying some natural, but non-trivial, properties arises this way. 
 \end{itemize}

In the setting of $\RCD$ spaces, a natural candidate for a theorem of this sort, very much in the spirit of Connes' result, is given by the couple $(L^2(\X),\Delta)$, possibly enriched with some additional data.

While no clean statement as the ones above is currently available, still a genuinely `functional' version of the $\RCD$ condition is available. To clarify the structure of both the statement and the proof, it is better to start with the following result, which has anyhow an intrinsic interest: notice indeed that it provides a characterization of $\RCD$ spaces independent on optimal transport and study of geodesics and for this reason is relevant in applications, as it makes it easier in some cases to detect $\RCD$ spaces. For instance, the fact that the product of two $\RCD$ spaces is still $\RCD$ is based on the result below.
\begin{theorem}\label{thm:reprinterm}
Let $K\in\R$ and $(\X,\sfd,\mm)$ be a metric measure space.  Assume that
\begin{itemize}
\item[a)] The space is infinitesimally Hilbertian.
\item[b)] For some $\bar x\in\X$ and $C>0$ we have $\mm(B_r(\bar x))\leq Ce^{Cr^2}$ for every $r>0$.
\item[c)] Every function $f\in W^{1,2}(\X)$ with $|\D f|\leq 1$ $\mm$-a.e.\ admits a 1-Lipschitz representative.
\item[d)] The Bochner inequality \eqref{eq:bochner} holds for any $f,g$ as in the statement of Theorem \ref{thm:boch}.
\end{itemize}
Then $(\X,\sfd,\mm)$ is an  $\RCD(K,\infty)$ metric measure space. 
\end{theorem}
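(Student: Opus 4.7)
The plan is to leverage the infinitesimal Hilbertianity (a) to treat the heat flow as the linear $L^2$-gradient flow of $\ch$, and then to promote the pointwise Bochner inequality (d) into an $\EVI_K$-property of the heat semigroup---viewed as acting on $\prd(\X)$---for $\ent$. Once this is accomplished, Proposition \ref{prop:EVIconv} applied in $(\prd(\X),W_2)$ yields $K$-convexity of $\ent$ along every $W_2$-geodesic, i.e.\ $\CD(K,\infty)$; combined with (a), this gives $\RCD(K,\infty)$. The scheme of implications is the one displayed in the proof of Theorem \ref{thm:boch}, run in reverse:
\begin{equation*}
\text{Bochner}\ \Rightarrow\ \text{Bakry--\'Emery estimate}\ \Rightarrow\ W_2\text{-contraction of }\h_t\ \Rightarrow\ \EVI_K\text{ for }\ent.
\end{equation*}

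For the first implication, given sufficiently regular $f$ and non-negative test $g$, one differentiates $s\mapsto\int g\,\h_{t-s}(|\d \h_s f|^2)\,\d\mm$: linearity of $\Delta$, self-adjointness of $\h_\cdot$, integration by parts \eqref{eq:intlaph} and the chain rule \eqref{eq:chainh} reduce matters to the pointwise Bochner inequality (d), yielding $|\d \h_t f|^2\leq e^{-2Kt}\h_t(|\d f|^2)$ $\mm$-a.e.; the time regularization \eqref{eq:regheat} extends this to general $f\in W^{1,2}(\X)$. For the second, if $|\d f|\leq 1$ $\mm$-a.e.\ then Bakry--\'Emery gives $|\d \h_t f|\leq e^{-Kt}$ $\mm$-a.e., and the Sobolev-to-Lipschitz hypothesis (c) upgrades this to an $e^{-Kt}$-Lipschitz representative of $\h_t f$. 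Running Kuwada's duality argument (cf.\ Lemma \ref{le:basegfch}(iv)) with this bound on $\h_t$ applied to bounded Kantorovich potentials then produces the contraction $W_2(\h_t\mu,\h_t\nu)\leq e^{-Kt}W_2(\mu,\nu)$; the Gaussian growth bound (b) ensures that $\h_t$ maps $\prd(\X)$ into itself and that all relevant entropies are well defined globally.

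The third implication is the main obstacle: contrary to the ``only if'' direction of Theorem \ref{thm:rcdevi}, the computation \eqref{eq:bello} cannot be closed by invoking $K$-convexity of $\ent$ along $W_2$-geodesics, since this is precisely what we want to prove. The resolution, due to Ambrosio--Gigli--Savar\'e, is to derive $\EVI_K$ directly from the Bakry--\'Emery contraction: for $\mu_t=f_t\mm=\h_t\mu$ with $f$ a strictly positive, bounded probability density, and a target $\nu\in D(\ent)$ of bounded support, Lemma \ref{le:derw2} together with \eqref{eq:intlaph} and \eqref{eq:chainh} gives
\begin{equation*}
\partial_t\tfrac12 W_2^2(\mu_t,\nu)=-\int\la\d\varphi_t,\d\log f_t\ra\,\d\mu_t
\end{equation*}
for a Kantorovich potential $\varphi_t$ from $\mu_t$ to $\nu$, and one bounds the right-hand side by $\ent(\nu)-\ent(\mu_t)-\tfrac K2 W_2^2(\mu_t,\nu)$ via the horizontal--vertical identity of Lemma \ref{le:horver}, applied to a test plan representing $\nabla(-\varphi_t)$ and furnished by the metric Brenier Theorem \ref{thm:metbr}; the necessary regularity (bounded compression) of the underlying $W_2$-geodesic from $\mu_t$ to $\nu$ is transferred from the smoothing effect of the heat flow through the $W_2$-contraction, replacing the use of Lemma \ref{le:tapio} which is unavailable at this stage. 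The $\EVI_K$-trajectories produced this way on a $W_2$-dense family of initial data are then extended to every $\mu\in\overline{D(\ent)}$ via \eqref{eq:exevi}, and Proposition \ref{prop:EVIconv} concludes the proof.
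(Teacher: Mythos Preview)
Your first two implications (Bochner $\Rightarrow$ Bakry--\'Emery $\Rightarrow$ $W_2$-contraction, using (c) for the Sobolev-to-Lipschitz upgrade) are correct and match the paper. The gap is in the third.

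You try to close the $\EVI_K$ inequality by reproducing the computation \eqref{eq:bello}: differentiate $\tfrac12 W_2^2(\mu_t,\nu)$ via Lemma \ref{le:derw2}, integrate by parts, and control the resulting term using Lemma \ref{le:horver} along a test plan furnished by the metric Brenier Theorem \ref{thm:metbr}. But Theorem \ref{thm:metbr} (and hence Lemma \ref{le:derent}) requires a $W_2$-geodesic from $\mu_t$ to $\nu$ of \emph{bounded compression}. Your assertion that this ``is transferred from the smoothing effect of the heat flow through the $W_2$-contraction'' is the unproved step: $W_2$-contraction controls the distance between evolved measures, it gives no information whatsoever on the densities of intermediate points along a geodesic joining $\mu_t$ to an arbitrary $\nu\in D(\ent)$. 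The only device in the paper producing such geodesics is Lemma \ref{le:tapio}, which presupposes $\CD(K,\infty)$ and is, as you correctly note, unavailable. Without it the argument stalls exactly at the starred inequality in \eqref{eq:bello}. This is not what Ambrosio--Gigli--Savar\'e do.

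The paper bypasses the need for good geodesics altogether. Its core step (iii) is an \emph{action estimate} valid along \emph{any} sufficiently regular curve $s\mapsto\mu_s=\rho_s\mm$:
\[
W_2^2(\mu_0,\h_t\mu_1)+2t\,\ent(\h_t\mu_1)\ \leq\ \int_0^1|\dot\mu_s|^2\,\d s+2t\,\ent(\mu_0),
\]
obtained by differentiating $s\mapsto\int Q_s\varphi\,\eta_s\,\d\mm + t\int\eta_s\log\eta_s\,\d\mm$ with $\eta_s:=\h_{ts}\rho_s$, using only the Hopf--Lax sub-solution property, the Bakry--\'Emery estimate \eqref{eq:BElav}, and the pointwise bound \eqref{eq:derpuntvarphi}. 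No test plan representing a gradient, no metric Brenier, no compression of geodesics enter. One then approximates a $W_2$-geodesic by curves regularized via the heat flow (step (iv)); justifying this requires the uniform a priori bound \eqref{eq:apriorical} on $\ent(\h_t\mu)$ and the Fisher information, derived via a log-Harnack inequality --- a further non-trivial ingredient absent from your proposal. Specializing the action estimate to the approximated geodesic yields the midpoint inequality $\tfrac{W_2^2(\mu_0,\h_t\mu_1)-W_2^2(\mu_0,\mu_1)}{2t}\leq\ent(\mu_0)-\ent(\h_t\mu_1)$, and one concludes via the argument of Proposition \ref{prop:EVIconv}.
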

\begin{idea} Say $\X$ is compact, $\mm\in\pr(\X)$ and $K=0$. Let $(\h_t)$ be the $L^2$-gradient flow of $\ch$.  Roughly said, one proves that:
\begin{subequations}
\begin{align}
i)&\text{ $(\supp(\mm),\sfd)$ is a geodesic space},\\
ii)&\text{ the adjoint semigroup $\h_t(\rho\mm):=(\h_t\rho)\mm$ $W_2$-continuously extends to $\pr(\X)$},\\
iii)&
\text{ for any $s\mapsto\mu_s:=\rho_s\mm$ sufficiently regular and $t>0$ we have }\\
\label{eq:acest}&\text{$W_2^2(\mu_0,\h_t\mu_1)+2t\ent(\h_t\mu_1)\leq\int_0^1|\dot\mu_s|^2\,\d s+2t\ent(\mu_0)$,}\\
iv)&\text{ any $W_2$-abs.cont.\ curve can be approximated by curves to which $(iii)$ applies.}
\end{align}
\end{subequations}
Once these are proved, we can apply a suitable approximation argument based on $(iv)$  to deduce that the estimate in $(iii)$  holds for a $W_2$-geodesic $(\mu_t)$, in which case it reads as   $\frac{W_2^2(\mu_0,\h_t\mu_1)-W_2^2(\mu_0,\mu_1)}{2t}\leq \ent(\mu_0)-\ent(\h_t\mu_1)$. Then arguing as for the proof of Proposition \ref{prop:EVIconv} we deduce that the entropy is convex along $W_2$-geodesics, as desired.

To prove $(i)$  it is enough to show that for any $x,y\in\supp(\mm)$ and $\eps>0$ there is $m\in\supp(\mm)$ with $\sfd(x,m),\sfd(y,m)\leq\frac{r}2+\eps$ for $r:=\sfd(x,y)$. Say not, thus there are $x,y,\eps$ so that $\mm(B_{\frac{r}2+\eps}(x)\cap B_{\frac{r}2+\eps}(y))=0$. Then the Lipschitz function 
\[
f(z):=\big(\tfrac{r+\eps}2-\sfd(z,x)\big)^+-\big(\tfrac{r+\eps}2-\sfd(z,y)\big)^+
\]
is so that $\lipa f\leq 1$ on $\supp(\mm)$, thus $|\d f|\leq 1$ $\mm$-a.e.\ and by $(c)$ and the continuity of $f$ we get that $f$ is 1-Lipschitz on $\supp(\mm)$. However $|f(x)-f(y)|=r+\eps>r=\sfd(x,y)$.

For $(ii)$  recall that the heat flow preserves the mass and the order (by linearity and weak maximum principle), thus it can be extended by continuity to $L^1$. Then let $f, g$ be as in \eqref{eq:bochner}, $t>0$ and put $F(s):=\int\h_s(|\d\h_{t-s}f|^2)g\,\d\mm$. Using \eqref{eq:bochner} we see that $F'(s)=\int |\d\h_{t-s}f|^2\Delta\h_sg-2\la\d\h_{t-s}f,\d\Delta\h_{t-s}f \ra g\,\d\mm\geq0$, so that $\int(\h_t(|\d f|^2)-|\d \h_tf|^2)g\,\d\mm=F(t)-F(0)\geq 0 $ and  the arbitrariness of $g\geq 0$ implies  the Bakry-\'Emery estimate
\begin{equation}
\label{eq:BElav}
|\d \h_tf|^2\leq\h_t(|\d f|^2)\qquad\mm-a.e.\ \forall t>0\qquad\forall f\in W^{1,2}(\X).
\end{equation}
In particular, if $f$ is 1-Lipschitz, this and  assumption $(c)$ ensure that $\h_tf$ has a 1-Lipschitz representative.  Thus for $\mu=\rho\mm$, $\nu=\eta\mm$ by duality we have
\[
W_1(\h_t\mu,\h_t\nu)=\sup_{\Lip(f)\leq 1}\int f(\h_t\rho-\h_t\eta)\,\d\mm\leq \sup_{\Lip(g)\leq 1}\int g(\rho-\eta)\,\d\mm=W_1(\mu,\nu)
\]
proving the claim (in compact spaces $W_1$ and $W_2$ both induce the weak topology). It is then easy, using that $\h_t$ is self adjoint in $L^2$, to prove that $\h_tf(x)=\int f\,\d\h_t\delta_x$ holds for a.e.\ $x$ and that if $f$ is continuous, so is $x\mapsto \int f\,\d\h_t\delta_x$, thus providing a continuous representative for $\h_tf$ that will be considered below. Even more, the estimate \eqref{eq:apriorical} that we shall prove later on shows that the functions $\frac{\d\h_t\delta_x}{\d\mm}$ are uniformly integrable, thus $x\mapsto \int f\,\d\h_t\delta_x $ is continuous for any $f\in L^\infty$.
 
We pass to $(iii)$ and  notice that   for $|\d f|\in L^\infty$ what just said, the estimate \eqref{eq:BElav}, assumption $(c)$ and a localization argument grant that  $\h_tf$ is Lipschitz with $\lipa(\h_tf)^2\leq\h_t(|\d f|^2)$ a.e.\ on $\X$. It follows that
\begin{equation}
\label{eq:uppreg}
|\d f|\in L^\infty\qquad\Rightarrow\qquad\text{$\lipa(\h_tf)\to|\d f|$ in $L^2\quad$ and $\quad\sup_{t<1}\|\lipa \h_tf\|_\infty<\infty$.}
\end{equation}
From this we get that for $\varphi$ Lipschitz  and $t\mapsto \mu_t=\rho_t\mm$ $W_2$-absolutely continuous we have
\begin{equation}
\label{eq:derpuntvarphi}
\partial_t\int\varphi\,\d\mu_t\leq\tfrac12\int|\d\varphi|^2\,\d\mu_t+\tfrac12|\dot\mu_t|^2
\end{equation}
 at every $t$ for which the metric speed $|\dot\mu_t|$ exists (integrating the trivial bound $\partial_t\varphi(\gamma_t)\leq\tfrac{\lipa^2\varphi}2(\gamma_t)+\tfrac12|\dot\gamma_t|^2$ over a lifting  of $(\mu_t)$ we get the estimate with $\lipa\varphi$ in place of $|\d\varphi|$, then we argue by approximation using \eqref{eq:uppreg}). Then for the estimate in $(iii)$  let $\eta_s:=\h_{ts}\rho_s$, for $\varphi$ Lipschitz set $\varphi_s:=Q_s\varphi$ as in formula \eqref{eq:defHL} and notice that at least formally we have
\[
\begin{split}
\partial_s\int\varphi_s\eta_s\,\d\mm&\leq\int-\tfrac12\lipa^2\varphi_s\eta_s+t\varphi_s\Delta\eta_s+\varphi_s\h_{ts}(\partial_s\rho_s)\,\d\mm\\
&\leq\int-\tfrac12|\d\varphi_s|^2\eta_s-t\la \d\varphi_s,\d \log\eta_s\ra \eta_s+\h_{ts}\varphi_s\partial_s\rho_s\,\d\mm,\\
\partial_s\int\eta_s\log\eta_s\,\d\mm&=\int\log\eta_s(t\Delta\eta_s+\h_{ts}(\partial_s\rho_s))\,\d\mm\\
&\leq\int-\tfrac t 2|\d\log\eta_s|^2\eta_s+\h_{ts}(\log\eta_s)\partial_s\rho_s\,\d\mm
\end{split}
\]
thus for $\psi_s:=\h_{ts}(\varphi_s+t\log\eta_s)$, keeping in mind also \eqref{eq:BElav} we have
\[
\begin{split}
\int\varphi_1\eta_1-\varphi_0\eta_0+t(\eta_1\log\eta_1-\eta_0\log\eta_0)\,\d\mm&\leq \iint_0^1 \psi_s\partial_s\rho_s-\tfrac12|\d\psi_s|^2\rho_s\,\d s\,\d\mm\!\!\!\stackrel{\eqref{eq:derpuntvarphi}}\leq\!\!\!\int_0^1\tfrac12|\dot\mu_t|^2\,\d t
\end{split}
\]
which by the arbitrariness of $\varphi$ and the duality formula \eqref{eq:dualw2} is the claim (here the use of \eqref{eq:derpuntvarphi} can be justified via a regularization procedure of the densities $\eta_s$ that in turn grants Lipschitz regularity for $\psi_s$).

For $(iv)$  we regularize via heat flow (that by arguments akin to those above does not increase the metric speed). With a bit of work one sees that the computations for the estimate in $(iii)$ are justifiable if we show that for $\h_t\mu=\rho_t\mm$ we have
\begin{equation}
\label{eq:apriorical}
\ent(\h_t\mu)+\int\tfrac{|\d\rho_t |^2}{\rho_t}\,\d\mm \leq C(t)\qquad\forall t>0
\end{equation}
for some $C(t)$ independent on $\mu$. Then  since  $\partial_t\ent(\h_t\mu)=-\int\tfrac{|\d\rho_t|^2}{\rho_t}\,\d\mm$ and 
\[
\frac\d{\d t} \int\frac{|\d\rho_t|^2}{\rho_t}\,\d\mm=-\int\rho_t(\Delta|\d\log\rho_t|^2-\la\d\log\rho_t,\d\Delta\log\rho_t\ra)\,\d\mm\stackrel{\eqref{eq:bochner}}\leq 0,
\]
we deduce $\int\tfrac{|\d\rho_t|^2}{\rho_t}\,\d\mm\leq {\ent(\h_t\mu)-\ent(\h_{t+1}\mu)}\leq \ent(\h_t\mu)$ and thus it is enough to prove  $\ent(\h_t\mu)\leq C(t)$. Also, by the convexity and lower semicontinuity of the entropy, it is enough to prove such uniform bound for $\mu:=\delta_x$. To this aim let $f:\X\to\R$ be regular enough and $\gamma$ a geodesic. Then for $t>0$ formally (but it is easy to justify the computations) we have
\[
\begin{split}
\partial_s\h_s(\log(\h_{t-s}f))(\gamma_s)&=\h_s(\Delta \log\h_{t-s}f-\tfrac{\Delta \h_{t-f}f}{\h_{t-s}f})(\gamma_s)+\d\h_s(\log(\h_{t-s}f)(\gamma'_s)\\
&\leq \h_s(-|\d\log\h_{t-s}f|^2)(\gamma_s)+|\d\h_s(\log(\h_{t-s}f)|^2(\gamma_s)+\tfrac14|\gamma_s'|^2.
\end{split}
\]
Using \eqref{eq:BElav} and integrating in $[0,t]$ we deduce the log-Harnack inequality
\[
\h_t(\log f)(x)\leq\log(\h_tf)(y)+\tfrac{\sfd^2(x,y)}{4t}\qquad\forall x,y\in\X,\ t>0.
\]
In particular, by approximation the above holds for the heat kernel $f:=\rho_t[x]$ in which case it gives  $\int\rho_t[x]\log\rho_t[x]\,\d\mm\leq \log(\rho_{2t}[x])(y)+\frac{{\sf D}^2}{4t}$ for ${\sf D}:=\diam(\X)$. Integrating w.r.t.\ $y$ we get
\[
\ent(\h_t\delta_x)\leq \int \log(\rho_{2t}[x])(y)\,\d\mm(y)+\frac{{\sf D}^2}{4t}\leq \log\Big(\int \rho_{2t}[x](y)\,\d\mm(y)\Big)+\frac{{\sf D}^2}{4t}=\frac{{\sf  D}^2}{4t} ,
\]
as desired.
\end{idea}
I now want to make a further abstraction step and formulate the notion of $\RCD$ space purely in terms of Dirichlet forms.  For this, some vocabulary has to be introduced. Recall that given a Polish space $(\X,\tau)$ equipped with a Radon measure $\mm$, a  Dirichet form on $\X$ is a lower semicontinuous quadratic form $\mathcal E:L^2(\X,\mm)\to[0,\infty]$ satisfying the Markov property
\begin{equation}
\label{eq:markov}
\mathcal E(\varphi\circ f)\leq \Lip(\varphi)^2\mathcal E(f)\qquad\forall f\in L^2(\X,\mm),\ \varphi:\R\to\R\ \text{Lipschitz with }\varphi(0)=0.
\end{equation}
By polarization we have a bilinear map, still denoted  $\mathcal E$, on $D(\mathcal E):=\{\mathcal E<\infty\}$, i.e.\ we put
\[
\mathcal E(f,g):=\tfrac12\big(\mathcal E(f+g)-\mathcal E(f)-\mathcal E(g)\big)\qquad\forall f,g\in D(\mathcal E).
\]
The form $\mathcal E$ is called strongly local if
\[
\mathcal E(f,g)=0\qquad\text{whenever $f=g+c$ for some $c\in\R$}.
\]
Picking $\varphi(z):=z^2\wedge C$ for $C\gg1$ in  \eqref{eq:markov} we see that $f^2\in D(\mathcal E)$ whenever $f\in \mathcal A:=D(\mathcal E)\cap L^\infty(\X)$, so that $\mathcal A$ is an algebra. Any $f\in \mathcal A$ induces a linear operator  $\Ggamma(f;\cdot): \mathcal A\to\R$, called Carr\'e du champ, by the formula
\[
\Ggamma(f;g):=\mathcal E(f,fg)-\tfrac12\mathcal E(f^2,g)\qquad\forall g\in \mathcal A.
\]
Building on top of the Markov property \eqref{eq:markov} one can prove that 
\begin{equation}
\label{eq:ineqcarre}
0\leq \Ggamma(\varphi\circ f;g)\leq \Ggamma(f;g)\leq \|g\|_{L^\infty}\mathcal E(f)\qquad\forall f,g\in\mathcal A,\ g\geq0,\ \Lip(\varphi)\leq 1,\ \varphi(0)=0
\end{equation}
and thus that $\Ggamma$ can be extended by continuity to a linear operator $\Ggamma(f):\mathcal A\to\R$ for any $f\in D(\mathcal E)$ that also satisfies  the inequalities in \eqref{eq:ineqcarre}. We denote by $D(\Gamma)\subset D(\mathcal E)$ the collection of those $f$'s for which there is $\Gamma(f)\in L^1(\X)$ such that $\Ggamma(f,g)=\int g\Gamma(f)\,\d\mm$ for every $g\in\mathcal A$ and then put   $\mathcal L:=\{f\in D(\Gamma) \ :\ \Gamma(f)\leq 1\ \mm-a.e.\}$. A non-trivial property one can prove starting from \eqref{eq:ineqcarre} is 
\begin{equation}
\label{eq:Lclosed}
\mathcal L\text{ is a closed subset of $L^2$,}
\end{equation}
see \cite[Chapter 1]{BH91}  for the proof.  Notice that in the smooth setting, the typical example of Dirichlet form is the Dirichlet energy $\mathcal E(f):=\frac12\int|\d f|^2\,\d\mm$ for which we have $\Ggamma(f)=|\d f|^2\mm$, hence functions in $\mathcal L$ are, in a sense, those with `differential $\leq 1$ from the perspective of $\mathcal E$'. Therefore, if we further impose a continuity condition on $f\in\mathcal L$, we can use these functions to induce, by duality, a distance on $\X$:
\begin{equation}
\label{eq:defde}
\sfd_\mathcal E(x,y):=\sup\{|f(y)-f(x)|\ :\ f\in \mathcal L_C\}\qquad\text{where}\qquad \mathcal L_C:=\mathcal L\cap C(\X).
\end{equation}
A Dirichlet form induces via integration by parts an operator $\Delta_{\mathcal E}:D(\Delta_{\mathcal E})\subset L^2(\X,\mm)\to L^2(\X,\mm)$, called \emph{infinitesimal generator}, defined as: 
\[
\text{$f\in D(\Delta_{\mathcal E})$ with $\Delta_{\mathcal E}f=h\qquad $ iff $\qquad \mathcal E(f,g)=-\int gh\,\d\mm\quad$ for every $g\in D(\mathcal E)$}.
\] 
To $\Delta_{\mathcal E}$ is associated  a diffusion process. Namely, and in analogy with \eqref{eq:gfch}, for every $f\in L^2(\X)$ there is  a unique curve $t\mapsto f_t\in L^2(\X) $ in $C([0,\infty))\cap C^1((0,\infty))$ with 
\[
\partial_tf_t=\Delta_{\mathcal E}f_t\quad\forall t>0\qquad \text{and}\qquad f_0=f.
\]
We put $P_tf:=f_t$. A way to prove such existence and uniqueness, very much in line with the spirit of this note, is to notice that $(f_t)$ is the gradient flow trajectory of the convex and lower semicontinuous functional $\mathcal E$ on $L^2(\X,\mm)$\footnote{this variational viewpoint is by far not the only one possible. An alternative approach is via the theory of linear semigroups, see e.g.\ \cite{Paz}.}.

We can now define the \emph{iterated Carre du champ operator} $\Ggamma_2$ as follows. We put 
\[
D(\Ggamma_2):=\{(f,\varphi)\in D(\Delta_{\mathcal E})\ :\ \Delta_{\mathcal E}f\in D(\mathcal E),\ \varphi,\Delta_{\mathcal E}\varphi\in L^\infty(\X)\},
\] 
notice that arguing as in \eqref{eq:regheat}  we see that $D(\Ggamma_2)$ is dense in $[D(\mathcal E)]^2$, and then define
\[
\Ggamma_2[f;\varphi]:=\tfrac12\Ggamma[f,\Delta_{\mathcal E}\varphi]-\tfrac12\big(\mathcal E(f,\varphi\Delta_{\mathcal E}f)+\mathcal E(\Delta_{\mathcal E}f,f\varphi)-\mathcal E(f\Delta_{\mathcal E}f,\varphi)\big)
\]
(in the smooth setting this reads as $\int\varphi(\Delta \tfrac12|\d f|^2-\la\d f,\d\Delta f\ra)\,\d\mm$ - the writing as above helps giving a meaning to such expression under minimal regularity assumptions on $f,\varphi$). Then:
\begin{theorem}\label{thm:reprfin}
Let $K\in\R$,  $(\X,\tau)$ be a Polish space, $\mm$ a Radon measure on it and $\mathcal E:L^2(\X,\mm)\to[0,\infty]$ a strongly local Dirichlet form. Assume that
\begin{itemize}
\item[a')] $\sfd_\mathcal E$ is a finite distance on $\X$ generating the topology $\tau$.
\item[b')] For some $\bar x\in\X$ and $C>0$ we have $\mm(B_r^{\sfd_\mathcal E}(\bar x))\leq Ce^{Cr^2}$ for every $r>0$.
\item[c')] every function $f\in D(\Gamma) $ with $\Gamma(f)\in L^\infty(\X,\mm)$ admits a continuous representative.
\item[d')] For every $(f,\varphi)\in D(\Ggamma_2)$ with $\varphi\geq 0$ we have
\begin{equation}
\label{eq:Gamma2cond}
\Ggamma_2[f;\varphi]\geq K\Ggamma[f;\varphi].
\end{equation}
\end{itemize}
Then the metric completion $\bar\X$ of $(\X,\sfd_{\mathcal E})$ equipped with the (naturally induced) distance $\sfd_{\mathcal E}$ and measure\footnote{in extending $\mm$ from $\X$ to its completion $\bar\X$ one uses the fact that $\X$ is a Borel subset of $\bar \X$. This in turn follows from  $(\X,\tau)$ being Polish and \cite[Theorem 6.8.6]{Bogachev07}. Alternatively, one can directly assume in the above that $\sfd_{\mathcal E}$ is complete.} $\mm$ is an $\RCD(K,\infty)$ metric measure space. 
\end{theorem}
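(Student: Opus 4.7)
The plan is to reduce to Theorem \ref{thm:reprinterm} by identifying the Dirichlet structure $(\mathcal E,\Gamma)$ on $\X$ with the intrinsic Cheeger calculus on the metric measure space $(\bar\X,\sfd_{\mathcal E},\mm)$, thereby translating (a')--(d') into (a)--(d). First I would set up the underlying space: by (a'), $\sfd_{\mathcal E}$ is a distance generating the Polish topology $\tau$, so $(\X,\sfd_{\mathcal E})$ is separable and its completion $(\bar\X,\sfd_{\mathcal E})$ is complete and separable; since $\X$ is Borel in $\bar\X$, the measure $\mm$ extends to a Radon measure on $\bar\X$ with exponential volume growth by (b'). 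The identification to prove is
\[
2\ch(f)=\mathcal E(f)\quad\forall f\in L^2(\bar\X,\mm),\qquad |\D f|^2=\Gamma(f)\ \mm\text{-a.e.\ for }f\in D(\mathcal E).
\]

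The upper bound $\Gamma(f)\leq|\D f|^2$ starts from the observation that, by (c') and the very definition of $\sfd_{\mathcal E}$, any $f\in\Lip_\bs(\bar\X)$ with $\lipa f\leq L$ $\mm$-a.e.\ lies (up to a scalar) in $\mathcal L_C$, so $\Gamma(f)\leq L^2$; a localization/chain-rule argument using \eqref{eq:ineqcarre} and the strong locality of $\mathcal E$ refines this to the pointwise bound $\Gamma(f)\leq\lipa^2 f$, which passes to the limit in the definition of $\ch$ thanks to the closure property \eqref{eq:Lclosed}. The reverse inequality requires producing Lipschitz approximations of $f\in D(\mathcal E)$ with optimal energy: working with the semigroup $P_t$ generated by $\mathcal E$, one establishes -- exactly as in the proof of Theorem \ref{thm:reprinterm} -- the Bakry--\'Emery estimate $\Gamma(P_tf)\leq e^{-2Kt}P_t(\Gamma(f))$ as a direct consequence of (d'); combined with (c'), this shows that $P_tf$ admits a Lipschitz representative with $\lipa^2(P_tf)\leq e^{-2Kt}P_t(\Gamma(f))$ $\mm$-a.e., and these form a recovery sequence for $\ch(f)$ as $t\downarrow 0$.

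With the identification established, the verification of the hypotheses of Theorem \ref{thm:reprinterm} is direct: (a) follows from $\ch=\tfrac12\mathcal E$ being a quadratic form; (b) is (b'); (c) holds because $|\D f|\leq 1$ $\mm$-a.e.\ forces $f\in\mathcal L$, whence by (c') and \eqref{eq:defde} it admits a $1$-Lipschitz representative; and (d) translates into (d') once one observes that the two notions of Laplacian coincide, both being characterized by integration by parts against $\mathcal E=2\ch$. The main obstacle is the identification $2\ch=\mathcal E$: the upper bound requires turning the $L^1$-density $\Gamma$ into pointwise asymptotic Lipschitz bounds via the strong locality of $\mathcal E$, while the lower bound is the genuinely hard step, as it converts the `Eulerian' spectral data of $\mathcal E$ into the `Lagrangian' Cheeger energy and hinges crucially on the $\Gamma_2$-condition (d') to produce, via $P_t$, the needed Lipschitz regularization.
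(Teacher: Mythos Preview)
Your overall strategy---reduce to Theorem \ref{thm:reprinterm} by proving $2\ch=\mathcal E$ with matching carr\'e du champ---is exactly the paper's. However, your argument for the inequality $\Gamma(f)\leq|\D f|^2$ has a genuine gap. You claim that ``by (c') and the very definition of $\sfd_{\mathcal E}$, any $f\in\Lip_\bs(\bar\X)$ with $\lipa f\leq L$ lies (up to a scalar) in $\mathcal L_C$'', but the definition \eqref{eq:defde} only yields the \emph{trivial} direction $\mathcal L_C\Rightarrow$ 1-Lipschitz, and (c') goes from $\Gamma$-bounds to continuity, not the other way. The implication you need is precisely the nontrivial one: that $\sfd_{\mathcal E}$-Lipschitz functions have $\Gamma\leq\Lip^2$. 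The paper obtains this by proving $\sfd_{\mathcal E}(\cdot,y)\in\mathcal L$ for each $y$, approximating from below by finite maxima of functions in $\mathcal L_C$ (here one uses that $\mathcal L$ is stable under $\vee$, a consequence of strong locality and the Markov property) and then invoking the $L^2$-closedness \eqref{eq:Lclosed}. Only once this is in hand does a McShane-type localization yield $\Gamma(f)\leq\lipa^2 f$ for Lipschitz $f$.

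Two smaller points on the reverse inequality. First, writing $\Gamma(P_tf)\leq e^{-2Kt}P_t(\Gamma(f))$ presupposes $f\in D(\Gamma)$; the identity $D(\Gamma)=D(\mathcal E)$ is itself a nontrivial consequence of (d'), established in the paper via the bound $t\Gamma(P_tf)\leq\tfrac12 P_t(f^2)-\tfrac12(P_tf)^2$. Second, to pass from $\Gamma(P_tf)\leq g_t:=e^{-2Kt}P_t(\Gamma(f))$ to $\lipa^2(P_tf)\leq g_t$ you need $g_t$ to admit an upper semicontinuous representative (cf.\ \eqref{eq:perlilip}): this is where (c') is actually used---applied not to $P_tf$ but to $g_t$, after noting that $\Gamma(g_t)\in L^\infty$ by the same semigroup estimate.
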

\begin{idea}
By Theorem \ref{thm:reprinterm} it is sufficient to prove that the Cheeger energy built on $(\bar \X,\sfd_{\mathcal E},\mm)$ coincides, together with the notion of carr\'e du champ/minimal weak upper gradient, with $\mathcal E$. Say $K=0$ and assume that $\bar \X$ is compact, so that in particular $\mm$ is finite and $\sfd_{\mathcal E}$ is bounded. 
%
%
%
%
%
The proof will follow if we show that:
\begin{subequations}
\begin{align}
\label{eq:ineq1e}
D(\ch)&\subset D(\Gamma)&&\text{and }\qquad \Gamma(f)\leq |\d f|^2\quad\mm-a.e.\quad \forall f\in D(\ch), \\
\label{eq:dgamma}
D(\Gamma)&=D(\mathcal E),&&\\
\label{eq:ineq2}
D(\Gamma)&\subset D(\ch)&&\text{and }\qquad \ch(f)\leq\mathcal E(f)\quad  \forall f\in D(\Gamma).
\end{align}
\end{subequations}
To prove \eqref{eq:ineq1e} it suffices to prove, by relaxation, that if $f$ is $\sfd_{\mathcal E}$-Lipschitz, then it is in $D(\Gamma)$ with $\Gamma(f)\leq\lipa^2(f)$. Also, since for $L:=\Lip(f\restr U)$ with $U\subset\X$, the function $\tilde f(x):=\inf_{y\in U}f(y)+L\sfd_{\mathcal E}(x,y)$ coincides with $f$ in $U$, by locality and again a relaxation procedure, it suffices to show that for any $y\in\X$ the function $\sfd_{\mathcal E}(\cdot,y)$ is in $\mathcal L_C$ (recall \eqref{eq:defde}). Fix $y$. Continuity is obvious, because $\sfd_{\mathcal E}$ induces the topology.  Now observe that from \eqref{eq:markov} and strong locality  it follows that $\mathcal L$ is stable by the `max' operation, thus taking into account the separability of $\bar \X$ there is an increasing sequence  $(\varphi_n)\subset\mathcal L_C$ converging pointwise to $\sfd_{\mathcal E}(\cdot,y)$. The $\varphi_n$'s are uniformly bounded (by the diameter of $\bar \X$), thus the convergence is also in $L^2(\bar\X,\mm)$. By \eqref{eq:Lclosed} we just proved that   $\sfd_{\mathcal E}(\cdot,y)\in\mathcal L$, as desired.

To prove \eqref{eq:dgamma} the key fact to be proved is: $\forall f\in L^2$ and $t>0$ we have $P_tf\in D(\Gamma)$ and 
\begin{equation}
\label{eq:perdgamma}
t\Gamma(P_tf)\leq\tfrac12P_t(f^2)-\tfrac12(P_tf)^2\qquad\mm-a.e.,
\end{equation}
as then for generic $f\in D(\mathcal E)$ we can apply the above keeping in mind that the energy decreases along the flow to conclude by a limiting argument. To get \eqref{eq:perdgamma} we fix $\varphi\geq0$ with $\varphi,\Delta_{\mathcal E}\varphi\in L^\infty$, put $F(s):=\tfrac12\int (P_{t-s}f)^2 P_s\varphi\,\d\mm $ for $s\in[0,t]$ and notice that it is continuous on $[0,t] $ and that by direct computation, using \eqref{eq:Gamma2cond} we get 
\begin{equation}
\label{eq:Fprimo}
F'(s)=\Ggamma(P_{t-s}f;P_s\varphi)\qquad\text{and}\qquad F''(s)=2\Ggamma_2(P_{t-s}f;P_s\varphi)\geq 0\qquad\forall s\in(0,1).
\end{equation}
Thus $F$ is convex on $[0,t]$ and the inequality $tF'(0)\leq F(t)-F(0)$ reads as
\[
tF'(0)=t\Ggamma(P_tf;\varphi)\leq \int\varphi g\,\d\mm\qquad\text{where}\qquad g:=\tfrac12P_t(f^2)-\tfrac12P_t(f)^2.
\]
By \eqref{eq:ineqcarre}  we get $F'(0)\geq0$, thus
\begin{equation}
\label{eq:pertrovaregamma}
0\leq t\Ggamma(P_tf;\varphi)\leq \int\varphi g\,\d \mm\qquad\forall \varphi\geq 0,\ \varphi,\Delta_{\mathcal E}\varphi\in L^\infty.
\end{equation}
The arbitrariness of $\varphi$ implies $g\geq 0$. Also,  since clearly $g\in L^1(\X,\mm)$, the linear map $\varphi\mapsto \Ggamma(P_tf;\varphi)$ is bounded in $L^\infty$, and building upon the fact that \eqref{eq:pertrovaregamma} tells that $\Ggamma(P_tf;\varphi)\to 0$ as $\varphi\downarrow0$ $\mm$-a.e., one can prove that it is represented by an $L^1$ function, as desired.

To prove \eqref{eq:ineq2} start noticing that, trivially, any $f\in\mathcal L_C$ is 1-Lipschitz w.r.t.\ $\sfd_{\mathcal E}$. Then  considering, for fixed $x\in\X$, the function $(f(\cdot)-f(x))\varphi(\cdot)$ with $\varphi$ suitable Lipschitz cut-off function identically 1 on a neighbourhood of $x$, we can easily conclude by locality that for $f\in D(\Gamma)\cap C(\X)$ we have
\begin{equation}
\label{eq:perlilip}
\Gamma(f)\leq g\ \mm-a.e.\quad\text{for some $g$ upper semicontinuous}\quad\Rightarrow\quad \lipa^2 f\leq g\ \mm-a.e.
\end{equation}
Hence recalling the definition of $\ch$, to conclude it suffices to show that for any $f$ belonging to some subset of $D(\mathcal E)$ dense in energy there are $(f_n),(g_n)\subset C_\b(\X)$ with 
\begin{equation}
\label{eq:upperreg}
{\Gamma(f_n)}\leq g_n\quad \mm-a.e.,\qquad f_n\stackrel{L^2}\to f,\qquad\lims_n\tfrac12\int g_n\,\d\mm\leq \mathcal E(f).
\end{equation}
As dense subset we pick those $f$'s in $D(\mathcal E)\cap L^\infty(\X)$ with $\Gamma(f)\in L^\infty$ (by \eqref{eq:perdgamma}, for $\tilde f\in L^2\cap L^\infty$ the function $f:=P_t\tilde f$ has such property for any $t>0$). Applying   \eqref{eq:perdgamma} to ${\Gamma(f)}\in L^2\cap L^\infty(\X)$ we see that $\Gamma(P_t(\Gamma(f)))\in L^\infty(\X)$, thus $P_t(\Gamma(f))$ admits, by $(c')$,  a continuous representative (still denoted $P_t(\Gamma(f))$). Now notice that arguing as for \eqref{eq:pertrovaregamma}, the inequality $F'(0)\leq F'(t)$ reads as $\Gamma(P_tf)\leq P_t(\Gamma(f))$, thus \eqref{eq:upperreg} follows picking $f_n:=P_{\frac1n}f$ and $g_n:=P_{\frac1n}(\Gamma(f))$.
\end{idea}
Notice that combining \eqref{eq:perdgamma}, property $(c')$, estimate \eqref{eq:perlilip} and the fact that $(\X,\sfd)$ is a length space\footnote{strictly speaking, it is $(\supp(\mm),\sfd)$ that is a length space and in estimate \eqref{eq:linftylip} we should have $\Lip(\h_t f\restr{\supp(\mm)})$ at the left hand side. This is not so important, though, as we will never look at any function outside $\supp(\mm)$ and in any case we can extend them outside it using, e.g., Lemma \ref{le:locmcsh}. In this direction, notice also that unlike other results we discussed, the assumption `$\sfd_{\mathcal E}$ is a finite distance' in Theorem \ref{thm:reprfin} trivially forces $\supp(\mm)=\X$.} we get   the useful $L^\infty-\Lip$ regularization property of the heat flow, i.e.
\begin{equation}
\label{eq:linftylip}
\Lip(\h_t f)\leq C(K,t)\|f\|_{L^\infty}\qquad\forall t>0,\quad\text{ with }C(K,t)\sim\sqrt t\text{ for small }t
\end{equation}
on an $\RCD(K,\infty)$ space, being intended that $\h_tf$ is identified with its continuous representative. 

\begin{remark}{\rm
Theorems \ref{thm:reprinterm} and \ref{thm:reprfin} are less abstract that those recalled at the beginning of the section, as part of the data are a `space' containing `points' and a notion of `proximity' among these. It would be interesting to understand whether this sort of assumptions can be removed, in favour of a purely algebraic description of $\RCD$ spaces. In this direction, notice that the concept of closed quadratic forms is well defined on arbitrary Hilbert spaces and that the Markov condition \eqref{eq:markov} can be equivalently stated as $\mathcal E(0\wedge f\vee 1)\leq\mathcal E(f)$: this inequality can formulated in abstract terms if one assumes the existence of an algebra with unit (say $\mm(\X)<\infty$) that is also a lattice in the domain of the form, so perhaps this program is not out of reach. 
}\fr\end{remark}

\subsection{Bibliographical notes}\label{se:bibliorcd}
{\footnotesize{Sobolev  functions on an arbitrary metric measure space have been introduced in \cite{HajSob}; the approach in such paper produces a notion different from the one used here, and in particular lacks the key locality property \eqref{eq:dfloc} (read in the smooth setting, the `minimal upper gradient' in  \cite{HajSob} is the maximal function of $|\d f|$).

The first definition of Sobolev space equivalent to the ones presented here appeared in \cite{Cheeger00}, where a relaxation procedure very similar to the one in Section \ref{se:vertsob} has been presented (whence the choice of terminology `Cheeger energy'). Later in \cite{Shanmugalingam00} a different, but equivalent, definition has been proposed: here the idea is conceptually closer to the one presented in Section \ref{se:horsob}, although the concept of `Modulus of a family of curves', introduced by Fuglede in \cite{Fugl57},  is used in place of test plans  to describe `negligible set of curves' to ignore when imposing a `weak upper gradient property'. See the books \cite{HKST15}, \cite{Bjorn-Bjorn11} for more on this topic.

The definitions of Sobolev functions adopted here come from \cite{AmbrosioGigliSavare11} (see also \cite{AmbrosioGigliSavare11-3}), where the equivalence with these pre-existing approaches has also been proved (see also the more recent \cite{EB23} for a more direct proof of some of the equivalences that avoids the use of the heat flow). Extension of this approach to the $\BV$ case has been obtained in \cite{Ambrosio-DiMarino14} (see also the original  \cite{Mir03}). The inspiration for the relaxation procedure comes from the already mentioned \cite{Cheeger00}, while the notion of test plan is related to the studies of flows of Sobolev vector fields in the Euclidean spaces carried out in \cite{DiPerna-Lions89} and \cite{Ambrosio04}. Lemma \ref{le:locmcsh} is the main result in \cite{GDMP}.  

To the best of my knowledge, \cite{KS03conv} is the first paper where $\Gamma$-convergence is used in conjunction with mGH-convergence and also the first one where it appeared the concept of convergence of $L^2$ functions defined on a varying sequence of spaces; the motivations of the authors were very close to those of this manuscript. Theorem \ref{thm:glimsch} appears in this form here for this first time, but it is just a restatement of ideas presented in \cite{AmbrosioGigliSavare11-3}, where also the observation that the upper semicontinuity of $\lipa f$ leads to stability of `first order inequalities' as in Corollary \ref{cor:stablogsob} was made. Lemma \ref{le:equivsobw} comes from \cite[Theorem 2.1.12]{GP19}.

Proposition \ref{prop:HL} has been proven independently in \cite{AmbrosioGigliSavare11} and \cite{NCS14} under slightly different assumptions on the base space (in \cite{AmbrosioGigliSavare11}, no compactness is assumed). Notably, even in metric spaces, the Hopf-Lax formula produces `the viscosity solution' of the  Hamilton-Jacobi equation, see the independent papers \cite{AF14} and \cite{GS15}.

The superposition principle has been proved in the Euclidean setting in \cite{Ambrosio04} (see also \cite{AmbrosioGigliSavare08}). The metric version presented in Lemma \ref{le:superpp} has been obtained \cite{Lisini07}.

Lemma \ref{le:basegfch} and Theorem \ref{thm:hfgfagain} come from \cite{Gigli-Kuwada-Ohta10}, where they were obtained in the setting of compact Alexandrov spaces; their generalization to normalized spaces is straightforward once one knows that the Hopf-Lax formula produces subsolutions of the Hamilton-Jacobi equation, which was already known in $\CD(K,N)$ spaces by \cite{LVHJ}. In particular, Kuwada's lemma (here given as item $(iv)$ in Lemma \ref{le:basegfch}) comes from \cite{Gigli-Kuwada-Ohta10}, but the idea is so beautiful and crucial that  Ohta and I decided to name it after the person who actually got the idea. These principles have been used in \cite{AmbrosioGigliSavare11} to not only get  the non-trivial generalization of Theorem \ref{thm:hfgfagain} to spaces with infinite mass, and possibly infinite distances, but, perhaps more interestingly, to give new foundation to the theory of metric Sobolev spaces via Definitions \ref{def:ch} and \ref{def:defsobw} and Theorem \ref{thm:sobuguali}. Lemma \ref{le:inpartI} is also from \cite{AmbrosioGigliSavare11}, while Theorem \ref{thm:convslen} comes from the already mentioned \cite{GMS15}. In developing this program, inspiring studies (at least for me) have been those in  \cite{OhSt09} that showed that the Jordan-Kinderleher-Otto ideas, and related ones, are still available in a setting where the Laplacian is non-linear.

Lemma \ref{le:tapio} has been proved in \cite{Rajala12-2}. Proposition \ref{prop:sobtolip} appears here for the first time, the argument for the proof being taken  from \cite{Gigli13}. Earlier, a different proof based on the Bakry-\'Emery contraction rate was given in \cite{AmbrosioGigliSavare11-2} in the setting of $\RCD(K,\infty)$ spaces.

The Definitions \ref{def:planrepgr} and \ref{def:infhilb} of `plan representing a gradient' and of `infinitesimally Hilbertian spaces' as well as definition \eqref{eq:defdpm} of `duality between differentials and gradients', the calculus rules in Lemma \ref{le:horver} and Proposition \ref{prop:infhilbcalc} come from \cite{Gigli12}. Part of such calculus rules appeared earlier, at times with incomplete proofs (see \cite[Lemma 4.7 and the comments before it]{AmbrosioGigliSavare11-2}), in \cite{AmbrosioGigliSavare11-2} where they have been used to prove the key formulas in Lemmas \ref{le:derw2} and \ref{le:derent}.

The concept of gradient flow trajectory as encoded by Definition \ref{def:EVI} has been proposed in \cite{AmbrosioGigliSavare08}  and subsequently studied in detail by Savar\'e and collaborators. Proposition \ref{prop:pasevi} comes from \cite{MS20}, to which I also refer for a recent account of the theory. It has been soon understood that $\EVI$'s are linked to some sort of `Hilbert structure of tangent spaces' and while no general statement in this direction is available, it is a sort of meta-theorem of the theory that whenever the local geometry of the space under consideration resembles a Hilbertian one, $\EVI$-gradient flows exist; for results in this direction see for instance \cite{OhSt09},  \cite{OhSt12}, \cite{vRT12}, \cite{Gigli13},  \cite{OhPa17},  \cite{MS20}. Proposition \ref{prop:EVIconv} has been proved in \cite{DaneriSavare08} (partly inspired by \cite{OtWe05}). The stability properties in Theorem \ref{thm:stabevi} and Corollary \ref{cor:stabhagain} have been proved in \cite{AmbrosioGigliSavare11-2} (see also \cite{GMS15} and \cite{MS20} for generalizations to non-compact settings). In connection with Corollary \ref{cor:stabhagain} it is worth to recall that if $\Delta$ is a compact operator (more precisely: it has compact resolvents $(\Id-\tau\Delta)^{-1}:L^2\to L^2$) then it has discrete spectrum $0\leq \lambda_1\leq\lambda_2\leq\cdots$ and the $k$-th eigenvalue $\lambda_k$ admits the variational characterization
\[
\lambda_k=\min_{}\max_{0\neq f\in V}\frac{\int|\d f|^2\,\d\mm}{\int |f|^2\,\d\mm},
\]
where the $\min$ is taken among $k$-dimensional subspaces $V$ of $L^2$. This representation and the stability in formula \eqref{eq:laplcl} and Corollary \ref{cor:stabhagain} allow to deduce spectral convergence of the Laplacian under suitable uniform compactness assumptions on the resolvents, see \cite[Sections 2.6, 2.7]{KS03conv},  \cite[Theorem 7.8]{GMS15} and references therein.

The first definition of a `Riemannian' Curvature Dimension condition has been proposed in \cite{AmbrosioGigliSavare11-2}  specifically for the case $N=\infty$ and for normalized spaces in a form slightly different from the one given here in Definition \ref{def:rcd}: the main equivalence result proved in \cite{AmbrosioGigliSavare11-2}  is the one between the $\EVI_K$-condition for the heat flow and the coupling of infinitesimal Hilbertianity with a version of the curvature condition $\CD(K,\infty)$ stronger than the one encoded in Definition \ref{def:cd} (i.e.\ we proved a modified version of Theorem \ref{thm:rcdevi}). Then either of these two equivalent notions was taken as definition of $\RCD(K,\infty)$ space. The proposal to focus the attention only on infinitesimal Hilbertianity regardless of curvature/dimension/mass bounds come from \cite[Remark 4.20]{Gigli12}, where it was also suggested how to reconcile this definition with the one in \cite{AmbrosioGigliSavare11-2}: the suggestion was basically to rely on Lemma \ref{le:tapio} and has been put forward in \cite{AmbrosioGigliMondinoRajala12}, where Theorem \ref{thm:rcdevi} was proved on spaces with possibly infinite mass.

The idea behind the proposal in  \cite{Gigli12} to insist on properties of Sobolev functions, and solely on that, was that in order to prove geometric results in the $\RCD$ category akin to those available in the smooth Riemannian world, we need to have at disposal a calculus that resembles the Riemannian one (as opposed to the Finsler one - see also the introduction of \cite{Gigli13over}). Proofs of concept for this plan came initially  in \cite{Gigli-Mosconi12}, where the Abresch-Gromoll inequality has been proved for finite dimensional $\RCD$ spaces (answering an open criticism raised in \cite{Petrunin11} about the $\CD$ condition), and then in \cite{Gigli13} with the proof of the splitting theorem.

The Bochner inequality has been proved for the first time in the non-smooth setting in \cite{Gigli-Kuwada-Ohta10} on compact finite dimensional Alexandrov spaces. The structure of the argument given there is the one presented in the proof of Theorem \ref{thm:boch} and works with basically no modifications on $\RCD(K,\infty)$ spaces, once one has at disposal the $\EVI_K$-property of the heat flow (see also  \cite[Remark 6.3]{AmbrosioGigliSavare11-2}). The `hard' implication in the proof, namely the one linking a $W_2$-contraction property to a Bakry-\'Emery type of inequality was investigated in general setting by Kuwada in \cite{Kuwada10} and is known by the name `Kuwada's duality'.

\medskip

Coming to the finite dimensional world, after \cite{BacherSturm10} it became quite clear in the community that, thanks to its better local-to-global properties,  the $\CD^*(K,N)$ condition was better suited than $\CD(K,N)$ to be linked to the Bochner inequality, see for instance  the preprint \cite{Bochner-CD}, then merged in \cite{Gigli13} (the equivalence between $\CD$ and $\CD^*$ was not yet known). After these, the dimension-dependent Bochner inequality has been obtained in \cite{Erbar-Kuwada-Sturm13} with the help, as already mentioned, of the notion of $\CD^e(K,N)$ condition defined there - recall \eqref{eq:diffcdekn}, \eqref{eq:defcdekn} (see also the subsequent \cite{AmbrosioMondinoSavare13}). The first result  in \cite{Erbar-Kuwada-Sturm13} is a finite dimensional analogue of Theorem \ref{thm:rcdevi}, namely
\begin{equation}
\label{eq:cdknevikn}
\text{``$\CD^e(K,N)+{\rm Inf.\ Hilb.}$" is equivalent to ``the heat flow satisfies a $\EVI_{K,N}$ condition'',}
\end{equation}
the latter meaning that the stronger (see \cite[Lemma 2.15]{Erbar-Kuwada-Sturm13}) inequality
\begin{equation}
\label{eq:evikn}
\frac{\d}{\d t}{\sf S}_{\frac  KN}\big(\tfrac12\sfd^2(x_t,y)\big)+K\,{\sf S}_{\tfrac KN}\big(\tfrac12\sfd^2(x_t,y)\big)\leq \tfrac 12N\big(1-\tfrac{U_N(y)}{U_N(x_t)}\big)
\end{equation}
is in place of \eqref{eq:defEVI}. Here $U_N:=-\exp(-\tfrac 1N\E)$ (notice the different sign convention w.r.t.\ \cite{Erbar-Kuwada-Sturm13}) and 
\[
{\sf S}_\kappa(t):=\left\{
\begin{array}{ll}
\tfrac{1}{\sqrt \kappa}\sin(\sqrt\kappa t),&\quad\text{ if }\kappa>0,\\
t,&\quad\text{ if }\kappa=0,\\
\tfrac{1}{\sqrt {|\kappa|}}\sinh(\sqrt{|\kappa|} t),&\quad\text{ if }\kappa<0.\\
\end{array}
\right.
\]
The idea for proving \eqref{eq:cdknevikn} is that the stronger convexity assumption gives a better estimate on the derivate of the entropy than the one used in the starred inequality in \eqref{eq:bello} and, viceversa, the stronger $\EVI$ implies stronger convexity via the arguments in Proposition \ref{prop:EVIconv}.

Once `the correct' $\EVI$ has been found, the finite-dimensional Bochner inequality has been obtained, still in \cite{Erbar-Kuwada-Sturm13}, on the backbone of the proof of Theorem \ref{thm:boch}. More precisely, the $\EVI_{K,N}$-property of the heat flow implies the contraction estimate
\[
{\sf S}_{\frac  KN}\big(\tfrac12W_2^2(\mu_t,\nu_s)\big)\leq e^{-K(t+s)}{\sf S}_{\frac  KN}\big(\tfrac12W_2^2(\mu_0,\nu_0)\big)+\tfrac NK(1-e^{-K(t+s)})\tfrac{(\sqrt t-\sqrt s)^2}{2(t+s)}\qquad\forall t,s\geq 0
\]
that in turn, by a suitable version of Kuwada's duality, gives the Bakry–Ledoux (from \cite{BaLe06}) estimate
\begin{equation}
\label{eq:BL}
|\d\h_tf|^2+\frac{4Kt^2}{N(e^{2Kt}-1)}|\Delta\h_tf|^2\leq e^{-2Kt}\h_t(|\d f|^2)\qquad\forall t\geq 0.
\end{equation}
This is an equality at $t=0$, thus a (formal) differentiation at $t=0$ gives the desired  Bochner inequality
\begin{equation}
\label{eq:bochfindim}
\tfrac12\Delta |\d f|^2 \geq \tfrac1N |\Delta f|^2 +\la \d f,\d\Delta f\ra +K|\d f|^2.
\end{equation}
Alternative proofs of   these finite dimensional equivalences  have been obtained in the already mentioned \cite{AmbrosioMondinoSavare13}: their overall strategy is the same outlined here, but rather than working with the (linear) heat flow, they study  the (non-linear) porous media flow, that arises as $W_2$-gradient flow of the R\'enyi entropy (see \cite{Otto01}) used to define $\CD^*(K,N)$ spaces. Such non-linearity provides an additional technical challenge. 

With the Bochner inequality at disposal, it became standard in the field to work with the $\RCD^*=\RCD^e$ notion; as mentioned before it has been  \cite{CavMil16} that proved the equivalence \eqref{eq:equivcd} of these notions with $\RCD(K,N):=\CD(K,N)+\text{Inf. Hilb.}$ (at least in normalized spaces).

\medskip

The possibility of encoding a lower Ricci and an upper dimension curvature bound in terms of the relation between the $\Gamma$ and $\Gamma_2$ operators is an idea that goes back to \cite{BakryEmery85}, where the concept of `Curvature-Dimension condition' appeared for the first time. In the setting of smooth, weighted, Riemannian manifolds $(M,g,e^{-V}\vol)$ the $\CD(K,N)$ condition is equivalent to ${\rm Ric}_N\geq Kg$, where ${\rm Ric}_N$ is the Bakry-\'Emery $N$-Ricci tensor defined as
\begin{equation}
\label{eq:bern}
{\rm Ric}_N:=\left\{
\begin{array}{ll}
{\rm Ric}+\Hess V-\frac1{N-n}{\nabla V\otimes\nabla V},&\qquad\text{ if }N>n,\\
{\rm Ric},&\qquad\text{ if $N=n$ and $V$ is constant},\\
-\infty,&\qquad\text{ otherwise}.  
\end{array}
\right.
\end{equation}
The basic idea behind this definition is in the validity of the Bochner inequality
\[
\tfrac12\Delta|\d f|^2\geq \tfrac1N|\Delta f|^2+\la \d f,\d\Delta f\ra+{\rm Ric}_N(\d f,\d f),
\]
so that from lower bounds on ${\rm Ric}_N$ one can actually deduce informations about the interplay of `differential' and `Laplacian'. In terms of the calculus as developed by Bakry-\'Emery, such interplay is read as relation between $\Gamma$ and $\Gamma_2$ and such calculus   goes under the name of ``$\Gamma$-calculus"\footnote{this has nothing to do with $\Gamma$-convergence!}. THE reference for this topic is the monograph \cite{BakryGentilLedoux14}.

Standard references for the theory of Dirichlet forms are \cite{BH91}, \cite{MaRockner92}, \cite{Fuk80}. The fact  that  the intrinsic distance $\sfd_{\mathcal E}$ is geodesic and the key regularity property $\Gamma(\sfd_{\mathcal E}(x,\cdot))\leq 1$ have been obtained  in \cite{Sturm96I}. The rest of Theorems \ref{thm:reprinterm}, \ref{thm:reprfin} come from \cite{AmbrosioGigliSavare12}.  Building on these two, the finite dimensional analogue have been obtained in \cite{Erbar-Kuwada-Sturm13}  (and later in  \cite{AmbrosioMondinoSavare13} along similar lines): they proved that if \eqref{eq:bochfindim} is assumed in place of \eqref{eq:bochner} in Theorem \ref{thm:reprinterm}, then the space is $\CD^e(K,N)$. Here the point is that \eqref{eq:bochfindim} implies, by suitable integration in $t$, the estimate \eqref{eq:BL} and this in turn gives a better estimate in \eqref{eq:acest} than the one achievable by \eqref{eq:BElav}. Related results have been obtained in \cite{Koskela-Zhou12}, \cite{KSZ14}.

Theorems  \ref{thm:reprinterm}, \ref{thm:reprfin} and their finite dimensional counterpart are useful in checking when a given space is actually $\RCD$, as often computing differential operators is easier than understanding the behaviour of $W_2$-geodesics. For instance, the fact that Cartesian and suitable warped products of $\RCD$ spaces is still $\RCD$ are proved starting from results (in \cite{AmbrosioGigliSavare12} and \cite{Ketterer13}, respectively). The same for  stratified manifolds satisfying appropriate angle conditions, see   \cite{BKMR21}.

Speaking of examples of $\RCD$, and in fact of Ricci-limit spaces, let me mention the recent works  \cite{PanWei22} and \cite{Pan23} where it has been showed that the half-Grushin-plane and half-Grushin-sphere, that are subRiemannian manifolds, are also Ricci-limit spaces(!). These are also examples of Ricci-limit spaces whose Hausdorff dimension is non-integer (and thus bigger than the dimension of the regular set); notice that by Colding's volume convergence, examples of this sort must arise from a collapsing of dimension. The tools used in these papers have little to do with what has been exposed here. Earlier examples of `irregular' manifolds with a lower Ricci bound / Ricci-limits  spaces, in this case from the topological perspective, have been constructed in the  nineties in \cite{ShaYang89}, \cite{AndKroLeB89}, \cite{anderson1990}, \cite{Perelman97}, \cite{Menguy00}, see also references therein.

\medskip

In the recent \cite{CMTkatoI} it is proved stability of heat flow and lower Ricci bounds without relying on optimal transport at all: the authors impose an upper dimension bound, but weaken the uniform lower Ricci bound to a  so-called Kato lower Ricci bound (a suitable integrated lower Ricci bound). I think that this approach is extremely interesting, and believe it has more to say, so I  want to take few lines to describe the core of the idea in the simplified setting of a converging sequence of $\RCD(0,N)$ spaces. Thus say that the $(\X_n,\sfd_n,\mm_n)$'s are $\RCD(0,N)$ and are mGH-converging to some $(\X_\infty,\sfd_\infty,\mm_\infty)$. We aim at proving that $\X_\infty$ is also $\RCD(0,N)$. 

The basic idea  is to decouple the role of the distance and of the Cheeger/Dirichlet energy: we already have mGH-convergence, and by compactness in $\Gamma$-convergence, up to subsequences we can assume that $\ch_n\stackrel{\Gamma}\to \mathcal E$ for some functional $\mathcal E:L^2(\X_\infty,\mm_\infty)\to[0,\infty]$, that is clearly a Dirichlet form.  The criterium of strong compactness in Theorem \ref{thm:convslen} is still in place, so in fact $(\ch_n)$ Mosco-converges to $\mathcal E$ w.r.t.\ the weak/strong $L^2$-convergences of functions (I am not cheating here: the proof of the strong compactness does not require the limit space to be $\RCD$. Still, the proof of Theorem  \ref{thm:convslen} uses optimal transport: one can avoid this by using the local Poincar\'e inequality that come from the lower Ricci and upper dimension bounds - this is the approach in \cite{CMTkatoI}).  Now recall the  very general fact that if a sequence of convex and lower semicontinuous functionals Mosco-converge, then their Gradient Flows converge as well (because the minimum of $\E_n(\cdot)+\tfrac{\|\cdot-x\|^2}{2\tau}$ converge to that of $\E_\infty(\cdot)+\tfrac{\|\cdot-x\|^2}{2\tau}$ for any $\tau>0$, and iterating this minimization scheme we converge to the gradient flow with a rate known a priori and granted by the convexity). Thus the heat flows on the $\X_n$ converge to the flow associated to $\mathcal E$ and this allows to pass to the limit in virtually any inequality involving the heat flow and the distance. In particular, the Bochner inequality \eqref{eq:bochfindim} with $K=0$ holds in $\X_\infty$, the Laplacian being the one induced by $\mathcal E$ so that our task is `only' to prove that $\mathcal E$ coincides with the Cheeger energy built on $(\X_\infty,\sfd_\infty,\mm_\infty)$. In turn, this is more-or-less the same as to prove that the intrinsic distance $\sfd_{\mathcal E}$ induced by $\mathcal E$ by formula \eqref{eq:defde} coincides with $\sfd_\infty$.

To see this, notice that  since on $\RCD(0,N)$ spaces we have Gaussian estimates for the heat kernel (see \cite{Sturm96III}), these are also valid on $(\X_\infty,\sfd_\infty,\mm_\infty,\mathcal E)$:
\begin{equation}
\label{eq:gaussest}
\frac{1}{C \mm_\infty(B_{\sqrt t}(x))}e^{-C\frac{\sfd_\infty^2(x,y)}{4t}}\leq\rho_t(x,y)\leq \frac{C}{\sqrt{\mm_\infty(B_{\sqrt t}(x))\mm_\infty(B_{\sqrt t}(y))}}e^{-\frac{\sfd_\infty^2(x,y)}{4t}}\Big(1+\frac{\sfd_\infty^2(x,y)}{2}\Big)^{N+1}
\end{equation}
and the heat  kernel is a (family of) probability measure. For the same reason, the heat kernel satisfies suitable H\"older continuity estimates. These properties imply, by non-trivial general results about Dirichlet forms (\cite{Sturm96I}, \cite{Ram01}, \cite{ERS07},\cite{Gri10}, \cite{CarTew22} among others, see \cite[Appendix C]{CMTkatoI}), that Varadhan asymptotic formula 
\begin{equation}
\label{eq:varadhan}
\sfd^2_{\mathcal E}(x,y)=-4\lim_{t\downarrow0}t\log\rho_t(x,y)
\end{equation}
holds. Coupling this with the upper estimate in \eqref{eq:gaussest} we get $\sfd_{\mathcal E}\geq \sfd_\infty$. For the converse inequality I will argue differently from \cite{CMTkatoI} and pick $f\in C_\b(\X_\infty)$ with $\Gamma(f)\leq 1$ $\mm_\infty$-a.e.\ and notice that the Bakry-\'Emery estimate \eqref{eq:BElav} holds (as a consequence of Bochner inequality) thus $\Gamma(\h_t f)\leq 1$ $\mm_\infty$-a.e.\ as well. Now let   $(f_n)$ be a recovery sequence for $f$, so that $|\d f_n|\to \sqrt{\Gamma(f)}$ in $L^2$. By the Gaussian estimates, the norm of $\h_t$ as map from $L^1$ to $L^\infty$ is bounded in $t$ uniformly in $n$, thus $\lims_n\|\h_t(|\d f_n|^2)\|_\infty\leq1$ and again Bakry-\'Emery gives $\lims_n\Lip(\h_t f_n)\leq 1$. Thus for arbitrary sequences of points $x_n\to x_\infty$, $y_n\to y_\infty$ we get 
\[
|\h_tf(x_\infty)-\h_tf(y_\infty)|=\lim_n|\h_tf_n(x_n)-\h_tf_n(y_n)|\leq \limi_n\Lip(\h_tf_n)\sfd_n(x_n,y_n)=\sfd_\infty(x_\infty,y_\infty)
\]
so that letting $t\downarrow0$, by the arbitrariness of $f$ we conclude that $\sfd_{\mathcal E}\leq \sfd_\infty$, as desired.

Fore more about Kato lower bounds see \cite{Ket21}, \cite{ERST22}, \cite{CMTkatoII} and references therein. Other examples of situations where the decoupling of the roles of distance and Dirichlet energy has been useful are \cite{AmbrosioGigliSavare12} and \cite{BSR22},  in these cases to study  tensorization properties. 

}}

\section{Differential calculus on a non-differentiable environment}
\subsection{First order calculus on general metric measure spaces}\label{se:firstordercalc}
To develop tensor calculus on general metric measure spaces it is useful to introduce the following concept:
\begin{definition}[$L^2$-normed $L^\infty$-module]
An $L^2$-normed $L^\infty$-module over $(\X,\sfd,\mm)$ is a Banach space $(\M,\|\cdot\|_\M)$ that is also a module over (the commutative ring with unity) $L^\infty(\X)$ and which moreover admits a \emph{pointwise norm}, i.e.\ a map $|\cdot|:\M\to L^2(\X)$ satisfying
\[
\begin{split}
|v|&\geq 0\qquad\text{and}\qquad |fv|=|f|\,|v|\qquad\text{and}\qquad
\|v\|_\M=\sqrt{\int|v|^2\,\d\mm}
\end{split}
\]
for any $f\in L^\infty(\X)$, $v\in \M$, where the first two are intended $\mm$-a.e..
\end{definition}
It is not hard to check that the pointwise norm also satisfies
\[
|\lambda v|=|\lambda|\,|v|,\qquad\text{ and }\qquad |v+w|\leq |v|+|w|,
\]
$\mm$-a.e.\ for any $v,w\in \M$ and $\lambda\in\R$.

The prototypical example of $L^2$-normed $L^\infty$-module is the space of $L^2$ vector fields on a given Riemannian manifold or, more generally, the space of $L^2$ sections of a normed vector bundle. Intuitively, we shall  think at any such  module as the space of $L^2$-sections of some bundle, even if such bundle is not really given: from the technical perspective this offers several advantages, including a more direct link with the concept of integration (for which $\mm$-a.e.\ defined objects are `the' objects to handle) and with $W^{1,2}$-functions (whose differentials are, in a natural way, 1-forms in $L^2$).

In practice, $L^2$-normed $L^\infty$-modules enter the field via the following result:
\begin{thmdef}\label{thmdef:d}
Let $(\X,\sfd,\mm)$ be a metric measure space. Then there is a unique couple $(L^2(T^*\X),\d)$, with $L^2(T^*\X)$ being an $L^2$-normed $L^\infty$-module and $\d:W^{1,2}(\X)\to L^2(T^*\X)$ being linear and such that
\begin{itemize}
\item[i)] $|\d f|=|\D f|$ $\mm$-a.e.\ for any $f\in W^{1,2}(\X)$,
\item[ii)] $L^\infty(\X)$-linear combinations of  $\{\d f:f\in W^{1,2}(\X)\}$ are dense in $L^2(T^*\X)$.
\end{itemize}
Uniqueness is  up to unique isomoprhism: if $(\M,\d')$ is another couple with the same properties, then there is a unique $\Psi:L^2(T^*\X)\to \M$ isomorphism of modules such that $\Psi\circ\d=\d'$.\footnote{The reader familiar with category theory will realize that this has the flavour of a universal property: $(L^2(T^*\X),\d)$ can equivalently be described as universal among  couples $(\M,\d')$  such that $\d':W^{1,2}(\X)\to \M$ is linear with $|\d' f|\leq |\D f|$ $\mm$-a.e.\ for any $f$, meaning that it has this property and for any other such couple there is a unique morphism $\Psi:L^2(T^*\X)\to \M$  with $\Psi\circ\d=\d'$. Here `morphism' means $L^\infty$-linear and 1-Lipschitz. 

Similar considerations are in place for the pullback operation introduced in Theorem/Definition \ref{def:pb}.}
\end{thmdef}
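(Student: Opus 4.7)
The plan is to build $L^2(T^*\X)$ explicitly as the completion of a ``pre-cotangent module'' of formal linear combinations of differentials of Sobolev functions, and to deduce uniqueness from a tautological universal-property argument exploiting (i) and (ii). For existence, I start with the set $\mathcal{V}$ of formal simple objects $v = \sum_{i=1}^n \nchi_{A_i} \d f_i$, where $(A_i)$ ranges over finite Borel partitions of $\X$ and $f_i \in W^{1,2}(\X)$. On $\mathcal{V}$ I set the candidate pointwise norm $|v| := \sum_i \nchi_{A_i} |\D f_i|$, define addition and multiplication by simple $L^\infty$-functions via common refinement of partitions (using linearity of $\d$ on each partition cell), and declare $v \sim w$ when $|v - w| = 0$ $\mm$-a.e.. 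The crucial well-posedness of $|\cdot|$ on equivalence classes rests on the locality \eqref{eq:dfloc}, which, combined with \eqref{eq:dfconv}, forces $|\D f| = |\D g|$ $\mm$-a.e.\ on $\{f = g\}$; the pointwise triangle inequality is a direct consequence of \eqref{eq:dfconv}, and the identity $|\nchi_A v| = \nchi_A |v|$ is built in. Setting $\|v\|^2 := \int |v|^2\,\d\mm$ gives a norm on $\mathcal{V}/{\sim}$, and its completion is a Banach space which I call $L^2(T^*\X)$. The $L^\infty$-action extends from simple scalars to all of $L^\infty(\X)$ by sup-norm approximation via the trivial bound $\|gv\| \leq \|g\|_{L^\infty} \|v\|$, and the pointwise norm extends to the completion because $|v_n|$ is Cauchy in $L^2(\X)$ whenever $v_n$ is Cauchy in $L^2(T^*\X)$, as $\bigl||v_n| - |v_m|\bigr| \leq |v_n - v_m|$ $\mm$-a.e.. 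All axioms of an $L^2$-normed $L^\infty$-module are then routine to verify, and the differential is $\d f := [\nchi_\X \d f]$, for which (i) holds by definition and (ii) by construction.

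For uniqueness, given another couple $(\M, \d')$ obeying (i) and (ii), I define a candidate isomorphism $\Psi$ on simple elements by
\[
\Psi\Bigl(\sum_i \nchi_{A_i} \d f_i\Bigr) := \sum_i \nchi_{A_i} \d' f_i.
\]
The key identity
\[
\Bigl|\sum_i \nchi_{A_i} \d' f_i\Bigr|_\M = \sum_i \nchi_{A_i}|\d' f_i|_\M = \sum_i \nchi_{A_i}|\D f_i| = \Bigl|\sum_i \nchi_{A_i} \d f_i\Bigr|,
\]
valid because (i) holds for both modules and pointwise norms are $L^\infty$-homogeneous, shows that $\Psi$ is pointwise norm-preserving and hence well-defined on equivalence classes. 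It is $L^\infty$-linear on simple scalars by construction, extends to an isometric $L^\infty$-linear map on all of $L^2(T^*\X)$, and its image is dense in $\M$ by (ii) for $\M$; hence $\Psi$ is surjective and thus an isomorphism of modules with $\Psi \circ \d = \d'$. Uniqueness of $\Psi$ is forced by (ii) for $L^2(T^*\X)$.

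The main obstacle I expect is the bookkeeping required to make the pre-cotangent module well-defined, i.e., to verify that the candidate pointwise norm, the addition, and the $L^\infty$-action descend unambiguously to equivalence classes under refinement of partitions. This is where the locality \eqref{eq:dfloc} of the minimal weak upper gradient plays a decisive role: without it, two representatives of the same formal sum could yield different pointwise norms. The remaining ingredients — the $L^2$-completion, the extension of the $L^\infty$-action by sup-norm density, and the verification of the module axioms — are standard functional-analytic routine.
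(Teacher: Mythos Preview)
Your proposal is correct and follows essentially the same approach as the paper: build the cotangent module as the completion of a pre-cotangent module of finite formal sums $\sum_i\nchi_{A_i}\d f_i$ (the paper encodes these as pairs $(A_i,f_i)$) with pointwise norm $\sum_i\nchi_{A_i}|\D f_i|$, using the locality \eqref{eq:dfloc} and subadditivity \eqref{eq:dfconv} to make the equivalence relation and operations well posed, and then prove uniqueness by forcing $\Psi(\sum_i\nchi_{A_i}\d f_i)=\sum_i\nchi_{A_i}\d' f_i$ and extending by density via (i) and (ii). The paper's proof is the same construction with the same key ingredients.
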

\begin{idea}\ \\
{\sc Uniqueness} By $(ii)$  and the linearity of $\d$,  objects of the form $\sum_i\nchi_{A_i}\d f_i$ for $(A_i)$ finite Borel partition of $\X$ and $(f_i)\subset W^{1,2}(\X)$ are dense in $L^2(T^*\X)$. Then the requirement  $\Psi\circ\d=\d'$ forces the definition $\Psi(\sum_i\nchi_{A_i}\d f_i)=\sum_i\nchi_{A_i}\d' f_i$ and using $(i),(ii)$ it is easy to see that this is well posed and uniquely extends to the desired isomorphism.

\noindent{\sc Existence} comes from an explicit construction. We define `Pre-cotangent module' ${\sf Pcm}$ as the set of finite sequences $(A_i,f_i)$ with $(A_i)$ Borel partition of $\X$ and $(f_i)\subset W^{1,2}(\X)$. We then say $(A_i,f_i)\sim (B_j,g_j)$ if $|\D(f_i-g_j)|=0$ $\mm$-a.e.\ on $A_i\cap B_j$ for every $i,j$ and denote by $[A_i,f_i]\in{\sf Pcm}/\sim$ the equivalence class of $(A_i,f_i)\in {\sf Pcm}$ (intuitively: we shall think at $[A_i,f_i]$ as the 1-form that is equal to $\d f_i$ on $A_i$ - even if at this stage $\d f$ is not yet defined - notice that in the smooth setting these 1-forms are dense in the space of $L^2$ 1-forms). 

We endow ${\sf Pcm}/\sim$ with the following operations:
\[
\begin{array}{rll}
\alpha[A_i,f_i]+\beta[B_j,g_j]&:=\ [A_i\cap B_j,\alpha f_i+\beta g_j],\qquad&\text{\emph {(linear combination),}}\\
|[A_i,f_i]|&:=\ \sum_i\nchi_{A_i}|\D f_i|,\qquad&\text{\emph{(pointwise norm),}}\\
\big(\sum_j\nchi_{B_j}\alpha_j\big)\,[A_i,f_i]&:=\ [A_i\cap B_j,\alpha_j f_i],\qquad&\text{\emph{(product by simple functions),}}\\
\big\|[A_i,f_i]\big\|^2&:=\ {\int |[A_i,f_i]|^2\,\d\mm},\qquad&\text{\emph{(norm)}.}
\end{array}
\]
It is easy to see that these are well posed and uniquely extend, by continuity, to operations on the completion of $({\sf Pcm}/\sim,\|\cdot\|)$, thus giving such completion the structure of $L^2$-normed $L^\infty$-module. Now putting $\d f:=[\X,f]$ we see that $(i)$ holds by definition of pointwise norm and $(ii)$ by the identity $\sum_i\nchi_{A_i}\d f_i=[A_i,f_i]$ (that  follows from the above defined operations) and density of ${\sf Pcm}/\sim$ in the module just built.
\end{idea}
The uniqueness and property \eqref{eq:dffinsl} of minimal weak upper gradients show that if $\X$ is a smooth Finsler manifold, then the cotangent module so defined is canonically isomorphic to the space of $L^2$ 1-forms, the differential being (identified to) the one obtained via integration by parts.
\begin{proposition} The differential $\d$ is a closed operator when seen as unbounded operator from $L^2(\X)$ to $L^2(T^*\X)$ and satisfies
\[
\begin{array}{rlll}
\d f&=\ \d g,&\qquad\mm-a.e.\ on\ \{f=g\},\ \forall f,g\in W^{1,2}(\X)&\text{\emph {(locality),}}\\
\d(\varphi\circ f)&=\ \varphi'\circ f,&\qquad\forall\varphi\in C^1\cap\LIP(\R),\ f\in W^{1,2}(\X)&\text{\emph{(chain rule),}}\\
\d(fg)&=\ f\d g+g\d f,&\qquad\forall f,g\in L^\infty\cap W^{1,2}(\X)&\text{\emph{(Leibniz rule).}}\\
\end{array}
\]
\end{proposition}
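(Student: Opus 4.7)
The plan is to derive each property directly from the corresponding one for the minimal weak upper gradient collected in \eqref{eq:calcdf}, using the defining identity $|\d f| = |\D f|$ $\mm$-a.e., the linearity of $\d$ (which is built into Theorem/Definition \ref{thmdef:d}), and the $L^\infty(\X)$-module structure on $L^2(T^*\X)$.

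For closure, observe first that $\|\d g\|_{L^2(T^*\X)} = \||\D g|\|_{L^2(\mm)} \leq \|g\|_{W^{1,2}}$, so $\d$ is a bounded linear operator from the Banach space $W^{1,2}(\X)$ into $L^2(T^*\X)$. Assume $f_n \to f$ in $L^2(\X)$ and $\d f_n \to \omega$ in $L^2(T^*\X)$. Linearity of $\d$ gives $\|\d f_n - \d f_m\|_{L^2(T^*\X)} = \||\D(f_n-f_m)|\|_{L^2(\mm)}$, so the Cauchy property of $(\d f_n)$ in $L^2(T^*\X)$ together with that of $(f_n)$ in $L^2(\X)$ shows $(f_n)$ is Cauchy in $W^{1,2}(\X)$. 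Completeness of $W^{1,2}(\X)$ (see \eqref{eq:chlsc}) then yields a $W^{1,2}$-limit, which by uniqueness of $L^2$-limits agrees with $f$, and whose $\d$-image is $\omega$ by continuity of $\d$; thus $\d f = \omega$.

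Locality is immediate: apply \eqref{eq:dfloc} to $f-g$ to get $|\D(f-g)| = 0$ $\mm$-a.e.\ on $\{f=g\}$, then use $|h\,\eta| = |h|\,|\eta|$ for the pointwise norm of a module element to obtain $|\nchi_{\{f=g\}}(\d f - \d g)| = \nchi_{\{f=g\}}|\d(f-g)| = 0$ $\mm$-a.e., so $\nchi_{\{f=g\}}(\d f - \d g) = 0$ in $L^2(T^*\X)$. For the chain rule, the case of constants (where $\d c = 0$ since $|\D c| = 0$) combined with linearity settles the affine case. Applying locality on the preimages $A_i := f^{-1}(I_i)$ of a partition of $\R$ into intervals on which $\varphi$ is affine upgrades it to $\varphi \in C^1$ piecewise affine. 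For general $\varphi \in C^1 \cap \LIP(\R)$, pick piecewise-affine $(\varphi_n)$ with $\varphi_n(0) = \varphi(0)$, $\sup_n \Lip(\varphi_n) < \infty$, and $\varphi_n \to \varphi$, $\varphi_n' \to \varphi'$ locally uniformly. Then $\varphi_n\circ f \to \varphi\circ f$ in $L^2(\X)$ (by dominated convergence, bounding $|\varphi_n\circ f - \varphi(0)|$ by $\Lip(\varphi_n)|f|$) and $\varphi_n'(f)\d f \to \varphi'(f)\d f$ in $L^2(T^*\X)$ (dominated convergence applied to $|\varphi_n'(f) - \varphi'(f)|^2|\d f|^2 \in L^1$). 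Closure of $\d$ yields $\varphi\circ f \in W^{1,2}(\X)$ with $\d(\varphi\circ f) = \varphi'(f)\d f$.

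For Leibniz, fix $M > \max(\|f\|_\infty, \|g\|_\infty, \|f+g\|_\infty)$ and modify $z\mapsto z^2$ outside $[-M,M]$ to obtain $\psi \in C^1\cap\LIP(\R)$ with $\psi(z) = z^2$ on $[-M,M]$. The chain rule applied to $\psi$ gives $\d(f^2) = 2f\,\d f$, $\d(g^2) = 2g\,\d g$, and $\d((f+g)^2) = 2(f+g)\,\d(f+g) = 2(f+g)(\d f + \d g)$ by linearity. Expanding $(f+g)^2 = f^2 + 2fg + g^2$ and using linearity of $\d$ once more, $\d(f^2) + 2\d(fg) + \d(g^2) = 2(f+g)(\d f + \d g)$, which simplifies to $\d(fg) = f\,\d g + g\,\d f$. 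The main obstacle is the approximation step in the chain rule: one must ensure both $L^2(\X)$-convergence of $\varphi_n\circ f$ and $L^2(T^*\X)$-convergence of $\varphi_n'(f)\d f$, for which reducing to $\varphi(0)=0$ via $\d c = 0$ and using the uniform Lipschitz bound on the $\varphi_n$'s is essential, and this is the only place where closure of $\d$ is genuinely invoked.
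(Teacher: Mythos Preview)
Your argument is correct and, for closure, locality, and the chain rule, follows the paper's line essentially verbatim (closure via completeness of $W^{1,2}$, locality from \eqref{eq:dfloc}, chain rule by the affine $\to$ piecewise-affine $\to$ approximation-plus-closure scheme).

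The one genuine divergence is in the Leibniz rule. You polarize: from the chain rule applied to a $C^1\cap\LIP$ modification of $z\mapsto z^2$ you get $\d(h^2)=2h\,\d h$ for bounded $h$, and then expand $(f+g)^2$. The paper instead adds a large constant to make $f,g\geq 1$ and applies the chain rule with $\varphi=\log$, writing $\d(fg)=fg\,\d(\log(fg))=fg\,(\d\log f+\d\log g)=g\,\d f+f\,\d g$. Both are clean one-line reductions to the chain rule; your polarization trick has the mild advantage of not needing to first shift $f,g$ to be bounded away from zero, while the paper's logarithm trick is the one that generalizes more directly to non-Hilbertian or multiplicative settings. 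Either way the content is the same: Leibniz is a formal consequence of the chain rule plus linearity.
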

\begin{idea} For closure, let $(f_n)\subset  W^{1,2}(\X)$ be such that $(f_n),(\d f_n)$ converge to $f,\omega$ in $L^2(\X)$ and $L^2(T^*\X)$ respectively. We aim at proving that $f\in W^{1,2}(\X)$ and $\d f=\omega$. The Sobolev regularity of $f$ follows from the $L^2$-lower semicontinuity of the Cheeger energy and  $2\ch(f_n)=\int|\d f_n|^2\,\d\mm\to \|\omega\|_{L^2(T^*\X)}^2<\infty$. The linearity of $\d$ and again such semicontinuity give, as in \eqref{eq:chlsc}, that
\[
\|\d f-\d f_n\|^2_{L^2(T^*\X)}=2\ch(f-f_n)\leq\limi_m 2\ch(f_m-f_n)=\limi_m \|\d f_m-\d f_n\|^2_{L^2(T^*\X)}
\]
so that sending $n\to\infty$ and using that $(\d f_n)$ is $L^2(T^*\X)$-Cauchy we conclude that such sequence converges to $\d f$ and thus that $\d f=\omega$.

For locality notice that $|\nchi_{\{f=0\}}\d f|=\nchi_{\{f=0\}}|\D f|=0$ by the locality property \eqref{eq:dfloc} and conclude via linearity of $\d$. For the chain rule say that $\mm(\X)<\infty$ so that constant functions are in $L^2(\X)$ (otherwise argue by locality). Then we already know by \eqref{eq:dfchain} that $\varphi\circ f\in W^{1,2}(\X)$ and, by the linearity of $\d$ and the trivial fact that constant functions have zero differential, that the chain rule holds if $\varphi$ is affine. Then by locality it holds if $\varphi$ is piecewise affine and the conclusion follows by approximation using the closure of $\d$. Finally, the Leibniz rule for $f,g\geq 1$ follows from the chain rule for $\varphi=\log$:
\[
\d(fg) =fg\,(\d(\log(fg))=fg\,(\d(\log f+\log g))=fg\,(\d(\log f)+\d (\log g))=g\,\d f+f\,\d g
\]
and the general case follows from this one applied to $f+C,g+C$ for $C\gg1$.
\end{idea}
We say that $\M$ is a {\bf Hilbert module} provided it is, when seen as a Banach space, a Hilbert space. From $\|\nchi_Ev\|^2_\M=\int_E|v|^2\,\d\mm$ it is easy to see that this is the case iff the pointwise norm satisfies the pointwise parallelogram identity
\[
|v+w|^2+|v-w|^2=2(|v|^2+|w|^2)\quad\mm-a.e.\qquad\forall v,w\in\M
\]
and in this case the formula
\[
\la v,w\ra:=\tfrac12(|v+w|^2-|v|^2-|w|^2)
\]
defines an $L^\infty$-bilinear and continuous map from $\M^2$ to $L^1$. 

From this discussion and Proposition \ref{prop:infhilbcalc} it is easy to see that $(\X,\sfd,\mm)$ is infinitesimally Hilbertian iff $L^2(T^*\X)$ is a Hilbert module and that in this case the scalar product of $\d f$ and $\d g$ as just defined coincides with the quantity introduced in Proposition \ref{prop:infhilbcalc}.

\medskip

The concept of $L^2$-normed module somehow interpolates the ones of Banach space and vector bundle and burrows constructions from both of these. In particular, both the {\bf dual} and the {\bf pullback} of a module can be introduced in a natural way. 

\medskip

The dual $\M^*$ of $\M$ is the space of $L^\infty(\X)$-linear and continuous maps $T:\M\to L^1(\X)$ equipped with the operator norm, the natural product $(fT(v)):=T(fv)$ and the pointwise norm:
\[
|T|_*:=\esssup_{|v|\leq 1\ \mm-a.e.} T(v)\qquad\forall T\in \M^*.
\]
A little bit of work shows that with this structure $\M^*$ is also an $L^2$-normed $L^\infty$-module and that
\begin{equation}
\label{eq:intmap}
\begin{split}
&\text{the natural `integration' map ${\sf Int}$ from $\M^*$ to the Banach dual $\M'$ of $\M$ }\\
&\text{sending $T$ to $\textstyle{v\mapsto \int T(v)\,\d\mm}$ is a bijective isometry.}\footnotemark
\end{split}
\end{equation}
\footnotetext{the only non trivial part is surjectivity of ${\sf Int}$. For this let $\ell\in \M'$ and notice that for $v\in \M$ the map taking a Borel set $E\subset\X$ and returning $\ell(\nchi_Ev)\in\R$ is a measure absolutely continuous w.r.t.\ $\mm$: its Radon-Nikodym density is the desired value of ${\sf Int}^{-1}(\ell)(v)$.}

Relying on this fact one can prove the duality formula
\[
|v|=\esssup_{|T|_*\leq 1\ \mm-a.e.} T(v)\qquad\forall v\in \M
\]
that shows that $\M$ isometrically embeds in its bidual $\M^{**}$ in a natural way. Then again the fact that ${\sf Int}$ is an isometric bijection shows that $\M$ is reflexive as a module (i.e.\ the embedding of $\M$ in $\M^{**}$ is surjective) iff it is reflexive as Banach space. 

If $\M$ is a Hilbert module, then the Riesz isomorphism of $\M$ with $\M'$ induces, by post-composition with ${\sf Int}^{-1}$, a Riesz isomorphism of modules $\mathscr R:\M\to\M^*$: for $v\in\M$ the element $\mathscr R(v)\in\M^*$ is characterized by the fact that 
\[
\la v,w \ra=\mathscr R(v)(w)\quad\mm-a.e.\qquad\forall w\in\M.
\]
In particular, on an infinitesimally Hilbertian space the tangent and cotangent modules are isomorphic via such isomorphism: we shall denote by $L^2(T\X)$ the dual of $L^2(T^*\X)$ and  by $\nabla f\in L^2(T\X)$  the image of $\d f\in L^2(T^*\X)$ under the Riesz isomorphism. Notice that   since, rather trivially from Theorem/Definition \ref{thmdef:d}, if $W^{1,2}(\X)$ is separable so is $L^2(T^*\X)$, recalling also \eqref{eq:w12reflsep} we have
\begin{equation}
\label{eq:infhilftangsep}
\text{$(\X,\sfd,\mm)$ infinitesimally Hilbertian}\qquad\Rightarrow\qquad\text{$L^2(T^*\X)$ and $L^2(T\X)$ are separable.}
\end{equation}
Having defined a differential and a concept of duality, by taking the adjoint we can define the divergence: for $v\in L^2(T\X)$ we say
\begin{equation}
\label{eq:defdiv}
v\in D(\div)\quad\text{ with }\quad\div (v)=f\qquad\text{iff}\qquad \int fg\,\d\mm=-\int \d g(v)\,\d\mm\quad\forall g\in W^{1,2}(\X).
\end{equation}
Starting from the analogous properties of $\d$ it is then easy to see that $\div:L^2(T\X)\to L^2(\X)$ is a closed operator and that
\begin{equation}
\label{eq:leibdiv}
\begin{array}{ll}
\div(fv)=\d f(v)+f\div(v)&\qquad\forall v\in D(\div),\ f\in\Lip_\b(\X),\\
f\in D(\Delta)\quad\Leftrightarrow\quad \nabla f\in D(\div)&\qquad\text{and in this case}\qquad\Delta f=\div(\nabla f).
\end{array}
\end{equation}

The pullback operation takes an $L^2(\X)$-normed $L^\infty(\X)$-module  $\M$ and a suitable map $\varphi:\Y\to\X$ and returns an $L^2(\Y)$-normed $L^\infty(\Y)$-module $\varphi^*\M$ together with a pullback map $\varphi:\M\to\varphi^*\M$. If we think at the module $\M$ as the space of sections of the bundle whose fibre at $x$ is the space $\M_x$, then the analogy with the classical construction in differential geometry would lead at thinking $\varphi^*\M$ as the space of sections of the bundle whose fibre at $y\in\Y$ is the space $\varphi^*\M_y:=\M_{\varphi(y)}$ and the pullback map would take the section $x\mapsto v_x\in\M_x$ and return $y\mapsto v_{\varphi(x)}\in \varphi^*\M_y$. This intuitive description is compatible with the following characterization,  given in terms of the axiomatization of modules as  above:
\begin{thmdef}\label{def:pb}  Let $(\X,\sfd_\X,\mm_\X)$ and $(\Y,\sfd_\Y,\mm_\Y)$ be two metric measure spaces and $\varphi:\Y\to\X$ Borel with $\varphi_*\mm_\Y\leq C\mm_\X$ for some $C>0$. Then for every $L^2(\X)$-normed $L^\infty(\X)$-module $\M$ there is a unique couple $(\varphi^*\M,\varphi^*)$ with $\varphi^*\M$ being an $L^2(\Y)$-normed $L^\infty(\Y)$-module and $\varphi^*:M\to\varphi^*\M$ linear and such that
\begin{itemize}
\item[i)] $|\varphi^*v|=|v|\circ\varphi$ $\mm_\Y$-a.e.\ for any $v\in\M$,
\item[ii)] $L^\infty(\Y)$-linear combinations of $\{\varphi^*v:v\in\M\}$ are dense in $\varphi^*\M$.
\end{itemize}
Uniqueness is  up to unique isomorphism: if $(\mathscr N,\tilde\varphi^*)$ is another couple with the same properties, then there is a unique $\Psi:\varphi^*\M\to\mathscr N$ isomorphism of modules such that $\Psi\circ\varphi^*=\tilde\varphi^*$.
\end{thmdef}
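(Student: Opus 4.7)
The plan is to follow the template of Theorem/Definition \ref{thmdef:d} almost verbatim: both statements have the same universal-property shape, and the only genuinely new input is the bounded compression assumption $\varphi_*\mm_\Y\leq C\mm_\X$, which is exactly what makes $|v|\circ\varphi$ a well-defined element of $L^2(\Y,\mm_\Y)$ for every $v\in\M$. Indeed, since $|v|$ is only defined $\mm_\X$-a.e., the absolute continuity $\varphi_*\mm_\Y\ll\mm_\X$ guarantees that $|v|\circ\varphi$ is an unambiguous $\mm_\Y$-a.e.\ equivalence class, and
\[
\int_\Y\bigl(|v|\circ\varphi\bigr)^2\,\d\mm_\Y=\int_\X |v|^2\,\d\varphi_*\mm_\Y\leq C\|v\|_\M^2,
\]
so the bookkeeping all remains inside $L^2$.

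For uniqueness, assume $(\mathscr N,\tilde\varphi^*)$ satisfies $(i),(ii)$. By $(ii)$ applied to $\varphi^*\M$, elements of the form $\sum_i\nchi_{B_i}\varphi^*v_i$ with $(B_i)$ a finite Borel partition of $\Y$ and $v_i\in\M$ are dense, and the requirement $\Psi\circ\varphi^*=\tilde\varphi^*$ together with $L^\infty(\Y)$-linearity forces
\[
\Psi\Bigl(\sum_i\nchi_{B_i}\varphi^*v_i\Bigr)=\sum_i\nchi_{B_i}\tilde\varphi^*v_i.
\]
Property $(i)$ for both modules gives $\bigl|\sum_i\nchi_{B_i}\varphi^*v_i\bigr|=\sum_i\nchi_{B_i}|v_i|\circ\varphi=\bigl|\sum_i\nchi_{B_i}\tilde\varphi^*v_i\bigr|$, so the prescription is independent of the representative, is a pointwise isometry on a dense subspace, and extends uniquely by continuity to the desired isomorphism of modules.

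For existence I would mimic the construction of the pre-cotangent module. Let ${\sf Ppm}$ be the set of finite sequences $(B_i,v_i)$ with $(B_i)$ Borel partition of $\Y$ and $v_i\in\M$, and declare $(B_i,v_i)\sim(C_j,w_j)$ if $|v_i-w_j|\circ\varphi=0$ $\mm_\Y$-a.e.\ on $B_i\cap C_j$ for all $i,j$. Write $[B_i,v_i]$ for the equivalence class of $(B_i,v_i)$ and endow ${\sf Ppm}/\!\sim$ with
\[
\begin{array}{rll}
\alpha[B_i,v_i]+\beta[C_j,w_j]&:=[B_i\cap C_j,\alpha v_i+\beta w_j], &\quad\text{(linear combination),}\\
\bigl|[B_i,v_i]\bigr|&:=\sum_i\nchi_{B_i}\bigl(|v_i|\circ\varphi\bigr),&\quad\text{(pointwise norm),}\\
\bigl(\sum_j\nchi_{D_j}\alpha_j\bigr)[B_i,v_i]&:=[B_i\cap D_j,\alpha_j v_i],&\quad\text{(action by simple functions),}\\
\|[B_i,v_i]\|^2&:=\int\bigl|[B_i,v_i]\bigr|^2\,\d\mm_\Y. &\quad\text{(norm).}
\end{array}
\]
All four operations are well posed precisely because of bounded compression, and give ${\sf Ppm}/\!\sim$ the structure of a normed module over the simple functions on $\Y$. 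Take the completion in this norm: the action by simple functions extends by continuity (via the estimate $\|f\cdot x\|\leq\|f\|_{L^\infty}\|x\|$ which holds on generators) to an action of $L^\infty(\Y)$, turning the completion into an $L^2(\Y)$-normed $L^\infty(\Y)$-module $\varphi^*\M$. Finally set $\varphi^*v:=[\Y,v]$: then $(i)$ is built into the pointwise norm, while the identity $\sum_i\nchi_{B_i}\varphi^*v_i=[B_i,v_i]$ together with density of ${\sf Ppm}/\!\sim$ in $\varphi^*\M$ yields $(ii)$.

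The hard part will not be any of the algebraic verifications, which are essentially those already carried out in Theorem/Definition \ref{thmdef:d}, but rather tracking the role of the hypothesis $\varphi_*\mm_\Y\leq C\mm_\X$: it is needed to make $|v|\circ\varphi$ independent of the $\mm_\X$-a.e.\ representative of $|v|$, to ensure the quantity $\|[B_i,v_i]\|$ is finite, and to check that two representatives $(B_i,v_i),(C_j,w_j)$ with $\sum_i\nchi_{B_i}(|v_i|\circ\varphi)=\sum_j\nchi_{C_j}(|w_j|\circ\varphi)$ $\mm_\Y$-a.e.\ give well-defined operations. Once this is in place, the universal construction, the extension of the $L^\infty(\Y)$-action by truncation, and the uniqueness argument above complete the proof.
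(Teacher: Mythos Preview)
Your proposal is correct and follows essentially the same approach as the paper: both argue by analogy with Theorem/Definition \ref{thmdef:d}, constructing a ``pre-pullback module'' of finite sequences $(B_i,v_i)$ with the same equivalence relation, operations, and completion, and both deduce uniqueness from the forced definition on simple combinations $\sum_i\nchi_{B_i}\varphi^*v_i$. Your write-up is in fact more detailed than the paper's sketch, particularly in isolating where the bounded-compression hypothesis $\varphi_*\mm_\Y\leq C\mm_\X$ enters.
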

\begin{idea} Arguments close to those for Theorem \ref{thmdef:d} are valid also here. For existence, one starts with the `Prepullback module' ${\sf Ppb}$ defined as the set of finite sequences $(A_i,v_i)$ with $(A_i)$ Borel partition of $\Y$ and $(v_i)\subset\M$, puts on it the equivalence relation $(A_i,v_i)\sim(B_j,w_j)$ if $|v_i-w_j|\circ\varphi=0$ $\mm_\Y$-a.e.\ on $A_i\cap B_j$ and then defines the operations
\[
\begin{array}{rll}
\alpha[A_i,v_i]+\beta[B_j,w_j]&:=[A_i\cap B_j,\alpha v_i+\beta w_j],\qquad&\text{\emph {(linear combination),}}\\
|[A_i,v_i]|&:=\sum_i\nchi_{A_i}|v_i|\circ\varphi,\qquad&\text{\emph{(pointwise norm),}}\\
\big(\sum_j\nchi_{B_j}\alpha_j\big)\,[A_i,v_i]&:=[A_i\cap B_j,\alpha_j v_i],\qquad&\text{\emph{(product by simple functions),}}\\
\big\|[A_i,v_i]\big\|^2&:={\int |[A_i,v_i]|^2\,\d\mm_\Y},\qquad&\text{\emph{(norm)}.}
\end{array}
\]
As before, these can be extended by continuity to the completion of $({\sf Ppb}/\sim,\|\cdot\|)$, thus producing the desired pullback.
\end{idea}
Notable examples of pullbacks are:
\begin{itemize}
\item[-] $(\Y,\sfd_\Y,\mm_\Y):=(\X,\sfd,\mm\restr E)$ for $E\subset\X$ Borel and $\varphi:\Y\to\X$ the identity map. In this case the pullback  is given by the `restricted' module $\M\restr E:=\{\nchi_Ev:v\in\M\}$ and the restriction map $v\mapsto\nchi_Ev$.
\item[-] $\Y$ is the product of $\X$ and the Euclidean interval $[0,1]$ equipped with the product measures and distance,   and $\varphi:\Y\to\X$ is the projection map. In this case the pullback is given by the Lebesgue-Bochner space $L^2([0,1],\M)$ (that carries the structure of $L^\infty(\Y)$ module via the operations $|(t\mapsto v_t)|(t,x):=|v_t|(x)$ and $f(\cdot,\cdot)(t\mapsto v_t):=(t\mapsto f(t,\cdot)v_t)$) and the map sending $v\in\M$ to the constant function $(t\mapsto v)\in L^2([0,1],\M)$.
\end{itemize}
The latter example, the isometry ${\sf Int}:\M^*\to\M'$ and the well-established duality theory of Lebesgue-Bochner spaces (see e.g.\ \cite{DiestelUhl77}) show that in general the dual of the pullback is not the pullback of the dual. More precisely, the natural $L^\infty(\Y)$-linear and isometric immersion ${\sf I}:\varphi^*\M^*\to(\varphi^*\M)^*$ characterized by the validity of ${\sf I}(\varphi^*w)(\varphi^*v)=w(v)\circ\varphi$ is not always surjective. Surjectivity of ${\sf I}$ holds at least if $\M$ is Hilbert (with a direct proof based on Riesz' isomorphism - see \cite[Proposition 1.32]{Gigli17}) or if $\M^*$ is separable (via a more elaborate argument that reduces the problem to the classical study of $(L^2([0,1],\M))'$ - see \cite[Theorem 1.6.7]{Gigli14}).

\medskip

The next proposition shows that the pointwise norm in the tangent module can be used to compute speed of curves, much like in the classical smooth setting. Due to the irregularity of the underlying spaces and the nature of the definitions adopted, we cannot expect this statement to have a meaning for \emph{any} given curve: some sort of \emph{a.e.\ curve} should appear somewhere. It is then not surprising that the concept of test plan is called into play:    
\begin{theorem}\label{thm:speedplan}
Let $(\X,\sfd,\mm)$ be a metric measure space such that $L^2(T\X)$ is separable and $\ppi$ a test plan. Then for a.e.\ $t\in[0,1]$ there is $\ppi'_t\in \e_t^*L^2(T\X)$ such that for any $f\in W^{1,2}(\X)$ the derivative of $t\mapsto f\circ\e_t \in L^1(\ppi)$ (that exists by Lemma \ref{le:equivsobw}) is given by $(\e_t^*\d f)(\ppi'_t)$ $\ppi$-a.e..  Moreover, it holds
\begin{equation}
\label{eq:metsp}
|\ppi'_t|(\gamma)=|\dot\gamma_t|\qquad(\ppi\times\mathcal L^1)-a.e.\ (\gamma,t).
\end{equation}
\end{theorem}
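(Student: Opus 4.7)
The plan is to construct $\ppi'_t$ by duality: define a functional $L_t$ on the pullback cotangent module by reading off the $L^1(\ppi)$-derivative $\partial_t(f\circ\e_t)$, check it is bounded pointwise by $|\omega|\cdot|\dot\gamma_t|$, and realize it as an element of $\e_t^*L^2(T\X)$ via the separability assumption.

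First I would pick a countable $\mathcal D\subset W^{1,2}(\X)$ whose differentials, together with $L^\infty$-coefficients, are dense in $L^2(T^*\X)$ (available because $L^2(T\X)\cong L^2(T^*\X)^*$ is separable, as Banach spaces). Outside a single $\mathcal L^1$-null set of $t$'s one has simultaneously: for every $f\in\mathcal D$ the strong $L^1(\ppi)$-derivative $\partial_t(f\circ\e_t)$ exists with $|\partial_t(f\circ\e_t)|(\gamma)\leq|\D f|(\gamma_t)\,|\dot\gamma_t|$ for $\ppi$-a.e.\ $\gamma$ (Lemma \ref{le:equivsobw}), and $|\dot\gamma_t|\in L^2(\ppi)$. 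For each such $t$ and each simple pullback form $\omega=\sum_i\nchi_{A_i}\e_t^*\d f_i$ with $f_i\in\mathcal D$, set
\begin{equation*}
L_t(\omega):=\sum_i\nchi_{A_i}\,\partial_t(f_i\circ\e_t).
\end{equation*}
Well-definedness follows by combining the above pointwise estimate with locality \eqref{eq:dfloc}: two representations of the same form have difference with vanishing pullback pointwise norm, hence with vanishing derivation; and the same estimate yields
\begin{equation*}
|L_t(\omega)|(\gamma)\leq|\omega|_{\e_t^*L^2(T^*\X)}(\gamma)\,|\dot\gamma_t|\qquad\ppi\text{-a.e.}
\end{equation*}

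Cauchy--Schwarz then makes $L_t$ a bounded $L^\infty(\ppi)$-linear map into $L^1(\ppi)$, extendable by density to all of $\e_t^*L^2(T^*\X)$. The separability of $L^2(T\X)$ activates the duality identification recalled after Theorem/Definition \ref{def:pb}, so $(\e_t^*L^2(T^*\X))^*\cong\e_t^*L^2(T\X)$ and $L_t$ corresponds to the desired $\ppi'_t\in\e_t^*L^2(T\X)$ with $\omega(\ppi'_t)=L_t(\omega)$. The identity $(\e_t^*\d f)(\ppi'_t)=\partial_t(f\circ\e_t)$ propagates from $\mathcal D$ to arbitrary $f\in W^{1,2}(\X)$ by density and continuity of both sides in the $W^{1,2}$-topology (via the operator bound $\|\cdot\|_{L^1(\ppi)}\leq\sqrt{\comp(\ppi)}\,\||\dot\gamma_t|\|_{L^2(\ppi)}\,\|f\|_{W^{1,2}}$).

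For the identity \eqref{eq:metsp}, the inequality $|\ppi'_t|(\gamma)\leq|\dot\gamma_t|$ is immediate from the pointwise bound on $L_t$ and the dual characterization of the pointwise norm. For the opposite bound I would mimic the argument behind \eqref{eq:metrsp}: fix a countable dense $(x_n)\subset\supp(\mm)$ and put $h_{n,k}(x):=\sfd(x,x_n)\wedge k$, which is bounded and $1$-Lipschitz, hence in $W^{1,2}(\X)$ with $|\D h_{n,k}|\leq 1$ $\mm$-a.e.\ and, by bounded compression, also at $\gamma_t$ for $\ppi\times\mathcal L^1$-a.e.\ $(\gamma,t)$. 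For $\ppi$-a.e.\ $\gamma$, compactness of $\gamma([0,1])$ supplies $k_n(\gamma)\in\N$ with $\gamma([0,1])\subset B_{k_n(\gamma)}(x_n)$ for every $n$; on such $\gamma$ the function $h_{n,k_n(\gamma)}\circ\gamma$ coincides with $\sfd(\cdot,x_n)\circ\gamma$, and the proof of \eqref{eq:metrsp} gives $|\dot\gamma_t|=\sup_n|(h_{n,k_n(\gamma)}\circ\gamma)'(t)|$ for a.e.\ $t$. Since $(h_{n,k}\circ\gamma)'(t)=(\e_t^*\d h_{n,k})(\ppi'_t)(\gamma)$ by the extended identity and $|\e_t^*\d h_{n,k}|(\gamma)\leq 1$, we get $|(h_{n,k}\circ\gamma)'(t)|\leq|\ppi'_t|(\gamma)$, and taking $\sup_n$ closes the estimate.

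The main obstacle I anticipate is the Fubini-type bookkeeping needed to arrange a single $\ppi\times\mathcal L^1$-null set outside of which all the a.e.\ statements hold in unison (existence of derivatives along the countable family $\mathcal D$, the pointwise estimate $|\partial_t(f\circ\e_t)|\leq|\D f|(\gamma_t)|\dot\gamma_t|$, the pullback of $|\D h_{n,k}|\leq 1$, and the representation of metric speed as a countable supremum), together with ensuring that the approximation of arbitrary $W^{1,2}$-functions by elements of $\mathcal D$ does not enlarge this null set; once this is handled the duality step is routine.
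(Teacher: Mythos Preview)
Your proposal is correct and follows essentially the same route as the paper: construct $L_t$ on simple pullback forms via the $L^1(\ppi)$-derivative, bound it pointwise by $|\omega|\,|\dot\gamma_t|$, extend by density, and invoke the separability-based duality $(\e_t^*L^2(T^*\X))^*\cong\e_t^*L^2(T\X)$; the reverse inequality in \eqref{eq:metsp} is obtained in both cases by testing against a countable family of truncated $1$-Lipschitz distance functions that recovers the metric speed as a supremum. The only cosmetic differences are that the paper uses $f_n:=(n-\sfd(\cdot,x_n))^+$ (which has bounded support, hence lies in $W^{1,2}$ even when $\mm(\X)=\infty$, unlike your $h_{n,k}=\sfd(\cdot,x_n)\wedge k$) and handles the uniform null set by first observing that separability of $L^2(T\X)$ forces separability of $W^{1,2}(\X)$, rather than fixing a countable $\mathcal D$ and extending afterward.
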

\begin{idea} Recall that if the dual of a Banach space is separable so is the given Banach space. Hence $L^2(T^*\X)$ is separable and thus so is $W^{1,2}(\X)$ (that embeds isometrically in $L^2(\X)\times L^2(T^*\X)$ via $f\mapsto(f,\d f)$). 

By Lemma \ref{le:equivsobw} and an argument based on the separability of $W^{1,2}(\X)$ we deduce that there is a Borel exceptional set $N\subset[0,1]$ such that for any $f\in W^{1,2}(\X)$ the derivative ${\rm Der}_{\sppi,t}(f)$ of   $t\mapsto f\circ\e_t \in L^1(\X)$  exists for $t\notin N$ and satisfies 
\begin{equation}
\label{eq:derp}
|{\rm Der}_{\sppi,t}(f)|(\gamma)\leq |\D f|(\gamma_t)|\dot\gamma_t|\qquad for\ \ppi-a.e.\ \gamma.
\end{equation}
Now let $V\subset \e_t^*(L^2(T^*\X))$ be the space of elements of the form $\sum_i\nchi_{A_i}\e_t^*\d f_i$ where $(A_i)$ is a finite Borel partition of $C([0,1],\X)$ and $(f_i)\subset W^{1,2}(\X)$. Define $L_t:V\to L^1(\ppi)$ by putting $L_t(\sum_i\nchi_{A_i}\e_t^*\d f_i)=\sum_i\nchi_{A_i}{\rm Der}_{\sppi,t}(f_i)$ and notice that \eqref{eq:derp} ensures that the definition is well posed and that  $|L_t(w)|(\gamma)\leq |w||\dot\gamma_t|$ for any $w\in V$ and  for $\ppi$-a.e.\ $\gamma$. In particular, $L_t$ is continuous. Since $V$ is dense in $\e_t^*L^2(T^*\X)$, this tells that $L_t$ can be uniquely extended to a linear continuous map $\ppi_t'$ from $\e_t^*L^2(T^*\X)$ to $L^1(\ppi)$ and such extension still satisfies 
\begin{equation}
\label{eq:normpp}
|\ppi_t'(w)|\leq |w||\dot\gamma_t|\quad \forall  w\in \e_t^*L^2(T^*\X),\qquad \ppi-a.e.\ \gamma. 
\end{equation}
This bound ensures that $\ppi'_t$ respects the product with characteristic functions and thus, by linearity and continuity, the product with functions in $L^\infty(\ppi)$. In other words, $\ppi_t'\in (\e_t^*L^2(T^*\X))^*\sim \e_t^*L^2(T\X)$ (from the separability of $L^2(T\X)$  the discussion made before) and \eqref{eq:normpp} and the definition of dual pointwise norm give  $\leq$ in \eqref{eq:metsp}.

For the opposite inequality we notice that $|{\rm Der}_{\sppi,t}(f)|(\gamma)\leq |\D f|(\gamma_t)|\ppi'_t|(\gamma)$  for $\ppi$-a.e.\ $\gamma$ by definition of dual norm. Thus for $f$ 1-Lipschitz \eqref{eq:dflip} gives $\partial_t(f\circ\gamma)\leq |\ppi'_t|(\gamma)$ for $\ppi$-a.e.\ $\gamma$ and a.e.\ $t$, hence  to conclude it is sufficient to find a sequence $(f_n)\subset W^{1,2}(\X)$ of 1-Lipschitz functions such that $\sup_n\partial_t(f\circ\gamma)\geq|\dot\gamma_t|$ for a.e.\ $t$ and any absolutely continuous curve $\gamma$. Take $f_n:=(n-\sfd(\cdot,x_n))^+$ with $(x_n)\subset\X$ dense: since $\sfd(x,y)=\sup_nf_n(x)-f_n(y)$ we have
\[
\sfd(\gamma_s,\gamma_t)=\sup_nf_n(\gamma_s)-f_n(\gamma_t)=\sup_n\int_t^s\partial_rf_n(\gamma_r)\,\d r\leq\int_t^s \sup_n\partial_rf_n(\gamma_r)\,\d r
\]
and the conclusion follows from the minimality property of the metric speed.
\end{idea}

\subsection{Second order calculus on $\RCD$ spaces}\label{se:secondordercalc}
The language of $L^2$-normed modules offers a natural way to speak about higher order tensors: these are elements of suitable tensor products of $L^2(T^*\X)$ and $L^2(T\X)$. Here the tensor product of the Hilbert modules $\Hi_1,\Hi_2$ on $\X$ is obtained by:
\begin{itemize}
\item[-] first considering their algebraic tensor product $\Hi_1\otimes_{alg}\Hi_2$ as $L^\infty(\X)$-modules, i.e.\ the space of formal finite sums $\sum_iv_i\otimes w_i$ that are $L^\infty$-linear in each entry,
\item[-] then  defining on $\Hi_1\otimes_{alg}\Hi_2$ a symmetric map with values on $\mm$-a.e.\ defined Borel functions by putting
\[
\la v_1\otimes w_1,v_2\otimes w_2\ra:=\la v_1,v_2\ra_{1}\la w_1,w_2\ra_{2}
\]
where $\la\cdot,\cdot\ra_i$ is the pointwise scalar product on $\Hi_i$, and extend it by $L^\infty$-bilinearity. Standard manipulations show that this is well-posed and a.e.\ positively definite, thus $|A|:=\sqrt{\la A,A\ra}$ is $\mm$-a.e.\ well defined for any $A\in \Hi_1\otimes_{alg}\Hi_2$.
\item[-] Finally, the tensor product $\Hi_1\otimes\Hi_2$ is the completion of  the space $\{A\in \Hi_1\otimes_{alg}\Hi_2\ :\ |A|\in L^2(\X) \}$  w.r.t.\ the natural norm $\|A\|:=\||A|\|_{L^2}$.  It is not hard to see that this comes naturally with the structure of $L^2(\X)$-normed $L^\infty(\X)$-module and that if $\Hi_1,\Hi_2$ are separable, so is $\Hi_1\otimes\Hi_2$.
\end{itemize}
Thus, for instance, if we want to define the space $W^{2,2}(\X)$ and the Hessian of any such function, we know that such Hessian should belong to the tensor product  $L^2((T^*)^{\otimes 2}\X)$ of $L^2(T^*\X)$ with itself. We shall denote by $|\cdot|_{\HS}$ the pointwise norm in $L^2((T^*)^{\otimes 2}\X)$, as in the smooth setting the construction above produces the Hilbert-Schmidt norm.

In the smooth setting, the Hessian can be characterized via iterated gradients, i.e.\ via
\begin{equation}
\label{eq:Hesssmoooth}
{\rm Hess}f(\nabla g,\nabla h)=\tfrac12\big(\la\d\la\d f,\d g\ra,\d h\ra+\la\d \la\d f,\d h\ra,\d g\ra-\la\d f,\d\la\d g,\d h\ra\ra\big).
\end{equation}
We will use this formula to define the Hessian in $\RCD$ spaces, but in order to do so we need to have `many' functions $g,h$ for which $\d\la\d g,\d h\ra$ is well defined, i.e.\ for which $\la\d g,\d h\ra\in W^{1,2}(\X)$. This is one of the motivations that lead to the definition of the  space of \emph{test functions}
\[
\test(\X):=\big\{f\in L^\infty(\X)\cap\Lip(\X)\cap D(\Delta)\ :\ \Delta f\in L^\infty(\X)\cap D(\Delta)\big\}.
\]
Notice that using the mollified heat flow as in \eqref{eq:regheat} and recalling  the weak maximum principle and the $L^\infty-\Lip$ contraction property \eqref{eq:linftylip}, we can prove that   for $f\in W^{1,2}\cap L^\infty(\X)$ we have $\h_\eta f\in \test(\X)$, thus obtaining the density of $\test(\X)$ in $W^{1,2}(\X)$.

The relevance of the space $\test(\X)$ is due to the following result:
\begin{proposition}\label{prop:test} Let $(\X,\sfd,\mm)$ be $\RCD(K,\infty)$ and $f\in \test(\X)$. Then:
\begin{itemize}
\item[i)] $|\d f|^2\in W^{1,2}(\X)$ with
\begin{equation}
\label{eq:ddf}
\ch(|\d f|^2)\leq \Lip(f)^2\Big( \|\d f\|_{L^2}\|\d\Delta f\|_{L^2}+K^-\|\d f\|^2_{L^2}\Big).
\end{equation}
\item[ii)] $\test(\X)$ is an algebra.
\item[iii)] There exists a unique finite Borel (signed) measure $\Ggamma_2(f)$ on $\X$ satisfying
\begin{equation}
\label{eq:Ggamma2}
\int g\,\d\Ggamma_2(f)=\int \tfrac12{|\d f|^2}\Delta g-g\la \d f,\d\Delta f \ra\,\d\mm\qquad\forall g\in \test(\X)
\end{equation}
and we have
\begin{equation}
\label{eq:BG2}
\Ggamma_2(f)\geq K|\d f|^2\,\d\mm\qquad\text{(Bochner inequality)}.
\end{equation}
\end{itemize} 
\end{proposition}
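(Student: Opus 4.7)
I would prove the three items in the order (i), (iii), (ii), all three resting on the Bochner inequality \eqref{eq:bochner}.

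For (i) the strategy is to bound the Cheeger energies of heat-flow regularizations of $|\d f|^2$ uniformly in time. Fix $\eta\in C^\infty_c((0,\infty))$ with $\eta\geq 0$ and $\int\eta=1$ and set
$$g^\eta:=\int \eta(s)\,\h_s|\d f|^2\,\d s.$$
Since $|\d f|^2\in L^\infty(\X)$, the $L^\infty\to \LIP$ regularization \eqref{eq:linftylip} applied to $\h_s|\d f|^2$ together with the identity $\Delta^n g^\eta=\int (-1)^n\eta^{(n)}(s)\h_s|\d f|^2\,\d s$ (obtained by repeated integration by parts in $s$ using $\partial_s\h_s=\Delta\h_s$) place $g^\eta$ and all its iterated Laplacians in $L^\infty\cap \LIP(\X)$, so $g^\eta\in\test(\X)$; moreover $g^\eta\geq 0$ since $\h_s$ preserves positivity. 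Inserting $g^\eta$ into Bochner, computing by self-adjointness of the heat flow
$$\int|\d f|^2\Delta g^\eta\,\d\mm = -\int\eta'(s)\|\h_{s/2}|\d f|^2\|_{L^2}^2\,\d s,$$
and integrating by parts in $s$ via $\partial_s\|\h_{s/2}u\|_{L^2}^2=-2\ch(\h_{s/2}u)$, one arrives, using $\|g^\eta\|_\infty\leq \LIP(f)^2$ and Cauchy--Schwarz, at
$$\int\eta(s)\ch(\h_{s/2}|\d f|^2)\,\d s\leq \LIP(f)^2\bigl(\|\d f\|_{L^2}\|\d\Delta f\|_{L^2}+K^-\|\d f\|_{L^2}^2\bigr),$$
uniformly in $\eta$. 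Concentrating $\eta$ at some $s_0>0$ from the left (so that monotonicity $t\mapsto \ch(\h_tu)$ gives $\ch(\h_{s/2}|\d f|^2)\geq \ch(\h_{s_0/2}|\d f|^2)$ on the support of $\eta$) and then letting $s_0\downarrow 0$, the $L^2$-strong convergence $\h_{s_0/2}|\d f|^2\to|\d f|^2$ and the $L^2$-lower semicontinuity of $\ch$ deliver $|\d f|^2\in W^{1,2}(\X)$ and \eqref{eq:ddf}.

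For (iii) I consider the linear functional $F:\test(\X)\to\R$,
$$F(g):=\tfrac12\int|\d f|^2\Delta g\,\d\mm-\int g\la\d f,\d\Delta f\ra\,\d\mm-K\int g|\d f|^2\,\d\mm,$$
which by Bochner satisfies $F(g)\geq 0$ for every non-negative $g\in\test(\X)$. Applying $F\geq 0$ to $g+M$ and to $M-g$ with $M:=\|g\|_\infty$ (both non-negative, both still in $\test(\X)$) and computing $F(M)=M(\|\Delta f\|_{L^2}^2-K\|\d f\|_{L^2}^2)$ via \eqref{eq:intlaph} and $\Delta(M)=0$, one obtains the two-sided bound $|F(g)|\leq \|g\|_\infty(\|\Delta f\|_{L^2}^2+K^-\|\d f\|_{L^2}^2)$. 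Density of $\test(\X)$ in $C_b(\supp\mm)$ (via the mollified heat flow applied to continuous bounded functions) together with Riesz--Markov then produces a unique positive finite Borel measure $\mu_f$ with $F(g)=\int g\,\d\mu_f$ for every $g\in\test(\X)$. Setting $\Ggamma_2(f):=\mu_f+K|\d f|^2\,\mm$ gives \eqref{eq:Ggamma2}, while $\mu_f\geq 0$ gives the Bochner inequality \eqref{eq:BG2}. Finally, (ii) follows by Leibniz calculus: for $f_1,f_2\in\test(\X)$ the product $f_1f_2\in \LIP_\b(\X)\cap L^\infty(\X)$, and a direct integration by parts against an arbitrary $h\in W^{1,2}(\X)$ yields
$$\Delta(f_1f_2)=f_1\Delta f_2+f_2\Delta f_1+2\la\d f_1,\d f_2\ra\in L^\infty(\X);$$
the summands $f_i\Delta f_j$ lie in $W^{1,2}\cap L^\infty$ since each factor does, while the cross term is in $W^{1,2}$ by the polarization $\la\d f_1,\d f_2\ra=\tfrac12(|\d(f_1+f_2)|^2-|\d f_1|^2-|\d f_2|^2)$ and part (i), which together give $\Delta(f_1f_2)\in L^\infty\cap W^{1,2}(\X)$, i.e.\ $f_1f_2\in\test(\X)$.

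The main technical obstacle is the legitimate insertion of $g^\eta$ into Bochner in step (i); the key device is the time-mollification via $\eta$, which reduces all iterated Laplacians of $g^\eta$ to heat-flow evaluations against smooth compactly supported time profiles and so keeps everything in $L^\infty\cap\LIP$ with no appeal to pre-existing regularity of $|\d f|^2$. A secondary point is the density of $\test(\X)$ in $C_b(\supp\mm)$ needed in (iii), which again is obtained via the mollified heat flow.
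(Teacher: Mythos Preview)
Your argument mirrors the paper's: for (i) both insert $g^\eta=\h_\eta(|\d f|^2)$ into Bochner, rewrite $\tfrac12\int|\d f|^2\Delta g^\eta\,\d\mm=-\int\eta(s)\,\ch(\h_{s/2}|\d f|^2)\,\d s$ and let $\eta\weakto\delta_0$ using lower semicontinuity of $\ch$, while (ii) is in both cases Leibniz for $\Delta(f_1f_2)$ plus (i) via polarization. For (iii) the paper runs the same positivity-plus-Riesz argument when $\X$ is compact, but explicitly warns that for general $\X$ this step is \emph{not} routine --- density in $C_b$ together with an $L^\infty$-bound does not by itself yield a countably additive Borel measure on a non-compact space --- and instead invokes the quasi-regularity of $\ch$ and the transfer method; so what you flag as a ``secondary point'' is precisely the place where the paper defers to heavier machinery.
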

\begin{idea}
Recall \eqref{eq:regheat}, pick $g:=\h_\eta(|\d f|^2)$ in \eqref{eq:bochner} and notice that $0\leq g\leq \Lip(f)^2$ to get
\[
\int \tfrac12\Delta \h_\eta(|\d f|^2) |\d f|^2\,\d\mm\geq -\Lip(f)^2\Big( \|\d f\|_{L^2}\|\d\Delta f\|_{L^2}+K^-\|\d f\|^2_{L^2}\Big).
\]
Then observe that $-\int \Delta \h_\eta(|\d f|^2) |\d f|^2\,\d\mm=\iint_0^1\eta_t | \d\h_{t/2}(|\d f|^2)|^2\,\d t\d\mm$, let $\eta\weakto \delta_0$ and use the lower semicontinuity of $\ch$ to get $(i)$. Then $(ii)$ follows by direct computation: the only non-trivial claim is the fact that $\Delta(fg)$ is in $W^{1,2}(\X)$ for $f,g\in\test(\X)$, but this follows from the - easy to check - Leibniz rule $\Delta(fg)=f\Delta g+g\Delta f+2\la\d f,\d g\ra$ and $(i)$.

For $\X$  compact,  $(iii)$ follows from  Riesz's theorem and the Bochner inequality \eqref{eq:bochner}. The general case is technically more involved: the representability as measure of the functional sending $g$ to the right hand side of \eqref{eq:Ggamma2} comes from the so-called \emph{quasi regularity} of the Dirichlet form $\ch$ (a sort of tightness of Dirichlet forms) and the  \emph{transfer method}.
\end{idea}
For $f\in\test(\X)$ we shall write
\[
\Ggamma_2(f)=\gamma_2(f)\mm+\Ggamma_2^s(f)\qquad{ where }\qquad\Ggamma_2^s(f)\perp\mm.
\]
Now that we know that $|\d f|^2\in W^{1,2}(\X)$ for $f\in\test(\X)$, the following definition makes sense (notice the analogy with \eqref{eq:Hesssmoooth}). Below we shall write $A(\nabla g,\nabla h)$ for the pointwise scalar product of $ A$ and $\d g\otimes \d h$ in $L^2((T^*)^{\otimes 2}\X)$.
\begin{definition}[The space $W^{2,2}(\X)$]\label{def:w22} Let $(\X,\sfd,\mm)$ be $\RCD(K,\infty)$. We say that $f\in W^{1,2}(\X)$ belongs to $W^{2,2}(\X)$ if there is $A\in L^2((T^*)^{\otimes 2}\X)$ such that
\[
\int hA(\nabla g_1,\nabla g_2)\,\d\mm=\tfrac12\int-\la\d f,\d g_1\ra\div(h\nabla g_2)-\la\d f,\d g_2\ra\div(h\nabla g_1)-h\la\d f,\d\la\d g_1,\d g_2\ra\ra\,\d\mm
\]
holds for any $g_1,g_2,h\in\test(\X)$. Such $A$ will be denoted ${\rm Hess}f$. We also define the norm
\[
\|f\|_{W^{2,2}(\X)}^2:=\|f\|_{L^2(\X)}^2+\|\d f\|_{L^2(T^*\X)}^2+\|{\rm Hess}\,f\|_{L^2((T^*)^{\otimes 2}\X)}^2
\] 
\end{definition}
From the density of $\test(\X)$ in $W^{1,2}(\X)$ one can prove that $A$ is unique, so that the definition of the Hessian ${\rm Hess}f$ is well posed. It is then clear that $W^{2,2}(\X)$ is a separable Hilbert space (for separability use the isometric embedding $f\mapsto (f,\d f,{\rm Hess}f)$ of $W^{2,2}$ in $L^2(\X)\times L^2(T^*\X)\times L^2((T^*)^{\otimes 2}\X)$ and \eqref{eq:infhilftangsep}).

Even after Proposition \ref{prop:test}, it is not obvious that $W^{2,2}(\X)$ contains any non-constant function. To prove this it is useful to introduce  the `proto-Hessian' $H[f](g,h)$ as
\[
H[f](g,h):=\tfrac12\big(\la\d\la\d f,\d g\ra,\d h\ra+\la\d \la\d f,\d h\ra,\d g\ra-\la\d f,\d\la\d g,\d h\ra\ra\big)\quad\in L^1\cap L^2(\mm) 
\]
for every $f,g,h\in\test(\X)$. Of course we expect that $H[f](g,h)={\rm Hess}[f](\nabla g,\nabla h)$ but we don't know this yet, because $H[f]$ is not a tensor and thus can't be chosen as $A$ in Definition \ref{def:w22}. Still, we have:
\begin{lemma}[Self improvement of  Bochner inequality]\label{le:sibo} Let $(\X,\sfd,\mm)$ be an $\RCD(K,\infty)$ space, $f\in\test(\X)$. Then $\Ggamma_2^s(f)\geq0$ and for every  choice $(h_j)\subset\test(\X)$, $j=1,\ldots,m$, we have
\begin{equation}
\label{eq:SIB}
\Big|\sum_jH[f](h_j,h_j)\Big|^2\leq  \Big(\sum_{j,j'}\big|\la\d h_j, \d h_{j'}\ra\big|^2\Big)\,\big(\gamma_2(f)-K|\d f|^2\big)\qquad\mm-a.e..
\end{equation}
\end{lemma}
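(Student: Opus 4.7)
The claim $\Ggamma_2^s(f) \geq 0$ follows at once from \eqref{eq:BG2}: since $K|\d f|^2\mm$ is absolutely continuous with respect to $\mm$, the Radon--Nikodym decomposition of the nonnegative measure $\Ggamma_2(f) - K|\d f|^2\mm$ forces its singular part, namely $\Ggamma_2^s(f)$, to be a nonnegative measure.

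For \eqref{eq:SIB} the plan is a self-improvement of Bochner: apply \eqref{eq:BG2} not to $f$ alone but to the family of quadratic perturbations
\[
F_{t,c} := f + \frac{t}{2}\sum_{j=1}^m (h_j - c_j)^2, \qquad (t,c) \in \R \times \R^m,
\]
which all belong to $\test(\X)$ by Proposition \ref{prop:test}(ii). Taking the absolutely continuous part of the measure inequality $\Ggamma_2(F_{t,c}) \geq K|\d F_{t,c}|^2\mm$ (the singular part $\Ggamma_2^s(F_{t,c})$ is nonnegative by the first assertion of the lemma, hence can be discarded) gives $\gamma_2(F_{t,c}) \geq K|\d F_{t,c}|^2$ $\mm$-a.e. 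As functions of $(t,c)$, both sides are polynomials of degree $\leq 2$ in $t$ and $\leq 4$ in $c$, with coefficients that are $L^1$-functions of $x$ expressible from $f$, the $h_j$, and their first- and second-order derivatives via the bilinear extension of $\gamma_2$ and the identities $\d F_{t,c} = \d f + t\sum_j (h_j-c_j)\d h_j$ and $\Delta F_{t,c} = \Delta f + t\sum_j[(h_j-c_j)\Delta h_j + |\d h_j|^2]$. A countable-dense-plus-continuity argument in $(t,c)$ then produces a single $\mm$-null exceptional set outside of which the polynomial inequality holds simultaneously for every $(t,c) \in \R^{m+1}$.

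For any $x$ outside this exceptional set, I specialise $c_j := h_j(x)$, using the Lipschitz (hence continuous) representatives of the $h_j \in \test(\X)$. The key calculation is the Leibniz-type identity for $\gamma_2$ on test functions,
\[
\gamma_2(f, h_j^2) - 2 h_j\,\gamma_2(f, h_j) = 2\,H[f](h_j, h_j),
\]
which, together with its analogue for the sum-of-squares perturbation, shows that the coefficient of $t$ at $c_j = h_j(x)$ collapses to $2\sum_j H[f](h_j,h_j)(x)$ (the $K|\d F_{t,c}|^2$ contribution having vanishing linear coefficient because $\d F_{t,c}(x) = \d f(x)$ there), while the coefficient of $t^2$ becomes $|\sum_j \nabla h_j \otimes \nabla h_j|_{\HS}^2(x) = \sum_{j,k}\la\d h_j,\d h_k\ra^2(x)$. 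The surviving polynomial
\[
(\gamma_2(f) - K|\d f|^2)(x) + 2t\sum_j H[f](h_j,h_j)(x) + t^2\sum_{j,k}\la\d h_j,\d h_k\ra^2(x) \geq 0 \qquad \forall\, t \in \R
\]
is nonnegative, and its discriminant condition is precisely \eqref{eq:SIB}.

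The main obstacle is the proof of the Leibniz-type identity $\gamma_2(f, h^2) - 2 h\,\gamma_2(f, h) = 2\,H[f](h,h)$ and its multilinear analogue used above. This is a computation in the calculus of test functions: polarising \eqref{eq:Ggamma2} and testing against a nonnegative $g \in \test(\X)$ gives an integrated version, and the various cross-terms $h\la\d\Delta f,\d h\ra$, $h\la\d f,\d\Delta h\ra$, and $\la\d f,\d|\d h|^2\ra$ that arise must be reorganised by repeated integration by parts via \eqref{eq:intlaph}, exploiting the symmetry between the two ``gradient-of-scalar-product'' positions, into the symmetric proto-Hessian form \eqref{eq:Hesssmoooth}. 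Once this algebraic identity is secured, the rest of the argument is the polynomial/discriminant manipulation above; the singular-part bookkeeping across the perturbation is painless thanks to the nonnegativity of $\Ggamma_2^s(F_{t,c})$ established in the first part of the lemma.
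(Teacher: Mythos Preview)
Your proof is correct and follows the same self-improvement strategy as the paper: apply the Bochner inequality \eqref{eq:BG2} to a quadratic perturbation of $f$ by $(h_j-c_j)^2$, specialise $c_j$ to $h_j(x)$ to kill the remainder, and read off \eqref{eq:SIB} from the discriminant of the resulting quadratic in the perturbation parameter. The paper's version scales $f$ by $\lambda$ rather than the perturbation by $t$, writes out only the case $m=1$, and handles the remainder at the measure level via a bound $|{\bf R}_c|\leq(1+c^2)\mu$ rather than your pointwise polynomial argument, but the mechanism---including the key algebraic fact that $\gamma_2(f,(h-c)^2)$ equals $2H[f](h,h)$ modulo $(h-c)$-terms---is identical.
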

\begin{idea}\footnotemark The inequality $\Ggamma_2^s(f)\geq0$  follows directly from \eqref{eq:BG2}. We prove \eqref{eq:SIB} for $m=1$. For $c,\lambda\in\R$ put $F_{\lambda,c}:=\lambda f+(h-c)^2\in\test(\X)$. Then direct computations give
\[
\Ggamma_2(F)-K|\d F|^2\mm=\lambda^2\big(\Ggamma_2(f)-K|\d f|^2\mm\big)+\big(4\lambda H[f](h,h)+4|\d h|^4\big)\mm+(h-c){\bf R}_{c}
\]
for some reminder term ${\bf R}_{c}={\bf R}_{c,\lambda,f,h}$ satisfying $|{\bf R}_{c}|\leq(1+c^2)\mu$ for some finite measure $\mu=\mu_{\lambda,f,h}$. Since the left hand side is $\geq0$ and $c\in\R$ is arbitrary, we can conclude that 
\[
\lambda^2\big(\Ggamma_2(f)-K|\d  f|^2\mm\big)+\lambda\,4 H[f](h,h)\mm+4|\d h|^4\mm\geq0\qquad \forall\lambda\in\R,
\]
so that inspecting the absolutely continuous part w.r.t.\ $\mm$ and using the arbitrariness of $\lambda$ it   follows that  $|H[f](h,h)|^2\leq |\d  h|^4\big(\gamma_2(f)-K|\d f|^2\big)$ $\mm$-a.e., which is \eqref{eq:SIB} for $m=1$. 
%
\end{idea}
\footnotetext{Read in the smooth setting, the statement tells that if the inequality $\Delta\frac{|\d f|^2}2-\la\d f,\d\Delta f\ra-K|\d f|^2\geq 0$ holds everywhere for any smooth function, then the stronger inequality $\Delta\frac{|\d f|^2}2-\la\d f,\d\Delta f\ra-K|\d f|^2\geq |{\rm Hess}f|_{\HS}^2$ also holds. On manifolds this is easy to prove, as recalling the Bochner-Weitzenb\"ock identity $$\Delta\frac{|\d f|^2}2= |{\rm Hess}f|_{\HS}^2+\la\d f,\d\Delta f\ra+{\rm Ric}(\nabla f,\nabla f)$$ the claim becomes: if $|{\rm Hess}f|_{\HS}^2+{\rm Ric}(\nabla f,\nabla f)\geq K|\d f|^2$ for any $f$, then in fact ${\rm Ric}(\nabla f,\nabla f)\geq K|\d f|^2$. This is clear, because for any point $x\in M$ and $v\in T_xM$ there is $f$ with $\nabla f(x)=v$ and ${\rm Hess}f(x)=0$: picking this $f$ the conclusion follows by the arbitrariness of $x,v$.

In the non-smooth setting this pointwise approach is not viable. The  argument is instead  analogue to that for the Cauchy-Schwarz inequality, that tells that $|\d f|^2\geq0$ `self improves' to $|\d g|^2|\d f|^2\geq|\la \d f,\d g\ra|^2$. A standard proof of it  applies the given bound to $\lambda f+g$ to get $0\leq|\d(\lambda f+g)|^2=\lambda^2|\d f|^2+2\lambda\la\d f,\d g\ra+|\d g|^2$ and  concludes by the arbitrariness of $\lambda\in\R$.}
We can now prove:
\begin{theorem}[There are many functions in $W^{2,2}(\X)$] Let $(\X,\sfd,\mm)$ be $\RCD(K,\infty)$. Then $D(\Delta)\subset W^{2,2}(\X)$ (in particular $W^{2,2}(\X)$ is dense in $W^{1,2}(\X)$) and
\begin{equation}
\label{eq:l2hl2d}
\int |{\rm Hess}f|_{\HS}^2\,\d\mm\leq \int |\Delta f|^2-K|\d f|^2\,\d\mm\qquad\forall f\in D(\Delta).
\end{equation}
Moreover, for any $f\in \test(\X)$ we have
\begin{subequations}
\begin{align}
\label{eq:hessugu}
H[f](g,h)&={\rm Hess}f(\nabla g,\nabla h)\quad\mm-a.e.\qquad\forall g,h\in \test(\X),\\
\label{eq:BI2}
\Ggamma_2(f)&\geq \big( |{\rm Hess}f|_{\HS}^2+K|\d f|^2\big)\mm.
\end{align}
\end{subequations}
\end{theorem}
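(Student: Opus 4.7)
The plan is to construct the Hessian for test functions via the self-improvement Lemma~\ref{le:sibo}, verify the defining integration-by-parts identity, integrate the resulting pointwise Bochner inequality to get the $L^2$ bound on test functions, and finally extend to $D(\Delta)$ by density.

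For $f\in\test(\X)$, the proto-Hessian $H[f]$ is \emph{not} $L^{\infty}$-bilinear in its two arguments, so it cannot be defined for $g,h\in W^{1,2}$ directly; the point is that it nevertheless \emph{factors} through the tensor $\d g\otimes_{\rm sym}\d h$. The key observation is to re-read \eqref{eq:SIB} as a pointwise bound
\[
\Big|\sum_j \mu_j H[f](v_j,v_j)\Big|^{2}\ \leq\ \Big|\sum_j \mu_j\,\d v_j\otimes \d v_j\Big|_{\HS}^{2}\,\big(\gamma_2(f)-K|\d f|^{2}\big)\qquad \mm\text{-a.e.}
\]
valid for any $\mu_j\geq 0$ and $v_j\in\test(\X)$ (obtained from \eqref{eq:SIB} applied to $(\sqrt{\mu_j}\,v_j)_j$ together with the $\R$-bilinearity of $H[f]$). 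This bound proves that if a positive combination $\sum_j\mu_j\,\d v_j\otimes\d v_j$ vanishes in $L^{2}((T^{*})^{\otimes 2}\X)$ then so does $\sum_j\mu_j H[f](v_j,v_j)$, so the assignment $S\mapsto \sum_j \mu_j H[f](v_j,v_j)$ is well-defined on the convex cone of such positive combinations; by subtraction it extends to the full linear span, and a pointwise spectral decomposition of symmetric $L^{2}$-tensors into positive and negative parts, together with the bound, gives continuity into $L^{1}(\mm)$. The extension is $L^{\infty}$-linear by construction and, by density of $L^{\infty}$-combinations of $\{\d h:h\in\test(\X)\}$ in $L^{2}(T^{*}\X)$, yields a symmetric, $L^{\infty}$-bilinear, continuous form on $L^{2}(T\X)\times L^{2}(T\X)$, hence a unique ${\rm Hess}f\in L^{2}((T^{*})^{\otimes 2}\X)$ satisfying \eqref{eq:hessugu}. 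That this tensor also satisfies the identity of Definition~\ref{def:w22} is then a direct integration by parts: integrating $\int h\,H[f](g_1,g_2)\,\d\mm$ and applying \eqref{eq:intlaph} together with the duality \eqref{eq:defdiv} on each of the three terms of $H[f]$ reproduces exactly the right-hand side there.

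Applying the same pointwise bound to test tensors approximating ${\rm Hess}f$ itself gives $|{\rm Hess}f|_{\HS}^{2}\leq \gamma_2(f)-K|\d f|^{2}$ $\mm$-a.e., and combining with $\Ggamma_2^{s}(f)\geq 0$ (also from Lemma~\ref{le:sibo}) yields \eqref{eq:BI2}. To integrate, test \eqref{eq:Ggamma2} against $g\equiv 1$, which belongs to $\test(\X)$ since $\Delta 1=0$ and the space is normalized: one obtains $\Ggamma_2(f)(\X)=-\int\la\d f,\d\Delta f\ra\,\d\mm=\int(\Delta f)^{2}\,\d\mm$ via \eqref{eq:intlaph}, and integrating \eqref{eq:BI2} gives \eqref{eq:l2hl2d} on $\test(\X)$.

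Finally, for a general $f\in D(\Delta)$, approximate by $f_n:=\h_{\eta_n}f\in\test(\X)$ (see \eqref{eq:regheat}), with the mollifiers $\eta_n$ chosen so that $f_n\to f$ in $W^{1,2}(\X)$ and $\Delta f_n\to\Delta f$ in $L^{2}(\X)$. The estimate \eqref{eq:l2hl2d} for each $f_n$ yields a uniform $L^{2}$-bound on $({\rm Hess}f_n)$, hence a weak subsequential limit $A\in L^{2}((T^{*})^{\otimes 2}\X)$. The defining identity of Definition~\ref{def:w22} for $f_n$ passes to the limit since its right-hand side depends linearly on $f$ through $\d f$ alone (with $g_1,g_2,h\in\test(\X)$ fixed), so $f\in W^{2,2}(\X)$ with ${\rm Hess}f=A$; lower semicontinuity of the $L^{2}$-norm under weak convergence then upgrades the uniform bound into \eqref{eq:l2hl2d} for $f$. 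The main obstacle is the first stage: \eqref{eq:SIB} is inherently a sum-of-squares inequality, so extracting genuine $L^{\infty}$-bilinearity in two \emph{independent} test arguments $(g,h)$ requires the positive-combination gymnastics sketched above, since one cannot directly polarize an inequality.
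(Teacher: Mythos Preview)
Your overall architecture matches the paper's: build ${\rm Hess}f$ for $f\in\test(\X)$ from Lemma~\ref{le:sibo}, deduce \eqref{eq:BI2}, integrate against $g\equiv 1$ to get \eqref{eq:l2hl2d} on $\test(\X)$, then pass to $D(\Delta)$ by approximation. The divergence is in how you turn the diagonal inequality \eqref{eq:SIB} into an honest tensor, and there your route has a gap. You propose to define the functional on the cone $\{\sum_j\mu_j\,\d v_j\otimes\d v_j:\mu_j\ge0\}$ and then extend to all symmetric $L^2$-tensors via a ``pointwise spectral decomposition into positive and negative parts''. In the abstract $L^\infty$-module framework used here no fibrewise finite-dimensionality has been established at this stage, so such a decomposition is not available; and even if it were, the positive and negative parts $S_\pm$ of a symmetric tensor have no reason to lie in the cone generated by $\{\d v\otimes\d v:v\in\test(\X)\}$, which is all your estimate controls. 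The subsequent claim that ``the extension is $L^\infty$-linear by construction'' is also unsupported: your $\mu_j$ are real constants (so that $\sqrt{\mu_j}\,v_j\in\test(\X)$), and replacing them by genuine $L^\infty$ functions would take $\sqrt{\mu_j}\,v_j$ out of $\test(\X)$.

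The paper avoids both issues by polarizing \eqref{eq:SIB2} to a \emph{pointwise} off-diagonal bound \eqref{eq:SIB3} on $|\sum_jH[f](h_j,\tilde h_j)|$ in terms of $|\sum_j\d h_j\otimes\d\tilde h_j|_{\HS}$, and then defining $L$ directly on the dense subspace $V=\{\sum_j\chi_{A_j}\d h_j\otimes\d\tilde h_j\}$: the $\mm$-a.e.\ nature of the bound lets one localize it to each $A_j$, which is exactly what delivers well-definedness and $L^\infty$-linearity without any spectral theory. So your instinct that ``one cannot directly polarize an inequality'' is the right worry, but the resolution is polarization plus pointwise localization via characteristic functions, not spectral decomposition. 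One minor further point: for $f\in D(\Delta)\setminus L^\infty$ the mollification $\h_{\eta_n}f$ need not lie in $\test(\X)$ (the remark after \eqref{eq:regheat} requires $f\in W^{1,2}\cap L^\infty$), so the approximation step needs a preliminary truncation.
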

\begin{idea} For $f\in\test(\X)$ inequality \eqref{eq:SIB} and the definition of pointwise norm in $L^2((T^*)^{\otimes 2}\X)$ tell
\begin{equation}
\label{eq:SIB2}
\big|\sum_jH[f](h_j,h_j)\big|\leq G(f)\Big|\sum_j\d h_j\otimes \d h_{j}\Big|_{\HS}\quad\mm-a.e.\quad\text{for}\quad G(f):=\sqrt{\gamma_2(f)-K|\d f|^2}\,
\end{equation}
and since we  have $ H[f](h_j,\tilde h_j)=\tfrac12( H[f](h_j+\tilde h_j,h_j+\tilde h_j)-H[f](h_j, h_j)-H[f](\tilde h_j,\tilde h_j))
$
and $\sum_j\frac{\d(h_j+\tilde h_j)\otimes\d(h_j+\tilde h_j)-\d h_j\otimes h_j-\tilde h_j\otimes \tilde h_j}2=\sum_j \tfrac{\d h_j\otimes \d \tilde h_{j}+\d\tilde h_j\otimes \d  h_{j}}2$, from \eqref{eq:SIB2} we deduce
\begin{equation}
\label{eq:SIB3}
\begin{split}
\big|\sum_jH[f](h_j,\tilde h_j)\big|&\leq  G(f)\,\Big|\sum_j\tfrac{\d h_j\otimes \d \tilde h_{j}+\d\tilde h_j\otimes \d  h_{j}}2\Big|_{\HS}\leq G(f)\,\Big|\sum_j\d h_j\otimes \d \tilde h_{j}\Big|_{\HS}
\end{split}
\end{equation}
$\mm$-a.e.. Now let $V\subset L^2((T^*)^{\otimes 2}\X)$ be the space of finite sums of the form $\sum_j\nchi_{A_j}\d h_j\otimes\d \tilde h_j$ for $(A_j)$ Borel partition of $\X$ and $(h_j),(\tilde h_j)\subset\test(\X)$. It is not hard to see that it is dense. Let $L:V\to L^1(\X)$ be given by $L(\sum_j\nchi_{A_j}\d h_j\otimes\d \tilde h_j):=\sum_j\nchi_{A_j}H[f](h_j,\tilde h_j)$. Then \eqref{eq:SIB3} grants that $L$  is well-defined, continuous and satisfies $L(\nchi_AT)=\nchi_AL(T)$ for every $T\in V$, $A\subset\X$ Borel. It is then clear that it uniquely extends to an $L^\infty$-linear and continuous map from $L^2((T^*)^{\otimes 2}\X)$ to $L^1(\X)$, i.e.\ to an element of the dual of $L^2((T^*)^{\otimes 2}\X)$. By the  Riesz isomorphism for Hilbert modules we conclude that there is a unique ${\rm Hess }f\in L^2((T^*)^{\otimes 2}\X)$ such that ${\rm Hess}f(\nabla g,\nabla g)=L(\d g\otimes\d h)$ (i.e.\ \eqref{eq:hessugu} holds) and it satisfies $|{\rm Hess} f|_{\HS}\leq G(f)$ (i.e.\ \eqref{eq:BI2} holds - recall that $\Ggamma_2^s(f)\geq 0$).

Now \eqref{eq:l2hl2d} for $f\in\test(\X)$ follows evaluating the measures in both sides of \eqref{eq:BI2} at $\X$ (picking $g\equiv1$ in \eqref{eq:Ggamma2} - possible at least if $\mm(\X)<\infty$  - we get $\Ggamma_2(f)(\X)=\int|\Delta f|^2\,\d\mm$). The general case can be proved by approximation.
\end{idea}
Notice that \eqref{eq:hessugu} in particular gives
\begin{equation}
\label{eq:dgrad}
f\in\test(\X)\qquad\Rightarrow\qquad |\d f|^2\in W^{1,2}(\X)\quad\text{ with }\quad\d(|\d f|^2)=2{\rm Hess}f(\nabla f,\cdot)\,.
\end{equation}

\begin{corollary}[Improved Bakry-\'Emery estimate] Let $(\X,\sfd,\mm)$ be $\RCD(K,\infty)$ and $f\in W^{1,2}(\X)$. Then
\begin{equation}
\label{eq:BEimpr}
|\d\h_tf|\leq e^{-Kt}\h_t(|\d f|)\qquad\mm-a.e.\qquad\forall t\geq 0.
\end{equation}
\end{corollary}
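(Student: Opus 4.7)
\begin{idea}
The plan is to self-improve the strategy already used to establish the standard Bakry-\'Emery estimate \eqref{eq:BElav} by replacing `$|\d f|^2$' by `$|\d f|$' throughout, the key extra input being the tensorial Bochner inequality \eqref{eq:BI2} which, together with the identity \eqref{eq:dgrad}, supplies exactly the Kato-type term needed to upgrade the differential inequality.

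\medskip

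\noindent\textbf{Step 1: pointwise inequality for $\Delta|\d f|$.} Fix $f\in\test(\X)$. By \eqref{eq:dgrad} we have $\d(|\d f|^2)=2{\rm Hess}f(\nabla f,\cdot)$, and a direct application of Cauchy-Schwarz at the level of the pointwise norm gives
\[
\big|\d(|\d f|^2)\big|^2=4|{\rm Hess}f(\nabla f,\cdot)|^2\leq 4|{\rm Hess}f|_{\HS}^2|\d f|^2.
\]
Plugging this into \eqref{eq:BI2} and testing against $g\in\test(\X)$ with $g\geq 0$ suggests the formal differential inequality
\begin{equation}\label{eq:improvedbe1}
\Delta|\d f|\,\geq\,\tfrac{\la\d f,\d\Delta f\ra}{|\d f|}+K|\d f|\qquad\text{on }\{|\d f|>0\}.
\end{equation}
To make this rigorous while avoiding the set where $|\d f|$ vanishes, I would regularize and work with $u_\eps:=\sqrt{|\d f|^2+\eps}$, $\eps>0$. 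By \eqref{eq:dgrad} and the chain rule \eqref{eq:dfchain}, $u_\eps\in W^{1,2}(\X)$ with $\d u_\eps={\rm Hess}f(\nabla f,\cdot)/u_\eps$, hence $|\d u_\eps|^2\leq |{\rm Hess}f|_{\HS}^2|\d f|^2/u_\eps^2$. Testing \eqref{eq:BI2} against $g/u_\eps$ (for $g\in\test(\X)$ non-negative with enough regularity so that $g/u_\eps$ is admissible) and using \eqref{eq:Ggamma2} plus the integration by parts formula \eqref{eq:intlaph}, one obtains
\[
\int\!\la\d(g/u_\eps),\d|\d f|^2\ra/2\,\d\mm\geq\int\!\frac{g}{u_\eps}\big(\la\d f,\d\Delta f\ra+K|\d f|^2+|{\rm Hess}f|_{\HS}^2\big)\,\d\mm,
\]
and the choice of $u_\eps$ lets the $|{\rm Hess}f|_{\HS}^2$ term cancel the contribution coming from $\la\d g,\d u_\eps\ra$ modulo an error $O(\eps)$. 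Letting $\eps\downarrow0$ yields a rigorous weak form of \eqref{eq:improvedbe1}.

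\medskip

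\noindent\textbf{Step 2: Gronwall along the heat flow.} Regularize in time by taking $f$ of the form $\h_\sigma\tilde f$ with $\tilde f\in W^{1,2}\cap L^\infty$ so that $f\in\test(\X)$ and the full a priori estimates \eqref{eq:aprioriheat} apply along $(\h_r f)_{r\geq 0}$. Set
\[
F_\eps(s):=\h_s\big(\sqrt{|\d\h_{t-s}f|^2+\eps}\big),\qquad s\in[0,t].
\]
Using Corollary \ref{cor:stabhagain}-type arguments on the fixed space (i.e.\ standard $L^2$-continuity of $\h_\cdot$ and $\Delta\h_\cdot$), one shows $s\mapsto F_\eps(s)$ is absolutely continuous. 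Differentiating and using that $\partial_s|\d\h_{t-s}f|^2=-2\la\d\h_{t-s}f,\d\Delta\h_{t-s}f\ra$, one gets
\[
\tfrac{\d}{\d s}F_\eps(s)=\h_s\Big(\Delta u_{\eps,s}-\tfrac{\la\d\h_{t-s}f,\d\Delta\h_{t-s}f\ra}{u_{\eps,s}}\Big),\quad u_{\eps,s}:=\sqrt{|\d\h_{t-s}f|^2+\eps}.
\]
By Step 1 (applied to $\h_{t-s}f\in\test(\X)$), the bracket is $\geq K|\d\h_{t-s}f|^2/u_{\eps,s}$, which tends to $K\,u_{\eps,s}-O(\eps/u_{\eps,s})$. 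Hence $F_\eps'(s)\geq K F_\eps(s)-C(\eps)$ and letting $\eps\downarrow0$ after Gronwall gives
\[
|\d\h_tf|=F_0(0)\leq e^{-Kt}F_0(t)=e^{-Kt}\h_t(|\d f|)\qquad\mm\text{-a.e.}
\]

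\medskip

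\noindent\textbf{Step 3: removal of regularity and extension.} Pass from $f=\h_\sigma\tilde f$ to general $\tilde f\in W^{1,2}\cap L^\infty(\X)$ by letting $\sigma\downarrow0$, using the $L^2$-continuity $\h_\sigma\tilde f\to\tilde f$ in $W^{1,2}$ from \eqref{eq:aprioriheat} and the closure of $\d$. A truncation argument then lifts the bound to all $f\in W^{1,2}(\X)$. The main technical obstacle is Step 1: the self-improvement of Bochner was stated in \eqref{eq:BI2} in integrated form against $\Gamma_2(f)$, whereas to derive \eqref{eq:improvedbe1} I need to test against $g/u_\eps$, which forces careful approximation to guarantee that $g/u_\eps$ (or a suitable truncation thereof) is an admissible test function and that the boundary terms produced by the Leibniz rule for $\Delta$ vanish in the limit $\eps\downarrow0$. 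Once this is done, Steps 2 and 3 are routine.
\end{idea}
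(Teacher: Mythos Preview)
Your proposal is correct and follows essentially the same approach as the paper: both differentiate the interpolation $s\mapsto \h_s(|\d\h_{t-s}f|)$, use the improved Bochner inequality \eqref{eq:BI2} together with the Kato inequality $|\d|\d g||\leq|{\rm Hess}\,g|_{\HS}$ (from \eqref{eq:dgrad}) to bound the derivative from below by $K$ times the function, and conclude by Gronwall. The only difference is presentational: you spell out the $u_\eps=\sqrt{|\d f|^2+\eps}$ regularization that the paper leaves implicit under ``computations can be justified via suitable approximations''.
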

\begin{idea}
As for the proof of \eqref{eq:BElav}, fix $t,f$ and consider $[0,t]\ni s\mapsto \h_{s}(|\d\h_{t-s}f|)$. The derivative of this function is (at least formally, but computations can be justified via suitable approximations and integration by parts) given by
\[
\begin{split}
\h_s\big(\Delta|\d\h_{t-s}f|-\tfrac1{|\d \h_{t-s}f|}\la\d \h_{t-s}f,\d \Delta \h_{t-s}f\ra\big)=\h_s\big(\tfrac1{|\d \h_{t-s}f|}(\Ggamma_2(\h_{t-s}f) -|\d|\d\h_{t-s}f||^2)\big).
\end{split}
\]
Now observe that  \eqref{eq:dgrad} implies  $|\d|\d g||\leq |\Hess g|_{\HS}$. Hence \eqref{eq:BI2} tells that the above derivative is $\geq K\h_s(|\d \h_{t-s}f|)$ and the conclusion follows from Gronwall's lemma.
\end{idea}

The ideas used to define the Hessian of a function and the space $W^{2,2}(\X)$ can be tweaked to define the covariant derivative of a vector field and the space $W^{1,2}_C(T\X)$ of Sobolev vector fields (the subscript $C$ denotes Covariant differentiation). We shall denote by $L^2(T^{\otimes 2}\X)$ the tensor product of $L^2(T\X)$ with itself, by $|\cdot|_{\HS}$ the corresponding pointwise norm and by $A: B$ the pointwise scalar product. 

Noticing that in the smooth setting we have
\[
\la\nabla_{\nabla f}v,\nabla g\ra=\la\d\la v,\nabla g\ra,\d f\ra-{\rm Hess}\,g(v,\nabla f)
\]
we give the following:
\begin{definition}[The space $W^{1,2}_C(T\X)$]
Let $(\X,\sfd,\mm)$ be $\RCD(K,\infty)$. We say that $v\in L^2(T\X)$ belongs to the Sobolev space $W^{1,2}_C(T\X)$ provided there is $T\in L^2(T^{\otimes 2}\X)$ such that
\begin{equation}
\label{eq:defcov}
\int h T:(\nabla f\otimes\nabla g)\,\d\mm=\int- \la v,\nabla g\ra\div(h\nabla f)-h{\rm Hess}\,g(v,\nabla f)\,\d\mm 
\end{equation}
for every $f,g,h\in\test(\X)$. Such $T$ will be denoted $\nabla v$. On $W^{1,2}_C(T\X)$ we put the norm
\[
\|v\|_{W^{1,2}_C(T\X)}^2:=\|v\|^2_{L^2(T\X)}+\|\nabla v\|_{L^2(T^{\otimes 2}\X)}^2.
\] 
\end{definition}
It is easy to check that $T$ is unique, so that $\nabla v$ is well defined, that $W^{1,2}_C(T\X)$ is a separable Hilbert space and that if $f\in W^{2,2}(\X)$ then $\nabla f\in W^{1,2}_C(T\X)$ with ${\rm Hess}\,f\sim \nabla\nabla f$, where $\sim$ is the Riesz isomorphism. Also, as a consequence of the calculus rules for the differential, one can verify that  the covariant derivative  is \emph{compatible with the metric} and \emph{torsion free} in a natural way and satisfies
\[
f\in \LIP_\b(\X),\ v\in W^{1,2}_C(T\X)\quad\Rightarrow\quad fv\in W^{1,2}_C(T\X)\quad\text{with}\quad \nabla(fv)=\nabla f\otimes v+f\nabla v
\]
so that from the density of $W^{2,2}(\X)$ in $W^{1,2}(\X)$ we can deduce the one of $W^{1,2}_C(T\X)$ in $L^2(T\X)$. Also, with a bit of work and in analogy with \eqref{eq:dgrad} one can prove that for $v\in W^{1,2}(T\X)$ with $|v|\in L^\infty(\X)$ and $f\in\test(\X)$ we have
\begin{equation}
\label{eq:leibcov}
\text{$\d f(v)\in W^{1,2}(\X)\qquad$ with }\qquad\d (\d f(v)) ={\rm Hess}\,f(v)+\nabla v:(\,\cdot\,\otimes\nabla f ).
\end{equation}
\begin{remark}[$H\neq W$]\label{re:honda}{\rm It is natural to wonder whether test functions are dense in $W^{2,2}(\X)$ and, similarly, whether vector fields of the form $\sum_{i=1}^kg_i\nabla f_i$, $f_i,g_i\in \test(\X)$, $k\in\N$ are dense in $W^{1,2}_C(T\X)$. In both cases the answer is \emph{no}: the problem is the presence of `boundary' and can be seen already in the simple(st) case $\X=[0,1]$ equipped with the Euclidean distance and the Lebesgue measure. It is easy to see that  any $f\in D(\Delta)$ must have 0 Neumann boundary condition (otherwise Dirac masses appear in $0,1$, because in testing the definition of $\Delta f$ we are allowed to pick any $\varphi\in C^1([0,1])\subset W^{1,2}(\X)$). On the other hand, it is clear that $x\mapsto f(x):=x$  belongs to $W^{2,2}(\X)$ (with 0 Hessian) and that the map sending $g\in W^{2,2}(\X)$ to $g'(0)$ (more precisely: to the trace in 0 of $g'$) is continuous w.r.t.\ the $W^{2,2}$-norm.
}\fr\end{remark}
\subsection{Flow of vector fields}\label{se:flowrcd}
Once one has vector fields, it is natural to try to solve the ODE 
\begin{equation}
\label{eq:ODE}
\gamma'_t=v_t(\gamma_t)
\end{equation}
and to wonder whether existence/uniqueness/stability are in place. We shall see that this is the case under suitable Sobolev regularity for the $v_t$'s, once we refrain from studying these problems for a single curve, but rather consider all the solutions together and impose bounded compression for the resulting flow.

Let $\pr_\bc(\X)\subset \pr(\X)$ be the space of measures $\mu$ so that $\mu\leq C\mm$ for some $C>0$.
\begin{definition}[Regular Lagrangian Flows]
Let $(v_t)\in L^2([0,1],L^2(T\X))$. We say that a Borel map $F:[0,1]\times\X\to\X$ is a Regular Lagrangian Flow (RLF in short) for $(v_t)$ provided:
\begin{itemize}
\item[i)] $t\mapsto F_t(x)$ is continuous for $\mm$-a.e.\ $x\in\X$.
\item[ii)] There is $C>0$ such that
\begin{equation}
\label{eq:bcompr}
(F_t)_*\mm\leq C\mm\qquad\forall t\in[0,1].
\end{equation}
\item[iii)] For every $f\in W^{1,2}(\X)$ we have: for $\mm$-a.e.\ $x\in\X$ the function $t\mapsto f(F_t(x))$ is in $W^{1,1}(0,1)$ with derivative $\d f(v_t)(F_t(x))$.
\end{itemize}
We say that $(v_t)$ admits \emph{unique} RLF if for any two such flows $F,\tilde F$ we have: for  $\mm$-a.e.\ $x\in\X$ the identity  $F_t(x)=\tilde F_t(x)$ holds for every $t\in[0,1]$.
\end{definition}
Notice that by \eqref{eq:bcompr} the requirement in item $(iii)$ makes sense (since $\d f(v_t)(x)$ is well defined only up to equality for a.e.\ $(t,x)$, some regularity on $F_t(x)$ is needed to be sure that  $(t,x)\mapsto\d f(v_t)(F_t(x))$ is well defined up to equality  for a.e.\ $(t,x)$).  

Using Lemma  \ref{le:equivsobw}  for  $(v_t)\in L^2([0,1],L^2(T\X))$ it is not difficult to see that $F$ is a RLF for $(v_t)$ iff  for any $\mu\in\pr_\bc(\X)$ and $f\in W^{1,2}(\X)$ the map $t\mapsto f\circ F_t\in L^1(\mu)$ is absolutely continuous with derivative $\partial_t(f\circ F_t)=\d f(v_t)\circ F_t$. Thus recalling the definition of $\ppi'_t$ given in Theorem \ref{thm:speedplan} we see that
\[
\begin{split}
\text{$F$ is a RLF for $(v_t)$ iff $\forall\mu\in\pr_\bc(\X)$ the plan $\ppi:=(F_\cdot)_*\mu$ is  test with $\ppi'_t=\e_t^*v_t$ for a.e.\ $t$,}
\end{split}
\]
so that $\ppi'_t=\e_t^*v_t$ is our way to interpret \eqref{eq:ODE}. Then recalling \eqref{eq:metsp} we see that 
\begin{equation}
\label{eq:speedrlf}
\text{for $\mm$-a.e.\ $x$ the curve $t\mapsto F_t(x)$ is abs.\ cont.\ with speed  $|v_t|(F_t(x))$ for a.e.\ $t$.}
\end{equation}
The following measure-theoretic lemma shows that existence and uniqueness for RLFs follows from existence and uniqueness for solutions of such `ODE for test plans':
\begin{lemma}\label{le:measth}
Let $(v_t)\in L^2([0,1],L^2(T\X))$ and assume that for any $\mu\in\pr_\bc(\X)$ there exists a unique test plan $\ppi$ with $\ppi'_t=\e_t^*v_t$ for a.e.\ $t$ and $(\e_0)_*\ppi=\mu$.

 Then the RLF of $(v_t)$ exists and is unique.
  \end{lemma}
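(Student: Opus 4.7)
The plan is to apply the hypothesis to $\mu=\mm$ (which lies in $\pr_\bc(\X)$ since $\mm$ is normalized), obtaining the unique test plan $\ppi_\mm$ with $(\e_0)_*\ppi_\mm=\mm$ and $(\ppi_\mm)'_t=\e_t^*v_t$ for a.e.\ $t$, and then to \emph{disintegrate} it along $\e_0$ as $\ppi_\mm=\int\ppi_{\mm,x}\,\d\mm(x)$. The heart of the argument is to prove that $\ppi_{\mm,x}$ is a Dirac mass for $\mm$-a.e.\ $x$; granted this, the Borel map $x\mapsto\gamma_x\in C([0,1],\X)$ characterized by $\ppi_{\mm,x}=\delta_{\gamma_x}$ gives the RLF via $F_t(x):=(\gamma_x)_t$. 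Indeed, continuity of $t\mapsto F_t(x)$ for $\mm$-a.e.\ $x$ is automatic, the compression bound \eqref{eq:bcompr} follows from $(F_t)_*\mm=(\e_t)_*\ppi_\mm\leq\comp(\ppi_\mm)\,\mm$, and the Sobolev-derivative property (iii) in the definition of RLF is the translation through the disintegration of the curvewise identity $(f\circ\gamma)'(t)=\d f(v_t)(\gamma_t)$ valid for $\ppi_\mm$-a.e.\ $\gamma$ (itself a direct consequence of $(\ppi_\mm)'_t=\e_t^*v_t$ via Theorem \ref{thm:speedplan} and Fubini).

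To establish the Dirac property, assume by contradiction that $A:=\{x:\ppi_{\mm,x}\text{ is not Dirac}\}$ has $\mm(A)>0$. Fix a countable basis $(U_n)$ of the Polish space $C([0,1],\X)$: the Borel functions $\phi_n(x):=\ppi_{\mm,x}(U_n)$ satisfy $\phi_n(x)\in\{0,1\}$ for every $n$ exactly when $\ppi_{\mm,x}$ is Dirac, hence there exist $n\in\N$ and $\eps>0$ such that $A^*:=\{x:\eps\leq\phi_n(x)\leq 1-\eps\}$ has positive $\mm$-measure. On $A^*$ the conditional probabilities $\sigma^1_x:=\phi_n(x)^{-1}\ppi_{\mm,x}\restr{U_n}$ and $\sigma^2_x:=(1-\phi_n(x))^{-1}\ppi_{\mm,x}\restr{U_n^c}$ are distinct, depend Borel-measurably on $x$, and satisfy $\sigma^i_x\leq\eps^{-1}\ppi_{\mm,x}$. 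Define
\[
\ppi^i:=\int_{A^*}\sigma^i_x\,\d\mm(x)+\int_{\X\setminus A^*}\ppi_{\mm,x}\,\d\mm(x),\qquad i=1,2.
\]
Each $\ppi^i$ is $\leq\eps^{-1}\ppi_\mm$, hence a test plan, with $(\e_0)_*\ppi^i=\mm$; moreover, since the $L^1(\ppi_\mm)$-identity $\partial_t(f\circ\e_t)=\d f(v_t)\circ\e_t$ (which encodes $(\ppi_\mm)'_t=\e_t^*v_t$) passes to $L^1(\ppi^i)$ thanks to the bounded density, one has $(\ppi^i)'_t=\e_t^*v_t$ as well. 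Since $\sigma^1_x\neq\sigma^2_x$ on $A^*$ we have $\ppi^1\neq\ppi^2$, contradicting the uniqueness hypothesis.

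Uniqueness of the RLF is then immediate: for any RLF $\tilde F$, the plan $(\tilde F_\cdot)_*\mm$ is, by the characterization recalled in the text preceding the statement, a test plan with $(\e_0)$-marginal $\mm$ and velocity $\e_t^*v_t$, hence coincides with $\ppi_\mm$ by hypothesis; disintegrating along $\e_0$ then forces $\delta_{\tilde F_\cdot(x)}=\delta_{F_\cdot(x)}$ for $\mm$-a.e.\ $x$, i.e.\ $\tilde F_t(x)=F_t(x)$ for every $t\in[0,1]$. The main obstacle is the splitting argument in the middle paragraph: one must extract from the failure of the Dirac property a genuine Borel decomposition of $\ppi_\mm$ into two \emph{distinct} test plans with the same initial datum \emph{and} the same velocity, so that the uniqueness hypothesis can be invoked to derive a contradiction.
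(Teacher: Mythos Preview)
Your proof is correct and follows essentially the same strategy as the paper: disintegrate the unique plan $\ppi_\mm$ along $\e_0$ and, assuming some fiber is non-Dirac, split $\ppi_\mm$ into two distinct test plans with bounded density (hence same velocity $\e_t^*v_t$) and the same initial marginal, contradicting the hypothesis. The paper carries out the split using a fixed time $t$ and a ball $B\subset\X$ (restricting to $\e_t^{-1}(B)$ and $\e_t^{-1}(\X\setminus B)$) rather than a basic open set in $C([0,1],\X)$, and proves uniqueness separately by localizing $\mm$ to a set where two given RLFs disagree---but these are minor variations on the same argument.
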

\begin{idea}\ \\ 
\noindent{\sc Uniqueness} Say that $F^1,F^2$ are two different RLF. 
Then by the continuity of $t\mapsto F^1_t(x),F^2_t(x)$ it is not hard to find $E\subset\X$ Borel with $\mm(E)\in(0,\infty)$, $r>0$ and $t\in[0,1]$ such that $\sfd(F^1_t(x),F^2_t(x))>4r$ for any $x\in E$. Then for some ball $B=B_r(y)$ the set $E':=E\cap (F^1_t)^{-1}(B)$ has positive measure and since the sets $F^1_t(E')$ and $F^2_t(E')$ are disjoint by construction, we have $(F^1_t)_*(\mm\restr{E'})\neq (F^2_t)_*(\mm\restr{E'})$. Hence for $\mu:=\mm(E')^{-1}\mm\restr{E'}$ the plans $\ppi^i:=(F^i_\cdot)_*\mu$, $i=1,2$, would be as in the statement and different.

\noindent{\sc Existence} We rely on the disintegration theorem and on the same basic principle just used. Say $\mm(\X)=1$, pick $\mu:=\mm$, let $\ppi$ be given by the statement and let $\{\ppi_x\}_{x\in\X}\subset\pr(C([0,1],\X))$ be its disintegration w.r.t.\ $\e_0$. If we prove that $\ppi_x$ is a Dirac mass, say at the curve $F_\cdot(x)$, for $\mm$-a.e.\ $x$, then $F$ is the RLF we are looking for. By contradiction, if not we can find $t\in[0,1]$ so that $(\e_t)_*\ppi_x$ is not a Dirac mass for a set of $x$'s of positive measure. Then for some $r>0$ the set of $x\in\X$ such that ${\rm diam}(\supp((\e_t)_*\ppi_x))>4r$ has positive measure and thus for some ball $B=B_r(y)$, $\delta>0$ and $E\subset\X$ with $\mm(E)>0$ we have $(\e_t)_*\ppi_x(B),(\e_t)_*\ppi_x(\X\setminus B)>\delta$ for every $x\in E$. Then for $\mm$-a.e.\ $x\in E$ we put
\[
\ppi^1_x:=\tfrac1{\sppi_x({\e_t^{-1}(B)})}\ppi_x\restr{\e_t^{-1}(B)}\qquad\text{ and }\qquad\ppi^2_x:=\tfrac1{\sppi_x({\e_t^{-1}(\X\setminus B)})}\ppi_x\restr{\e_t^{-1}(\X\setminus B)}
\]
and $\ppi^i:=\tfrac1{\mm(E)}\int_E\ppi^i_x\,\d\mm(x)$, $i=1,2$. It is easy to see that $\ppi^1,\ppi^2$ are two test plans absolutely continuous w.r.t.\ $\ppi$: this grants that $(\ppi^i)'_s=\e^*_sv_s$ (the meaning of this pullback depends on the measure $\ppi^i$ we are putting on $C([0,1],\X)$) for a.e.\ $s$, and $i=1,2$. The construction also gives $(\e_0)_*\ppi^1=(\e_0)_*\ppi^2=\mm(E)^{-1}\mm\restr E$, that  $(\e_t)_*\ppi^1$ is concentrated on $B$ and  $(\e_t)_*\ppi^2$ on $\X\setminus B$. Thus $\ppi^1\neq\ppi^2$, which is the desired contradiction. 
\end{idea}

\begin{definition}[Solution of the continuity equation]\label{def:solCE} Let $(v_t)\in L^2([0,1],L^2(T\X))$.
We say that $(\mu_t)\subset \pr(\X)$ solves the continuity equation
\begin{equation}
\label{eq:CE}
\partial_t\mu_t+\div(v_t\mu_t)=0
\end{equation}
provided $\mu_t\leq C\mm$ for every $t\in[0,1]$ and some $C>0$   and for every $f\in W^{1,2}(\X)$ the function $t\mapsto \int f\,\d\mu_t$ is absolutely continuous with derivative equal to $\int \d f(v_t)\,\d\mu_t$.   
\end{definition}
In particular, $t\mapsto\int f\,\d\mu_t$ is continuous for every $f\in \Lip_\bs(\X)$, from which it follows that $t\mapsto\mu_t$ is weakly continuous. 

It is a trivial consequence of the definitions that if $\ppi$ is a test plan with $\ppi'_t=\e_t^*v_t$ for a.e.\ $t$, then $t\mapsto\mu_t:=(\e_t)_*\ppi$ solves the continuity equation in the sense above.
It is much less trivial the fact that this operation can be inverted, so that solutions $(\mu_t)$ of the continuity equation can be appropriately  `lifted'. This is the content of the superposition principle we are going to discuss. The statement below is about the standard continuity equation in $\R^d$, whose solutions must be interpreted distributionally (i.e.\ Definition \ref{def:solCE} above plays no role).
\begin{lemma}[Superposition principle - the $\R^d$ case]\label{le:suprd}
Let $(\mu_t)\subset \pr(\R^d)$ and $(t,x)\mapsto v_t(x)\in \R^d$ Borel be such that
\[
\partial_t\mu_t+\div(v_t\mu_t)=0\qquad\text{and}\qquad \int_0^1\int|v_t|^2\,\d\mu_t\,\d t<\infty,
\]
where the continuity equation is intended in the sense of distributions. Then there is $\ppi\in\pr(C([0,1],\R^d))$   with  $(\e_t)_*\ppi=\mu_t$ for every $t\in[0,1]$ such that
\begin{equation}
\label{eq:ode}
\ppi-a.e.\ \gamma\text{ is absolutely continuous with }\qquad\gamma'_t=v_t(\gamma_t)\quad a.e.\ t\in[0,1].
\end{equation}
\end{lemma}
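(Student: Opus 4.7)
The plan is to produce $\ppi$ as a weak limit of probability measures $\ppi^\eps$ obtained from smooth approximations of the data, for which the existence of a flow and the identity $(\e_t)_*\ppi^\eps=\mu_t^\eps$ are classical, and then to verify that the defining ODE is preserved in the limit. Mollification will be the key regularization: let $\rho_\eps$ be a standard even smooth kernel, set $\mu_t^\eps:=\mu_t\ast\rho_\eps$ and define $v_t^\eps$ via the identity $v_t^\eps\mu_t^\eps:=(v_t\mu_t)\ast\rho_\eps$, i.e.\ $v_t^\eps(x):=\frac{((v_t\mu_t)\ast\rho_\eps)(x)}{\mu_t^\eps(x)}$; since $\mu_t^\eps$ is strictly positive and smooth, $v_t^\eps$ is well-defined and smooth in $x$, and the continuity equation for $(\mu_t^\eps,v_t^\eps)$ is satisfied classically.

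First I would record the uniform kinetic bound. Writing $v_t^\eps(x)$ as a conditional expectation (a weighted average of $v_t$ with weights $\frac{\rho_\eps(x-y)\mu_t(dy)}{\mu_t^\eps(x)}$), Jensen's inequality gives $|v_t^\eps|^2\mu_t^\eps\leq (|v_t|^2\mu_t)\ast\rho_\eps$ pointwise, so
\[
\int_0^1\!\!\int|v_t^\eps|^2\,d\mu_t^\eps\,dt\;\leq\;\int_0^1\!\!\int|v_t|^2\,d\mu_t\,dt\;=:M<\infty.
\]
Next, since $(\mu_t^\eps,v_t^\eps)$ is smooth, the method of characteristics produces a smooth flow $\Phi^\eps:[0,1]\times\R^d\to\R^d$ satisfying $\partial_t\Phi^\eps_t(x)=v_t^\eps(\Phi^\eps_t(x))$ with $(\Phi^\eps_t)_*\mu_0^\eps=\mu_t^\eps$. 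Define $\ppi^\eps:=(\Phi^\eps_\cdot)_*\mu_0^\eps\in\pr(C([0,1],\R^d))$; this is concentrated on the classical solutions of the ODE for $v^\eps$, satisfies $(\e_t)_*\ppi^\eps=\mu_t^\eps$, and has kinetic energy $\iint_0^1|\dot\gamma_t|^2\,dt\,d\ppi^\eps=\iint_0^1|v_t^\eps|^2\,d\mu_t^\eps\,dt\leq M$.

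The tightness of $\{\ppi^\eps\}$ in $\pr(C([0,1],\R^d))$ follows from Ascoli--Arzelà: the Hölder bound $|\gamma_t-\gamma_s|^2\leq |t-s|\int_0^1|\dot\gamma_r|^2\,dr$ gives equicontinuity in probability, and the family of starting measures $(\e_0)_*\ppi^\eps=\mu_0^\eps$ is tight since $\mu_0^\eps\weakto\mu_0$. Extracting a subsequence $\ppi^\eps\weakto\ppi$, the identity $(\e_t)_*\ppi=\mu_t$ comes from $\mu_t^\eps\weakto\mu_t$ and weak continuity of $\e_t$. Lower semicontinuity of kinetic energy along $\ppi^\eps\weakto\ppi$ (recall Theorem \ref{thm:ghgamma} applied in $\R^d$) gives $\iint_0^1|\dot\gamma_t|^2\,dt\,d\ppi\leq M$, so $\ppi$-a.e.\ curve is absolutely continuous.

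The hard part is to certify that the weak limit $\ppi$ is concentrated on curves satisfying \eqref{eq:ode} with the \emph{original} vector field $v_t$, rather than merely on absolutely continuous curves. My plan is to introduce, for each bounded continuous $b:[0,1]\times\R^d\to\R^d$, the lower semicontinuous functional
\[
\Psi_b(\gamma):=\int_0^1\bigl|\gamma_t-\gamma_0-\textstyle\int_0^t b_s(\gamma_s)\,ds\bigr|\wedge 1\;dt,
\]
and bound $\int\Psi_b\,d\ppi$. For each $\eps$, since $\ppi^\eps$ concentrates on solutions of the ODE for $v^\eps$, we have
\[
\int\Psi_b\,d\ppi^\eps\leq\int_0^1\!\!\iint_0^t|v_s^\eps(\gamma_s)-b_s(\gamma_s)|\,ds\,d\ppi^\eps\,dt\leq\int_0^1\!\!\int|v_s^\eps-b_s|\,d\mu_s^\eps\,ds,
\]
and the rightmost integrand can be made small by choosing first $b$ Lusin-close to $v$ in $L^1(d\mu_s\,ds)$ (possible since bounded continuous vector fields are dense in $L^2((d\mu_s\,ds)^d)$, hence in $L^1$), then letting $\eps\downarrow0$ and using that $v^\eps\mu^\eps-b\mu^\eps\to (v-b)\mu$ in total variation, courtesy of the Jensen estimate and approximation by convolution. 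Lower semicontinuity of $\Psi_b$ yields $\int\Psi_b\,d\ppi\leq\|v-b\|_{L^1}$; varying $b$ along a sequence $b^n\to v$ in $L^1$ and using Fubini to pass the approximation to the $\ppi$-integral, one concludes that $\gamma_t=\gamma_0+\int_0^t v_s(\gamma_s)\,ds$ for $\ppi$-a.e.\ $\gamma$ and every $t\in[0,1]$, which is the ODE in integral form.
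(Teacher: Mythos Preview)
Your strategy is essentially the paper's: mollify to $(\mu_t^\eps,v_t^\eps)$, establish the Jensen kinetic bound, build $\ppi^\eps$ from the classical flow, extract a weak limit, and verify the ODE by testing against bounded continuous approximants $b$ of $v$. The presentational choices differ (the paper works on the torus $\T^d$ so tightness is trivial, and it uses a fixed pair $t<s$ rather than your $t$-averaged truncated functional $\Psi_b$), but the substance is the same.

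One claim needs repair: ``$v^\eps\mu^\eps - b\mu^\eps \to (v-b)\mu$ in total variation'' is false when $\mu_t$ is not absolutely continuous (try $\mu=\delta_0$, $v=0$, $b\equiv 1$: the total variation distance stays equal to $2$). What you actually need --- and what \emph{does} follow from Jensen --- is the one-sided bound $\limsup_\eps\int_0^1\!\int|v^\eps_s - b_s|\,\d\mu^\eps_s\,\d s \leq \int_0^1\!\int|v_s - b_s|\,\d\mu_s\,\d s$. The paper obtains this by introducing the auxiliary $b^\eps_s := \tfrac{(b_s\mu_s)\ast\rho^\eps}{\mu_s\ast\rho^\eps}$: then $\int|v^\eps_s - b^\eps_s|\,\d\mu^\eps_s \leq \int|v_s - b_s|\,\d\mu_s$ is exactly Jensen, while $\int|b^\eps_s - b_s|\,\d\mu^\eps_s \to 0$ because $b^\eps \to b$ uniformly (this is where continuity of $b$ enters). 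With this fix your argument is complete.
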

\begin{idea} Say that we are on the torus $\T^d$ rather than on $\R^d$. Let $(\rho^\eps)$ be a family of mollifiers and define
\[
\mu^\eps_t:=\mu_t\ast\rho^\eps\qquad\text{ and }\qquad v^\eps_t:=\tfrac{\d(v_t\mu_t)\ast\rho^\eps}{\d\mu^\eps_t}.
\]
Then clearly $(\mu^\eps_t,v^\eps_t)$ still solves the continuity equation and, by the convexity of $\R^d\times\R^+\ni (p,z)\mapsto\frac{|p|^2}z$ and Jensen's inequality, it is not hard to see that 
\begin{equation}
\label{eq:unifke}
 \int_0^1\int|v^\eps_t|^2\,\d\mu^\eps_t\,\d t\leq  \int_0^1\int|v_t|^2\,\d\mu_t\,\d t<\infty.
\end{equation}
By smoothness and the standard Cauchy-Lipschitz theory, for any $x\in\T^d$ and $\eps>0$ there is a unique curve $[0,1]\ni t\mapsto F^\eps_t(x)$ starting from $x$ and solving $\gamma'_t=v^\eps_t(\gamma_t)$. Put $\ppi^\eps:= (F^\eps_\cdot)_*\mu^\eps_0\in \pr(C([0,1],\T^d)).$ For $\varphi\in C^\infty_c((0,1)\times\T^d)$ the computation
\[
\begin{split}
\int_0^1\int \partial_t\varphi+\d_x\varphi(v_t^\eps)\,\d (\e_t)_*\ppi^\eps\,\d t=\iint_0^1 \partial_t(\varphi_t(\gamma_t))\,\d t\,\d\ppi^\eps(\gamma)=0
\end{split}
\]
shows that $t\mapsto(\e_t)_*\ppi^\eps_t$  solves the continuity equation with vector fields $v^\eps_t$. The construction ensures that the $v^\eps_t$'s are smooth in the space variable and bounded in the time variable, thus we know  (e.g.\ via the method of characteristics) that the associated continuity equation has unique solutions, i.e.\ that $\mu^\eps_t=(\e_t)_*\ppi^\eps$ for every $t,\eps$. 

The compactness of $\T^d$ and the uniform estimate \eqref{eq:unifke} imply the tightness of $(\ppi^\eps)$ (as in the proof of Lemma \ref{le:superpp}). Let $\ppi$ be the weak limit of $\ppi^{\eps_n}$: it is clear that $(\e_t)_*\ppi=\mu_t$ for any $t$, thus  to conclude it is sufficient to prove that $\int |\gamma_s-\gamma_t-\int_t^sv_r(\gamma_r)\,\d r|\,\d\ppi(\gamma)=0$ holds for any $t<s$. Let $\varphi\in C^\infty([0,1]\times\T^d;\R^d)$, put $\varphi^\eps_t:=\frac{\d(\varphi_t\mu_t)\ast\rho^\eps}{\d(\mu_t\ast\rho^\eps)}$, notice that $\varphi^\eps\to\varphi$ uniformly and let $\eps=\eps_n\downarrow0$ in 
\[
\begin{split}
\int|\gamma_s-\gamma_t-\int_t^s\varphi_r(\gamma_r)\,\d r|\,\d\ppi^\eps(\gamma)&\leq\iint_t^s| v^\eps_r(\gamma_r)-\varphi_r(\gamma_r)|\,\d r\,\d\ppi^\eps(\gamma)\\
&\leq \underbrace{\int_t^s\int|v^\eps_r-\varphi^\eps_r|\,\d\mu^\eps_r\,\d r}_{\text{(Jensen)\ }\ \leq\int_t^s\int|v_r-\varphi_r|\,\d\mu_r\,\d r}+ \int_t^s\int|\varphi^\eps_r-\varphi_r|\,\d\mu^\eps_r\,\d r
\end{split}
\] to deduce that 
\begin{equation}
\label{eq:col2}
\begin{split}
&\int|\gamma_s-\gamma_t-\!\!\int_t^s\!\! v_r(\gamma_r)\,\d r|\,\d\ppi(\gamma)\\
&\leq \int|\gamma_s-\gamma_t-\!\!\int_t^s\!\! \varphi_r(\gamma_r)\,\d r|\,\d\ppi(\gamma)+\int_t^s\!\!\int |v_r-  \varphi_r|\,\d\mu_r\,\d r
\leq2 \int_0^1\!\!\int|v_r-\varphi_r|\,\d\mu_r\,\d r.
\end{split}
\end{equation}
As $\varphi$ is arbitrary, by  approximation  in $L^1([0,1]\times\T^d,\d t\otimes\d\mu_t)$ we conclude.
\end{idea}

\begin{proposition}[Superposition principle  - the metric case]\label{prop:supmet}
Let $(\X,\sfd,\mm)$ be infinitesimally Hilbertian, $(v_t)\in L^2([0,1],L^2(T\X))$ and $(\mu_t)\subset  \pr(\X)$ be a solution of the continuity equation as in Definition \ref{def:solCE}. Then there is a test plan $\ppi$ with $(\e_t)_*\ppi=\mu_t$ and $\ppi'_t=\e_t^*v_t$ for a.e.\ $t$. 
\end{proposition}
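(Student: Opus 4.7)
The plan is to reduce to the Euclidean superposition (Lemma \ref{le:suprd}) via finite-dimensional projections, rather than attempt a direct application of the metric superposition Lemma \ref{le:superpp}, which does \emph{not} suffice by itself: that lemma delivers only the lift with $\int|\dot\gamma_t|^2\,\d\ppi=|\dot\mu_t|^2$, possibly strictly less than $\int|v_t|^2\,\d\mu_t$ (think of $\X=\T^2$, $\mu_t\equiv\mm$, $v_t\equiv(1,0)$, where it returns constant curves, not the translation flow). After a truncation reducing to $\X$ bounded, $\mu_t\leq C\mm$, and $|v_t|\in L^\infty$, I would exploit the separability of $L^2(T^*\X)$ granted by \eqref{eq:infhilftangsep} to pick a countable $(f_n)\subset\test(\X)\cap\Lip_\b(\X)$ which separates points of $\supp(\mm)$ and whose $L^\infty(\mm)$-combinations of $\d f_n$ are dense in $L^2(T^*\X)$. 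For each $N$ set $J_N:=(f_1,\dots,f_N):\X\to\R^N$, $\mu^N_t:=(J_N)_*\mu_t$, and define
\[
w^N_{t,i}(y):=\int\d f_i(v_t)(x)\,\d\mu_t^{N,y}(x),\qquad i=1,\dots,N,
\]
with $\{\mu_t^{N,y}\}$ the disintegration of $\mu_t$ along $J_N$. Testing Definition \ref{def:solCE} against $\varphi\circ J_N$, $\varphi\in C^\infty_c(\R^N)$, shows that $(\mu^N_t,w^N_t)$ is a distributional solution of the Euclidean continuity equation, while Jensen's inequality bounds $\int_0^1\int|w^N_t|^2\,\d\mu^N_t\,\d t$ uniformly in $N$.

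Lemma \ref{le:suprd} then produces $\ppi^N\in\pr(C([0,1],\R^N))$ with $(\e_t)_*\ppi^N=\mu^N_t$ and the pointwise ODE $\gamma'_t=w^N_t(\gamma_t)$ holding $\ppi^N$-a.e. Arranging these compatibly as $N$ grows (via a Kolmogorov-type selection so that projecting $\ppi^N$ onto the first $k$ coordinates returns $\ppi^k$) yields a limit $\ppi^\infty\in\pr(C([0,1],\R^\N))$, which is pulled back through the injective map $J:=(f_n)_n$ to a plan $\ppi\in\pr(C([0,1],\X))$ of bounded compression with $(\e_t)_*\ppi=\mu_t$. To identify the tangent, observe that for each fixed $n$ the pointwise ODE in $\R^N$ with $N\geq n$ reads $\partial_t(f_n\circ\gamma_t)=w^N_{t,n}(J_N(\gamma_t))$; letting $N\to\infty$ and invoking Doob's martingale convergence (the $w^N_{t,n}\circ J_N$ are conditional expectations of $\d f_n(v_t)$ with respect to $\sigma$-algebras that increase to the Borel one on $\supp(\mm)$), this upgrades to $\partial_t(f_n\circ\e_t)=\d f_n(v_t)\circ\e_t$ $\ppi$-a.e.\ for a.e.\ $t$. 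Combined with Theorem \ref{thm:speedplan} this gives $\e_t^*(\d f_n)(u_t)=0$ $\ppi$-a.e., where $u_t:=\ppi'_t-\e_t^*v_t\in\e_t^*L^2(T\X)$. Extending by $L^\infty(\ppi)$-linearity to finite combinations $\sum_i g_i\,\e_t^*(\d f_{n_i})$, which by Theorem/Definition \ref{def:pb}(ii) together with our density choice are dense in $\e_t^*L^2(T^*\X)$, and passing to the limit via Cauchy-Schwarz, one obtains $\sigma(u_t)=0$ $\ppi$-a.e.\ for every $\sigma\in\e_t^*L^2(T^*\X)$; taking $\sigma$ to be the Riesz dual of $u_t$ (which exists because infinitesimal Hilbertianity transfers to the pullback module) forces $|u_t|^2=0$, hence $\ppi'_t=\e_t^*v_t$ for a.e.\ $t$.

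The hardest step I anticipate is the construction and pullback of $\ppi^\infty$: ruling out trajectories escaping to points of the completion of $J(\supp(\mm))$ inside $\R^\N$, and securing the kinetic-energy bound needed to make $\ppi$ a test plan, require a careful combination of the bounded compression $\mu_t\leq C\mm$ with the separating property of the $(f_n)$'s. A subtler point, closely related to this, is that the martingale passage to the limit must preserve the ODE \emph{pointwise in $\gamma$}, not only in integrated form, which is delicate given that $\d f_n(v_t)$ is defined only $\mm$-a.e.
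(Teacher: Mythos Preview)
Your overall strategy---push forward via finite-dimensional maps $J_N=(f_1,\dots,f_N)$, apply the Euclidean superposition Lemma~\ref{le:suprd}, and pass to the limit---is exactly the paper's. The gap is in how you take the limit. The ``Kolmogorov-type selection so that projecting $\ppi^N$ onto the first $k$ coordinates returns $\ppi^k$'' does not work: the plans $\ppi^N$ produced by Lemma~\ref{le:suprd} are not unique, and there is no mechanism to choose them compatibly. More fundamentally, even if you had such a choice, a $\ppi^N$-typical curve $\gamma$ solves $\gamma'_{t,i}=w^N_{t,i}(\gamma_{t,1},\dots,\gamma_{t,N})$, where the right-hand side depends on \emph{all} $N$ coordinates; projecting to $\R^k$ does not give a closed ODE with drift $w^k_t$, so the projected plan need not be a superposition for the $\R^k$ problem. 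The tower property only says $E[w^N_{t,i}\mid J_k]=w^k_{t,i}$, which is far from what you need. Consequently the Doob martingale step, which presupposes a single limiting curve along which all the $w^N_{t,n}\circ J_N$ are evaluated, never gets off the ground.

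The paper replaces this with a compactness argument: choose the $f_n$ to be $1$-Lipschitz with $\sfd(x,y)=\sup_n|f_n(x)-f_n(y)|$ (a Kuratowski-type embedding), so $F=(f_n):\X\to\ell^\infty$ is an isometry. View $\R^d\hookrightarrow\ell^\infty$; since the $f_n$ are uniformly bounded (say $\X$ compact), all $(\e_t)_*\ppi^d$ sit in a fixed weak$^*$-compact ball $K\subset\ell^\infty$, and the uniform kinetic-energy bound $\int|w^d_t|_\infty^2\,\d\nu^d_t\le\int|v_t|^2\,\d\mu_t$ gives tightness of $\{\ppi^d\}$ in $\pr(C([0,1],K))$. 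Take any weak limit $\ppi$. To recover the ODE for the $i$-th coordinate one does not use martingales but an $L^1$ approximation: for cylindrical $\varphi$ depending continuously on finitely many coordinates and on time, Jensen gives $\int|w^d_{i,t}-\varphi|\,\d\nu^d_t\le\int|\d f_i(v_t)\circ F^{-1}-\varphi|\,\d F_*\mu_t$ uniformly in $d$, and this passes under the weak limit to yield $\gamma_{s,i}-\gamma_{t,i}=\int_t^s\d f_i(v_r)(F^{-1}(\gamma_r))\,\d r$ for $\ppi$-a.e.\ $\gamma$. The isometry of $F$ then makes the pullback $\bar\ppi:=F^{-1}_*\ppi$ automatic, delivers $(\e_t)_*\bar\ppi=\mu_t$, and turns the $\ell^\infty$ kinetic-energy bound directly into $\iint|\dot\gamma_t|^2\,\d t\,\d\bar\ppi<\infty$, so $\bar\ppi$ is a test plan---this handles precisely the ``escape to the completion'' and ``kinetic-energy'' worries you flagged. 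Two minor points: $\test(\X)$ is defined only on $\RCD$ spaces, whereas the hypothesis here is merely infinitesimal Hilbertianity, so take $(f_n)\subset W^{1,2}(\X)$ Lipschitz instead; and to guarantee the final density argument it suffices that the span of the $f_n$ is dense in $W^{1,2}(\X)$, which \eqref{eq:w12reflsep} provides.
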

\begin{idea} Say that $\X$ is compact. Let $(f_n)\subset W^{1,2}(\X)$ be a sequence of 1-Lipschitz  functions whose span is dense in $W^{1,2}(\X)$ (recall \eqref{eq:w12reflsep}). Possibly enlarging the set we can also assume that 
\begin{equation}
\label{eq:isoF}
\sfd(x,y)=\sup_n|f_n(x)-f_n(y)|\qquad\forall x,y\in\X.
\end{equation}
For $d\in\N$ let $F^d:\X\to\R^d$ be given by $F^d:=(f_1,\ldots,f_d)$ and $\{\mu_{t,p}\}_{p\in\R^d} $ be  the disintegration of $\mu_t$ w.r.t.\ $F^d$. Then put 
\begin{equation}
\label{eq:defnud}
\nu^d_t:=F^d_*\mu_t\qquad\text{ and }\qquad w^d_{i,t}:=\frac{\d(F^d_*(\d f_i(v_t)\mu_t))}{\d( F^d_*\mu_t)}= \int \d f_i(v_t)\,\d\mu_{t,\cdot}\quad i=1,\ldots,d.
\end{equation}
From $|\d f_i(v_t)|\leq |v_t|$ we get
\begin{equation}
\label{eq:pertigsup}
\int |w^d_{t}|_\infty^2\,\d\nu^d_t\leq \int|v_t|^2\,\d\mu_t,\qquad\forall t\in[0,1],
\end{equation}
where $w^d_t=(w^d_{1,t},\ldots,w^d_{d,t})$. Now notice  that for $g\in C^\infty_c(\R^d)$ the natural chain rule $\d(g\circ F^d)=\sum_i(\partial_i g)\circ F^d\d f_i$ holds (for $g$ polynomial this follows from  the Leibniz rule, the general case then comes by approximation recalling the closure of the differential), hence
\[
\begin{split}
\partial_t\int g\,\d\nu^d_t=\partial_t\int g\circ F^d\,\d\mu_t\stackrel{\eqref{eq:CE}}=\sum_i\int (\partial_i g)\circ F^d\,\d f_i(v_t)\,\d\mu_t=\int \d g(w^d_t)\,\d\nu^d_t
\end{split}
\]
and from this it follows that $(\nu^d_t,w^d_t)$ solves the continuity equation in $\R^d$; thus by Lemma \ref{le:suprd} we get a corresponding plan $\ppi^d$. Think $\R^d$ embedded in $\ell^\infty$ via the map sending $(p_1,\ldots,p_d)$ to $(p_1,\ldots,p_d,0,0,\ldots)$ and, for later use, notice that if $\varphi$ is a bounded function on $\ell^\infty$ continuously depending on the first $d'\leq d$  coordinates, from \eqref{eq:defnud} and Jensen's inequality we get
\begin{equation}
\label{eq:lateruse}
\int |w^d_{i,t} -\varphi |\,\d\nu^d_t\leq \int|\d f_i(v_t)\circ F^{-1}-\varphi |\,\d F_*\mu_t,\qquad\forall i\in\N,\ t\in[0,1]
\end{equation}
where $F:\X\to \ell^\infty$ is given by  $F(x):=(\ldots,f_i,\ldots)$. In particular, if $\varphi$ also depends continuously on time we have
\begin{equation}
\label{eq:later2}
\begin{split}
\lims_{d\to\infty}\int\big|\gamma_{s,i}-\gamma_{t,i}-\int_t^s\varphi_r(\gamma_r)\,\d r\big|\,\d\ppi^d(\gamma)&\leq \lims_{d\to \infty}\iint_t^s|w^d_{r,i}(\gamma_r)-\varphi_r(\gamma_r)|\,\d r\,\d\ppi^d(\gamma)\\
&=\lims_{d\to\infty} \int_t^s\!\!\int|w^d_{r,i}-\varphi_r|\,\d\nu^d_r\,\d r\\
\text{(by \eqref{eq:lateruse})}\qquad\qquad&\leq \int_t^s\!\!\int|\d f_i(v_r)\circ F^{-1}-\varphi_r|\,\d F_*\mu_r\,\d r
\end{split}
\end{equation}
for any $i\in\N$ and $t<s$. Now notice that  the $f_n$'s are uniformly bounded, thus  the measures $(\e_t)_*\ppi^d=\nu^d_t$ are all  concentrated in the same closed ball $K$ of $\ell^\infty$, which is  weakly$^*$-compact. This and the uniform estimate \eqref{eq:pertigsup} are sufficient to grant tightness of $\{\ppi^d\}$ (recall that on $K$ the weak$^*$ topology is Polish). Let $\ppi$ be  any weak limit and notice that $(\e_t)_*\ppi=F_*\mu_t$ for every $t\in[0,1]$ and, by \eqref{eq:later2} and arguing as in   \eqref{eq:col2} we get
\[
\int |\gamma_{s,i}-\gamma_{t,i}-\int \d f_i(v_r)(F^{-1}(\gamma_r))\,\d r|\,\d\ppi(\gamma)\leq 2 \int_0^1\int|\d f_i(v_r)\circ F^{-1}-\varphi_r|\,\d F_*\mu_r\,\d r
\]
for any $i\in\N$, $t<s$ and $\varphi$ as above. Letting $\varphi\to \d f_i(v_r)\circ F^{-1}$ in $L^1([0,1]\times\ell^\infty,\d t\otimes F_*\mu_t)$ we get
\begin{equation}
\label{eq:compi}
\gamma_{s,i}-\gamma_{t,i}=\int_t^s\d f_i(v_t)(F^{-1}(\gamma_r))\,\d r\qquad \ppi-a.e.\ \gamma\qquad\forall t<s,\ i\in\N.
\end{equation}
Then denoting by $F^{-1}(\gamma)$ the curve $t\mapsto F^{-1}(\gamma_t)\in\X$, it is easy to see that the plan $\bar\ppi:=F^{-1}_*\ppi$ is well defined, that $(\e_t)_*\bar\ppi=\mu_t$ for every $t$ and, since $F$ is an isometry (by \eqref{eq:isoF}) and by \eqref{eq:pertigsup}, that $\iint_0^1|\dot\gamma_t|^2\,\d t\,\d\bar\ppi(\gamma)<\infty$. Hence $\bar\ppi$ is a test plan. Also, \eqref{eq:compi} reads as
\[
f_i(\gamma_s)-f_i(\gamma_t)=\int_t^s\d f_i(v_r)\,\d r\qquad\bar\ppi-a.e.\ \gamma,\qquad\forall t<s,\ i\in\N
\]
and the arbitrariness of $t,s,i$ and the fact that the $f_i$'s generate a dense subspace of $W^{1,2}(\X)$ suffices to show that $\bar\ppi'_t=\e_t^*v_t$ for a.e.\ $t$, as desired.
\end{idea}
The superposition principle allows to prove the following result:
\begin{lemma}\label{le:ponte}
Let $(\X,\sfd,\mm)$ be infinitesimally Hilbertian and $(v_t)\in L^2([0,1],L^2(T\X))$. Assume that for every $\mu\in\pr_\bc(\X)$ there is a unique solution $(\mu_t)$ of the continuity equation in the sense of Definition \ref{def:solCE} with $\mu_0=\mu$. 

Then  $\forall\mu\in\pr_\bc(\X)$ there is a unique test plan $\ppi$ with $(\e_0)_*\ppi=\mu$ and $\ppi'_t=\e_t^*v_t$ for a.e.\ $t$.
\end{lemma}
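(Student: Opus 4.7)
The plan is to get existence from the superposition principle (Proposition \ref{prop:supmet}) and uniqueness by a disintegration argument analogous to the existence part of Lemma \ref{le:measth}, trading ``splitting of a curve into two evaluations" against ``uniqueness of the continuity equation".

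For existence, given $\mu\in\pr_{\rm bc}(\X)$ I would first invoke the hypothesis to obtain the (unique) solution $(\mu_t)$ of the continuity equation starting from $\mu$. Since $(\mu_t)$ has bounded compression by definition, Proposition \ref{prop:supmet} immediately produces a test plan $\ppi$ with $(\e_t)_*\ppi=\mu_t$ for every $t$ and $\ppi'_t=\e_t^*v_t$ a.e., which is the desired object.

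For uniqueness, assume two test plans $\ppi^1,\ppi^2$ both meet the requirements. The first observation I would prove, directly from the definition of $\ppi'_t$ in Theorem \ref{thm:speedplan}, is that any test plan $\ppi$ with $\ppi'_t=\e_t^*v_t$ automatically makes $t\mapsto(\e_t)_*\ppi$ a solution of the continuity equation in the sense of Definition \ref{def:solCE}: indeed, for any $f\in W^{1,2}(\X)$,
\[
\tfrac{\d}{\d t}\!\int f\,\d(\e_t)_*\ppi=\int\partial_t(f\circ\e_t)\,\d\ppi=\int(\e_t^*\d f)(\ppi'_t)\,\d\ppi=\int\d f(v_t)\,\d(\e_t)_*\ppi.
\]
By the uniqueness hypothesis it then follows that $(\e_t)_*\ppi^1=(\e_t)_*\ppi^2=:\mu_t$ for every $t$. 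Next I would form $\ppi:=\tfrac12(\ppi^1+\ppi^2)$, which is again a test plan with $\ppi'_t=\e_t^*v_t$ (all three conditions are preserved under convex combinations), disintegrate it w.r.t.\ $\e_0$ as $\ppi=\int\ppi_x\,\d\mu(x)$, and aim to show that $\ppi_x$ is a Dirac mass for $\mu$-a.e.\ $x$.

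If not, continuity in $t$ of $\gamma\mapsto\gamma_t$ and a measurable-selection argument, exactly as in the existence part of Lemma \ref{le:measth}, yield $t\in[0,1]$, a ball $B\subset\X$, a threshold $\delta>0$ and a Borel set $E\subset\X$ with $\mu(E)>0$ such that $\ppi_x(\e_t^{-1}(B))$ and $\ppi_x(\e_t^{-1}(\X\setminus B))$ both exceed $\delta$ for every $x\in E$. I would then set
\[
\tilde\ppi^{(j)}:=\tfrac{1}{\mu(E)}\int_E\tfrac{\ppi_x\restr{A_j}}{\ppi_x(A_j)}\,\d\mu(x),\qquad A_1:=\e_t^{-1}(B),\quad A_2:=\e_t^{-1}(\X\setminus B),
\]
so that $\tilde\ppi^{(1)},\tilde\ppi^{(2)}$ are absolutely continuous w.r.t.\ $\ppi$ with bounded densities and therefore test plans with $(\tilde\ppi^{(j)})'_t=\e_t^*v_t$; they share the initial measure $\mu(E)^{-1}\mu\restr E$ but $(\e_t)_*\tilde\ppi^{(1)}$ lives in $B$ whereas $(\e_t)_*\tilde\ppi^{(2)}$ lives in $\X\setminus B$. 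By the observation above, the two maps $s\mapsto(\e_s)_*\tilde\ppi^{(j)}$ would be two different solutions of the continuity equation with the same initial datum, contradicting the hypothesis. Hence $\ppi_x=\delta_{\eta(x)}$ for $\mu$-a.e.\ $x$, and since $\ppi^i\leq2\ppi$ their disintegrations $\ppi^i_x$ are forced to be Dirac as well, giving $\ppi^1_x=\ppi^2_x=\delta_{\eta(x)}$ for $\mu$-a.e.\ $x$ and thus $\ppi^1=\ppi^2$.

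The main obstacle I expect is the bookkeeping needed to verify that conditioning, restricting to sublevels of $\ppi$-densities, and averaging via a disintegration all preserve the property $\ppi'_t=\e_t^*v_t$. This ultimately reduces to the fact, inherent in Theorem \ref{thm:speedplan}, that $\ppi'_t$ is characterized by a $\ppi$-a.e.\ identity of derivatives of $t\mapsto f\circ\e_t$ in $L^1(\ppi)$, a property that is stable under passing to any measure absolutely continuous w.r.t.\ $\ppi$ with bounded density and under convex combinations with bounded Radon-Nikodym constants.
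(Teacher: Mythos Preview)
Your proof is correct and follows essentially the same route as the paper: existence via the superposition principle (Proposition \ref{prop:supmet}), and uniqueness by averaging $\ppi^1,\ppi^2$, disintegrating over $\e_0$, and then invoking the splitting construction from Lemma \ref{le:measth} to produce two test plans with the same initial datum but different marginals at some time $t$, contradicting uniqueness for the continuity equation. Your explicit verification that any such plan induces a solution of the continuity equation, and your closing remarks about why restriction and renormalisation preserve the property $\ppi'_t=\e_t^*v_t$, are exactly the points the paper leaves implicit; the intermediate observation that $(\e_t)_*\ppi^1=(\e_t)_*\ppi^2$ is true but not actually needed for the contradiction.
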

\begin{idea}
Existence is the content of Proposition \ref{prop:supmet}. For uniqueness we argue by contradiction: if $\ppi^1\neq \ppi^2$ both have the required properties, then the disintegration of $\ppi:=\frac{\sppi^1+\sppi^2}2$ w.r.t.\ $\e_0$ is not always made of Dirac masses. Hence as in the proof of Lemma \ref{le:measth} we can find test plans $\tilde\ppi^1,\tilde\ppi^2$ with  $(\tilde\ppi^i)'_t=\e_t^*v_t$ for a.e.\ $t$, $(\e_0)_*\tilde\ppi^1=(\e_0)_*\tilde\ppi^2$ and so that $(\e_t)_*\tilde\ppi^1\neq(\e_t)_*\tilde\ppi^2$ for some $t$. Then $s\mapsto (\e_s)_*\tilde\ppi^1,(\e_s)_*\tilde\ppi^2$ are two different solutions of the continuity equation starting from the same measure, giving the desired contradiction.
\end{idea}
Thus existence and uniqueness of RLF is reduced to existence and uniqueness of solutions of the continuity equation as in Definition \ref{def:solCE}. Notice that in doing so we transformed the non-linear problem of studying solutions of the ODE \eqref{eq:ODE} into the linear one of studying solutions of the continuity equation \eqref{eq:CE} (in line with the classical use of Young's measures and with Kantorovich's approach to Optimal Transport).

The continuity equation is tractable on $\RCD$ spaces under suitable regularity assumptions on the vector fields:
\begin{proposition}[Existence and uniqueness for the continuity equation]\label{prop:exuniqCE} Let $(\X,\sfd,\mm)$ be $\RCD(K,\infty)$ and $(v_t)\in L^2([0,1],L^2(T\X))$ be with
\begin{equation}
\label{eq:assvt}
{\sf N}\big((v_t)\big):=\int_0^1\||v_t|\|_{L^\infty}+\|\div( v_t)\|_{L^2}+\|\div (v_t)\|_{L^\infty}+\||\nabla v_t|_{\HS}\|_{L^2}\,\d t<\infty.
\end{equation}
Then for every $\mu=\rho\mm\in\pr_\bc(\X)$ there exists a unique solution $(\mu_t)=(\rho_t\mm)$ of the continuity equation \eqref{eq:CE} with $\mu_0=\mu$ and it satisfies
\begin{equation}
\label{eq:expb}
\|\rho_t\|_{L^\infty}\leq \|\rho\|_{L^\infty}e^{\int_0^t\|(\div(v_r))^-\|_{L^\infty}\,\d r}\qquad\forall t\in[0,1].
\end{equation}
\end{proposition}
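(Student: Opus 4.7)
Both uniqueness and the $L^\infty$ bound \eqref{eq:expb} will be deduced from a single \emph{renormalization identity}: for any solution $(\rho_t\mm)$ of \eqref{eq:CE} in the sense of Definition \ref{def:solCE} and any $\beta\in C^1\cap\Lip(\R)$ with $\beta(0)=0$, the map $t\mapsto \int \beta(\rho_t)\,\d\mm$ is absolutely continuous with
\begin{equation}\label{eq:renorm}
\frac{\d}{\d t}\int \beta(\rho_t)\,\d\mm \;=\; \int \bigl(\beta(\rho_t)-\rho_t\beta'(\rho_t)\bigr)\div(v_t)\,\d\mm.
\end{equation}
Existence will then be obtained by a parabolic regularization producing approximate solutions that inherit \eqref{eq:expb} uniformly.

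\textbf{From \eqref{eq:renorm} to the conclusions.} Choosing $\beta(z) := z^p/p$ with $p>1$ and applying Gr\"onwall's lemma gives
\[
\|\rho_t\|_{L^p}^p \;\leq\; \|\rho\|_{L^p}^p\,\exp\!\Bigl((p-1)\!\!\int_0^t\!\|(\div v_s)^-\|_{L^\infty}\,\d s\Bigr);
\]
letting $p\to\infty$ yields \eqref{eq:expb}. For uniqueness, by linearity of \eqref{eq:CE} the identity \eqref{eq:renorm} applies to the difference $\sigma_t:=\rho_t^1-\rho_t^2\in L^\infty$ of two solutions with $\sigma_0=0$; the choice $\beta(z)=z^2$ and again Gr\"onwall, using $\div v_t\in L^\infty$, force $\sigma_t\equiv 0$.

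\textbf{Proof of \eqref{eq:renorm} --- the main obstacle.} The plan is to mollify in space via the heat flow: for $\eps>0$ set $\rho_t^\eps:=\h_\eps\rho_t$, which lies in $\test(\X)$ by \eqref{eq:regheat} and the $L^\infty$ bound from bounded compression. Testing Definition \ref{def:solCE} against $\h_\eps\phi$ and using the self-adjointness of $\h_\eps$ shows that $\rho_t^\eps$ satisfies
\[
\partial_t \rho_t^\eps \;+\; \div(\rho_t^\eps v_t)\;=\;\mathcal C_\eps(t)
\qquad\text{with}\qquad
\mathcal C_\eps(t):= \div(\h_\eps\rho_t\cdot v_t) - \h_\eps\bigl(\div(\rho_t v_t)\bigr).
\]
Now $\rho_t^\eps$ is regular enough that the classical chain rule and \eqref{eq:leibdiv} produce \eqref{eq:renorm} for $\rho_t^\eps$ up to a remainder
$\int_0^t\!\int \beta'(\rho_s^\eps)\,\mathcal C_\eps(s)\,\d\mm\,\d s$. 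The crucial step is to show that $\mathcal C_\eps\to 0$ in $L^1([0,1],L^2(\X))$ as $\eps\downarrow 0$. Decompose $\mathcal C_\eps=\mathcal C_\eps^{(1)}+\mathcal C_\eps^{(2)}$ with
\[
\mathcal C_\eps^{(1)}(s):= \h_\eps\rho_s\,\div v_s - \h_\eps(\rho_s\,\div v_s),
\qquad
\mathcal C_\eps^{(2)}(s):= \d\h_\eps\rho_s(v_s) - \h_\eps\bigl(\d\rho_s(v_s)\bigr).
\]
The first piece tends to $0$ in $L^2$ for a.e.\ $s$ because $\|\mathcal C_\eps^{(1)}(s)\|_{L^2}\leq \|\div v_s\|_{L^\infty}\|\h_\eps\rho_s-\rho_s\|_{L^2}+\|\h_\eps(\rho_s\div v_s)-\rho_s\div v_s\|_{L^2}\to 0$ by strong $L^2$-continuity of $\h_\eps$ at $\eps=0$, combined with dominated convergence in $s$. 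The second piece is the genuine Di Perna--Lions commutator between differentiation along $v_s$ and heat-flow smoothing; this is \emph{the hard part}. Its control is where the Sobolev regularity $\int \||\nabla v_s|_{\HS}\|_{L^2}\,\d s<\infty$ enters: via duality and the covariant Leibniz identity \eqref{eq:leibcov}, $\mathcal C_\eps^{(2)}(s)$ can be rewritten in terms of $\nabla v_s$ tested against regularized gradients of $\rho_s$, and Bakry--\'Emery-type contraction estimates for $\h_\eps$ (as in Section \ref{se:secondordercalc}) turn this into a quantity converging to $0$ in $L^2$ by the $L^2$-continuity of $\eps\mapsto \h_\eps$ applied componentwise.

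\textbf{Existence.} For $\delta>0$ consider the viscous regularization $\partial_t\rho_t^\delta+\div(\rho_t^\delta v_t)=\delta\,\Delta\rho_t^\delta$ with $\rho_0^\delta:=\rho$, solvable by standard semigroup theory viewing the first-order part as a $\delta$-bounded perturbation of $\delta\Delta$. The renormalization argument above applies a fortiori in this smoother regime and yields \eqref{eq:expb} uniformly in $\delta$. Weak-$^*$ compactness in $L^\infty([0,1]\times\X)$ extracts a limit $(\rho_t)$, the uniform $L^\infty$ bound forces $\delta\,\Delta\rho_t^\delta\to 0$ distributionally, and the limit solves \eqref{eq:CE}; the $L^\infty$ bound \eqref{eq:expb} passes by weak-$^*$ lower semicontinuity, and uniqueness just established guarantees convergence of the full family.
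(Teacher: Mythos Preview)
Your overall architecture matches the paper's: viscous approximation for existence, heat-flow mollification and a DiPerna--Lions commutator estimate for the renormalization identity, and then uniqueness and the $L^\infty$ bound follow. But your treatment of the commutator has a genuine gap.

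First, your decomposition $\mathcal C_\eps=\mathcal C_\eps^{(1)}+\mathcal C_\eps^{(2)}$ is not well-defined: the term $\mathcal C_\eps^{(2)}(s)=\d\h_\eps\rho_s(v_s)-\h_\eps(\d\rho_s(v_s))$ involves $\d\rho_s$, but $\rho_s$ is merely in $L^\infty$, not in $W^{1,2}$, so this object does not exist. The paper is careful to write $\h_\alpha^*(\div(v\rho))$ via the adjoint semigroup precisely because $\div(v\rho)$ must be treated distributionally; you cannot split off the Leibniz terms as you do.

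Second, and more importantly, the phrases ``Bakry--\'Emery-type contraction estimates'' and ``$L^2$-continuity of $\eps\mapsto\h_\eps$ applied componentwise'' do not constitute a proof of the commutator estimate. Pointwise convergence $\mathcal C_\eps\to 0$ is easy when $\rho$ is regular; the whole difficulty is obtaining a bound \emph{uniform in $\eps$} of the form $\|\mathcal C_\eps(v,\rho)\|_{L^1}\leq c\,\|\rho\|_{L^2}\bigl(\|\nabla v\|_{L^2}+\|\div v\|_{L^2}\bigr)$, which then allows a density/equicontinuity argument. The paper achieves this via an interpolation identity
\[
\mathcal C_\alpha(v,\rho)=\int_0^\alpha \partial_s\,\h^*_{\alpha-s}\bigl(\div(v\,\h_s\rho)\bigr)\,\d s,
\]
paired by duality against $f\in L^\infty$. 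After using the definition of $\nabla v$ and \eqref{eq:dgrad}, the resulting terms are controlled by $\||\nabla v|_{\HS}\|_{L^2}\cdot\||\nabla\h_{\alpha-s}f|\|_{L^\infty}\cdot\||\nabla\h_s\rho|\|_{L^2}$; the $L^\infty$--$\Lip$ regularization \eqref{eq:linftylip} and the a priori estimate \eqref{eq:aprioriheat} give factors $\sim(\alpha-s)^{-1/2}$ and $\sim s^{-1/2}$, whose product integrates over $[0,\alpha]$ to a constant independent of $\alpha$. This scale-invariant integral is the mechanism that makes the estimate uniform, and it is absent from your sketch.
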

\begin{idea}\ \\
{\sc Existence} For $\eps>0$ we consider the viscous approximation 
\begin{equation}
\label{eq:visc}
\partial_t\rho_t+\div(v_t\rho_t)=\eps\Delta\rho_t.
\end{equation}
A rather standard  application of Lax-Milgram theorem (in its generalization given by J.\,L.\, Lions -  see \cite[Theorem III.2.1, Corollary III.2.3]{Show97}) ensures the existence of a solution $(\rho^\eps_t)$ of \eqref{eq:visc}  in the space $L^2([0,1],W^{1,2}(\X))$ with $\rho^\eps_0=\rho$ (in the suitable weak sense). For $u:\R\to\R$ convex smooth we consider the `pressure' ${\sf p}(z):=zu'(z)-u(z)$ and   the (formal, but justifiable) computation
\begin{equation}
\label{eq:formalu}
\begin{split}
\partial_t\int u\circ\rho_t^\eps\,\d\mm&=\int u'\circ\rho^\eps_t\partial_t\rho^\eps_t\,\d\mm=\int\d(u'\circ\rho^\eps_t)(v_t)\rho^\eps_t-\eps \la\d(u'\circ\rho^\eps_t),\d\rho^\eps_t\ra\,\d\mm\\
&=\int \d({\sf p}\circ\rho^\eps_t)(v_t)-\eps u''\circ\rho^\eps_t|\d\rho^\eps_t|^2\,\d\mm\leq -\int {\sf p}\circ\rho^\eps_t\,\div(v_t)\,\d\mm.
\end{split}
\end{equation}
The choice $u(z):=z^p$ gives ${\sf p}(z)=(p-1)z^p$ thus by Gronwall's lemma we get $\|\rho^\eps_t\|_{L^p}\leq \|\rho\|_{L^p}e^{(1-\frac1p)\int_0^t\|(\div(v_r))^-\|_{L^\infty}\,\d r}$ for every $t$.  This is sufficient both to get weak compactness of the solutions $(\rho^\eps_t)$, say in $L^2([0,1],L^2(\X))$, and the estimate \eqref{eq:expb} for any weak limit $(\rho_t)$. 

We claim that $\rho_t$ is a probability density for a.e.\ $t$. If $\mm(\X)<\infty$ the mass preservation is trivial because we can  test \eqref{eq:visc} with the function identically equal to 1 (in the general case the growth assumptions on the $v_t$'s play a role). For non-negativity  pick $u$ identically 0 on $\R^+$ and positive in $(-\infty,0)$ in \eqref{eq:formalu}  to deduce that $\int u(\rho^\eps_t)\,\d\mm\leq \int u(\rho)\,\d\mm=0$.

Now the fact that   $(\rho_t)$ solves the continuity equation easily follows passing to the limit in \eqref{eq:visc}: here it is convenient to test \eqref{eq:visc} with functions in the domain of the Laplacian to see that the RHS vanishes in the limit.

\noindent{\sc Uniqueness} Let $(\mu^1_t),(\mu^2_t)$ be two solutions with the same initial datum and set $\rho_t\mm=\mu^1_t-\mu^2_t$. Then  $(\rho_t)\subset L^\infty_t(L^\infty_x)$ and for any $f\in W^{1,2}(\X)$ the function $t\mapsto \int f\rho_t\,\d\mm$ is absolutely continuous with derivative $\int \d f(v_t)\rho_t\,\d\mm$, i.e.\ $(\rho_t)$ solves
\begin{equation}
\label{eq:CEdens}
\partial_t\rho_t+\div(v_t\rho_t)=0
\end{equation}
in a natural sense. Clearly, $\rho_0\equiv 0$: our goal is to prove that $\rho_t\equiv 0$ for every $t\in[0,1]$.

This is achieved by enforcing a non-linearity in the linear equation: one proves that 
\begin{equation}
\label{eq:CEb}
\partial_t\beta(\rho_t)+\div(v_t\beta(\rho_t))=\div(v_t)(\beta(\rho_t)-\rho_t\beta'(\rho_t))\qquad\forall \beta\in C^1(\R)
\end{equation}
(notice that in the smooth category, this follows from \eqref{eq:CEdens} by direct computation). If \eqref{eq:CEb} holds, picking $\beta(z)=z^2$   we get $\partial_t\int\rho_t^2\,\d\mm\leq {\|\div(v_t)\|_\infty}\int\rho_t^2\,\d\mm$, thus by Gronwall's lemma we conclude. To prove \eqref{eq:CEb} we regularize $(\rho_t)$ by defining $\rho^\alpha_t:=\h_\alpha\rho_t$ for $\alpha>0$. Then from   \eqref{eq:CEdens} it follows that  
\begin{equation}
\label{eq:CEreg}
\partial_t\rho^\alpha_t+\div(v_t\rho^\alpha_t)=\mathcal C_\alpha(v_t,\rho_t)\qquad\text{ where }\qquad \mathcal C_\alpha(v,\rho):=\div(v\h_\alpha\rho)-\h_\alpha^*(\div(v\rho)).
\end{equation}
Here we wrote $\h_\alpha^*(\div(v\rho))$ using the adjoint semigroup because $\div(v\rho)$ is not a function under our assumptions and thus must be treated via integration by parts. This is in line with the PDE in \eqref{eq:CEreg} whose solutions must be interpreted `distributionally', as for \eqref{eq:CEdens} above. The additional regularity on $(\rho^\alpha_t)$ allows rather easily to justify from \eqref{eq:CEreg} that
\begin{equation}
\label{eq:CEregb}
\partial_t\beta(\rho^\alpha_t)+\div(v_t\beta(\rho^\alpha_t))=\div(v_t)(\beta(\rho^\alpha_t)-\rho_t\beta'(\rho^\alpha_t))+\beta'(\rho^\alpha_t)\,\mathcal C_\alpha(v_t,\rho_t)
\end{equation}
and thus letting $\alpha\downarrow0$ we can deduce \eqref{eq:CEb} from \eqref{eq:CEregb} provided we can prove that
\begin{equation}
\label{eq:commint}
\lim_{\alpha \downarrow0}\int_0^1\|\mathcal C_\alpha(v_t,\rho_t)\|_{L^1}\,\d t=0.
\end{equation}
Notice that for $v\in D(\div)$ bounded and $\rho\in L^\infty\cap W^{1,2}(\X)$ it is trivial to check (using \eqref{eq:leibdiv} and that $\d\h_\alpha\rho\to\d\rho$ in $L^2(T^*\X)$) that $\|\mathcal C_\alpha(v,\rho)\|_{L^1}\to 0$ as $\alpha\downarrow0$. Thus \eqref{eq:commint} follows from \eqref{eq:assvt} by a density and equicontinuity argument if we prove that
\begin{equation}
\label{eq:commest}
\|\mathcal C_\alpha(v,\rho)\|_{L^1}\leq c\|\rho\|_{L^2}(\|\nabla v\|_{L^2}+\|\div (v)\|_{L^2})\qquad\forall \alpha\in(0,1)
\end{equation}
for some constant $c>0$. Let us thus prove \eqref{eq:commest} in the simplified case $\div(v)=0$. We have
\[
\begin{split}
\mathcal C_\alpha(v,\rho)&=\int_0^\alpha\partial_s \h^*_{\alpha-s}(\div(v\h_s\rho))\,\d s=\int_0^\alpha -\Delta^*\h_{\alpha-s}(\div(v\h_s\rho))+\h_{\alpha-s}(\div(v\Delta\h_\alpha\rho))\,\d s
\end{split}
\]
thus for  $f\in L^\infty(\X)$, using $\div(v)=0$, the definition of $\nabla v$ (with $h=1$) and \eqref{eq:dgrad}  we get
\[
\begin{split}
\la \mathcal C_\alpha(v,\rho),f\ra&=\iint_0^\alpha- \Delta\h_{\alpha-s}f\la v,\nabla\h_s\rho\ra -\la v,\nabla\h_{\alpha-s}f\ra \Delta\h_s\rho\,\d s\,\d\mm\\
&=\iint_0^\alpha \nabla v(\nabla\h_{\alpha-s}f,\nabla\h_\alpha\rho) + \nabla v(\nabla\h_\alpha\rho,\nabla\h_{\alpha-s}f) +\d(\la\d\h_{\alpha-s}f,\d\h_s\rho\ra)(v)\,\d s\,\d\mm.
\end{split}
\]
Since, again, $\div(v)=0$ the last term vanishes and we have
\[
\begin{split}
|\la \mathcal C_\alpha(v,\rho),f\ra|&\leq 2\||\nabla v|_{\HS}\|_{L^2}\int_0^\alpha \||\nabla\h_{\alpha-s}f |\|_{L^\infty} \||\nabla\h_{s}\rho |\|_{L^2}\,\d s\\
\text{(by \eqref{eq:linftylip} and \eqref{eq:aprioriheat})}\quad&\leq c\||\nabla v|_{\HS}\|_{L^2}\|\rho\|_{L^2}\|f\|_{L^\infty}\int_0^\alpha\tfrac1{\sqrt{s(\alpha-s)}}\,\d s
\end{split}
\]
for $c=c(K)$, and since the last integral is equal to $\int_0^1\tfrac1{\sqrt {s(1-s)}}\,\d s<\infty$ we proved that the norm of $\mathcal C_\alpha(v,\rho)$ as element of $(L^\infty(\X))'$ is bounded by the RHS in \eqref{eq:commest}. 
\end{idea}
Collecting Proposition \ref{prop:exuniqCE}, Lemma \ref{le:ponte} and Lemma \ref{le:measth} we  get:
\begin{theorem}[Existence and uniqueness of Regular Lagrangian Flows]\label{thm:RLF} Let $(\X,\sfd,\mm)$ be $\RCD(K,\infty)$ and $(v_t)\in L^2([0,1],L^2(T\X))$ be satisfying \eqref{eq:assvt}.

Then $(v_t)$ admits a unique Regular Lagrangian Flow.
\end{theorem}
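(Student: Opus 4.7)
The plan is to simply chain together the three results \ref{prop:exuniqCE}, \ref{le:ponte}, \ref{le:measth} just proved; no new ingredient is needed. First I would fix an arbitrary $\mu=\rho\mm\in\pr_\bc(\X)$ and invoke Proposition \ref{prop:exuniqCE}: the regularity assumption \eqref{eq:assvt} on $(v_t)$ (in particular the $L^1_t$-integrability of $\|\div(v_t)\|_{L^2}$, $\|\div(v_t)\|_{L^\infty}$, $\||\nabla v_t|_{\HS}\|_{L^2}$ and $\||v_t|\|_{L^\infty}$) is exactly what is required there, and it yields a unique solution $(\mu_t)=(\rho_t\mm)$ of the continuity equation \eqref{eq:CE} with $\mu_0=\mu$, together with the quantitative bound \eqref{eq:expb} ensuring $\mu_t\in\pr_\bc(\X)$ for every $t\in[0,1]$.

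Next I would apply Lemma \ref{le:ponte}: the hypothesis of that lemma (``for every $\mu\in\pr_\bc(\X)$ there is a unique solution of the continuity equation starting from $\mu$'') has just been verified, and infinitesimal Hilbertianity holds because the space is $\RCD(K,\infty)$. The conclusion is that for every $\mu\in\pr_\bc(\X)$ there exists a unique test plan $\ppi$ with $(\e_0)_*\ppi=\mu$ and $\ppi'_t=\e_t^*v_t$ for a.e.\ $t\in[0,1]$. Finally, feeding this into Lemma \ref{le:measth} yields existence and uniqueness of the Regular Lagrangian Flow associated to $(v_t)$.

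There is no genuine obstacle at this stage, as the hard analytic work has been absorbed into Proposition \ref{prop:exuniqCE}: the real difficulties (the viscous approximation of \eqref{eq:CE}, the renormalization property \eqref{eq:CEb} and the commutator estimate \eqref{eq:commest}, which is where the bound on $|\nabla v_t|_{\HS}$ is crucially used via \eqref{eq:linftylip} and \eqref{eq:aprioriheat}) have already been addressed there. The only point worth double-checking in the chaining is that the `bounded compression' class $\pr_\bc(\X)$ used to express uniqueness in Lemma \ref{le:ponte} and Lemma \ref{le:measth} is preserved along the constructed curves, which is precisely the content of \eqref{eq:expb}; this is what allows the disintegration/contradiction arguments in the proofs of those two lemmas to run without a loss of integrability along $t$. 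With that consistency observed, the three implications fit together and the theorem follows.
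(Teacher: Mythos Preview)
Your proposal is correct and matches the paper's approach exactly: the paper's proof is simply the one-line remark that the theorem follows by collecting Proposition \ref{prop:exuniqCE}, Lemma \ref{le:ponte} and Lemma \ref{le:measth}. Your additional commentary on the role of \eqref{eq:expb} in preserving the class $\pr_\bc(\X)$ along the flow is a helpful consistency check but not strictly needed beyond what those three results already encode.
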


\subsection{Bibliographical notes}

{\footnotesize The idea of basing differential calculus on non-smooth structures on the concept of $L^\infty$ modules goes back to   \cite{Sauvageot89,Sauvageot90} where it appeared in the context of Dirichlet forms. Later, \cite{Weaver01} used it in the setting of metric geometry to define differentials of Lipschitz functions in general metric measure spaces. The approach described in Section \ref{se:firstordercalc} is from \cite{Gigli14} and has been surveyed in  \cite{Gigli17}, \cite{GP19}: the main technical difference w.r.t.\ the approach in \cite{Weaver01}  is in the concept of pointwise norm. This sort of structure was studied earlier, for other reasons more related to functional analysis and probability theory, in \cite{HLR91}. A different, but equivalent (see \cite[Corollary 2.5.2]{Gigli14} and its proof), construction of cotangent module was made in \cite{Cheeger00} on doubling spaces supporting a local, weak Poincar\'e inequality: there a key result is the existence of suitable Lipschitz charts, so that the differential of a Lipschitz function can be defined in coordinates.

A crucial inspiration for Section \ref{se:secondordercalc}, and in particular for the proof of the key Lemma \ref{le:sibo}, is Bakry's work \cite{Bakry83}; the space of test functions was considered  in \cite{Savare13}, who was  adapting $\Gamma$-calculus techniques to the $\RCD$ setting.  Proposition \ref{prop:test} is also  from  \cite{Savare13} (for more about quasi-regolarity and the transfer method see \cite[Chapters IV, VI]{MaRockner92}). See also \cite{Sturm14} for related results under additional technical assumptions. The rest of Section  \ref{se:secondordercalc} comes from  \cite{Gigli14}, with the exception of the content of Remark \ref{re:honda} that is the result of a conversation between Braun and Honda of which I've been kindly informed. 

For more recent advances about second order calculus on nonsmooth spaces see \cite{Han14}, \cite{Br21}, \cite{Braun22}, \cite{HS22} and for more about the structure of $L^\infty$-modules see \cite{LP18}, \cite{P19}, \cite{DMLP21}, \cite{LPV22}, \cite{GLP22}.

The first studies about flows of Sobolev vector fields have been carried out   in  \cite{DiPerna-Lions89} in the Euclidean setting. Their approach has been extended  in \cite{Ambrosio04} to cover $\BV$ vector fields in $\R^d$. The content of Section \ref{se:flowrcd}, which generalizes these latter studies to the $\RCD$ category, comes from  \cite{Ambrosio-Trevisan14}, see also the survey \cite{AT15}. I haven't discussed at all the regularity of Regular Lagrangian Flows: formally linearizing the equation $\partial_t F_t=v_t(F_t)$ one gets 
\[
\partial_t\d F_t=\nabla v_t\circ F_t\,\d F_t
\]
that suggests that $\log(|\d F_t|)$ has the same integrability of $|\nabla v_t|$. Turning this to an actual estimate is tricky, though, because the limit $\log'(z)=\tfrac1z\to0$ as $z\to+\infty$ entails that an estimate on the integral of $\log(|\d F_t|)$ tells nothing about the regularity of $F_t$. Regularity results have been obtained in \cite{CDL08} (see also references therein) in the Euclidean case and in \cite{BS18} in the $\RCD$ setting, where they played a crucial role in proving constancy of dimension for $\RCD$ spaces (see also the more recent \cite{BDS21} for improved estimates).

}

\section{Examples of  stability/semicontinuity results}
It is well known that the lower semicontinuity of a Dirichlet form is equivalent to the closure of the underlying differentiation operator. In our setting, lower semicontinuity of the Cheeger energy is in place also along a converging sequence of $\CD(K,\infty)$ spaces (recall Theorem \ref{thm:convslen}), thus we can expect a suitable closure property of the differential even along such sequence of spaces: as we shall see in item \ref{it:cldiff} below, this actually happens. Even more so, given that we introduced all the other  operators (divergence, Laplacian,  covariant derivative, flows...)  building on top of the differential, we might expect that the closure of the latter implies some stability of the formers.

In this section we prove that this is the case  and give some example. Without exceptions, we shall be given a sequence $(\X_n,\sfd_n,\mm_n)$ of  $\RCD(K,\infty)$ spaces mGH-converging to a limit space $(\X_\infty,\sfd_\infty,\mm_\infty)$, all being normalized, via some isometric inclusions $\iota_n$, $n\in\N\cup\{\infty\}$, in a bigger space $\Y$. We shall identify the $\X_n$'s with their images in $\Y$.

As usual, none of the results presented really depend on the fact that the spaces are normalized, but restricting to this case makes the presentation easier.

\subsection{Convergence of tensor fields: definitions and basic properties}\label{se:defconvtens}
Theorem \ref{thm:convslen} tells that $f_n\stackrel{L^2}\weakto f_\infty$ implies $\ch_\infty(f_\infty)\leq \limi_n\ch_n(f_n)$ and that for any $f_\infty$ there is a sequence $(f_n)$ strongly $L^2$-converging to it with $\ch_\infty(f_\infty)\geq \lims_n\ch_n(f_n)$. It is thus natural to interpret this latter inequality by thinking that $(\d f_n)$ is converging to $\d f_\infty$ in some strong $L^2$ sense, and then use sequences of this kind to `test' convergence of other tensors. 

The key result that allows to turn this idea into practice is the following (notice that a posteriori the assumption on uniform Lipschitz and $L^\infty$ bounds can be removed - see items \ref{it:lscnorm}, \ref{it:cldiff} below):
\begin{lemma}\label{le:convnorme}
Let $f_n\stackrel{L^2}\to f_\infty$ be so that $\sup_n\Lip(f_n)+\|f_n\|_{L^\infty}<\infty$ and $\ch_n(f_n)\to\ch_\infty(f_\infty)$. 

Then $|\d f_n|\stackrel{L^2}\to |\d f_\infty|$.
\end{lemma}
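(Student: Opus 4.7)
Since $\ch_n(f_n)=\tfrac12\int |\d f_n|^2\,\d\mm_n$, the hypothesis already gives $\|\,|\d f_n|\,\|_{L^2(\mm_n)}\to \|\,|\d f_\infty|\,\|_{L^2(\mm_\infty)}$, and the uniform bound $|\d f_n|\leq \Lip(f_n)$ from \eqref{eq:dflip} keeps $(|\d f_n|)$ weakly $L^2$-compact in the varying-spaces sense. By Definition \ref{def:convl2var}, what remains to be shown is the weak convergence of measures $|\d f_n|\,\mm_n\weakto |\d f_\infty|\,\mm_\infty$ on $\Y$. My plan is a polarization/recovery-sequence argument exploiting that the Cheeger energies Mosco-converge (Theorem \ref{thm:convslen}) and are quadratic by infinitesimal Hilbertianity.

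The key ingredient is a bilinear-form convergence: whenever $(u_n),(v_n)$ strongly $L^2$-converge in varying spaces to $u_\infty,v_\infty$ with $\ch_n(u_n)\to \ch_\infty(u_\infty)$ and $\ch_n(v_n)\to \ch_\infty(v_\infty)$, one has $\int\la\d u_n,\d v_n\ra\,\d\mm_n\to \int\la\d u_\infty,\d v_\infty\ra\,\d\mm_\infty$. This follows by applying the $\Gamma$-$\limi$ inequality to the strongly converging sequence $u_n+\epsilon v_n\to u_\infty+\epsilon v_\infty$, combined with the quadratic expansion
\[
\ch_n(u_n+\epsilon v_n)=\ch_n(u_n)+\epsilon\int\la\d u_n,\d v_n\ra\,\d\mm_n+\epsilon^2\ch_n(v_n),
\]
and varying the sign of $\epsilon\in\R$. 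Letting $(g_n)$ be any recovery sequence for $f_\infty$ (existence from Mosco convergence), applying this with $u_n=f_n$ and $v_n=g_n$ yields $\int\la \d f_n,\d g_n\ra\,\d\mm_n\to \int|\d f_\infty|^2\,\d\mm_\infty=2\ch_\infty(f_\infty)$, whence
\[
\int|\d(f_n-g_n)|^2\,\d\mm_n=2\ch_n(f_n)+2\ch_n(g_n)-2\int\la\d f_n,\d g_n\ra\,\d\mm_n\to 0.
\]
Since $\bigl|\,|\d f_n|-|\d g_n|\,\bigr|\leq |\d(f_n-g_n)|$, the problem reduces to exhibiting \emph{one} recovery sequence $(g_n)$ for which $|\d g_n|\to |\d f_\infty|$ strongly in $L^2$ in varying spaces.

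The natural candidate, following the proof of Theorem \ref{thm:glimsch}, is $g_n:=\varphi_{k(n)}|_{\X_n}$ for a diagonal choice $k(n)\to\infty$ among Lipschitz extensions $\varphi_k\in\Lip_{\rm bs}(\Y)$ of $f_\infty$ (constructed via Lemma \ref{le:locmcsh}) selected so that $\varphi_k\to f_\infty$ and $\lipa(\varphi_k)\to |\d f_\infty|$ strongly in $L^2(\mm_\infty)$, which is possible by \eqref{eq:perdenslip}. That $g_n\to f_\infty$ strongly in $L^2$ in varying spaces and $\ch_n(g_n)\to \ch_\infty(f_\infty)$ is precisely the content of the $\glims$ inequality in Theorem \ref{thm:glimsch}. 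The main obstacle---and the hard step of the whole proof---is promoting this to the strong $L^2$-convergence of $|\d g_n|$ to $|\d f_\infty|$: one must combine the pointwise domination $|\d g_n|\leq \lipa^{\Y}(\varphi_{k(n)})|_{\X_n}$, the upper semicontinuity of each $\lipa^{\Y}(\varphi_k)$ on $\Y$ (which via $\mm_n\weakto\mm_\infty$ yields only the one-sided bound $\lims_n\int\psi \lipa^{\Y}(\varphi_k)\,\d\mm_n\leq \int\psi \lipa^{\Y}(\varphi_k)\,\d\mm_\infty$ for $\psi\in C_\b(\Y)$, $\psi\geq 0$), and the already-established $L^2$-norm convergence $\int|\d g_n|^2\,\d\mm_n\to \int|\d f_\infty|^2\,\d\mm_\infty$, to force the weak measure convergence $|\d g_n|\,\mm_n\weakto |\d f_\infty|\,\mm_\infty$. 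A careful choice of $k(n)$ is required so that the $L^2$-norm convergence saturates the asymmetric upper-semicontinuous bound and prevents asymptotic loss of mass; this is essentially the same phenomenon that makes the $\glims$ inequality in Theorem \ref{thm:glimsch} reach exactly the limiting Cheeger energy.
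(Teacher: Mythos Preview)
Your polarization step giving $\int\la\d f_n,\d g_n\ra\,\d\mm_n\to\int\la\d f_\infty,\d g_\infty\ra\,\d\mm_\infty$ is correct and is exactly how the paper opens its argument. The reduction $\int|\d(f_n-g_n)|^2\,\d\mm_n\to0$ is also valid, so the problem for $(f_n)$ is indeed equivalent to the problem for your recovery sequence $(g_n)$.

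The gap is in the last step, and it is not merely a technical matter of ``careful choice of $k(n)$''. The three ingredients you list --- the domination $|\d g_n|\leq h_n:=\lipa^{\Y}(\varphi_{k(n)})|_{\X_n}$, upper semicontinuity of $\lipa^\Y(\varphi_k)$, and the $L^2$-norm convergence --- all point in the \emph{wrong direction}. After passing to a subsequence with $|\d g_n|\stackrel{L^2}\weakto G'$, these tools yield at best $G'\leq|\d f_\infty|$ $\mm_\infty$-a.e.\ and $\|G'\|_{L^2}\leq\|\,|\d f_\infty|\,\|_{L^2}$; neither inequality, nor both together, forces $G'=|\d f_\infty|$. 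What is needed is a \emph{lower} bound $G'\geq|\d f_\infty|$, and nothing in your toolkit produces one. The analogy with Theorem~\ref{thm:glimsch} is misleading: there the saturation is only at the level of the total norm (a single number), whereas here you need it pointwise, i.e.\ a localized $\Gamma$-$\liminf$ inequality --- which is precisely the content of the lemma you are trying to prove.

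The paper obtains the missing lower bound by a genuinely different idea. After the polarization step, it bootstraps to the \emph{localized} weak convergence $\la\d f_n,\d_ng\ra\stackrel{L^2}\weakto\la\d f_\infty,\d_\infty g\ra$ for every fixed $g\in\Lip_\bs(\Y)$ (via the Leibniz-type identity $\int\psi_n\la\d f_n,\d_ng\ra=\int\la\d f_n,\d(g\psi_n)\ra-g\la\d f_n,\d\psi_n\ra$). It then approximates the \emph{direction} $\tfrac{\d f_\infty}{|\d f_\infty|}$, not $f_\infty$ itself, by finite sums $\sum_i\varphi_i^k\,\d g_i^k$ with $\varphi_i^k,g_i^k\in\Lip_\bs(\Y)$, $\varphi_i^k\geq0$, chosen so that $\sum_i\varphi_i^k\lipa(g_i^k)\to1$ and $\sum_i\varphi_i^k\la\d f_\infty,\d g_i^k\ra\to|\d f_\infty|$. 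Combining the weak convergence with the Cauchy--Schwarz-type bound $\la\d f_n,\d_ng_i^k\ra\leq|\d f_n|\,\lipa(g_i^k)$ and the upper semicontinuity of $\lipa(g_i^k)$ yields $\int\psi|\d f_\infty|\,\d\mm_\infty\leq\int\psi G\,\d\mm_\infty$ for all $\psi\geq0$, i.e.\ $|\d f_\infty|\leq G$. This duality argument --- testing against the unit covector field rather than approximating the function --- is the idea your proposal is missing.
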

\begin{idea}
Passing to a non-relabeled subsequence we can assume that  $|\d f_n|\stackrel{L^2}\weakto G$ for some $G\in L^2(\X_\infty)$. To conclude it is then clearly enough to prove that $|\d f_\infty|\leq G$ $\mm_\infty$-a.e..  

Let $(g_n)$ be  with $\sup_n\ch_n(g_n)<\infty$; then for $\eps\in\R$,  the inequality $\limi_n\ch_n(f_n+\eps g_n)\geq\ch_\infty(f_\infty+\eps g_\infty)$ and the arbitrariness of $\eps$  easily imply 
\begin{equation}
\label{eq:fnrecgncaso}
\int \la\d f_n,\d g_n\ra\,\d\mm_n\to \int \la\d f_\infty,\d g_\infty\ra\,\d\mm_\infty.
\end{equation}
Now let $(\psi_n)$ be another uniformly bounded and uniformly Lipschitz recovery sequence (notice that by Corollary \ref{cor:stabhagain}, the weak maximum principle and the estimate \eqref{eq:linftylip}, the choice $\psi_n:=\h_{n,t}(\psi)$ for some $t>0$ and $\psi\in\Lip_\bs(\Y)$ does the job). Then both $(\psi_nf_n)$ and $(|f_n|^2)$ have uniformly bounded Cheeger energies, thus by \eqref{eq:fnrecgncaso} we can pass to the limit in the identity $\int\psi_n|\d f_n|^2\,\d\mm_n=\int\la\d f_n,\d(\psi_n f_n)\ra-\la\d \psi_n,\d\tfrac{|f_n|^2}2\ra\,\d\mm_n$ to conclude (using also that $\sup_n\||\d f_n|^2\|_{L^2}\leq\sup_n\Lip(f_n)^2<\infty$ and recalling \eqref{eq:equivl2conv}) that $|\d f_n|^2\weakto |\d f_\infty|^2$. By polarization we then get
\begin{equation}
\label{eq:recrec}
\la\d f_n,\d\psi_n\ra\stackrel{L^2}\weakto \la\d f_\infty,\d\psi_\infty\ra. 
\end{equation}
Thus for $g\in\Lip_\bs(\Y)$, we can pass to the limit in the identity $\int \psi_n\la \d f_n,\d_ng\ra\,\d\mm_n=\int \la\d f_n,\d(g\psi_n)\ra- g\la\d f_n,\d\psi_n\ra\,\d\mm_n$ (here $\d_ng\in L^2(T^*\X_n)$ is the differential of $g$ as element of $W^{1,2}(\X_n)$) using \eqref{eq:fnrecgncaso} and \eqref{eq:recrec} to conclude that $\int \psi_n\la \d f_n,\d_ng\ra\,\d\mm_n\to \int \psi_\infty\la \d f_\infty,\d_\infty g\ra\,\d\mm_\infty$. Then the arbitrariness of $(\psi_n)$ and \eqref{eq:equivl2conv} give
\begin{equation}
\label{eq:fatto}
\la \d f_n,\d_ng\ra\stackrel{L^2}\weakto \la \d f_\infty,\d_\infty g\ra.
\end{equation}
Now approximate  $\frac{\d f_\infty}{|\d f_\infty|}\in L^2(T^*\X_\infty)$ with 1-forms of the kind $\sum_i\varphi_i\d g_i$ and then recall  \eqref{eq:perdenslip} to find functions  $\varphi_i^k,g_i^k\in \Lip_\bs(\Y)$ for $k\in\N$, $i=0,\ldots, N_k$ with  
\begin{equation}
\label{eq:diciamochece}
\varphi_i^k\geq0\quad\text{and}\qquad
\begin{array}{lll}
 \sum_i\varphi_{i}^k\lipa( g^k_i)&\to\ 1,&\quad\text{in $L^2(\X_\infty)$}\\  
 \sum_i\varphi_{i}^k\la\d f_\infty,\d g^k_i\ra&\to \ |\d f_\infty|&\quad\text{in $L^1(\X_\infty)$}
 \end{array}\quad \text{as $\quad k\to\infty$}.
\end{equation}
Then for any $\psi\in\Lip_\bs(\Y)$ non-negative and any $i,k$ we have
\[
\begin{split}
\int\psi \varphi_i^k\la \d f_\infty,\d_\infty g _i^k\ra\,\d\mm_\infty&\stackrel{\eqref{eq:fatto}}=\lim_n\int\psi  \varphi_i^k\la \d f_n,\d_ng_i^k \ra\,\d\mm_n\\
&\stackrel{\phantom{\eqref{eq:diciamocheevero}}}\leq\limi_n\int\psi | \d f_n|  \varphi_i^k\lipa(g_i^k) \,\d\mm_n\leq \int\psi \,G \, \varphi_i^k \lipa(g_i^k) \,\d\mm_\infty,
\end{split}
\]
having used the upper semicontinuity of $\lipa(g_i^k)$ in the last step. Summing over $i$ and then letting $k\to\infty$, recalling \eqref{eq:diciamochece} we conclude that $\int\psi |\d f_\infty|\,\d\mm_\infty\leq \int\psi G\,\d\mm_\infty$ which, by the arbitrariness of $\psi$,   gives the desired conclusion $|\d f_\infty|\leq G$ $\mm_\infty$-a.e..
\end{idea}

We now  give the following definitions:
\begin{definition}[Test sequences of functions and tensors]
We say that $n\mapsto f_n\in\test(\X_n)$, $n\in\N\cup\{\infty\}$, is a test sequence if  $\sup_n\|f_n\|_{L^\infty}+\Lip(f_n)+\|\Delta f_n\|_{L^\infty}<\infty$ and
\begin{equation}
\label{eq:deftestseq}
\begin{array}{rl}
f_n&\stackrel{L^2}\to\  f_\infty\\
\Delta f_n&\stackrel{L^2}\to\ \Delta f_\infty
\end{array}
\qquad\text{ and }\qquad
\begin{array}{rl}
&\ch_n(f_n)\to  \ch_\infty(f_\infty)\\
&\sup_n\ch_n(\Delta f_n)<\infty.
\end{array}
\end{equation}
Then, $n\mapsto z_n\in L^2(T\X_n)$ is a test sequence of vectors if it is of the form $z_n=\sum_{i=1}^mf_{i,n}\nabla g_{i,n}$ for $(f_{i,n}),(g_{i,n})$ test sequences.

More generally, for  $d> 1$ and $n\mapsto z_n\in L^2(T^{\otimes d}\X_n)$, $n\in\N\cup\{\infty\}$ we say that   $(z_n)$ is a test sequence of tensors if it is of the form $z_n=\sum_{i=1}^mf_{i,n}\nabla g_{1,i,n}\otimes\cdots\otimes\nabla g_{d,i,n}$ for $(g_{j,i,n})$ and $(f_{i,n})$ test sequences.
\end{definition}
\begin{definition}[$L^2$-convergence of tensors]\label{def:convtens} We say that $n\mapsto v_n\in L^2(T^{\otimes d}\X_n)$ converges weakly in $L^2$ to $v_\infty\in L^2(T^{\otimes d}\X_\infty)$ provided 
\[
\sup_n\|v_n\|_{L^2(T^{\otimes d}\X_n)}<\infty\qquad\text{and}\qquad \lim_n\int\la v_n,z_n\ra\,\d\mm_n=\int\la v_\infty,z_\infty\ra\,\d\mm_\infty
\]
for any test sequence $(z_n)$ of $d$-tensors. 

The convergence is strong if moreover $\|v_n\|_{L^2(T^{\otimes d}\X_n)}\to \|v_\infty\|_{L^2(T^{\otimes d}\X_\infty)}$.
\end{definition}

Lemma \ref{le:convnorme} easily implies the following properties of test sequences:
\begin{enumerate}[label=(\alph*)]
\item\label{it:prodtest2}  $(f_n),(g_n)$  test sequences of functions $\quad\Rightarrow\quad$ so is $(f_ng_n)$
\item $(f_n)$ test sequence of functions  $\quad\Rightarrow\quad$ $\d f_n\to\d f_\infty$ strongly as tensors
\item\label{it:convnormtest} $(z_n)$ test sequence of tensors  $\quad\Rightarrow\quad$ $|z_n|\to |z_\infty|$ as $L^2$ functions
\item $(z_n)$ test sequence of tensors  $\quad\Rightarrow\quad$ $z_n\to z_\infty$ strongly as tensors
\item\label{it:convdiv2} $(z_n)$ test sequence of vectors  $\quad\Rightarrow\quad$ $\div(z_n)\to \div(z_\infty)$ as $L^2$ functions\\
{\footnotesize (All these follow by polarization and other basic algebraic operations if we prove that for $(f_n),(g_n)$ test we have $f_n\d g_n\to f_\infty\d g_\infty$ strongly as tensors. Convergence of norms is a direct consequence of Lemma \ref{le:convnorme} which also implies, by polarization and uniform $L^\infty$ bounds, that for $(\tilde f_n),(\tilde g_n)$ test sequence of functions we have $\int f_n\tilde f_n\la\d g_n,\d\tilde g_n\ra\,\d\mm_n\to\int f_\infty\tilde f_\infty\la\d g_\infty,\d\tilde g_\infty\ra\,\d\mm_\infty$)}
\end{enumerate}

\medskip

\noindent It is also easy to establish that
\begin{enumerate}[label=(\alph*)]\setcounter{enumi}{5}
\item\label{it:testd} `Density of test objects':  $\{v_\infty:(v_n)\text{ is test}\}\subset L^2(T^{\otimes d}\X) $ is dense in $L^2(T^{\otimes d}\X)$ 

{\footnotesize {\vspace{-5pt}}(it suffices to prove that $\{f_\infty: (f_n)\text{ is test}\}$ is dense in $W^{1,2}(\X)$. For this let $f\in\Lip_\bs(\Y)$, recall \eqref{eq:regheat}, put   $f_n:=\h_{n,\eta}(f)\in\test(\X_n)$ and notice that $(f_n)$ is test: the uniform estimates follow by the maximum principle and the Bakry-\'Emery estimates, while the convergences in  \eqref{eq:deftestseq} follow from Corollary \ref{cor:stabhagain}. The arbitrariness of $\eta$ and the density of $\Lip_\bs(\Y)$ in $W^{1,2}(\X_\infty)$, recall \eqref{eq:w12reflsep}, give the claim).}

\end{enumerate}

\noindent Now the desired/expected properties of convergence of tensors easily follow:
\begin{enumerate}[label=(\roman*)]

\item {\sc Uniqueness of limits}: weak/strong limits of tensors are unique.{\footnotesize {\vspace{-5pt}}

 (from  item \ref{it:testd} above).}

\item\label{it:lscnorm}  $v_n\weakto v_\infty$ implies $|v_\infty|\leq G$ $\mm_\infty$-a.e.\ for any $L^2$-weak limit $G$ of $n\mapsto |v_n|$.

{\footnotesize {\vspace{-5pt}}(Let $(f_n),(z_n)$ be test sequences of functions and tensors, with $f_n\geq0$ and notice that $(f_nz_n)$ is also test (by item \ref{it:prodtest2}). Thus by definition of weak convergence and item \ref{it:convnormtest} we get 
\[
\begin{split}
 \int f_\infty\la v_\infty,z_\infty\ra-\tfrac12f_\infty|z_\infty|^2\,\d\mm_\infty&=\lim_n\int f_n\la v_n,z_n\ra-\tfrac12f_n|z_n|^2\,\d\mm_n\\
 &\leq\limi_n\int f_n|v_n|\,|z_n|-\tfrac12f_n|z_n|^2\,\d\mm_n=\int f_\infty G|z_\infty|-\tfrac12f_\infty|z_\infty|^2\,\d\mm_\infty.
 \end{split}
\]
The RHS is $\leq \int \tfrac12f_\infty G^2\,\d\mm_\infty$ and the sup of the LHS over $(z_n)$ equals   $\int \tfrac12f_\infty |v_\infty|^2\,\d\mm_\infty$. The conclusion follows from the arbitrariness of $(f_n)$.)}

\item\label{it:strpoint} $v_n\to v_\infty$ implies $|v_n|\to |v_\infty|$. 

{\footnotesize {\vspace{-5pt}} (Direct consequence of   item \ref{it:lscnorm} above.)}

\item\label{it:combinate} $v_n\weakto v_\infty$ and $w_n\to w_\infty$ imply $\la v_n,w_n\ra\mm_n\weakto \la v_\infty,w_\infty\ra\mm_\infty$ as measures.

{\footnotesize {\vspace{-5pt}}(passing to the limit in $\|w_n+tv_n\|^2_{L^2}=\|w_n\|^2_{L^2}+2t\int \la v_n,w_n\ra\,\d\mm_n+t^2\|v_n\|^2_{L^2}$ using  $w_n+tv_n\weakto w_\infty+tv_\infty$ and item \ref{it:lscnorm} we get $\limi_n t\int \la v_n,w_n\ra\,\d\mm_n\geq t \int \la v_\infty,w_\infty\ra\,\d\mm_\infty-2Ct^2$ for $C:=\sup_n\|v_n\|^2_{L^2}$. Being this true for any $t\in\R$ we conclude that $ \int\la v_n,w_n\ra\,\d\mm_n\to  \int\la v_\infty,w_\infty\ra\,\d\mm_\infty$. To localize this information  replace $(v_n)$ with $(f_nv_n)$ for $(f_n)$ arbitrary test sequence).}

\item\label{it:strlin} {\sc Linearity of strong conv.}: $v_n\to v_\infty$ and $w_n\to w_\infty$ imply $  v_n+  w_n\to  v_\infty+  w_\infty$.   

{\footnotesize {\vspace{-5pt}}(convergence of norms follows from $|  v_n+  w_n|^2=|v_n|^2+2\la v_n,w_n\ra+|w_n|^2$  using item \ref{it:combinate} above).}

\item\label{it:wcomp} {\sc Weak compactness}: if $\sup_n\|v_n\|_{L^2}<\infty$, then a subsequence is weakly converging. 

{\footnotesize {\vspace{-5pt}}(item \ref{it:convnormtest}  tell that for $(w_n),(w_n')$ test we have $\|w_n-w_n'\|_{L^2}\to \|w_\infty-w_\infty'\|_{L^2}$, thus by separability  it suffices to test weak convergence against a well chosen countable number of test sequences. The conclusion follows by standard means via diagonalization).}
\item\label{it:cldiff} {\sc\underline{Closure of the differential}}: if $f_n\weakto f_\infty $  with $\sup_n\ch_n(f_n)<\infty$, then $\d f_n\weakto \d f_\infty$. If  $\ch_n(f_n)\to \ch_\infty(f_\infty)$, then  $\d f_n\to \d f_\infty$. 

{\footnotesize {\vspace{-5pt}} (Theorem \ref{thm:convslen} gives $\ch_\infty(f_\infty)<\infty$ and by weak compactness a subsequence of $(\d f_n)$ has a weak limit. To identify such weak limit with $\d f_\infty$ pass to  the limit in $\int \d f_n(v_n)\,\d\mm_n=-\int f_n\div(v_n)\,\d\mm_n$ for $(v_n)$ test recalling items \ref{it:convdiv2}, \ref{it:testd}).}
\item\label{it:convHess} {\sc Closure of Hessian}: if $f_n\to f_\infty$ with $\ch_n(f_n)\to \ch_\infty(f_\infty)$ and $\sup_n\|{\rm Hess}f_n\|_{L^2}<\infty$, then $f_\infty\in W^{2,2}(\X_\infty)$ and ${\rm Hess}f_n\weakto {\rm Hess}f_\infty$. 

{\footnotesize {\vspace{-5pt}}(for $(g_n)$ test by \eqref{eq:ddf} we have $\sup_n\ch_n(|\d g_n|^2)<\infty$, thus using what already proved   we can pass to the limit in 
\[
\int h_n{\rm Hess}(f_n)(\nabla g_n,\nabla g_n)\,\d\mm_n=\int- \la\d f_n,\d g_n\ra\div(h_n\d g_n)-h_n\la \d f_n,\d\tfrac{|\d g_n|^2}2\ra\,\d\mm_n
\]
using item \ref{it:testd} and the weak compactness  of $({\rm Hess}f_n)$ to conclude. Notice that it is important to know that $\d f_n\to\d f_\infty$ strongly, due to the  weak convergence  $\d|\d g_n|^2\weakto \d|\d g_\infty|^2$).}
\item\label{it:closcovder} {\sc Closure of covariant derivative}: let  $v_n\to v_\infty$ be  with $\sup_n\|\nabla v_n\|_{L^2}<\infty$. Then $v_\infty\in W^{1,2}_C(T\X_\infty)$ and $\nabla v_n\weakto \nabla v_\infty$. 

{\footnotesize {\vspace{-5pt}}(for $(f_n),(g_n),(h_n)$ test sequences of functions we have $\Hess f_n\weakto \Hess f_\infty$ and $\nabla g_n\to \nabla g_\infty$ by the last two items and also $v_n\otimes \nabla g_n\to v_\infty\otimes \nabla g_\infty$  (by convergence of pointwise norms and uniform $L^\infty$ bounds on $|\d g_n|$). Thus using also item \ref{it:convdiv2} we can pass to the limit in the definition \eqref{eq:defcov} of covariant derivative of $v_n$ to deduce that any weak limit of $\nabla v_n$, that exist by weak compactness, must coincide with $\nabla v_\infty$. As above, here it matters that $v_n\to v_\infty$ strongly, due to the weak convergence of Hessians).}

\end{enumerate}
If we define the \emph{Covariant energy} $\mathcal E_C:L^2(T\X)\to[0,\infty]$ as $\mathcal E_C(v):=\tfrac12\|\nabla v\|^2_{L^2}$ if $v\in W^{1,2}_C(T\X)$ and $\mathcal E_C(v)=\infty$ otherwise, then  this last point implies 
\[
\glimi_{n\to\infty}\mathcal E_{C,n}\geq \mathcal E_{C,\infty}\qquad\text{w.r.t.\ strong $L^2$-convergence of vector fields.}
\]
A similar statement holds for the Hessian, provide we require strong $L^2$ convergence of both the functions and their differentials.

\begin{example}[Strong compactness for tensor fields]\label{eq:compvect}{\rm Let $\X_n$ be the circle $S^1$ of total length $\tfrac1n$ with the normalized volume measure: these are $\RCD(0,\infty)$ spaces converging to the one point space/manifold $\X_\infty$ (if thinking at a one point manifold is confusing, just replace $\X_n$ with $\X_n\times M$ for some fixed compact manifold). Let $v_n\in L^2(T\X_n)$ be the (only, up to a change of signs) unit parallel vector field. Then clearly $\nabla v_n\equiv 0$ and $(v_n)$ weakly converges to the 0 vector field $v_\infty$. However, since $\|v_\infty\|_{L^2}=0<1=\|v_n\|_{L^2}$, the convergence is not strong. This shows that a uniform bound on the $L^2$-norm of the covariant derivative is not sufficient to obtain strong $L^2$-compactness, unlike the case of functions discussed in Theorem \ref{thm:convslen}.

Here compactness is lost due to the drop in the dimension that, in some sense, prevents the existence of sufficiently many $L^2$-vector fields in the limit space. It can be shown compactness is restored if one assumes that, in a suitable sense made rigorous by the notion of non-collapsed convergence, there is no drop in the dimension (see \cite[Corollary 6.16]{Honda14}).\fr
}\end{example}
This list of properties emphasise those aspects of the theory of converging tensors that more closely resemble those valid for a fixed space. In some sense, no other property trivially generalizes to the current setting. A notable one that  is  missing  is Mazur's lemma, and this makes it hard to  promote weak to strong convergence in the statements above, e.g.:
\begin{open}[$\Gamma$-convergence of `covariant energy']{\rm
Let $f\in \test(\X_\infty)$. Can we find $v_n\in W^{1,2}_C(T\X_n)$ strongly $L^2$-converging to $\nabla f$ so that  $\nabla v_n\to{\rm Hess}f$? 

Notice that in \cite[Remark 1.10.6]{AH16} it is given an example of a (collapsing) sequence of surfaces with  curvature $\geq1$ converging to a (weighted) interval and functions $f_n\to f_\infty$ with $\Delta f_n\to\Delta f_\infty$ but with $\||{\rm Hess}f_\infty|_{\HS}\|_{L^2}<\limi_n\||{\rm Hess}f_n|_{\HS}\|_{L^2}$. In this example, the functions $f_n$ are eigenfunctions relative to the  first positive eigenvalue of $\Delta$ and one also has $\d|\d f_n|^2\to\d|\d f_\infty|^2$.

More explicitly, the metrics $g_n$ are given by spherical subspensions with cross section $\tfrac1nS^1$ and the functions $f_n$ are the projection on the underlying interval $[0,\pi]$. Then  it holds ${\rm Hess}f_n=f_ng_n$. Here the $f_n$'s are strongly converging, but the $g_n$'s are not, because $|g_n|_{\HS}^2\equiv2$ for every $n$, whereas $|g_\infty|_{\HS}^2\equiv1$\footnote{some readers may find easier to visualize the conical case (in which however compactness is lost and convergences should  in the appropriate `local'/`pointed' sense). Let $S_\alpha$ be the manifold $S^1$ of total length $\alpha$ and $(\X_\alpha,\sfd_\alpha)$ the cone built over it equipped with the 2 dimensional Hausdorff measure. Then (\cite{Ketterer13}) for $\alpha\in(0,2\pi]$ the $\X_\alpha$'s are $\RCD(0,2)$ spaces and the function $f_\alpha:=\tfrac12\sfd_\alpha^2(\cdot,o_\alpha)$, where $o_\alpha$ is the tip of the cone, satisfies $\Delta f_\alpha\equiv 2$ (this is the base  of the `volume cone to metric cone principle' in lower Ricci bounds). $\X_\alpha\setminus\{o_\alpha\}$ is a smooth manifold, say  with metric tensor $g_\alpha$; on such manifold $f_\alpha$ is smooth and satisfies ${\rm Hess}f_\alpha=g_\alpha$, hence $|{\rm Hess}f_\alpha|^2_{\HS}\equiv 2$ (notice that locally $\X_\alpha\setminus\{o_\alpha\}$ is  isometric to $\R^2$ with an isometry that sends $f_\alpha$ to $x\mapsto\frac12|x|^2$). As $\alpha\downarrow0$ the spaces converge to the Euclidean half line $\R^+$ equipped with the measure $x\d x$ and the functions $f_\alpha$ converge to $\R^+\ni x\mapsto f_0(x):=\frac12|x|^2$. Direct computations show that $\Delta f_0\equiv 2$, so that the $f_\alpha$'s and their Laplacian converge to $f_0,\Delta f_0$. However ${\rm Hess}f_0(x)\sim f''_0(x)=1$ (in the canonical base $\frac\d{\d x}$ of the tangent spaces) and thus $|{\rm Hess}f_0|_{\HS}^2\equiv 1$.}. 
}\fr
\end{open}
\subsection{Lower semicontinuity of  dimension}
\begin{theorem}\label{thm:lscdim}
Let $M_n$ be smooth (possibly weighted) normalized Riemannian manifolds with (Bakry-\'Emery, recall \eqref{eq:BEricci}) Ricci tensor uniformly bounded from below. Assume that they mGH-converge to a limit space that also happens to be a smooth  (possibly weighted) Riemannian manifold $M_\infty$. Then $\dim M_\infty\leq \limi_n\dim M_n$.
\end{theorem}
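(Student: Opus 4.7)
The proof hinges on the elementary fact that for any symmetric positive semi-definite endomorphism $A$ of a $d$-dimensional inner product space one has $|A|_{\HS}^2\geq(\mathrm{tr}\,A)^2/d$ (Cauchy--Schwarz on the spectrum). Thus on a smooth $d$-dimensional manifold, pointwise for any tangent vectors $v_1,\dots,v_k$,
\[
\Big|\sum_i v_i\otimes v_i\Big|_{\HS}^2\ \geq\ \frac1d\Big(\sum_i|v_i|^2\Big)^2.
\]
The plan is to test this inequality with tensors of the form $T_n:=\sum_i\nabla f_{i,n}\otimes\nabla f_{i,n}$ built from test sequences whose $\infty$-limits span an orthonormal frame at a chosen point of $M_\infty$, and then pass to the limit using the convergence machinery developed in the previous section.

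Passing to a subsequence we may assume $\dim M_n\equiv d'$ and aim to prove $d'\geq d_\infty:=\dim M_\infty$. Fix $p\in M_\infty$ and $\eps>0$. Using smoothness of $M_\infty$, I pick smooth compactly supported $f_{1,\infty},\dots,f_{d_\infty,\infty}\in\test(M_\infty)$ (cut-offs of normal coordinates at $p$) and a non-negative $\varphi\in\Lip_\bs(Y)$ whose restriction to $M_\infty$ has support in a small neighbourhood $U\ni p$ with $\int\varphi^2\,\d\mm_\infty>0$, such that on $U$
\[
\big|\la\nabla f_{i,\infty},\nabla f_{j,\infty}\ra-\delta_{ij}\big|<\eps\qquad\forall i,j\in\{1,\dots,d_\infty\}.
\]
By the density of test-sequence limits (item \ref{it:testd}) combined with the explicit heat-flow construction sketched in its proof and the stability Corollary \ref{cor:stabhagain}, I find test sequences $(f_{i,n})_n$ whose $\infty$-limits are $\eps$-close to $f_{i,\infty}$ in $W^{1,2}\cap L^\infty$; absorbing this discrepancy into $\eps$ I may assume outright that $(f_{i,n})$ is a test sequence converging to $f_{i,\infty}$. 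Set $T_n:=\sum_i\nabla f_{i,n}\otimes\nabla f_{i,n}$ and $\varphi_n:=\varphi\circ\iota_n$. Then $(T_n)$ is a test sequence of $2$-tensors, hence strongly converges to $T_\infty$ (property (d) in the list), $|T_n|_{\HS}\to|T_\infty|_{\HS}$ in $L^2$ by \ref{it:strpoint}, and $\sum_i|\nabla f_{i,n}|^2\to\sum_i|\nabla f_{i,\infty}|^2$ strongly in $L^2$; all these quantities are uniformly bounded in $L^\infty$.

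The rank inequality applied pointwise on each smooth $M_n$ reads $|T_n|_{\HS}^2\geq\frac1{d'}(\sum_i|\nabla f_{i,n}|^2)^2$ $\mm_n$-a.e.. Multiplying by $\varphi_n^2\geq 0$, integrating, and letting $n\to\infty$ using the uniform $L^\infty$-bounds together with the strong $L^2$-convergences above yields
\[
\int\varphi_\infty^2|T_\infty|_{\HS}^2\,\d\mm_\infty\ \geq\ \frac1{d'}\int\varphi_\infty^2\Big(\sum_i|\nabla f_{i,\infty}|^2\Big)^2\d\mm_\infty.
\]
On $U\supset\supp\varphi_\infty$ the near-orthonormality forces $|T_\infty|_{\HS}^2=d_\infty+O(\eps)$ and $\sum_i|\nabla f_{i,\infty}|^2=d_\infty+O(\eps)$, so (after dividing by $\int\varphi^2\d\mm_\infty>0$) the inequality reduces to $d_\infty+O(\eps)\geq(d_\infty+O(\eps))^2/d'$; letting $\eps\downarrow0$ produces $d'\geq d_\infty$, as required.

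The main obstacle is the second step, namely realising functions with the prescribed orthonormal-frame structure at $p$ as actual $\infty$-limits of test sequences: the density statement \ref{it:testd} is a density of a class, not a statement about individual functions. One resolves this either by allowing a small $W^{1,2}\cap L^\infty$-perturbation of $f_{i,\infty}$ (absorbed into $O(\eps)$) or by constructing the approximants directly as $f_{i,n}:=\h_{n,\eta}(g_i)$ with $g_i\in\Lip_\bs(Y)$ a McShane-type extension of a smooth function on $M_\infty$: stability of the heat flow plus the fact that $\h_{\infty,\eta}(g_i)$ approximates $g_i\restr{M_\infty}$ arbitrarily well as $\eta\weakto\delta_0$ furnishes the required test sequences. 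Compared to the classical smooth setting, the novelty is that the dimension lower-semicontinuity is deduced \emph{variationally}, by transferring a pointwise rank bound across a converging sequence of tensor fields.
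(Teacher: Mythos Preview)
Your argument is correct and uses the same convergence machinery (test sequences of tensors and item \ref{it:convnormtest}) as the paper, but the linear-algebraic kernel is different. The paper's proof is shorter: it picks a pointwise orthonormal frame $v_1,\ldots,v_{d_\infty}\in L^2(TM_\infty)$, uses the density item \ref{it:testd} to find test sequences of vector fields $(v_{i,n})$ with $\det(\la v_{i,\infty},v_{j,\infty}\ra)\neq 0$ on a set of positive $\mm_\infty$-measure, and then observes that $\det(\la v_{i,n},v_{j,n}\ra)\to\det(\la v_{i,\infty},v_{j,\infty}\ra)$ in $L^1$ (the determinant being a polynomial in the pairwise scalar products, which converge strongly by \ref{it:convnormtest}). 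Hence eventually $\det(\la v_{i,n},v_{j,n}\ra)\neq 0$ on a set of positive $\mm_n$-measure, forcing $d_\infty$ linearly independent tangent vectors at some point of $M_n$.

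The advantage of the paper's Gram-determinant route is that it needs no localization by a cut-off $\varphi$, no $O(\eps)$ bookkeeping, and no care about hitting specific functions: density alone guarantees a nearby test sequence whose limit Gram determinant is still nonvanishing on a positive-measure set. Your trace/Hilbert--Schmidt inequality $|T|_{\HS}^2\geq(\mathrm{tr}\,T)^2/d'$ is a perfectly valid alternative detector of the rank constraint and leads to the same conclusion; it just requires the extra step of arranging near-orthonormality on a small neighbourhood and carrying the $\eps$ through. Your discussion of the ``main obstacle'' (realising given $f_{i,\infty}$ as limits of test sequences) is handled correctly via the heat-mollified construction $\h_{n,\eta}(g_i)$; note that the paper avoids this issue entirely because the Gram-determinant criterion is open, so any sufficiently close approximant works.
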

\begin{idea}
Recall that a finite set $v_1,\ldots,v_d$ of elements of a Hilbert space $H$ is linearly independent iff $\det(\la v_i,v_j\ra)\neq 0$. Then let $d:=\dim M_\infty$, let $v_1,\ldots,v_d\in L^2(T\X)$ be a pointwise orthonormal frame and use item  \ref{it:testd}  to find test sequences $(v_{1,n}),\ldots,(v_{d,n})$ such that $\det(\la v_{i,\infty},v_{j,\infty}\ra)\neq 0$ on a set of positive measure. Since $\det(\la v_{i,n},v_{j,n}\ra)\to \det(\la v_{i,\infty},v_{j,\infty}\ra)$ (by item \ref{it:convnormtest}) eventually we must have $\det(\la v_{i,n},v_{j,n}\ra)\neq0 $ on a set of positive measure.
\end{idea}
\subsection{Convergence of harmonic functions}\label{se:convharm}
The concepts of convergence of functions/tensors can easily be localized: for  $x_n\to x_\infty$ and $R>0$ we say that $v_n\weakto v_\infty$ (resp.\ $v_n\to v_\infty$) in $L^2(B_R(x_n))$ provided $\nchi_{B_R(x_n)}v_n\weakto \nchi_{B_R(x_\infty)}v_\infty$ (resp.\ $\nchi_{B_R(x_n)}v_n\to \nchi_{B_R(x_\infty)}v_\infty$) in the sense of Definition \ref{def:convtens}. 

Notice that this concept applies both to functions and tensors and that it makes sense even if these are not defined outside $B_R(x_n)$, in which case the object $\nchi_{B_R(x_n)}v_n$ is anyway intended to be 0 outside $B_R(x_n)$.

Now let $U$ be an open subset of an $\RCD(K,\infty)$ space $\X$. The space $W^{1,2}(U)$ is defined as the Sobolev space relative to the metric measure space $(\bar U,\sfd,\mm\restr U)$ (notice that we are not charging $\partial U$). Natural compatibility relations are in place between $W^{1,2}(U)$ and $W^{1,2}(\X)$, for instance it is not hard to check that for $f\in W^{1,2}(U)$ and $\varphi\in \Lip_\bs(\X)$ with $\supp(\varphi)\subset U$ the function $\varphi f$, intended to be 0 outside $U$, is in both $W^{1,2}(U)$ and $W^{1,2}(X)$ and 
\begin{equation}
\label{eq:samewug}
|\d f|_U=|\d(\varphi f)|_\X\qquad\mm-a.e.\ on\ \varphi^{-1}(1).
\end{equation}
Starting from this, one can then think at the differential of $f\in W^{1,2}(U)$ as an element of $L^2(T^*\X)$ that is defined only on $U$ or, more rigorously and relying on the locality of the differential, as the element of $L^2(T^*\X)$ that is 0 outside $U$ and coincides with $\d(\varphi f)$ $\mm$-a.e.\ on $\varphi^{-1}(1)$ for any $\varphi\in \Lip_\bs(\X)$ with $\supp(\varphi)\subset U$. Finally, with a bit of work (made somehow easier by the approach via test plans) one can show that
\begin{equation}
\label{eq:equivw12u}
f\in W^{1,2}(U)\quad\Leftrightarrow\quad\left\{\begin{array}{l} \forall\varphi\in \Lip_\bs(\X)\text{ with }\supp(\varphi)\subset U\text{ we have }\\
\text{$f\varphi\in W^{1,2}(\X)$ and $\int_U|f|^2+|\d f|^2\,\d\mm<\infty$} \end{array}\right.
\end{equation}
where here $|\d f|$ is defined via \eqref{eq:samewug}. 
\begin{definition}[Harmonic functions]
 $f\in W^{1,2}(U)$  is said harmonic provided 
\begin{equation}
\label{eq:defharm}
\int_U|\d f|^2\,\d\mm\leq \int_U|\d(f+\varphi)|^2\,\d\mm\qquad\forall \varphi\in W^{1,2}(\X)\ \text{ with }\supp(\varphi)\subset U.
\end{equation}
\end{definition} Writing the Euler-Lagrange equations for the minimizer and taking into account  the convexity of $\varphi\mapsto \int_U|\d(f+\varphi)|^2\,\d\mm$ we see that
\begin{equation}
\label{eq:equivharm}
f\text{ is harmonic }\qquad\Leftrightarrow\qquad \int_U\la\d f,\d\varphi\ra\,\d\mm=0\quad \forall \varphi\in W^{1,2}(\X)\ \text{ with }\supp(\varphi)\subset U.
\end{equation}
We then have the following stability result:
\begin{theorem}\label{thm:stabharm}
Let $x_n\to x_\infty$, $R>0$ and $f_n\in W^{1,2}(B_R(x_n))$ be harmonic. Assume that  $\sup_n\|f_n\|_{L^2(B_R(x_n))}<\infty$, $n\in\N$. Then there is a non-relabeled subsequence and $f_\infty\in L^2(B_R(x_\infty))$ such that for any $r\in(0,R)$ we have  strong $L^2(B_r(x_n))$-convergence of $(f_n),(\d f_n)$ to $f_\infty,\d f_\infty$ respectively. Also, $f_\infty$ is harmonic on $B_r(x_\infty)$.
\end{theorem}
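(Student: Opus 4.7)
The plan is to follow the classical template for stability of harmonic functions, adapted to the metric-measure setting: a local Caccioppoli bound, the strong $L^2$-compactness of Theorem \ref{thm:convslen}, closure of the differential (item \ref{it:cldiff}), passage to the limit via the strong-weak pairing (item \ref{it:combinate}), and finally an energy identity to upgrade weak to strong convergence of gradients.

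First I would establish the interior gradient bound. For any $r' \in (r, R)$ and any $\tfrac{1}{r'-r}$-Lipschitz cutoff $\eta$ equal to $1$ on $B_r(x_n)$ and supported in $B_{r'}(x_n)$, testing \eqref{eq:equivharm} against $\eta^2 f_n$ and expanding via the Leibniz rule yields the standard Caccioppoli estimate
\[
\int_{B_r(x_n)} |\d f_n|^2 \, \d\mm_n \; \leq \; \frac{4}{(r'-r)^2} \int_{B_{r'}(x_n)} f_n^2 \, \d\mm_n,
\]
so $\sup_n \|f_n\|_{W^{1,2}(B_{r'}(x_n))} < \infty$ for every $r' < R$. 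Picking concentric radii $r < r' < r'' < R$, a Lipschitz cutoff $\chi_\infty$ on $\Y$ equal to $1$ on $B_{r'}(x_\infty)$ and supported in $B_{r''}(x_\infty)$, together with heat-flow regularized test sequences $\chi_n \to \chi_\infty$ with $\d\chi_n \to \d\chi_\infty$ strongly and $\supp \chi_n$ eventually in $B_{r''}(x_n)$ (by item \ref{it:testd}), the products $\chi_n f_n$ have uniformly bounded $W^{1,2}$-norm and uniformly concentrated supports. The strong compactness clause of Theorem \ref{thm:convslen} then extracts a subsequence along which $\chi_n f_n \to g_\infty$ strongly in $L^2$; setting $f_\infty := g_\infty / \chi_\infty$ on $B_{r'}(x_\infty)$ and diagonalizing as $r' \uparrow R$ produces the claimed $f_\infty$ with $f_n \to f_\infty$ strongly in $L^2(B_r(x_n))$ for every $r < R$.

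Next, item \ref{it:cldiff} applied to $\chi_n f_n \to \chi_\infty f_\infty$ (with uniformly bounded Cheeger energies) gives $\d(\chi_n f_n) \weakto \d(\chi_\infty f_\infty)$; locality \eqref{eq:dfloc} and the Leibniz rule on $\{\chi_\infty = 1\}$ then identify the weak limit of $\d f_n$ as $\d f_\infty$ in $L^2(B_r(x_n))$, and $f_\infty \in W^{1,2}(B_r(x_\infty))$ follows from \eqref{eq:equivw12u}. To prove that $f_\infty$ is harmonic on $B_r(x_\infty)$, I would take $\psi_\infty \in W^{1,2}(\X_\infty)$ with $\supp \psi_\infty \subset B_r(x_\infty)$, approximate it by $\Lip_\bs$ functions compactly supported in $B_r(x_\infty)$, and construct for each such approximant test recovery sequences $\psi_n$ with $\d\psi_n \to \d\psi_\infty$ strongly and $\supp \psi_n$ eventually in $B_r(x_n)$. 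Item \ref{it:combinate} then passes the identity $0 = \int \la \d f_n, \d\psi_n \ra \, \d\mm_n$ to the limit, producing $\int \la \d f_\infty, \d\psi_\infty \ra \, \d\mm_\infty = 0$ and hence \eqref{eq:equivharm} for $f_\infty$ on $B_r(x_\infty)$.

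To upgrade weak to strong convergence of $(\d f_n)$ on $B_r(x_n)$, I would exploit the Dirichlet identity obtained by testing harmonicity of $f_n$ against $\eta_n^2 f_n$ with $\eta_n$ a uniformly Lipschitz cutoff recovery sequence as above:
\[
\int \eta_n^2 |\d f_n|^2 \, \d\mm_n \; = \; -2 \int \la \d f_n, \, \eta_n f_n \, \d\eta_n \ra \, \d\mm_n.
\]
The right-hand side is the pairing of the weakly convergent sequence $\d f_n$ with $\eta_n f_n \, \d\eta_n$; combined with the lower-semicontinuity bound from item \ref{it:lscnorm} and a diagonal passage $\eta_\infty \uparrow \nchi_{B_r(x_\infty)}$, this forces convergence of the Dirichlet norms on $B_r(x_n)$ and hence strong tensor convergence $\d f_n \to \d f_\infty$ by definition. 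The hard part will be the intermediate claim that $\eta_n f_n \, \d\eta_n$ is strongly convergent as a tensor in the sense of Definition \ref{def:convtens}: while $\eta_n f_n \to \eta_\infty f_\infty$ strongly in $L^2$ and $\d\eta_n \to \d\eta_\infty$ strongly with uniformly bounded pointwise norm, turning this into strong tensor convergence of the product needs care with the varying-space identification \eqref{eq:equivl2convs}, since uniform local $L^\infty$-bounds on $f_n$ (available in the $\RCD(K,N)$ case via Moser iteration) are not automatic in the infinite-dimensional setting.
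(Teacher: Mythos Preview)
Your approach is essentially identical to the paper's: Caccioppoli estimate, strong $L^2$-compactness via (a localized variant of) Theorem~\ref{thm:convslen}, closure of the differential, passage of harmonicity to the limit, and finally the identity $\int\eta_n^2|\d f_n|^2\,\d\mm_n = -2\int f_n\eta_n\la\d f_n,\d\eta_n\ra\,\d\mm_n$ (from harmonicity) to force convergence of the Dirichlet norms. The only cosmetic difference is that the paper uses explicit distance cutoffs $\varphi_n:=(1-\tfrac{\sfd(\cdot,B_r(x_n))}{R-r})^+$ rather than heat-regularized test sequences, and its ``idea'' glosses over exactly the step you flag as delicate.

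Your worry about that step is overstated: no $L^\infty$ bound on $f_n$ is needed. Since $(\eta_n)$ is a test sequence, both $\eta_n$ and $|\d\eta_n|$ are uniformly bounded in $L^\infty$, and $|\eta_n\d\eta_n|=\tfrac12|\d(\eta_n^2)|\to\tfrac12|\d(\eta_\infty^2)|$ strongly in $L^2$ by item~\ref{it:convnormtest} applied to the test sequence $(\eta_n^2)$ (item~\ref{it:prodtest2}). Reading everything through \eqref{eq:equivl2convs}: $f_n^2\circ T_n\to f_\infty^2$ in $L^1(\mm_\infty)$, while $(\eta_n^2|\d\eta_n|^2)\circ T_n$ is uniformly bounded and convergent in $L^2(\mm_\infty)$; hence the norms $\int f_n^2\eta_n^2|\d\eta_n|^2\,\d\mm_n$ converge. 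Weak convergence is handled the same way, pairing the strongly $L^2$-convergent $\eta_n f_n$ against the uniformly bounded, strongly $L^2$-convergent $\la\d\eta_n,z_n\ra$. So $\eta_n f_n\d\eta_n\to\eta_\infty f_\infty\d\eta_\infty$ strongly as tensors and item~\ref{it:combinate} applies directly---Moser iteration is not required.
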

\begin{idea} Pick $\varphi_n:=(1-\frac{\sfd(\cdot, B_r(x_n))}{R-r})^+$ in the classical estimate
\[
\begin{split}
\int\varphi_n^2|\d f_n|^2\,\d\mm_n=\int\underbrace{\la\d f_n,\d(\varphi^2_nf_n)\ra}_{\text{integral  0 by \eqref{eq:equivharm}}}-2 f_n\varphi_n\la\d f_n,\d\varphi_n\ra\,\d\mm_n \leq\int\tfrac12{\varphi_n^2}|\d f_n|^2+2 f_n^2|\d \varphi_n|^2\,\d\mm_n
\end{split}
\]
to get $\int_{B_r(x_n)}|\d f_n|^2\,\d\mm_n\leq\frac{4}{(R-r)^2} \int_{B_R(x_n)}|f_n|^2\,\d\mm_n$, hence the uniform bound on the $L^2$-norms and (a variant of) the compactness statement in Theorem \ref{thm:convslen} give strong $L^2$-compactness. Then the closure of the differential (item \ref{it:cldiff}), the density given by item \ref{it:testd}  and the characterization \eqref{eq:equivharm} give that any limit is harmonic. For convergence of energies the key observation is that, as above, we have $\int\varphi_n^2|\d f_n|^2\,\d\mm_n=-\int 2 f_n\varphi_n\la\d f_n,\d\varphi_n\ra\,\d\mm_n$ and the right hand side passes to the limit.
\end{idea}

\subsection{Convergence of flows}\label{se:convflow}

Let  $(\Z,\sfd_\Z)$ be a complete and separable space and $F^n:\X_n\to\Z$, $n\in\N\cup\{\infty\}$. We say that $F^n\to F^\infty$ in measure provided for some $(T_n)$ as in \eqref{eq:mappeTn}  we have 
\begin{equation}
\label{eq:convmes}
 \int 1\wedge\sfd_\Z(F^n\circ T_n\,,\,F^\infty )\,\d\mm_\infty\ \to\ 0\qquad\text{as $n\to\infty$}.
\end{equation}
The following equivalent characterization of such convergence will be useful (and shows that the particular choice of $(T_n)$ as in \eqref{eq:mappeTn} is not relevant):
\begin{lemma}\label{le:criterioL0} With the above notation,  $F^n\to F^\infty$  in measure if and only if we have:  for any $\mu_n\weakto\mu_\infty$ with $\mu_n\leq C\mm_n$ for some $C>0$ we have $F^n_*\mu_n\weakto F^\infty_*\mu_\infty$.  
\end{lemma}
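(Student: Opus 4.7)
\begin{idea}
The plan is to reduce both directions to weak/strong pairings in the common Hilbert space $L^2(\mm_\infty)$, routing everything through maps $T_n:\Y\to\Y$ satisfying \eqref{eq:mappeTn}.

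For the direct implication, let $\mu_n\weakto\mu_\infty$ with $\mu_n\leq C\mm_n$. Testing $\int\varphi\,\d\mu_n\leq C\int\varphi\,\d\mm_n$ against non-negative $\varphi\in C_b(\Y)$ and passing to the limit yields $\mu_\infty\leq C\mm_\infty$; let $\rho_n,\rho_\infty\in[0,C]$ be the respective densities. For $\varphi\in C_b(\Z)$, using $(T_n)_*\mm_\infty=\mm_n$ one has
\[
\int\varphi\,\d F^n_*\mu_n=\int(\varphi\circ F^n\circ T_n)(\rho_n\circ T_n)\,\d\mm_\infty.
\]
The hypothesis $\mu_n\weakto\mu_\infty$ together with the equivalence \eqref{eq:equivl2convw} gives $\rho_n\circ T_n\weakto\rho_\infty$ weakly in $L^2(\mm_\infty)$; simultaneously $F^n\circ T_n\to F^\infty$ in $\mm_\infty$-measure by the very definition of $L^0$-convergence, so continuity of $\varphi$ and its uniform bound yield $\varphi\circ F^n\circ T_n\to\varphi\circ F^\infty$ strongly in $L^2(\mm_\infty)$. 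The weak-strong pairing concludes: $\int\varphi\,\d F^n_*\mu_n\to\int\varphi\rho_\infty\,\d\mm_\infty=\int\varphi\,\d F^\infty_*\mu_\infty$.

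For the converse, pick any maps $T_n$ satisfying \eqref{eq:mappeTn} and consider the joint measures $\pi_n:=(F^n\circ T_n,F^\infty)_*\mm_\infty$ on $\Z\times\Z$, with target $\pi_\infty:=(F^\infty,F^\infty)_*\mm_\infty$ concentrated on the diagonal. Applying the hypothesis to $\mu_n:=\beta\mm_n$ for non-negative $\beta\in C_b(\Y)$ and extending by linearity yields
\[
(\star)\qquad\int(\alpha\circ F^n)\beta\,\d\mm_n\ \to\ \int(\alpha\circ F^\infty)\beta\,\d\mm_\infty\qquad\forall\alpha\in C_b(\Z),\ \beta\in C_b(\Y).
\]
For fixed $\alpha,\beta\in C_b(\Z)$ one computes $\int\alpha(z_1)\beta(z_2)\,\d\pi_n=\int(\alpha\circ F^n\circ T_n)(\beta\circ F^\infty)\,\d\mm_\infty$. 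The main obstacle is that $F^\infty$ is only Borel, so $\beta\circ F^\infty\notin C_b(\Y)$ and $(\star)$ does not apply directly. Overcome this via density of $C_b(\Y)$ in $L^1(\mm_\infty)$: given $\varepsilon>0$ pick $\tilde\beta\in C_b(\Y)$ with $\|\tilde\beta-\beta\circ F^\infty\|_{L^1(\mm_\infty)}<\varepsilon$, replacing $\beta\circ F^\infty$ by $\tilde\beta$ up to an error of order $\|\alpha\|_\infty\varepsilon$. Split $\tilde\beta=\tilde\beta\circ T_n+(\tilde\beta-\tilde\beta\circ T_n)$: the second piece contributes negligibly by dominated convergence together with $T_n(x)\to x$ in $\mm_\infty$-measure, while the first, via $(T_n)_*\mm_\infty=\mm_n$, equals $\int(\alpha\circ F^n)\tilde\beta\,\d\mm_n$, which converges to $\int(\alpha\circ F^\infty)\tilde\beta\,\d\mm_\infty$ by $(\star)$. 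Undoing the approximation at cost $O(\|\alpha\|_\infty\varepsilon)$ and letting $\varepsilon\downarrow0$ yields $\int\alpha(z_1)\beta(z_2)\,\d\pi_n\to\int\alpha\beta\,\d\pi_\infty$. Tightness of $\{\pi_n\}$ follows from tightness of its two marginals (the first, $F^n_*\mm_n$, weakly converges to $F^\infty_*\mm_\infty$ by taking $\mu_n=\mm_n$ in the hypothesis; the second is constantly $F^\infty_*\mm_\infty$), so convergence on product functions upgrades to $\pi_n\weakto\pi_\infty$. Testing against $1\wedge\sfd_\Z\in C_b(\Z\times\Z)$, which vanishes on the diagonal, yields $\int 1\wedge\sfd_\Z(F^n\circ T_n,F^\infty)\,\d\mm_\infty\to\int(1\wedge\sfd_\Z)\,\d\pi_\infty=0$, completing the proof.
\end{idea}
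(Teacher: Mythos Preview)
Your `only if' direction is essentially the paper's argument: both route through $(T_n)_*\mm_\infty=\mm_n$ and pair a weakly converging density against a strongly converging test function. The paper phrases it via weak$^*$-$L^\infty$ convergence of $\rho_n\circ T_n$, you via weak $L^2$; these are interchangeable here since the densities are uniformly bounded.

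For the `if' direction the approaches genuinely diverge. The paper argues by contradiction: assuming convergence in measure fails, it uses separability of $\Z$ to find a small ball $B_r(z_i)$ and a set $D=(F^\infty)^{-1}(B_r(z_i))$ of positive measure on which $F^n\circ T_n$ lands far from $B_r(z_i)$ infinitely often; then $\mu_n:=(T_n)_*(c\mm_\infty\restr D)$ furnishes a sequence violating the push-forward hypothesis. This is short and constructive. Your route is instead to build the couplings $\pi_n=(F^n\circ T_n,F^\infty)_*\mm_\infty$ and prove $\pi_n\weakto(F^\infty,F^\infty)_*\mm_\infty$ directly, then test against $1\wedge\sfd_\Z$. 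This works and is a clean measure-theoretic alternative; it trades the localization step for a tightness-plus-determining-class argument. One small point: when you apply the hypothesis to $\mu_n:=\beta\mm_n$ to obtain $(\star)$, the $\mu_n$'s are not probability measures, so strictly speaking you should normalize (and observe the normalizing constants converge, since $\int\beta\,\d\mm_n\to\int\beta\,\d\mm_\infty$); this is cosmetic. The paper's argument is slightly more elementary (no product-space weak convergence), while yours makes the structure `convergence in measure $\Leftrightarrow$ coupling concentrates on the diagonal' transparent.
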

\begin{idea}\ \\
\noindent{\sc If}
If not, there are $r,\eps>0$ so that putting $A_n:=\{\sfd_\Z(F^n\circ T_n,F^\infty)>4r\}\subset\X_\infty$ we have $\mm_\infty(A_n)>2\eps$ for infinitely many $n$'s. By the separability of $\Z$ we can find a finite number of points $z_1,\ldots,z_k\in\Z$ such that $\mm_\infty(C)>1-\eps$ for $C:=(F^\infty)^{-1}(\cup_iB_{r}(z_i))$. Hence $\mm_\infty(A_n\cap  C)>\eps$ for infinitely many $n$'s, and thus for some $i$ we have $\mm_\infty(A_n\cap D)\geq \frac\eps k$ for infinitely $n$'s, where $D:=(F^\infty)^{-1}(B_{r}(z_i))$. 

Then put $c^{-1}:=\mm_\infty(D)$,  $\mu_\infty:=c\mm_\infty\restr {D}$ and  consider $\mu_n:=(T_n)_*\mu_\infty\leq c\mm_n$. From \eqref{eq:mappeTn} it is clear that $\mu_n\weakto\mu_\infty$ and by construction  the measure $F^\infty_*\mu_\infty$ is concentrated on $B_r(z_i)$, while $F^n_*\mu_n(B_{2r}^c(z_i))=c\mm_\infty(\{x\in D:F^n(T_n(x))\in B_{2r}^c(z_i)\})\geq c\mm_\infty(A_n\cap D)$ tells that $F^n_*\mu_n(B_{2r}^c(z_i))\geq\tfrac{\eps c}k$ for infinitely many $n$'s. Hence  $(T_n)_*\mu_n\not\weakto (T_\infty)_*\mu_\infty$.

\noindent{\sc Only if} Let $\mu_n=\rho_n\mm_n$. Then $\rho_n\leq C$ and $\mu_n\weakto \mu_\infty$ give that $\rho_n\circ T_n\weakto \rho_\infty$ in the weak$^*$ topology of $L^\infty$. Also, for $\varphi\in C_b(\X)$ the assumption  \eqref{eq:convmes} implies that $(\varphi\circ F^n\circ T_n)$ converges in measure to $\varphi\circ F^\infty$ and since these are uniformly bounded we can conclude that 
\[
\int\varphi\,\d F^n_*\mu_n=\int \varphi\circ F^n\circ T_n\,\rho_n\circ T_n\,\d\mm_\infty\qquad\to\qquad \int \varphi\circ F^\infty\,\rho_\infty\,\d\mm_\infty =\int\varphi\,\d F^\infty_*\mu_\infty,
\]
as desired.
 \end{idea}
\begin{lemma}\label{le:2fl}
Let $(v^n_t)\in L^2([0,1],L^2(T\X_n))$ be with $\sup_{n}{\sf N}\big((v^n_t)\big)<\infty$ (recall \eqref{eq:assvt}) and let $F^n$ be the RLF of $(v^n_t)$ granted by Theorem \ref{thm:RLF}. Assume that for any $(\mu_n)$ as in Lemma \ref{le:criterioL0} we have $\mu_t^n\weakto \mu^\infty_t$ for any $t\in[0,1]$, where  $(\mu^n_t)$ is the solution of the continuity equation for $(v^n_t)$ starting from $\mu_n$ (recall Proposition \ref{prop:exuniqCE}). 

Then $F^n\to F^\infty$   in measure as maps from $\X_n$ to $C([0,1],\X_n)\subset C([0,1],\Y)$.
\end{lemma}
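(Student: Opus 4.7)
The plan is to use Lemma \ref{le:criterioL0} with $\Z=C([0,1],\Y)$ and thus reduce the claim to the following: whenever $\mu_n\weakto\mu_\infty$ with $\mu_n\leq C\mm_n$, the path-space plans $\ppi^n:=(F^n_\cdot)_*\mu_n\in\pr(C([0,1],\Y))$ converge weakly to $(F^\infty_\cdot)_*\mu_\infty$.

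Tightness of $\{\ppi^n\}$ is immediate from \eqref{eq:speedrlf}: for $\ppi^n$-a.e.\ $\gamma$ one has $|\dot\gamma_t|=|v^n_t|(\gamma_t)\leq\||v^n_t|\|_{L^\infty}$, so all the curves in $\supp\ppi^n$ share the uniform modulus of continuity $\omega(s,t):=\int_s^t\sup_n\||v^n_r|\|_{L^\infty}\,\d r$, which is integrable in $(s-t)$ by \eqref{eq:assvt}; together with tightness of $(\e_0)_*\ppi^n=\mu_n\weakto\mu_\infty$, Ascoli-Arzelà gives tightness. Let $\ppi$ be any subsequential weak limit. The hypothesis that the continuity-equation solutions converge forces $(\e_t)_*\ppi=\mu^\infty_t$ for every $t\in[0,1]$; coupled with the uniform bound $\mu^\infty_t\leq C'\mm_\infty$ from \eqref{eq:expb} and weak lower semicontinuity of the kinetic energy, $\ppi$ is a test plan on $\X_\infty$.

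The main obstacle is to show that $\ppi'_t=\e_t^*v^\infty_t$ for a.e.\ $t$: once this is done, Lemma \ref{le:ponte} combined with Proposition \ref{prop:exuniqCE} gives $\ppi=(F^\infty_\cdot)_*\mu_\infty$, the full sequence converges by arbitrariness of the subsequence, and Lemma \ref{le:criterioL0} concludes. The key idea here is to exploit the hypothesis on \emph{every} sequence of initial data, not only $\mu_n$ itself. Fix $h\in C_b(C([0,1],\Y))$ with $h\geq0$ and $\int h\,\d\ppi>0$, set $c_n:=\int h\,\d\ppi^n\to c_\infty:=\int h\,\d\ppi$, and consider the probability measures $\tilde\mu_n:=c_n^{-1}(h\circ F^n_\cdot)\mu_n\in\pr(\X_n)$. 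They satisfy $\tilde\mu_n\leq \|h\|_{L^\infty}c_n^{-1}C\mm_n$ uniformly and, since $h\,\ppi^n\weakto h\,\ppi$, converge weakly to $\tilde\mu_\infty:=c_\infty^{-1}(\e_0)_*(h\,\ppi)$. By the very definition of $\ppi^n$, the RLF-plan of $\tilde\mu_n$ is $c_n^{-1}h\,\ppi^n$, whose time-$t$ marginals $\tilde\mu^n_t:=c_n^{-1}(\e_t)_*(h\,\ppi^n)$ solve the continuity equation on $\X_n$ starting from $\tilde\mu_n$; the hypothesis of the lemma applied to the family $(\tilde\mu_n)$ then yields $\tilde\mu^n_t\weakto\tilde\mu^\infty_t$, where $(\tilde\mu^\infty_t)$ is the unique solution of the continuity equation on $\X_\infty$ with vector field $v^\infty$ starting from $\tilde\mu_\infty$. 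On the other hand, directly from $\ppi^n\weakto\ppi$, one has $\tilde\mu^n_t\weakto c_\infty^{-1}(\e_t)_*(h\,\ppi)$, so $(\e_t)_*(h\,\ppi)$ is a scalar multiple of the unique continuity-equation solution for $v^\infty$.

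Translating this into PDE form, for every $f\in W^{1,2}(\X_\infty)$ and every such $h$,
\[
\frac{\d}{\d t}\int h(\gamma)f(\gamma_t)\,\d\ppi(\gamma)=\int h(\gamma)\,\d f(v^\infty_t)(\gamma_t)\,\d\ppi(\gamma)\qquad\text{a.e.\ }t.
\]
Testing in distribution against $\psi\in C^\infty_c(0,1)$, applying Fubini, and using the arbitrariness of $h\in C_b$ implies that for $\ppi$-a.e.\ $\gamma$,
\[
-\int_0^1\psi'(t)f(\gamma_t)\,\d t=\int_0^1\psi(t)\,\d f(v^\infty_t)(\gamma_t)\,\d t.
\]
A countable dense choice of $\psi$ and $f$ (using separability of $W^{1,2}(\X_\infty)$, recall \eqref{eq:w12reflsep}) shows that for $\ppi$-a.e.\ $\gamma$ and every $f\in W^{1,2}(\X_\infty)$, $f\circ\gamma\in W^{1,1}([0,1])$ with derivative $\d f(v^\infty_t)(\gamma_t)$. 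By Lemma \ref{le:equivsobw} this is exactly $\ppi'_t=\e_t^*v^\infty_t$, completing the identification of $\ppi$ and thus the proof.
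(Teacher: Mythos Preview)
Your argument is correct, but it takes a much longer route than necessary. The paper's proof is a two-liner that avoids path space entirely: for each \emph{fixed} $t\in[0,1]$, the hypothesis says precisely that $(F^n_t)_*\mu_n=\mu^n_t\weakto\mu^\infty_t=(F^\infty_t)_*\mu_\infty$ whenever $\mu_n\weakto\mu_\infty$ with bounded compression, so the ``if'' direction of Lemma~\ref{le:criterioL0} (with $\Z=\Y$) gives $F^n_t\to F^\infty_t$ in measure for every $t$. Since the curves $t\mapsto F^n_t(x)$ share a common modulus of absolute continuity (from $\sup_n\int_0^1\||v^n_t|\|_{L^\infty}\,\d t<\infty$ and \eqref{eq:speedrlf}), the elementary bound $1\wedge\sup_t\sfd(\gamma_t,\eta_t)\leq \omega(\tfrac1k)+\sum_{i<k}1\wedge\sfd(\gamma_{i/k},\eta_{i/k})$ upgrades time-slice convergence to convergence in $C([0,1],\Y)$.

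What you do instead is apply Lemma~\ref{le:criterioL0} once on path space, which forces you to identify any subsequential limit $\ppi$ of $(F^n_\cdot)_*\mu_n$ as $(F^\infty_\cdot)_*\mu_\infty$. Your re-weighting trick with $h\in C_b(C([0,1],\Y))$ to show $\ppi'_t=\e_t^*v^\infty_t$ is nice and does work, but it re-proves from scratch what Lemma~\ref{le:measth} and Lemma~\ref{le:ponte} already encode: that convergence of \emph{all} marginal curves pins down the plan. The paper simply observes that Lemma~\ref{le:criterioL0} can be applied slice by slice, so the identification of $\ppi$ is never needed. Your approach buys nothing extra here, though the re-weighting idea could be useful in situations where one only controls integrated-in-time quantities rather than each time slice.
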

\begin{idea} The proof of Theorem \ref{thm:RLF} tells that $\mu^n_t=(F^n_t)_*(\mu_n)$, thus Lemma \ref{le:criterioL0} gives that $F^n_t\to F_t^\infty$ in measure for any $t\in[0,1]$. To conclude we notice that curves in the images of the $F_n$'s are uniformly Lipschitz (from $\sup_n\||v_t^n|\|_{L^\infty}<\infty$ and \eqref{eq:speedrlf}) and use the trivial bound $1\wedge\sup_t\sfd(\gamma_t,\eta_t)\leq \frac Lk+\sum_{i=0}^{k-1}1\wedge\sfd(\gamma_{\frac ik},\eta_{\frac ik})$ valid for every $k\in\N$ and any two $L$-Lipschitz curves $\gamma,\eta$. 
\end{idea}
\begin{definition} Let $(v^n_t)\in L^2([0,1],L^2(T\X_n))$. We say that $v^n\to v^\infty$ \emph{weakly in time and strongly in space} provided:
 \[
\begin{split}
\iint_0^1\varphi_t\la v^n_t, z^n\ra\,\d t\,\d\mm_n&\to\iint_0^1\varphi_t\la v^\infty_t, z^\infty\ra\,\d t\,\d\mm_\infty\quad\forall\varphi\in C([0,1]),\  (z^n)\ \text{\rm test sequence}\\
\iint_0^1|v^{n,\psi}_t|^2\,\d t\,\d\mm_n&\to \iint_0^1|v^{\infty,\psi}_t|^2\,\d t\,\d\mm_\infty\quad\forall \psi\in C_c(\R),\ where\ v^{n,\psi}_t:=\int_0^1v^n_s\psi_{t-s}\,\d s.
\end{split}
\]
\end{definition}
\begin{theorem}\label{thm:convflow}
Let $(v^n_t)\subset L^2([0,1],L^2(T\X_n))$ be with $\sup_{n}{\sf N}\big((v^n_t)\big)<\infty$ (recall \eqref{eq:assvt}) and converging weakly in time and strongly in space to $(v^\infty_t)\subset L^2([0,1],L^2(T\X_\infty))$. 

Then the Regular Lagrangian Flows  $F^n$ of $(v^n_t)$ converge in measure to that $F^\infty$ of $(v^\infty_t)$.
\end{theorem}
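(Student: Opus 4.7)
The plan is to apply Lemma \ref{le:2fl} and thus reduce the problem to stability of the continuity equation. So let $\mu_n=\rho_n\mm_n\weakto\mu_\infty$ with $\rho_n\leq C$, and let $(\mu^n_t)=(\rho^n_t\mm_n)$ be the unique solution of the continuity equation for $v^n$ starting from $\mu_n$ given by Proposition \ref{prop:exuniqCE}. The goal is to show $\mu^n_t\weakto\mu^\infty_t$ for every $t\in[0,1]$; once this is done, Lemma \ref{le:2fl} concludes.

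\emph{Step 1: compactness and equicontinuity.} The bound \eqref{eq:expb}, combined with $\sup_n\int_0^1\|(\div v^n_s)^-\|_{L^\infty}\,ds<\infty$ coming from $\sup_n{\sf N}((v^n_t))<\infty$, gives $\sup_{n,t}\|\rho^n_t\|_{L^\infty}<\infty$. Together with $\mm_n\weakto\mm_\infty$ this provides tightness of $\{\mu^n_t\}$ uniformly in $(n,t)$. The continuity equation also yields, for every $1$-Lipschitz $f$,
\[
\Big|\int f\,d\mu^n_t-\int f\,d\mu^n_s\Big|\leq\sup_{n,t}\|\rho^n_t\|_{L^\infty}\int_s^t\||v^n_r|\|_{L^\infty}\,dr,
\]
so $t\mapsto\mu^n_t$ is uniformly equicontinuous in the dual Lipschitz metric. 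Prokhorov plus a diagonal argument over a countable dense set of times extracts a subsequence with $\mu^n_t\weakto\mu^*_t$ for every $t\in[0,1]$ and $\mu^*_0=\mu_\infty$.

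\emph{Step 2: identification via uniqueness.} By Proposition \ref{prop:exuniqCE} the continuity equation for $v^\infty$ with initial datum $\mu_\infty$ has a unique solution, so it suffices to prove that $(\mu^*_t)$ is such a solution: this will force $\mu^*=\mu^\infty$ and give convergence along the full original sequence. Fix $f\in\test(\X_\infty)$, pick a test sequence $(f_n)$ for it and $\varphi\in C^1_c(0,1)$; the continuity equation for $v^n$ applied to $f_n$ and $\varphi$ reads
\[
\iint\varphi(s)\la\nabla f_n,v^n_s\ra\rho^n_s\,d\mm_n\,ds=-\iint\varphi'(s)\,f_n\,\rho^n_s\,d\mm_n\,ds.
\]
The right-hand side converges to $-\iint\varphi'(s)\,f\,d\mu^*_s\,ds$ by $f_n\to f$ strongly in $L^2$, the uniform $L^\infty$-bound on $\rho^n_s$, the pointwise-in-$s$ convergence $\mu^n_s\weakto\mu^*_s$ from Step 1 and dominated convergence in time. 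So the left-hand side also has a limit, and it remains to identify it with $\iint\varphi(s)\la\nabla f,v^\infty_s\ra\,d\mu^*_s\,ds$.

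\emph{Step 3: the main obstacle.} This identification is the crux and is where the mollified-norm clause in the definition of weak-in-time/strong-in-space convergence enters. For $\psi\in C_c(\R)$ nonnegative with $\int\psi=1$ define $v^{n,\psi}_s:=\int v^n_r\psi(s-r)\,dr$; combining the convergence of $\iint|v^{n,\psi}_s|^2\,d\mm_n\,ds$ with the weak-time/strong-space clause applied to the time-independent test sequence $(\nabla f_n)$ upgrades the convergence of $v^{n,\psi}$ to $v^{\infty,\psi}$ to strong convergence in the variable-space $L^2([0,1];L^2(T\X_n))$. Decomposing
\[
\la\nabla f_n,v^n_s\ra\rho^n_s=\la\nabla f_n,v^{n,\psi}_s\ra\rho^n_s+\la\nabla f_n,v^n_s-v^{n,\psi}_s\ra\rho^n_s,
\]
one passes to the limit in the first summand using strong space-time convergence of $v^{n,\psi}$ paired against the weakly convergent time-dependent field $\rho^n_s\nabla f_n$, obtaining $\iint\varphi(s)\la\nabla f,v^{\infty,\psi}_s\ra\,d\mu^*_s\,ds$, which tends to the target as $\psi\to\delta_0$ by the bound $\int\||v^\infty_s|\|_{L^\infty}\,ds<\infty$. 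The error summand, after Fubini, rewrites as
\[
\iint\big[\varphi(r)\rho^n_r(x)-\textstyle\int\varphi(s)\psi(s-r)\rho^n_s(x)\,ds\big]\la\nabla f_n(x),v^n_r(x)\ra\,dr\,d\mm_n(x),
\]
and the hard technical step is to show that this vanishes uniformly in $n$ as $\psi\to\delta_0$: the plan is to deduce a uniform-in-$n$ time-modulus of continuity for $s\mapsto\rho^n_s$ tested against $W^{1,2}$-objects out of the equicontinuity from Step 1 and the full strength of the bound $\sup_n{\sf N}((v^n_t))<\infty$ (both the divergence and the covariant-derivative parts), so that the contribution of any time-oscillation in $v^n$ that survives the convolution with $\psi$ can be controlled against the $L^1_t L^\infty_x$ bound on $(v^n_r)$. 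This uniform error estimate is the main obstacle of the proof.
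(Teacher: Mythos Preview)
Your proposal follows essentially the same route as the paper: reduction via Lemma~\ref{le:2fl}, compactness and identification via uniqueness of the continuity equation, and the same mollification-in-time splitting $v^n = v^{n,\psi} + (v^n - v^{n,\psi})$, with the remainder controlled through a uniform time-modulus of continuity of $s\mapsto\mu^n_s$ tested against the function $\d f_n(v^n_r)$. The one point to make precise is that this modulus does \emph{not} come from the dual-Lipschitz equicontinuity of Step~1 but from a second use of the continuity equation, for which one needs $\d f_n(v^n_r)\in W^{1,2}(\X_n)$ with a uniform bound on $\|\d(\d f_n(v^n_r))\|_{L^2}$: this is exactly the Leibniz rule~\eqref{eq:leibcov}, and it is here (and only here) that the covariant-derivative part of ${\sf N}((v^n_t))$ together with the Hessian bound available for $f_n\in\test(\X_n)$ enter; after the natural splitting of your bracket into a $(\varphi-\varphi*\psi)$ piece and a $\mu^n_r-\mu^n_s$ piece, both terms are then seen to vanish uniformly in $n$ as $\psi\to\delta_0$.
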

\begin{idea}
Let $(\mu_n)$ be arbitrary as in Lemma \ref{le:criterioL0} and $\mu^n_t:=(F^n_t)_*\mu_n$. By Lemma \ref{le:2fl}, to conclude it is sufficient to prove that $\mu^n_t\weakto\mu^\infty_t$ for every $t\in[0,1]$. The uniform bound on $\|\div(v^n_t)\|_\infty$ and \eqref{eq:expb} imply $\mu^n_t\leq \Comp\,\mm_n$ for some $\Comp>0$ and every $n,t$, that in turns gives the tightness of the $\mu^n_t$'s. Also, the uniform bound on $\||v^n_t|\|_\infty$  (and \eqref{eq:speedrlf}, \eqref{eq:superpfacile}) implies a uniform bound on the $W_2$-speed of $t\mapsto\mu^n_t$. By Ascoli-Arzel\`a, this suffices to get weak compactness    of the curves $t\mapsto \mu^n_t\in \pr(\X_n) \subset \pr(\Y)$. Let $t\mapsto\nu_t$ be a limit curve of some non-relabeled subsequence: we need to prove that $\nu_t=\mu_t^\infty$ for any $t\in[0,1]$. Since clearly this holds for $t=0$ and $(\nu_t)$ has bounded compression, it is sufficient to prove that it solves the continuity equation for the $v_t$'s.

Thus let  $(f_n)$ be an arbitrary test sequence: by item \ref{it:testd} it is easy to see that to conclude it is sufficient to prove that $\partial_t\int f_\infty\,\d\nu_t=\int\d f_\infty(v^\infty_t)\,\d\nu_t$. Since  $\int f_n\,\d\mu^n_t\to\int f_\infty\,\d\nu_t$ for any  $t\in[0,1]$, to conclude it suffices  to show that $\partial_t\int f_n\,\d\mu^n_t\stackrel{L^2([0,1])}{\weakto}\int \d f_\infty(v^\infty_t)\,\d\nu_t$  \footnote{if we strengthen our assumption in `the vector fields strongly converge in both space and time', then quite clearly we get that $\d f_n(v_t^n)\stackrel{}\to \d f_\infty(v_t^\infty)$ strongly in $L^2_{t,x}$; this  and  the weak convergence of measures allows to conclude. Thus from here on the proof is about handling the weak convergence in time.}. Put $I_n(s,t):=\int \d f_n(v^n_s)\,\d\mu^n_t$, $I_\infty(s,t):=\int \d f_\infty(v^\infty_s)\,\d\nu_t$, notice that $\partial_t\int f_n\,\d\mu^n_t=I_n(t,t)$  and fix $\varphi\in L^2(0,1)$ and an even mollification kernel  $\psi\in   C_c(\R)$.

The  assumptions on the $v^n_t$'s tell that, in a suitable natural sense, $v^{n,\psi}\to v^{\infty,\psi}$ strongly in $t,x$, thus arguing as in item \ref{it:combinate} keeping in mind the uniform bound on $|\d f_n|$ we get $\d f_n(v^{n,\psi}_\cdot)\to \d f_n(v^{\infty,\psi}_\cdot)$ strongly in $L^2_{t,x}$ that together with the weak convergence of the measures give $\iint\varphi_tI_n(s,t)\psi_{t-s}\,\d s\,\d t\to \iint\varphi_tI_\infty(s,t)\psi_{t-s}\,\d s\,\d t$. Hence  to conclude it is sufficient to prove that 
\begin{equation}
\label{eq:clstabfl}
\sup_nA_{n}\to 0\quad\text{as}\quad\psi\weakto\delta_0\qquad\text{where}\qquad A_n:=\int\varphi_t\Big(\int I_n(s,t)\psi_{t-s}\,\d s-I_n(t,t)\Big).
\end{equation}
Since $\int\varphi_tI_n(t,t)\,\d t=\int \varphi_sI_n(s,s)\psi_{t-s}\,\d t\,\d s$ and  $\psi$ is  even we have
\[
\begin{split}
A_{n}=\iint\varphi_t  (I_n(s,t)-I_n(s,s))\psi_{t-s}\,\d s \,\d t +\int (\varphi\ast\psi(s)-\varphi_s)I_n(s,s)\,\d s.
\end{split}
\]
Now recall that $(f_n)$ is a test sequence, that for a.e.\ $s$ we have  $v^n_s\in W^{1,2}_C(T\X)$ and use \eqref{eq:leibcov} to deduce that  $\d f_n(v^n_s)\in W^{1,2}(\X)$ with $\||\d (\d f_n(v^n_s))|\|_{L^2}\leq C$ for some $C$ depending only on the data in the statement and on $(f_n)$ (but independent on $n$). Thus $t\mapsto I_n(s,t)$ is absolutely continuous with $|\partial_tI_n(s,t)|=|\int \d(\d f_n(v^n_s))(v^n_t)\,\d\mu_t^n|\leq C\Comp \||v^n_t|\|_{L^2 }$ and we have
\[
\begin{split}
|A_{n}|&\leq \iint |\varphi_t|\int_s^t|\partial_rI_n(s,r)|\,\d r\,\psi_{t-s}\,\d s\,\d t+\int |\varphi\ast\psi(s)-\varphi_s|\,|I_n(s,s)|\,\d s\\
&\leq \Comp\Big(\int C |\varphi_t|\sqrt{|s-t|\int_0^1 \||v^n_r|\|^2_{L^2}\,\d r} \,\psi_{t-s}\,\d s\,\d t+\|\varphi-\varphi\ast\psi\|_{L^2}\sqrt{\int_0^1I_n^2(s,s)\,\d s}\Big)
\end{split}
\]
and since $\sup_n\int_0^1I_n^2(s,s)\,\d s<\infty$, the claim \eqref{eq:clstabfl} follows.
\end{idea}

\subsection{Mosco-convergence of Total Variation}\label{se:gtv}
In Section \ref{se:vertsob} we introduced the Cheeger energy functional by $L^2$-lower semicontinuous relaxation of the functional $\tfrac12\int \lipa^2(f)\,\d\mm$. There is nothing special about the choice of the exponent 2, as a similar construction can be done for any $p\in[1,\infty)$: we define $\ch_p:L^p(\X)\to[0,\infty]$ as
\[
\ch_p(f):=\inf\limi_n\tfrac1p\int\lipa^p(f_n)\,\d\mm_n 
\]
the $\inf$ being taken among all $(f_n)\subset\Lip_{\bs}(\X)$ $L^p$-converging to $f$. The set $\{\ch_p<\infty\}\subset L^p(\X)$ takes the name of $W^{1,p}(\X)$ Sobolev space (or $\BV(\X)$ space for $p=1$), and it is Banach when equipped with the norm
\[
\|f\|_{W^{1,p}}^p:=\|f\|^p_{L^p}+p\ch_p(f).
\]
Arguing as in Section \ref{se:vertsob} it is not hard to see that for $f\in W^{1,p}(\X)$ there is a unique $|\D f|_p\in L^p(\X)$  non-negative such that 
\[
\ch_p(f)=\tfrac1p\int|\D f|_p^p\,\d\mm\qquad\text{and}\qquad |\D f|_p\leq G\quad\mm-a.e.
\]
for any weak $L^p$-limit $G$ of some subsequence of $(\lipa f_n)$ for $(f_n)\subset\Lip_\bs(\X)$ converging to $f$ in $L^p$. Analogously, but with a bit more of work, for $f\in \BV(\X)$ there is a unique non-negative Radon measure $|\DD f|$ such that 
\[
\ch_1(f)=|\DD f|(\X)\qquad\text{and}\qquad |\DD f|\leq\wlimi_{n\to\infty}\lipa(f_n)\mm,
\]
the latter meaning that $\int\varphi\,\d|\DD f|\leq\limi_n\int\varphi \lipa(f_n)\,\d\mm $ for any $\varphi\in C_\b(\X)$ non-negative.
 
 Notice that there is an analogue `dual' construction based on the concept of test plans (with $\iint_0^1|\dot\gamma_t|^q\,\d t\,\d\ppi<\infty$  replacing \eqref{eq:finiteenergy}) and that the same arguments leading to Theorem \ref{thm:glimsch}  grant that for arbitrary spaces $\X_n$ we have
 \begin{equation}
\label{eq:glimischp}
 \X_n\stackrel{mGH}\to\X_\infty\qquad\Rightarrow\qquad \glims \ch_{n,p}\leq  \ch_{\infty,p}\quad\forall p\geq1.
\end{equation}
 We also point out that the object $|\D f|_p$ in general  \emph{depends} on $p$, and that typically only one inequality holds: for $p_1\leq p_2$ the fact that weak convergence in $L^{p_2}$ implies weak convergence in $L^{p_1}$ (say $\mm(\X)<\infty$, otherwise the claim should be intended locally) tells that
\begin{equation}
\label{eq:ineqp1p2}
f\in W^{1,p_2}(\X)\quad\Rightarrow\quad f\in W^{1,p_1}(\X)\qquad\text{with}\qquad |\D f|_{p_1}\leq |\D f|_{p_2}\quad\mm-a.e..
\end{equation}

In these matters, a uniform $\RCD$ condition has two roles: it grants that $|\D f|_p$ is actually independent on $p$ and ensures the weak-$\glimi$ inequality in \eqref{eq:glimischp}. 
\begin{theorem}[Independence on $p$ of  $p$-weak upper gradients]\label{thm:indp} Let $(\X,\sfd,\mm)$ be $\RCD(K,\infty)$, $p_1, p_2\in(1,\infty)$ and $f\in W^{1,p_1}(\X)$ be with $f,|\D f|_{p_1}\in L^{p_2}(\X)$. 

Then $f\in W^{1, p_2}(\X)$ with $|\D f|_{p_1}=|\D f|_{ p_2}$ $\mm$-a.e.. Similarly, if $f\in \BV\cap L^p(\X)$ is with $|\DD f|\ll\mm$ and $\frac{\d |\DD f|}{\d \mm}\in L^p(\X)$, then $f\in W^{1,p}(\X)$ and $|\DD f|=|\D f|_p\mm$. Also, we have
\begin{equation}
\label{eq:BEBV}
|\DD \h_tf|\leq e^{-Kt}\h_t(|\DD f|)\qquad\forall f\in \BV(\X),\ t\geq0. 
\end{equation}
\end{theorem}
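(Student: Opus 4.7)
The plan is to use the improved Bakry--Émery estimate \eqref{eq:BEimpr} together with the regularizing effect of the heat flow, exploiting the fact that $\h_t$ extends to a contraction on $L^p$ for every $p\in[1,\infty]$ via the representation \eqref{eq:reprform}. The underlying mechanism is: \eqref{eq:BEimpr} is a \emph{pointwise} inequality whose right-hand side only involves the first-order object $|\d f|$, which makes sense (and lies in $L^p$) as soon as $f$ has distributional gradient in $L^p$, so the estimate survives under $L^p$-regularization even for $p\neq 2$.

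For the first assertion, let $f\in W^{1,p_1}(\X)$ with $f,|\D f|_{p_1}\in L^{p_2}(\X)$. First I would truncate by $f_n:=(-n)\vee f\wedge n$, which lies in $L^\infty\cap L^{p_1}\cap L^{p_2}\cap W^{1,p_1}$ with $|\D f_n|_{p_1}\leq |\D f|_{p_1}$ (by an $L^p$-analogue of the chain rule \eqref{eq:dfchain}). Since $f_n\in L^2$, the heat flow $\h_t f_n$ lies in $W^{1,2}$ by \eqref{eq:aprioriheat}, and applying \eqref{eq:BEimpr} to $f_n$ gives $|\d\h_t f_n|\leq e^{-Kt}\h_t(|\D f_n|_{p_1})$ $\mm$-a.e. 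Since $\h_t$ is an $L^{p_2}$-contraction, the right-hand side converges in $L^{p_2}$ as $n\to\infty$ to $e^{-Kt}\h_t(|\D f|_{p_1})$, and $\h_t f_n\to \h_t f$ in $L^{p_2}$. Hence
\[
|\d\h_t f|\leq e^{-Kt}\h_t(|\D f|_{p_1})\qquad \mm\text{-a.e.},
\]
so that $\h_t f\in W^{1,p_2}(\X)$ with $\ch_{p_2}(\h_t f)\leq \frac{1}{p_2}e^{-Kp_2 t}\int|\D f|_{p_1}^{p_2}\,\d\mm$. Letting $t\downarrow 0$, since $\h_t f\to f$ in $L^{p_2}$, the $L^{p_2}$-lower semicontinuity of $\ch_{p_2}$ (proved exactly as for the $p=2$ case in Section \ref{se:vertsob}) yields $f\in W^{1,p_2}(\X)$ with $\int|\D f|_{p_2}^{p_2}\,\d\mm\leq\int|\D f|_{p_1}^{p_2}\,\d\mm$. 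To upgrade this integral inequality to $|\D f|_{p_2}\leq|\D f|_{p_1}$ $\mm$-a.e., I would apply the same argument with $f$ replaced by a sequence of localizations (for instance use the pointwise estimate above, which is already $\mm$-a.e., and pass to the limit $t\downarrow 0$ invoking lower semicontinuity of the pointwise minimal gradient under $L^{p_2}$-convergence, proved by a Mazur-type argument for $p_2>1$). The reverse pointwise inequality is \eqref{eq:ineqp1p2} (applied in whichever direction $p_1\lessgtr p_2$ is relevant, after showing $f$ also belongs to the space with the smaller exponent, which is immediate).

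For \eqref{eq:BEBV}, fix $f\in\BV(\X)$ and $s>0$. The $L^1\to W^{1,2}$ smoothing property of the heat flow on $\RCD(K,\infty)$ spaces (a standard consequence of the log-Harnack inequality used in the proof of Theorem \ref{thm:reprfin}, combined with \eqref{eq:linftylip}) gives $\h_s f\in W^{1,2}(\X)$. Apply \eqref{eq:BEimpr} at time $t$ to $\h_s f$, using the semigroup property:
\[
|\d\h_{s+t}f|\leq e^{-Kt}\h_t(|\d\h_s f|)\qquad \mm\text{-a.e.}
\]
Then I would show that $|\d\h_s f|\,\mm\weakto|\mathbf{D}f|$ as measures when $s\downarrow 0$: take a recovery sequence $(f_n)\subset\Lip_{\rm bs}(\X)$ with $f_n\to f$ in $L^1$ and $\lipa(f_n)\mm\weakto|\mathbf{D}f|$, apply $\h_s$, use commutation and the dominated convergence for the heat flow plus \eqref{eq:BEimpr} applied to $f_n$ to bound $|\d\h_s f_n|\mm\leq e^{-Ks}\h_s(\lipa(f_n))\mm$, and diagonalize. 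Passing $s\downarrow 0$ in the displayed inequality (testing against non-negative $\varphi\in C_\b(\X)$ and using that $\h_t$ preserves continuity for bounded data via \eqref{eq:linftylip}) yields \eqref{eq:BEBV}. The second BV statement then follows as in the Sobolev case: if $|\mathbf{D}f|=g\mm$ with $g\in L^p$, then $|\d\h_t f|\leq e^{-Kt}\h_t g$ is bounded in $L^p$ uniformly in $t$, so $f\in W^{1,p}$ by $L^p$-lower semicontinuity of $\ch_p$, and the pointwise matching $|\D f|_p=g$ $\mm$-a.e.\ follows by comparing norms as before.

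The main obstacle will be the two pointwise-to-pointwise passages. First, propagating the Bakry--Émery pointwise inequality from $W^{1,2}$ to $W^{1,p_1}$ data whose distributional gradient lies in $L^{p_2}$ but which might not be in $L^2$: the truncation scheme works provided one can identify the limits of $\h_t(|\D f_n|_{p_1})$ with $\h_t(|\D f|_{p_1})$ in a strong enough sense, and this requires knowing that the $L^p$-minimal relaxed slope is compatible with truncation (i.e.\ a chain rule in $L^p$). Second, the weak convergence $|\d\h_s f|\mm\weakto|\mathbf{D}f|$ for $f\in\BV$: the issue is that $f\notin L^2$ in general, so the standard $L^2$-theory of the heat flow does not directly apply, and one has to carefully combine the BV-relaxation of $f$ by Lipschitz functions with the heat-kernel smoothing, using the Bakry--Émery estimate in both $s$ and $t$ simultaneously to obtain a uniform bound that allows passage to the limit.
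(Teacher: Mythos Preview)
Your approach has a circular gap at the key step. The Bakry--\'Emery estimate \eqref{eq:BEimpr} is proved for $f\in W^{1,2}(\X)$, and both the $|\d\h_t f|$ on the left and the $|\d f|$ on the right are $2$-minimal weak upper gradients. When you write ``applying \eqref{eq:BEimpr} to $f_n$ gives $|\d\h_t f_n|\leq e^{-Kt}\h_t(|\D f_n|_{p_1})$'', you are implicitly assuming both that $f_n\in W^{1,2}$ and that its $2$-gradient coincides with its $p_1$-gradient; and to then conclude $\h_t f\in W^{1,p_2}$ you further need the left-hand side to control the $p_2$-gradient of $\h_t f_n$. Each of these identifications is precisely the exponent-independence you are trying to prove. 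Truncation does bring $f_n$ into $L^2\cap L^\infty$, but not into $W^{1,2}$ (unless $p_1\geq 2$, and even then you only control the $2$-gradient of $\h_t f_n$, not its $p_2$-gradient).

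The paper breaks this circularity by passing through the \emph{Lipschitz} level via the Sobolev-to-Lipschitz property (Proposition~\ref{prop:sobtolip}), which is the idea missing from your proposal. From \eqref{eq:BEimpr} and Proposition~\ref{prop:sobtolip} one first gets the pointwise inequality $\lipa(\h_t g)\leq e^{-Kt}\h_t(\lipa g)$ for Lipschitz $g$; since $\lipa$ is a purely metric object, this estimate carries no exponent. Relaxing in $L^{p_1}$ yields a $p_1$-version of Bakry--\'Emery; applied to Lipschitz $g$ the right-hand side is continuous, so a second use of Sobolev-to-Lipschitz gives $\lipa(\h_t g)\leq e^{-Kt}\h_t(|\d g|_{p_1})$, whence the trivial bound $|\D\h_t g|_{p_2}\leq\lipa(\h_t g)$ produces the cross-exponent control. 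Feeding in $g:=\h_t f$ (Lipschitz by \eqref{eq:linftylip}, after truncating $f$ to $L^\infty$) and iterating closes the argument. One minor remark: upgrading the integral inequality to pointwise is simpler than you suggest---once $f\in W^{1,p_2}$, the a.e.\ bound $|\D f|_{p_1}\leq|\D f|_{p_2}$ from \eqref{eq:ineqp1p2} combined with $\int|\D f|_{p_2}^{p_2}\leq\int|\D f|_{p_1}^{p_2}$ forces equality $\mm$-a.e.
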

\begin{idea} Say $p_1\leq  p_2$. By \eqref{eq:ineqp1p2} and a truncation argument it suffices to prove that $\ch_{ p_2}(f)\leq \tfrac1{p_2}\int|\D f|_{p_1}^{  p_2}\,\d\mm$ under the assumption that $f$ is also in $L^\infty$. From \eqref{eq:BEimpr} and Proposition \ref{prop:sobtolip} we deduce
\[
\lipa(\h_tg)\leq e^{-Kt}\h_t(\lipa g)\quad\text{ pointwise}\qquad\forall g\in \Lip_\bs(\X),\ t\geq0.
\]
Writing this for an optimal sequence $(g_n)\subset \Lip_\bs(\X)$ for the definition of $\ch_{p_1}(g)$  we deduce
\begin{equation}
\label{eq:BEp}
|\d\h_tg|_{p_1}\leq e^{-Kt}\h_t(|\d g|_{p_1})\qquad\mm-a.e.\qquad\forall g\in W^{1,{p_1}}(\X),\ t\geq0.
\end{equation}
Applying this to $g\in \Lip_\b(\X)$, noticing that $\h_t(|\d g|_{p_1})\in C_\b(\X)$ (e.g.\ by \eqref{eq:linftylip}) and then using again Proposition \ref{prop:sobtolip} we get
\[
|\d\h_t g|_{ p_2}\leq \lipa(\h_tg)\leq e^{-Kt}\h_t(|\d g|_{p_1})\qquad\mm-a.e.\qquad\forall g\in \Lip_\b(\X),\ t\geq0.
\]
having used also \eqref{eq:dflip}. Picking $g:=\h_tf$ (that is in $\Lip_\b(\X)$ by \eqref{eq:linftylip}) and using also \eqref{eq:BEp}   we get
\[
|\d\h_{2t}f |_{ p_2}\leq  e^{-Kt}\h_t(|\d \h_tf|_{p_1})\leq e^{-2Kt}\h_{2t}(|\d f|_{p_1})\qquad\mm-a.e.,
\]
so that raising to the $ p_2$-th power, integrating, using the trivial bound $\h_t(F)^p\leq \h_t(F^p)$ (by Jensen's inequality and \eqref{eq:reprform}), we conclude by letting $t\downarrow0$ using the lower semicontinuity of $\ch_{p_2}$. The BV case is handled analogously.
\end{idea}
I now discuss the weak-$\glimi$ for the $p$-Cheeger energies. For simplicity, I state the result only for the Total Variation (i.e.\ for $\ch_1$), but analogue results hold for any $p>1$.
\begin{theorem}\label{thm:stabtv} Let $\X_n\to\X_\infty$ be a sequence of normalized $\RCD(K,\infty)$ spaces, and $f_n\in L^1(\X_n)$, $n\in\N\cup\{\infty\}$, be such that $f_n\circ T_n\weakto f_\infty$ in $L^1(\X_\infty)$, where the $T_n$'s are as in \eqref{eq:mappeTn}.

Then $\ch_{1,\infty}(f_\infty)\leq\limi_n\ch_{1,n}(f_n)$. Also, if $\sup_n\ch_{1,n}(f_n)<\infty$ we have
\[
|\DD f|\leq\wlimi_{n\to\infty}|\DD f_n|
\]
meaning that for any $\varphi\in C_\b(\Y)$ non-negative we have $\int\varphi\,\d |\DD f_\infty|\leq\limi_n\int\varphi\,\d |\DD f_n|$.
\end{theorem}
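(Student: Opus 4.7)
Statement (i) is the $\varphi\equiv 1$ instance of (ii), so focus on (ii) assuming $C:=\sup_n|\DD f_n|(\X_n)<\infty$. The idea is to regularize by heat flow, apply the $\BV$ Bakry-Émery estimate \eqref{eq:BEBV}, pass to the limit first in $n$ and then in the regularization parameter. The key ingredients are: stability of the $L^2$-heat flow (Corollary \ref{cor:stabhagain}), Lemma \ref{le:convnorme} on convergence of norms of tensors, the $\RCD$-specific identification of $p$-weak upper gradients (Theorem \ref{thm:indp}), and the $\BV$ Bakry-Émery estimate itself.

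\textbf{Step 1 (reduction and regularization).} By truncating at level $M$, using $|\DD T_M(f)|\leq |\DD f|$ for $T_M(z):=(z\wedge M)\vee(-M)$ (which follows from \eqref{eq:dfchain} by relaxation), and, up to extracting a subsequence, upgrading weak $L^1$-convergence to strong $L^1$-convergence via a Rellich-type compactness in $\BV$ on normalized $\RCD(K,\infty)$ spaces, we may assume $\sup_n\|f_n\|_{L^\infty}<\infty$ and that $f_n\to f_\infty$ strongly in $L^2$ (by $L^1\cap L^\infty$ interpolation). For $t>0$ set $f_{n,t}:=\h_{n,t}f_n$ and $f_{\infty,t}:=\h_{\infty,t}f_\infty$. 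By \eqref{eq:linftylip} and the weak maximum principle, $\sup_n\big(\Lip(f_{n,t})+\|f_{n,t}\|_{L^\infty}\big)<\infty$, and Corollary \ref{cor:stabhagain} yields $f_{n,t}\to f_{\infty,t}$ strongly in $L^2$ with convergence of Cheeger energies $\ch_n(f_{n,t})\to\ch_\infty(f_{\infty,t})$.

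\textbf{Steps 2--3 (convergence of regularized TV measures and BE).} Since each $f_{n,t}$ is bounded Lipschitz in $W^{1,2}(\X_n)$, Theorem \ref{thm:indp} yields $f_{n,t}\in\BV(\X_n)$ with $|\DD f_{n,t}|=|\d f_{n,t}|\mm_n$ (and likewise for $f_{\infty,t}$). Lemma \ref{le:convnorme} then provides strong $L^2$-convergence $|\d f_{n,t}|\to |\d f_{\infty,t}|$ which, by the very definition of strong $L^2$-convergence in varying spaces, amounts to the weak convergence of measures $|\DD f_{n,t}|\weakto|\DD f_{\infty,t}|$ on $\Y$. Fix now $\varphi\in \Lip_b(\Y)$, $\varphi\geq 0$. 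Combining \eqref{eq:BEBV} with the self-adjointness of the heat flow on measures, $\int\varphi\,d(\h_t\mu)=\int\h_t\varphi\,d\mu$, gives
\[
\int\varphi\, d|\DD f_{n,t}|\;\leq\; e^{-Kt}\int\h_{n,t}\varphi\, d|\DD f_n|.
\]
The improved Bakry-Émery contraction \eqref{eq:BEimpr} combined with the $W_2$-contractivity \eqref{eq:contrEVI} of the heat flow (and Kantorovich duality between $W_1$ and 1-Lipschitz functions) furnishes a modulus $\omega_\varphi$, independent of $n$, with $\|\h_{n,t}\varphi-\varphi\|_{L^\infty(\X_n)}\leq\omega_\varphi(t)\to 0$ as $t\downarrow 0$. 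Passing to $\limi_n$ using the aforementioned weak convergence of measures:
\[
\int\varphi\, d|\DD f_{\infty,t}|\;\leq\; e^{-Kt}\Big(\limi_n\int\varphi\, d|\DD f_n|+C\,\omega_\varphi(t)\Big).
\]
Since $f_{\infty,t}\to f_\infty$ in $L^1(\X_\infty)$ as $t\downarrow 0$, the $L^1$-lower semicontinuity of $g\mapsto\int\varphi\,d|\DD g|$ (a diagonalization consequence of the relaxation definition of $\ch_1$) yields, letting $t\downarrow 0$, the claimed $\int\varphi\, d|\DD f_\infty|\leq\limi_n\int\varphi\, d|\DD f_n|$. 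The extension from $\varphi\in\Lip_b(\Y)$ to $\varphi\in C_b(\Y)$ non-negative is by uniform approximation, using $|\DD f_n|(\Y)\leq C$ uniformly.

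\textbf{Main obstacle.} The crux is the weak convergence \emph{of measures} $|\DD f_{n,t}|\weakto|\DD f_{\infty,t}|$ in Step 2: converting convergence of Cheeger-type scalars into convergence of the underlying measures hinges on the $\RCD$-specific identification $|\DD g|=|\d g|\mm$ for bounded Lipschitz $g$ (Theorem \ref{thm:indp}) together with the sharp convergence of pointwise norms along nice converging sequences (Lemma \ref{le:convnorme}). The uniform-in-$n$ modulus $\omega_\varphi(t)$ in Step 3 and the Rellich-type reduction in Step 1 are further technical points where the $\RCD$ assumption plays a decisive role.
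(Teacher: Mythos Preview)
Your overall strategy (truncate, heat-regularize, apply the BV Bakry--\'Emery estimate \eqref{eq:BEBV}, then let $t\downarrow 0$) is exactly the paper's. The gap is in Step~1: you invoke a ``Rellich-type compactness in $\BV$ on normalized $\RCD(K,\infty)$ spaces'' to upgrade weak to strong $L^1$-convergence, but no such result is provided in the paper (the strong compactness in Theorem~\ref{thm:convslen} is specific to $W^{1,2}$ and hinges on the identity \eqref{eq:slopech}, which has no $\BV$ analogue), and it is not clear this compactness holds without a finite upper dimension bound. This upgrade is also unnecessary: you need it only to feed \emph{strong} $L^2$-convergence into Corollary~\ref{cor:stabhagain} and then Lemma~\ref{le:convnorme}, but the paper shows that the \emph{inequality} (not equality) on norms suffices. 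Concretely, once the $f_n$'s are bounded, weak $L^1$-convergence gives weak $L^2$-convergence; by self-adjointness of $\h_t$ together with Corollary~\ref{cor:stabhagain} (applied to the test function, not to $f_n$) one gets $\h_tf_n\weakto\h_tf_\infty$ weakly in $L^2$; then the closure of the differential (item~\ref{it:cldiff}) gives $\d\h_tf_n\weakto\d\h_tf_\infty$ weakly, and item~\ref{it:lscnorm} yields $|\d\h_tf_\infty|\leq G$ $\mm_\infty$-a.e.\ for any weak $L^2$-limit $G$ of $|\d\h_tf_n|$. This inequality is all that is required to run your Step~3, so the whole strong-convergence detour (and the unjustified Rellich claim) can be removed.

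Two smaller points. Your justification of the uniform modulus $\|\h_{n,t}\varphi-\varphi\|_{L^\infty(\X_n)}\leq\omega_\varphi(t)$ via ``$W_2$-contractivity and Kantorovich duality'' boils down to a uniform-in-$(n,x)$ bound on $W_1(\h_{n,t}\delta_x,\delta_x)$, which is not immediate in $\RCD(K,\infty)$ (no Gaussian heat-kernel bound is available); the paper's sketch glosses over this same step, so it is a shared technical point rather than a defect of your approach, but your stated reason does not obviously suffice. Finally, in the reduction from unbounded to bounded functions the paper is a bit more careful than your Step~1: it lets $f_{\infty,k}$ be a weak $L^1$-limit of the truncations $f_{n,k}$ (not assumed a priori to equal the truncation of $f_\infty$) and then uses Dunford--Pettis to conclude $f_{\infty,k}\to f_\infty$ in $L^1$.
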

\begin{idea} Assume at first that the $f_n$'s are equibounded in $L^\infty$. Then they are also equibounded in $L^2$ and thus for any $t>0$ we have $\sup_n\ch_{2,n}(\h_tf_n)<C(t)<\infty$ (by \eqref{eq:aprioriheat}). Also, quite clearly from the self adjointness of the heat flow and Corollary \ref{cor:stabhagain}, we have $\h_tf_n\stackrel{L^2}\weakto\h_tf_\infty$, thus  the closure of the differential tells $\d\h_tf_n\weakto \d\h_tf_\infty$. Up to subsequence we can assume that  $|\d\h_tf_n|\stackrel{L^2}\weakto G$, so that item \ref{it:lscnorm} in Section \ref{se:defconvtens} tells that $|\d \h_tf_\infty|\leq G$ $\mm_\infty$-a.e.\ and thus
\[
\begin{split}
\int \varphi\,\d|\DD(\h_tf_\infty)|&=\int \varphi|\d\h_tf_\infty|\,\d\mm_\infty\leq\int \varphi G\,\d\mm_\infty \\&= \lim_n\int \varphi|\d\h_tf_n|\,\d\mm_n= \lim_n\int \varphi\,\d|\DD(\h_tf_n)|\stackrel{}\leq e^{-Kt}\limi_n\int \varphi\,\d|\DD f_n|.
\end{split}
\]
Letting $t\downarrow0$ we conclude by the arbitrariness of $\varphi$.

For the general case, we truncate the $f_n$'s by putting $f_{n,k}:=(-k)\vee f_k\wedge k$ and, possibly passing to a non-relabeld subsequence, we let $f_{\infty,k}$ be the weak $L^1(\X_\infty)$-limit of $f_{n,k}\circ T_n$. Since quite clearly we have $|\DD f_{n,k}|\leq |\DD f_n|$, we see that 
\[
|\DD f_{\infty,k}|\leq \wlimi_{n\to\infty}|\DD f_{n,k}|\leq \wlimi_{n\to\infty}|\DD f_n|,
\]
so that to conclude is then enough to prove that $f_{\infty,k}\to f_\infty$ in $L^1(\X_\infty)$. To see this, use Dunford-Pettis theorem to get uniform integrability of $(f_n\circ T_n)\subset L^1(\X_\infty)$: this grants that $\lim_k\sup_n\|f_n-f_{n,k}\|_{L^1}=\lim_k\sup_n\int_{\{|f_n|>k\}}|f_n|\,\d\mm_n=0$, which suffices to conclude.
\end{idea}

\subsection{$\Gamma-\liminf$ of Willmore energy functional}\label{se:willmore}
Given a smooth open set $U$ inside a Riemannian manifold, its mean curvature is defined as $-\div(\nu)$, where $\nu$ is the outer pointing unit vector. Thus given a smooth function $u$, the mean curvature of $\{u\leq t\}$ at any point of the boundary with $|\d u|\neq 0$ is 
\[
H:=-\div\Big(\frac{\nabla u}{|\nabla u|}\Big)
\]
and the Willmore energy of the level set is $\int_{\{u=t\}}|\div(\frac{\nabla u}{|\nabla u|})|^2 $. Integrating in $t$ and using coarea formula we see that the \emph{Willmore energy functional} of $u$ is given by
\[
\WW(u):=\int\Big|\div\Big(\frac{\nabla u}{|\nabla u|}\Big)\Big|^2|\nabla u|\,\d\vol.
\]
Now let $v$ be a smooth vector field with compact support and notice that, by direct computation, it holds
\begin{equation}
\label{eq:partH}
\int H\la v,\tfrac{\nabla u}{|\nabla u|}\ra |\nabla u|\,\d\vol=-\int\big(\div(v)-\nabla v\big(\tfrac{\nabla u}{|\nabla u|},\tfrac{\nabla u}{|\nabla u|}\big)\big)|\nabla u|\,\d\vol
\end{equation}
where  we recognize the tangential divergence $\div_{\rm T}(v):=\div(v)-\nabla v\big(\tfrac{\nabla u}{|\nabla u|},\tfrac{\nabla u}{|\nabla u|})$ at the right hand side. With this in mind and the trivial  duality formula $\frac12H^2=\sup_v H\la v,\tfrac{\nabla u}{|\nabla u|} \ra-\frac12|v|^2$, it is easy to establish that
\[
\WW(u)=\sup\int \big(\div_{\rm T}(v)-\frac12|v|^2\big)|\nabla u|\,\d\vol,
\] 
where the sup is taken over all smooth compactly supported vector fields. The result of the integration by parts  \eqref{eq:partH} is, as usual, that now we have a formula for $\WW(u)$ where fewer  derivatives of $u$ appear (one rather than two), so that we can use such formula to define the functional under low(er) regularity of the function and the underlying space and also obtain natural lower semicontinuity properties: 
\begin{definition}
Let $(\X,\sfd,\mm)$ be $\RCD(K,\infty)$. Then $\WW:W^{1,2}(\X)\to[0,\infty]$ is defined as
\[
\WW(u)=\sup\int \Big(\div(v)-\nabla v\big(\tfrac{\nabla u}{|\nabla u|},\tfrac{\nabla u}{|\nabla u|})-\frac12|v|^2\Big)|\nabla u|\,\d\mm,
\]
 the sup being taken among all $v$'s  of the form $v=\sum_{i=1}^kf_i\nabla g_i$ with $f_i,g_i\in\test(\X)$, $k\in\N$. Here $\frac{\nabla u}{|\nabla u|}$ is intended to be 0 on $\{|\nabla u|=0\}$.
\end{definition}
Then a quite direct consequences of the results in Section \ref{se:defconvtens} is:
\begin{theorem}
Let $u_n\stackrel{L^2}\to u_\infty$ be with $\ch_n(u_n)\to \ch_\infty(u_\infty)$. Then $\WW(u_\infty)\leq \limi_n\WW(u_n)$.
\end{theorem}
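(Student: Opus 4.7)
The strategy is the direct one suggested by the variational definition of $\WW$: fix a candidate $v_\infty = \sum_{i=1}^k f_{i,\infty}\nabla g_{i,\infty}$ with $f_{i,\infty},g_{i,\infty}\in\test(\X_\infty)$ attaining, up to an error $\delta>0$, the supremum that defines $\WW(u_\infty)$; then build a test sequence of vectors $v_n:=\sum_i f_{i,n}\nabla g_{i,n}$ such that each of the three terms in the corresponding integrand converges, so that the $v_n$-integral lower bounds $\liminf_n\WW(u_n)$ and lets us recover $\WW(u_\infty)-\delta$ in the limit.

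First I would invoke density of test sequences (item (vi) in Section \ref{se:defconvtens}) to pick test sequences $(f_{i,n})$ and $(g_{i,n})$. This gives a test sequence of vectors $v_n$, so by items (c)--(e) one has $v_n\to v_\infty$ strongly, $|v_n|\to |v_\infty|$ in $L^2$ with uniform $L^\infty$ bound (since $|v_n|\leq\sum_i\|f_{i,n}\|_{L^\infty}\Lip(g_{i,n})$), and $\div(v_n)\to\div(v_\infty)$ in $L^2$. Expanding $\nabla v_n=\sum_i\nabla f_{i,n}\otimes\nabla g_{i,n}+f_{i,n}\Hess g_{i,n}$ and combining strong convergence of the first summand with weak convergence of Hessians (item (viii)) yields $\nabla v_n\weakto\nabla v_\infty$ with $\sup_n\||\nabla v_n|_{\HS}\|_{L^2}<\infty$ (using \eqref{eq:ddf}). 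From the assumption $\ch_n(u_n)\to\ch_\infty(u_\infty)$, item (vii) gives $\d u_n\to\d u_\infty$ strongly and $|\d u_n|\to|\d u_\infty|$ in $L^2$. With these inputs the first summand $\int\div(v_n)|\nabla u_n|\,\d\mm_n$ and the third summand $\int|v_n|^2|\nabla u_n|\,\d\mm_n$ pass to the limit by the product-of-strong-convergences property (item (iv) in Section \ref{se:defconvtens}).

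The main obstacle is the middle summand $J_n:=\int\nabla v_n\bigl(\tfrac{\nabla u_n}{|\nabla u_n|},\tfrac{\nabla u_n}{|\nabla u_n|}\bigr)|\nabla u_n|\,\d\mm_n=\int\frac{\nabla v_n(\nabla u_n,\nabla u_n)}{|\nabla u_n|}\,\d\mm_n$ (zero on $\{|\nabla u_n|=0\}$), which is nonlinear in $\nabla u_n$ with an unstable normalization. To handle this I would regularize, setting
\[
J_n^\varepsilon:=\int\frac{\nabla v_n(\nabla u_n,\nabla u_n)}{\sqrt{|\nabla u_n|^2+\varepsilon}}\,\d\mm_n=\int\nabla v_n:T_n^\varepsilon\,\d\mm_n,\qquad T_n^\varepsilon:=\frac{\nabla u_n\otimes\nabla u_n}{\sqrt{|\nabla u_n|^2+\varepsilon}},
\]
and prove (a) $\sup_n|J_n-J_n^\varepsilon|\to 0$ as $\varepsilon\downarrow 0$, and (b) $J_n^\varepsilon\to J_\infty^\varepsilon$ for each fixed $\varepsilon>0$. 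For (a) I use the elementary inequality $t^2\bigl(\tfrac{1}{t}-\tfrac{1}{\sqrt{t^2+\varepsilon}}\bigr)\leq C\sqrt{\varepsilon}$ for $t>0$, bounding the integrand by $C\sqrt{\varepsilon}|\nabla v_n|_{\HS}$, whence $|J_n-J_n^\varepsilon|\leq C\sqrt{\varepsilon}\,\||\nabla v_n|_{\HS}\|_{L^2}$, uniformly small because the $\mm_n$ are normalized. For (b) I combine the weak convergence $\nabla v_n\weakto\nabla v_\infty$ with strong convergence of $T_n^\varepsilon$ as $2$-tensors and conclude via item (iv) of Section \ref{se:defconvtens}. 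Strong convergence of $T_n^\varepsilon$ reduces to multiplying the strongly convergent $\nabla u_n\otimes\nabla u_n$ by the scalar $\phi_n:=(|\nabla u_n|^2+\varepsilon)^{-1/2}$, which is bounded by $\varepsilon^{-1/2}$ and whose composition with the continuous map $t\mapsto(t+\varepsilon)^{-1/2}$ transports the $L^2$-strong convergence of $|\nabla u_n|$ to $L^2$-strong convergence of $\phi_n$; an $L^\infty$-bounded strongly convergent scalar times a strongly convergent tensor is strongly convergent, as can be checked directly from Definition \ref{def:convtens} by combining convergence of pointwise norms ($|T_n^\varepsilon|_{\HS}=|\nabla u_n|^2/\sqrt{|\nabla u_n|^2+\varepsilon}\to|T_\infty^\varepsilon|_{\HS}$ in $L^2$) with weak testing against test sequences of tensors.

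Taking first $n\to\infty$ using (b) and then $\varepsilon\downarrow 0$ using (a) gives $J_n\to J_\infty$. Putting the three terms together yields $\int\bigl(\div(v_n)-\nabla v_n(\tfrac{\nabla u_n}{|\nabla u_n|},\tfrac{\nabla u_n}{|\nabla u_n|})-\tfrac12|v_n|^2\bigr)|\nabla u_n|\,\d\mm_n\to\int\bigl(\div(v_\infty)-\nabla v_\infty(\tfrac{\nabla u_\infty}{|\nabla u_\infty|},\tfrac{\nabla u_\infty}{|\nabla u_\infty|})-\tfrac12|v_\infty|^2\bigr)|\nabla u_\infty|\,\d\mm_\infty$, and since the left-hand side is $\leq\WW(u_n)$ for each $n$, the limit is $\leq\liminf_n\WW(u_n)$. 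By the arbitrariness of $v_\infty$ and $\delta$, one concludes $\WW(u_\infty)\leq\liminf_n\WW(u_n)$.
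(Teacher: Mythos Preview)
Your approach is essentially the paper's: reduce to showing that for every test sequence of vectors $(v_n)$ the integrals
\[
\int \Big(\div(v_n)-\nabla v_n\big(\tfrac{\nabla u_n}{|\nabla u_n|},\tfrac{\nabla u_n}{|\nabla u_n|}\big)-\tfrac12|v_n|^2\Big)|\nabla u_n|\,\d\mm_n
\]
converge to the corresponding expression at $n=\infty$, and then take the supremum. The paper handles the middle (normalization) term by saying that $\tfrac{\nabla u_n}{|\nabla u_n|}\to\tfrac{\nabla u_\infty}{|\nabla u_\infty|}$ ``in a suitable locally strong sense'' away from $\{|\nabla u_\infty|=0\}$; your $\varepsilon$-regularization $T_n^\varepsilon=\tfrac{\nabla u_n\otimes\nabla u_n}{\sqrt{|\nabla u_n|^2+\varepsilon}}$ is a concrete implementation of that same idea, and the uniform bound $|J_n-J_n^\varepsilon|\leq C\sqrt{\varepsilon}\,\||\nabla v_n|_{\HS}\|_{L^2}$ is correct.

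Two small points to tighten. First, item (f) gives that the set of \emph{endpoints} of test sequences is dense in $W^{1,2}$, not that every $f_\infty\in\test(\X_\infty)$ is such an endpoint; so rather than fixing $v_\infty$ and then ``extending'' it, you should take the sup directly over endpoints of test sequences (this yields the same value by density and continuity of the integrand in $v_\infty$). Second, the sentence ``reduces to multiplying the strongly convergent $\nabla u_n\otimes\nabla u_n$'' is misleading since $\nabla u_n\otimes\nabla u_n$ need not converge strongly in $L^2$ without $L^4$-bounds on $|\nabla u_n|$; however your subsequent direct verification of strong convergence of $T_n^\varepsilon$ via $|T_n^\varepsilon|_{\HS}=\psi(|\nabla u_n|)$ with $\psi(t)=t^2(t^2+\varepsilon)^{-1/2}$ Lipschitz (uniformly in $\varepsilon$) is the right argument and does not need that claim. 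For the uniform bound $\sup_n\||\nabla v_n|_{\HS}\|_{L^2}<\infty$ the relevant estimate is \eqref{eq:l2hl2d}, not \eqref{eq:ddf}.
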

\begin{idea} It is sufficient to prove that for $(v_n)$ test we have 
\[
\int \big(\div_{n,{\rm T}}(v_n)-\frac12|v_n|^2\big)|\nabla u_n|\,\d\mm_n\to \int \big(\div_{\infty,{\rm T}}(v_\infty)-\frac12|v_\infty|^2\big)|\nabla u|\,\d\mm_\infty
\]
where $\div_{n,{\rm T}}(v_n):=\div(v_n)-\nabla v_n(\tfrac{\nabla u_n}{|\nabla u_n|},\tfrac{\nabla u_n}{|\nabla u_n|})$. 

Here $\nabla u_n$ strongly converges to $\nabla u_\infty$ and since the sets $\{|\nabla u_n|=0\}$ give no contribution to the integral, it is not hard to check that $\frac{\nabla u_n}{|\nabla u_n|}\to \frac{\nabla u_\infty}{|\nabla u_\infty|}$ in a suitable locally strong sense. Then since $\div(v_n)\to \div(v_\infty)$  and $\nabla v_n\weakto \nabla v_\infty$, the conclusion follows.
\end{idea}
The interest of this discussion would be quite limited without some evidence of the relevance of the functional $\WW$ in the setting of $\RCD$ spaces. Without entering the details, let me just mention that  the study of $\WW$ makes it possible to prove a natural sharp version of the Willmore inequality on  $\RCD(0,N)$ spaces with Euclidean volume growth.

\subsection{Bibliographical notes}
{\footnotesize
Convergence of tensors along varying spaces has been first investigated in \cite{Honda11-2} in the context of Ricci-limit spaces, taking advantage of Cheeger-Colding charts to read tensors in coordinates. A different approach has been considered later in \cite{AST17}  and then in \cite{AH16}, where tensors were studied via their actions on a suitable algebra of Lipschitz functions on the space $\Y$ where all the $\X_n$'s are embedded. Definition \ref{def:convtens}  I am proposing here is more intrinsic, but equivalent to these previous ones and the ideas for proving the stated properties of weak/strong convergence of tensors  all come from these earlier works.   Example \ref{eq:compvect} comes from \cite[Remark 6.18]{Honda14}.

Theorem \ref{thm:lscdim} is a consequence, at least under a uniform upper bound on the dimension, of Colding's celebrated volume convergence theorem in \cite{Colding97}. The proof given here follows a different set of ideas that come from \cite{Honda14}. I gave the statement for manifolds, but in fact an equivalent version, with the same proof, holds for $\RCD$ spaces once one interprets `dimension' as `dimension of the tangent module'. It is an uneasy task to link the abstract concept of `dimension of the tangent module' to more concrete geometric quantities like the dimension of tangent spaces intended as pointed mGH-limits of rescaled spaces (see \cite{GP16} and \cite{IPS22}) and to accomplish this task one needs a very good understanding of the rectifiability properties of $\RCD$ spaces (which we do have  thanks to \cite{Gigli13}, \cite{Mondino-Naber14},\cite{BS18}, \cite{MK16}, \cite{GP16-2}, \cite{BPS21}). It is worth to notice that once one knows all this about the local geometry of $\RCD$ spaces, a proof of lower semicontinuity of dimension closer in spirit to Colding's one, and based on the notion of $\delta$-splitting maps, is available (see  \cite{BPS21} and references therein).

Section \ref{se:convharm} comes from \cite{AH16} (with the exception of the equivalence \eqref{eq:equivw12u}, that comes from \cite{AmbrosioGigliSavare11-2}).

Convergence of flows in $\RCD$ setting has been studied in \cite{AST17}, where the convergence of vector fields was `strong in both space and time'. The stability under the weaker `strong in space and weak in time' convergence as stated in Theorem \ref{thm:convflow} appears here for the first time and combines ideas from \cite{AST17} and \cite{GT18}.

Theorem \ref{thm:indp} is taken from \cite{GigliHan14} (see also the more recent \cite{GiNo21}). In the setting of doubling spaces supporting a Poincar\'e inequality (such as finite dimensional $\CD$ spaces \cite{Lott-Villani07}, \cite{Sturm06II}, \cite{Rajala12}), the same result was known from \cite[Lemma 5.1]{KKST12}. Theorem \ref{thm:stabtv} was stated in the already mentioned \cite{AH16} for $L^1$-strongly converging sequences of functions; the extension to weak convergence, i.e.\ `Mosco' convergence of Total Variation rather than `$\Gamma$' convergence, appears here for the first time.

Section \ref{se:willmore} comes from \cite{GV23}.
}

\section{Some  applications to the smooth category}
\subsection{Extracting information from  \hsout{Pythagora's} the  splitting theorem}\label{se:pitagora}
A \emph{line} in a metric space $(\X,\sfd)$ is a map  $\gamma:\R\to\X$ with $\sfd(\gamma_t,\gamma_s)=|s-t|$ for every $t,s\in\R$.

The splitting theorem (\cite{Gigli13}, \cite{Gigli13over}) in the $\RCD$ category reads as:
\begin{theorem}\label{thm:splittingrcd}
Let $(\X,\sfd,\mm)$ be an $\RCD(0,N)$ space, $N<\infty$, containing a line.

Then such space is isomorphic to the metric-measure product of $\R$ and an $\RCD(0,N-1)$ space $(\X,\sfd',\mm')$ (here $\X'$ is just a point if $N-1<1$).
\end{theorem}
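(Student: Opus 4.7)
The plan is to implement, in the $\RCD$ framework, the strategy of Cheeger--Gromoll's classical splitting via Busemann functions. Given the line $\gamma:\R\to\X$, I would first define the two Busemann functions
\[
b^\pm(x):=\lim_{t\to\pm\infty}\big(\sfd(x,\gamma_{\pm t})-t\big),
\]
noting that the limits exist and that $b^\pm$ are $1$-Lipschitz. The triangle inequality forces $b^++b^-\geq 0$ everywhere, while the line condition $\sfd(\gamma_{-t},\gamma_t)=2t$ together with the triangle inequality along points close to $\gamma$ forces equality on a dense set, hence $b^++b^-\equiv 0$. The next key step, and the first genuinely non-trivial one, is to show that $b^\pm$ is \emph{harmonic} (in the sense of \eqref{eq:equivharm}): in the smooth category this follows from a Laplacian comparison $\Delta\sfd(\cdot,\gamma_t)\leq\frac{N-1}{\sfd(\cdot,\gamma_t)}$, and the analogous Laplacian bound is available on $\RCD(0,N)$ spaces as a consequence of the $\CD$ convexity applied to entropies of Wasserstein geodesics targeting a Dirac mass. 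Passing to the limit $t\to\pm\infty$ gives $\Delta b^+\leq 0$ and $\Delta b^-\leq 0$ in the weak sense, so $\Delta(b^++b^-)\leq 0$; but $b^++b^-\equiv 0$, forcing equality and hence $\Delta b^\pm=0$.

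Once $b^+$ is harmonic and $1$-Lipschitz, the aim is to prove that it is a \emph{parallel} function: $|\nabla b^+|\equiv 1$ and $\Hess b^+\equiv 0$ $\mm$-a.e.. This would be the heart of the argument, and uses the self-improved Bochner inequality \eqref{eq:BI2}. Indeed, choosing appropriate test functions $g$ in the integrated Bochner inequality
\[
\int \tfrac12|\d b^+|^2\Delta g\,\d\mm\geq \int g\big(|\Hess b^+|_{\HS}^2+\tfrac1N|\Delta b^+|^2+\la\d b^+,\d\Delta b^+\ra\big)\,\d\mm
\]
(valid on $\RCD(0,N)$ after the obvious finite-dimensional enhancement of Theorem \ref{thm:boch}), and using $\Delta b^+=0$, $|\d b^+|\leq 1$, one can run a maximum-principle type argument  to obtain  $|\d b^+|\equiv 1$ and $\Hess b^+\equiv 0$. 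The Sobolev-to-Lipschitz property (Proposition \ref{prop:sobtolip}) then guarantees that $b^+$ has a genuinely $1$-Lipschitz representative and controls its level sets.

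With $\nabla b^+$ a parallel, unit, divergence-free vector field in $W^{1,2}_C(T\X)$, I would then appeal to the theory of Regular Lagrangian Flows of Section \ref{se:flowrcd} (Theorem \ref{thm:RLF}): the flow $(F_t)$ of $\nabla b^+$ is globally defined, measure-preserving (since $\div(\nabla b^+)=\Delta b^+=0$), and satisfies $b^+\circ F_t=b^++t$ along trajectories, which are unit-speed geodesic lines because $\Hess b^+=0$ implies that speed and direction are preserved. Setting $\X':=(b^+)^{-1}(0)$ equipped with the restricted distance and the disintegrated measure, the map $(t,x')\mapsto F_t(x')$ yields a measure-preserving isometry from $\R\times\X'$ onto $(\supp(\mm),\sfd)$. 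The final step is to verify that the factor $(\X',\sfd',\mm')$ is $\RCD(0,N-1)$: this is the converse to the tensorization property mentioned in point (3) of Section \ref{se:pitagora} (see \cite{AmbrosioGigliSavare12}), namely the fact that a metric measure product is $\RCD(0,N)$ if and only if the factors are $\RCD(0,N_i)$ with $N_1+N_2\leq N$, so $\R$ being $\RCD(0,1)$ leaves $\RCD(0,N-1)$ for $\X'$.

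The main obstacle will be steps two and three — ensuring harmonicity of the Busemann function (which requires the sharp Laplacian comparison available only in the finite-dimensional refinement of the theory) and then rigorously producing the splitting map from $\Hess b^+=0$. The latter is delicate because $\Hess b^+=0$ is an $\mm$-a.e.\ statement about a tensor, while the splitting is a pointwise, metric statement; bridging the gap requires combining the Sobolev-to-Lipschitz property, the good regularity of RLFs of parallel fields, and the fact that along the flow trajectories of $\nabla b^+$ the function $b^+$ grows at speed $1$, forcing these trajectories to be lines.
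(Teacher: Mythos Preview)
The paper does not prove this theorem: it is stated as a known result with references \cite{Gigli13}, \cite{Gigli13over}, and then used to derive Corollary~\ref{cor:almostspl}. So there is no ``paper's own proof'' to compare against.

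That said, your outline is essentially the strategy of the cited works, with one logical slip and one anachronism worth flagging. The slip: you cannot conclude $b^++b^-\equiv 0$ from ``equality on a dense set'' before establishing superharmonicity---equality holds a priori only on the line $\gamma(\R)$, which is nowhere dense. The correct order is: first prove $\Delta b^\pm\leq 0$ via the Laplacian comparison, then apply the strong maximum principle to the nonnegative superharmonic function $b^++b^-$, which vanishes on $\gamma(\R)$. The anachronism: the RLF machinery of Theorem~\ref{thm:RLF} (from \cite{Ambrosio-Trevisan14}) postdates \cite{Gigli13}, so the original proof constructs the gradient flow of $b^+$ by hand rather than invoking a black-box existence result; your packaging via RLF is a legitimate modernization but not how the cited proof proceeds. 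Also, the Bochner inequality you wrote mixes the dimensional term $\tfrac1N|\Delta f|^2$ with the Hessian term $|\Hess f|_{\HS}^2$; in the actual argument one uses the dimensional Bochner \eqref{eq:bochfindim} to get $|\d b^+|\equiv 1$ (since $\Delta b^+=0$ forces $\Delta|\d b^+|^2\geq 0$, and $|\d b^+|^2\leq 1$ with equality on $\gamma$), and then the self-improved version \eqref{eq:BI2} separately to get $\Hess b^+=0$.
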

Here `isomorphic to the metric-measure product' means that we can put coordinates $(x',t)$ on $\X$ with $x'\in\X'$ and $t\in\R$ in such a way that:
\begin{itemize}
\item[a)] Pythagora's theorem holds,
\item[b)] The area of a rectangle is the product of the sizes of its sides.
\end{itemize}
More rigorously, this means that
\[
\begin{split}
\sfd\big((x',t),(y',s)\big)^2=\sfd'(x',y')^2+|s-t|^2\qquad\text{ and }\qquad \mm(E\times F)=\mm'(E)\mathcal L^1(F)
\end{split}
\]
for any $x',y'\in\X'$, $t,s\in\R$ and $E\subset\X'$, $F\subset\R$ Borel.

A direct application of Gromov's compactness principle for pointed $\RCD(K,N)$ spaces gives the following `almost splitting' statement, so called because it tells that if a space almost satisfies the assumptions of the splitting, then it almost satisfies the conclusions:
\begin{corollary}\label{cor:almostspl}
For every $\eps>0$ and $N\geq 1$ there is $\delta>0$ such that the following holds. Let $(\X,\sfd,\mm,\bar x)$ be an $\RCD(-\delta,N)$ space such that $\bar x$ is the midpoint of some geodesic of length $\geq \delta^{-1}$.

Then there is a pointed $\RCD(0,N-1)$ space $(\X',\sfd',\mm',\bar x')$ (this  reduces to a singleton if $N-1<1$) such that $\mathbb D_{pmGH}(\X,\X'\times\R)\leq \eps$.
\end{corollary}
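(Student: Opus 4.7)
The natural strategy is a compactness/contradiction argument built on top of Theorem \ref{thm:splittingrcd}. Suppose the statement fails: then there is $\eps>0$, $N\geq 1$ and a sequence $\delta_n\downarrow 0$ together with pointed $\RCD(-\delta_n,N)$ spaces $(\X_n,\sfd_n,\mm_n,\bar x_n)$ such that $\bar x_n$ is the midpoint of a geodesic $\gamma^n:[-L_n,L_n]\to \X_n$ with $L_n\geq \delta_n^{-1}\to\infty$, but $\mathbb D_{pmGH}(\X_n,\X'\times\R)>\eps$ for every pointed $\RCD(0,N-1)$ space $\X'$. The plan is to extract a pmGH-limit of the $\X_n$'s, show that this limit contains a line, invoke the splitting theorem, and derive a contradiction.

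First I would apply the pointed analogue of Gromov's precompactness theorem: the Bishop-Gromov inequality, valid on $\CD(K,N)$ spaces, gives uniform local doubling on balls centred at $\bar x_n$ for any fixed radius (the constants depending only on $N$ and the ambient curvature lower bound, which is uniformly bounded thanks to $\delta_n\to 0$). Hence, after choosing suitable normalizations of $\mm_n$ on $B_1(\bar x_n)$, there exists a non-relabelled subsequence pmGH-converging to a pointed space $(\X_\infty,\sfd_\infty,\mm_\infty,\bar x_\infty)$; by the stability of the Curvature-Dimension condition (Theorem \ref{thm:stabcd}, in its finite-dimensional pointed variant referenced in the bibliographical notes) and the stability of infinitesimal Hilbertianity under the $\RCD$ assumption, the limit is an $\RCD(0,N)$ space.

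Next I would pass to the limit on the geodesics. For each $R>0$, eventually $L_n\geq R$, so the restriction $\gamma^n\restr{[-R,R]}$ is an isometric embedding of $[-R,R]$ into $\X_n$ sending $0$ to $\bar x_n$. By Ascoli--Arzelà applied inside a realization of the pmGH-convergence (the curves being $1$-Lipschitz with values in a common compact set), a diagonal extraction yields a limit map $\gamma^\infty:\R\to\X_\infty$ with $\gamma^\infty_0=\bar x_\infty$ and $\sfd_\infty(\gamma^\infty_t,\gamma^\infty_s)=|s-t|$ for all $t,s\in\R$; that is, $\X_\infty$ contains a line. The splitting Theorem \ref{thm:splittingrcd} then forces $(\X_\infty,\sfd_\infty,\mm_\infty)\simeq (\X'_\infty,\sfd'_\infty,\mm'_\infty)\times\R$ for some $\RCD(0,N-1)$ space $\X'_\infty$ (a point if $N-1<1$). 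This contradicts $\mathbb D_{pmGH}(\X_n,\X'_\infty\times\R)>\eps$ for $n$ large and concludes the proof.

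The only delicate step is the last but one: because the statement is pointed and possibly non-compact, one has to make sure that convergence of curves, compactness of spaces and stability of $\RCD(K,N)$ are all formulated consistently in the pointed mGH framework (with appropriate rescaling of the reference measures on unit balls centred at $\bar x_n$). Once this technical bookkeeping is in place the contradiction is immediate; the heart of the argument is really Theorem \ref{thm:splittingrcd} applied to the limit.
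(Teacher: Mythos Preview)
Your argument is correct and is exactly what the paper has in mind: it presents the corollary as ``a direct application of Gromov's compactness principle for pointed $\RCD(K,N)$ spaces'' on top of Theorem \ref{thm:splittingrcd}, which is precisely the contradiction/compactness scheme you wrote out in detail.
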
 
We compare this last result to the almost splitting theorem as obtained in \cite{Cheeger-Colding96}: in the late nineties the  $\CD/\RCD$ technology was not yet available, but still Gromov's concepts of metric convergence Riemannian manifolds and his pre-compactness theorem were available. These were the grounds on which Cheeger-Colding built their theory of Ricci-limit spaces \cite{Cheeger-Colding97I} ,\cite{Cheeger-Colding97II}, \cite{Cheeger-Colding97III}, i.e.\ those spaces that could arise as limits of Riemannian manifolds with a uniform lower bound on the Ricci curvature and upper bound on the dimension. Clearly, by the stability of the $\RCD$ condition we know today that Ricci-limit spaces are a subclass of $\RCD$ ones (see below for further comments on this).

Results about Ricci-limit spaces are typically obtained by studying smooth Riemannian manifolds satisfying suitable curvature-dimension conditions and deriving estimates that are stable under Gromov-Hausdorff or measured-Gromov-Hausdorff convergence (there is little difference between these two concepts if the given measures are uniformly locally doubling, as is the case under a uniform $\CD(K,N)$ condition). A prototypical and key result in this direction obtained in \cite{Cheeger-Colding96}  (reformulated to emphasize the analogies with the above) is:
\begin{theorem}\label{thm:riccilim}  For every $\eps>0$ and $N\geq 1$ there is $\delta>0$ such that the following holds.  Let $M$ be a   Riemannian manifold with   Ricci $\geq -\delta$, dimension $\leq N$  and  a geodesic of length $\geq\delta^{-1}$. Let $p$ be the midpoint of any such geodesic and consider the pointed metric  space $(M,\sfd,p)$, where $\sfd$ is the distance induced by the metric tensor.

Then there is a pointed geodesic space $(\X',\sfd',\bar x')$ such that  $\mathbb D_{pGH}(M,\X'\times\R)\leq \eps$.
\end{theorem}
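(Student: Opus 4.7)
The plan is to prove the statement by a standard contradiction-and-compactness argument, using Theorem \ref{thm:splittingrcd} (the splitting theorem in the $\RCD$ category) as the geometric input and the mGH-stability of the $\RCD(K,N)$ condition (Theorem \ref{thm:stabcd}, in its finite-dimensional version discussed in the bibliographical notes) as the analytic input. Concretely, suppose the claim fails: then there exist $\eps>0$, $N\geq 1$ and a sequence $(M_n,\sfd_n,p_n)$ of pointed Riemannian manifolds with $\Ric \geq -1/n$, $\dim \leq N$, admitting a geodesic $\gamma^n:[-n,n]\to M_n$ parametrized by arclength with $\gamma^n_0 = p_n$, and such that $\mathbb D_{pGH}(M_n, \X'\times \R) > \eps$ for every pointed geodesic space $(\X',\sfd',\bar x')$.

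First I would equip each $M_n$ with the renormalized volume measure $\mm_n := \vol(B_1(p_n))^{-1}\vol$ and invoke Gromov's pre-compactness theorem (together with the finite-dimensional Bishop-Gromov inequality recalled in Section \ref{se:mGH}) to extract a subsequence converging in the pointed measured-Gromov-Hausdorff sense to some pointed space $(\X_\infty,\sfd_\infty,\mm_\infty,p_\infty)$. By the mGH-stability of the curvature-dimension condition (the pointed / locally finite variant of Theorem \ref{thm:stabcd}), the limit is $\RCD(0,N)$. In particular $(\X_\infty,\sfd_\infty)$ is a geodesic space.

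Next I would check that $\X_\infty$ contains a line through $p_\infty$. For any fixed $T>0$ and $n\geq T$ the restriction $\gamma^n\restr{[-T,T]}$ is a unit speed geodesic in $M_n$ based at $p_n$ with endpoints in a bounded set; by the Arzel\`a-Ascoli-type compactness available in pGH convergence, a diagonal extraction produces a limit unit speed geodesic $\gamma^\infty:\R\to\X_\infty$ with $\gamma^\infty_0 = p_\infty$, which is exactly a line since $\sfd_\infty(\gamma^\infty_t,\gamma^\infty_s) = \lim_n \sfd_n(\gamma^n_t,\gamma^n_s) = |s-t|$ for all $t,s\in\R$. Applying Theorem \ref{thm:splittingrcd} we obtain an isomorphism $(\X_\infty,\sfd_\infty,\mm_\infty) \cong (\X'\times \R, \sfd'\otimes\sfd_{\R},\mm'\otimes\mathcal L^1)$ for some $\RCD(0,N-1)$ space $(\X',\sfd',\mm')$, and (after suitably choosing the base point) $p_\infty$ corresponds to $(\bar x',0)$.

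Finally, since $M_n \to \X_\infty = \X'\times\R$ in pmGH, and pmGH convergence implies pGH convergence, we have $\mathbb D_{pGH}(M_n, \X'\times\R)\to 0$, contradicting $\mathbb D_{pGH}(M_n,\X'\times\R)>\eps$ for all $n$. The main obstacle, at least conceptually, is the splitting theorem itself (Theorem \ref{thm:splittingrcd}), which does all the geometric heavy lifting; the only genuinely delicate point in the present soft argument is to ensure that the finite-dimensional $\RCD$ stability and the pointed version of Gromov's pre-compactness theorem are applicable in the non-normalized setting, which is addressed by the standard local renormalization of the volume measure at $p_n$ and the uniform local doubling coming from Bishop-Gromov.
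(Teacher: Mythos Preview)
Your argument is correct and in fact establishes the stronger Corollary \ref{cor:almostspl}: the almost-quotient $\X'$ comes out $\RCD(0,N-1)$, not merely geodesic. But this is not the route the paper attributes to Theorem \ref{thm:riccilim}. The paper does not prove this theorem itself; it quotes it from \cite{Cheeger-Colding96}, whose proof predates the $\RCD$ theory by well over a decade and proceeds by direct quantitative estimates on a harmonic replacement of the approximate Busemann function (via the segment inequality and Abresch--Gromoll-type excess bounds), together with an explicit construction of the approximate splitting map. As the paper emphasizes in the footnote immediately after Theorem \ref{thm:riccilim}, the Cheeger--Colding argument is \emph{quantitative} --- $\delta$ depends explicitly on $\eps$ and $N$ --- whereas the compactness argument you give (which is exactly how the paper derives Corollary \ref{cor:almostspl} from Theorem \ref{thm:splittingrcd}) is only \emph{qualitative}. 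So your approach is softer, works directly in the $\RCD$ category, and yields a structured quotient as a bonus; the original approach is harder and specific to smooth manifolds, but delivers the effective bounds that are the main point of the Cheeger--Colding theorem as stated.
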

The key difference between Corollary \ref{cor:almostspl} and Theorem \ref{thm:riccilim} I want to highlight is the fact that the almost quotient space in Corollary \ref{cor:almostspl} can be chosen to be $\RCD(0,N-1)$, whereas in Theorem \ref{thm:riccilim} no geometric information, beside the geodesic condition, is given. Notice that such $\RCD$ structure is in place and non-trivial even if the space $\X$ is a smooth Riemannian manifold, so that from the study of the $\RCD$ category we derive new informations about the shape of smooth Riemannian manifolds\footnote{there is another important difference between  Corollary \ref{cor:almostspl} and Theorem \ref{thm:riccilim}, made invisible by the way I phrased the results: in  Theorem \ref{thm:riccilim} the dependance of $\delta$ on $\eps,n$ is \emph{quantitative}, whereas in Corollary \ref{cor:almostspl}, which is obtained via compactness, such dependance is only \emph{qualitative}. It is unclear if the two statements can be combined to deduce a quantitative information about the distance with an almost quotient space that is $\RCD$ (but see \cite[Theorem 4.1]{ColdMin14} for a possible approach in this direction).

Notice also that even if no information about the measure is given in Theorem \ref{thm:riccilim}, it would be easy to derive it along the lines of the proof given in \cite{Cheeger-Colding96}: the relevant `approximated Busemann function' is harmonic, and thus its gradient flow preserves the volume measure.}.

This is an instance of a more general phenomenon well known to experts in metric geometry: working in the non-smooth category gives more freedom in performing geometric constructions. For what concerns the class of $\RCD$ spaces, and in analogy with the case of Alexandrov spaces with curvature bounded from below, we know that it is stable under:
\begin{itemize}
\item[a)] Products (\cite{AmbrosioGigliSavare12}, \cite{Erbar-Kuwada-Sturm13}, \cite{AmbrosioMondinoSavare13}),
\item[b)] Factorization, i.e.\ if $\X\times\Y$ is $\RCD$ then so are $\X,\Y$ with natural bounds on curvature and dimension (the proof is easy - see \cite{Gigli13} for the case $\Y=\R$),
\item[a')] Cones and spherical suspensions (\cite{Ketterer13}),
\item[b')] Passage to cross sections of cones and spherical suspensions (\cite{Ketterer13}),
\item[c)] Metric measure submersions and, in particular, passage to the quotient under group actions (\cite{GKMS17}).
\end{itemize}
By contrast, the class of Ricci-limit spaces is only known to be stable under products. This sort of geometric stability of the $\RCD$ class helps in studying Riemannian manifolds and how they degenerate under a uniform lower Ricci bound. For instance, a typical singularity that can appear is that of a cone and studying it is ultimately reduced to that of its cross section: knowing that the latter is an $\RCD$ space rather than a more general doubling space supporting a Poincar\'e inequality is helpful, see for instance \cite[Sections 4.10 and 7.3]{ChJiNa21} for uses of spectral properties of the Laplacian and \cite[Claim 5]{ChNa15} for mean value estimates for harmonic functions. 

Concerning the relation between $\RCD$ and Ricci-limit spaces, it is natural to wonder whether the two classes coincide. This is unknown, but an example due to De Philippis-Mondino-Topping shows that there is an $\RCD(2,3)$ space that is not a non-collapsing Ricci-limit, i.e.\ it is not the limit of a sequence of 3 dimensional Riemannian manifolds with a uniform lower Ricci bound. The space $\X$ is the spherical suspension over $\R P^2$: since $\R P^2$ is smooth with curvature $\geq1$, it is clearly an $\RCD(1,2)$ space. Then the already mentioned result  \cite{Ketterer13} tells that $\X$  is $\RCD(2,3)$. From topological considerations it is not hard to see that $\X$ is not a manifold (e.g.\ if it were, `half' of it would be a manifold with $\R P^2$ as boundary, but $\R P^2$ has Euler characteristic 1, while boundaries have even characteristic). On the other hand, Simon proved in \cite{Simon12} that any non-collapsed 3 dimensional Ricci limit space must be topologically a manifold (this is achieved via suitable estimates on the Ricci flow starting from the manifolds converging to such Ricci-limit).

\subsection{Lack of quantitative $C^1$ estimates for harmonic functions}
Let $U\subset M$ be an open set in a (smooth, complete, connected, without boundary) Riemannian manifold $M$ and let $u:U\to\R$ be harmonic. It is a classical fact that $u$ is smooth and that the quantitative Lipschitz estimate
\begin{equation}
\label{eq:lipest}
\||\d u|\|_{L^\infty(B)}\leq C(K^-R^2,{\rm dim}(M))\sqrt{\int_{2B}|\d u|^2\,\d{\rm vol}}
\end{equation}
holds whenever $B=B_r(x)$ is with $r\leq R$, $2B=B_{2r}(x)\subset U$ and the Ricci curvature of $M$ is $\geq K$ uniformly. The bound \eqref{eq:lipest} can be proved noticing that the Bochner inequality for the harmonic function $u$  gives
\[
\Delta\tfrac12|\d u|^2\geq K|\d u|^2.
\]
Then a Moser iteration argument shows that the $L^\infty$ norm of $|\d u|^2$ in a given ball can be controlled with the $L^1$ norm in  a larger ball, where the constant appearing in the estimate depend on the doubling constant of the measure and constant appearing in the Sobolev inequality. Since both of these can be bounded in terms of the (scale invariant) lower Ricci bound and the upper dimension bound, \eqref{eq:lipest} follows.

\bigskip

One can wonder whether if it is possible to improve \eqref{eq:lipest} by getting quantitative $C^1$ estimates, possibly adding a `non-collapsing' assumption (i.e.\ imposing a lower bound on the volume of the unit ball with the same center of $B$\footnote{I haven't discussed at all the important concept of  non-collapsing sequence of Riemannian manifolds. Very briefly said, this comes from the already mentioned `volume convergence theorem' of Colding \cite{Colding97} (see also \cite{Cheeger-Colding97I}  and \cite{GDP17} for extensions to Ricci-limit and $\RCD$ spaces respectively). It tells the following: fix $K\in\R$, $n\in\N$ and consider the class $\B$ of unit balls in Riemannian manifolds with $\Ric\geq K$ and $\dim\leq n$. Equip $\B$ with the Gromov-Hausdorff (\emph{not} measured-Gromov-Hausdorff) distance. Then $\B\ni \X\mapsto \Hi^n(\X)$ is continuous. 

A GH-converging sequence is said to \emph{collapse} if the volume goes to 0, in which case the dimension of the limit space is $<n$, and to be \emph{non-collapsing} if it instead remains bounded away from 0, in which case the dimension of the limit is $=n$. Limit spaces of non-collapsing sequences are better behaved than arbitrary Ricci-limit spaces (in line with the general principle that whenever  equality holds in an inequality, we are in a somehow better position).

This phenomenon is evident in the theory of $\RCD(K,N)$ spaces, where there is a potential mismatch between the `analytic' upper bound $N$ on the dimension and the `geometric' dimension $n$ of the space, interpreted e.g.\ as generic dimension of the Euclidean blow-ups (\cite{BS18}). In general we have only the inequality $n\leq N$, and whenever equality occurs the space is better behaved. For instance, the reference measure $\mm$ must be a constant multiple of the $n$-dimensional Hausdorff measure, see \cite{H19}, \cite{HZ00}, \cite{BGHZ23}. This is very related to the fact that in order for the Bakry-\'Emery Ricci tensor in \eqref{eq:bern} to be bounded from below it is necessary that $N\geq n$ and if $N=n$ one needs $V$ to be constant.
}) and imposing a lower sectional instead of a lower Ricci bound. One of the  results in \cite{DPZ19} is that this is \emph{not} possible (in the statement below a \emph{modulus of continuity} is a function $\omega:(0,\infty)\to( 0,\infty)$ with $\omega(z)\downarrow0$ as $z\downarrow0$):
\begin{theorem}\label{thm:DPZ}
For any $k\in\R$,  $v\in(0,\pi)$ and modulus of continuity $\omega$ there is a 2-dimensional Riemannian manifold $M$, a unit ball $B=B_1(x)$ and an harmonic function $u:2B\to\R$ such that: the sectional curvature of $M$ is uniformly $\geq k$, the volume of $B$ is $\geq v$ and 
\begin{equation}
\label{eq:failurec1}
\sup_{B_r(x)}|\d u|-\inf_{B_r(x)}|\d u|\geq \omega(r)\qquad\forall r\ll1.
\end{equation}
\end{theorem}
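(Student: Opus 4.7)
My plan is to construct the desired surface as a warped product of revolution, within which the harmonic analysis is essentially explicit. Set $M=\R\times S^1$ with metric $g=\d t^2+f(t)^2\,\d\theta^2$, where $f:\R\to(0,\infty)$ is smooth and to be chosen. In two dimensions the sectional curvature coincides with the Gaussian curvature $K=-f''/f$, so the condition that the sectional curvature is $\geq k$ becomes the one-sided pointwise inequality $f''+kf\leq 0$. With base point $x=(0,0)$, the area of $B_1(x)$ is controlled from below by taking $f$ bounded away from zero on $[-1,1]$, which also secures $\vol(B_1(x))\geq v$. The natural harmonic function is
\begin{equation*}
u(t,\theta):=\int_0^t\frac{\d s}{f(s)},
\end{equation*}
for which $\Delta u=f^{-1}(fu_t)_t=0$ and $|\d u|(t,\theta)=1/f(t)$. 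The oscillation of $|\d u|$ on the geodesic ball $B_r(x)$ therefore equals, up to a routine comparison of geodesic and coordinate balls, the oscillation of $1/f$ on $(-r,r)\subset\R$, so the problem reduces to building a smooth positive $f$ subject to $f''+kf\leq 0$ whose reciprocal has prescribed oscillation at small scales.

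To build $f$, I would start from a baseline $f_0$ solving $f_0''+kf_0\equiv-\mu$ on $[-2,2]$ with a fixed margin $\mu>0$ and $f_0(0)$ large enough for the volume constraint. Then I add a sequence of small-amplitude, high-frequency perturbations localized near $0$: write
\begin{equation*}
f=f_0+\sum_{n\in\N}\varepsilon_n\cos(N_nt)\,\eta_n(t),
\end{equation*}
where $\eta_n$ is a smooth cutoff supported in a disjoint interval $I_n\subset(-1,1)$ accumulating at $0$, $N_n\to\infty$ rapidly, and $\varepsilon_n\to 0$. The second derivative of the perturbation is $-\varepsilon_nN_n^2\cos(N_nt)\eta_n$ plus lower-order terms, so the curvature bound is preserved provided $\varepsilon_nN_n^2\leq\mu/C$ for a fixed combinatorial constant $C$. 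On the other hand, on any subinterval of scale $1/N_n$ inside $I_n$, the function $1/f$ oscillates by an amount proportional to $\varepsilon_n$. Matching this to the prescribed $\omega(1/N_n)$ requires $\varepsilon_n\gtrsim\omega(1/N_n)$, and by using also the contributions of larger-scale bumps to provide oscillation at intermediate scales, the oscillation of $1/f$ on $(-r,r)$ will remain at least $\omega(r)$ for every $r\ll 1$.

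The step I expect to be the main obstacle is precisely the simultaneous satisfaction of the two inequalities $\varepsilon_nN_n^2\leq\mu/C$ (from the curvature bound) and $\varepsilon_n\gtrsim\omega(1/N_n)$ (from the oscillation requirement), which together impose the compatibility $N_n^2\omega(1/N_n)\leq\mu/C$. When $\omega$ decays at least like $r^2$ this is arranged by choosing $N_n\to\infty$ sufficiently fast; for more slowly decaying moduli the warped-product ansatz alone is insufficient and one must break the rotational symmetry of $M$. The resolution is to replace the pure warping by a conformal perturbation of the metric, $g=e^{2\phi(t,\theta)}(\d t^2+f_0(t)^2\,\d\theta^2)$, for which the curvature bound becomes a one-sided inequality for a nonlinear elliptic expression in $\phi$ while the Laplacian retains a flexible form; one then solves $\Delta_g u=0$ perturbatively around the baseline $u_0$, and shows that the oscillation of $|\d u|_g$ inherits the chosen perturbation amplitudes. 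Using two-dimensional perturbations one gains an extra derivative of freedom, which is what permits matching any prescribed $\omega$.

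The final verification is then routine: the constructed $f$ (or the conformal factor) is smooth and positive by absolute convergence of the defining series, the metric is genuinely Riemannian with sectional curvature $\geq k$ by the computations above, the volume lower bound holds by choice of baseline, and the oscillation lower bound at every small scale $r\ll 1$ is read off directly from the construction, matching the prescribed modulus $\omega$.
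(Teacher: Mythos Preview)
Your approach is entirely different from the paper's, and the gap you yourself flag (``the main obstacle'') is real and not cured by the conformal step.

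The paper does \emph{not} construct a single smooth $(M,u)$ directly. It argues by contradiction and through non-smooth limit spaces. The logic is: if a uniform modulus $\omega$ existed for the class in question, then by the stability of harmonic functions along an mGH-converging sequence (Theorem~\ref{thm:stabharm}) the same modulus would hold for the limit harmonic function on any Ricci-limit space. One then exhibits a specific limit space on which this fails: the boundary of a convex body in $\R^3$ with a \emph{dense} set of conical singularities (e.g.\ the epigraph of $\sum_i a_i|x-x_i|$). Such a surface is the GH-limit of smooth convex surfaces with curvature $\geq 0$ and controlled volume. The key analytic fact is that at each conical point the gradient of any harmonic function vanishes (in the sense that $\fint_{B_r}|\d u|^2\to 0$); since the conical points are dense, a continuous representative of $|\d u|$ would have to vanish identically, forcing $u$ constant. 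As non-constant harmonic functions exist (solve a Dirichlet problem), this is the contradiction.

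Your proposed conformal repair does not recover the missing range. In two dimensions the Laplacian is conformally covariant, so with $g=e^{2\phi}g_0$ the harmonic functions are unchanged and $|\d u|_g=e^{-\phi}|\d u|_{g_0}$; the curvature bound becomes the one-sided second-order inequality $\Delta_{g_0}\phi\leq -k\,e^{2\phi}$. This is exactly the same type of constraint as $f''+kf\leq 0$, and high-frequency perturbations of $\phi$ face the identical $\varepsilon N^2\lesssim 1$ barrier. Harmonic perturbations of $\phi$ would have $\Delta\phi=0$ and cost no curvature, but bounded harmonic functions on a disk have oscillation $O(r)$ at best and cannot be localized without reintroducing the second-order cost in the cutoff. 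So no ``extra derivative'' is gained. More decisively, \emph{any} direct construction on a single smooth manifold is blocked: harmonic $u$ is $C^\infty$, hence $|\d u|$ is locally Lipschitz, and the oscillation on $B_r(x)$ is at most $C_{(M,u)}\,r$. For moduli with $\omega(r)/r\to\infty$ the conclusion cannot hold on any fixed smooth $(M,u)$; this is precisely why the paper's detour through a genuinely singular limit space --- where $|\d u|$ has no continuous representative --- is the heart of the argument.
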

This means that  one cannot hope to derive a uniform estimate, in terms of $k,v$, on the modulus of continuity of $|\d u|$ and a fortiori not even on $\d u$ (say that we put Sasaki's metric on the tangent bundle).  Notice that $\pi$ is the volume of the unit ball in $\R^2$ and thus, by Bishop-Gromov, it bounds from above the volume of any unit ball in a surface with non-negative curvature: since in the above we ca take $v$ as close to $\pi$ as we want, the result also shows that there is no $\eps$-regularity statement available. In other words, the ball can be as close as we want to the Euclidean one in terms of volume (and thus  also in a suitable mGH-sense by the `almost' version of the volume-cone-to-metric-cone principle) while still admitting harmonic functions with highly oscillating differentials.

It would be hard to prove Theorem \ref{thm:DPZ} by actually exhibiting a Riemannian manifold $M$ and an harmonic function $u$. Instead, what De Philippis-Zimbron did was to argue by contradiction along the following lines:
\begin{itemize}
\item[i)] By the stability result Theorem \ref{thm:stabharm}, harmonic maps converge to harmonic maps under a uniform lower Ricci bound. Conversely, it is not hard to see that any harmonic function $u$ in a Ricci-limit space can be realized as limit in this sense (because the Lipschitz estimate \eqref{eq:lipest} holds also in Ricci limit spaces, so it can be used  to assign boundary value to the Dirichlet problem along the converging sequence of spaces).
\item[ii)] Now suppose that the conclusion of Theorem \ref{thm:DPZ} were false, i.e.\ that some uniform continuity estimate for $|\d u|$ actually exists. Then by the previous point the same estimate would be valid for the limit harmonic function $u$.
\item[iii)] To conclude it is therefore enough to exhibit a compact space $\X$ that can arise as limit of a sequence of surfaces $M_n $ with curvature $\geq k$ and ${\rm Vol}_n(B_1(x_n))\geq v$ and an harmonic function $u:B_1(x)\to\R$ for which $|\d u|$ is discontinuous (more precisely: it does not have a continuous representative).
\end{itemize}
We thus see that the original problem of proving Theorem \ref{thm:DPZ} is ultimately reduced to the problem of building a suitable non-smooth space and a `sufficiently irregular' harmonic function on it. The advantage of this is that it is conceptually (and practically) easier to build  a bad-behaved harmonic function if the underlying space is non-smooth. The example given in \cite{DPZ19} is the boundary of an open convex subset of $\R^3$ with a dense set of conical singularities, i.e.\ points such that the blow-up of the set at that point is a cone strictly contained in a halfspace. This is coupled with the following considerations:
\begin{itemize}
\item[a)] The boundary of a smooth convex open subset of $\R^3$ (with the induced intrinsic metric) is a Riemannian manifold with non-negative curvature.
\item[b)] Any open convex subset of $\R^3$ is the union of an increasing sequence of smooth open convex subsets and the corresponding boundaries converge in the mGH-sense.
\item[c)]\label{it:c} There exists an open convex subset $U$ of $\R^3$ with a dense collection of conical singularities, e.g.: the epigraph of $\R^2\ni x\mapsto f(x):=\sum_{i\in\N}a_i|x-x_i|$ for $a_i>0$ sufficiently small and $(x_i)\subset\R^2$ dense. Also, taking the $a_i$'s very small, we can make the area of the unit ball centered at $(0,f(0))$ as close  as we wish to the area of the unit ball in $\R^2$.
\item[d)]\label{it:d} If $x\in \partial U$ is a conical singularity and $u$ is harmonic and defined on a neighbourhood of $x$, then $|\d u|(x)=0$ (more precisely: $\lim_{r\downarrow0}\fint_{B_r(x)}|\d u|^2\,\d\mathcal H^2=0$). This is the technically difficult part of the proof, carried out by a careful analysis of the asymptotic properties of $u$ written in polar coordinates in a neighbourhood of $x$ (the phenomenon according to which `harmonic functions must have 0 gradient at the tips of cones' was already well known -  see for instance \cite[Example 2.14]{ChNa15}).
\end{itemize}
We thus see that if $u$ is an harmonic function defined on an open set in $\partial U$ so that $|\d u|$ is a continuous, then by items c), d) we conclude that $|\d u|\equiv 0$ and thus that $u$ is constant. Since it is easy to construct non-constant harmonic functions (just solve the Dirichlet problem for a non-constant boundary condition), the proof of Theorem \ref{thm:DPZ} is complete.

\begin{remark}{\rm Notice that this line of though is extremely classical in modern analysis. As a matter of comparison, consider for instance the following problem: for $f\in C^\infty_c(\R^d)$, can we derive a quantitative information on its modulus of continuity if we know that $\int|\d f|^2\leq1$? Much like the problem above about regularity of harmonic functions, the question arises and makes perfect sense in the smooth category, but the answer can be best found looking at the non-smooth world: if such a modulus of continuity exists, it would pass to the limit and be in place also for Sobolev functions. However, it is easy to build  discontinuous functions in $W^{1,2}(\R^d)$.
}\fr\end{remark}

\subsection{Qualitative stability in Sobolev inequality on manifolds with non-negative Ricci}

Let $d\in \N$ and $p\in(1,d)$. The classical sharp Sobolev inequality in $\R^d$ reads as
\begin{equation}
\label{eq:sobRd}
\|u\|_{L^{p^*}}\leq S_{p,d}\|\d u\|_{L^p}\qquad\forall u\in \dot W^{1,p}(\R^d)\qquad p^*:=\frac{pd}{d-p}
\end{equation}
for some $S_{p,d}>0$, where $\dot W^{1,p}(\R^d)$ is the completion of $C^\infty_c(\R^d)$ w.r.t.\ the norm $\|u\|_{\dot W^{1,p}}:=\|\d u\|_{L^p}$ (here I am using $d$ for the dimension in place of the more common $n$ to avoid possible confusion with the weak upper bound on the dimension, for which the letter $N$ is customary) . The optimal value of $S_{p,d}$ has been computed in \cite{Aubin76-2} and \cite{Talenti76} and it is also known that the only extremizers for the above inequality are the functions
\begin{equation}
\label{eq:extremsob}
U_{a,b,y_0}:=\frac{a}{(1+b|\cdot-y_0|^{\frac p{p-1}})^{\frac{d-p}{p}}}\qquad\text{for } a\in\R,\ b>0,\ y_0\in\R^d,
\end{equation}
where the arbitrary parameters $a,b,y_0$ reflect the invariance of \eqref{eq:sobRd} under multiplication by a constant, homotheties (i.e.\ $u(x)\mapsto \lambda^{d}u(\lambda x)$) and translation, respectively (see also \cite{CENaVi04} for a proof based on optimal transport techniques).

Knowing the extremizers naturally lead to the following stability question: is a function that almost saturates \eqref{eq:sobRd} close to an extremizer? The answer is affirmative and given in the following statement:
\begin{theorem}[Qualitative stability of Sobolev inequality in $\R^d$]\label{thm:stabsobrd}
Let $p,d$ as above. Then for every $\eps>0$ there is $\delta=\delta(\eps,d,p)>0$ such that
\[
\frac{\|u\|_{L^{p^*}}}{\|\d u\|_{L^p}}\geq S_{p,d}-\delta\qquad\Rightarrow\qquad \inf_{a,b,y_0}\frac{\|\d(u-U_{a,b,y_0})\|_{L^p}}{\|\d u\|_{L^p}}\leq\eps
\]
\end{theorem}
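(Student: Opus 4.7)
The natural strategy is concentration--compactness combined with contradiction. Suppose the conclusion fails: by homogeneity there exist $\eps>0$ and $(u_n) \subset \dot W^{1,p}(\R^d)$ with $\|\d u_n\|_{L^p} = 1$, $\|u_n\|_{L^{p^*}} \to S_{p,d}$, and $\inf_{a,b,y_0}\|\d(u_n - U_{a,b,y_0})\|_{L^p} > \eps$ for all $n$. For $\lambda>0$, $y\in\R^d$, the rescaling $T_{\lambda,y}u(x):=\lambda^{(d-p)/p}u(\lambda(x-y))$ preserves both $\|\d u\|_{L^p}$ and $\|u\|_{L^{p^*}}$ and permutes extremizers. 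Following Lions, fix $\theta\in(0,S_{p,d}^{p^*})$ and use the concentration function $Q_n(r):=\sup_{y}\int_{B_r(y)}|u_n|^{p^*}$ to choose $\lambda_n, y_n$ so that $\tilde u_n:=T_{\lambda_n,y_n}u_n$ satisfies
\[
\int_{B_1(0)}|\tilde u_n|^{p^*}\,dx \;=\; \sup_{y\in\R^d}\int_{B_1(y)}|\tilde u_n|^{p^*}\,dx \;=\; \theta,
\]
fixing both the scale and the location of a dominant concentration.

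Since $(\tilde u_n)$ is bounded in $\dot W^{1,p}$, Rellich--Kondrachov yields a subsequence with $\tilde u_n \weakto u_\infty$ in $\dot W^{1,p}$, $\tilde u_n \to u_\infty$ a.e.\ and in $L^{p^*}_{\mathrm{loc}}$. The normalization gives $\int_{B_1}|u_\infty|^{p^*}=\theta>0$, so $u_\infty\neq 0$. Set $r_n:=\tilde u_n-u_\infty$ and $A:=\|\d u_\infty\|_{L^p}^p\in(0,1]$. The Brezis--Lieb lemma (applied to $\tilde u_n$ in $L^{p^*}$, and, after a further extraction yielding a.e.\ convergence of gradients, to $\d \tilde u_n$ in $L^p$) gives the asymptotic orthogonality
\[
\|\tilde u_n\|_{L^{p^*}}^{p^*}=\|u_\infty\|_{L^{p^*}}^{p^*}+\|r_n\|_{L^{p^*}}^{p^*}+o(1), \qquad 1=A+\|\d r_n\|_{L^p}^p+o(1).
\]
Applying \eqref{eq:sobRd} to $u_\infty$ and to $r_n$ separately, summing, and using $\|\tilde u_n\|_{L^{p^*}}\to S_{p,d}$, one obtains
\[
S_{p,d}^{p^*} \;\leq\; S_{p,d}^{p^*}\bigl(A^{p^*/p}+(1-A)^{p^*/p}\bigr).
\]
Since $p^*/p>1$, the function $t\mapsto t^{p^*/p}+(1-t)^{p^*/p}$ is strictly convex on $[0,1]$ with value $1$ only at the endpoints; combined with $A>0$ this forces $A=1$, i.e.\ $\|\d r_n\|_{L^p}\to 0$.

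Hence $\tilde u_n\to u_\infty$ strongly in $\dot W^{1,p}$, so $u_\infty$ saturates the Sobolev inequality and by the classification of extremizers $u_\infty=U_{a,b,y_0}$ for some parameters. Undoing the rescaling produces $(\tilde a_n,\tilde b_n,\tilde y_{0,n})$ with $\|\d(u_n-U_{\tilde a_n,\tilde b_n,\tilde y_{0,n}})\|_{L^p}=\|\d(\tilde u_n-u_\infty)\|_{L^p}\to 0$, contradicting the starting assumption.

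The main obstacle is the noncompactness of $\dot W^{1,p}(\R^d)\hookrightarrow L^{p^*}(\R^d)$: a bounded sequence can a priori split into several ``bubbles'' at different scales (the profile decomposition of G\'erard--Meyer--Struwe), and the a.e.\ convergence of gradients underpinning the Brezis--Lieb splitting of $\|\d\tilde u_n\|_{L^p}^p$ is a delicate technical point that typically requires either a refined compactness argument or an explicit profile decomposition. It is precisely the asymptotic saturation $\|\tilde u_n\|_{L^{p^*}}\to S_{p,d}$, combined with the strict convexity just used, that forces exactly one surviving bubble.
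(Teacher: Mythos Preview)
Your argument and the paper's sketch are the same concentration--compactness strategy, only packaged differently. The paper runs Lions' four-case alternative (compactness / escape at infinity / dissipation / dichotomy) on the probability densities $\rho_n=|u_n|^{p^*}$, excluding dichotomy via the strict subadditivity \eqref{eq:strictconv}, and disposing of dissipation and escape by rescaling and translating the sequence. Your normalization via the L\'evy concentration function handles dissipation and escape in one stroke, and your Brezis--Lieb plus strict-convexity step (forcing $A\in\{0,1\}$) is exactly the mechanism that excludes dichotomy. The paper also remarks that in the remaining ``compact'' case a further dilation may be needed to avoid collapse to a Dirac mass; your single normalization absorbs this too.

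One technical slip worth flagging: Rellich--Kondrachov gives local compactness only in $L^q_{\rm loc}$ for $q<p^*$, \emph{not} for the critical exponent, so you cannot deduce $\int_{B_1}|u_\infty|^{p^*}=\theta$ directly from it; this failure of compactness is precisely what the whole machinery is built to overcome. Showing $u_\infty\neq 0$ from your normalization genuinely requires Lions' second concentration-compactness lemma (the one on defect measures, ensuring that $|\tilde u_n|^{p^*}$ does not lose all its mass to atoms) or an equivalent profile-decomposition argument. The paper, being a survey, also glosses over this step (``with some further work one can show\ldots''), and you already acknowledge the parallel delicate point about a.e.\ convergence of gradients, so this is a warning rather than a fatal objection.
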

In principle, one might try to prove such result by compactness: take a sequence $(u_n)\subset  \dot W^{1,p}(\R^d)$ such that $\delta_n:=\frac{\|u\|_{L^{p^*}}}{\|\d u\|_{L^p}}-S_{p,d}\downarrow0$. If for some reason $(u_n)$ converges to some limit non-zero function $u_\infty$  in $\dot W^{1,p}(\R^d)$, then it is easy to see that $u_\infty$ must realize the equality in \eqref{eq:sobRd}, and thus be one of the $U_{a,b,y_0}$, whence the conclusion follows. Unfortunately, this plan can't work this easily, precisely because of the invariances described above that create loss of compactness. 

Understanding how compactness might fail leads to the celebrated concentration compactness principle developed by Lions \cite{LionsLimCasI}, \cite{LionsLimCasII}, \cite{LionsLocComI}, \cite{LionsLocComII}. Say that we normalized the $u_n$'s so that $\|u_n\|_{L^{p^*}}=1$ for every $n$  and let $\rho_n:=|u_n|^{p^*}$ be the corresponding probability density telling `where the mass is located'. Then,  concentration-compactness tells that for some non-relabeled subsequence of $(\rho_n)$ exactly one of the following occurs:
\begin{itemize}
\item[i)] {\sc Compactness} $\rho_n$ weakly converges to some probability measure $\mu$ on $\R^d$.
\item[ii)] {\sc Escape at infinity} for some $(x_n)\subset \R^d$ with $|x_n|\to\infty$ the sequence $\rho_n(\cdot-x_n)$ weakly converges to some probability measure $\mu$ on $\R^d$.
\item[iii)] {\sc Dissipation} For any $(x_n)\subset \R^d$ and any $R>0$ we have $\lim_n\int_{B_R(x_n)}\rho_n= 0$
\item[iv)] {\sc Dichotomy} There is $\lambda\in(0,1)$, so that for every $\eps>0$ there are points $(x_n)\subset\R^d$ and $R>1$ so that
\[
\begin{split}
\lims_{n\to\infty}\Big|\int_{B_R(x_n)}\rho_n\,\d\mathcal L^d-\lambda\Big|&<\eps,\\
\lims_{n\to\infty}\Big|\int_{\R^d\setminus B_{R_n}(x_n)}\rho_n\,\d\mathcal L^d-(1-\lambda)\Big|&<\eps,\qquad\text{for some }R_n\uparrow+\infty.
\end{split}
\]
\end{itemize}
These four possibilities are in place for any sequence of probability measures on $\R^d$. Let us consider what happens in our case,  where the densities are given by $\rho_n:=|u_n|^{p^*}$ for $u_n$ as above. In this case it is easy to exclude dichotomy, because of strict subadditivity  of the ratio $\frac{\|u\|_{L^{p^*}}}{\|\d u\|_{L^p}}$: if, say, $u_1,u_2\in \dot W^{1,p}(\R^d)$ have disjoint support, then 
\begin{equation}
\label{eq:strictconv}
\frac{\|u_1+u_2\|_{L^{p^*}}}{\|\d (u_1+u_2)\|_{L^p}}<\frac{\|u_1\|_{L^{p^*}}}{\|\d u_1\|_{L^p}}+\frac{\|u_2\|_{L^{p^*}}}{\|\d u_2\|_{L^p}}
\end{equation}
and making this quantitative shows that an optimizing sequence for   \eqref{eq:sobRd} cannot be made by functions whose mass is almost split into two (or more) parts.

Dissipation can occur, but we can easily modify our given sequence $(u_n)$ into another for which it doesn't happen: put $u_{n,\lambda_n}:=c_{n} u_n(\lambda_n x)$ for $c_n$ normalizing constant and $\lambda_n\uparrow\infty$ chosen so that $\int_{B_1(0)} |u_{n,\lambda_n}|^{p^*}\geq \tfrac12$. Clearly $\frac{\|u_n\|_{L^{p^*}}}{\|\d u_n\|_{L^p}}=\frac{\|u_{n,\lambda_n}\|_{L^{p^*}}}{\|\d u_{n,\lambda_n}\|_{L^p}}$ and for $(u_{n,\lambda_n})$ dissipation does not occur.

Escape at infinity can also be easily ruled out by replacing $u_n$ with $u_n(\cdot-x_n)$.

Thus  we end up dealing with the `compactness case' only. In other words by studying how compactness might fail and relying on some strict subadditivity, we restored compactness. This is the heart of the concentrated compactness principle. Notice that reducing to such compact case  is not sufficient to conclude:  we wish  our sequence $(u_n)$ to converge to an extremal function for \eqref{eq:sobRd} but we only know that $|u_n|^{p^*}$ weakly converges to some probability measure. Such weak limit might very well be a Dirac mass (e.g.\ if $u_n=U_{a_n,b_n,0}$ for $b_n\uparrow\infty$), in which case we need `dilate' the functions by considering $u_{n,\lambda_n}:=c_{n} u_n(\lambda_n (x-x_n))$ for suitable $(x_n)\subset\R^d$ and $\lambda_n\downarrow0$. Once the correct scale has been found, with some further work one can show that the sequence truly converges to an extremizer for  \eqref{eq:sobRd}\footnote{I'm skipping few important steps of the proofs here, as in principle the limit can be any measure except a Dirac delta: it is the structure of the problem at hand that ultimately gives the desired convergence to an extremizer.}, and this must be   one of the $U_{a,b,y_0}$, whence the conclusion follows. 

The procedure just described is the archetypical application of Lions' concentration compactness principle. Before passing to the manifold case let me mention that estimate in Theorem \ref{thm:stabsobrd} has been made \emph{quantitative}, i.e.\ with an explicit dependance of $\delta$ on the data: see  \cite{BianchiEgnell91} for $p=2$ and \cite{CianchiFuscoMaggiPratelli09},  \cite{FigalliNeumayer19}, \cite{Neumayer19}, \cite{FigalliZhang22} for $p\neq 2$.

Now let us move from $\R^d$ to Riemannian manifolds. There are various Sobolev-type inequalities one might consider under various curvature bounds, here I shall focus on manifolds/spaces with non-negative Ricci curvature and Euclidean volume growth (see e.g.\ \cite{He99}, \cite{DrHe02} for more on the topic). For the latter, recall that the Bishop-Gromov inequality tells that on a $\CD(0,N)$ space $(\X,\sfd,\mm)$ the quantity $\frac{\mm(B_r( x))}{r^N}$ is non-increasing in $r$ for any $x\in\X$. Then one defines the Asymptotic Volume Ratio $\AVR(\X)$ of $\X$ as
\begin{equation}
\label{eq:defavr}
\AVR(\X):=\lim_{R\to\infty}\frac{\mm(B_R(x))}{\omega_NR^N}=\inf_{R>0}\frac{\mm(B_R(x))}{\omega_NR^N}\qquad\forall x\in \X,
\end{equation}
where $\omega_N=\frac{\pi^{ N/2}}{\int_0^\infty t^{ N/2}e^{-t}\,\d t}$ is, for $N\in\N$, the volume of the Euclidean $N$-dimensional ball. Then we say that $\X$ has Euclidean volume growth if $\AVR(\X)>0$. Several geometric inequalities on manifolds with ${\rm Ric}\geq0$ are related to the Asymptotic Volume Ratio. This is the case, for instance, of  the Sobolev inequality:
\begin{theorem}[Sharp Sobolev  inequality]\label{thm:sharpsobman} Let $M$ be a Riemannian manifold with ${\rm Ric}\geq 0$ and $\dim\leq N$. Then
\begin{equation}
\label{eq:sobvar}
\|u\|_{L^{p^*}}\leq S_{p,N}\AVR(M)^{-\frac1N} \|\d u\|_{L^p}  \qquad\forall u\in \dot W^{1,p}(M),\qquad p^*:=\tfrac{Np}{N-p}
\end{equation}
and the inequality is sharp, meaning that we can always find $(u_n)\subset \dot W^{1,p}(M)$ such that $\frac{\|u\|_{L^{p^*}}}{\|\d u\|_{L^p}}\to S_{p,N}\AVR(M)^{-\frac1N}$.
\end{theorem}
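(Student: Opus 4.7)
The plan is to prove the inequality by concentration-compactness along a maximizing sequence in $\dot W^{1,p}(M)$, reducing to the sharp Sobolev ratio on the asymptotic cone of $M$ at infinity, and then to obtain sharpness by transplanting Aubin--Talenti bubbles from that cone back to $M$. Let $S^*(M)$ denote the smallest constant for which $\|u\|_{L^{p^*}}\le S^*(M)\|\d u\|_{L^p}$ holds on $\dot W^{1,p}(M)$; the goal is $S^*(M)\le S_{p,N}\AVR(M)^{-1/N}$, with a matching lower bound for sharpness.

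First I would pick $(u_n)\subset C^\infty_c(M)$ with $\|u_n\|_{L^{p^*}}=1$ and $\|\d u_n\|_{L^p}\to 1/S^*(M)$. Select centers $x_n\in M$ and scales $r_n>0$ so that $\int_{B_{r_n}(x_n)}|u_n|^{p^*}\,d\vol=\tfrac12$, and consider the rescaled pointed spaces $(M_n,\sfd_n,\mm_n,x_n):=(M,r_n^{-1}\sfd,r_n^{-N}\vol,x_n)$ together with $v_n:=r_n^{(N-p)/p}u_n$ viewed on $M_n$; the Sobolev ratio is preserved. By Bishop--Gromov and the scaling-invariance of the $\RCD(0,N)$ condition one has pmGH precompactness, and along a subsequence $(M_n,x_n)\to(Y,\sfd_Y,\mm_Y,y_\infty)$ with $Y$ still $\RCD(0,N)$. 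Three regimes are possible: (i) $r_n$ stays bounded in $(0,\infty)$, so $Y$ is isometric to $M$ itself and a Lions-type subadditivity argument based on \eqref{eq:strictconv} either produces a nontrivial extremizer $v_\infty\in\dot W^{1,p}(M)$ realizing $S^*(M)$ or reduces to the next cases; (ii) $r_n\to 0$, in which case $Y$ is a tangent cone of the smooth manifold $M$ and is thus isometric to $\R^d$ with $d=\dim M\le N$ and Lebesgue measure; (iii) $r_n\to\infty$, in which case $Y$ is an asymptotic cone of $M$.

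In cases (i) and (ii) the bound on $S^*(M)$ follows from the classical Aubin--Talenti inequality and from $\AVR(M)\le 1$. In case (iii), the Euclidean volume growth of $M$ forces $\mm_Y(B_R(y))=\omega_N\AVR(M)R^N$ for every $R>0$ and $y\in Y$, so by the volume-cone-to-metric-cone rigidity (an iterated consequence of the splitting Theorem~\ref{thm:splittingrcd}) $Y$ is a metric cone over an $\RCD(N-2,N-1)$ cross-section $\Sigma$ with $\mathcal H^{N-1}(\Sigma)=N\omega_N\AVR(M)$. The Mosco convergence of $p$-Cheeger energies coming from Theorem~\ref{thm:convslen} coupled with Theorem~\ref{thm:indp}, together with the strong $L^{p^*}$-compactness guaranteed by the concentration choice above, produces a nontrivial limit $v_\infty\in\dot W^{1,p}(Y)$ and gives $S^*(M)\le S^*(Y)$. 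On the cone $Y$, the sharp Sobolev inequality for radial functions reduces to the one-dimensional Euclidean radial Sobolev computation integrated over $\Sigma$: inserting $\mathcal H^{N-1}(\Sigma)=N\omega_N\AVR(M)$ yields exactly the constant $S_{p,N}\AVR(M)^{-1/N}$, saturated by the radial Aubin--Talenti profile centered at the tip. For nonradial competitors a Schwarz symmetrization around the tip preserves the $L^{p^*}$-norm (by the conic measure structure) and decreases the $p$-gradient energy through coarea combined with the sharp isoperimetric inequality $\mathrm{Per}_Y(E)\ge N\omega_N^{1/N}\AVR(M)^{1/N}\mm_Y(E)^{(N-1)/N}$, which in turn follows from the cone structure and Euclidean isoperimetry applied to radial level-set rearrangements. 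Sharpness of the inequality on $M$ is then obtained by transplanting the cone extremizers back to $M$ along an asymptotic ray, using approximations as in \eqref{eq:mappeTn}.

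The hard part is the zoom-out case (iii), and specifically two items inside it: first, justifying the passage to the limit of the Sobolev quotient through the Mosco convergence of $p$-Cheeger energies along a pmGH-converging sequence of noncompact $\RCD(0,N)$ spaces, which requires careful control of tails (the concentration choice $\int_{B_{r_n}(x_n)}|u_n|^{p^*}=\tfrac12$ is precisely what prevents escape of mass in the rescaled picture); second, developing Schwarz symmetrization on the $\RCD(0,N)$ cone $Y$ with prescribed cross-section volume, which is not explicit in the excerpt but follows by combining the coarea formula on $\RCD$ spaces with the cone structure. A considerably shorter alternative shortcuts most of the above by invoking Brendle's ABP-based sharp isoperimetric inequality directly on $M$, from which the sharp Sobolev inequality follows by the classical coarea-plus-\emph{Polya--Szego} scheme; in that variant concentration-compactness is only needed for sharpness, not for the inequality itself.
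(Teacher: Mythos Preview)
The paper does not give its own proof of this theorem; it cites \cite{BalKri23} and states that ``the proof is based on the sharp isoperimetric inequality, also established there (see Theorem~\ref{thm:sharpisoavr}) that in turn is proved starting from the Bishop--Gromov inequality.'' In other words, the route is: Bishop--Gromov $\Rightarrow$ sharp isoperimetric inequality \eqref{eq:isoavr} $\Rightarrow$ sharp Sobolev inequality via coarea and P\'olya--Szeg\H{o} rearrangement. This is precisely the ``considerably shorter alternative'' you mention in your last sentence. That alternative is the actual proof; your main concentration-compactness scheme is not.

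Your concentration-compactness argument has a genuine circularity. In case~(i) you assert that obtaining a nontrivial extremizer $v_\infty\in\dot W^{1,p}(M)$ gives the bound ``from the classical Aubin--Talenti inequality and $\AVR(M)\le1$.'' But Aubin--Talenti is an inequality on $\R^N$, not on $M$: knowing that $S^*(M)$ is \emph{attained} tells you nothing about its \emph{value}. (You have also not addressed the sub-case $r_n$ bounded but $x_n\to\infty$, where $Y$ is a pointed limit at infinity of $M$, again a generic $\RCD(0,N)$ space.) More structurally, the chain $S^*(M)\le S^*(Y)$ is useless unless you can bound $S^*(Y)$ independently, and your argument for that in case~(iii) rests on Schwarz symmetrization on the cone, which in turn requires the sharp isoperimetric inequality on the cone. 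That inequality does not ``follow from Euclidean isoperimetry applied to radial level-set rearrangements''; it is a nontrivial result of the same depth as Theorem~\ref{thm:sharpisoavr}. So even in your best case you are invoking the isoperimetric inequality on an $\RCD(0,N)$ space --- at which point you should apply it directly on $M$ and skip the detour through limit spaces.

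Concentration-compactness is the right tool for the \emph{stability} statement (Theorem~\ref{thm:qualstabsob}) and for the \emph{sharpness} part (constructing almost-extremizers by pushing bubbles toward infinity along an asymptotic cone), and that is exactly how the paper deploys it. For the inequality itself, promote your final sentence to the proof.
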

Here $S_{p,N}$ is, for $N$ integer, the Euclidean constant appearing also in \eqref{eq:sobRd}. Notice that since $\AVR(M)\leq 1$ (by Bishop-Gromov monotonicity and the trivial limit $\lim_{R\downarrow0}\frac{\mm(B_R(x))}{\omega_NR^N}=1$), inequality \eqref{eq:sobvar} is worse than the analogue Euclidean one.

Theorem \ref{thm:sharpsobman}  has been proved in \cite{BalKri23} and generalized to the stable class of $\CD(0,N)$ spaces in \cite{NoVi22} (see also the earlier contributions \cite{Ledoux99}, \cite{Xia01}). The proof is based on the sharp isoperimetric inequality, also established there (see Theorem \ref{thm:sharpisoavr}) that in turn is proved starting from the Bishop-Gromov inequality. In \cite{BalKri23} is has also been proved that if $M$ is smooth, then no smooth non-negative function $u$ satisfies the equality in \eqref{eq:sobvar}, unless $M=\R^N$. 

The next result, proved in \cite{NobiliViolo22}, shows that extremizers exist in the non-smooth category (Theorems \ref{thm:extrsobman} and \ref{thm:qualstabsob} below are stated in the more tractable case $p=2$, but it seems reasonable to expect that the techniques used for proving them can be pushed to obtain similar results for general $p$). 

Below $\dot W^{1,2}(\X)$ is the completion of $\Lip_\bs(\X)$ w.r.t.\ the norm $u\mapsto\|\d u\|_{L^2}$:
\begin{theorem}[Space-Function extremizers for the Sobolev inequality]\label{thm:extrsobman}
Let $\X$ be $\RCD(0,N)$ and $u\in\dot W^{1,2}(\X)$ not identically zero. Then $u$ realizes the equality in \eqref{eq:sobvar} for $p=2$ if and only if $\X$ is a $N$-cone and $u=U_{a,b,\bar x}$, where $\bar x$ is  one of the tips of $\X$ and
\begin{equation}
\label{eq:extrem}
U_{a,b,\bar x}:=a(1+b\sfd^2(\cdot,\bar x))^{\frac{2-N}2}.
\end{equation}
\end{theorem}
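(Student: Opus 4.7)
The plan is to prove the two directions separately, with the bulk of the work in the rigidity step of the ``only if'' implication.

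For the ``if'' direction, suppose $\X$ is an $N$-cone over a cross section $(Z,\sfd_Z,\mm_Z)$ with tip $\bar x$, so that on the regular part the measure disintegrates as $r^{N-1}\,\d r\otimes\mm_Z$, the distance from $\bar x$ is $r$, and any radial function $u(r)$ satisfies $|\d u|=|u'(r)|$ $\mm$-a.e.. Since $U_{a,b,\bar x}$ is radial, both $\int|U|^{2^*}\,\d\mm$ and $\int|\d U|^2\,\d\mm$ reduce to one-dimensional integrals against $\mm_Z(Z)\,r^{N-1}\,\d r$. Using the identity $\AVR(\X)=\mm_Z(Z)/(N\omega_N)$ (which is built into the very definition of $\AVR$ for a cone), the Sobolev ratio of $U$ reduces to the classical Talenti--Aubin computation for the extremizer on $\R^N$ rescaled by $\AVR(\X)^{1/N}$, and this is precisely the right-hand side of \eqref{eq:sobvar}.

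For the ``only if'' direction, first reduce to a continuous, non-negative extremizer: replacing $u$ by $|u|$ preserves the Sobolev ratio by \eqref{eq:dfchain}, and the Euler--Lagrange equation $-\Delta u=\lambda u^{2^*-1}$ together with elliptic regularity on $\RCD$ spaces (heat-flow regularization as in \eqref{eq:linftylip} and a bootstrap) give continuity, with strict positivity via a strong maximum principle. The core of the argument is to adapt the Cordero-Erausquin--Nazaret--Villani optimal-transport proof to the $\RCD$ setting. Set $\mu_0:=u^{2^*}\mm/\|u\|_{L^{2^*}}^{2^*}$, let $\mu_1$ be the analogous probability measure built from $U_{1,1,o}$ on the \emph{model} $N$-cone of asymptotic volume ratio $\AVR(\X)$, and join them by a $W_2$-geodesic $(\mu_t)$ of bounded compression supplied by Lemma \ref{le:tapio}. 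Combining the $\CD(0,N)$ displacement convexity of the R\'enyi-type entropy $\E_N$ with a Kantorovich-duality computation in the spirit of Lemmas \ref{le:derw2}, \ref{le:derent} and \ref{le:horver} produces a chain of inequalities whose endpoints are exactly the two sides of \eqref{eq:sobvar}.

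The hypothesis that $u$ is an extremizer forces equality throughout this chain, and two rigidities follow. First, equality in the displacement convexity of $\E_N$ along $(\mu_t)$ forces the Kantorovich potential for $(\mu_0,\mu_1)$ to be, after normalization, $\tfrac12\sfd^2(\cdot,\bar x)$ for some $\bar x\in\X$, and the resulting concentration profile is compatible with Bishop--Gromov monotonicity only if $\mm(B_r(\bar x))=\AVR(\X)\,\omega_N r^N$ for every $r>0$, i.e.\ volume-cone at $\bar x$. By the volume-cone-to-metric-cone theorem for $\RCD(0,N)$ spaces (Ketterer), $(\X,\sfd,\mm)$ is then a metric-measure $N$-cone with tip $\bar x$. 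Second, equality in the pointwise Jensen-type step (the non-smooth replacement of the Monge--Amp\`ere inequality in CENaVi) forces the radial profile of $u$ to satisfy the same ODE as the Talenti extremizer, whence $u=U_{a,b,\bar x}$ for some admissible $a,b$.

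The principal obstacle I foresee is passing from the \emph{integrated} equality in the $\CD(0,N)$ displacement-convexity chain to the \emph{structural} rigidity of $\X$ as a cone and of $u$ as the Talenti profile. On smooth manifolds this step is powered by the regularity of Brenier's map and a pointwise Monge--Amp\`ere identity; in the $\RCD$ setting only integrated identities and Kantorovich potentials are available, so one must use the full strength of the Bishop--Gromov rigidity plus volume-cone-to-metric-cone to identify $\bar x$ with a genuine tip, and a careful analysis of how equality propagates along $W_2$-geodesics (together with the Euler--Lagrange PDE) to pin down the radial profile of $u$. Guaranteeing that the point $\bar x$ extracted from the transport argument is \emph{in} $\supp(\mm)$ and genuinely a conical tip, rather than an artefact of the model measure $\mu_1$, is the most delicate part and is where the assumption that $u$ is a true extremizer (not just an almost-extremizer) is essential.
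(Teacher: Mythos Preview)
The paper does not give a proof of this theorem (it is attributed to \cite{NobiliViolo22}), but it does describe the strategy: the argument is by P\'olya--Szeg\H{o} symmetrization, i.e.\ one compares $u$ with its decreasing rearrangement on a weighted half-line, using the sharp isoperimetric inequality \eqref{eq:isoavr} and its rigidity on the level sets of $u$. Your proposal is a genuinely different route, via the Cordero-Erausquin--Nazaret--Villani transport argument.

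There is, however, a real gap in your plan. You take $\mu_0=u^{2^*}\mm/\|u\|_{L^{2^*}}^{2^*}$ on $\X$ and $\mu_1$ on a \emph{different} space (the model $N$-cone), and then invoke Lemma~\ref{le:tapio} to connect them by a $W_2$-geodesic. That lemma produces geodesics between two measures on the \emph{same} $\CD$ space; there is no $W_2$-geodesic between measures living on distinct metric measure spaces, and no version of displacement convexity that compares entropies across spaces in the way you need. The CENV mechanism on $\R^N$ works precisely because both the competitor and the extremizer live on the same space, so the Brenier map is available; on an unknown $\X$ you cannot choose the target $\mu_1$ without already knowing that $\X$ is a cone and what the extremizer looks like, which is circular. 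Your final paragraph flags a difficulty, but not this one: the issue is not extracting rigidity from an integrated equality, it is that the chain of inequalities is never set up.

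The symmetrization route sidesteps this entirely: the sharp isoperimetric inequality \eqref{eq:isoavr} lets you compare $\int|\d u|^2\,\d\mm$ with the Dirichlet energy of the rearrangement $u^*$ on the one-dimensional model $(\R^+,|\cdot|,\AVR(\X)N\omega_N r^{N-1}\d r)$, where the extremizers are explicit. Equality in Sobolev forces equality in P\'olya--Szeg\H{o}, hence equality in the isoperimetric inequality for almost every superlevel set $\{u>t\}$; the rigidity case of \eqref{eq:isoavr} then makes these sets metric balls centred at a common point $\bar x$, and equality in Bishop--Gromov at $\bar x$ triggers the volume-cone-to-metric-cone theorem. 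The radial profile of $u$ is then read off from the one-dimensional problem. If you want to pursue a transport-based argument, the natural fix is to transport within $\X$ (e.g.\ against measures concentrated on balls, in the spirit of Balogh--Krist\'aly's proof of the inequality itself), but extracting the full rigidity this way is substantially harder than the symmetrization approach.
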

Notice that here, as customary, it is important to assume $\RCD$ in place of $\CD$ to get metric informations. I haven't defined the notion of $N$-cone: informally, the cone construction is purely metric and it is the one that, when performed on $S^{N-1}$ produces $\R^N$. In writing $N$-cone we also give information on how the measure on the cross section produces the one on the cone: it scales with ``(distance from the tip)$^{N-1}$" (as in the from-$S^{N-1}$-to-$\R^N$ case). Theorem \ref{thm:extrsobman} is proved by `symmetrization' \`a la P\'olya-Szeg\H{o} by comparing the original space/function with a weighted half line and a suitably defined function on it. 

Knowing the shape of the extremizers leads, as in the Euclidean setting, to the stability question, which is answered by the following result, also proved in \cite{NobiliViolo22}:
\begin{theorem}\label{thm:qualstabsob}
For every $N>2$, $\bar v>0$ and $\eps>0$ there is $\delta=\delta(\eps,N,\bar v)$ such that the following holds. Let $M$ be a smooth Riemannian manifold with ${\rm Ric}\geq0$, dimension $\leq N$ and $\AVR(M)\geq\bar v$. Assume that $u\in\dot W^{1,2}(M)$ is non-zero with $\frac{\|u\|_{L^{2^*}}}{\|\d u\|_{L^2}}\geq S_{2,N}\AVR(M)^{-\frac1N}-\delta$. 

Then there are $a,b\in\R$, $b>0$ and $\bar x\in M$ such that 
\[
\frac{\|\d(u-U_{a,b,\bar x})\|_{L^2}}{\|\d u\|_{L^2}}\leq \eps,
\]
where $U_{a,b,\bar x}$ is as in \eqref{eq:extrem}.
\end{theorem}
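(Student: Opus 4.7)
The plan is to argue by contradiction and compactness, using Mosco-convergence of Cheeger energies, the rigidity of extremizers in the $\RCD$ category given by Theorem \ref{thm:extrsobman}, and a mild amount of concentration-compactness to kill the loss of compactness due to translation and scaling invariance.

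Fix $N>2$, $\bar v >0$ and $\eps>0$, and suppose for contradiction that there are smooth manifolds $M_n$ with $\Ric\geq 0$, $\dim\leq N$, $\AVR(M_n)\geq \bar v$ and nonzero functions $u_n\in \dot W^{1,2}(M_n)$ with
\[
\frac{\|u_n\|_{L^{2^*}}}{\|\d u_n\|_{L^2}}\geq S_{2,N}\AVR(M_n)^{-1/N}-\tfrac1n
\]
but with $\|\d(u_n-U_{a,b,\bar x})\|_{L^2}\geq \eps\,\|\d u_n\|_{L^2}$ for every admissible $(a,b,\bar x)$. Normalize $\|\d u_n\|_{L^2}=1$, so that $\|u_n\|_{L^{2^*}}$ stays bounded. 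The Sobolev ratio is invariant under the rescalings $(\sfd,\mm,u)\mapsto(\lambda^{-1}\sfd,\lambda^{-N}\mm,\lambda^{(N-2)/2}u(\lambda\,\cdot))$ and under choice of base point, and the metric rescaling preserves $\RCD(0,N)$ and the value of $\AVR$. Following Lions, I would pick base points $\bar x_n\in M_n$ and scales $\lambda_n>0$ so that the probability measures $\rho_n:=|u_n|^{2^*}\mm_n/\|u_n\|_{L^{2^*}}^{2^*}$, read in the rescaled metric, place a definite fraction of mass, say $\geq \tfrac12$, on the unit ball around $\bar x_n$ but none of them concentrates at a strictly smaller scale. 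The strict subadditivity analogue of \eqref{eq:strictconv} on $\RCD(0,N)$ spaces, obtained from the sharp Sobolev inequality \eqref{eq:sobvar} applied to functions with disjoint supports, rules out dichotomy; vanishing and escape to infinity are excluded by construction.

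By Gromov's precompactness, the rescaled pointed spaces $(M_n,\lambda_n^{-1}\sfd_n,\lambda_n^{-N}\mm_n,\bar x_n)$ pmGH-subconverge to an $\RCD(0,N)$ space $(\X_\infty,\sfd_\infty,\mm_\infty,\bar x_\infty)$, and since $\AVR$ is monotone as in \eqref{eq:defavr} one checks $\AVR(\X_\infty)\geq \bar v>0$. Working in a realization $\Y$, the tightness of $\rho_n$ coming from the previous paragraph together with the uniform bound $\sup_n\|u_n\|_{L^{2^*}}<\infty$ and $\sup_n \ch_n(u_n)<\infty$ yield, by the compactness discussion in Theorem \ref{thm:convslen} and item \ref{it:wcomp} of Section \ref{se:defconvtens}, a nontrivial limit $u_\infty\in \dot W^{1,2}(\X_\infty)$ with $u_n\to u_\infty$ strongly in $L^2$ on balls and $\d u_n\weakto \d u_\infty$. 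By Mosco-convergence of Cheeger energies and the sharp Sobolev inequality \eqref{eq:sobvar} on $\X_\infty$, combined with lower semicontinuity of $L^{2^*}$-norms under weak measure convergence applied to $|u_n|^{2^*}\mm_n$, one obtains
\[
S_{2,N}\AVR(\X_\infty)^{-1/N}\|\d u_\infty\|_{L^2}\geq \|u_\infty\|_{L^{2^*}}\geq\lims_n\|u_n\|_{L^{2^*}}\geq S_{2,N}\AVR(\X_\infty)^{-1/N}\lims_n\|\d u_n\|_{L^2},
\]
so that all inequalities are equalities: $u_\infty$ saturates the Sobolev inequality on $\X_\infty$, $\|\d u_n\|_{L^2}\to\|\d u_\infty\|_{L^2}$, and therefore by item \ref{it:cldiff} of Section \ref{se:defconvtens} the convergence $\d u_n\to\d u_\infty$ is strong.

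By Theorem \ref{thm:extrsobman}, $\X_\infty$ must be an $N$-cone with tip $\bar x_\infty$ and $u_\infty=U_{a_\infty,b_\infty,\bar x_\infty}$ for some $a_\infty\in\R$, $b_\infty>0$. Choose $\bar x_n\in M_n$ with $\bar x_n\to\bar x_\infty$ in the realization, and set $\tilde U_n:=U_{a_\infty,b_\infty,\bar x_n}$ on $M_n$. Since $x\mapsto \sfd_n(\cdot,\bar x_n)$ is 1-Lipschitz on each $\X_n$ and converges pointwise (through the realization) to $\sfd_\infty(\cdot,\bar x_\infty)$, a standard cut-off and dominated convergence argument, together with the Bochner-based Lipschitz estimate of type \eqref{eq:lipest} to dominate the differentials of $\tilde U_n$, yields $\tilde U_n\to u_\infty$ strongly in $\dot W^{1,2}$ in the sense of varying spaces. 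Combining with the strong convergence $u_n\to u_\infty$ established above and the linearity of strong convergence (item \ref{it:strlin}) gives $\|\d(u_n-\tilde U_n)\|_{L^2}\to 0$, contradicting $\|\d(u_n-\tilde U_n)\|_{L^2}\geq \eps$. The main technical obstacle is the passage from weak to strong $L^{2^*}$ control: one needs that $|u_n|^{2^*}$ does not escape or split, and this is exactly what the concentration-compactness normalization together with the strict subadditivity of the Sobolev ratio guarantees; once the $L^{2^*}$-mass of $u_\infty$ matches the limit of that of $u_n$, saturation of \eqref{eq:sobvar} at the limit forces convergence of Cheeger energies, which unlocks the machinery of Section \ref{se:defconvtens}.
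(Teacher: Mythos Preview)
Your approach is essentially the paper's: argue by contradiction, rescale the \emph{spaces} via concentration--compactness (the paper treats the four Lions alternatives separately, landing in a tangent cone, a limit-at-infinity, or an asymptotic cone according to the case, whereas you unify them with a single choice of scale and basepoint), pass to an $\RCD(0,N)$ limit with $\AVR\geq\bar v$ by Gromov precompactness, invoke Theorem~\ref{thm:extrsobman} to identify the extremizer, and pull the bubble $U_{a_\infty,b_\infty,\bar x_\infty}$ back to $M_n$ to reach the contradiction. One point to tighten: the inequality $\|u_\infty\|_{L^{2^*}}\geq\lims_n\|u_n\|_{L^{2^*}}$ in your displayed chain is not ``lower semicontinuity of $L^{2^*}$-norms'' (that yields the opposite direction) but precisely the no-mass-loss conclusion you correctly name in your last paragraph as the crux --- and obtaining it genuinely requires a Lions-II-type argument adapted to the $\RCD$ setting (ruling out Dirac atoms in the weak limit of $|u_n|^{2^*}\mm_n$), which is where the actual work of \cite{NobiliViolo22} lies.
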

Notice that we cannot conclude that $M$ is $\eps$-close to a cone because the statement is invariant under scaling of the distance, while (pointed) mGH-distance is not. This last result is actually proved in the class of $\RCD(0,N)$ spaces, but in the spirit of this chapter I'm stating it for smooth manifolds.

In any case, the proof passes necessarily from the non-smooth category and goes as follows. Ideally, we would like to apply the principles of concentration-compactness as in the Euclidean case: it is not hard to check that for any sequence $(\rho_n)$ of probability measures on $M$, up to subsequences exactly one of $(i),(ii),(iii),(iv)$ holds. The problem is that the underlying space is not anymore as symmetric as $\R^d$, so that we don't have dilations/translations at disposal to get the mass `in the correct place at the correct scale'. What we can do, however, is to dilate/translate the space itself; in other words, for given $(\X,\sfd,\mm)$ we define:

\noindent \emph{Tangent cone at $\bar x\in\X$} any pointed-mGH limit of  $(\X,\lambda_n\sfd,c_n\mm,\bar x)$ for some $\lambda_n\uparrow\infty$,

\noindent\emph{Asymptotic cone} any pointed-mGH limit of  $(\X,\lambda_n\sfd,c_n\mm,\bar x)$ for some $\lambda_n\downarrow 0$,

\noindent \emph{Limit space} any pointed-mGH limit of $(\X,\sfd,c_n\mm,x_n)$ for some $(x_n)\subset\X$ with $\sfd(x_0,x_n)\to\infty$.

Here the constant $c_n$ is chosen so that the unit ball centered at the given point has mass 1. I haven't given the precise definition of pointed convergence. A non-precise one is: normalized balls centered in the given points mGH-converge. Obviously, even if our starting space was a smooth manifold, asymptotic cones and limit space can be non-smooth, and clearly if the manifold had non-negative Ricci, such cone/space is going to be $\RCD(0,N)$. Also, relevantly for what comes next, notice that the Asymptotic Volume Ratio is upper semicontinuous under pointed mGH convergence (being an infimum of volume of balls), and thus in these limit spaces the same Sobolev inequality as in the original $\RCD(0,N)$ space holds (and possibly a better one if $\AVR$ increased in the limit).

Now for the proof of Theorem \ref{thm:qualstabsob} take a sequence $(M_n)$ of smooth Riemannian manifolds with ${\rm Ric}\geq 0$, ${\rm dim}\leq N$ and $\AVR(M_n)\geq \bar v>0$. Let  $u_n\in\dot W^{1,2}(M_n)$ be with $\int|u_n|^{2^*}=1$ for every $n\in\N$ and $
S_{2,N}\AVR(M_n)^{-\frac1N}-\frac{1}{\|\d u_n\|_{L^2}}\downarrow0$: if we can prove that the $u_n$'s converge in energy to an extremizer for the Sobolev inequality in a $\RCD(0,N)$ space with $\AVR\geq \bar v$, then by Theorem \ref{thm:extrsobman} we are done. Thus let  $\rho_n:=|u_n|^{2^*}$, embed all the $M_n$'s in a common space and notice that:
\begin{itemize}
\item[-] Dichothomy can be excluded exactly as before via the strict subadditivity in \eqref{eq:strictconv}.
 
\item[-]If dissipation occurs, then we can consider the $u_n$'s along a properly chosen scalings $(M_n,\lambda_n\sfd,c_n\mm,\bar x)$ for $\lambda_n\downarrow 0$: by Gromov's (pre)compactness we can find a limit spaces $\X$ (that we can think of as an asymptotic cone for the sequence $(M_n)$) and, by $\Gamma$-convergence of the Cheeger energies, an extremizer $u$ for the Sobolev inequality in $\X$, which is exactly what we wanted. Notice that $\AVR(\X)\geq\bar v$ because $\AVR$ is upper semicontinuous w.r.t.\ pointed mGH-convergence (as it is the inf of volume of balls, and these pass to the limit) and that $\X$  is a space obtained as limit, hence will typically be non-smooth, whence the necessity of being able to deal with non-smooth geometries.

\item[-] Escaping of the mass to infinity can be handled like the dissipation. The only difference is that one looks at limit spaces rather than asymptotic cones.

\item[-] It remains the compactness case. This also can be treated along similar lines, possibly after considering a sequence of dilations - and thus up to pass to a tangent space - to find an extremizer. Here it is conceptually relevant to notice that in practice, since we know that extremizers do not exist in the smooth category outside $\R^d$, we are \emph{never} in this `compact scenario' if our starting manifold is not $\R^d$.
\end{itemize}
For the stability of other Sobolev inequalities (e.g.\ on compact manifolds) see \cite{NoVi22} and \cite{NobiliViolo22}.
\subsection{Sharp concavity of isometric profile}

The isoperimetric inequality is arguably one of the most important geometric inequalities. As this note is titled after De Giorgi and Gromov, let me mention that the former gave the first complete proof on $\R^d$ (by investigating the compactness properties of Caccioppoli's sets and lower semicontinuity of the perimeter functional, see \cite{DeGiorgiSelected} for more informations and detailed references), while the latter realized the relevance of lower Ricci curvature bounds in this matter (by generalizing an earlier work of Levy valid for convex hypersurfaces in $\R^d$, see \cite{Gromov07}) and proving what is now know as Levy-Gromov isoperimetric inequality:  if ${\rm Ric}_M\geq K>0$, then
\begin{equation}
\label{eq:LGiso}
\frac{{\rm Per}_M(E)}{{\rm vol}(M)}\geq \frac{{\rm Per}_S(B)}{{\rm vol}(S)}\qquad\forall E\subset M,
\end{equation}
where $S$ is the sphere with ${\rm dim}(S)={\rm dim}(M)$ and ${\rm Ric}_S\equiv K$ and $B\subset S$ is a ball with $\frac{{\rm vol}(B)}{{\rm vol}(S)}=\frac{{\rm vol}(E)}{{\rm vol}(M)}$. In other words, the sphere satisfies the `best normalized isoperimetric inequality' among manifolds of same dimension and bigger Ricci curvature. 

Notably, inequality \eqref{eq:LGiso} has been extended to a large class of $\CD(K,N)$ spaces, including $\RCD$ ones, in  \cite{CavMon15}. The technique used has little to do with those I presented in this manuscript, and is rather related to the so-called `needle decomposition' or `localization technique', that in some sense allows to reduce the study of some relevant geometric quantity from the original metric measure space to a suitable family of 1-dimensional metric measure spaces, where things are more tractable. For an overview on this and  detailed bibliography I refer to  the survey \cite{Cavalletti17}, here I just mention that even these tools have been useful in deriving new informations about the smooth Riemannian world, see for instance \cite{CMM19}.

\medskip

Studying the isoperimetric inequality on a space $(\X,\sfd,\mm)$ amounts in understanding the shape of the so-called isoperimetric profile function ${\sf I}_\X:\R^+\to\R^+$ defined as 
\[
{\sf I}_\X(v):=\inf\{{\rm Per}_\X(E)\ :\ E\subset\X,\ \mm(E)=v\},
\]
where here and below the perimeter ${\rm Per}_\X(E)$ of the set $E\subset\X$ is defined as   the Total Variation of the characteristic function, i.e.\ 
\begin{equation}
\label{eq:perch1}
{\rm Per}(E):=\ch_1(\nchi_E)
\end{equation}
in the notation introduced in Section \ref{se:gtv}. The result I want to present here is:
\begin{theorem}\label{thm:concI}
Let $M$ be a Riemannian manifold with ${\rm Ric}\geq K$ and ${\rm dim}\leq N$ with $\inf_{x\in M}{\rm vol}(B_1(x))>0$. Then
\begin{equation}
\label{eq:concI}
\psi''\leq-\frac{KN}{N-1}\psi^{\frac{2-N}{N}}\qquad\text{ for }\qquad\psi:=({\sf I}_M)^{\frac N{N-1}}.
\end{equation}
\end{theorem}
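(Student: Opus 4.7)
The plan is to reduce to an $\RCD(K,N)$ limit space where isoperimetric minimizers exist genuinely, prove the concavity there via a localization/second-variation argument powered by the dimensional Bochner inequality, and transfer the result back to $M$.

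First, I would leverage the non-collapsing hypothesis $\inf_{x \in M}\vol(B_1(x)) > 0$ together with the Bishop-Gromov inequality to obtain uniform local doubling, so that Gromov's precompactness theorem applies to pointed rescalings of $M$.  For a fixed $v > 0$, take a minimizing sequence $(E_n)$ with $\vol(E_n) = v$ and ${\rm Per}(E_n) \downarrow {\sf I}_M(v)$.  A concentration-compactness argument (dichotomy being excluded by strict subadditivity ${\sf I}_M(v_1+v_2) < {\sf I}_M(v_1)+{\sf I}_M(v_2)$), combined with re-centering the $E_n$ around mass concentration points $x_n \in M$, produces a pointed mGH-limit $(\X_\infty, \sfd_\infty, \mm_\infty, \bar x_\infty)$ which, by stability of $\CD(K,N)$ (Theorem \ref{thm:stabcd}) plus preservation of infinitesimal Hilbertianity in the limit, is $\RCD(K,N)$.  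By the Mosco-convergence of the total variation (Theorem \ref{thm:stabtv}) and strong $L^1$-compactness of $(\nchi_{E_n})$ along the converging spaces, any weak-$L^1$ limit $E_\infty \subset \X_\infty$ has mass $v$ and perimeter $\leq {\sf I}_M(v)$; the opposite inequality is built into ${\sf I}_M$ being an infimum and the relation ${\sf I}_M \leq {\sf I}_{\X_\infty}$ inherited from the mGH-approximation of $\X_\infty$ by $M$.  Hence $E_\infty$ is a bona fide isoperimetric minimizer in $\X_\infty$, and ${\sf I}_M(v) = {\sf I}_{\X_\infty}(v)$.

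Second, I would establish the inequality \eqref{eq:concI} for $\psi_\infty := ({\sf I}_{\X_\infty})^{N/(N-1)}$ in the limit space.  One route is via the Cavalletti-Mondino localization technique: the isoperimetric problem on $\X_\infty$ reduces to a family of one-dimensional $\CD(K,N)$ profiles on ``needles'', on each of which the desired ODE inequality is an elementary computation involving the comparison density $\sn_{K/(N-1)}^{N-1}$ solving the appropriate Jacobi equation.  A more direct route is to compute the second derivative of ${\sf I}_{\X_\infty}$ at $v$ along a volume-parametrized normal variation of $\partial E_\infty$ generated by a regularization (e.g.\ via the heat flow, using the $L^\infty$-$\Lip$ estimate \eqref{eq:linftylip}) of the signed distance function $\sfd(\cdot, \partial E_\infty)$; the dimensional Bochner inequality \eqref{eq:bochfindim} together with the Cauchy-Schwarz bound $|\Hess f|^2_{\HS} \geq (\Delta f)^2/N$ yields, after integration and parametrization by volume, exactly $\psi_\infty'' \leq -\tfrac{KN}{N-1}\psi_\infty^{(2-N)/N}$.

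Third, I would transfer the concavity to $M$.  Running the construction for every $v$ in a dense subset of admissible volumes produces a family of $\RCD(K,N)$ limit spaces $\X_\infty^{(v)}$ with ${\sf I}_M(v) = {\sf I}_{\X_\infty^{(v)}}(v)$, and the pointwise estimate \eqref{eq:concI} for each $\psi_{\X_\infty^{(v)}}$ transfers to $\psi_M$ as a viscosity (equivalently, distributional) inequality, since pointwise infima of families of viscosity supersolutions of the ODE $\psi'' + \tfrac{KN}{N-1}\psi^{(2-N)/N} \leq 0$ remain supersolutions.  The main obstacle, in my view, is the rigorous justification of the second-variation step in the non-smooth setting: one must either invoke recent sharp regularity results for isoperimetric boundaries in $\RCD(K,N)$ spaces (to work on a smooth $\mathcal H^{N-1}$-a.e.\ portion of $\partial E_\infty$ where the comparison between principal curvatures and the Laplacian of the distance function is classical), or implement the localization route, both of which sit outside the first-order calculus developed in the preceding chapters and rely on considerable additional machinery.
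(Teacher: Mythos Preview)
Your overall architecture is right --- pass to limit spaces where minimizers exist, do the second-variation estimate there, and transfer back --- but there is a genuine circularity in your concentration-compactness step. You exclude dichotomy by invoking the strict subadditivity ${\sf I}_M(v_1+v_2) < {\sf I}_M(v_1)+{\sf I}_M(v_2)$, yet this strict subadditivity is precisely a \emph{consequence} of the concavity inequality \eqref{eq:concI} you are trying to prove (see \eqref{eq:Isubadd} in the paper). A priori you do not have it, and indeed the paper is explicit that for the perimeter functional the analogue of \eqref{eq:strictconv} fails (the relevant exponents are $p=p^*=1$), so dichotomy genuinely \emph{can} occur. The correct picture, as developed in \cite{APPS22b}, is that a minimizing sequence for volume $v$ may split into finitely many pieces $E_1,\dots,E_k$ living in possibly different pointed limit $\RCD(K,N)$ spaces $\X_1,\dots,\X_k$, with $\sum_i \mm_i(E_i)=v$ and $\sum_i{\rm Per}_{\X_i}(E_i)={\sf I}_M(v)$; finiteness of $k$ is obtained not from subadditivity but from the small-volume isoperimetric bound \eqref{eq:smallvol} combined with the openness of isoperimetric sets.

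This multi-piece structure also affects your transfer step. The inequality ${\sf I}_M \leq {\sf I}_{\X_\infty}$ that you invoke need not hold (limit spaces can have strictly larger $\AVR$, hence a better profile), and in any case the equality ${\sf I}_M(v)={\sf I}_{\X_\infty^{(v)}}(v)$ at a single $v$ does not by itself produce a barrier in a \emph{neighbourhood} of $v$. What the paper does instead is vary each $E_i$ by the signed-distance flow $E_i^r$, so that $r\mapsto \sum_i{\rm Per}_{\X_i}(E_i^r)$ is an upper barrier for ${\sf I}_M$ at the moving volume $\sum_i\mm_i(E_i^r)$; the second-derivative bound comes from sharp Laplacian comparison for the signed distance (\cite{MS21}, \cite{CavMon20}), which plays the role your Bochner/localization suggestions were aiming at. Summing these one-piece estimates across $i$ then yields \eqref{eq:concI} in the barrier/viscosity sense for $\psi_M$.
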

Here the function $\psi$ is not necessarily $C^2$, thus inequality \eqref{eq:concI} should be intended in the weak sense of barrier/viscosity/distributions (these are equivalent in this setting).

Theorem \ref{thm:concI} has been proved for compact manifolds in \cite{MorJoh00}, \cite{Bay04}, \cite{NiWan16}, see also the earlier contributions \cite{BavPan86}, \cite{Gal88}. The extension to the non-compact setting requires the $\RCD$ theory and has been obtained  in \cite{APPS22b} in the more general setting of  non-collapsed $\RCD$ spaces.

To get an idea of why  \eqref{eq:concI} holds, let  $E$ be a smooth minimizer for the perimeter among sets with given volume $v$ and put $E^r:=\{x:\sfd(x,E)\leq r\}$ for $r\geq 0$ and $E^r:=\{x:\sfd(x,M\setminus E)>|r|\}$ for $r<0$. The minimality of $E$ ensures that its mean curvature $H$ is constant, and by direct computation we have
\begin{equation}
\label{eq:der1pe}
\frac{\d}{\d r}{\rm Per}(E^r)\restr{r=0}=H\,{\rm Per}(E).
\end{equation}
A further differentiation gives
\[
\frac{\d^2}{\d r^2}{\rm Per}(E^r)\restr{r=0}=\int_{\partial E}H^2-\|{\rm II}\|^2-{\rm Ric}(\nu_E,\nu_E)\,\d\mathcal H^{N-1},
\]
where ${\rm II},\nu_E$  are  the second fundamental form and the outer unit normal of $E$. Thus ${\rm Ric}\geq K$ and ${\rm dim}\leq N$ gives
\[
\frac{\d^2}{\d r^2}{\rm Per}(E^r)\restr{r=0}\leq \big(\frac{N-2}{N-1}H^2-K\big){\rm Per}(E)
\]
and since ${\rm Per}(E^r)\geq {\sf I}_M(\mathcal H^N(E^r))$ for any $r$, with equality at $r=0$, \eqref{eq:concI} follows by comparison.

By nature of the argument,  two ingredients play a key role: existence of the minimizer and the possibility of estimating the first and second derivative of ${\rm Per}(E^r)$ at $r=0$ (for this, regularity of the minimizer plays a role).

Existence of minimizers is easy to obtain if $M$ is compact, and this is why Theorem \ref{thm:concI} was first obtained on compact manifolds, but in general is false (see \cite{Rit01} and \cite{AFP21} for examples). The problem is that a minimizing sequence $(E_n)$ of sets can have part of the mass `escaping at infinity'. Thus, much like in the previous section, the authors of \cite{APPS22b}  tackled this problem via concentration-compactness (developed in the metric setting independently from \cite{NobiliViolo22}, see also \cite{AntonelliNardulliPozzetta22}, based on \cite{AFP21} and \cite{Nard14}). Let us analyze the possibilities $(i),\ldots,(iv)$ mentioned in the previous section. If we are in the `compact' situation $(i)$ we find a minimizer in our manifold, which is the best scenario. Escaping of the mass at infinity can certainly occur, in which case we end up with an isoperimetric set (by the $\glims$ inequality  \eqref{eq:glimischp} for the Total Variation) in a limit space. 

Dicothomy can also occur, because the strict concavity in \eqref{eq:strictconv} actually fails for $p=p^*=1$ (but see below), so we must be ready to deal with multiple `limit sets in limit spaces'.  Still, at least   dissipation can be excluded thanks to a non-sharp isoperimetric inequality valid for sets of small volumes in $\CD(K,N)$ spaces provided $\inf_{x\in\X}\mm(B_1(x))>0$ (see \cite{APP22} and the original argument in \cite{CouSal93} valid in smooth manifolds) that tells that for any such space $\X$ we have 
\begin{equation}
\label{eq:smallvol}
{\rm Per}(E)\geq C \mm(E)^{1-\frac1N}+o(\mm(E)^{1-\frac1N})\qquad\forall E\subset\X.
\end{equation}
This grants  that dividing the mass into too many small pieces is for sure not convenient and thus that there is at most a countable number of `limit sets in limit spaces', whose total mass is that of the sets in the original sequence. By the $\glims$ inequality  \eqref{eq:glimischp} for the Total Variation, any such limit set must be isoperimetric in its limit space. Thus by a regularity result proved in \cite{APP22} (see also \cite{APPV23}) we see that each of these limit sets is open. We can then  use this information  to conclude that in fact there is only a finite number of  `limit sets in limit spaces':  by \eqref{eq:smallvol} one pays a perimeter of at least $Cv^{1-\frac1N}$ for a set of volume $v$, but if we suitably enlarge a given open set adding volume $v$, we only add $cv$ to the perimeter, thus it is not convenient to have limit sets of arbitrary small volume.

We thus replaced the unknown existence of isoperimetric set in the given manifold with a finite number of isoperimetric sets in limit spaces. Now, if we are able to suitably bound from above the second derivative of the isoperimetric profile function in these limit spaces at the given volumes, we could add up the resulting bounds to get the desired estimate \eqref{eq:concI}.  This last step is quite technical (after all, we are speaking about taking two derivatives of a functional depending on a co-dimension one object, in a setting where neither the underlying the space nor the set considered are  smooth!). As in the outline above, one starts from an isoperimetric set $E$ and wants to estimate  the derivatives of ${\rm Per}(E^r)$: without entering in the details, let me just mention that this is achieved via a careful study of the Laplacian of the signed distance function from $E$ (see \cite{APPS22b}, inspired by \cite{CafCor95}, \cite{Petrunin03} and  based on the earlier \cite{MS21}, \cite{CavMon20}, \cite{Gigli12}).

\medskip

Theorem \ref{thm:concI} has a further application to the smooth category when put in conjunction with the following sharp isoperimetric inequality:
\begin{theorem}\label{thm:sharpisoavr}
Let $\X$ be $\CD(0,N)$. Then 
\begin{equation}
\label{eq:isoavr}
{\sf I}_\X(v)\geq N(\AVR(\X)\omega_N)^{\frac1N} \,v^{\frac{N-1}N}\quad\forall v>0
\end{equation}
and this inequality is sharp for large volumes, i.e.
\begin{equation}
\label{eq:sharpisoavr}
\lim_{v\to\infty}\ {{\sf I}_\X(v)}\,v^{-\frac{N-1}N} =N(\AVR(\X)\omega_N)^{\frac1N}.
\end{equation}
\end{theorem}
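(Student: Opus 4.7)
The plan has two largely independent pieces: establishing the inequality \eqref{eq:isoavr} by a Brunn--Minkowski argument coupled with the Euclidean volume growth, and verifying the sharpness \eqref{eq:sharpisoavr} by testing against large concentric balls.

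For the inequality I would exploit the Brunn--Minkowski inequality available on $\CD(0,N)$ spaces: for Borel sets $A_0,A_1$ of positive finite measure,
\[
\mm(M_t(A_0,A_1))^{1/N} \geq (1-t)\,\mm(A_0)^{1/N} + t\,\mm(A_1)^{1/N}, \qquad t\in[0,1],
\]
where $M_t(A_0,A_1)$ denotes the set of $t$-midpoints of geodesics joining $A_0$ to $A_1$. First I would fix a bounded set $E$ of positive measure, a point $x_0\in\X$, and put $D:=\sup_{y\in E}\sfd(y,x_0)$. Applying Brunn--Minkowski to $E$ and $B_R(x_0)$, the triangle inequality yields $M_t(E,B_R(x_0))\subseteq E^{t(D+R)}$, so reparametrizing via $s:=t(D+R)$ and letting $R\to\infty$---using the definition \eqref{eq:defavr} in the form $\mm(B_R(x_0))^{1/N}/R\to(\AVR(\X)\omega_N)^{1/N}$---gives the enlargement estimate
\[
\mm(E^s)^{1/N} \geq \mm(E)^{1/N} + s\,(\AVR(\X)\omega_N)^{1/N}, \qquad s\geq 0.
\]
Applying this bound with $E^{s_0}$ in place of $E$ (and using $(E^{s_0})^s=E^{s+s_0}$ in the length space) shows that $s\mapsto \mm(E^s)^{1/N}$ has slope at least $(\AVR(\X)\omega_N)^{1/N}$ at every $s\geq 0$. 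Raising to the $N$-th power and differentiating using the coarea identity $\mm(E^s)=\mm(E)+\int_0^s{\rm Per}(E^r)\,dr$---valid because $|{\rm D}\sfd(\cdot,E)|=1$ $\mm$-a.e.\ outside $E$ in any length space---produces
\[
{\rm Per}(E^s) \geq N\,(\AVR(\X)\omega_N)^{1/N}\,\mm(E^s)^{(N-1)/N} \qquad \text{for a.e.\ }s>0.
\]
Sending $s\to 0^+$ along a regular sequence then yields \eqref{eq:isoavr} first for open sets with negligible topological boundary, where ${\rm Per}$ and the outer Minkowski content coincide; the general case follows by a density argument in the $\BV$ topology.

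For the sharpness I would test with concentric balls $B_R(x_0)$. Writing $\mm(B_R(x_0))=g(R)\,\omega_N R^N$, the Bishop--Gromov inequality gives that $g$ is non-increasing on $(0,\infty)$ with $g(R)\downarrow \AVR(\X)$ as $R\to\infty$, hence differentiable a.e.\ with $g'\leq 0$. The coarea identity $\mm(B_R(x_0))=\int_0^R{\rm Per}(B_r(x_0))\,dr$ gives ${\rm Per}(B_R(x_0))=\tfrac{d}{dR}\mm(B_R(x_0))=\omega_N\bigl(N g(R)R^{N-1}+g'(R)R^N\bigr)$ for a.e.\ $R$. Since $g'\leq 0$, the second summand is non-positive, whence
\[
\frac{{\rm Per}(B_R(x_0))}{\mm(B_R(x_0))^{(N-1)/N}} \leq N\,(g(R)\omega_N)^{1/N} \xrightarrow[R\to\infty]{} N\,(\AVR(\X)\omega_N)^{1/N}.
\]
Combined with ${\sf I}_\X(\mm(B_R(x_0)))\leq {\rm Per}(B_R(x_0))$, the fact that $R\mapsto\mm(B_R(x_0))$ covers a co-null subset of $(0,\infty)$ continuously, and the matching lower bound just proved, this establishes \eqref{eq:sharpisoavr}.

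The main technical obstacle in the first step is the passage from the Minkowski-content bound naturally produced by Brunn--Minkowski to a bound on the BV perimeter ${\rm Per}(E)=\ch_1(\nchi_E)$, since only the inequality ${\rm Per}\leq\mathcal{M}^-$ is automatic. A cleaner route, actually followed in \cite{BalKri23,NoVi22}, is the Cavalletti--Mondino localization technique adapted to the Euclidean volume growth setting: one disintegrates $\mm$ along the transport rays of a Kantorovich potential adapted to $E$, reduces to a one-dimensional $\CD(0,N)$ isoperimetric problem on each needle (where the BV perimeter is well understood), and reassembles the bounds via the volume normalization dictated by $\AVR(\X)$.
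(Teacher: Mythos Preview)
Your proposal is correct and aligns with the paper's treatment, which does not give a full proof but cites \cite{BalKri23} for \eqref{eq:isoavr} and sketches the $\lims$ in \eqref{eq:sharpisoavr} via the spherical Bishop--Gromov monotonicity of $r\mapsto{\rm Per}(B_r(x))/r^{N-1}$---equivalent, after integrating and comparing with $\mm(B_R)$, to your differentiation of the volume ratio. Your Brunn--Minkowski enlargement argument is exactly the method of \cite{BalKri23}; the one inaccuracy is your closing attribution: \cite{BalKri23} uses Brunn--Minkowski as you outline, not localization---the latter is rather the route of \cite{CavMan22}.
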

Here the $\lims$ inequality in \eqref{eq:sharpisoavr} is a trivial consequence of the Bishop-Gromov inequality in its spherical version
\[
r\quad\mapsto\quad\frac{{\rm Per}(B_r(x))}{r^{N-1}}\quad\text{ agrees a.e.\ with a non-increasing function}
\]
(this was stated in   \cite[Theorem 2.3]{Sturm06II} with the outer Minkowski content in place of the perimeter; the version above can then be easily deduced via coarea). The non-trivial, even in the smooth category, bound is \eqref{eq:isoavr}: this has been obtained in \cite{BalKri23} under the stated assumptions (with a careful study of the Bishop-Gromov inequality), see also \cite{CavMan22},  \cite{AFM18}, \cite{APPS22} and the earlier contributions \cite{CouSal93}, \cite{Bre23}, \cite{Joh21} for results in the smooth setting.

\medskip

Using Theorems \ref{thm:concI} and \ref{thm:sharpisoavr} one can get the following interesting existence result. In the statement below we say that $\X$ splits a line if it is isomorphic to the product of the Euclidean line $\R$ and some other space $(\X',\sfd',\mm')$ as in the splitting Theorem \ref{thm:splittingrcd}.
\begin{theorem}\label{thm:ABFP22}
Let $M$ be a smooth Riemannian manifold with ${\rm Ric}\geq 0$ and strictly positive {asymptotic volume ratio}. Assume that neither $M$ nor any asymptotic cone split a line.

Then there is $V>0$ such that for any $v\geq V$ there is an isoperimetric set of volume $v$.
\end{theorem}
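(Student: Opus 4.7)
The plan is to implement the concentration-compactness strategy sketched in the manuscript, using Theorems \ref{thm:concI} and \ref{thm:sharpisoavr} as quantitative inputs and the no-line-splitting hypothesis as the rigidity ingredient that rules out mass escape. First I would fix a volume $v$ and consider a minimizing sequence $(E_n)\subset M$ with $\vol(E_n)=v$ and ${\rm Per}(E_n)\downarrow {\sf I}_M(v)$. Since $\AVR(M)>0$ together with Bishop-Gromov gives $\inf_{x\in M}\vol(B_1(x))>0$, the non-sharp small-volume isoperimetric inequality \eqref{eq:smallvol} is available and forbids dissipation; dichotomy is excluded by the strict subadditivity \eqref{eq:strictconv} of the isoperimetric ratio. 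The concentration-compactness principle from \cite{AFP21,NobiliViolo22,AntonelliNardulliPozzetta22} then produces a finite asymptotic mass decomposition
$$v=v_0+\sum_{i=1}^{k}v_i,\qquad {\sf I}_M(v)={\sf I}_M(v_0)+\sum_{i=1}^{k}{\sf I}_{X_i}(v_i),$$
where $v_0$ is realized by an isoperimetric region in $M$ (possibly of zero volume), and each $(X_i,\sfd_i,\mm_i,p_i)$ is a pointed $\RCD(0,N)$ space obtained as the pmGH-limit of $(M,\sfd,\vol,p_{n,i})$ along some sequence with $\sfd(p_0,p_{n,i})\to\infty$. Each $X_i$ inherits $\AVR(X_i)\geq \AVR(M)$ by the upper semicontinuity of $\AVR$ under pmGH-convergence (direct from \eqref{eq:defavr} as an infimum of the non-increasing quantities $\mm(B_R)/(\omega_N R^N)$).

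The decisive step is to upgrade this to the \emph{strict} inequality $\AVR(X_i)>\AVR(M)$ for every $i\geq 1$ under the standing hypothesis. Suppose by contradiction $\AVR(X_i)=\AVR(M)$ for some $i$; then Bishop-Gromov is asymptotically saturated along $p_{n,i}$, and the almost-volume-cone rigidity (a consequence of Colding's volume convergence together with the `volume cone implies metric cone' theorem, available in the $\RCD$ category) forces $X_i$ itself to be a metric cone isomorphic to an asymptotic cone $C$ of $M$. But every pmGH-limit at infinity of a non-negatively Ricci-curved space that already arises as an asymptotic cone splits a line: the base point $p_i$ is the limit of points $p_{n,i}$ at distance diverging to $\infty$ inside $M$, and rescaling allows to detect a full Busemann function on $X_i$ with sublinear oscillation, which by the almost-splitting theorem and Theorem \ref{thm:splittingrcd} forces either $M$ itself or an asymptotic cone of $M$ to split a line, contradicting the hypothesis. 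Hence $\AVR(X_i)>\AVR(M)$ strictly for all $i\geq 1$, and writing $c_M:=N(\AVR(M)\omega_N)^{1/N}$, $c_i:=N(\AVR(X_i)\omega_N)^{1/N}$ we get $c_i>c_M$ and, by Theorem \ref{thm:sharpisoavr} applied in each $X_i$,
$${\sf I}_M(v)\ \geq\ c_M\, v_0^{\frac{N-1}{N}}+\sum_{i\geq 1}c_i\, v_i^{\frac{N-1}{N}}\ \geq\ c_M\,v^{\frac{N-1}{N}}+\sum_{i\geq 1}(c_i-c_M)\,v_i^{\frac{N-1}{N}},$$
where the last step uses the subadditivity $(a+b)^{(N-1)/N}\leq a^{(N-1)/N}+b^{(N-1)/N}$.

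On the other hand, the sharp asymptotics \eqref{eq:sharpisoavr} give ${\sf I}_M(v)=c_M v^{(N-1)/N}+o(v^{(N-1)/N})$, and, crucially, by constructing explicit competitors (e.g., smoothed metric balls of large radius, to which Theorem \ref{thm:concI} is applied to control the concavity defect of $\psi={\sf I}_M^{N/(N-1)}$ from above) one shows that in fact ${\sf I}_M(v)-c_Mv^{(N-1)/N}=o(v^{(N-1)/N})$ from above as well. Comparing with the displayed lower bound yields $\sum_{i\geq 1}(c_i-c_M)v_i^{(N-1)/N}=o(v^{(N-1)/N})$, and a bootstrap based on concavity of $\psi$ (which forces any escaping mass to be uniformly bounded) together with the strict gap $c_i-c_M>0$ forces $v_i=0$ for all $i\geq 1$ once $v\geq V$ for some threshold $V>0$. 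Hence $v_0=v$ and the full mass is attained by an isoperimetric region inside $M$. The main obstacle is the rigidity step in paragraph two: turning the soft upper semicontinuity of $\AVR$ into a strict inequality under the no-line-splitting assumption requires a delicate argument connecting pmGH-limits at infinity with asymptotic cones, and is the reason the hypothesis is phrased on asymptotic cones rather than on arbitrary limits of $M$.
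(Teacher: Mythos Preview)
Your strategy is along the right lines, but there are two issues, one fixable and one essential. The fixable one: strict subadditivity of the isoperimetric profile does not come from \eqref{eq:strictconv} (which, as the paper notes, fails for $p=p^*=1$) but from Theorem \ref{thm:concI}: concavity of $\psi={\sf I}_M^{N/(N-1)}$ for $K=0$ together with ${\sf I}_M(0^+)=0$ gives \eqref{eq:Isubadd}. Used properly, this rules out dichotomy entirely and reduces the concentration-compactness alternative to just two cases: either the minimizing sequence converges in $M$, or \emph{all} the mass escapes to a \emph{single} limit space $\X$. Your multi-piece decomposition is then unnecessary (and inconsistent with your own claim that dichotomy is excluded).

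The essential gap is in the rigidity step. You only argue that $\AVR(X_i)>\AVR(M)$ strictly for each escaping piece, but what is needed is a \emph{uniform} bound $\AVR(\X)\geq \AVR(M)+\eps$ for \emph{all} limit spaces of $M$: without uniformity the gap $c_i-c_M$ may vanish as $v\to\infty$ and your final comparison (and the unspecified ``bootstrap'') does not close. Moreover, your justification of strict inequality is flawed: equality $\AVR(X_i)=\AVR(M)$ does \emph{not} force $X_i$ to be a metric cone, since the density at the base point $p_i$ need not equal $\AVR(X_i)$. The paper's argument for the uniform gap \eqref{eq:avrlimiti} is different and is the heart of the proof: if limit spaces $\X_i$ had $\AVR(\X_i)\to\AVR(M)$, then by compactness of the family of limit spaces and a suitable blow-down one diagonalizes to an asymptotic cone $(\Y,\bar y)$ of $M$ together with a second point $\bar y'\neq\bar y$ at which the volume ratio is constant; volume-cone-to-metric-cone makes $\Y$ a cone also over $\bar y'$, so the line through $\bar y$ and $\bar y'$ lies in $\Y$ and by splitting $\Y$ factors a line, contradicting the hypothesis. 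With this uniform $\eps$ the contradiction with \eqref{eq:sharpisoavr} is immediate: if the mass escaped, ${\sf I}_M(v)/v^{(N-1)/N}\geq N((\AVR(M)+\eps)\omega_N)^{1/N}$ for all such $v$, impossible for $v$ large.
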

This theorem  actually holds on non-collapsed $\RCD(0,N)$ spaces (see  \cite[Theorem 1.2]{APPS22}), but in the spirit of this chapter I'm stating it in the smooth case. The proof goes along the following lines:
\begin{itemize}
\item The key role of Theorem \ref{thm:concI}  is to deduce that the isoperimetric profile ${\sf I}_M$ is strictly subadditive, i.e.\ 
 \begin{equation}
\label{eq:Isubadd}
{\sf I}_M(v_1+v_2)<{\sf I}_M(v_1)+{\sf I}_M(v_2)\quad\forall v_1,v_2>0.
\end{equation}
Indeed,  since ${\sf I}_M(v)\to 0$ as $v\to 0$, the above  follow from the strict concavity of  ${\sf I}_M$ that in turn is a consequence of \eqref{eq:concI} with $K=0$.
\item Such strict subadditivity and the concentration-compactness principle already described ensure that there is no dichothomy for a minimizing sequence $(E_n)$ for the perimeter: either it converges in $L^1$ to a set, which therefore will be isoperimetric, or they converge to some limit set in a limit space $\X$ of $M$. Thus if we exclude this latter possibility we are done.
\item Here is where the assumptions about lines comes into play. Notice that possibly  factoring out lines in $M$, we can always reduce to the case in which $M$ does not split any line. The assumption that the same holds for asymptotic cones is used to deduce that 
\begin{equation}
\label{eq:avrlimiti}
\text{there is $\eps>0$ such that any limit $\X$ of $M$ at infinity satisfies $\AVR(\X)\geq\AVR(M)+\eps$.}
\end{equation}
Indeed, we have already noticed that $\AVR$ is upper semicontinuous w.r.t.\ pointed mGH convergence (because it is an $\inf$ of normalized volume of balls), whence   $\AVR(\X)\geq\AVR(M)$ for any limit space $\X$ of $M$. Also,  by Gromov's (pre)compactness the collection of limit spaces is compact. Now assume that $\X_i$  are limit spaces with $\AVR(\X_i)\leq\AVR(M)+\tfrac1i$: by diagonalization and a suitable blow-down we can find  an asymptotic cone $(\Y,\sfd_\Y,\mm_\Y,\bar y)$ and $\bar y'\neq\bar y$ with $\frac{\mm_\Y(B_r(\bar y'))}{\mm_\Y(B_R(\bar y'))}=\big(\tfrac rR\big)^N$ for any $r,R>0$. By the `volume-cone-to-metric-cone' principle, $\Y$ is also a cone over $\bar y'$, and thus contains a line (the one passing through $\bar y$ and $\bar y'$) and therefore, by the splitting theorem, it splits a line. This is the desired contradiction.
 
 Notice that if $M$ has non-negative sectional curvature, then by Toponogov's theorem (see e.g.\ \cite[Theorem 12.2.2]{Petersen16}) it admits a unique asymptotic cone and it splits a line iff $M$ does, so in this case (after taking out the maximal Euclidean factor from $M$) there is no need of  any assumption about lines on $M$ to deduce \eqref{eq:avrlimiti}  (and Theorem  \ref{thm:ABFP22} is new and relevant also in this case, see \cite[Corollary 1.3]{ABFP22}).

\item Thus let $(E_n)$ be a minimizing sequence on $M$ for the perimeter among sets with volume $v>0$. Assume that it converges to some set $E\subset\X$ with $\X$ limit space of $M$.  Then by lower semicontinuity of the perimeter along a sequence of $\RCD$ spaces (Theorem \ref{thm:stabtv}) we get
\[
\frac{{\sf I}_M(v)}{v^{\frac{N-1}N}}=\lim_{n\to\infty}\frac{{\rm Per}_M(E_n)}{v^{\frac{N-1}N}}\geq \frac{{\rm Per}_\X(E)}{v^{\frac{N-1}N}}\stackrel{\eqref{eq:isoavr}}\geq N(\AVR(\X)\omega_N)^{\frac1N}\stackrel{\eqref{eq:avrlimiti}}\geq N((\AVR(M)+\eps)\omega_N)^{\frac1N}.
\]
This, however, cannot be true for $v\gg1$ by \eqref{eq:sharpisoavr}.
\end{itemize}

For much more on this topic and  detailed references I refer to the recent survey \cite{PozzettaSurvey}.

\subsection{A version of the Bonnet-Myers estimate for ${\rm Ric}> 0$}\label{se:bmricci0}
The classical Bonnet-Myers estimate tells that if a manifold $M$ has Ricci $\geq K>0$ and dimension $\leq N$, then its diameter is bounded above by $\pi\sqrt{\tfrac{N-1}K}$.  In particular any such manifold is compact.

Clearly, replacing the assumption  $\geq K>0$ with $>0$ cannot lead to any diameter bound and the simple example of a paraboloid shows that compactness might fail as well. Still, in the example of the paraboloid the Ricci curvature in the radial direction goes to 0 faster than the one in the tangential direction. One might therefore wonder if a pinching condition on the Ricci curvature forces compactness, as conjectured by Hamilton (see also \cite{Lott19}); recall that one says that the Ricci curvature of $(M,g)$ is pinched if it is non-negative and there is $c>0$ such that at any point its  highest eigenvalue is bounded above by $c$ times the lowest one. Equivalently, if for some $c>0$ we have
\begin{equation}
\label{eq:pinch}
{\rm Ric}\geq c\,{\rm Scal}\,g.
\end{equation}
We then have:
\begin{theorem}\label{thm:Riccipinch}
Let $M$ be a three dimensional manifold with Ricci curvature strictly positive  and pinched.

Then $M$ is  compact.
\end{theorem}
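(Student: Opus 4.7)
The plan is to argue by contradiction: assuming $M$ is non-compact, produce asymptotic splittings at infinity via the almost-splitting theorem, combine with Bochner and the pinching condition to force the Ricci curvature to decay at infinity, and conclude via Bishop--Gromov rigidity that $M$ would have to be $\R^3$, contradicting ${\rm Ric}>0$.

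First, suppose $M$ is non-compact. By completeness and unboundedness, for each $n\in\N$ pick a unit-speed minimising geodesic $\gamma_n:[-n,n]\to M$ and set $p_n:=\gamma_n(0)$. Renormalising the volume by $\mm_n:=\vol/\vol(B_1(p_n))$, Gromov's precompactness yields a subsequential pointed-mGH limit $(\X_\infty,\sfd_\infty,\mm_\infty,x_\infty)$ which, by stability of the finite-dimensional $\RCD$ condition, is $\RCD(0,3)$. The almost-splitting theorem (Corollary \ref{cor:almostspl}) combined with the splitting theorem (Theorem \ref{thm:splittingrcd}) then forces $\X_\infty\cong\X'\times\R$ with $\X'$ an $\RCD(0,2)$ space. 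At the smooth level, this corresponds to the existence of almost-Busemann functions $b_n\in C^\infty(B_1(p_n))$ with $\||\nabla b_n|^2-1\|_{L^2(B_1(p_n))}+\|\Delta b_n\|_{L^2(B_1(p_n))}\to 0$.

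Next, exploit the pinching by integrating the Bochner identity
\[
\tfrac12\Delta|\nabla b_n|^2 \ =\ |{\rm Hess}\,b_n|^2_{\HS}+\langle\nabla b_n,\nabla\Delta b_n\rangle+{\rm Ric}(\nabla b_n,\nabla b_n)
\]
against a smooth cutoff $\eta$ supported in $B_1(p_n)$ and equal to $1$ on $B_{1/2}(p_n)$. Two integrations by parts, together with the smallness of $|\nabla b_n|^2-1$ and $\Delta b_n$ in $L^2$, show that the two non-negative contributions $|{\rm Hess}\,b_n|^2_{\HS}$ and ${\rm Ric}(\nabla b_n,\nabla b_n)$ vanish individually in $L^1(B_{1/2}(p_n))$ as $n\to\infty$. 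Since $|\nabla b_n|\to 1$, the pinching ${\rm Ric}\geq c\,{\rm Scal}\,g$ forces
\[
\int_{B_{1/2}(p_n)}{\rm Scal}\,\d\vol\ \xrightarrow[n\to\infty]{}\ 0,
\]
so that, again by pinching, the full Ricci tensor decays in $L^1$ on $B_{1/2}(p_n)$. Because every sequence of points escaping to infinity can be realised (up to subsequence) as midpoints of arbitrarily long geodesics, this decay holds at every escaping sequence.

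The final step is to upgrade this averaged Ricci decay into a Euclidean rigidity statement. The $\RCD(0,3)$ structure of every asymptotic cone of $M$, together with the line produced by the almost-splitting, forces each such cone to split as $\X'\times\R$ with $\X'$ an $\RCD(0,2)$ space carrying vanishing synthetic Ricci curvature; iterating the splitting theorem on the unbounded factor $\X'$ then yields $\X'\cong\R^2$, and hence every asymptotic cone of $M$ is isometric to $\R^3$. It follows that $\AVR(M)=1$, and the rigidity case of the Bishop--Gromov inequality gives $M\cong\R^3$ isometrically, contradicting ${\rm Ric}>0$. The main obstacle is precisely this last step: extracting pointed-mGH rigidity of the asymptotic cones from the $L^1$ decay of the Ricci tensor is delicate because the second-order calculus on $\RCD$ spaces lacks clean strong convergence of Hessians (see Remark \ref{re:honda} and the open problem following item \ref{it:closcovder}). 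Overcoming this requires quantitative versions of the Cheeger--Colding almost-splitting together with the self-improvement of Bochner (Lemma \ref{le:sibo}); the three-dimensional hypothesis enters crucially here, as it is what makes the surviving two-dimensional factor $\X'$ rigid enough to collapse to $\R^2$.
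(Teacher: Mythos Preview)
Your approach diverges entirely from the paper's: the actual proof uses Ricci flow as the core engine (the Lee--Topping immortal flow with $|{\rm Riem}_{g_t}|\leq C/t$, parabolic rescaling, expanding gradient solitons coming out of the asymptotic cone, and a comparison argument reducing to the Chen--Zhu result), with $\RCD$/Alexandrov theory entering only to analyse the cross-section of that cone via the Lytchak--Stadler theorem. You attempt to bypass Ricci flow altogether via almost-splitting and Bochner, and the argument does not close.

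The central gap is the jump in your final paragraph, which you yourself flag. Your Bochner computation gives $L^1$ decay of Ricci on \emph{unit} balls around midpoints of long geodesics; this controls pointed limits at the original scale, not asymptotic cones, which are blow-\emph{downs}. The phrase ``the line produced by the almost-splitting'' conflates these two kinds of limits: an asymptotic cone of a non-compact manifold with $\Ric\geq 0$ is a metric cone over a compact cross-section, and it contains a line only if that cross-section has diameter $\pi$ --- nothing you have done establishes this. Even granting a splitting $\X'\times\R$, there is no ``vanishing synthetic Ricci'' condition in the $\RCD$ framework that forces $\X'\cong\R^2$; $\X'$ could perfectly well be a flat $2$-cone of angle $<2\pi$, and it has no a priori line on which to iterate the splitting. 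You also never establish $\AVR(M)>0$ (in the paper this is itself a non-trivial output of the Ricci flow analysis), without which asymptotic cones may collapse to something lower-dimensional. Finally, the claim that \emph{every} escaping sequence can be realised as midpoints of long minimising geodesics is false in general.
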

Notice that the assumption is scale invariant (a $c$-pinched manifold remains $c$-pinched after scaling), thus the conclusion has to be scale invariant as well, whence the lack of any diameter bound. This result has an extrinsic analogue, valid in any dimension, proved by Hamilton in \cite{Ham94}: a convex hypersurface in $\R^d$ with pinched second fundamental form is either compact or flat. In principle, one might  wonder whether an $n$-dimensional analogue of \ref{thm:Riccipinch} holds as well, but this is not clear (and possibly requires an assumption different from pinching of the Ricci); in any case the strategy for proving the above is strictly related to dimension 3.

To get an idea on why Theorem \ref{thm:Riccipinch} should hold, assume that $M$ is a cone outside some compact set (here I more or less follow the introduction of \cite{Lott19}). Then the Ricci curvature vanishes in the radial direction, hence $M$ is Ricci flat by the pinching condition. Since the dimension is 3, the manifold is actually flat. Then the cross section of such cone is a 2-dimension manifold with constant curvature $\equiv 1$ (by direct computation) and connected (because the truncated cone is connected). Hence it is either $S^2$ or $\R P^2$. However,  the latter does not bound any manifold (it has odd Euler characteristic, while boundaries have even characteristic), while clearly by construction our cross section bounds the `truncated part' of the manifold. Hence the cross section is $S^2$. Then the flatness of $M$ forces it to be a cone and its smoothness such cone to be $\R^3$.

Very roughly said, the actual proof of Theorem  \ref{thm:Riccipinch} follows this principle and works by contradiction: one assumes that $M$ is non-compact and non-flat and derives an absurdum. Here one uses Ricci flow to replace the original metric with a `better one' that is more cone-like, and Alexandrov/$\RCD$ theory to study the asymptotic of such cone.

\medskip
   
The first proof of Theorem \ref{thm:Riccipinch} has been given  in \cite{CheZhu00} under the additional assumption that the sectional curvature is non-negative as well. Here no non-smooth theory appears; instead the authors study the long time asymptotic of the Ricci flow and show, using also some estimates due to Hamilton, that $M$ cannot be both non-compact and non-flat.

The assumption on non-negativity of sectional curvature has been weakened, and finally removed, in the more recent series of papers \cite{Lott19}, \cite{DSS22}, \cite{LT22}  with the last one having  the proof of  Theorem \ref{thm:Riccipinch} as stated. The rough outline of the proof given below comes from \cite{DSS22} and builds upon ideas in \cite{Lott19}.
\begin{enumerate}[label=(\roman*)]
\item Let $(M,g)$ be non-compact and non-flat and $c$-Ricci pinched. Then there is an immortal, i.e.\ defined for any $t\geq0$, Ricci flow $(g_t)$ starting from $g$, it is $c$-Ricci pinched and satisfies $|{\rm Riem}_{g_t}|\leq\frac Ct$ for some $C$ depending only on $c$. This existence result together with the curvature bound is the main result in \cite{LT22} (previous works gave existence under additional curvature assumptions) and the one that ultimately led to the proof of Theorem \ref{thm:Riccipinch}.
\item\label{it:AVRpos} By \cite[Propositions 1.5, 3.1 and 4.1]{Lott19}  we see that $g_t$ has  uniformly positive Asymptotic Volume Ratio, i.e.\ $\AVR(g_t):=\lim_{r\to\infty}\frac{{\rm Vol}_{g_t}(B_r(x))}{\omega_3r^3}\geq \bar v$ for every $t>0$ and some $\bar v>0$ independent on $t$. 
\item Scale down the flow by putting $g^s_t:=\tfrac1sg_{st}$ and notice that both the $\AVR$, the curvature bound and the pinching remain unchanged. Then by mGH-compactness we  can find $s_n\to\infty$ and a limit flow $(g^\infty_t)_{t>0}$ defined on some smooth manifold $M^\infty$ which satisfies the same curvature bound. Here it is used the information that the $\AVR$ remains positive (to avoid collapsing, but the  proof is still non-trivial  - see \cite[Propositions 1.6, Section 3]{Lott19}).
\item Building upon \cite{SimTop21} one can prove that as $t\downarrow0$ the distances $\sfd_t$ induced by $g^\infty_t$ on $M^\infty$ converge in the pointed-Gromov-Hausdorff sense to an asymptotic cone $C$ of the original manifold $M$.  Since $M$ has positive $\AVR$ (by item \ref{it:AVRpos} and upper semicontinuity of $\AVR$ under mGH convergence), $C$ is a volume cone, and thus a metric cone \cite{Cheeger-Colding96}.
\item Clearly, $C$ is also an $\RCD(0,3)$ space, thus by  \cite{Ketterer13} its cross section $F$ is an $\RCD(1,2)$ space.
\item By  \cite{LS18} the cross section $F$ is also an Alexandrov space with curvature $\geq 1$. Notice  that the result in \cite{LS18} is the non-smooth analogue of the consideration, trivial in the smooth category but highly not so in the non-smooth one, that in dimension 2 we have Ricci=sectional (up to suitable identification of tensors).
\item By structural results about low dimensional Alexandrov spaces (see \cite{IRV15}) and topological considerations akin to those given above, one deduces that $F$ is homeomorphic to $S^2$ and that there is a sequence $g_n$ of smooth Riemannian metrics on $S^2$ with curvature $\geq 1$ that is GH-converging to the metric on $F$.
\item Building cones over $(S^2,g_n)$ one sees that $C$ is the limit of a sequence of smooth Riemannian 3-manifolds with non-negative sectional curvature. Using results from \cite{Der16} or alternatively \cite{SchSim13} we have  expanding gradient Ricci solitons  having non-negative curvature starting from each of these approximations, and by passing to the limit we can find an expanding gradient Ricci solitons $(\tilde g_t)_{t>0}$ converging to $C$ as $t\downarrow0$.
\item If we knew that  $\tilde g_t$ was Ricci pinched for some $t>0$, then using also that it is an expanding gradient Ricci solitons we could conclude that it is flat following   \cite{CheZhu00}. A variant of this argument, provided in  \cite{DSS22}, works also if $\tilde g_t$ is almost Ricci-pinched in a suitable sense. Such `almost Ricci pinched' for  $\tilde g_t$ will be obtained by comparing it to $g^\infty_t$.
\item Coming back to the study of $(g^\infty_t)$, by relying on a splitting result by \cite{HochardThesis}, it is proved in \cite[Lemma 81]{DSS22} that outside of the tip, $C$ has only $\R^3$ as tangent spaces.
\item This regularity informations, and the existence of $(1+\eps)$-biLipschitz charts in $C$ (see \cite{BGP92}, \cite{BBI01} and recall that $C$  is an Alexandrov space) is the key ingredient that allows to use the main result in \cite{DSS22}, namely  a stability result which allows one to locally compare solutions to Ricci flow which start locally from the same initial data, in the case that the initial data only has trivial cones as blow ups. This allows to conclude that $(\tilde g_t)$ is almost Ricci pinched and thus, as mentioned above, to produce a contradiction following \cite{CheZhu00}.
\end{enumerate}

{\small
\def\cprime{$'$} \def\cprime{$'$}


\begin{thebibliography}{GGKMS18}

\bibitem[AB84]{homofinsl}
Emilio Acerbi and Giuseppe Buttazzo.
\newblock On the limits of periodic {R}iemannian metrics.
\newblock {\em J. Analyse Math.}, 43:183--201, 1983/84.

\bibitem[ABFP22]{ABFP22}
Gioacchino Antonelli, Elia Bru\`e, Mattia Fogagnolo, and Marco Pozzetta.
\newblock On the existence of isoperimetric regions in manifolds with
  nonnegative {R}icci curvature and {E}uclidean volume growth.
\newblock {\em Calc. Var. Partial Differential Equations}, 61(2):Paper No. 77,
  40, 2022.

\bibitem[ABS21]{AmbBruSem21}
Luigi Ambrosio, Elia Bru\'{e}, and Daniele Semola.
\newblock {\em Lectures on optimal transport}, volume 130 of {\em Unitext}.
\newblock Springer, Cham, [2021] \copyright 2021.
\newblock La Matematica per il 3+2.

\bibitem[ADM14]{Ambrosio-DiMarino14}
Luigi Ambrosio and Simone Di~Marino.
\newblock Equivalent definitions of {$BV$} space and of total variation on
  metric measure spaces.
\newblock {\em J. Funct. Anal.}, 266(7):4150--4188, 2014.

\bibitem[AF14]{AF14}
Luigi Ambrosio and Jin Feng.
\newblock On a class of first order {H}amilton-{J}acobi equations in metric
  spaces.
\newblock {\em J. Differential Equations}, 256(7):2194--2245, 2014.

\bibitem[AFM20]{AFM18}
Virginia Agostiniani, Mattia Fogagnolo, and Lorenzo Mazzieri.
\newblock Sharp geometric inequalities for closed hypersurfaces in manifolds
  with nonnegative {R}icci curvature.
\newblock {\em Invent. Math.}, 222(3):1033--1101, 2020.

\bibitem[AFP21]{AFP21}
Gioacchino Antonelli, Mattia Fogagnolo, and Marco Pozzetta.
\newblock The isoperimetric problem on {R}iemannian manifolds via
  {G}romov--{H}ausdorff asymptotic analysis.
\newblock Preprint, arXiv:2101.12711, 2021.

\bibitem[AGMR12]{AmbrosioGigliMondinoRajala12}
Luigi Ambrosio, Nicola Gigli, Andrea Mondino, and Tapio Rajala.
\newblock Riemannian {R}icci curvature lower bounds in metric measure spaces
  with $\sigma$-finite measure.
\newblock {\em Trans. Amer. Math. Soc.}, 367(7):4661--4701, 2012.

\bibitem[AGS08]{AmbrosioGigliSavare08}
Luigi Ambrosio, Nicola Gigli, and Giuseppe Savar{\'e}.
\newblock {\em Gradient flows in metric spaces and in the space of probability
  measures}.
\newblock Lectures in Mathematics ETH Z\"urich. Birkh\"auser Verlag, Basel,
  second edition, 2008.

\bibitem[AGS12]{AmbrosioGigliSavare-compact}
Luigi Ambrosio, Nicola Gigli, and Giuseppe Savar{\'e}.
\newblock Heat flow and calculus on metric measure spaces with {R}icci
  curvature bounded below---the compact case.
\newblock {\em Boll. Unione Mat. Ital. (9)}, 5(3):575--629, 2012.

\bibitem[AGS13]{AmbrosioGigliSavare11-3}
Luigi Ambrosio, Nicola Gigli, and Giuseppe Savar{\'e}.
\newblock Density of {L}ipschitz functions and equivalence of weak gradients in
  metric measure spaces.
\newblock {\em Rev. Mat. Iberoam.}, 29(3):969--996, 2013.

\bibitem[AGS14a]{AmbrosioGigliSavare11}
Luigi Ambrosio, Nicola Gigli, and Giuseppe Savar{\'e}.
\newblock Calculus and heat flow in metric measure spaces and applications to
  spaces with {R}icci bounds from below.
\newblock {\em Invent. Math.}, 195(2):289--391, 2014.

\bibitem[AGS14b]{AmbrosioGigliSavare11-2}
Luigi Ambrosio, Nicola Gigli, and Giuseppe Savar{\'e}.
\newblock Metric measure spaces with {R}iemannian {R}icci curvature bounded
  from below.
\newblock {\em Duke Math. J.}, 163(7):1405--1490, 2014.

\bibitem[AGS15]{AmbrosioGigliSavare12}
Luigi Ambrosio, Nicola Gigli, and Giuseppe Savar{\'e}.
\newblock Bakry-\'{E}mery curvature-dimension condition and {R}iemannian
  {R}icci curvature bounds.
\newblock {\em The Annals of Probability}, 43(1):339--404, 2015.

\bibitem[AGS17]{savareEMS}
Luigi Ambrosio, Nicola Gigli, and Giuseppe Savar{\'e}.
\newblock Diffusion, optimal transport and {R}icci curvature for metric measure
  spaces.
\newblock {\em Eur. Math. Soc. Newsl.}, (103):19--28, 2017.

\bibitem[AH17]{AH16}
Luigi Ambrosio and Shouhei Honda.
\newblock New stability results for sequences of metric measure spaces with
  uniform {R}icci bounds from below.
\newblock In {\em Measure theory in non-smooth spaces}, Partial Differ. Equ.
  Meas. Theory, pages 1--51. De Gruyter Open, Warsaw, 2017.

\bibitem[AKL89]{AndKroLeB89}
Michael~T. Anderson, Peter~B. Kronheimer, and Claude LeBrun.
\newblock Complete {R}icci-flat {K}\"{a}hler manifolds of infinite topological
  type.
\newblock {\em Comm. Math. Phys.}, 125(4):637--642, 1989.

\bibitem[Amb90]{Ambr90}
Luigi Ambrosio.
\newblock Metric space valued functions of bounded variation.
\newblock {\em Ann. Scuola Norm. Sup. Pisa Cl. Sci. (4)}, 17(3):439--478, 1990.

\bibitem[Amb04]{Ambrosio04}
Luigi Ambrosio.
\newblock Transport equation and {C}auchy problem for {$BV$} vector fields.
\newblock {\em Invent. Math.}, 158(2):227--260, 2004.

\bibitem[Amb18]{AmbrosioICM}
Luigi Ambrosio.
\newblock Calculus, heat flow and curvature-dimension bounds in metric measure
  spaces.
\newblock In {\em Proceedings of the {I}nternational {C}ongress of
  {M}athematicians---{R}io de {J}aneiro 2018. {V}ol. {I}. {P}lenary lectures},
  pages 301--340. World Sci. Publ., Hackensack, NJ, 2018.

\bibitem[AMS15]{AmbrosioMondinoSavare13}
Luigi Ambrosio, Andrea Mondino, and Giuseppe Savar{\'e}.
\newblock Nonlinear diffusion equations and curvature conditions in metric
  measure spaces.
\newblock {\em Mem. Amer. Math. Soc.}, 262(1270):0, 2015.

\bibitem[And90]{anderson1990}
Michael~T. Anderson.
\newblock Short geodesics and gravitational instantons.
\newblock {\em J. Differential Geom.}, 31(1):265--275, 1990.

\bibitem[ANP22]{AntonelliNardulliPozzetta22}
Gioacchino Antonelli, Stefano Nardulli, and Marco Pozzetta.
\newblock The isoperimetric problem {$via$} direct method in noncompact metric
  measure spaces with lower {R}icci bounds.
\newblock {\em ESAIM Control Optim. Calc. Var.}, 28:Paper No. 57, 32, 2022.

\bibitem[APP22]{APP22}
Gioacchino Antonelli, Enrico Pasqualetto, and Marco Pozzetta.
\newblock Isoperimetric sets in spaces with lower bounds on the {R}icci
  curvature.
\newblock {\em Nonlinear Anal.}, 220:Paper No. 112839, 59, 2022.

\bibitem[APPS22a]{APPS22}
Gioacchino Antonelli, Enrico Pasqualetto, Marco Pozzetta, and Daniele Semola.
\newblock Asymptotic isoperimetry on non collapsed spaces with lower {R}icci
  bounds.
\newblock Preprint, arXiv:2208.03739, 2022.

\bibitem[APPS22b]{APPS22b}
Gioacchino Antonelli, Enrico Pasqualetto, Marco Pozzetta, and Daniele Semola.
\newblock Sharp isoperimetric comparison on non collapsed spaces with lower
  {R}icci bounds.
\newblock Preprint, arXiv:2201.04916, 2022.

\bibitem[APPV23]{APPV23}
Gioacchino Antonelli, Enrico Pasqualetto, Marco Pozzetta, and Ivan~Yuri Violo.
\newblock Topological regularity of isoperimetric sets in {PI} spaces having a
  deformation property.
\newblock Preprint, arXiv:2303.01280, 2023.

\bibitem[AST17]{AST17}
Luigi Ambrosio, Federico Stra, and Dario Trevisan.
\newblock Weak and strong convergence of derivations and stability of flows
  with respect to {MGH} convergence.
\newblock {\em J. Funct. Anal.}, 272(3):1182--1229, 2017.

\bibitem[AT14]{Ambrosio-Trevisan14}
Luigi Ambrosio and Dario Trevisan.
\newblock Well posedness of {L}agrangian flows and continuity equations in
  metric measure spaces.
\newblock {\em Anal. PDE}, 7(5):1179--1234, 2014.

\bibitem[AT17]{AT15}
Luigi Ambrosio and Dario Trevisan.
\newblock Lecture notes on the {D}i{P}erna-{L}ions theory in abstract measure
  spaces.
\newblock {\em Ann. Fac. Sci. Toulouse Math. (6)}, 26(4):729--766, 2017.

\bibitem[Aub76]{Aubin76-2}
Thierry Aubin.
\newblock Probl\`emes isop\'{e}rim\'{e}triques et espaces de {S}obolev.
\newblock {\em J. Differential Geometry}, 11(4):573--598, 1976.

\bibitem[Bak85]{Bakry83}
Dominique Bakry.
\newblock Transformations de {R}iesz pour les semi-groupes sym\'etriques. {II}.
  \'{E}tude sous la condition {$\Gamma_2\geq 0$}.
\newblock In {\em S\'eminaire de probabilit\'es, {XIX}, 1983/84}, volume 1123
  of {\em Lecture Notes in Math.}, pages 145--174. Springer, Berlin, 1985.

\bibitem[Bay04]{Bay04}
Vincent Bayle.
\newblock A differential inequality for the isoperimetric profile.
\newblock {\em Int. Math. Res. Not.}, (7):311--342, 2004.

\bibitem[BB11]{Bjorn-Bjorn11}
Anders Bj{\"o}rn and Jana Bj{\"o}rn.
\newblock {\em Nonlinear potential theory on metric spaces}, volume~17 of {\em
  EMS Tracts in Mathematics}.
\newblock European Mathematical Society (EMS), Z\"urich, 2011.

\bibitem[BBI01]{BBI01}
Dmitri Burago, Yuri Burago, and Sergei Ivanov.
\newblock {\em A course in metric geometry}, volume~33 of {\em Graduate Studies
  in Mathematics}.
\newblock American Mathematical Society, Providence, RI, 2001.

\bibitem[BDS22]{BDS21}
Elia Bru\'{e}, Qin Deng, and Daniele Semola.
\newblock Improved regularity estimates for {L}agrangian flows on {${\rm
  RCD}(K,N)$} spaces.
\newblock {\em Nonlinear Anal.}, 214:Paper No. 112609, 26, 2022.

\bibitem[B{\'E}85]{BakryEmery85}
Dominique Bakry and Michel {\'E}mery.
\newblock Diffusions hypercontractives.
\newblock In {\em S\'eminaire de probabilit\'es, {XIX}, 1983/84}, volume 1123
  of {\em Lecture Notes in Math.}, pages 177--206. Springer, Berlin, 1985.

\bibitem[BE91]{BianchiEgnell91}
Gabriele Bianchi and Henrik Egnell.
\newblock A note on the {S}obolev inequality.
\newblock {\em J. Funct. Anal.}, 100(1):18--24, 1991.

\bibitem[BGHZ23]{BGHZ23}
Camillo Brena, Nicola Gigli, Shouhei Honda, and Xingyu Zhu.
\newblock Weakly non-collapsed {$\sf{RCD}$} spaces are strongly non-collapsed.
\newblock {\em J. Reine Angew. Math.}, 794:215--252, 2023.

\bibitem[BGL14]{BakryGentilLedoux14}
Dominique Bakry, Ivan Gentil, and Michel Ledoux.
\newblock {\em Analysis and geometry of {M}arkov diffusion operators}, volume
  348 of {\em Grundlehren der Mathematischen Wissenschaften [Fundamental
  Principles of Mathematical Sciences]}.
\newblock Springer, Cham, 2014.

\bibitem[BGP92]{BGP92}
Yu~Burago, Mikhael Gromov, and Grigori Perelman.
\newblock A. {D}. {A}leksandrov spaces with curvatures bounded below.
\newblock {\em Uspekhi Mat. Nauk}, 47(2(284)):3--51, 222, 1992.

\bibitem[BH91]{BH91}
Nicolas Bouleau and Francis Hirsch.
\newblock {\em Dirichlet forms and analysis on {W}iener space}, volume~14 of
  {\em De Gruyter Studies in Mathematics}.
\newblock Walter de Gruyter \& Co., Berlin, 1991.

\bibitem[BK23]{BalKri23}
Zolt\'{a}n~M. Balogh and Alexandru Krist\'{a}ly.
\newblock Sharp isoperimetric and {S}obolev inequalities in spaces with
  nonnegative {R}icci curvature.
\newblock {\em Math. Ann.}, 385(3-4):1747--1773, 2023.

\bibitem[BKMR21]{BKMR21}
J\'{e}r\^{o}me Bertrand, Christian Ketterer, Ilaria Mondello, and Thomas
  Richard.
\newblock Stratified spaces and synthetic {R}icci curvature bounds.
\newblock {\em Ann. Inst. Fourier (Grenoble)}, 71(1):123--173, 2021.

\bibitem[BL06]{BaLe06}
Dominique Bakry and Michel Ledoux.
\newblock A logarithmic {S}obolev form of the {L}i-{Y}au parabolic inequality.
\newblock {\em Rev. Mat. Iberoam.}, 22(2):683--702, 2006.

\bibitem[Bog07]{Bogachev07}
Vladimir Bogachev.
\newblock {\em Measure theory. {V}ol. {I}, {II}}.
\newblock Springer-Verlag, Berlin, 2007.

\bibitem[BP86]{BavPan86}
Christophe Bavard and Pierre Pansu.
\newblock Sur le volume minimal de {${\bf R}^2$}.
\newblock {\em Ann. Sci. \'{E}cole Norm. Sup. (4)}, 19(4):479--490, 1986.

\bibitem[BPS21]{BPS21}
Elia Bru\`e, Enrico Pasqualetto, and Daniele Semola.
\newblock Rectifiability of {${\rm RCD}(K,N)$}spaces via {$\delta$}-splitting
  maps.
\newblock {\em Ann. Fenn. Math.}, 46(1):465--482, 2021.

\bibitem[Bra02]{Braides02}
Andrea Braides.
\newblock {\em {$\Gamma$}-convergence for beginners}, volume~22 of {\em Oxford
  Lecture Series in Mathematics and its Applications}.
\newblock Oxford University Press, Oxford, 2002.

\bibitem[Bra21]{Br21}
Mathias Braun.
\newblock Vector calculus for tamed {D}irichlet spaces.
\newblock Preprint, arXiv:2108.12374, 2021.

\bibitem[Bra22]{Braun22}
Mathias Braun.
\newblock Heat flow on 1-forms under lower {R}icci bounds. {F}unctional
  inequalities, spectral theory, and heat kernel.
\newblock {\em J. Funct. Anal.}, 283(7):Paper No. 109599, 65, 2022.

\bibitem[Bre11]{Brezis11}
Haim Brezis.
\newblock {\em Functional analysis, {S}obolev spaces and partial differential
  equations}.
\newblock Universitext. Springer, New York, 2011.

\bibitem[Bre20]{Bre23}
Simon Brendle.
\newblock Sobolev inequalities in manifolds with nonnegative curvature.
\newblock Preprint, arXiv:2009.13717, 2020.

\bibitem[BS10]{BacherSturm10}
Kathrin Bacher and Karl-Theodor Sturm.
\newblock Localization and tensorization properties of the curvature-dimension
  condition for metric measure spaces.
\newblock {\em J. Funct. Anal.}, 259(1):28--56, 2010.

\bibitem[BS20]{BS18}
Elia Bru\'{e} and Daniele Semola.
\newblock Constancy of the dimension for {${\rm RCD}(K,N)$} spaces via
  regularity of {L}agrangian flows.
\newblock {\em Comm. Pure Appl. Math.}, 73(6):1141--1204, 2020.

\bibitem[Cav17]{Cavalletti17}
Fabio Cavalletti.
\newblock An overview of {$L^1$} optimal transportation on metric measure
  spaces.
\newblock In {\em Measure theory in non-smooth spaces}, Partial Differ. Equ.
  Meas. Theory, pages 98--144. De Gruyter Open, Warsaw, 2017.

\bibitem[CC95]{CafCor95}
Luis~A. Caffarelli and Antonio C\'{o}rdoba.
\newblock Correction: ``{A}n elementary regularity theory of minimal surfaces''
  [{D}ifferential {I}ntegral {E}quations {\bf 6} (1993), no. 1, 1--13;
  {MR}1190161 (94c:49042)].
\newblock {\em Differential Integral Equations}, 8(1):223, 1995.

\bibitem[CC96]{Cheeger-Colding96}
Jeff Cheeger and Tobias~Holck Colding.
\newblock Lower bounds on {R}icci curvature and the almost rigidity of warped
  products.
\newblock {\em Ann. of Math. (2)}, 144(1):189--237, 1996.

\bibitem[CC97]{Cheeger-Colding97I}
Jeff Cheeger and Tobias~Holck Colding.
\newblock On the structure of spaces with {R}icci curvature bounded below. {I}.
\newblock {\em J. Differential Geom.}, 46(3):406--480, 1997.

\bibitem[CC00a]{Cheeger-Colding97II}
Jeff Cheeger and Tobias~Holck Colding.
\newblock On the structure of spaces with {R}icci curvature bounded below.
  {II}.
\newblock {\em J. Differential Geom.}, 54(1):13--35, 2000.

\bibitem[CC00b]{Cheeger-Colding97III}
Jeff Cheeger and Tobias~Holck Colding.
\newblock On the structure of spaces with {R}icci curvature bounded below.
  {III}.
\newblock {\em J. Differential Geom.}, 54(1):37--74, 2000.

\bibitem[CDL08]{CDL08}
Gianluca Crippa and Camillo De~Lellis.
\newblock Estimates and regularity results for the {D}i{P}erna-{L}ions flow.
\newblock {\em J. Reine Angew. Math.}, 616:15--46, 2008.

\bibitem[CE08]{ChEb08}
Jeff Cheeger and David~G. Ebin.
\newblock {\em Comparison theorems in {R}iemannian geometry}.
\newblock AMS Chelsea Publishing, Providence, RI, 2008.
\newblock Revised reprint of the 1975 original.

\bibitem[CEMS01]{CEMCS01}
Dario Cordero-Erausquin, Robert~J. McCann, and Michael Schmuckenschl\"{a}ger.
\newblock A {R}iemannian interpolation inequality \`a la {B}orell, {B}rascamp
  and {L}ieb.
\newblock {\em Invent. Math.}, 146(2):219--257, 2001.

\bibitem[CEMS06]{CEMS06}
Dario Cordero-Erausquin, Robert~J. McCann, and Michael Schmuckenschl\"{a}ger.
\newblock Pr\'{e}kopa-{L}eindler type inequalities on {R}iemannian manifolds,
  {J}acobi fields, and optimal transport.
\newblock {\em Ann. Fac. Sci. Toulouse Math. (6)}, 15(4):613--635, 2006.

\bibitem[CENV04]{CENaVi04}
Dario Cordero-Erausquin, Bruno Nazaret, and C{\'e}dric Villani.
\newblock A mass-transportation approach to sharp {S}obolev and
  {G}agliardo-{N}irenberg inequalities.
\newblock {\em Adv. Math.}, 182(2):307--332, 2004.

\bibitem[CFMP09]{CianchiFuscoMaggiPratelli09}
A.~Cianchi, N.~Fusco, F.~Maggi, and A.~Pratelli.
\newblock The sharp {S}obolev inequality in quantitative form.
\newblock {\em J. Eur. Math. Soc. (JEMS)}, 11(5):1105--1139, 2009.

\bibitem[Cha06]{Chavel06}
Isaac Chavel.
\newblock {\em Riemannian geometry}, volume~98 of {\em Cambridge Studies in
  Advanced Mathematics}.
\newblock Cambridge University Press, Cambridge, second edition, 2006.
\newblock A modern introduction.

\bibitem[Che99]{Cheeger00}
Jeff Cheeger.
\newblock Differentiability of {L}ipschitz functions on metric measure spaces.
\newblock {\em Geom. Funct. Anal.}, 9(3):428--517, 1999.

\bibitem[Che01]{Cheegersurvey}
Jeff Cheeger.
\newblock {\em Degeneration of {R}iemannian metrics under {R}icci curvature
  bounds}.
\newblock Lezioni Fermiane. [Fermi Lectures]. Scuola Normale Superiore, Pisa,
  2001.

\bibitem[CJN21]{ChJiNa21}
Jeff Cheeger, Wenshuai Jiang, and Aaron Naber.
\newblock Rectifiability of singular sets of noncollapsed limit spaces with
  {R}icci curvature bounded below.
\newblock {\em Ann. of Math. (2)}, 193(2):407--538, 2021.

\bibitem[CM14]{ColdMin14}
Tobias~Holck Colding and William~P. Minicozzi, II.
\newblock On uniqueness of tangent cones for {E}instein manifolds.
\newblock {\em Invent. Math.}, 196(3):515--588, 2014.

\bibitem[CM17]{CavMon15}
Fabio Cavalletti and Andrea Mondino.
\newblock Sharp and rigid isoperimetric inequalities in metric-measure spaces
  with lower ricci curvature bounds.
\newblock {\em Invent. Math.}, 208(3):803--849, 2017.

\bibitem[CM20]{CavMon20}
Fabio Cavalletti and Andrea Mondino.
\newblock New formulas for the {L}aplacian of distance functions and
  applications.
\newblock {\em Anal. PDE}, 13(7):2091--2147, 2020.

\bibitem[CM21]{CavMil16}
Fabio Cavalletti and Emanuel Milman.
\newblock The globalization theorem for the curvature-dimension condition.
\newblock {\em Invent. Math.}, 226(1):1--137, 2021.

\bibitem[CM22]{CavMan22}
Fabio Cavalletti and Davide Manini.
\newblock Isoperimetric inequality in noncompact {$\rm{MCP}$} spaces.
\newblock {\em Proc. Amer. Math. Soc.}, 150(8):3537--3548, 2022.

\bibitem[CMM19]{CMM19}
Fabio Cavalletti, Francesco Maggi, and Andrea Mondino.
\newblock Quantitative isoperimetry \`a la {L}evy-{G}romov.
\newblock {\em Comm. Pure Appl. Math.}, 72(8):1631--1677, 2019.

\bibitem[CMT21]{CMTkatoI}
Gilles Carron, Ilaria Mondello, and David Tewodrose.
\newblock Limits of manifolds with a {K}ato bound on the {R}icci curvature.
\newblock Preprint, arXiv:2102.05940, 2021.

\bibitem[CMT22]{CMTkatoII}
Gilles Carron, Ilaria Mondello, and David Tewodrose.
\newblock Limits of manifolds with a {K}ato bound on the {R}icci curvature.
  {II}.
\newblock Preprint, arXiv:2205.01956, 2022.

\bibitem[CN12]{ColdingNaber12}
Tobias~Holck Colding and Aaron Naber.
\newblock Sharp {H}\"older continuity of tangent cones for spaces with a lower
  {R}icci curvature bound and applications.
\newblock {\em Ann. of Math. (2)}, 176(2):1173--1229, 2012.

\bibitem[CN15]{ChNa15}
Jeff Cheeger and Aaron Naber.
\newblock Regularity of {E}instein manifolds and the codimension 4 conjecture.
\newblock {\em Ann. of Math. (2)}, 182(3):1093--1165, 2015.

\bibitem[Col96]{Colding96b}
Tobias~Holck Colding.
\newblock Shape of manifolds with positive {R}icci curvature.
\newblock {\em Invent. Math.}, 124(1-3):175--191, 1996.

\bibitem[Col97]{Colding97}
Tobias~Holck Colding.
\newblock Ricci curvature and volume convergence.
\newblock {\em Ann. of Math. (2)}, 145(3):477--501, 1997.

\bibitem[CSC93]{CouSal93}
Thierry Coulhon and Laurent Saloff-Coste.
\newblock Isop\'{e}rim\'{e}trie pour les groupes et les vari\'{e}t\'{e}s.
\newblock {\em Rev. Mat. Iberoamericana}, 9(2):293--314, 1993.

\bibitem[CT22]{CarTew22}
Gilles Carron and David Tewodrose.
\newblock A rigidity result for metric measure spaces with {E}uclidean heat
  kernel.
\newblock {\em J. \'{E}c. polytech. Math.}, 9:101--154, 2022.

\bibitem[CZ00]{CheZhu00}
Bing-Long Chen and Xi-Ping Zhu.
\newblock Complete {R}iemannian manifolds with pointwise pinched curvature.
\newblock {\em Invent. Math.}, 140(2):423--452, 2000.

\bibitem[Den20]{Deng20}
Qin Deng.
\newblock H\"older continuity of tangent cones in {${\rm RCD}(K,N)$} spaces and
  applications to non-branching.
\newblock Preprint, arXiv:2009.07956, 2020.

\bibitem[Der16]{Der16}
Alix Deruelle.
\newblock Smoothing out positively curved metric cones by {R}icci expanders.
\newblock {\em Geom. Funct. Anal.}, 26(1):188--249, 2016.

\bibitem[DG92]{DeGiorgi92}
Ennio De~Giorgi.
\newblock Movimenti minimizzanti.
\newblock In {\em talk given at the meeting ``Aspetti e problemi della
  matematica oggi'', {L}ecce}, October 20-22, 1992.

\bibitem[DG93]{DeGiorgi93}
Ennio De~Giorgi.
\newblock New problems on minimizing movements.
\newblock In Claudio Baiocchi and Jacques~Louis Lions, editors, {\em Boundary
  Value Problems for PDE and Applications}, pages 81--98. Masson, 1993.

\bibitem[DG13]{DeGiorgiSelected}
Ennio De~Giorgi.
\newblock {\em Selected papers}.
\newblock Springer Collected Works in Mathematics. Springer, Heidelberg, 2013.
\newblock [Author name on title page: Ennio Giorgi], Edited by Luigi Ambrosio,
  Gianni Dal Maso, Marco Forti, Mario Miranda and Sergio Spagnolo, Reprint of
  the 2006 edition [MR2229237].

\bibitem[DGF75]{DeGiorgiGamma}
Ennio De~Giorgi and Tullio Franzoni.
\newblock Su un tipo di convergenza variazionale.
\newblock {\em Atti Accad. Naz. Lincei Rend. Cl. Sci. Fis. Mat. Nat. (8)},
  58(6):842--850, 1975.

\bibitem[DGMT80]{DeGiorgiMarinoTosques80}
Ennio De~Giorgi, A.~Marino, and M.~Tosques.
\newblock Problems of evolution in metric spaces and maximal decreasing curve.
\newblock {\em Atti Accad. Naz. Lincei Rend. Cl. Sci. Fis. Mat. Natur. (8)},
  68(3):180--187, 1980.

\bibitem[DH02]{DrHe02}
Olivier Druet and Emmanuel Hebey.
\newblock The {$AB$} program in geometric analysis: sharp {S}obolev
  inequalities and related problems.
\newblock {\em Mem. Amer. Math. Soc.}, 160(761):viii+98, 2002.

\bibitem[DL89]{DiPerna-Lions89}
Ronald~J. DiPerna and Pierre-Louis Lions.
\newblock Ordinary differential equations, transport theory and {S}obolev
  spaces.
\newblock {\em Invent. Math.}, 98(3):511--547, 1989.

\bibitem[DM93]{DalMaso93}
Gianni Dal~Maso.
\newblock {\em An introduction to {$\Gamma$}-convergence}, volume~8 of {\em
  Progress in Nonlinear Differential Equations and their Applications}.
\newblock Birkh\"{a}user Boston, Inc., Boston, MA, 1993.

\bibitem[DMGP20]{GDMP}
Simone Di~Marino, Nicola Gigli, and Aldo Pratelli.
\newblock Global {L}ipschitz extension preserving local constants.
\newblock {\em Atti Accad. Naz. Lincei Rend. Lincei Mat. Appl.},
  31(4):757--765, 2020.

\bibitem[DMLP21]{DMLP21}
Simone Di~Marino, Danka Lu\v{c}i\'{c}, and Enrico Pasqualetto.
\newblock Representation theorems for normed modules.
\newblock Preprint, arXiv:2109.03509, 2021.

\bibitem[DPNnZ23]{DPZ19}
Guido De~Philippis and Jes\'{u}s N\'{u}\~{n}ez Zimbr\'{o}n.
\newblock The behavior of harmonic functions at singular points of {$\rm
  {RCD}$} spaces.
\newblock {\em Manuscripta Math.}, 171(1-2):155--168, 2023.

\bibitem[DS08]{DaneriSavare08}
Sara Daneri and Giuseppe Savar{\'e}.
\newblock Eulerian calculus for the displacement convexity in the {W}asserstein
  distance.
\newblock {\em SIAM J. Math. Anal.}, 40(3):1104--1122, 2008.

\bibitem[DSS22]{DSS22}
Alix Deruelle, Felix Schulze, and Miles Simon.
\newblock Initial stability estimates for {R}icci flow and three dimensional
  {R}icci-pinched manifolds.
\newblock Preprint, arXiv:2203.15313v1, 2022.

\bibitem[DU77]{DiestelUhl77}
Joseph Diestel and J.~Jerry Uhl, Jr.
\newblock {\em Vector measures}.
\newblock American Mathematical Society, Providence, R.I., 1977.
\newblock With a foreword by B. J. Pettis, Mathematical Surveys, No. 15.

\bibitem[EB23]{EB23}
Sylvester Eriksson-Bique.
\newblock Density of {L}ipschitz functions in energy.
\newblock {\em Calc. Var. Partial Differential Equations}, 62(2):Paper No. 60,
  23, 2023.

\bibitem[EBSR22]{BSR22}
Sylvester Eriksson-Bique, Elefterios Soultanis, and Tapio Rajala.
\newblock Tensorization of quasi-{H}ilbertian {S}obolev spaces.
\newblock Preprint, arXiv:2209.03040, 2022.

\bibitem[EKS14]{Erbar-Kuwada-Sturm13}
Matthias Erbar, Kazumasa Kuwada, and Karl-Theodor Sturm.
\newblock On the equivalence of the entropic curvature-dimension condition and
  {B}ochner's inequality on metric measure spaces.
\newblock {\em Invent. Math.}, 201(3):1--79, 2014.

\bibitem[ERST22]{ERST22}
Matthias Erbar, Chiara Rigoni, Karl-Theodor Sturm, and Luca Tamanini.
\newblock Tamed spaces---{D}irichlet spaces with distribution-valued {R}icci
  bounds.
\newblock {\em J. Math. Pures Appl. (9)}, 161:1--69, 2022.

\bibitem[FG21]{FigGla21}
Alessio Figalli and Federico Glaudo.
\newblock {\em An invitation to optimal transport, {W}asserstein distances, and
  gradient flows}.
\newblock EMS Textbooks in Mathematics. EMS Press, Berlin, [2021] \copyright
  2021.

\bibitem[FN19]{FigalliNeumayer19}
Alessio Figalli and Robin Neumayer.
\newblock Gradient stability for the {S}obolev inequality: the case {$p\geq
  2$}.
\newblock {\em J. Eur. Math. Soc. (JEMS)}, 21(2):319--354, 2019.

\bibitem[Foc12]{Foc12}
Matteo Focardi.
\newblock {$\Gamma$}-convergence: a tool to investigate physical phenomena
  across scales.
\newblock {\em Math. Methods Appl. Sci.}, 35(14):1613--1658, 2012.

\bibitem[Fug57]{Fugl57}
Bent Fuglede.
\newblock Extremal length and functional completion.
\newblock {\em Acta Math.}, 98:171--219, 1957.

\bibitem[Fuk80]{Fuk80}
Masatoshi Fukushima.
\newblock {\em Dirichlet forms and {M}arkov processes}, volume~23 of {\em
  North-Holland Mathematical Library}.
\newblock North-Holland Publishing Co., Amsterdam-New York; Kodansha, Ltd.,
  Tokyo, 1980.

\bibitem[Fuk87]{fukaya}
Kenji Fukaya.
\newblock Collapsing of {R}iemannian manifolds and eigenvalues of {L}aplace
  operator.
\newblock {\em Invent. Math.}, 87(3):517--547, 1987.

\bibitem[FZ22]{FigalliZhang22}
Alessio Figalli and Yi~Ru-Ya Zhang.
\newblock Sharp gradient stability for the {S}obolev inequality.
\newblock {\em Duke Math. J.}, 171(12):2407--2459, 2022.

\bibitem[Gal88]{Gal88}
Sylvestre Gallot.
\newblock In\'{e}galit\'{e}s isop\'{e}rim\'{e}triques et analytiques sur les
  vari\'{e}t\'{e}s riemanniennes.
\newblock Number 163-164, pages 5--6, 31--91, 281 (1989). 1988.
\newblock On the geometry of differentiable manifolds (Rome, 1986).

\bibitem[GDP18]{GDP17}
Nicola Gigli and Guido De~Philippis.
\newblock Non-collapsed spaces with {R}icci curvature bounded from below.
\newblock {\em J. \'{E}c. polytech. Math.}, 5:613--650, 2018.

\bibitem[GGKMS18]{GKMS17}
Fernando Galaz-Garc\'{\i}a, Martin Kell, Andrea Mondino, and Gerardo Sosa.
\newblock On quotients of spaces with {R}icci curvature bounded below.
\newblock {\em J. Funct. Anal.}, 275(6):1368--1446, 2018.

\bibitem[GH16]{GigliHan14}
Nicola Gigli and Bang-Xian Han.
\newblock Independence on {$p$} of weak upper gradients on {$\sf{RCD}$} spaces.
\newblock {\em J. Funct. Anal.}, 271(1):1--11, 2016.

\bibitem[Gig10]{Gigli10}
Nicola Gigli.
\newblock On the heat flow on metric measure spaces: existence, uniqueness and
  stability.
\newblock {\em Calc. Var. PDE}, 39(1-2):101--120, 2010.

\bibitem[Gig11]{G11}
Nicola Gigli.
\newblock {\em Introduction to optimal transport: theory and applications}.
\newblock Publica\c{c}\~{o}es Matem\'{a}ticas do IMPA. [IMPA Mathematical
  Publications]. Instituto Nacional de Matem\'{a}tica Pura e Aplicada (IMPA),
  Rio de Janeiro, 2011.
\newblock 28${^{{}}{\rm{o}}}$ Col\'{o}quio Brasileiro de Matem\'{a}tica. [28th
  Brazilian Mathematics Colloquium].

\bibitem[Gig12]{Bochner-CD}
Nicola Gigli.
\newblock On the relation between the curvature dimension condition and
  {B}ochner inequality.
\newblock Unpublished paper, available at
  \texttt{http://cvgmt.sns.it/person/226/}, 2012.

\bibitem[Gig13]{Gigli13}
Nicola Gigli.
\newblock The splitting theorem in non-smooth context.
\newblock Preprint, arXiv:1302.5555, 2013.

\bibitem[Gig14]{Gigli13over}
Nicola Gigli.
\newblock An overview of the proof of the splitting theorem in spaces with
  non-negative {R}icci curvature.
\newblock {\em Analysis and Geometry in Metric Spaces}, 2:169--213, 2014.

\bibitem[Gig15]{Gigli12}
Nicola Gigli.
\newblock On the differential structure of metric measure spaces and
  applications.
\newblock {\em Mem. Amer. Math. Soc.}, 236(1113):vi+91, 2015.

\bibitem[Gig18a]{Gigli17}
Nicola Gigli.
\newblock Lecture notes on differential calculus on {$\sf{RCD}$} spaces.
\newblock {\em Publ. Res. Inst. Math. Sci.}, 54(4):855--918, 2018.

\bibitem[Gig18b]{Gigli14}
Nicola Gigli.
\newblock Nonsmooth differential geometry---an approach tailored for spaces
  with {R}icci curvature bounded from below.
\newblock {\em Mem. Amer. Math. Soc.}, 251(1196):v+161, 2018.

\bibitem[GKO13]{Gigli-Kuwada-Ohta10}
Nicola Gigli, Kazumasa Kuwada, and Shin-ichi Ohta.
\newblock Heat flow on {A}lexandrov spaces.
\newblock {\em Communications on Pure and Applied Mathematics}, 66(3):307--331,
  2013.

\bibitem[GLP]{GLP22}
Nicola Gigli, Danka Lu\v{c}i\'c, and Enrico Pasqualetto.
\newblock Duals and pullbacks of normed modules.
\newblock Preprint.

\bibitem[GM14]{Gigli-Mosconi12}
Nicola Gigli and Sunra Mosconi.
\newblock The {A}bresch-{G}romoll inequality in a non-smooth setting.
\newblock {\em Discrete Contin. Dyn. Syst.}, 34(4):1481--1509, 2014.

\bibitem[GMS15]{GMS15}
Nicola Gigli, Andrea Mondino, and Giuseppe Savar\'e.
\newblock Convergence of pointed non-compact metric measure spaces and
  stability of {R}icci curvature bounds and heat flows.
\newblock {\em Proc. Lond. Math. Soc. (3)}, 111(5):1071--1129, 2015.

\bibitem[GN22]{GiNo21}
Nicola Gigli and Francesco Nobili.
\newblock A first-order condition for the independence on {$p$} of weak
  gradients.
\newblock {\em J. Funct. Anal.}, 283(11):Paper No. 109686, 52, 2022.

\bibitem[GP20]{GP19}
Nicola Gigli and Enrico Pasqualetto.
\newblock {\em Lectures on nonsmooth differential geometry}, volume~2 of {\em
  SISSA Springer Series}.
\newblock Springer, Cham, [2020] \copyright 2020.

\bibitem[GP21]{GP16-2}
Nicola Gigli and Enrico Pasqualetto.
\newblock Behaviour of the reference measure on {$\rm{RCD}$} spaces under
  charts.
\newblock {\em Comm. Anal. Geom.}, 29(6):1391--1414, 2021.

\bibitem[GP22]{GP16}
Nicola Gigli and Enrico Pasqualetto.
\newblock Equivalence of two different notions of tangent bundle on rectifiable
  metric measure spaces.
\newblock {\em Comm. Anal. Geom.}, 30(1):1--51, 2022.

\bibitem[Gri10]{Gri10}
Alexander Grigor'yan.
\newblock Heat kernels on metric measure spaces with regular volume growth.
\newblock In {\em Handbook of geometric analysis, {N}o. 2}, volume~13 of {\em
  Adv. Lect. Math. (ALM)}, pages 1--60. Int. Press, Somerville, MA, 2010.

\bibitem[Gro81]{Gro81}
Mikhael Gromov.
\newblock {\em Structures m\'{e}triques pour les vari\'{e}t\'{e}s
  riemanniennes}, volume~1 of {\em Textes Math\'{e}matiques [Mathematical
  Texts]}.
\newblock CEDIC, Paris, 1981.
\newblock Edited by J. Lafontaine and P. Pansu.

\bibitem[Gro07]{Gromov07}
Misha Gromov.
\newblock {\em Metric structures for {R}iemannian and non-{R}iemannian spaces}.
\newblock Modern Birkh\"auser Classics. Birkh\"auser Boston Inc., Boston, MA,
  english edition, 2007.
\newblock Based on the 1981 French original, With appendices by M. Katz, P.
  Pansu and S. Semmes, Translated from the French by Sean Michael Bates.

\bibitem[GRS14]{NCS14}
Nathael Gozlan, Cyril Roberto, and Paul-Marie Samson.
\newblock Hamilton {J}acobi equations on metric spaces and transport entropy
  inequalities.
\newblock {\em Rev. Mat. Iberoam.}, 30(1):133--163, 2014.

\bibitem[G{\'S}15]{GS15}
Wilfrid Gangbo and Andrzej {\'S}wiech.
\newblock Metric viscosity solutions of {H}amilton-{J}acobi equations depending
  on local slopes.
\newblock {\em Calc. Var. Partial Differential Equations}, 54(1):1183--1218,
  2015.

\bibitem[GT21]{GT18}
Nicola Gigli and Alexander Tyulenev.
\newblock Korevaar-{S}choen's directional energy and {A}mbrosio's regular
  {L}agrangian flows.
\newblock {\em Math. Z.}, 298(3-4):1221--1261, 2021.

\bibitem[GV23]{GV23}
Nicola Gigli and Ivan~Yuri Violo.
\newblock Work in progress.
\newblock 2023.

\bibitem[Ha96]{HajSob}
Piotr Haj\l~asz.
\newblock Sobolev spaces on an arbitrary metric space.
\newblock {\em Potential Anal.}, 5(4):403--415, 1996.

\bibitem[Ham94]{Ham94}
Richard~S. Hamilton.
\newblock Convex hypersurfaces with pinched second fundamental form.
\newblock {\em Comm. Anal. Geom.}, 2(1):167--172, 1994.

\bibitem[Han18]{Han14}
Bang-Xian Han.
\newblock Ricci tensor on {${\rm RCD}^*(K,N)$} {spaces}.
\newblock {\em J. Geom. Anal.}, 28(2):1295--1314, 2018.

\bibitem[Heb99]{He99}
Emmanuel Hebey.
\newblock {\em Nonlinear analysis on manifolds: {S}obolev spaces and
  inequalities}, volume~5 of {\em Courant Lecture Notes in Mathematics}.
\newblock New York University, Courant Institute of Mathematical Sciences, New
  York; American Mathematical Society, Providence, RI, 1999.

\bibitem[HKST15]{HKST15}
Juha Heinonen, Pekka Koskela, Nageswari Shanmugalingam, and Jeremy~T. Tyson.
\newblock {\em Sobolev spaces on metric measure spaces}, volume~27 of {\em New
  Mathematical Monographs}.
\newblock Cambridge University Press, Cambridge, 2015.
\newblock An approach based on upper gradients.

\bibitem[HLR91]{HLR91}
Richard Haydon, Mireille Levy, and Yves Raynaud.
\newblock {\em Randomly normed spaces}, volume~41 of {\em Travaux en Cours
  [Works in Progress]}.
\newblock Hermann, Paris, 1991.

\bibitem[Hoc19]{HochardThesis}
Raphael Hochard.
\newblock {\em Th\'eor\`emes d'existence en temps court du flot de {R}icci pour
  des vari\'et\'es non-compl\`etes, non-\'effondr\'ees, \`a courbure
  minor\'ee}.
\newblock PhD thesis, Universit\'e de Bordeaux, 2019.

\bibitem[Hon11]{Honda11-2}
Shouhei Honda.
\newblock Ricci curvature and convergence of {L}ipschitz functions.
\newblock {\em Comm. Anal. Geom.}, 19(1):79--158, 2011.

\bibitem[Hon18]{Honda14}
Shouhei Honda.
\newblock Elliptic {PDE}s on compact {R}icci limit spaces and applications.
\newblock {\em Mem. Amer. Math. Soc.}, 253(1211):v+92, 2018.

\bibitem[Hon20]{H19}
Shouhei Honda.
\newblock New differential operator and noncollapsed {RCD} spaces.
\newblock {\em Geom. Topol.}, 24(4):2127--2148, 2020.

\bibitem[HS22]{HS22}
Bang-Xian Han and Karl-Theodor Sturm.
\newblock Curvature-dimension conditions under time change.
\newblock {\em Ann. Mat. Pura Appl. (4)}, 201(2):801--822, 2022.

\bibitem[HZ20]{HZ00}
Shouhei Honda and Xingyu Zhu.
\newblock A characterization of non-collapsed {$\sf{RCD}(K,N)$} spaces via
  {E}instein tensors.
\newblock Preprint, arXiv:2010.02530, 2020.

\bibitem[IPS22]{IPS22}
Toni Ikonen, Enrico Pasqualetto, and Elefterios Soultanis.
\newblock Abstract and concrete tangent modules on {L}ipschitz
  differentiability spaces.
\newblock {\em Proc. Amer. Math. Soc.}, 150(1):327--343, 2022.

\bibitem[IRV15]{IRV15}
Jin-ichi Itoh, Jo\"{e}l Rouyer, and Costin V\^{i}lcu.
\newblock Moderate smoothness of most {A}lexandrov surfaces.
\newblock {\em Internat. J. Math.}, 26(4):1540004, 14, 2015.

\bibitem[JKO98]{JKO98}
Richard Jordan, David Kinderlehrer, and Felix Otto.
\newblock The variational formulation of the {F}okker-{P}lanck equation.
\newblock {\em SIAM J. Math. Anal.}, 29(1):1--17, 1998.

\bibitem[Joh21]{Joh21}
Florian Johne.
\newblock Sobolev inequalities on manifolds with nonnegative {B}akry-\'{E}mery
  {R}icci curvature.
\newblock Preprint, arXiv:2103.08496, 2021.

\bibitem[Jos17]{Jost17}
J\"{u}rgen Jost.
\newblock {\em Riemannian geometry and geometric analysis}.
\newblock Universitext. Springer, Cham, seventh edition, 2017.

\bibitem[Ket15]{Ketterer13}
Christian Ketterer.
\newblock Cones over metric measure spaces and the maximal diameter theorem.
\newblock {\em J. Math. Pures Appl. (9)}, 103(5):1228--1275, 2015.

\bibitem[Ket21]{Ket21}
Christian Ketterer.
\newblock Stability of metric measure spaces with integral {R}icci curvature
  bounds.
\newblock {\em J. Funct. Anal.}, 281(8):Paper No. 109142, 48, 2021.

\bibitem[KKST12]{KKST12}
Juha Kinnunen, Riikka Korte, Nageswari Shanmugalingam, and Heli Tuominen.
\newblock A characterization of {N}ewtonian functions with zero boundary
  values.
\newblock {\em Calc. Var. Partial Differential Equations}, 43(3-4):507--528,
  2012.

\bibitem[KM18]{MK16}
Martin Kell and Andrea Mondino.
\newblock On the volume measure of non-smooth spaces with {R}icci curvature
  bounded below.
\newblock {\em Ann. Sc. Norm. Super. Pisa Cl. Sci. (5)}, 18(2):593--610, 2018.

\bibitem[KS03]{KS03conv}
Kazuhiro Kuwae and Takashi Shioya.
\newblock Convergence of spectral structures: a functional analytic theory and
  its applications to spectral geometry.
\newblock {\em Comm. Anal. Geom.}, 11(4):599--673, 2003.

\bibitem[KS20]{KazShi20}
Daisuke Kazukawa and Takashi Shioya.
\newblock High-dimensional ellipsoids converge to {G}aussian spaces.
\newblock Preprint, arXiv:2003.05105, 2020.

\bibitem[KSZ14]{KSZ14}
Pekka Koskela, Nageswari Shanmugalingam, and Yuan Zhou.
\newblock Geometry and analysis of {D}irichlet forms ({II}).
\newblock {\em J. Funct. Anal.}, 267(7):2437--2477, 2014.

\bibitem[Kuw10]{Kuwada10}
Kazumasa Kuwada.
\newblock Duality on gradient estimates and {W}asserstein controls.
\newblock {\em J. Funct. Anal.}, 258(11):3758--3774, 2010.

\bibitem[KY21]{KazYok21}
Daisuke Kazukawa and Takumi Yokota.
\newblock Boundedness of precompact sets of metric measure spaces.
\newblock {\em Geom. Dedicata}, 215:229--242, 2021.

\bibitem[KY22]{KazYok22}
Daisuke Kazukawa and Takumi Yokota.
\newblock Boundedness of measured {G}romov-{H}ausdorff precompact sets of
  metric measure spaces in pyramids.
\newblock Preprint, arXiv:2210.00687, 2022.

\bibitem[KZ12]{Koskela-Zhou12}
Pekka Koskela and Yuan Zhou.
\newblock Geometry and analysis of {D}irichlet forms.
\newblock {\em Adv. Math.}, 231(5):2755--2801, 2012.

\bibitem[Led99]{Ledoux99}
Michel Ledoux.
\newblock On manifolds with non-negative {R}icci curvature and {S}obolev
  inequalities.
\newblock {\em Comm. Anal. Geom.}, 7(2):347--353, 1999.

\bibitem[Li12]{Li12}
Peter Li.
\newblock {\em Geometric Analysis}.
\newblock Cambridge Studies in Advanced Mathematics. Cambridge University
  Press, 2012.

\bibitem[Lio84a]{LionsLocComI}
Pierre-Louis Lions.
\newblock The concentration-compactness principle in the calculus of
  variations. {T}he locally compact case. {I}.
\newblock {\em Ann. Inst. H. Poincar\'{e} Anal. Non Lin\'{e}aire},
  1(2):109--145, 1984.

\bibitem[Lio84b]{LionsLocComII}
Pierre-Louis Lions.
\newblock The concentration-compactness principle in the calculus of
  variations. {T}he locally compact case. {II}.
\newblock {\em Ann. Inst. H. Poincar\'{e} Anal. Non Lin\'{e}aire},
  1(4):223--283, 1984.

\bibitem[Lio85a]{LionsLimCasI}
Pierre-Louis Lions.
\newblock The concentration-compactness principle in the calculus of
  variations. {T}he limit case. {I}.
\newblock {\em Rev. Mat. Iberoamericana}, 1(1):145--201, 1985.

\bibitem[Lio85b]{LionsLimCasII}
Pierre-Louis Lions.
\newblock The concentration-compactness principle in the calculus of
  variations. {T}he limit case. {II}.
\newblock {\em Rev. Mat. Iberoamericana}, 1(2):45--121, 1985.

\bibitem[Lis07]{Lisini07}
Stefano Lisini.
\newblock Characterization of absolutely continuous curves in {W}asserstein
  spaces.
\newblock {\em Calc. Var. Partial Differential Equations}, 28(1):85--120, 2007.

\bibitem[Lot19]{Lott19}
John Lott.
\newblock On 3-manifolds with pointwise pinched nonnegative {R}icci curvature.
\newblock Preprint, arXiv:1908.04715, 2019.

\bibitem[LP19]{LP18}
Danka Lu\v{c}i\'{c} and Enrico Pasqualetto.
\newblock The {S}erre-{S}wan theorem for normed modules.
\newblock {\em Rendiconti del Circolo Matematico di Palermo Series 2},
  68:385--404, 2019.

\bibitem[LPV22]{LPV22}
Milica Lu\v{c}i\'{c}, Enrico Pasqualetto, and Ivana Vojnovi\'{c}.
\newblock On the reflexivity properties of {B}anach bundles and {B}anach
  modules.
\newblock Preprint, arXiv:2205.11608, 2022.

\bibitem[LS23]{LS18}
Alexander Lytchak and Stephan Stadler.
\newblock {R}icci curvature in dimension 2.
\newblock {\em J. Eur. Math. Soc. (JEMS)}, 25(3):845--867, 2023.

\bibitem[LT22]{LT22}
Man-Chun Lee and Peter~M. Topping.
\newblock Three-manifolds with non-negatively pinched {R}icci curvature.
\newblock Preprint, arXiv:2204.00504, 2022.

\bibitem[LV07a]{LVHJ}
John Lott and C{\'e}dric Villani.
\newblock Hamilton-{J}acobi semigroup on length spaces and applications.
\newblock {\em J. Math. Pures Appl. (9)}, 88(3):219--229, 2007.

\bibitem[LV07b]{Lott-Villani07}
John Lott and C{\'e}dric Villani.
\newblock Weak curvature conditions and functional inequalities.
\newblock {\em J. Funct. Anal.}, 245(1):311--333, 2007.

\bibitem[LV09]{Lott-Villani09}
John Lott and C{\'e}dric Villani.
\newblock Ricci curvature for metric-measure spaces via optimal transport.
\newblock {\em Ann. of Math. (2)}, 169(3):903--991, 2009.

\bibitem[Mar18]{mariani}
Mauro Mariani.
\newblock A {$\Gamma$}-convergence approach to large deviations.
\newblock {\em Ann. Sc. Norm. Super. Pisa Cl. Sci. (5)}, 18(3):951--976, 2018.

\bibitem[McC97]{McCann97}
Robert~J. McCann.
\newblock A convexity principle for interacting gases.
\newblock {\em Adv. Math.}, 128(1):153--179, 1997.

\bibitem[Men00]{Menguy00}
X.~Menguy.
\newblock Noncollapsing examples with positive {R}icci curvature and infinite
  topological type.
\newblock {\em Geom. Funct. Anal.}, 10(3):600--627, 2000.

\bibitem[Mir03]{Mir03}
Michele Miranda, Jr.
\newblock Functions of bounded variation on ``good'' metric spaces.
\newblock {\em J. Math. Pures Appl. (9)}, 82(8):975--1004, 2003.

\bibitem[MJ00]{MorJoh00}
Frank Morgan and David~L. Johnson.
\newblock Some sharp isoperimetric theorems for {R}iemannian manifolds.
\newblock {\em Indiana Univ. Math. J.}, 49(3):1017--1041, 2000.

\bibitem[MN19]{Mondino-Naber14}
Andrea Mondino and Aaron Naber.
\newblock Structure theory of metric measure spaces with lower {R}icci
  curvature bounds.
\newblock {\em J. Eur. Math. Soc. (JEMS)}, 21(6):1809--1854, 2019.

\bibitem[MR92]{MaRockner92}
Zhi~Ming Ma and Michael R{\"o}ckner.
\newblock {\em Introduction to the theory of (nonsymmetric) {D}irichlet forms}.
\newblock Universitext. Springer-Verlag, Berlin, 1992.

\bibitem[MS20]{MS20}
Matteo Muratori and Giuseppe Savar\'{e}.
\newblock Gradient flows and evolution variational inequalities in metric
  spaces. {I}: {S}tructural properties.
\newblock {\em J. Funct. Anal.}, 278(4):108347, 67, 2020.

\bibitem[MS21]{MS21}
Andrea Mondino and Daniele Semola.
\newblock Weak laplacian bounds and minimal boundaries in non-smooth spaces
  with {R}icci curvature lower bounds.
\newblock Preprint, arXiv:2107.12344, 2021.

\bibitem[Nar14]{Nard14}
Stefano Nardulli.
\newblock Generalized existence of isoperimetric regions in non-compact
  {R}iemannian manifolds and applications to the isoperimetric profile.
\newblock {\em Asian J. Math.}, 18(1):1--28, 2014.

\bibitem[Neu20]{Neumayer19}
Robin Neumayer.
\newblock A note on strong-form stability for the {S}obolev inequality.
\newblock {\em Calc. Var. Partial Differential Equations}, 59(1):Paper No. 25,
  8, 2020.

\bibitem[NS21a]{NakShi21-2}
Hiroki Nakajima and Takashi Shioya.
\newblock Convergence of group actions in metric measure geometry.
\newblock Preprint, arXiv:2104.00187, 2021.

\bibitem[NS21b]{NakShi21}
Hiroki Nakajima and Takashi Shioya.
\newblock A natural compactification of the {G}romov-{H}ausdorff space.
\newblock Preprint, arXiv:2109.14853, 2021.

\bibitem[NV22a]{NoVi22}
Francesco Nobili and Ivan~Yuri Violo.
\newblock Rigidity and almost rigidity of {S}obolev inequalities on compact
  spaces with lower {R}icci curvature bounds.
\newblock {\em Calc. Var. Partial Differential Equations}, 61(5):Paper No. 180,
  65, 2022.

\bibitem[NV22b]{NobiliViolo22}
Francesco Nobili and Ivan~Yuri Violo.
\newblock Stability of {S}obolev inequalities on {R}iemannian manifolds with
  {R}icci curvature lower bounds.
\newblock Preprint, arXiv:2210.00636, 2022.

\bibitem[NW16]{NiWan16}
Lei Ni and Kui Wang.
\newblock Isoperimetric comparisons via viscosity.
\newblock {\em J. Geom. Anal.}, 26(4):2831--2841, 2016.

\bibitem[OP17]{OhPa17}
Shin-ichi Ohta and Mikl\'{o}s P\'{a}lfia.
\newblock Gradient flows and a {T}rotter-{K}ato formula of semi-convex
  functions on {${\rm CAT}(1)$}-spaces.
\newblock {\em Amer. J. Math.}, 139(4):937--965, 2017.

\bibitem[OS09]{OhSt09}
Shin-ichi Ohta and Karl-Theodor Sturm.
\newblock Heat flow on {F}insler manifolds.
\newblock {\em Comm. Pure Appl. Math.}, 62(10):1386--1433, 2009.

\bibitem[OS12]{OhSt12}
Shin-ichi Ohta and Karl-Theodor Sturm.
\newblock Non-contraction of heat flow on {M}inkowski spaces.
\newblock {\em Arch. Ration. Mech. Anal.}, 204(3):917--944, 2012.

\bibitem[OS15]{OzaShi14}
Ryunosuke Ozawa and Takashi Shioya.
\newblock Limit formulas for metric measure invariants and phase transition
  property.
\newblock {\em Math. Z.}, 280(3-4):759--782, 2015.

\bibitem[Ott01]{Otto01}
Felix Otto.
\newblock The geometry of dissipative evolution equations: the porous medium
  equation.
\newblock {\em Comm. Partial Differential Equations}, 26(1-2):101--174, 2001.

\bibitem[OV00]{OV00}
Felix Otto and C{\'e}dric Villani.
\newblock Generalization of an inequality by {T}alagrand and links with the
  logarithmic {S}obolev inequality.
\newblock {\em J. Funct. Anal.}, 173(2):361--400, 2000.

\bibitem[OW05]{OtWe05}
Felix Otto and Michael Westdickenberg.
\newblock Eulerian calculus for the contraction in the {W}asserstein distance.
\newblock {\em SIAM J. Math. Anal.}, 37(4):1227--1255, 2005.

\bibitem[OY19]{OzaYok19}
Ryunosuke Ozawa and Takumi Yokota.
\newblock Stability of {RCD} condition under concentration topology.
\newblock {\em Calc. Var. Partial Differential Equations}, 58(4):Paper No. 151,
  30, 2019.

\bibitem[Pan23]{Pan23}
Jiayin Pan.
\newblock The {G}rushin hemisphere as a {R}icci limit space with curvature
  {$\ge1$}.
\newblock {\em Proc. Amer. Math. Soc. Ser. B}, 10:71--75, 2023.

\bibitem[Pas19]{P19}
Enrico Pasqualetto.
\newblock Direct and inverse limits of normed modules.
\newblock Preprint, arXiv:1902.04126, 2019.

\bibitem[Paz83]{Paz}
A.~Pazy.
\newblock {\em Semigroups of linear operators and applications to partial
  differential equations}, volume~44 of {\em Applied Mathematical Sciences}.
\newblock Springer-Verlag, New York, 1983.

\bibitem[Per97]{Perelman97}
Grigori Perelman.
\newblock Construction of manifolds of positive {R}icci curvature with big
  volume and large {B}etti numbers.
\newblock In {\em Comparison geometry ({B}erkeley, {CA}, 1993--94)}, volume~30
  of {\em Math. Sci. Res. Inst. Publ.}, pages 157--163. Cambridge Univ. Press,
  Cambridge, 1997.

\bibitem[Pet03]{Petrunin03}
Anton Petrunin.
\newblock Harmonic functions on {A}lexandrov spaces and their applications.
\newblock {\em Electron. Res. Announc. Amer. Math. Soc.}, 9:135--141, 2003.

\bibitem[Pet11]{Petrunin11}
Anton Petrunin.
\newblock Alexandrov meets {L}ott-{V}illani-{S}turm.
\newblock {\em M\"unster J. Math.}, 4:53--64, 2011.

\bibitem[Pet16]{Petersen16}
Peter Petersen.
\newblock {\em Riemannian geometry}, volume 171 of {\em Graduate Texts in
  Mathematics}.
\newblock Springer, Cham, third edition, 2016.

\bibitem[Poz23]{PozzettaSurvey}
Marco Pozzetta.
\newblock Isoperimetry on manifolds with {R}icci bounded below: overview of
  recent results and methods.
\newblock Preprint, arXiv:2303.11925, 2023.

\bibitem[Pra07]{Pratelli07}
Aldo Pratelli.
\newblock On the equality between {M}onge's infimum and {K}antorovich's minimum
  in optimal mass transportation.
\newblock {\em Ann. Inst. H. Poincar\'e Probab. Statist.}, 43(1):1--13, 2007.

\bibitem[PW22]{PanWei22}
Jiayin Pan and Guofang Wei.
\newblock Examples of {R}icci limit spaces with non-integer {H}ausdorff
  dimension.
\newblock {\em Geom. Funct. Anal.}, 32(3):676--685, 2022.

\bibitem[Raj12a]{Rajala12-2}
Tapio Rajala.
\newblock Interpolated measures with bounded density in metric spaces
  satisfying the curvature-dimension conditions of {S}turm.
\newblock {\em J. Funct. Anal.}, 263(4):896--924, 2012.

\bibitem[Raj12b]{Rajala12}
Tapio Rajala.
\newblock Local {P}oincar\'e inequalities from stable curvature conditions on
  metric spaces.
\newblock {\em Calc. Var. Partial Differential Equations}, 44(3-4):477--494,
  2012.

\bibitem[Ram01]{Ram01}
Jos\'{e}~A. Ram\'{\i}rez.
\newblock Short-time asymptotics in {D}irichlet spaces.
\newblock {\em Comm. Pure Appl. Math.}, 54(3):259--293, 2001.

\bibitem[Rit01]{Rit01}
Manuel Ritor\'{e}.
\newblock Constant geodesic curvature curves and isoperimetric domains in
  rotationally symmetric surfaces.
\newblock {\em Comm. Anal. Geom.}, 9(5):1093--1138, 2001.

\bibitem[RS12]{RajalaSturm12}
Tapio Rajala and Karl-Theodor Sturm.
\newblock Non-branching geodesics and optimal maps in strong
  ${CD(K,{\infty})}$-spaces.
\newblock {\em Calc. Var. Partial Differential Equations}, 50(3-4):831--846,
  2012.

\bibitem[San15]{Santambrogio15}
Filippo Santambrogio.
\newblock {\em Optimal transport for applied mathematicians}, volume~87 of {\em
  Progress in Nonlinear Differential Equations and their Applications}.
\newblock Birkh\"auser/Springer, Cham, 2015.
\newblock Calculus of variations, PDEs, and modeling.

\bibitem[Sau89]{Sauvageot89}
Jean-Luc Sauvageot.
\newblock Tangent bimodule and locality for dissipative operators on
  {$C^*$}-algebras.
\newblock In {\em Quantum probability and applications, {IV} ({R}ome, 1987)},
  volume 1396 of {\em Lecture Notes in Math.}, pages 322--338. Springer,
  Berlin, 1989.

\bibitem[Sau90]{Sauvageot90}
Jean-Luc Sauvageot.
\newblock Quantum {D}irichlet forms, differential calculus and semigroups.
\newblock In {\em Quantum probability and applications, {V} ({H}eidelberg,
  1988)}, volume 1442 of {\em Lecture Notes in Math.}, pages 334--346.
  Springer, Berlin, 1990.

\bibitem[Sav14]{Savare13}
Giuseppe Savar{\'e}.
\newblock Self-improvement of the {B}akry-\'{E}mery condition and {W}asserstein
  contraction of the heat flow in {${\rm RCD}(K,\infty)$} metric measure
  spaces.
\newblock {\em Discrete Contin. Dyn. Syst.}, 34(4):1641--1661, 2014.

\bibitem[Sha00]{Shanmugalingam00}
Nageswari Shanmugalingam.
\newblock Newtonian spaces: an extension of {S}obolev spaces to metric measure
  spaces.
\newblock {\em Rev. Mat. Iberoamericana}, 16(2):243--279, 2000.

\bibitem[Shi16]{Shi16}
Takashi Shioya.
\newblock {\em Metric measure geometry}, volume~25 of {\em IRMA Lectures in
  Mathematics and Theoretical Physics}.
\newblock EMS Publishing House, Z\"{u}rich, 2016.
\newblock Gromov's theory of convergence and concentration of metrics and
  measures.

\bibitem[Shi17]{Shi17}
Takashi Shioya.
\newblock Metric measure limits of spheres and complex projective spaces.
\newblock In {\em Measure theory in non-smooth spaces}, Partial Differ. Equ.
  Meas. Theory, pages 261--287. De Gruyter Open, Warsaw, 2017.

\bibitem[Sho97]{Show97}
R.~E. Showalter.
\newblock {\em Monotone operators in {B}anach space and nonlinear partial
  differential equations}, volume~49 of {\em Mathematical Surveys and
  Monographs}.
\newblock American Mathematical Society, Providence, RI, 1997.

\bibitem[Sim12]{Simon12}
Miles Simon.
\newblock Ricci flow of non-collapsed three manifolds whose {R}icci curvature
  is bounded from below.
\newblock {\em J. Reine Angew. Math.}, 662:59--94, 2012.

\bibitem[SS04]{SandSerf}
Etienne Sandier and Sylvia Serfaty.
\newblock Gamma-convergence of gradient flows with applications to
  {G}inzburg-{L}andau.
\newblock {\em Comm. Pure Appl. Math.}, 57(12):1627--1672, 2004.

\bibitem[SS13]{SchSim13}
Felix Schulze and Miles Simon.
\newblock Expanding solitons with non-negative curvature operator coming out of
  cones.
\newblock {\em Math. Z.}, 275(1-2):625--639, 2013.

\bibitem[ST21]{SimTop21}
Miles Simon and Peter~M. Topping.
\newblock Local mollification of {R}iemannian metrics using {R}icci flow, and
  {R}icci limit spaces.
\newblock {\em Geom. Topol.}, 25(2):913--948, 2021.

\bibitem[Stu94]{Sturm96I}
Karl-Theodor Sturm.
\newblock Analysis on local {D}irichlet spaces. {I}. {R}ecurrence,
  conservativeness and {$L^p$}-{L}iouville properties.
\newblock {\em J. Reine Angew. Math.}, 456:173--196, 1994.

\bibitem[Stu96]{Sturm96III}
Karl-Theodor Sturm.
\newblock Analysis on local {D}irichlet spaces. {III}. {T}he parabolic
  {H}arnack inequality.
\newblock {\em J. Math. Pures Appl. (9)}, 75(3):273--297, 1996.

\bibitem[Stu06a]{Sturm06I}
Karl-Theodor Sturm.
\newblock On the geometry of metric measure spaces. {I}.
\newblock {\em Acta Math.}, 196(1):65--131, 2006.

\bibitem[Stu06b]{Sturm06II}
Karl-Theodor Sturm.
\newblock On the geometry of metric measure spaces. {II}.
\newblock {\em Acta Math.}, 196(1):133--177, 2006.

\bibitem[Stu18]{Sturm14}
Karl-Theodor Sturm.
\newblock Ricci tensor for diffusion operators and curvature-dimension
  inequalities under conformal transformations and time changes.
\newblock {\em J. Funct. Anal.}, 275(4):793--829, 2018.

\bibitem[SY89]{ShaYang89}
Ji-Ping Sha and DaGang Yang.
\newblock Examples of manifolds of positive {R}icci curvature.
\newblock {\em J. Differential Geom.}, 29(1):95--103, 1989.

\bibitem[Tal76]{Talenti76}
Giorgio Talenti.
\newblock Best constant in {S}obolev inequality.
\newblock {\em Ann. Mat. Pura Appl. (4)}, 110:353--372, 1976.

\bibitem[tERS07]{ERS07}
Antonius Frederik~Maria ter Elst, Derek~W. Robinson, and Adam Sikora.
\newblock Small time asymptotics of diffusion processes.
\newblock {\em J. Evol. Equ.}, 7(1):79--112, 2007.

\bibitem[Vil]{Villani2017}
C{\'e}dric Villani.
\newblock {I}n\'egalit\'es isop\'erim\'etriques dans les espaces m\'etriques
  mesur\'es [d'apr\`es {F}. {C}avalletti \& {A}. {M}ondino].
\newblock S\'eminaire {B}ourbaki, available at:
  http://www.bourbaki.ens.fr/TEXTES/1127.pdf.

\bibitem[Vil09]{Villani09}
C{\'e}dric Villani.
\newblock {\em Optimal transport. Old and new}, volume 338 of {\em Grundlehren
  der Mathematischen Wissenschaften}.
\newblock Springer-Verlag, Berlin, 2009.

\bibitem[vRS05]{vRS05}
Max-K. von Renesse and Karl-Theodor Sturm.
\newblock Transport inequalities, gradient estimates, entropy, and {R}icci
  curvature.
\newblock {\em Comm. Pure Appl. Math.}, 58(7):923--940, 2005.

\bibitem[vRT12]{vRT12}
Max-K. von Renesse and Jonas~M. T\"{o}lle.
\newblock On an {EVI} curve characterization of {H}ilbert spaces.
\newblock {\em J. Math. Anal. Appl.}, 385(1):589--598, 2012.

\bibitem[Wea00]{Weaver01}
Nik Weaver.
\newblock Lipschitz algebras and derivations. {II}. {E}xterior differentiation.
\newblock {\em J. Funct. Anal.}, 178(1):64--112, 2000.

\bibitem[Wei07]{Wei07}
Guofang Wei.
\newblock Manifolds with a lower {R}icci curvature bound.
\newblock In {\em Surveys in differential geometry. {V}ol. {XI}}, volume~11 of
  {\em Surv. Differ. Geom.}, pages 203--227. Int. Press, Somerville, MA, 2007.

\bibitem[Xia01]{Xia01}
Changyu Xia.
\newblock Complete manifolds with nonnegative {R}icci curvature and almost best
  {S}obolev constant.
\newblock {\em Illinois J. Math.}, 45(4):1253--1259, 2001.

\end{thebibliography}
}
\end{document}